\newcommand{\etalchar}[1]{$^{#1}$}
\let\mathscr\mathcal}}
 \newcommand{\textcyr}[1]{%
   {\fontencoding{OT2}\fontfamily{wncyr}\fontseries{m}\fontshape{n}%
    \selectfont #1}}
\newcommand{\Sha}{{\mbox{\textcyr{Sh}}}}
\newcommand{\Z}{\mathbb{Z}}
\newcommand{\Q}{\mathbb{Q}}
\newcommand{\C}{\mathbb{C}}
\newcommand{\R}{\mathbb{R}}
\newcommand{\PP}{\operatorname{\mathbf{Prob}}}
\newcommand{\X}{\mathscr X}
\newcommand{\Gal}{\operatorname{Gal}}
\newcommand{\rk}{\operatorname{rank\,}_{\Z}}
\newcommand{\ord}{{\operatorname{ord}}}
\newcommand{\f}{\mathfrak f_{\chi}} 
\newcommand{\p}{\mathfrak p} 
\newcommand{\ff}{\mathfrak f} 
\newcommand{\Qbar}{\overline{\Q}}
\newcommand{\Nm}{\operatorname{\bf{Nm}}}
\newtheorem{thm}{Theorem}[section] 
\newtheorem{prop}[thm]{Proposition}
\newtheorem{lem}[thm]{Lemma}
\newtheorem{pred}[thm]{Prediction}
\theoremstyle{definition}
\newtheorem{remark}{Remark}
\numberwithin{equation}{section}
\title[Small Values ]%
{Small Algebraic Central Values of Twists of Elliptic $L$-Functions }
\date{\today}
\author[H.~Kisilevsky and JB Nam]{Hershy Kisilevsky and Jungbae Nam}
\address[H.~Kisilevsky]%
{Department of Mathematics and Statistics and CICMA\\
     Concordia University \\
     1455 de Maisonneuve  Blvd. West\\
     Montr\'eal, Qu\'ebec, H3G 1M8, CANADA}
\email{hershy.kisilevsky@concordia.ca}
\address[Jungbae ~Nam]%
{Department of Mathematics and Statistics and CICMA\\
     Concordia University \\
     1455 de Maisonneuve  Blvd. West\\
     Montr\'eal, Qu\'ebec, H3G 1M8, CANADA}
\email{jungbae.nam@concordia.ca}
\begin{document}

\begin{abstract}
We consider heuristic predictions for small non-zero algebraic central values of twists of the $L$-function of an elliptic 
curve $E/\Q$ by Dirichlet characters.
We provide computational evidence for these predictions and consequences of them for instances 
of an analogue of the Brauer-Siegel theorem associated to $E/\Q$ extended to chosen families of cyclic extensions of fixed degree.
%We consider heuristic predictions for small non-zero algebraic central values of twists of the $L$-function of an elliptic 
%curve $E/\Q$ by Dirichlet characters.
%We provide computational evidence for these predictions and consequences of them for instances 
%of an analogue of the Brauer-Siegel theorem.
\end{abstract}

\maketitle

\section{Introduction}\label{Introduction}

Let $E$ be an elliptic curve defined over the rational field $\Q$ with $L$-function $L(E/\Q,s)$, conductor $N_E$, and 
denote by $w_E$ the sign of its functional equation. 
Then the Birch \& Swinnerton-Dyer Conjecture
relates the leading term of the Taylor expansion of $L(E/\Q,s)$ at $s=1$ to the arithmetic invariants of $E/\Q.$ In particular, 
it predicts that the order of vanishing of  $L(E/\Q,s)$ at $s=1$ is
equal to $r$, the $\Z$-rank of the Mordell-Weil group $E(\Q),$ and that 
\begin{equation}\label{BSD}
\frac{L^{(r)}(E/\Q,1)}{r!}=\frac{\vert\Sha(E/\Q)\vert\Omega_E R(E(\Q))\prod_{p <\infty }c_p}{\vert E(\Q)_{\rm{tors}}\vert^2},
\end{equation}
where the right hand side involves the usual invariants of $E/\Q.$ The number $\vert\Sha(E/\Q)\vert$ defined by Equation~\eqref{BSD} is sometimes called the {\it{analytic}} order of $\Sha(E/\Q)$ and denoted $\Sha_{\rm{an}}(E/\Q)$ (see {\it{e.g.}}~\cite{LMFDB}).
    
There are corresponding (generalized) conjectures for $E$ over finite extensions  $K/\Q.$  We focus on the Birch \& Swinnerton-Dyer conjecture which asserts that the $\Z$-rank of $E(K)$ (=$r_K$ say)   is equal to the order of vanishing of the $L$-function
$L(E/K,s)$ at $s=1$  and that
\begin{equation}\label{genBSD}
\frac{L^{(r_K)}(E/K,1)}{r_K!}=\frac{\vert\Sha(E/K)\vert\Omega_{E/K}R(E(K))\prod_{\p <\infty}c_{\p}}{\sqrt{\vert \Delta(K)\vert}(\vert E(K)_{\rm{tors}}\vert)^2},
\end{equation}
where the Shafarevich-Tate group $\Sha(E/K)$ is conjecturally finite, $\Omega_{E/K}$ is a product of the periods of $E$, the $c_{\p}$'s are the Tamagawa numbers at $\p$ adjusted as in 
~\cite{D-E-W} (see also~\cite{An}), and $\Delta(K)$ is the field discriminant of $K.$

 In the case of {\it abelian} extensions $K/\Q,$  we have the factorisation
 \begin{equation}\label{Lfact}
 L(E/K,s) = \prod_{\chi} L(E,s,\chi),
 \end{equation}
where the product is taken over all primitive Dirichlet characters $\chi$ attached to the field $K$.
 It follows that the behaviour of $L(E/K,s)$ at $s=1$ is 
determined by the values $L(E,1,\chi)$. The case of
quadratic extensions has been extensively studied (see {\it{e.g.\/}}~\cite{CKRS} ,~\cite{G} or~\cite{Go-M}).

For complex characters (characters of order $\geq 3$), the
{\it {vanishing}} of $L(E,1,\chi)$ is considered in~\cite {M-R},~\cite {DFK1},~\cite {DFK2},~\cite{FKK}, and~\cite{BRY} and some predictions are 
made regarding the frequency of such vanishings. Specifically, Conjecture 10.1 of~\cite{M-R} predicts that 
there are only finitely many {\it{even}} primitive Dirichlet characters $\chi$ of order $k$ for which both Euler's $\phi$ function $\phi(k) \geq 6$ and  
$L(E,1,\chi)=0.$ Rephrasing this in terms of the Birch \& Swinnerton-Dyer conjecture, the authors predict (Conjecture 10.2~\cite{M-R})
that for any (infinite) real abelian extension 
$F/\Q$ with only a finite number of subfields of degree $2,3$ or $5$, the Mordell-Weil group $E(F)$ is finitely generated.
 
In this article, we consider  the distribution of {\it{non-vanishing}} central values $L(E,1,\chi)$ as $\chi$ varies 
over certain families of primitive Dirichlet characters.  
The corresponding 
fields $K_{\chi}$ are cyclic $\Z/k_{\chi}\Z$-extensions of the rational field $\Q$, and Equation~\eqref{Lfact} holds.
The heuristics of~\cite {M-R},~\cite {DFK1},~\cite {DFK2}
would indicate that for almost all such characters ($100\%$), we would have  $L(E,1,\chi)\neq 0.$
In $\S~4$, we propose a probabilistic model from which we predict the distributions of these non-zero central values. We construct the series of probabilities (see Equation~\eqref{growth}) whose convergence and divergence informs the rates of occurrence of these values.  

\iffalse
We will generally be concerned with ``orders of growth'' (rather than asymptotics), and we will use the notation $f(x)\asymp g(x)$ to indicate that $f=O(g)$ and $g=O(f)$ as $x \to \infty$.\\
\fi

In order to develop a heuristic from which we derive our speculations, we must make several assumptions. Note that each hypothesis  is
  arithmetic  and statistical in nature.

%\vspace{-6pt}

\subsection{\bf{Arithmetic Hypotheses}}\label{Arithmetic Hyp}

 For an elliptic curve $E$ defined over the rational field $\Q,$ we  assume  the generalized Birch \& Swinnerton-Dyer conjecture (as in Equation~\eqref{genBSD}), for $E$ viewed as a curve over  finite extensions $K/\Q$ .  We also assume
 the  generalized Lindel\"of hypothesis (see~\cite{G-H-P}) which states that for any $\epsilon >0$ we have $L(E,1,\chi) = O(\f^{\epsilon})$, where $\f$ is the conductor of $\chi$, 
 and the implied constant depends only on $\epsilon$ and $E$.

%\vspace{-6pt}

\subsection{\bf{Statistical Hypotheses}}\label{Statistical Hyp}

In \S~\ref{Probabilities} we consider a totally real field $F/\Q,$ with $[F:\Q]=n$, and the usual embedding $\psi:F \rightarrow \R^n$  
 sends $\alpha  \mapsto (\gamma_1(\alpha), \gamma_2(\alpha), \ldots,\gamma_n(\alpha))\in \R^n$ where
 $\gamma_1, \gamma_2, \ldots,\gamma_n$ are the distinct embeddings of $F$ into $\R$. Then the image of the ring of
 integers $\psi(\mathcal O_F)\subset \R^n$ is a sublattice of $\R^n$. We assume that the  probability that the image of 
 an integer  lies in a region  $\mathcal T \subset \R^n$ is given by the relative volume of $\mathcal T$, and we further assume that 
 the coordinates $\gamma(\alpha)$ are (generically) independent identically distributed random variables. (We say 
 ``generically'' since this is certainly not the case for $\alpha$ lying in a proper subfield of $F$).
%\bigskip
%\vspace{-6pt}
\begin{remark}  In the following we shall always use the word ``twist''  to refer to the $L$-function twisted by a character, and not the geometric twist of an
algebraic variety. We also restrict ourselves to twists of the $L$-function by primitive Dirichlet characters $\chi$ with $\f$ coprime to its analytic conductor.  This allows us to control the conductor and the order 
of the resulting character of the twisted  $L$-function. Otherwise,  had we started, for example, with an $L$-function $L(E,s,\psi)$ where $\psi$ is a primitive Dirichlet character
of some even order $2k$, and twisted by characters $\chi$ of the same order $2k$ with $\chi = \psi^{-1}\chi_D$ where $\chi_D$ is the quadratic character of 
conductor $D$ (with $\gcd(D,\ff_{\psi}N_E)=1$), then the resulting twisted $L$-functions $L(E,s,\psi\chi)$ corresponding to the primitive character associated to $\psi\chi$
would be the quadratic twists $L(E,s,\chi_D).$ The central values of such quadratic twists would be substantially different from those of twists by characters $\chi$ of order $2k$ and with coprime conductors. 
\end{remark}

In \S~\ref{Algebraic central values}, we follow~\cite{M-T-T} to write
 \begin{equation}\label{algpart}
L(E,1,\chi)= \frac{\Omega_{\chi}}{2\tau(\overline\chi)}\times L_E^{\rm{alg}}(\chi).
\end{equation}
Here $\overline\chi=\chi^{-1}$ is the complex conjugate character, $\Omega_{\chi}$ and $\tau(\overline\chi)$ are as in \S~\ref{Algebraic central values}, and
$L_E^{\rm{alg}}(\chi) $ is an algebraic integer in $ \Q(\chi)$, the cyclotomic field generated over $\Q$ by the values of $\chi$. 
We will see below that
\begin{equation*}
L(E,1,\chi) = 0 \Longleftrightarrow  L_E^{\rm{alg}}(\chi)= 0
\end{equation*}
and that $\sigma( L_E^{\rm{alg}}(\chi)) =  L_E^{\rm{alg}}(\chi^{\sigma})$ for all $\sigma \in \Gal(\Q(\chi)/\Q).$ 
Hence $L(E,1,\chi) = 0 \Longleftrightarrow L(E,1,\chi^{\sigma}) = 0$ for all $\sigma \in \Gal(\Q(\chi)/\Q).$

More generally, if $\chi$ is a primitive Dirichlet character of order $k$,  and $K= K_{\chi}$ is the cyclic extension of $\Q$ of degree $k$ corresponding to $\chi$, and $L(E/\Q,s,\chi)$
is the twist of $L(E,s)$ by $\chi$, then (see \cite{Ro}) the order of vanishing of $L(E/\Q,s,\chi)$ at $s=1$ is conjectured (generalized BSD) to be the rank of the
``$\chi$-component'' $E(K)^{\chi}$ of $E(K)$.  Here $\rk E(K)^{\chi}= \dim_{\C} \bigl(\C\otimes E(K)\bigr)^{\chi}$ is the dimension of the $\chi$ eigenspace of $\C\otimes_{\Z} E(K)$ as a $\Gal (\Qbar/\Q)$-space. 
Kato's  result \cite{Kato} generalizing Kolyavagin's theorem \cite{Ko} (see  \cite{Sch}) shows
that if $L(E/\Q,1,\chi)\neq 0 $, then both $E(K_{\chi})$ is finite  and that the ``$\chi$-part'' of the Shafarevich-Tate group $\Sha(E/K)(\chi)$ is finite
 where $\Sha(E/K)(\chi)$ is
the subgroup of $\Sha(E/K)$ on which $\Gal(K/\Q)$ acts via the  (rational) $\Q$-representation associated to the character $\chi$ (and its Galois conjugates).

If $\ord_{s=1}L(E/K,s)=\ord_{s=1}L(E/\Q,s)$, then we may
substitute $L(E,1,\chi)$ in Equation~\eqref{algpart} into the factored expression for $L(E/K,1)$ of Equation~\eqref{Lfact}. Then in view of Equation~\eqref{genBSD}, the Galois equivariance of $L_E^{\rm{alg}}(\chi)$ and the conductor-discriminant formula, we think of the integer 
$\vert \Nm_{\Q(\chi)/\Q}(L_E^{\rm{alg}}(\chi))\vert$ as  an  ``analytic order''  
$\vert\Sha_{\text{an}}(E/K)(\chi)\vert $
of the ``$\chi$-part'' of the Shafarevich-Tate group $\Sha(E/K)$. Under the general BSD conjecture we expect this to hold
 up to finitely many factors depending only on $E$ and the order of $\chi$ (see~\cite{D-E-W}). (For example, since the 
group order  $\vert \Sha(E/K)\vert$ is not an isogeny invariant, it might be slightly different from  $\vert\Sha_{\text{an}}(E/K)(\chi)\vert $).

The authors would like to thank the anonymous referees for their valuable insights and helpful suggestions in the preparation of this paper.

\section{Predictions}\label{Predictions}

\subsection{\bf{Main Prediction}}\label{Main Prediction}

 As mentioned in the Introduction (\S~\ref{Introduction}) the non-vanishing of $L(E,1,\chi)$ is expected to occur for $100\%$ of characters of fixed order $k\geq3.$
Kato's result then translates these non-zero values to inform the orders of the corresponding components of $\Sha(E/K)$ which play a role in the arithmetic of elliptic curves analagous to that of ideal class groups of number fields. Studying the distributions of these groups, their growth rates in
twist  families, is a motivating interest of this article.

Based on the Arithmetic and Statistical hypotheses of \S~\ref{Introduction}, 
we propose a model in \S~\ref{Probabilities}  which allows us to make a number of predictions on the behaviour of  
$\vert\Sha_{\text{an}}(E/K)(\chi)\vert =\vert \Nm_{\Q(\chi)/\Q}(L_E^{\rm{alg}}(\chi))\vert$
as $\chi$ runs over a chosen  family of primitive Dirichlet characters, and $K=K_{\chi}$.
Note that the generalized Lindel\"of hypothesis implies that 
\[
\vert\Sha_{\text{an}}(E/K)(\chi)\vert =\vert \Nm_{\Q(\chi)/\Q}(L_E^{\rm{alg}}(\chi))\vert \leq \f^{\frac{\phi(k)}{2}+\epsilon}
\]
where $\chi$ is a Dirichlet character of order $k$, and $\f$ is the conductor of $\chi.$
For $L_{\chi}>0,$ let $n_{k,E} (X;L_{\chi})$ denote the number of  primitive 
Dirichlet characters $\chi$ of order $k $ and with conductor $\f$ relatively prime to $N_E$ and $\f \leq X$
such that $0 \neq \vert \Nm_{\Q(\chi)/\Q}(L_E^{\rm{alg}}(\chi))\vert  \leq L_{\chi}^2$ (so we may take $L_{\chi} \leq \f^{\phi(k)/4}$).

\begin{pred}\label{A}
Fix an integer $k\geq 3$ and a real number $c$, with $ 0\leq c < \phi(k)/4.$ Let $L=L_{\chi} =O(\f^{c})$ and let $X>0.$
Then under the hypotheses of \S~\ref{Introduction}, as $X\to\infty,$ the number $n_{k,E} (X;L)$ grows as:
\begin{enumerate} %\label{conj_pos_c}
\item
\text{Case:\ \ } $c=0$
\[
\begin{aligned}
n_{k,E}(X;L)   &\asymp X^{1/2}\log^B(X) & {\text{ if \ \   }} \phi(k)=2\\
         &\asymp  \log^{B+1}(X) & {\text{if \ \ }} \phi(k)=4\\
         & {\phantom{\asymp}}  {\text{\ \   is bounded\ \ }} & {\text{\  if \ \  }} \phi(k)\geq 6         
\end{aligned}
\]

\item
\text{Case: \ \ }$0<c<\phi(k)/4$
\[
\begin{aligned}
          n_{k,E}(X:L) &\asymp X^{c-(\phi(k)/4 -1)}\log^B(X) &{\text{ if \ \ }} &\max{\{0, \frac{\phi(k)}{4} -1\}}< c<\frac{\phi(k)}{4}  \\
         &\asymp \log^{B+1}(X) & {\text{ if \ \ }} &c =\frac{\phi(k)}{4}-1 > 0 \\
         & {\phantom{\asymp}}  {\text{is bounded\ \ }} &{\text{ if \ \ }}  &0 < c<\frac{\phi(k)}{4}-1.\\
\end{aligned}
\]

\end{enumerate}
  \end{pred}
  
 %For $c=0$ we may take $L > 0$ to be a fixed integer.  Then Prediction~\ref{A} becomes
% the number of  primitive 
%Dirichlet characters $\chi$ of order $k $ and with conductor $\f$ relatively prime to $N_E$ and $\f \leq X$ such that
 % $0 \neq \vert \Nm_{\Q(\chi)/\Q}(L_E^{\rm{alg}}(\chi))\vert  \leq L$ grows as
%\begin{align*}
%n_{k,E}(X;L)   &\asymp X^{1/2}\log^B(X) & {\text{ if \ \   }} \phi(k)=2\\
  %       &\asymp  \log^{B+1}(X) & {\text{if \ \ }} \phi(k)=4\\
 %        & {\phantom{\asymp}}  {\text{\ \   is bounded\ \ }} & {\text{\  if \ \  }} \phi(k)\geq 6  
%\end{align*}
%as $X\to\infty.$

Here, as usual, $\phi(k)$ is the Euler $\phi$-function (totient) evaluated at $k,$ and $B=\sigma_0(k)+\phi(k)/2-3$ where $\sigma_0(k)$ 
is the number of positive divisors of $k$ (sometimes denoted $d(k)$ or $\tau(k)).$ The value of $B$ arises in Prediction~\ref{A} in the  power of the logarithm 
due to the count of 
the number of Dirichlet characters $\chi$ of order fixed order $k$ with conductors $\f$ coprime to a fixed integer $N$ and for which $\f \le X.$ 
This occurs from the application of a Delange-Ikehara-Tauberian theorem to the generating function counting such characters, which has a pole at $s=1$ 
of order $\sigma_0(k) -1.$

For example, letting  $k=3,$ we have $\phi(k)=2$ and $B=\sigma_0(3)+\phi(3)/2 -3= 0.$ Taking  $c=0$ and $L>0$ a fixed integer,
  the predicted growth rate for ``small'' non-zero algebraic values,  $0 \neq \vert \Nm_{\Q(\chi)/\Q}(L_E^{\rm{alg}}(\chi))\vert \leq L,$ as $\chi$ ranges over cubic characters is predicted to be
  order $\asymp X^{1/2}.$ 
%This contrasts with the situation 
%  for the distribution of class numbers of quadratic fields, where 
% for imaginary quadratic fields for any fixed $L>0$ the class number is at most $L$ only finitely often,  and the 
%  unit group is finite, and for real quadratic fields 
%  we expect infinitely many to have class number $1$ (or at most $L$), and large fundamental units. 
This contrasts with the situation for the distribution of class numbers in quadratic fields. For imaginary quadratic fields, 
the class number is at most $L$ only finitely often for any fixed $L>0$, and the unit group is finite. Moreover, for real quadratic fields, we expect infinitely many 
to have class number $1$ (or at most $L$) and large fundamental units.
For the elliptic curve $E$ and fixed $L > 0,$ we expect  that for infinitely many characters $\chi$ of order $3$ (and cyclic cubic fields $K=K_{\chi}$) we have
    $0 \neq \vert \Nm_{\Q(\chi)/\Q}(L_E^{\rm{alg}}(\chi))\vert \leq L$ and also that  $E(K)/E(\Q)$ is finite.\\

For $k=5,$ we have $\phi(k)=4$ and $B=\sigma_0(5)+\phi(5)/2 -3= 1,$ so the predicted growth rate for ``small'' non-zero algebraic values of 
quintic twists  $0 \neq \vert \Nm_{\Q(\chi)/\Q}(L_E^{\rm{alg}}(\chi))\vert \leq L$
is of the order $\asymp \log^2(X).$\\

For $k=6,$ we have $\phi(k)=2$ and $B=\sigma_0(6)+\phi(6)/2 -3= 2,$ so the predicted growth rate for ``small'' non-zero algebraic values of 
sextic twists  $0 \neq \vert \Nm_{\Q(\chi)/\Q}(L_E^{\rm{alg}}(\chi))\vert \leq L$
is of the order $\asymp X^{1/2}\log^2(X).$

The experimental computations (see \S~\ref{computations}) seem to support Prediction~\ref{Main Prediction}.
 
 \subsection{\bf{Consequences of the Main Prediction}}\label{More Predictions}
 
 \bigskip

 \begin{pred}\label{B}
 Fix an integer $L > 0.$ Let $\mathcal C(N_E)$ be any set of  primitive Dirichlet characters $\chi$ with conductors $\f$ coprime to $N_E,$ 
 whose orders $k_{\chi}$ satisfy 
 $\phi(k_{\chi}) \geq 6.$ Then there are only finitely many $\chi \in \mathcal C(N_E)$ such that 
 $\vert \Nm_{\Q(\chi)/\Q}(L_E^{\rm{alg}}(\chi))\vert \leq L.$
\end{pred}
 
 \begin{remark}\label{RemarkForB}
 The $c=0$ case of Prediction~\ref{A} asserts that for each fixed number $L$ and order $k$ with $\phi(k)\geq 6$ we expect only a finite number of characters of order $k$ for which 
  $\vert \Nm_{\Q(\chi)/\Q}(L_E^{\rm{alg}}(\chi))\vert \leq L.$ Because of the ambiguity of the implied constants involved, this does not  directly imply  Prediction~\ref{B}. However using the method in~\cite{M-R}, we can establish 
 Prediction~\ref{B} for all characters $\chi$ with $\gcd(\f, N_E)=1$ whose orders $k_{\chi}\geq 10.$ Then applying Prediction~\ref{A}  finitely many times yields Prediction~\ref{B}.    
 \end{remark} 
  
  \begin{pred}\label{C} Under the hypotheses above, for almost all ($100\%$) Dirichlet characters of order $k\geq 3$ and any $\epsilon >0,$
  we have 
  \begin{equation}\label{upper_lower_bounds}
  \f^{\frac{1}{2}-\epsilon}\ll \vert L_E^{\rm{alg}}(\chi)  \vert \ \ll \f^{\frac{1}{2}+\epsilon}.
  \end{equation}
  Consequently for almost all  $\chi$ of order $k$, for which $L_E^{\rm{alg}}(\chi)\neq 0$ we have 
  \[
  \f^{\phi(k)/2-\epsilon}\ll \vert\Sha_{\rm{an}}(E/K_{\chi})(\chi)\vert \ll \f^{\phi(k)/2+\epsilon}.
\] 
  \end{pred}

  \begin{remark} The upper bound is the  generalized Lindel\"of hypothesis and is conjectured to hold for all primitive Dirichlet characters. The novelty 
  of Prediction~\ref{C} is that the lower bound holds for {\it almost all} primitive Dirichlet characters of given order $k \ge 3$. In \S~\ref{Probabilities} we 
  take variable $L=\f^c$ for  $ 0<c<\phi(k)/4$ using Equation~\eqref{conj_pos_c} to predict the lower bound for almost all characters $\chi$ of order $k$.
  Similar questions are discussed in~\cite{D-W}.
  \end{remark}
  
  \subsection{\bf{Brauer-Siegel Quotients}}\label{Brauer-Siegel}
  
  The Brauer-Siegel theorem (see~\cite{Lang})  states that if $\{K_j\}_{j \ge 1}$ is an infinite sequence of finite Galois extensions of $\Q$ such that as $j \rightarrow \infty$
\[
  \frac{[K_j:\Q]}{\log \vert \Delta(K_j)\vert} \to 0
  \]
  then as $j \rightarrow \infty$
 \[
  \frac{\log (h_{K_j}R_{K_j})}{\log \sqrt{\vert \Delta(K_j) \vert}} \to 1,
  \]
  where $h_{K_j},R_{K_j},$ and $\Delta(K_j)$ are the class number, the regulator, and the discriminant of $K_j$ respectively.

  There are a number of extensions and conjectural analogues of the Brauer-Siegel theorem that can be found in the literature. The work of Tsfasman and 
  Vl\u{a}dut(\cite{Ts-Vl}) and  Zykin(\cite{Zy}) relax the hypotheses of the theorem and that of Hindry and Pacheco (\cite{Hi-Pa},\cite{Hi}) formulate 
  analogous statements in wider geometric settings. Also Ulmer (\cite{Ul}) examines the conjectures algebraically in the 
  context of certain varieties over function fields of finite characteristic.
  
 Below we consider analogous predictions for the Brauer-Siegel quotients associated
 to an elliptic curve $E/\Q$ extended to chosen families of cyclic extensions $K/\Q$ of fixed degree.
   
Let $L > 0$ be a fixed integer and let $\mathcal F_{k,N}(L)$ be the family of all primitive Dirichlet characters of order $k$ with $\gcd(\f, N)=1$ such that 
 $0 < \vert \Nm_{\Q(\chi)/\Q}(L_E^{\rm{alg}}(\chi))\vert \leq L.$ Then Prediction~\ref{A} implies that $\mathcal F_{k,N}(L)$ is an infinite set 
 if $\phi(k)\leq 4.$

For example for $k = 3,$ let $K_{\chi}/\Q$ be the cyclic cubic extension corresponding to the character $\chi \in \mathcal F_{3,N}(L)$. 
 Then since  $L(E,1,\chi) \neq 0$ 
 the Birch \& Swinnerton-Dyer conjecture predicts that $\vert \Sha(E/K_{\chi})\vert$ is finite and differs from $\vert \Sha(E/\Q)\vert\times  \vert \Nm_{\Q(\chi)/\Q}(L_E^{\rm{alg}}(\chi))\vert$ by a factor bounded independently of $\chi,$
 and that the elliptic regulator $R(E(K_{\chi})) =R(E(\Q))$ up to a factor also bounded independently of $\chi$. Since  families of cyclic cubic extensions $K_{\chi}/\Q$ with   $\chi \in \mathcal F_{3,N}(L)$  are predicted (by Prediction~\ref{A}) to be 
  infinite, we can take the limit (for $K_{\chi}$ in such a family and $\f \le X$ ) as $X\to\infty$ to get an analogue of the  Brauer-Siegel limits.
  
  Similarly if $\phi(k)\leq 4,$ then the Prediction~\ref{A} asserts that the family  of cyclic $\Z/k\Z$-extensions $K_{\chi}/\Q$ with $\chi \in \mathcal F_{k,N}(L)$ is an {\it{infinite}} set and so we have:
  
  \begin{pred}\label{Da} Let $\phi(k)\leq 4.$ For an infinite family of cyclic $\Z/k\Z$-extensions $K_{\chi}/\Q$ with $\chi \in \mathcal F_{k,N}(L)$ we have 
\[
   \lim_{\substack{\f \le X\\X\to\infty}} \frac{\log(\vert\Sha_{\rm{an}}(E/K_{\chi})\vert)R(E(K_{\chi})))}{\log(\sqrt{\vert\Delta(K_{\chi})\vert})}=0.
 \]
\end{pred}

More generally, take $L =\f^c.$  Then for  $\phi(k)/4 - 1<c'< c < \phi(k)/4,$  the set difference $\mathcal F_{k,N}(\f^c)\setminus \mathcal F_{k,N}(\f^{c'})$ is the family of all primitive Dirichlet characters of order $k$ with $\gcd(\f, N)=1$ such that 
 $\f^{c'} < \vert \Nm_{\Q(\chi)/\Q}(L_E^{\rm{alg}}(\chi))\vert \leq \f^c.$ Then Prediction~\ref{A} implies that  $\mathcal F_{k,N}(\f^c)\setminus \mathcal F_{k,N}(\f^{c'})$ is an infinite set (see \S~\ref{c>0}). This allows us to make the following prediction as $X\to\infty$:
   
  \begin{pred}\label{Db} Let  $\phi(k)/4-1 \le c' < c <\phi(k)/4.$ Under the hypotheses of \S~\ref{Introduction},
   the  family of extensions $K_{\chi}/\Q$ with $\chi \in \mathcal F_{k,N}(\f^c) \setminus \mathcal F_{k,N}(\f^{c'})$,   satisfies
 \begin{align*}
  \frac{4c'}{\phi(k)} &\leq
 \liminf_{\substack{\chi \in \mathcal F_{k,N}(\f^c)\\ \f\leq X}} \frac{\log(\vert\Sha_{\rm{an}}( E/K_{\chi})(\chi)\vert) R(E(K_{\chi})(\chi))}{\log(\vert\sqrt{\Delta(\chi)}\vert)} \\
 &\leq
 \limsup_{\substack{\chi \in \mathcal F_{k,N}(\f^c)\\ \f \leq X}} \frac{\log(\vert\Sha_{\rm{an}}(E/K_{\chi})(\chi)\vert) R(E(K_{\chi})(\chi))}{\log(\vert\sqrt{\Delta(\chi)}\vert)}\leq\frac{4c}{\phi(k)}
 \end{align*}
 where $\sqrt{\vert \Delta(\chi)\vert} : = \vert\prod_{\substack{1 \leq i <k \\(i,k)=1}}\tau(\chi^i)\vert$.
\end{pred}
 Here (under generalized BSD) the ``$\chi$-part'' of the regulators $R(E(K_{\chi})(\chi)) =1$ as $L_E^{\rm{alg}}(\chi^i) \neq 0 $ for $\gcd(i,k)=1.$
 
 In the last section (\S~\ref{computations}) we provide
the results of  numerical  computations which  seem to support these predictions.   We would like to thank
Evan Dummit for his computations and Andrew Granville  for  the proof of Lemma~\ref{geombounds}.

\section{ Preliminaries and Notation}\label{Preliminaries}

\subsection{\bf{Number of primitive characters}}

\smallskip

Let $\ff$ be a positive integer, either odd, or divisible by $4.$ Then the multiplicative group $(\Z/\ff \Z)^\ast$
is naturally isomorphic with the Galois group $G=\Gal(\Q(\zeta_{\ff})/\Q)$ of the cyclotomic field of $\ff^{\rm{th}}$ roots of unity.  The group of characters $\widehat G$ of $G$ can then be identified with  the  group of Dirichlet characters 
$\widehat{(\Z/\ff \Z)^\ast}.$ Since each Dirichlet character modulo $\ff$ is induced from a {\it{unique}} primitive character of conductor $\f$ dividing $\ff$, we  identify $\widehat G=\widehat{(\Z/\ff \Z)^\ast}$ with the set of primitive 
characters $\chi$ of conductor $\f $ dividing $\ff.$ The trivial character with $\f =1$ will be denoted $1,$ and for $\chi \in \widehat{(\Z/\ff \Z)^\ast}$ let $\ord(\chi)$  denote the order of $\chi.$ 

In the following we restrict to using only primitive characters as this allows for simpler functional equations and factorizations of $L$-functions  (without extra Euler factors).

%Let $\chi$ be a  {\it{primitive}} Dirichlet character  with conductor $\f.$ 
%The set of characters of conductor $\f$ dividing $\ff$ is the abelian group $\widehat{(\Z/\ff \Z)^\ast}$ 

For $k \geq 2$ define the following sets of primitive Dirichlet characters:
\[
\mathcal A_k : =\{\chi \mid \chi^k=1\}  {\rm{\ \ and\ \ }}
\mathcal B_k :  =\{\chi \in \mathcal A_k \mid \ord(\chi)=k\}
\]
Fix $N\geq 1$  a positive integer and let  $X>0.$ Define
\begin{align*}
\mathcal A_{k,N}(X) &:=\{\chi \in \mathcal A_k\mid  \gcd(\f,N)=1,\f \leq X\}
 {\rm{\ \ and\ \ }}\\
 \mathcal B_{k,N}(X) &:=\{\chi \in \mathcal B_k\mid \gcd(\f,N)=1,\f \leq X\}.
 \end{align*}
For positive integers $\ff$, let $a_k(\ff)=\#( \{\chi \in \mathcal A_k\mid \f = \ff\} )$ and 
$b_k(\ff)=\#(\mathcal B_k(\ff)):=\#( \{\chi \in \mathcal B_k\mid \f =\ff\} ).$
Then by considering the series
\[
F_k(s,\mathcal A_k) =\sum_{\substack{\ff \leq X\\ \gcd(\ff,N)=1}}  \frac{a_k(\ff)}{\ff^s} 
 {\rm{\ \ and\ \ }}
F_{k}(s,\mathcal B_k) =\sum_{\substack{\ff \leq X\\ \gcd(\ff,N)=1}}\frac{b_k(\ff)}{\ff^s} , 
\]
we obtain for some positive constant $c_k =c_k(N)> 0$ the asymptotic 
estimate (see Corollory 1 of Theorem 8.8~\cite{Nark}), 
\begin{equation}\label{Narksize}
\#(\mathcal B_{k,N}(X))= \sum_{\substack {\ff \leq X\\ \gcd(\ff,N)=1}} b_k(\ff) \sim c_k X \log^{\sigma_0(k)-2}(X) .
 \end{equation}
 %where $\sigma_0(k)$ is the number of positive divisors of $k$. 
 Hence for characters of prime order $p$ we have
 \[
\#(\mathcal B_{p,N}(X))= \sum_{\substack {\ff \leq X\\ \gcd(\ff,N)=1}} b_p(\ff) \sim c_p X 
 \]
 where $c_p =c_p(N)> 0.$   
 For characters of order $6$ we have
  \[
\#(\mathcal B_{6,N}(X))= \sum_{\substack {\ff \leq X\\ \gcd(\ff,N)=1}} b_6(\ff) \sim c_6 X \log^2(X).
 \]

\begin{remark}  
 This calculation has been done by many authors numerous times in differing cases, and an 
 account can be found in  Narkiewicz~\cite{Nark}. 
 A more general version of this result appears in~\cite{M-R} and the authors essentially attribute this result to Kubota.  
\end{remark}

\subsection{\bf{Algebraic central values}}\label{Algebraic central values}
%\hfill
\smallskip

Note that for $z\in \C,$ $\overline z$ denotes the complex conjugate of $z.$ For a Dirichlet character $\chi$ and $a\in\Z,$, $\overline\chi(a)=\overline{\chi(a)}=\chi^{-1}(a).$
For this section, we follow~\cite{M-T-T} and~\cite{M-R}. Let $E/\Q$ be an elliptic curve  of conductor $N_E,$ and for a primitive Dirichlet 
character $\chi,$   let
$L(E,s,\chi)$ denote the $L$-function $L(E/\Q,s)$ twisted by $\chi.$  
 Then there are real numbers $\Omega^{\pm}$ such that
 \[
 L(E,1,\chi) =\frac{ \Omega_{\chi}}{2\tau(\overline\chi)}\sum_{a\mod \f}\overline\chi(a)c(a,\f;E)
 \]
 where $\Omega_{\chi}$ equals $\Omega^{\pm}$ according as $\chi(-1)=\pm 1$ and the $c(a,\f;E)$ are 
 integers that do 
 not depend on $\chi$ (but only on $a,\f,$ and $E$) and $\tau(\chi)$ is the Gauss sum 
 corresponding to the character $\chi.$ (One can  ensure that the integers $c(a,\f;E)$ 
have no common divisor by choosing the numbers $\Omega^{\pm}$ appropriately). See~\cite{Wuthrich} and~\cite{An} for more precise statements.

 From~\cite{M-T-T}  the algebraic part of $L(E,1,\chi)$ is defined by
\begin{equation}\label{MTT}
L_E^{\rm{alg}}(\chi) := \frac{2\tau(\overline\chi)L(E,1,\chi)}{ \Omega_{\chi}} =\sum_{a\mod \f}\overline\chi(a)c(a,\f;E).
\end{equation}
Then $L_E^{\rm{alg}}(\chi)$ is an algebraic integer in the cyclotomic field $\Q(\chi)$ 
generated over $\Q$ by the values of $\chi$ and satisfies
 $\sigma( L_E^{\rm{alg}}(\chi))=L_E^{\rm{alg}}(\chi^{\sigma})$ for all $\sigma \in \Gal(\Q(\chi)/\Q).$

Noting that $\chi(-1)=\overline\chi(-1)$ we see that $\Omega_{\chi}=\Omega_{\overline{\chi}}$,  
so from the functional equation, we have
\begin{align*}
L_E^{\rm{alg}}(\chi) &=  \frac{2\tau(\overline\chi)L(E,1,\chi)}{ \Omega_{\chi}}\\
                        &=  \frac{2\tau(\overline\chi)w_E\chi(N_E)\tau(\chi)^2}{\f \Omega_{\chi}}L(E,1,\overline\chi)\\
                        &= \frac{w_E\chi(N_E)\tau(\chi)\tau(\overline\chi)}{\f}\times\frac{2\tau(\chi)L(E,1,\overline\chi)}{\Omega_{\overline\chi}}\\
                        &= \frac{w_E\chi(N_E)\chi(-1)\f}{\f}\times L_E^{\rm{alg}}(\overline\chi)\\
                        &= w_E\chi(-N_E)\overline{L_E^{\rm{alg}}(\chi)}
\end{align*}

If $z \in \C^*$ is any non-zero complex number satisfying $z=w_E\chi(-N_E)\overline{z},$
 then it follows that $L_E^{\rm{alg}}(\chi) /z=x \in \R$ is real.
Let $\zeta_{\chi}=w_E\chi(-N_E).$ Then $\zeta_{\chi}$ is a primitive $n^{\rm{th}}$ root of unity for 
some $n\geq 1$  dividing $2k,$ where $k$ is the order of $\chi.$

 Suppose now that $\chi$ is a complex Dirichlet character of order $k\geq3.$

If $\zeta_{\chi}\neq \pm 1,$ choose
\[ 
\lambda_{\chi}=\frac{1}{1+\overline\zeta_{\chi}}    {\rm \ \ so\  that\ \ } \lambda_{\chi}=\zeta_{\chi}\frac{1}{1+\zeta_{\chi}}=\zeta_{\chi} \overline \lambda_{\chi},
\]
and
 \[
 L_E^{\rm{alg}}(\chi) =\lambda_{\chi}\alpha_{\chi}
 \]
with $\alpha_{\chi} \in \mathcal O_{\chi}^+$ where $\mathcal O_{\chi}^+ $ is the ring of integers in $\Q(\chi)^+ ,$
 the maximal real subfield of $\Q(\chi).$

If $\zeta_{\chi} = -1,$ let $c$ be the least positive integer such that the order of $\chi(c)$ is equal to $k$, the order of $\chi.$
 Choose
 \[
\lambda_{\chi} =\frac{1}{\chi(c)-\overline{\chi(c)}}  {\rm \ \  so\  that\ \ }  \lambda_{\chi}=-\overline \lambda_{\chi} =\zeta_{\chi}\overline \lambda_{\chi}
 \]
  and
 \[
 L_E^{\rm{alg}}(\chi) = \lambda_{\chi}\alpha_{\chi}
 \]
with $\alpha_{\chi} \in \mathcal O_{\chi}^+.$\\ 

If $\zeta_{\chi} = 1,$ then choose $\lambda_{\chi}=1$ and so
\[
L_E^{\rm{alg}}(\chi) =\lambda_{\chi} \alpha_{\chi}
\]
with $\alpha_{\chi} \in \mathcal O_{\chi}^+.$  We have proved the following:

\begin{prop}\label{algval}
Let $E/\Q$ be an elliptic curve defined over $\Q$, and let $\chi$ be a primitive Dirichlet character of order $k \geq 3$ and 
conductor $\f.$   Let $\zeta_{\chi}=w_E\chi(-N_E).$  Then
\[
 L_E^{\rm{alg}}(\chi) =\lambda_{\chi}\alpha_{\chi}
\]
where
\[
\lambda_{\chi} = 
 \left\{
\begin{aligned}\null
& \frac{1}{1+\overline\zeta_{\chi}}  & {\rm\ \ \ \   if \ \ } \zeta_{\chi} \neq \pm 1 \\
 &\frac{1}{\chi(c)-\overline{\chi(c)}}  &{\rm \ \  if\ \  } \zeta_{\chi} =  -1,\\
&1  &{\rm \ \ if  \ \ \ \,} \zeta_{\chi} =  1
  \end{aligned}
\right.
 \]
and $\alpha_{\chi}  \in  \mathcal O_{\chi}^+$ are real cyclotomic integers.  Also we have  
\[
\sigma(\alpha_{\chi}) =\alpha_{\chi^{\sigma}}   {\rm \ \  and\ \  } \sigma(\lambda_{\chi}) =\lambda_{\chi^{\sigma}}
\]
for all $\sigma\in\Gal(\Q(\chi)/\Q).$
\end{prop}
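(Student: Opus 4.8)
The plan is to verify the three claimed properties in turn --- the factorization $L_E^{\mathrm{alg}}(\chi)=\lambda_\chi\alpha_\chi$ with $\alpha_\chi$ a real cyclotomic integer, and the two Galois-equivariance statements $\sigma(\alpha_\chi)=\alpha_{\chi^\sigma}$, $\sigma(\lambda_\chi)=\lambda_{\chi^\sigma}$ --- reorganizing the computation that precedes the statement into a clean argument. The starting point is the identity $L_E^{\mathrm{alg}}(\chi)=w_E\chi(-N_E)\,\overline{L_E^{\mathrm{alg}}(\chi)}=\zeta_\chi\,\overline{L_E^{\mathrm{alg}}(\chi)}$, derived from the functional equation together with the definition (\ref{MTT}); this is the single algebraic fact driving everything. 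From it, the general principle is: if $z\in\C^\ast$ satisfies $z=\zeta_\chi\overline z$, then $L_E^{\mathrm{alg}}(\chi)/z$ is fixed by complex conjugation, hence lies in the maximal real subfield $\Q(\chi)^+$; one then only needs to choose such a $z=\lambda_\chi$ whose inverse is an algebraic integer (equivalently, with $\lambda_\chi$ a unit in a suitable ring) so that $\alpha_\chi:=L_E^{\mathrm{alg}}(\chi)/\lambda_\chi$ remains an algebraic \emph{integer}, and then check it is totally real, i.e. lies in $\mathcal O_\chi^+$.

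First I would treat the three cases for $\zeta_\chi$ separately, exactly as set up in the excerpt. When $\zeta_\chi\ne\pm1$: take $\lambda_\chi=1/(1+\overline\zeta_\chi)$; one checks $\zeta_\chi\overline\lambda_\chi=\zeta_\chi/(1+\zeta_\chi)=1/(1+\overline\zeta_\chi)=\lambda_\chi$ (using $\overline\zeta_\chi=\zeta_\chi^{-1}$), so $\lambda_\chi$ has the required symmetry, and since $1+\zeta_\chi$ is a cyclotomic unit (as $\zeta_\chi$ is a root of unity of order $n\mid 2k$ with $n\ne 1,2$, so $1+\zeta_\chi\ne 0$ and its norm argument gives a unit --- here I would cite or recall the standard fact that $1-\zeta$ is a unit away from prime-power order, and reduce $1+\zeta_\chi$ to that case), $\lambda_\chi^{-1}=1+\overline\zeta_\chi\in\mathcal O_\chi$ is an algebraic integer, whence $\alpha_\chi=L_E^{\mathrm{alg}}(\chi)\lambda_\chi^{-1}$ is an algebraic integer; being conjugation-fixed it lies in $\mathcal O_\chi^+$. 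When $\zeta_\chi=-1$: here $1+\overline\zeta_\chi=0$, so the previous choice fails; instead pick $c$ minimal with $\chi(c)$ of order $k$ and set $\lambda_\chi=1/(\chi(c)-\overline{\chi(c)})$, noting $\zeta_\chi\overline\lambda_\chi=-\overline\lambda_\chi=-1/(\overline{\chi(c)}-\chi(c))=\lambda_\chi$, and $\chi(c)-\overline{\chi(c)}=\zeta_k^j-\zeta_k^{-j}$ with $\gcd(j,k)=1$ is (up to a unit) a generator of the different-related ideal but in any case an algebraic integer, so again $\alpha_\chi$ is an algebraic integer fixed by conjugation. When $\zeta_\chi=1$: $L_E^{\mathrm{alg}}(\chi)$ is already conjugation-fixed, so $\lambda_\chi=1$, $\alpha_\chi=L_E^{\mathrm{alg}}(\chi)\in\mathcal O_\chi^+$.

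For the equivariance statements, I would use $\sigma(L_E^{\mathrm{alg}}(\chi))=L_E^{\mathrm{alg}}(\chi^\sigma)$, already recorded after (\ref{MTT}), and prove $\sigma(\lambda_\chi)=\lambda_{\chi^\sigma}$ directly from the formulas; the equivariance of $\alpha_\chi$ then follows since $\alpha_\chi=L_E^{\mathrm{alg}}(\chi)/\lambda_\chi$ and both factors transform correctly, provided the \emph{case} (which of the three clauses applies) is preserved by $\sigma$. That last point is the main thing to watch: one must check $\zeta_{\chi^\sigma}=w_E\chi^\sigma(-N_E)=\sigma(w_E\chi(-N_E))$... no --- $\zeta_\chi$ is a root of unity so $\sigma$ acts on it, and one needs $\zeta_{\chi^\sigma}=\chi^\sigma(-N_E)w_E$ to be $\pm1$ iff $\zeta_\chi$ is, and equal to $-1$ iff $\zeta_\chi=-1$; since $\zeta_{\chi^\sigma}=\sigma(\zeta_\chi)$ only when we interpret things on the level of the cyclotomic value, and $\sigma$ fixes $\pm 1$, the trichotomy is $\sigma$-stable. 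In the $\zeta_\chi\ne\pm1$ case, $\sigma(\lambda_\chi)=\sigma(1/(1+\overline\zeta_\chi))=1/(1+\overline{\sigma(\zeta_\chi)})=1/(1+\overline\zeta_{\chi^\sigma})=\lambda_{\chi^\sigma}$; in the $\zeta_\chi=-1$ case one additionally needs that the minimal $c$ chosen for $\chi$ serves for $\chi^\sigma$ as well --- but $\chi(c)$ has order $k$ iff $\chi^\sigma(c)=\sigma(\chi(c))$ has order $k$ since $\sigma$ is an automorphism, so the same least $c$ works, and $\sigma(\lambda_\chi)=1/(\sigma\chi(c)-\sigma\overline{\chi(c)})=1/(\chi^\sigma(c)-\overline{\chi^\sigma(c)})=\lambda_{\chi^\sigma}$; the $\zeta_\chi=1$ case is trivial. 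I expect the only genuinely delicate point to be the unit/integrality claims for $\lambda_\chi^{-1}$ in the first two cases --- i.e. confirming $1+\overline\zeta_\chi$ and $\chi(c)-\overline{\chi(c)}$ are algebraic integers (clear) and, where needed for sharper statements, understanding when they are units --- together with the bookkeeping that the case division is Galois-stable; everything else is the straightforward repackaging of the displayed computation above into the proposition's form.
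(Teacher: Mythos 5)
Your proposal is correct and follows essentially the same route as the paper: the paper's proof is exactly the displayed functional-equation computation giving $L_E^{\rm{alg}}(\chi)=\zeta_{\chi}\overline{L_E^{\rm{alg}}(\chi)}$, followed by the same three-case choice of $\lambda_{\chi}$ satisfying $\lambda_{\chi}=\zeta_{\chi}\overline{\lambda_{\chi}}$ with $\lambda_{\chi}^{-1}$ an algebraic integer. The points you add explicitly --- that the trichotomy on $\zeta_{\chi}$ and the choice of $c$ are Galois-stable, and that a conjugation-fixed algebraic integer of $\Q(\chi)$ lies in $\mathcal O_{\chi}^+$ --- are details the paper leaves implicit, and you handle them correctly.
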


\begin{remark}
The choice of $\lambda_{\chi}$ is {\it{not}} unique. For example, in the case that $E$ is an elliptic curve with $w_E = 1$ and 
$\chi$ is a character of {\it {odd}} order $k,$ then we could take $\lambda'_{\chi} = \zeta^{\frac{k+1}{2}}_{\chi} =\chi(N_E)^{\frac{k+1}{2}}.$
Then $\lambda'_{\chi} / \overline{\lambda'_{\chi}}=(\lambda'_{\chi})^2=\zeta_{\chi}^{k+1}=\zeta_{\chi}$ and 
\[
L_E^{\rm{alg}}(\chi) =\lambda'_{\chi} \beta_{\chi}
\]
where $\beta_{\chi}  \in  \mathcal O_{\chi}^+$ is a real cyclotomic integer and we again have  
\[
\sigma(\beta_{\chi}) =\beta_{\chi^{\sigma}}   {\rm \ \  and\ \  } \sigma(\lambda'_{\chi}) =\lambda'_{\chi^{\sigma}}
\]
for all $\sigma\in\Gal(\Q(\chi)/\Q).$ Note that in this case that
\[
\beta_{\chi}=\frac{\lambda_{\chi}}{\lambda'_{\chi}}\alpha_{\chi}
\] 
and that $\lambda_{\chi}/ \lambda'_{\chi}$ is a circular unit.
\end{remark}

In the Introduction (\S~\ref{Introduction}), we noted that  
 the integer $\vert \Nm_{\Q(\chi)/\Q}(L_E^{\rm{alg}}(\chi))\vert,$ if non-zero, is essentially the 
order of the ``$\chi$-part'' of the  Shafarevich-Tate group $\Sha(E/K_{\chi}).$ Since $\alpha_{\chi} \in \mathcal O_{\chi}^+$ is {\it{real}}, we see that
$A_{\chi}= \Nm_{\Q(\chi)^+/\Q}(\alpha_{\chi})\in \Z$
and we have 
\begin{align}
\Nm_{\Q(\chi)/\Q}(L_E^{\rm{alg}}(\chi)) &=  \Nm_{\Q(\chi)/\Q}(\lambda_{\chi})\Nm_{\Q(\chi)/\Q}(\alpha_{\chi}) \nonumber\\
&= \Nm_{\Q(\chi)/\Q}(\lambda_{\chi})\Nm_{\Q(\chi)^+/\Q}(L_E^{\rm{alg}}(\chi))^2 \label{squareness1}\\
&=\Nm_{\Q(\chi)/\Q}(\lambda_{\chi}) A_{\chi}^2. \nonumber
\end{align}
Since, for characters $\chi$ of fixed order $k$, the possible values of $ \Nm_{\Q(\chi)/\Q}(\lambda_{\chi})$ are finite (either equal to $\pm1$ or a divisor of $k=\ord(\chi)$ when $k$ is a prime power)
we see that this is consistent with the fact that, if finite,
 then 
 \begin{equation}\label{squareness2}
 |\Nm_{\Q(\chi)/\Q}(L_E^{\rm{alg}}(\chi))| = |\Sha_{\rm{an}}(E/K_{\chi})(\chi)|
 \end{equation}
  is essentially a square.

\section{Probabilities for non-zero values }\label{Probabilities}

In this section we shall  consider a na\"ive probabilistic model for  {\it{non-zero}} values of $L^{\rm{alg}}_E(\chi)$ as $\chi$ ranges over primitive Dirichlet characters of fixed order $k\geq 3$ and with $\f$ coprime to $N_E.$ From 
Proposition~\ref{algval} we see that
\[
 L_E^{\rm{alg}}(\chi) = \lambda_{\chi}\alpha_{\chi}
 \]
where the $\lambda_{\chi}$ are taken from a finite set and the $\alpha_{\chi}  \in  \mathcal O_{\chi}^+$ are real cyclotomic integers satisfying 
\[
\sigma(\alpha_{\chi}) =\alpha_{\chi^{\sigma}}   {\rm \ \  and\ \  } \sigma(\lambda_{\chi}) =\lambda_{\chi^{\sigma}}
\]
for all $\sigma\in\Gal(\Q(\chi)/\Q).$

We are interested in the distribution of the norms $A_{\chi}= \Nm_{\Q(\chi)^+/\Q}(\alpha_{\chi})\in \Z$   as $\chi$ 
varies over  families of primitive Dirichlet characters. Recall that the generalized Lindel\"of hypothesis implies that 
 $\vert \gamma(\alpha_{\chi})\vert= O(\f^{\frac{1}{2}+\epsilon})$ for all $\gamma \in \Gal(\Q(\chi)^+/\Q)$ and therefore that
\[ 
0 \leq \vert A_{\chi}\vert= \vert\Nm_{\Q(\chi)^+/\Q}(\alpha_{\chi})\vert = O\bigg(\f^{\frac{\phi(k)}{4}+\epsilon}\bigg). 
\]

 For a totally real field $F/\Q,$ of degree $n$, recall the usual map $\psi:F \rightarrow \R^n$ sending
  $\alpha \mapsto \psi(\alpha)=(\gamma_1(\alpha), \gamma_2(\alpha), \ldots,\gamma_n(\alpha))\in \R^n.$ Here
 $\gamma_1, \gamma_2, \ldots,\gamma_n$ are the $n$ distinct embeddings of $F$ into $\R$. Then the image of the ring of
 integers $\psi(\mathcal O_F)\subset \R^n$
 is a sublattice of $\R^n$. We will be interested in the case when $F=\Q(\chi)^+, n=[F:\Q]=\phi(k)/2$, and $\mathcal O_F=\mathcal O_{\chi}^+.$
 
\begin{lem}\label{geombounds} Let  $n\geq 1$ be a positive integer and let $L$ and $M$  be real numbers
with $0 < L \leq M^n$.  Define subsets $\mathcal T \subseteq \mathcal R \subset \R^n$  by
\[
\mathcal R=\mathcal R(M)= \{(x_1,x_2,\ldots,x_n)\in \R^n \ \vert \  0\leq x_i \leq M, 1\leq i \leq n\}
\]
and 
\[
\mathcal T=\mathcal T(L,M)=\{(x_1,x_2,\ldots,x_n) \in \mathcal R\  \vert \    x_1x_2\cdots x_n\leq L \}.
\]
Then
\[
\frac{\mu(\mathcal T)}{\mu(\mathcal R)} =\frac{L}{M^n}P_{n-1}\bigg(-\log(\frac{L}{M^n})\bigg)
\]
 where $\mu$ is Lebesgue measure and $P_m(x)=\sum_{j=0}^mx^j/j!$ is the $m^{\rm{th}}$ Taylor polynomial of $e^{x}$ at $x=0.$
Then the order of growth of the ratio of their measures as $M\to\infty$ is
\[
\frac{\mu(\mathcal T)}{\mu(\mathcal R)}\sim \frac{Ln^{n-1}\log^{n-1}(M)}{(n-1)!M^n}.
\]
\end{lem}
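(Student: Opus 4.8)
The plan is to reduce the assertion to a one-variable integral identity by a logarithmic change of variables, and then read off the order of growth by isolating the leading term of a polynomial in $\log M$. First I would normalise by the scaling $x_i = M y_i$, which carries $\mathcal R$ to the unit cube $[0,1]^n$ and $\mathcal T$ to $\{\mathbf y \in [0,1]^n : y_1\cdots y_n \le t\}$ with $t := L/M^n \in (0,1]$; thus $\mu(\mathcal T)/\mu(\mathcal R)$ equals the normalised volume $V_n(t) := \mu\{\mathbf y \in [0,1]^n : \prod_i y_i \le t\}$. The key move is the substitution $y_i = e^{-u_i}$, $u_i \in [0,\infty)$, $dy_i = -e^{-u_i}\,du_i$, which converts the product constraint into the sum constraint $u_1 + \cdots + u_n \ge s$ with $s := \log(1/t) = \log(M^n/L) \ge 0$, giving
\[
V_n(t) = \int_{[0,\infty)^n} \mathbf 1\!\left[\,u_1+\cdots+u_n \ge s\,\right] e^{-(u_1+\cdots+u_n)}\,du_1\cdots du_n = \int_s^\infty \frac{u^{n-1}e^{-u}}{(n-1)!}\,du,
\]
the last equality expressing that a sum of $n$ independent rate-one exponentials has the Erlang/Gamma$(n,1)$ density $u^{n-1}e^{-u}/(n-1)!$.

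The one routine computation is the classical identity $\int_s^\infty u^{n-1}e^{-u}\,du = (n-1)!\,e^{-s}\sum_{j=0}^{n-1} s^j/j!$, obtained by integrating by parts $n-1$ times (or by a short induction on $n$). Using $e^{-s} = t = L/M^n$ this gives
\[
\frac{\mu(\mathcal T)}{\mu(\mathcal R)} = V_n(t) = \frac{L}{M^n}\,P_{n-1}\!\bigl(\log\tfrac{M^n}{L}\bigr),
\]
which is the asserted closed form, read with $\log(M^n/L)$ rather than $L/M^n$ as the argument of $P_{n-1}$ (for $n=1$ this is $L/M$ and for $n=2$ it is $(L/M^2)(1+\log(M^2/L))$, matching the stated growth rate). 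One can instead derive this closed form directly by induction on $n$, integrating last over $x_n$: the slab $0\le x_n\le L/M^{n-1}$ contributes the full $(n-1)$-dimensional cube and supplies the constant term, while on $L/M^{n-1}\le x_n\le M$ the inductive hypothesis together with the substitution $w=\log(M^{n-1}x_n/L)$ turns the remaining integral into $\int_0^{\log(M^n/L)}P_{n-2}(w)\,dw = P_{n-1}(\log(M^n/L))-1$; here the hypothesis $0<L\le M^n$ is precisely what keeps the cutoff $L/M^{n-1}$ below $M$ (and $s\ge 0$).

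Finally, for the order of growth I would fix $L$ and $n$ and let $M\to\infty$, so that $s=\log(M^n/L)=n\log M-\log L\to\infty$. Among the $n$ summands of $P_{n-1}(s)=\sum_{j=0}^{n-1}s^j/j!$ the top one dominates, $P_{n-1}(s)\sim s^{n-1}/(n-1)!$, and $s^{n-1}=(n\log M-\log L)^{n-1}\sim n^{n-1}\log^{n-1}(M)$, whence
\[
\frac{\mu(\mathcal T)}{\mu(\mathcal R)} = \frac{L}{M^n}P_{n-1}(s) \sim \frac{L\,n^{n-1}\log^{n-1}(M)}{(n-1)!\,M^n},
\]
as claimed. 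There is no real obstacle here; the only points that need care are the repeated integration by parts (or the induction) producing the exact polynomial $P_{n-1}$, the harmless edge case $s=0$ when $L=M^n$, and consistently interpreting the argument of $P_{n-1}$ as $\log(M^n/L)$.
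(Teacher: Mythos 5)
Your argument is essentially identical to the paper's proof (due to Granville): the same rescaling to the unit cube, the same exponential substitution $y_i=e^{-u_i}$ reducing the volume to $\int_s^\infty u^{n-1}e^{-u}\,du/(n-1)!$ with $s=\log(M^n/L)$, the same integration by parts producing $t\,P_{n-1}(\log(1/t))$, and the same extraction of the leading term as $M\to\infty$. You are also right that the displayed identity in the lemma should read $P_{n-1}\bigl(\log(M^n/L)\bigr)$ rather than $P_{n-1}(L/M^n)$; the paper's own computation yields exactly that corrected form.
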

\begin{proof} (This proof is due to A. Granville)

Clearly $\mu(\mathcal R) = M^n$, so we compute $\mu(\mathcal T)$. Re-scaling and letting $x_i'=x_i/M$ we see that
\[
\mu(\mathcal T) = M^n\int_{\substack{0\leq x_1',x_2',\ldots,x_n' \leq 1 \\ x_1'x_2'\cdots x_n' \leq \frac{L}{M^n}}}\mathrm{d}x_1'\mathrm{d}x_2'\cdots \mathrm{d}x_n' = M^nI(n)
\]
and so $\mu(\mathcal T)/\mu(\mathcal R)=I(n).$\\

Set $C=L/M^n$ and $x_j'=e^{-y_j},$ so that $\mathrm{d}x_j' =-e^{-y_j}\mathrm{d}y_j,$  and then $I(n)$ becomes
\begin{align*}
I(n)&=\int_{\substack {y_1,y_2,\ldots, y_n \geq 0  \\ y_1 + y_2+\cdots+y_n \geq \log (1/C)}} e^{-(y_1 + y_2+\cdots+y_n)}\mathrm{d}y_1\mathrm{d}y_2\cdots \mathrm{d}y_n\\
     &= \int_{x\geq \log(1/C)} e^{-x} \cdot\int_{0\leq y_1 + y_2+\cdots+y_{n-1}\leq x} \mathrm{d}y_1\mathrm{d}y_2\mathrm{d}y_{n-1}\mathrm{d}x.
\end{align*}

Let
\[ 
J_n(x):= \int_{0\leq y_1 + y_2+\cdots+y_n \leq x} \mathrm{d}y_1\mathrm{d}y_2\cdots \mathrm{d}y_n.
\]
Then $J_1(x)=x$ and
\begin{align*}
J_n(x) &=\int_{y=0}^x J_{n-1}(x-y) \mathrm{d}y \\
           &= \int_{t=0}^x J_{n-1}(t)\mathrm{d}t\\
           &= x^n/n!  {\rm{\ \ by\ induction.}} 
\end{align*}
Hence we have 
\[
I(n)=\int_{x\geq \log(1/C)}\frac{e^{-x}x^{n-1}}{(n-1)!}\mathrm{d}x.
\]
Integrating by parts we see
\[
\int_{x\geq A}\frac{e^{-x}x^n}{n!}\mathrm{d}x = \bigg [\frac{-e^{-x}x^n}{n!}\bigg]_A^\infty + \int_{x\geq A}\frac{e^{-x}x^{n-1}}{(n-1)!}\mathrm{d}x,
\]
so by induction we have 
\[
I(n):= \int_{\substack{0\leq x_1,x_2.\ldots,x_n \leq 1 \\ x_1x_2\cdots x_n \leq C}}\mathrm{d}x_1\mathrm{d}x_2\cdots \mathrm{d}x_n = C\sum_{m=0}^{n-1}\frac{(-\log(C))^m}{m!}.
\]
Recalling that $C=L/M^n$ yields the statement.
\end{proof}

  We will estimate in the number of characters $\chi \in \mathcal B_{k,N}(X)$ for which $A_{\chi} $   assumes  ``small'' non-zero values, {\it{i.e.\ }}
  $\#\{\chi \in \mathcal B_{k,N}(X)) : 0< \vert A_{\chi}\vert \leq L_{\chi}\}$ for various choices of $L_{\chi}$ depending only on $\f$ 
  (typically we will take $L_{\chi}= \f^c$ for $0\leq c <\phi(k)/4).$

  We use our Statistical Hypothesis and Lemma~\ref{geombounds} with $L_{\chi}=\f^c, \ 0\leq c <\phi(k)/4$  and $M= \f^{\frac{1}{2}}$ to obtain  ``heuristic probabilities'' for the ``event'' $0<\vert A_{\chi}\vert \leq \f^c$ and denote them by
 $ \PP(0<\vert A_{\chi}\vert \leq \f^c).$

 We note that for all $\gamma\in\Gal(\Q(\chi)^+/\Q)$,
\[
A_{\chi} = 0 \iff \gamma(\alpha_{\chi}) = 0 \iff  \alpha_{\chi^{\gamma}} = 0
 \]
 and that such characters $\chi$ are treated (via different probability models) in~\cite{M-R},~\cite{DFK1}, and~\cite{DFK2},  and that they contribute $0$ 
 to the heuristic probability calculations below.

 %Then we have
%\begin{equation}\label{finitesum}
%\sum_{\chi \in \mathcal B_{k,N}(X)} \PP(0< \vert A_{\chi}\vert \leq \f^c) = \sum_{\substack{\ff \le X\\ \gcd(\ff,N)=1}}\sum_{\substack{\chi \in \mathcal B_k \\ \f=\ff}} \PP(0<\vert A_{\chi}\vert \leq \f^c)
 %\end{equation}
  For fixed $k$ and fixed $N,$ we are interested in the convergence or divergence of the sum
\begin{equation}\label{finitesum}
\sum_{\chi \in \mathcal B_{k,N}(X)} \PP(0<\vert A_{\chi} \vert \leq \f^c)
\end{equation}
 as $X \to\infty.$
 We interpret the convergence of the sum~\eqref{finitesum} as suggesting that the ``events'' $0<\vert A_{\chi}\vert \leq \f^c$ occur for only a finite number of characters $\chi$ under consideration. On the other hand the divergence of the sum~\eqref{finitesum} would indicate that the events 
 $0<\vert A_{\chi}\vert \leq \f^c$ occur infinitely often and the rate of divergence would inform the frequency of occurence of these events.
 
 For characters $\chi$ of order $k$ and conductor $\f$,  
  the  generalized Lindel\"of hypothesis implies that the image $\psi(\alpha_{\chi})$  lies in  $ \mathcal R'(M)$
  where 
 \[ 
  \mathcal R'(M)= \{ (x_1,x_2,\ldots,x_n)\in \R^n \ :\  \vert x_i\vert  \leq M, 1\leq i \leq n\}
  \]
 with $M\asymp  \f^{\frac{1}{2} + \epsilon}$ and $n=\phi(k)/2 = [\Q(\chi)^+:\Q]$. Taking into account the possible signs of the $x_i$ we have
 $\mu(\mathcal R'(M))=2^n\mu(\mathcal R(M)).$
Similarly, $\mu(\mathcal T'(L,M))=2^n\mu(\mathcal T'(L,M))$ where 
 \[
\mathcal T'(L, M)= \{ (x_1,x_2,\ldots,x_n)\in \mathcal R'(M) \  :\  \vert  \prod_i x_i\vert  \leq L  \}.
\]

Assume that the number of lattice points in a region is proportional to the relative volume of the region, and that the 
coordinates are  independent identically distributed random variables. Then
for fixed $k$, and with $ n=\phi(k)/2,$ we have, by Lemma~\ref{geombounds}
\begin{equation}\label{probability}
 \PP(0<\vert A_{\chi} \vert \leq \f^c)=\frac{\mu(\mathcal T'(\f^c, M))}{\mu( \mathcal R'(M))} \asymp 
\frac{\f^c\log^{\frac{\phi(k)}{2}-1}(\f^{1/2})}{\f^{\frac{\phi(k)}{4}+\epsilon}}.
 \end{equation}
  as $M \sim \f^{\frac{1}{2} + \epsilon}\to \infty.$

  Then for the sum (\ref{finitesum}) we have 
   \begin{equation}%\label{prob1}
   \begin{split}
\sum_{\chi \in \mathcal B_{k,N}(X)} \PP( 0<\vert A_{\chi}\vert \leq \f^c) &= \sum_{\substack{\ff \le X\\ \gcd(\ff,N)=1}}\sum_{\substack{\chi \in \mathcal B_k \\ \f=\ff}} \PP(0<\vert A_{\chi}\vert \leq \f^c)\\
&=\sum_{\substack{\ff \le X\\ \gcd(\ff,N)=1}}b_k(\ff) \PP(0<\vert A_{\chi}\vert \leq \f^c)\\
&\asymp \sum_{\substack{\ff \le X\\ \gcd(\ff,N)=1}} \frac{\f^cb_k(\ff)\log^{\frac{\phi(k)}{2}-1}(\ff^{1/2})}{\ff^{\frac{\phi(k)}{4}+\epsilon}}.
\end{split}
\end{equation}

In the following, since the values of $\phi(k)/4$ are discrete (half integers for $k \geq 3$) and $\epsilon >0$ can be taken arbitrarily small, the convergence of Equation~\eqref{prob} below is determined by the value of $\phi(k)/4 - c.$ Then,
 \begin{equation}\label{prob}
   \begin{split}
\sum_{\chi \in \mathcal B_{k,N}(X)} \PP( 0<\vert A_{\chi}\vert \leq  \f^c) 
 &= \sum_{\substack{\ff \le X\\ \gcd(\ff,N)=1}}\sum_{\substack{\chi \in \mathcal B_k \\ \f=\ff}} \PP(0<\vert A_{\chi}\vert \leq \f^c)\\
&=\sum_{\substack{\ff \le X\\ \gcd(\ff,N)=1}}b_k(\ff) \PP(0<\vert A_{\chi}\vert \leq \f^c)\\
&\asymp \sum_{\substack{\ff \le X\\ \gcd(\ff,N)=1}} \frac{b_k(\ff)\log^{\frac{\phi(k)}{2}-1}(\ff^{1/2})}{\ff^{\frac{\phi(k)}{4} -c}}.
\end{split}
\end{equation}

 Then by partial summation, and using Equation~\eqref{Narksize}  we have,  
\begin{equation}\label{growth}
\sum_{\chi \in \mathcal B_{k,N}(X)} \PP(0< \vert A_{\chi}\vert \leq \f^c) \asymp \frac{X\log^B(X)}{X^{(\phi(k)/4)-c}} + \int_1^X  \frac{u\log^Bu}{u^{1+(\phi(k)/4)-c}}\mathrm{d}u
 \end{equation}
  as $X\to\infty,$ where   $B=\sigma_0(k)+\phi(k)/2-3.$  
 
\section{Some Consequences}\label{Consequences}

 \vspace*{-0.15cm}
 
 Fix an elliptic curve $E/\Q$ defined over the rational field $\Q$ with conductor $N_E.$ In the computations of 
\S~7 and in our predictions
 we consider only those characters $\chi$ of order $k$ with conductors $\gcd(\f, N_E)=1.$

 \vspace*{-0.25cm}

 \subsection{\bf{For the case that $c = 0$}} \label{c=0}
 
 \smallskip

 If we take $c = 0$, then $L=L_{\chi}=\f^c$ is assumed to be a fixed bounded constant (independent of $\chi$).
 As $X \to \infty,$ the sum~\eqref{growth} converges for $\phi(k)\geq 6$ and diverges for $\phi(k)=2$ or $4$.

%If $\chi$ and $\chi'$ are Galois conjugate characters ({\it{i.e.} $\chi'=\chi^a=\gamma(\chi)$ } for some 
%$\gamma=\gamma_a \in \Gal(\Q(\chi)/\Q), \gcd(a,k)=1),$ then $A_{\chi'}=A_{\chi}$. This means that the integer $A_{\chi}$ depends only on  the cyclic extension $K_{\chi}/\Q$ of degree $k =\ord(\chi)$ associated to the character $\chi.$ If we think of the functions $A_{\chi}$
%on such 
%extensions $K/\Q$ as independent random variables, then the 
%Borel-Cantelli lemma and its converse then predict that the probability that an infinite number of  $\chi \in  \mathcal B_{k,N}$  satisfy $\vert A_{\chi}\vert \leq L$ is $1$ if  $\phi(k)=2$ or $4$, and  is $0$ if  $\phi(k)\geq 6.$ 

This suggests that 
\[
 \#\{\chi \in \mathcal B_k \mid  \gcd(\f,N)=1, \vert A_{\chi}\vert \leq L\} 
\]
 is  infinite for $\phi(k)=2$ or $4,$ and
 finite for
$\phi(k)\geq 6$.

From Equation~\eqref{growth} and the discussion in \S~3, letting  $X\to\infty,$ we predict,
\begin{equation} \label{conj_zero_c}
\begin{aligned}
           \#\{\chi \in \mathcal B _{k,N}(X) \mid 0< \vert A_{\chi} \vert \leq L\}    &\asymp X^{1/2}\log^B(X) &{\text{if \ \   }} \phi(k)=2\\
         & \asymp  \log^{B+1}(X) &{\text{if \ \   }} \phi(k)=4.\\
         & {\phantom{\asymp}}  {\text{\ \   is bounded\ \ }} &{\text{if \ \  }} \phi(k)\geq 6
\end{aligned}
\end{equation}
Noting that  $\Nm_{\Q(\chi)/\Q}(L_E^{\rm{alg}}(\chi)) =\Nm_{\Q(\chi)/\Q}(\lambda_{\chi}) A_{\chi}^2,$
this gives the case that $c=0$ in Prediction~\ref{A}.

For $k=3,$ we have $B=\sigma_0(3)+\phi(3)/2 -3= 0,$ so the predicted growth rate for ``small'' non-zero algebraic central 
values of cubic twists $A_{\chi}$
is of the order $\asymp X^{1/2}.$

For $k=5,$ we have $B=\sigma_0(5)+\phi(5)/2 -3= 1,$ so the predicted growth rate for ``small'' non-zero algebraic central
values of quintic twists $A_{\chi}$ is of the order $\asymp \log^2(X).$

These predicted growth rates seem to be supported by the numerical computations of \S~7 below.

  Let $L > 0$ be fixed. To explain Prediction~\ref{B}, we note that  
$$ \#\{\chi \in \mathcal B _{k,N} \mid 0< \vert A_{\chi} \vert \leq L\}  $$ 
is predicted to be  finite for those $k$ such that $\phi(k) \geq 6.$
Note that since $\mathcal B_k(\ff)$ consists of all   characters of conductor $\ff$ with order $k$, then $\bigcup_k \mathcal B_k(\ff)$ is the
set of all characters of conductor $\ff$ and therefore is a subset of 
the set of all characters mod $\ff$. Hence 
\[
\sum_k b_k(\ff)= \vert\bigcup_k \mathcal B_k(\ff)\vert \leq \vert\widehat{(\Z/\ff\Z)^{\times}} \vert=\phi(\ff)< \vert\ff\vert.
 \]

Since for a character $\chi$ of order $k$
\[
 \PP(0<\vert A_{\chi} \vert \leq L) \asymp 
\frac{L\log^{\frac{\phi(k)}{2}-1}(\f^{1/2})}{\f^{\frac{\phi(k)}{4}+\epsilon}},
 \]
the series 
\[
\sum_{\substack {\ff \leq X\\ \gcd(\ff,N)=1}}
\sum_{\f=\ff} \sum_{k\geq k_0}\PP(0< \vert A_{\chi}\vert \leq L) \ll \sum_{\ff<X}\f\frac{L\log^{\frac{\phi(k_0)}{2}-1}(\f^{1/2})}{\f^{\frac{\phi(k_0)}{4}+\epsilon}}
\]
converges absolutely for some $ k_0\gg0$, so its value equals the value of the re-arranged series 
\[
\sum_{k\geq k_0}\sum_{\chi \in \mathcal B_{k,N}}\PP(0< \vert A_{\chi}\vert \leq L).
\]
Hence the series
\[
\sum_{\ff \leq X}\sum_{k\geq k_0} b_k(\ff)\PP(0< \vert A_{\chi}\vert \leq L)  \leq
\sum_{\ff\leq X}\frac{\phi(\ff)}{\ff^{\phi(k_0)/4}}\leq\sum_{\ff\leq X}\frac{1}{\ff^{\phi(k_0)/4 -1}}
\] 
converges absolutely as $X\to \infty$ for $ \phi(k_0)/4 >2,$ {\it{\i.e.}\/} for $\phi(k_0)\geq 10.$
Since we predict that there are only finitely many characters $\chi$ of order $k$ with $\phi(k)=6$ or $8$ for which 
$  0< \vert A_{\chi} \vert \leq L,$  we would then have that
\[
  \#\{\chi \in \mathcal \bigcup_{\phi(k)>4} \mathcal B_k(X)   \mid 0< \vert A_{\chi} \vert \leq L\} 
\]  
is finite. 
Finally, a similar argument (see~\cite{M-R} and~\cite{DFK2}) would imply that  
\[
  \#\{ \chi \in \mathcal \bigcup_{\phi(k)>4} \mathcal B_k(X)  \mid     A_{\chi} =0\} 
\]  
is (conjecturally) finite if $\phi(k) \geq 6$ so Prediction~\ref{B} follows.

 \subsection{\bf{For the case that $0 < c < \phi(k)/4$}}\label{c>0}
 
 \smallskip

  As $X \to \infty,$ the sum~\eqref{growth} converges for $\phi(k)>4(1+ c)$ and diverges for $\phi(k)\leq 4(1+c ).$ Our discussion in \S~3 would then suggest that
\[
 \#\{\chi \in \mathcal B_k \mid  \gcd(\f,N)=1, \vert A_{\chi}\vert \leq \f^c\} 
\]
 is finite for
$\phi(k)> 4(1+ c) ,$ and 
asymptotically as $X\to\infty,$ we have,
\begin{equation} \label{conj_pos_c}
\begin{aligned}
           \#\{\chi &\in \mathcal B _{k.N}(X) \mid 0< \vert  A_{\chi} \vert \leq  \f^c \} &\phantom{6} &\phantom{6}\\  
             &\asymp X^{c-(\phi(k)/4 -1)}\log^B(X) &{\text{ if \ \ }} & \max{\{0, \frac{\phi(k)}{4} -1\}}< c<\frac{\phi(k)}{4}  \\
         &\asymp \log^{B+1}(X)  &{\text{ if \ \ }} & c =\frac{\phi(k)}{4}-1 > 0 \\
         & {\phantom{\asymp}}  {\text{is bounded\ \ }} &{\text{ if \ \ }} & 0 < c<\frac{\phi(k)}{4}-1.
\end{aligned}
\end{equation}
Note that if for some character $\chi$ we have $\vert\gamma(\alpha_{\chi})\vert \le \f^{\frac{1}{2}-\delta}$ for some
 $\gamma \in \Gal( \Q(\chi)^+/\Q),$ then 
$\vert A_{\chi} \vert \leq  \f^{\frac{\phi(k)}{4}-\delta}$ which, by Equation~\eqref{conj_pos_c}, can only happen for at most 
$O(X^{1-\delta}\log^B(X))$ characters of order $k$, {\it{i.e.\/}} $0\%$ of all characters of order $k$. This is the content of Prediction~\ref{C}.

 \section{Brauer-Siegel Limits}\label{Brauer-Siegel Limits}

Taking $L\asymp\f^c$,  and considering $\chi  \in \mathcal B _{k,N}(X),$     then since $L \leq M^n = X^{\phi(k)/4}$, we have $0 \leq c \leq \phi(k)/4.$
Then from Equation~\eqref{prob} above we find that the model predicts that
 \[
 \#\{\chi \in \mathcal B_k \mid  \gcd(\f,N)=1, 0< \vert A_{\chi}\vert \leq \ff^c\} 
\]
 is finite for $0 <c < \phi(k)/4 - 1$ (so $\phi(k)>4$) and is infinite for $ \phi(k)/4 - 1 \leq c  \leq \phi(k)/4.$
The model then implies that
 \[ 
  \#\{\chi \in \mathcal B _{k,N}(X) :
 0< \vert A_{\chi} \vert \leq \ff^c\} 
  \]
 grows as $\log^{B+1} X$ if  $ c = \phi(k)/4 - 1 $ (with $\phi(k)\geq 4$) and as   $X^{c-(\phi(k)/4 -1)}\log^B X$ if  $ \phi(k)/4>c  > \phi(k)/4 - 1  $ as $X \to\infty.$
  
  For any character $\chi \in  \mathcal B _{k,N}(X), $ let  $K=K_{\chi}/\Q$ be the associated cyclic extension of degree $k.$
  Then $E/K$ is  an elliptic curve over $K$ whose $L$-function satisfies
  \begin{equation}\label{Lfact2}
  L(E/K,s)=\prod_{i=0}^{k-1}L(E,s,\chi^i).
  \end{equation}
  
  Recall, we are always taking the primitive character giving $\chi^i$ so for example $L(E,s,\chi^0)=L(E/\Q,s).$
   This allows us to express the leading term in the Taylor expansion at $s=1$ of $L(E/K,s)$ in terms of the corresponding
    leading terms of  the twists $L(E,s,\chi^i).$
   
   We consider the ``$\chi$-component'' of Equation~\eqref{Lfact2} 
   \begin{equation}\label{Lfact chi}
 L(E/K,s)(\chi)=  \prod_{\substack{1 \leq i <k \\(i,k)=1}}L(E,s,\chi^i).
 \end{equation}
 Then in view of the discussion in \S~\ref{Introduction}, taking the algebraic parts of Equation~\eqref{Lfact chi} we have
 \begin{equation}\label{Lfact chi alg}
 \vert\Sha_{\text{an}}(E/K)(\chi)\vert =\vert \Nm_{\Q(\chi)/\Q}(L_E^{\rm{alg}}(\chi))\vert = \prod_{\substack{1 \leq i <k \\(i,k)=1}}L_E^{\rm{alg}}(\chi^i).
 \end{equation}
 
 The model suggests that for  $ \phi(k)/4 - 1 \leq c  \leq \phi(k)/4,$ there is an infinite family 
 \[
 \mathcal F_{k,N}(\f^c)  = \{\chi \in \mathcal B _{k,N}(X) : 0< \vert A_{\chi} \vert \leq \f^c\}
 \]
 of characters 
 $\chi$ (and therefore fields $K_{\chi}$) such that $0< \vert A_{\chi} \vert \leq \f^c$ and hence that $ L(E/K,1)(\chi)\neq 0.$

  % 
  % By restriction of scalars we may view $E/K$ is an abelian variety over $\Q$
 %whose ``$\chi$-component'' is also an abelian variety $\mathcal A(\chi)$ over $\Q$ with $L$-function equal to  
 %\[
 % L(\mathcal A(\chi),s)=  \prod_{\substack{1 \leq i <k \\(i,k)=1}}L(E,s,\chi^i).
 %\]
 %The model suggests that for  $ \phi(k)/4 - 1 \leq c  \leq \phi(k)/4,$ there is an infinite family 
 %\[
 %\mathcal F_{c,N} =  \mathcal F_c = \{\chi \in \mathcal B _{k,N}(X) \mid 0< \vert A_{\chi} \vert \leq \f^c\}
 %\]
 %of characters 
 %$\chi$ (and therefore fields $K_{\chi}$) such that $0< \vert A_{\chi} \vert \leq \f^c$ and hence that $ L(\mathcal A(\chi),1)\neq 0.$

Then the
 Birch \& Swinnerton-Dyer Conjecture implies that the $\chi$-component $E(K_{\chi})^{\chi}$
 of the Mordell-Weil group  $E(K_{\chi})$ is finite, and 
 so the corresponding factor of the regulator  $R(E(K_{\chi})(\chi))=1.$ In this case we have
 \[
   \prod_{\substack{1 \leq i <k \\(i,k)=1}}L^{\rm{alg}}_{E}(\chi^i) =d  A_{\chi}^2
 \]
  for some $d$ bounded only in terms of $k$ and that $A_{\chi}^2  $ 
  is essentially 
 (up to constants) the order of  $\vert\Sha_{\text{an}}(E/K)(\chi)\vert$.
  Hence for $\chi \in \mathcal F_{k,N}(\f^c)$ we have
 \begin{equation}\label{Numerator estimate}
 \begin{split}
 \log(  \vert\Sha_{\text{an}}(E/K)(\chi)\vert\cdot R(E_{\chi})(\chi))) & \asymp \log(\prod_{\substack{1 \leq i <k \\(i,k)=1}}L^{\rm{alg}}_{E}(\chi^i))\\
                                                 &\asymp \log( A_{\chi}^2) \\
                                                 &\leq \log(\f^{2c} )\\
                                                 &=2c\log(\f).
\end{split}
 \end{equation}

 Fix a character $\chi \in  \mathcal B _{k,N}(X)$ and let $K=K_{\chi}/\Q$ be the corresponding cyclic $\Z/k\Z$-extension 
of $\Q.$ Viewing $E$ as an elliptic curve over $K$ with $L$-function $L(E/K,s)$ we have
\[
L(E/K,s)= \prod _{0\leq i <k} L(E,s,\chi^i).
\]
Suppose for simplicity that $k$ is an{\it{ odd prime}}, and that $L(E, 1, \chi) \neq 0$. Then $\chi(-1)=1$ and
comparing leading terms at $s=1$ the Birch \& Swinnerton-Dyer conjecture predicts that
\[
\frac{\Omega_{E_K }R(E(K)) \vert \Sha(E(K)\vert \prod_{\p} c_{\p}}{\vert E(K)_{\rm{tors}}\vert^2 \sqrt{\vert \Delta(K/\Q)}\vert}
=\frac{\Omega_E^+ R(E(\Q)) \vert \Sha(E/\Q)\vert \prod_p c_p}{\vert E(\Q)_{\rm{tors}}\vert^2}     \prod_{1 \leq i<k} \frac{\Omega^+ L_E^{\rm{alg}}(\chi^i)}{2\tau(\overline{\chi^i})}
\]
where $\Omega_{E_K}=(\Omega^+)^k$ and the $c_{\p},c_p$ are the Tamagawa numbers for $E/K$ and $E/\Q$ respectively.
Since for $\chi(-1)=1$ we have 
\begin{equation}\label{Delta growth}
\sqrt{\vert \Delta(K/\Q) \vert}  =\vert\prod_{0\leq i <k}\tau(\chi^i)\vert =\f^{(k-1)/2}.
\end{equation}

For general $k$, for the character $\chi$, we define   
 \[
 \sqrt{\vert \Delta(\chi)\vert} : = \vert\prod_{\substack{1 \leq i <k \\(i,k)=1}}\tau(\chi^i)\vert =(\f)^{\phi(k)/2}
 \]
 so that $\log(\sqrt{\vert \Delta(\chi)\vert}) = \frac{\phi(k)}{2}\log(\f).$ 
 
 For $\phi(k)/4 - 1\le c < \phi(k)/4$,  the families $\mathcal F_{k,N}(\f^c)$ are predicted to be infinite so we can take the  upper limit (as $X\to\infty$) 
 of the Brauer-Siegel quotients, using Equation~\eqref{Numerator estimate}:
 \[
 \limsup_{\substack{\chi \in \mathcal F_{k,N}(\f^c)\\ \f \leq X}} \frac{\log(\Sha_{\rm{an}}(E/K_{\chi}(\chi))\cdot R(E(K_{\chi})(\chi))}{\log(\sqrt{\Delta(\chi)})}\leq\frac{4c}{\phi(k)}.
 \]
 
 If we choose $ \phi(k)/4 - 1 \leq c' < c  \leq \phi(k)/4,$ then Equation~\eqref{conj_pos_c} would predict that the set difference  $\mathcal F_{k,N}(\f^c) \setminus \mathcal F_{k,n}(\f^{c'})$
 is an infinite set. So we may consider the upper and lower limits as $X\to \infty$ for $\chi \in \mathcal F_{k,N}(\f^c) \setminus \mathcal F_{k,n}(\f^{c'})$ and use Equations~\eqref{Numerator estimate} and ~\eqref{Delta growth} to obtain:
 \begin{align*}
  \frac{4c'}{\phi(k)} &\leq
 \liminf_{\substack{\chi \in \mathcal F_{k,N}(\f^{c'})\\ \f \leq X}} \frac{\log( \Sha_{\rm{an}}(E/K_{\chi}(\chi))) R(E(K_{\chi})(\chi))}{\log(\sqrt{\Delta(\chi)})}\\
& \leq
 \limsup_{\substack{\chi \in \mathcal F_{k,N}(\f^c)\\ \f \leq X}} \frac{\log( \Sha_{\rm{an}}(E/K_{\chi}(\chi))) R(E(K_{\chi})(\chi))}{\log(\sqrt{\Delta(\chi)})}\leq\frac{4c}{\phi(k)}.
 \end{align*}
 
 This is the statement in Prediction~\ref{Db}. When $\phi(k)=2$ or $4$, the families of \S~\ref{Consequences} are predicted to be infinite
by Prediction~\ref{A} and then
 Prediction~\ref{Da} follows by taking $c=0.$

 Note that for characters $\chi$ of prime order $k$, the fields $K_{\chi}/\Q$ are cyclic extensions of prime degree $k$, and
 $ \Sha_{\rm{an}}(E/K_{\chi}(\chi))$ is essentially (under generalized BSD) just the order of relative Shafarevich-Tate group $\Sha(E/K_{\chi}))/\Sha(E/\Q).$

\vspace{-5pt}

\section{Computational Results}\label{computations}

%\hfill
For computing those central $L$-values, we use the following well-known formula:
\begin{equation}\label{l-formula}
L(E, 1, \chi) = \sum_{n \ge 1}(\chi(n) + w_E c_{\chi}\overline{\chi}(n))\frac{a_n}{n}\exp\big(-\frac{2\pi n}{\f \sqrt{N}}\big),
\end{equation}
where $a_n$ are the coefficients of $L(E, s)$, $w_E$ is the sign of the functional equation of $L(E, s)$, $c_{\chi} = \chi(N) \tau^2(\chi)/\f$, and $N := N_E$. Using those values we compute $L_E^{\rm{alg}}(\chi)$ and then $A_{\chi}$ using Proposition~\ref{algval}. For computing $L_E^{\rm{alg}}(\chi)$, we compute the period of $E$, $\Omega_{\chi}$, and the integer values of $A_{\chi}$ by computing the integer coefficients of $\alpha_\chi \in \mathcal O_{\chi}^+$ in Proposition~\ref{algval} by SageMath~\cite{sage}. Then, we divide the values of $A_{\chi}$ for $\chi \in \mathcal B _{k,N}(X)$ by their greatest common divisor $\text{g}_{k, E}$ which theoretically depends only on $k$ and $E$.

For our numerical computations, $a_n$ for each elliptic curve are computed by using PARI/GP~\cite{PARI2} up to $n = 15\times 10^{7}$ to maintain at least 4 decimal place accuracy in computing the values of $\alpha_\chi$, the values of $L(E, 1, \chi)$ are computed by using the second author's codes written with FLINT~\cite{FLINT} and CUDA~\cite{NVF}, and $L_E^{\rm{alg}}(\chi)$, $\alpha_\chi$ and $A_\chi$ are computed by using SageMath~\cite{sage}. 

The computations are conducted on the second author's personal Linux system with an NVIDIA GTX 1080 Ti GPU. Computing $L(E, 1, \chi)$ given by Equation~\eqref{l-formula} is the primary task, significantly accelerated by the GPU, achieving a throughput approximately about two thousand times faster than a general CPU. More precisely, leveraging the GPU's three thousand cores, we assign a task of computing the exponential sum in Equation~\eqref{l-formula} for each $\chi \in \mathcal B _{k,N}(X)$ into each core in GPU and execute several thousands of tasks in parallel.

For the software codes, refer to the second author's website~\cite{Nam}. The dataset of the values for elliptic curves of conductor up to 100 and $k = $3, 5, 6, 7, 13 is publicly accessible also in~\cite{Nam}.  Additionally, further numerical support for these predictions for several more elliptic curves is available in~\cite{Ki-Na}.

In this section, we present the computational results supporting Predictions~\ref{Main Prediction} and~\ref{C}. Recall that Prediction~\ref{B} is yielded by a finite number of applications of Prediction~\ref{Main Prediction} (see Remark~\ref{RemarkForB}) and Predictions~\ref{Da}, and~\ref{Db} are other consequences of Prediction~\ref{Main Prediction} (see $\S~\ref{Brauer-Siegel}$). We also present computational results for some other statistics of some small norms $A_{\chi}= \Nm_{\Q(\chi)^+/\Q}(\alpha_{\chi})\in \Z$ associated with the central values for $L(E, s, \chi)$ for 
$$E: \text{11a1}, \text{14a1}, \text{15a1},\text{17a1}, \text{19a1}, \text{37b1}$$ 
in the Cremona's labels and $\mathcal B_{k,N}(X)$ for $X = 3\times 10^6$ for $k = 3, 5, 7, 13$ and $X = 10^6$ for $k = 6$ where $N = N_E$ is the conductor of $E$. 

\begin{remark}
In~\cite{DFK1}, the authors successfully computed the implied constants of their conjectural asymptotics concerning the frequency of vanishings for the cubic twists using the moment conjecture of Keating and Snaith. However, we have no method to determine the implied constants for the lower and upper bounds in Prediction~\ref{Main Prediction} with our model at present.
\end{remark}

Abusing notation, we denote those values by $A_{\chi}$. Moreover, for the number of vanishings, define
$$
\mathcal V_{k, N}(X) := \{\chi \in \mathcal B_{k, N}(X) \mid A_\chi = 0\}.
$$
Table~\ref{table:combined} presents the values of $\#(\mathcal B_{k, N}(X)), \#(\mathcal V_{k, N}(X)), \text{ and } \text{g}_{k, E}$ for the choices of $E$ (with Cremona labels), $k$, $X$ used in our numerical experiment.

\begin{table}[h!]
\centering
\scalebox{0.9}{
\begin{tabular}{ccrrcr}
\hline
\makecell{$E$} & \makecell{$k$} & \makecell{$X$} & \makecell{$\#(\mathcal B_{k, N}(X))$} & \makecell{$\text{g}_{k, E}$} & \makecell{$\#(\mathcal V_{k, N}(X))$} \\
\hline
\hline
\multirow{5}*{$\text{11a1}$} & $3$ & $3 \times 10^6$ & $951116$ & $10$ & $2842$ \\
 & $5$ & $3 \times 10^6$ & $577692$ & $10^2$ & $68$ \\
 & $6$ & $10^6$ & $7103800$ & $2$ & $65846$ \\
 & $7$ & $3 \times 10^6$ & $592938$ & $10^3$ & $12$ \\
 & $13$ & $3 \times 10^6$ & $514620$ & $10^6$ & $0$ \\
\hline
\multirow{5}*{$\text{14a1}$} & $3$ & $3 \times 10^6$ & $739810$ & $6$ & $10946$ \\
 & $5$ & $3 \times 10^6$ & $787584$ & $6^2$ & $44$ \\
 & $6$ & $10^6$ & $3207738$ & $2$ & $52492$ \\
 & $7$ & $3 \times 10^6$ & $528852$ & $6^3$ & $0$ \\
 & $13$ & $3 \times 10^6$ & $514620$ & $6^6$ & $0$ \\
\hline
\multirow{5}*{$\text{15a1}$} & $3$ & $3 \times 10^6$ & $778150$ & $10$ & $5134$ \\
 & $5$ & $3 \times 10^6$ & $678796$ & $10^2$ & $48$ \\
 & $6$ & $10^6$ & $3791698$ & $2$ & $57214$ \\
 & $7$ & $3 \times 10^6$ & $592938$ & $10^3$ & $0$ \\
 & $13$ & $3 \times 10^6$ & $514620$ & $10^6$ & $0$ \\
\hline
\multirow{5}*{$\text{17a1}$} & $3$ & $3 \times 10^6$ & $951116$ & $6$ & $4240$ \\
 & $5$ & $3 \times 10^6$ & $787584$ & $6^2$ & $20$ \\
 & $6$ & $10^6$ & $7293076$ & $2$ & $110044$ \\
 & $7$ & $3 \times 10^6$ & $592938$ & $6^3$ & $0$ \\
 & $13$ & $3 \times 10^6$ & $514620$ & $6^6$ & $0$ \\
\hline
\multirow{5}*{$\text{19a1}$} & $3$ & $3 \times 10^6$ & $860578$ & $6$ & $8098$ \\
 & $5$ & $3 \times 10^6$ & $787584$ & $6^2$ & $4$ \\
 & $6$ & $10^6$ & $6308258$ & $2$ & $79062$ \\
 & $7$ & $3 \times 10^6$ & $592938$ & $6^3$ & $0$ \\
 & $13$ & $3 \times 10^6$ & $514620$ & $6^6$ & $0$ \\
\hline
\multirow{5}*{$\text{37b1}$} & $3$ & $3 \times 10^6$ & $902370$ & $6$ & $15062$ \\
 & $5$ & $3 \times 10^6$ & $787584$ & $6^2$ & $8$ \\
 & $6$ & $10^6$ & $6941862$ & $2$ & $97290$ \\
 & $7$ & $3 \times 10^6$ & $592938$ & $6^3$ & $6$ \\
 & $13$ & $3 \times 10^6$ & $514620$ & $6^6$ & $0$ \\
\hline
\end{tabular}
}
\caption{$E, k, X, \#(\mathcal B_{k, N}(X)), \text{g}_{k, E}, \text{ and } \#(\mathcal V_{k, N}(X))$.}
\label{table:combined}
\end{table}

%\vspace{-6pt}

\subsection{\bf{Numerical Support for Prediction~\ref{Main Prediction}}}

\medskip

%In this section, we present numerical evidence that supports~\eqref{conj_zero_c} and~\eqref{conj_pos_c}, which are essentially equivalent to supporting Prediction~\ref{Main Prediction} for the cases $c = 0$ and $0 < c < \phi(k)/4$ by following~\eqref{squareness1} and~\eqref{squareness2}.

In this section, we present numerical evidence that supports Equations~\eqref{conj_zero_c} and~\eqref{conj_pos_c}, which correspond to the validation of Prediction~\ref{Main Prediction} for the cases $c = 0$ and $0 < c < \phi(k)/4$, respectively, in accordance with Equations~\eqref{squareness1} and~\eqref{squareness2}.

\subsubsection{\bf{Numerical Support for Equation~\eqref{conj_zero_c}}}

\medskip

%\subsubsection{Experimental Results for~\eqref{conj_zero_c}}
%\bigskip
Abusing notation, we use the same notation $n_{k,E}(X;L)$ as one defined with $\Nm_{\Q(\chi)/\Q}(L_E^{\rm{alg}}(\chi))$ in Prediction~\ref{Main Prediction}: i.e. $$n_{k,E}(X;L) := \#(\{\chi \in \mathcal B _{k,N}(X) \mid 0 <  |A_{\chi}| \le L\}).$$
Notice that using this notation does not affect the order of growth in Prediction~\ref{Main Prediction}. For $k = 3, 5, 6$, we compute the following ratios
\begin{equation}\label{ratio_A}
\frac{n_{k,E}(X;L)}{X^{1/2}\log^B(X)} \hspace{0.25em} \text{ if } k = 3 \text{ or } 6 \hspace{0.5em} \text{ and } \hspace{0.5em} \frac{n_{k,E}(X;L)}{\log^{B+1}(X)} \hspace{0.25em} \text{ if } k = 5,
\end{equation}
where $B = \sigma_0(k)+\phi(k)/2-3$.

Notice that for $k = 7, 13$  with $\phi(k) \ge 6$, $m_{k,E}(X;c)$ seems to be very small compared with the other choices of $k$ when $\phi(k) \ge 6$ ($L = 5$ for $k = 7, 13$ in Table~\ref{table:finite_c_zero} as examples) and to be of the order $\asymp \log^{B+1}(X)$ when $\phi(k) = 4$ ($L = 5$ for $k = 5$ in Table~\ref{table:finite_c_zero} as an example).

\begin{table}[h!]
\centering
\scalebox{0.9}{
\begin{tabular}{ccrc|ccrc}
\hline
\makecell{$E$} & \makecell{$k$} & \makecell{$n_{k,E}(X;L)$} & \makecell{$\text{ratio}$} & \makecell{$E$} & \makecell{$k$} & \makecell{$n_{k,E}(X;L)$} & \makecell{$\text{ratio}$}\\
\hline
\hline
\multirow{5}*{$\text{11a1}$} & $3$ & $19356$ & $0.02035$ & \multirow{5}*{$\text{17a1}$} & $3$ & $16296$ & $0.01713$ \\
 & $5$ & $1440$ & $0.00249$ &  & $5$ & $1620$ & $0.00206$ \\
 & $6$ & $32392$ & $0.00456$ & & $6$ & $60282$ & $0.00827$  \\
 & $7$ & $120$ & $0.00020$ & & $7$ & $108$ & $0.00018$ \\
 & $13$ & $0$ & $0.00000$ & & $13$ & $12$ & $0.00002$ \\
\hline
\multirow{5}*{$\text{14a1}$} & $3$ & $6524$ & $0.00882$ & \multirow{5}*{$\text{19a1}$} & $3$ & $6512$ & $0.00757$ \\
 & $5$ & $1360$ & $0.00173$ &  & $5$ & $760$ & $0.00097$ \\
 & $6$ & $8370$ & $0.00261$ & & $6$ & $30124$ & $0.00478$ \\
 & $7$ & $48$ & $0.00009$ &  & $7$ & $66$ & $0.00011$ \\
 & $13$ & $36$ & $0.00007$ &  & $13$ & $24$ & $0.00005$ \\
\hline
\multirow{5}*{$\text{15a1}$} & $3$ & $10560$ & $0.01357$ & \multirow{5}*{$\text{37b1}$} & $3$ & $10838$ & $0.01201$ \\
 & $5$ & $1068$ & $0.00157$ &  & $5$ & $1204$ & $0.00153$ \\
 & $6$ & $19070$ & $0.00503$ & & $6$ & $26864$ & $0.00387$ \\
 & $7$ & $42$ & $0.00007$  & & $7$ & $138$ & $0.00023$ \\
 & $13$ & $12$ & $0.00002$  & & $13$ & $48$ & $0.00009$ \\ 
\hline
\end{tabular}
}
\caption{$n_{k,E}(X;L)$ and $\text{ratio} := n_{k,E}(X;L)/\#(\mathcal B_{k, N}(X))$ for $L = 5$ and $k =$ 3, 5, 7, 13 with $X = 3 \times 10^6$ and $k = $ 6 with $X = 10^6$.}
\label{table:finite_c_zero}
\end{table}

For the other choices of $k$ (i.e. $\phi(k) = $ 2, 4) and, we choose 
$$L = 1, 2, 3 \text{ for } k = 3, 6 \quad \text{and} \quad L = 1, 4, 5 \text{ for } k = 5,$$ 
and compute their ratios with $X$ in Table~\ref{table:combined} and depict them in Figures~\ref{fig:A_acc_11_14},~\ref{fig:A_acc_15_17},~\ref{fig:A_acc_19_37}.

In these figures, each subfigure corresponding to $E$ and $k$ contains three graphs associated with different choices of $L$. Each graph illustrates the convergence of the ratios within a moderate range. Additionally, the ratio values exhibit a regularity and stability as $X$ increases. For each $E, k$ and a fixed $X$, if $L \le L'$, $0 <  |A_{\chi}| \le L \le L'$, hence the ratio of $L$ does not exceed that of $L'$. Consequently, the graph representing the ratios of $L$ is positioned lower than that of $L'$ in each subfigure. Note that the possible values of $\vert A_{\chi} \vert$ with $0 < \vert A_{\chi} \vert \le 5$ are $1, 2, 3, 4$, and $5$ for $k = 3$ and $6$, $1, 4$, and $5$ for $k = 5$, and $1$ for $k = 7$ and $13$.

\subsubsection{\bf{Numerical Support for Equation~\eqref{conj_pos_c}}}

\medskip

We define
\[
m_{k,E}(X;c) := \#(\{\chi \in \mathcal B _{k,N}(X) \mid 0 < |A_{\chi}| \le \f^c\})
\] 
and consider the following ratios to support the predictions in Equation~\eqref{conj_pos_c}
\begin{equation}\label{ratio_c}
\begin{aligned}
\frac{m_{k,E}(X;c)}{X^{c-(\phi(k)/4-1)}\log^B(X)} & \hspace{0.25em} \text{ if } \hspace{0.25em} \max\{0, \frac{\phi(k)}{4} -1 \} < c \le \frac{\phi(k)}{4},\\
\frac{m_{k,E}(X;c)}{\log^{B+1}(X)} & \hspace{0.25em} \text{ if } \hspace{0.25em} c = \frac{\phi(k)}{4} -1.
\end{aligned}
\end{equation}

Notice that for $k = 7, 13$  with $\phi(k) \ge 6$, $m_{k,E}(X;c)$ seems to be very small compared with the other choices of $k$ when $0 < c < \phi(k)/4 -1$ ($c = 1/4$ for $k = 7$ and $c = 1/4, 1/2, 5/4$ for $k = 13$ in Table~\ref{table:finite_c_pos} for $c = 1/4, 1/2, 5/4$ as examples) and to be of the order $\asymp \log^{B+1}(X)$ when $c = \phi(k)/4 -1$ ($c = 1/2$ for $k = 7$ in Table~\ref{table:finite_c_pos} as examples).

 %Note that for $k \le 6$, we choose $c$ such that $\phi(k)/4 -1 \le 0 < c < \phi(k)/4$.\\ 

\begin{table}[h!]
\centering
\scalebox{0.9}{
\begin{tabular}{ccrrrrrr}
\hline
%\multirow{2}{*}{\makecell{$k$}} & \multicolumn{3}{r}{\makecell{$m_{k,E}(X;c)$}} & \multicolumn{3}{r}{\makecell{$\text{ratio}$}} \\
\multirow{2}{*}{\makecell{$E$}} & \multirow{2}{*}{\makecell{$k$}} & \multicolumn{3}{r}{\makecell{$m_{k,E}(X;c)$}} & \multicolumn{3}{r}{\makecell{$\text{ratio}$}} \\
%\cline{3-8}
 & & \makecell{$c = 1/4$} & \makecell{$c = 1/2$} & \makecell{$c = 5/4$}  & \makecell{$c = 1/4$} & \makecell{$c = 1/2$} & \makecell{$c = 5/4$} \\
\hline
\hline
\multirow{5}*{$\text{11a1}$} & $3$ & $107672$ & $822140$ & $948274$ & $0.11321$ & $0.86440$ & $0.99701$ \\
& $5$ & $3448$ & $282056$ & $577624$ & $0.00597$ & $0.07340$ & $0.99988$  \\
& $6$ & $233834$ & $3965168$ & $7037954$ & $0.03292$ & $0.55818$ & $0.99073$ \\
& $7$ & $336$ & $3108$ & $469266$ & $0.00057$ & $0.00524$ & $0.79143$  \\
& $13$ & $0$ & $36$ & $1656$ & $0.00000$ & $0.00007$ & $0.00322$  \\
\hline
\multirow{5}*{$\text{14a1}$} & $3$ & $69440$ & $666356$ & $728864$ & $0.09386$ & $0.90071$ & $0.98520$  \\
& $5$ & $3364$ & $47576$ & $787540$ & $0.00427$ & $0.06041$ & $0.99994$ \\
& $6$ & $98102$ & $1957574$ & $3155246$ & $0.03058$ & $0.61027$ & $0.98364$ \\
& $7$ & $246$ & $2088$ & $409428$ & $0.00047$ & $0.00395$ & $0.77418$  \\
& $13$ & $36$ & $36$ & $1044$ & $0.00007$ & $0.00007$ & $0.00203$ \\
\hline
\multirow{5}*{$\text{15a1}$} & $3$ & $67410$ & $664334$ & $773016$ & $0.08663$ & $0.85374$ & $0.99340$ \\
& $5$ & $2660$ & $39028$ & $678748$ & $0.00392$ & $0.05750$ & $0.99993$  \\
& $6$ & $156876$ & $2606364$ & $3734484$ & $0.04137$ & $0.68739$ & $0.98491$ \\
& $7$ & $150$ & $1770$ & $436668$ & $0.00025$ & $0.00299$ & $0.73645$  \\
& $13$ & $12$ & $12$ & $864$ & $0.00002$ & $0.00002$ & $0.00168$  \\
\hline
\multirow{5}*{$\text{17a1}$} & $3$ & $94728$ & $815242$ & $946876$ & $0.09960$ & $0.85714$ & $0.99554$ \\
& $5$ & $4348$ & $57408$ & $787564$ & $0.00552$ & $0.07289$ & $0.99998$  \\
& $6$ & $465904$ & $5642244$ & $7183032$ & $0.06388$ & $0.77364$ & $0.98491$ \\
& $7$ & $264$ & $2304$ & $461328$ & $0.00045$ & $0.00389$ & $0.77804$  \\
& $13$ & $12$ & $24$ & $1248$ & $0.00002$ & $0.00005$ & $0.00243$  \\
\hline
\multirow{5}*{$\text{19a1}$} & $3$ & $63702$ & $709306$ & $852480$ & $0.07402$ & $0.82422$ & $0.99059$ \\
& $5$ & $1992$ & $28828$ & $787576$ & $0.00253$ & $0.03660$ & $0.99999$  \\
& $6$ & $263396$ & $3892956$ & $6229196$ & $0.04175$ & $0.61712$ & $0.98747$ \\
& $7$ & $126$ & $1038$ & $350796$ & $0.00021$ & $0.00175$ & $0.59162$  \\
& $13$ & $24$ & $36$ & $432$ & $0.00005$ & $0.00007$ & $0.00084$  \\
\hline
\multirow{5}*{$\text{37b1}$} & $3$ & $105334$ & $857782$ & $887308$ & $0.11673$ & $0.95059$ & $0.98331$ \\
& $5$ & $3464$ & $47708$ & $787576$ & $0.00440$ & $0.06058$ & $0.99999$  \\
& $6$ & $271734$ & $4643896$ & $6844572$ & $0.03914$ & $0.66897$ & $0.98599$ \\
& $7$ & $252$ & $2310$ & $492528$ & $0.00043$ & $0.00390$ & $0.83066$  \\
& $13$ & $48$ & $60$ & $1056$ & $0.00009$ & $0.00012$ & $0.00205$  \\
\hline
\end{tabular}
}
\caption{$m_{k,E}(X;c)$ and $\text{ratio} := m_{k,E}(X;c)/\#(\mathcal B_{k, N}(X))$ for $c = 1/4, 1/2, 5/4$ and $k =$ 3, 5, 7, 13 with $X = 3 \times 10^6$ and $k = $ 6 with $X = 10^6$.}
\label{table:finite_c_pos}
\end{table} 

For $k$ and $c$ such that $0 < c < \phi(k)/4$ when $\phi(k)/4 < 1$ and $\phi(k)/4 -1 \le c < \phi(k)/4$ when $\phi(k)/4 \ge 1$, we choose 
$$c = 
\begin{cases}
 0.3, 0.4 &\text{ for } k = 3, 6,\\
 0.3, 0.5 & \text{ for } k = 5,\\
 1.2, 1.3 & \text{ for } k = 7,\\
 2.4, 2.5 & \text{ for } k = 13.
\end{cases}
$$
Then, we compute those ratios for $E$, $k$ and $X$ given in Table~\ref{table:combined} and depict them in Figures ~\ref{fig:c_11_14_acc_3_5_6},~\ref{fig:c_11_14_acc_7_13},~\ref{fig:c_15_17_acc_3_5_6},~\ref{fig:c_15_17_acc_7_13},~\ref{fig:c_19_37_acc_3_5_6},~\ref{fig:c_19_37_acc_7_13}. %We include the case $c = \phi(k)/4$, even though $c$ is assumed to be stricly less than $\phi(k)/4$, since there is no harm to extend our prediction at these limits. 

In these figures, each subfigure corresponding to $E$ and $k$ contains two graphs associated with different choices of $c$. Each graph illustrates the convergence of the ratios within a moderate range. Additionally, the ratio values exhibit a regularity and stability as $X$ increases. Moreover, observe the first estimate in Equation~\eqref{ratio_c} includes $c$ in its denominator. Consequently, the ratios in Equation~\eqref{ratio_c} do not consistently increase as $c$ increases (cf. Figures~\ref{fig:A_acc_11_14},~\ref{fig:A_acc_15_17},~\ref{fig:A_acc_19_37}), and it seems rather the opposite as illustrated in these figures.

\subsection{\bf{Numerical Support for Prediction~\ref{C}}}
%\subsubsection{Experimental Results for~\eqref{conj_pos_c}}

%\medskip

%In this section, we present numerical evidence to support the first asymptotic estimate of Prediction~\ref{C}: i.e. {\it{For almost all ($100\%$) Dirichlet characters $\chi$ of order $k\geq 3$ and conductor $\f$, we have for any $\epsilon >0,$}}
In this section, we present numerical evidence supporting Equation~\eqref{upper_lower_bounds} within Prediction~\ref{C}. Subsequently, the second assertion in Prediction~\ref{C} follows from Prediction~\ref{Main Prediction}, in accordance with Equations~\eqref{squareness2} (see \S~\ref{c>0}). %: {\it{For almost all ($100\%$) Dirichlet characters $\chi$ of order $k\geq 3$ and conductor $\f$, we have for any $\epsilon >0,$}}
%  \[
%  \f^{\frac{1}{2}-\epsilon}\ll \vert L_E^{\rm{alg}}(\chi)  \vert \ \ll \f^{\frac{1}{2}+\epsilon}.
%  \]

For the implied constants of the lower and upper bounds depending on each $E$ and $k$, we define
 $M_{k, E}(X) := \max_{\chi \in \mathcal B_{k, N}(X)}\{\lvert L_E^{\text{alg}}(\chi) \rvert / \sqrt{\f} \}$.
%m_{k, E} := \min_{\chi \in \mathcal B_{k, N}(X)}\{\lvert L_E^{\text{alg}}(\chi) \rvert / \sqrt{\f} \} \hspace{0.5em} \text{ and } \hspace{0.5em} M_{k, E} := \max_{\chi \in \mathcal B_{k, N}(X)}\{\lvert L_E^{\text{alg}}(\chi) \rvert / \sqrt{\f} \}.
Abusing notation, we let $M := M_{k, E}(X)$. Then, we compute 
\begin{equation} \label{gen_lindeloff}
%l_{k, E}(X; \epsilon) := \#\{\chi \in \mathcal B_{k, N}(X) \mid m_{k, E} \hspace{0.2em} \f^{-\epsilon} \le \lvert L_E^{\text{alg}}(\chi) \rvert \le M_{k, E} \hspace{0.2em} \f^{\epsilon}\}
%l_{k, E}(X; \epsilon) := \#\{\chi \in \mathcal B_{k, N}(X) \mid (M_{k, E}(X) \hspace{0.2em} \f^{\epsilon})^{-1} \le \lvert L_E^{\text{alg}}(\chi) \rvert \le M_{k, E}(X) \hspace{0.2em} \f^{\epsilon}\}
l_{k, E}(X; \epsilon) := \#\{\chi \in \mathcal B_{k, N}(X) \mid (M \hspace{0.2em} \f^{\epsilon})^{-1} \le \lvert L_E^{\text{alg}}(\chi) \rvert \le M \hspace{0.2em} \f^{\epsilon}\}
\end{equation}
for $\epsilon = 10^{-1}$ and $10^{-8}$. In Table~\ref{table:gen_lindeloff}, the values of $l_{k, E}(X; \epsilon)$ at $\epsilon = 10^{-1}$ and $10^{-8}$ are present for $E$, $k$, and $X$. At least $97.9 \%$ of $\chi \in \mathcal B_{k, N}(X)$ satisfy the inequality in Equation~\eqref{gen_lindeloff}, which seems to support Equation~\eqref{upper_lower_bounds}.

\begin{table}[h!]
\centering
\scalebox{0.9}{
\begin{tabular}{ccrrrrcr}
\hline
\multirow{2}{*}{\makecell{$E$}} & \multirow{2}{*}{\makecell{$k$}} & \multicolumn{2}{r}{\makecell{$l_{k, E}(X; \epsilon)$}} & \multicolumn{2}{r}{\makecell{$\text{ratio}$}} & \multirow{2}{*}{\makecell{$M_{k, E}^{-1}$}} & \multirow{2}{*}{\makecell{$M_{k, E}$}} \\
 & & \makecell{$\epsilon = 10^{-1}$} & \makecell{$\epsilon = 10^{-8}$} & \makecell{$\epsilon = 10^{-1}$} & \makecell{$\epsilon = 10^{-8}$} & & \\
\hline
\hline
\multirow{5}*{$\text{11a1}$} & $3$ & $948274$ & $945738$ & $0.99701$ & $0.99435$ & $0.01289$ & $77.59927$ \\
 & $5$ & $577084$ & $575118$ & $0.99895$ & $0.99554$ & $0.00977$ & $102.32258$ \\
 & $6$ & $7036444$ & $7009180$ & $0.99052$ & $0.98668$ & $0.01316$ & $75.99970$ \\
 & $7$ & $592208$ & $590236$ & $0.99877$ & $0.99544$ & $0.00941$ & $106.31960$ \\
 & $13$ & $513978$ & $512102$ & $0.99875$ & $0.99511$ & $0.01052$ & $95.01895$ \\
\hline
\multirow{5}*{$\text{14a1}$} & $3$ & $728602$ & $724758$ & $0.98485$ & $0.97965$ & $0.02263$ & $44.19867$ \\
 & $5$ & $785266$ & $778102$ & $0.99706$ & $0.98796$ & $0.01932$ & $51.74859$ \\
 & $6$ & $3154516$ & $3141180$ & $0.98341$ & $0.97925$ & $0.01765$ & $56.63447$ \\
 & $7$ & $527106$ & $521846$ & $0.99670$ & $0.98675$ & $0.02013$ & $49.67612$ \\
 & $13$ & $513070$ & $508718$ & $0.99699$ & $0.98853$ & $0.01752$ & $57.08883$ \\
\hline
\multirow{5}*{$\text{15a1}$} & $3$ & $773016$ & $771428$ & $0.99340$ & $0.99136$ & $0.01392$ & $71.85634$ \\
 & $5$ & $677206$ & $672616$ & $0.99766$ & $0.99090$ & $0.02078$ & $48.12782$ \\
 & $6$ & $3734468$ & $3726672$ & $0.98491$ & $0.98285$ & $0.01602$ & $62.44145$ \\
 & $7$ & $591906$ & $588864$ & $0.99826$ & $0.99313$ & $0.01583$ & $63.16885$ \\
 & $13$ & $514044$ & $512352$ & $0.99888$ & $0.99559$ & $0.01001$ & $99.90364$ \\
\hline
\multirow{5}*{$\text{17a1}$} & $3$ & $946876$ & $944638$ & $0.99554$ & $0.99319$ & $0.01302$ & $76.77703$ \\
 & $5$ & $785704$ & $780138$ & $0.99761$ & $0.99054$ & $0.01774$ & $56.37136$ \\
 & $6$ & $7183032$ & $7166310$ & $0.98491$ & $0.98262$ & $0.01351$ & $74.01081$ \\
 & $7$ & $591936$ & $588866$ & $ 0.99831$ & $0.99313$ & $0.01386$ & $72.14272$ \\
 & $13$ & $513646$ & $510592$ & $0.99811$ & $0.99217$ & $0.01466$ & $68.20034$ \\
\hline
\multirow{5}*{$\text{19a1}$} & $3$ & $852480$ & $851034$ & $0.99059$ & $0.98891$ & $0.01297$ & $77.10545$ \\
 & $5$ & $786682$ & $783836$ & $0.99885$ & $0.99524$ & $0.01003$ & $99.68147$ \\
 & $6$ & $6226182$ & $6186182$ & $0.98699$ & $0.98065$ & $0.01663$ & $60.14557$ \\
 & $7$ & $592192$ & $589918$ & $0.99874$ & $0.99491$ & $0.01064$ & $93.94585$ \\
 & $13$ & $513956$ & $512022$ & $0.99871$ & $0.99495$ & $0.01081$ & $92.50724$ \\
\hline
\multirow{5}*{$\text{37b1}$} & $3$ & $887308$ & $886838$ & $0.98331$ & $0.98279$ & $0.01506$ & $66.40528$ \\
 & $5$ & $786912$ & $784710$ & $0.99915$ & $0.99635$ & $0.01059$ & $94.41026$ \\
 & $6$ & $6844572$ & $6835160$ & $0.98599$ & $0.98463$ & $0.01567$ & $63.81421$ \\
 & $7$ & $592460$ & $590810$ & $0.99919$ & $0.99641$ & $0.01024$ & $97.61092$ \\
 & $13$ & $514166$ & $512682$ & $0.99912$ & $0.99623$ & $0.01087$ & $91.96074$ \\
\hline
\end{tabular}
}
\caption{$l_{k, E}(X; \epsilon)$, $\text{ratio} := l_{k, E}(X; \epsilon)/\#(\mathcal B_{k, N}(X))$, and $M = M_{k, E}(X)$ for $\epsilon = 10^{-1}, 10^{-8}$ and $k =$ 3, 5, 7, 13 with $X = 3 \times 10^6$ and $k = $ 6 with $X = 10^6$.}
\label{table:gen_lindeloff}
\end{table}

\subsection{Distributions for fixed integer values of $A_\chi$}\label{dist_exact_values}
\hfill

For a non-zero integer $l$, define 
\[
x_{k,E}(X;l) := \#\{\chi \in \mathcal B _{k,N}(X) \mid A_{\chi} = l\}
\]
and we consider the ratios
\begin{equation}\label{ratio_A_exact}
\frac{x_{k,E}(X;l)}{X^{1/2}\log^B(X)} \hspace{0.25em} \text{ for } k = 3 \text{ or } 6 \hspace{0.5em} \text{ and } \hspace{0.5em} \frac{x_{k,E}(X;l)}{\log^{B+1}(X)} \hspace{0.25em} \text{ for } k = 5.\\
\end{equation}
 
Note that for $k$ such that $\phi(k) \ge 6$, $x_{k,E}(X;l)$ is predicted finite. Therefore, we only take $k = 3, 5, 6$. It would be interesting to see the dependencies of frequencies between $x_{k,E}(X;l)$ and $x_{k,E}(X;-l)$. As before, we take $l = \pm1, \pm2, \ldots, \pm9$ for $k = 3 , 6$ and $l = \pm1, \pm4, \pm5, \pm9, \pm11, \pm16, \pm19, \pm20, \pm25$ for $k = 5$.

Those ratios for $k = 3, 5, 6$ are depicted on Figures~\ref{fig:11a1_3_A_exact},~\ref{fig:11a1_5_A_exact},~\ref{fig:11a1_6_A_exact} for 11a1, Figures~\ref{fig:14a1_3_A_exact},~\ref{fig:14a1_5_A_exact},~\ref{fig:14a1_6_A_exact} for 14a1, Figures~\ref{fig:15a1_3_A_exact},~\ref{fig:15a1_5_A_exact},~\ref{fig:15a1_6_A_exact} for 15a1, Figures~\ref{fig:17a1_3_A_exact},~\ref{fig:17a1_5_A_exact},~\ref{fig:17a1_6_A_exact} for 17a1, Figures~\ref{fig:19a1_3_A_exact},~\ref{fig:19a1_5_A_exact},~\ref{fig:19a1_6_A_exact} for 19a1 and Figures~\ref{fig:37b1_3_A_exact},~\ref{fig:37b1_5_A_exact},~\ref{fig:37b1_6_A_exact} for 37b1, respectively. 

The numerical data seem to suggest that the quantities $x_{k,E}(X;l)$ increase with similar rates of growth, as in~\eqref{ratio_A_exact}, for each small admissable value of $\vert l \vert$.

Moreover, we have found no $\chi \in \mathcal B _{k,N}(X)$ such that $A_\chi \equiv 2 \bmod 3$, as shown in Figures~\ref{fig:19a1_3_A_exact} and~\ref{fig:37b1_3_A_exact} for 19a1 and 37b1 and $k = 3$, respectively. At present we have no explanation for this phenomenon.

\subsection{Distributions of $A_\chi$ depending on $\chi(-1)$ for $k = 6$}
\hfill

 In this section, we fix $k = 6$ and split $\mathcal B _{6,N}(X)$ inito two subfamilies by the signs of $\chi$'s for $E$ and $X$ given in Table~\ref{table:combined}, and present their distributions as the previous sections. Table~\ref{table:gcd_for_k_6} presents the greatest common divisors (gcd) of $A_\chi$'s of those subfamilies. Notice that the gcd's for $\chi(-1) = 1$ are some multiples of those for $\chi(-1) = -1$;  multiples of some divisors of the orders of torsions of $E$. 

\begin{table}[h!]
\centering
\scalebox{0.9}{
\begin{tabular}{rrrrrrr}
\hline
\makecell{$\chi(-1)$} & \makecell{$\text{11a1}$} & \makecell{$\text{14a1}$}  & \makecell{$\text{15a1}$} & \makecell{$\text{17a1}$} & \makecell{$\text{19a1}$} & \makecell{$\text{37b1}$}\\
\hline
$1$ & $10$ & $6$ & $8$ & $8$ & $6$ & $12$ \\
$-1$ & $2$ & $2$ & $4$ & $4$ & $2$ & $4$ \\
\hline
\end{tabular}
}
\caption{The greatest common divisors for $k = 6$ with $X = 10^6$.}
\label{table:gcd_for_k_6}
\end{table}

Now, similarly as before, define for a positive integer $L$, a real number $0 < c < \phi(k)/4$ and a non-zero integer $l$,
\[
\begin{split}
n_{6,E}^\pm(X;L) &:= \#\{\chi \in \mathcal B _{6,N}(X) \mid 0 <  |A_{\chi}| \le L \text{ and } \chi(-1) = \pm 1\},\\
m_{6,E}^\pm(X;c) &:= \#\{\chi \in \mathcal B _{6,N}(X) \mid 0 < |A_{\chi}| \le \f^c \text{ and } \chi(-1) = \pm 1\},\\
x_{6,E}^\pm(X;l) &:= \#\{\chi \in \mathcal B _{6,N}(X) \mid A_{\chi} = l \text{ and } \chi(-1) = \pm 1\}.
\end{split}
\]

Then, applying the ratio predictions~\eqref{ratio_A},~\eqref{ratio_c} and~\eqref{ratio_A_exact} for $k = 6$, consider the following ratios for those three families above:
\begin{equation}
{n_{6,E}^{\pm}(X;L)}/{X^{1/2}\log^2(X)},\label{ratio_N_pm}
\end{equation}
\vspace{-30pt}
\begin{equation}
{m_{6,E}^{\pm}(X;c)}/{X^{c +1/2}\log^2(X)},\label{ratio_M_pm}
\end{equation}
\vspace{-30pt}
\begin{equation}
{x_{6,E}^{\pm}(X;l)}/{X^{1/2}\log^2(X)}\label{ratio_n_pm}
\end{equation}

The graphs for~\eqref{ratio_N_pm} are presented in Figures~\ref{fig:6_even_odd_A_acc_11_14_15} and~\ref{fig:6_even_odd_A_acc_17_19_37} for $L = $ 1, 2, 3. Moreover, the graphs for~\eqref{ratio_M_pm} are presented in Figures~\ref{fig:c_11_14_15_pm_acc_6} and~\ref{fig:c_17_19_37_pm_acc_6} for $c = $ 0.3, 0.4. Lastly, the graphs for~\eqref{ratio_n_pm} are presented for $l = \pm1, \pm2, \ldots, \pm9$ in Figures~\ref{fig:11a1_6_even_A_exact},~\ref{fig:14a1_6_even_A_exact},~\ref{fig:15a1_6_even_A_exact},~\ref{fig:17a1_6_even_A_exact},~\ref{fig:19a1_6_even_A_exact} and~\ref{fig:37b1_6_even_A_exact} for $\chi(-1) = 1$ and Figures~\ref{fig:11a1_6_odd_A_exact},~\ref{fig:14a1_6_odd_A_exact},~\ref{fig:15a1_6_odd_A_exact},~\ref{fig:17a1_6_odd_A_exact},~\ref{fig:19a1_6_odd_A_exact} and~\ref{fig:37b1_6_odd_A_exact} for $\chi(-1) = -1$.

Notice that as shown in Figure~\ref{fig:11a1_6_odd_A_exact}, for 11a1, we have found no $\chi \in \mathcal B_{6,11}(X)$ such that $\chi(-1) = -1$ and $A_\chi = \pm 5$ for any $X$.

\subsection{Distributions of $A_\chi$ depending on $\chi(-1)$ and $\chi(N)$ for $k = 6$}
\hfill

In this section, as in the previous section, we we fix $k = 6$ and split $\mathcal B _{6,N}(X)$ inito four subfamilies by the parities of $\ord(\chi(-1))$ and $\ord(\chi(N))$ for $E$ and $X$ given in Table~\ref{table:combined}, and present their distributions as the previous sections. 

More precisely, we partition $\mathcal B _{k,N}(X)$ by the following four subfamilies:
\[
\begin{split}
\mathcal B_{6,N}^{(1,3)}(X) &:= \{\chi \in \mathcal B _{6,N}(X) \mid \ord(\chi(-1)) = 1 \text{ and } \ord(\chi(N)) = 1 \text{ or } 3\},\\
\mathcal B_{6,N}^{(2,3)}(X) &:= \{\chi \in \mathcal B _{6,N}(X) \mid \ord(\chi(-1)) = 2 \text{ and } \ord(\chi(N)) = 1 \text{ or } 3\},\\
\mathcal B_{6,N}^{(1,6)}(X) &:= \{\chi \in \mathcal B _{6,N}(X) \mid \ord(\chi(-1)) = 1 \text{ and } \ord(\chi(N)) = 2 \text{ or } 6\},\\
\mathcal B_{6,N}^{(2,6)}(X) &:= \{\chi \in \mathcal B _{6,N}(X) \mid \ord(\chi(-1)) = 2 \text{ and } \ord(\chi(N)) = 2 \text{ or } 6\},\\
\end{split}
\]
Denote $\alpha = 1, 2$ and $\beta = 3, 6$. Then, define for a positive integer $L$, a real number $0 < c < \phi(k)/4$ and a non-zero integer $l$
\[
\begin{split}
n_{6,E}^{(\alpha, \beta)}(X;L) &:= \#\{\chi \in \mathcal B _{6,N}^{(\alpha, \beta)}(X) \mid 0 <  |A_{\chi}| \le L \},\\
m_{6,E}^{(\alpha, \beta)}(X;c) &:= \#\{\chi \in \mathcal B _{6,N}^{(\alpha, \beta)}(X) \mid 0 < |A_{\chi}| \le \f^c \},\\
x_{6,E}^{(\alpha, \beta)}(X;l) &:= \#\{\chi \in \mathcal B _{6,N}^{(\alpha, \beta)}(X) \mid A_{\chi} = l \}.\\
\end{split}
\]
Notice that the image of $\chi(-N)$ for $\chi \in \mathcal B_{6,N}^{(\alpha, \beta)}(X)$ is a cubic root of unity (or sixth root of unity) for $(\alpha, \beta) = (1,3) \text{ or } (2,6)$ (or $(2,3) \text{ or } (1,6)$, respectively).  As in the previous section, applying the ratio predictions~\eqref{ratio_A},~\eqref{ratio_c} and~\eqref{ratio_A_exact} for $k = 6$, consider the following ratios for those three families above:

\vspace{-20pt}
\begin{equation}
{n_{6,E}^{(\alpha, \beta)}(X;L)}/{X^{1/2}\log^2(X)},\label{ratio_N_orders}
\end{equation}
\vspace{-20pt}
\begin{equation}
{m_{6,E}^{(\alpha, \beta)}(X;c)}/{X^{c +1/2}\log^2(X)},\label{ratio_M_orders}
\end{equation}
\vspace{-20pt}
\begin{equation}
{x_{6,E}^{(\alpha, \beta)}(X;l)}/{X^{1/2}\log^2(X)}.\label{ratio_n_orders}
\end{equation}

Table~\ref{table:gcd_for_k_6_orders} presents the greatest common divisors of $A_\chi$'s of the subfamilies $\mathcal B _{6,N}^{(\alpha, \beta)}(X)$. 

\begin{table}[h!]
\centering
\scalebox{0.9}{
\begin{tabular}{crrrrrr}
\hline
\makecell{$(\alpha, \beta)$} & \makecell{$\text{11a1}$} & \makecell{$\text{14a1}$}  & \makecell{$\text{15a1}$} & \makecell{$\text{17a1}$} & \makecell{$\text{19a1}$} & \makecell{$\text{37b1}$}\\
\hline
$(1,3)$ & $10$ & $6$ & $8$ & $8$ & $6$ & $12$\\
$(2,3)$ & $6$ & $6$ & $12$ & $12$ & $6$ & $12$\\
$(1,6)$ & $30$ & $18$ & $24$ & $24$ & $18$ & $36$\\
$(2,6)$ & $2$ & $2$ & $4$ & $4$ & $2$ & $4$\\
\hline
\end{tabular}
}
\caption{The greatest common divisors for $k = 6$ with $X = 10^6$.}
\label{table:gcd_for_k_6_orders}
\end{table}

The graphs for~\eqref{ratio_N_orders},~\eqref{ratio_M_orders} and~\eqref{ratio_n_orders} are presented in Figures~\ref{fig:6_alpha_beta_A_acc_11_14},~\ref{fig:6_alpha_beta_A_acc_15_17},~\ref{fig:6_alpha_beta_A_acc_19_37} for $L =$ 1, 2, 3, Figures~\ref{fig:c_11_14_acc_6_orders},~\ref{fig:c_15_17_acc_6_orders},~\ref{fig:c_19_37_acc_6_orders} for $c = $ 0.3, 0.4 and Figures~\ref{fig:11a1_6_1_3_A_exact} to~\ref{fig:37b1_6_2_6_A_exact} for $l = \pm 1, \pm 2, \ldots, \pm 9$, respectively.

 Similarly to the previous section, for 11a1, $l = \pm 5$ and $(\alpha, \beta) = (2, 6)$, we have found no $\chi \in \mathcal B_{6,11}^{(2,6)}(X)$ such that $A_\chi = \pm 5$ for any $X$ as shown in Figure~\ref{fig:11a1_6_2_6_A_exact}. Moreover, surprisingly again, we find similar phenomenons for $(\alpha, \beta) = (2, 3)$ as mentioned in the last paragraph of Section~\ref{dist_exact_values}.  More precisely, for 19a1, we have found no $\chi \in \mathcal B_{k,N}^{(2,3)}(X)$ such that $A_\chi \equiv 1 \bmod 3$, except for $\f = 9$, as shown in Figure~\ref{fig:19a1_6_2_3_A_exact} and for 38b1, we have found no $\chi \in \mathcal B_{k,N}^{(2,3)}(X)$ such that $A_\chi \equiv 2 \bmod 3$ as shown in Figure~\ref{fig:37b1_6_2_3_A_exact}. 

 %%%%%%%%%%%%%%%%%%%%%%%%%%%%%%%%%%%%%%%%%% Figures %%%%%%%%%%%%%%%%%%%%%%%%%%%%%%%%%%%%%%%%%%%%%%%%%%%%%%%%%%%%%

\begin{figure}[h!] % "[t!]" placement specifier just for this example
\hspace*{-.7cm}
\begin{subfigure}[h]{0.4\linewidth}
\includegraphics[width=\linewidth]{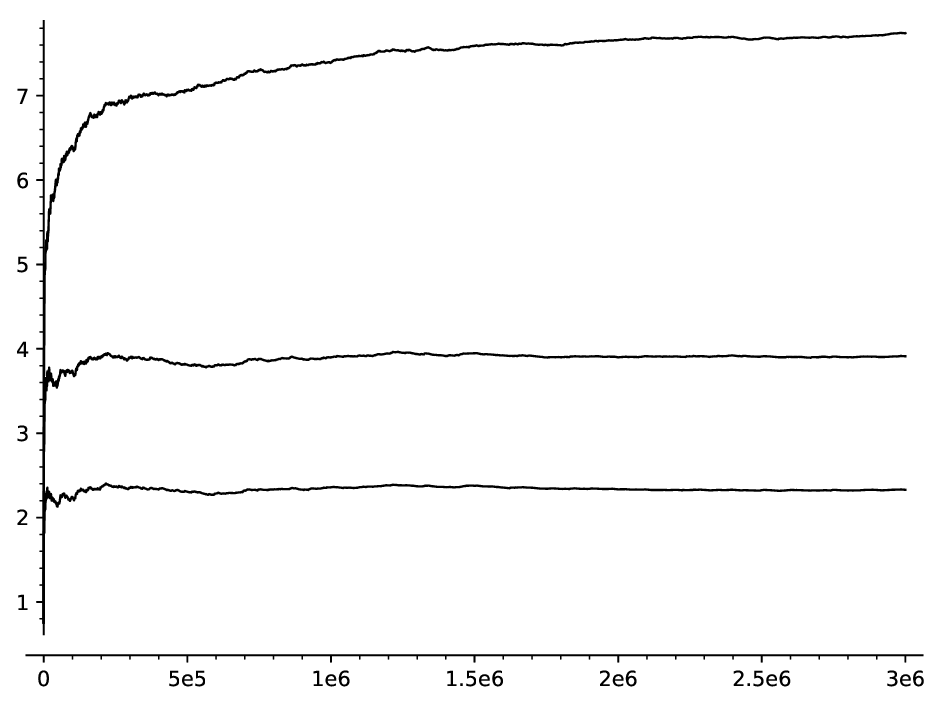}
\caption{11a1 $k = 3$: $n_{k,E}(X;L)/X^{1/2}$} \label{fig:11_3_acc_A}
\end{subfigure}\hspace*{\fill}
\begin{subfigure}[h]{0.4\linewidth}
\includegraphics[width=\linewidth]{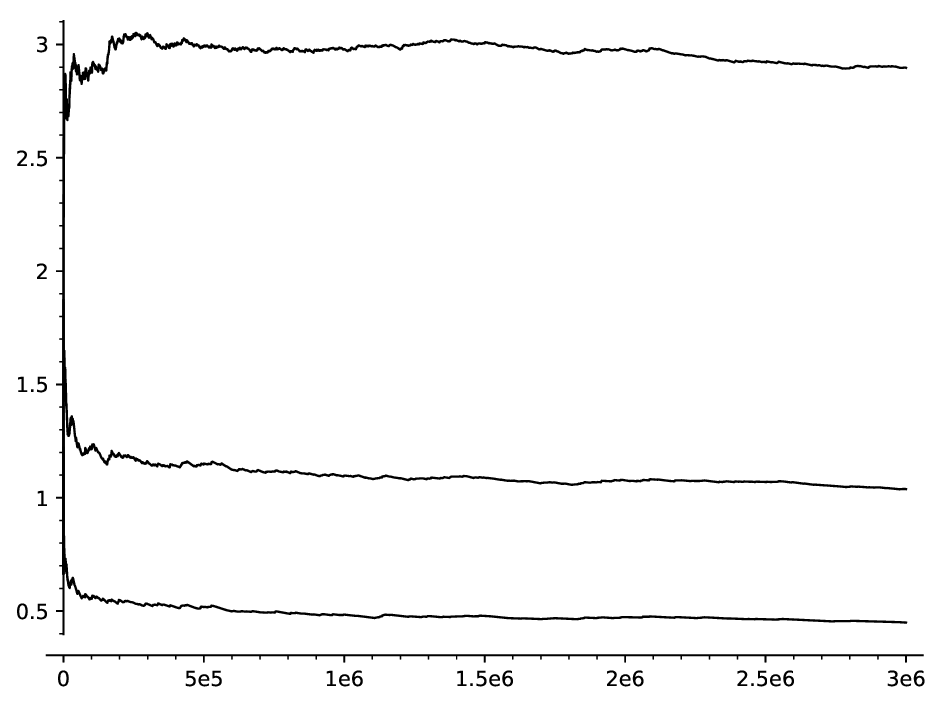}
\caption{14a1 $k = 3$: $n_{k,E}(X;L)/X^{1/2}$} \label{fig:14_3_acc_A}
\end{subfigure}
\hspace*{-.7cm}
\begin{subfigure}[h]{0.4\linewidth}
\includegraphics[width=\linewidth]{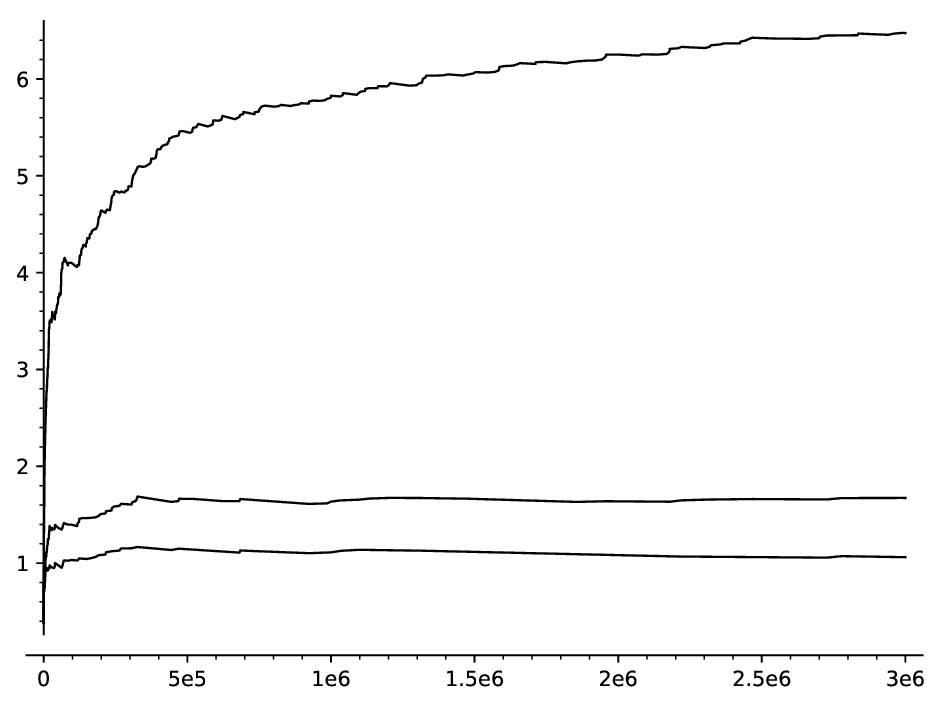}
\caption{11a1 $k = 5$: $n_{k,E}(X;L)/\log^{2}(X)$} \label{fig:11_5_acc_A}
\end{subfigure}\hspace*{\fill}
\begin{subfigure}[h]{0.4\linewidth}
\includegraphics[width=\linewidth]{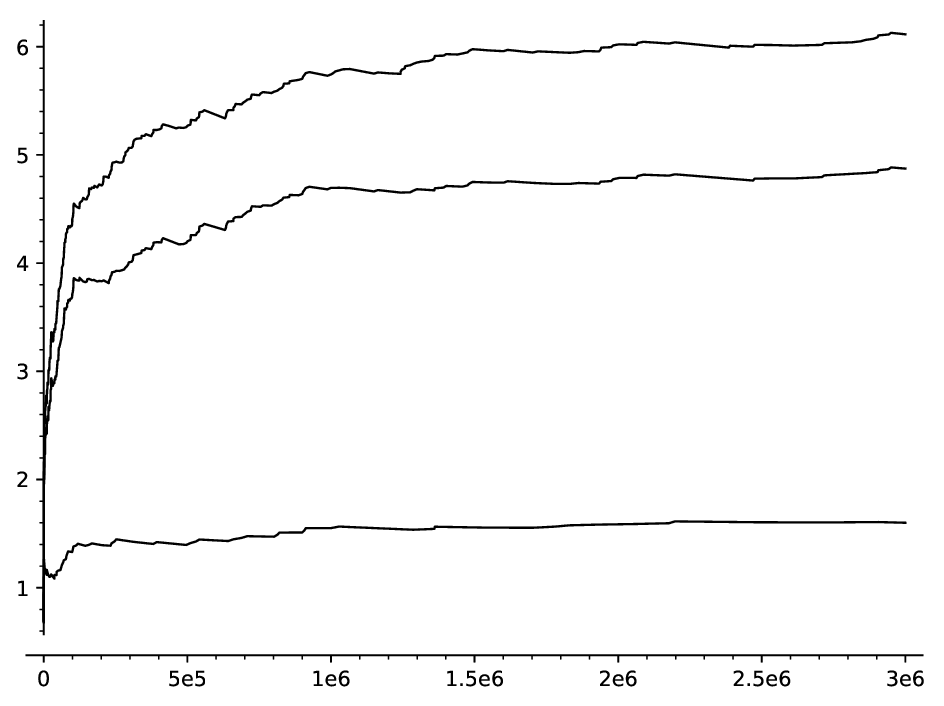}
\caption{14a1 $k = 5$: $n_{k,E}(X;L)/\log^{2}(X)$} \label{fig:14_5_acc_A}
\end{subfigure}
\hspace*{-.7cm}
\begin{subfigure}[h]{0.4\linewidth}
\includegraphics[width=\linewidth]{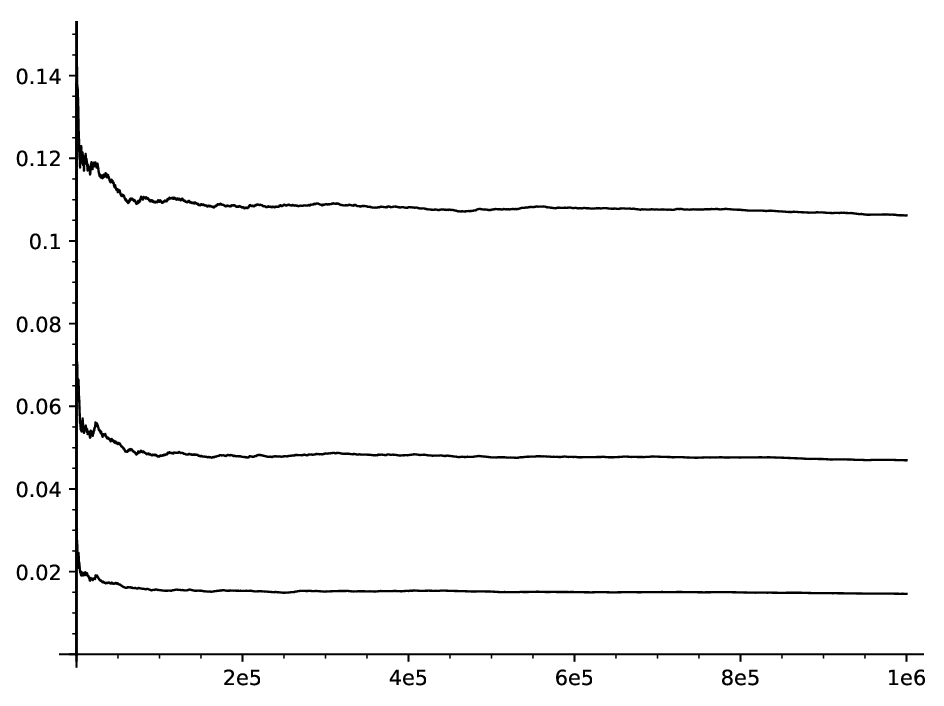}
\caption{11a1 $k = 6$: $n_{k,E}(X;L)/X^{1/2}\log^2(X)$} \label{fig:11_6_acc_A}
\end{subfigure}\hspace*{\fill}
\begin{subfigure}[h]{0.4\linewidth}
\includegraphics[width=\linewidth]{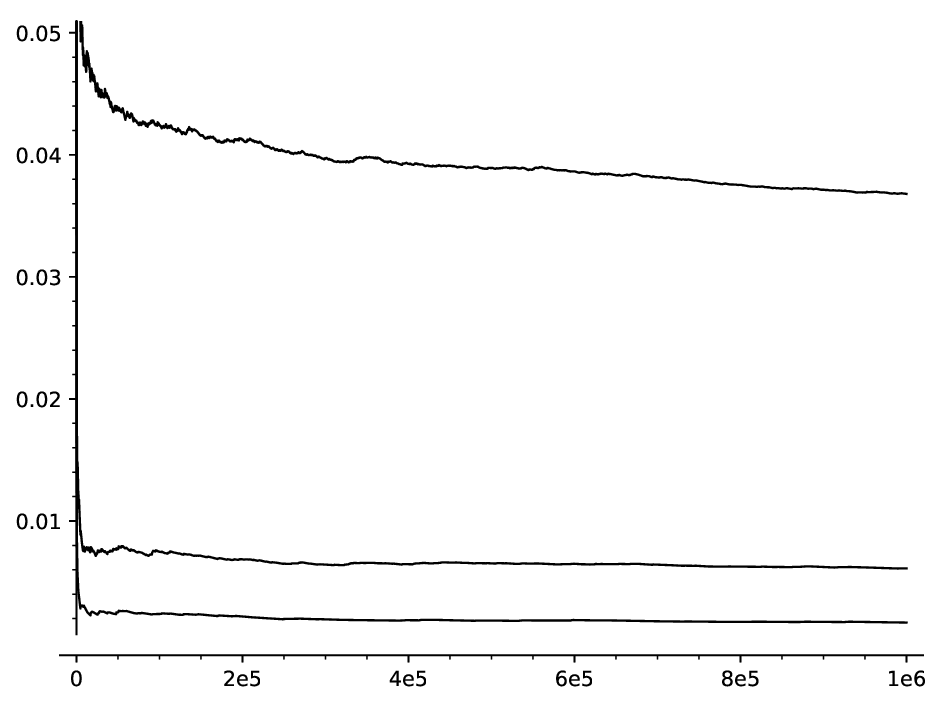}
\caption{14a1 $k = 6$: $n_{k,E}(X;L)/X^{1/2}\log^2(X)$} \label{fig:14_6_acc_A}
\end{subfigure}
\caption{Ratio~\eqref{ratio_A}: $L =$ 1, 2, 3 for $k =$ 3, 6 and $L =$ 1, 4, 5 for $k =$ 5 from the bottom to the top. Note that for each $E, k$ and a fixed $X$, the ratio in Equation~\eqref{ratio_A} for $L$ is less than or equal to that for $L'$ if $L \le L$ since $0 <  |A_{\chi}| \le L \le L'$.} \label{fig:A_acc_11_14}
\end{figure}

\clearpage

\begin{figure}[t!] % "[t!]" placement specifier just for this example
\hspace*{-.7cm}
\begin{subfigure}[h]{0.4\linewidth}
\includegraphics[width=\linewidth]{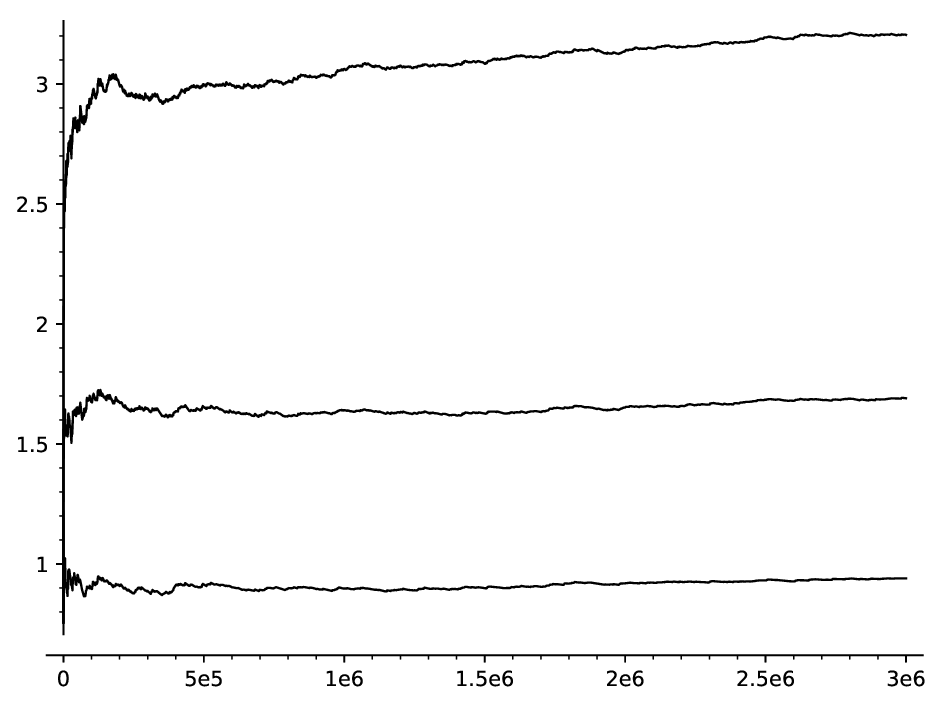}
\caption{15a1 $k = 3$: $n_{k,E}(X;L)/X^{1/2}$} \label{fig:15_3_acc_A}
\end{subfigure}\hspace*{\fill}
\begin{subfigure}[h]{0.4\linewidth}
\includegraphics[width=\linewidth]{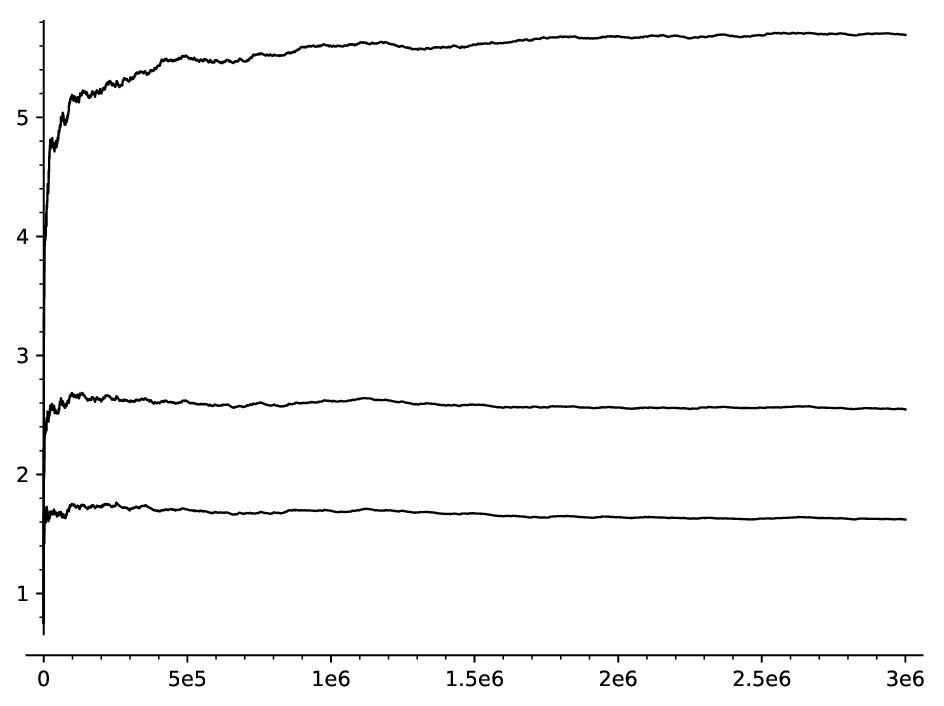}
\caption{17a1 $k = 3$: $n_{k,E}(X;L)/X^{1/2}$} \label{fig:17_3_acc_A}
\end{subfigure}
\hspace*{-.7cm}
\begin{subfigure}[h]{0.4\linewidth}
\includegraphics[width=\linewidth]{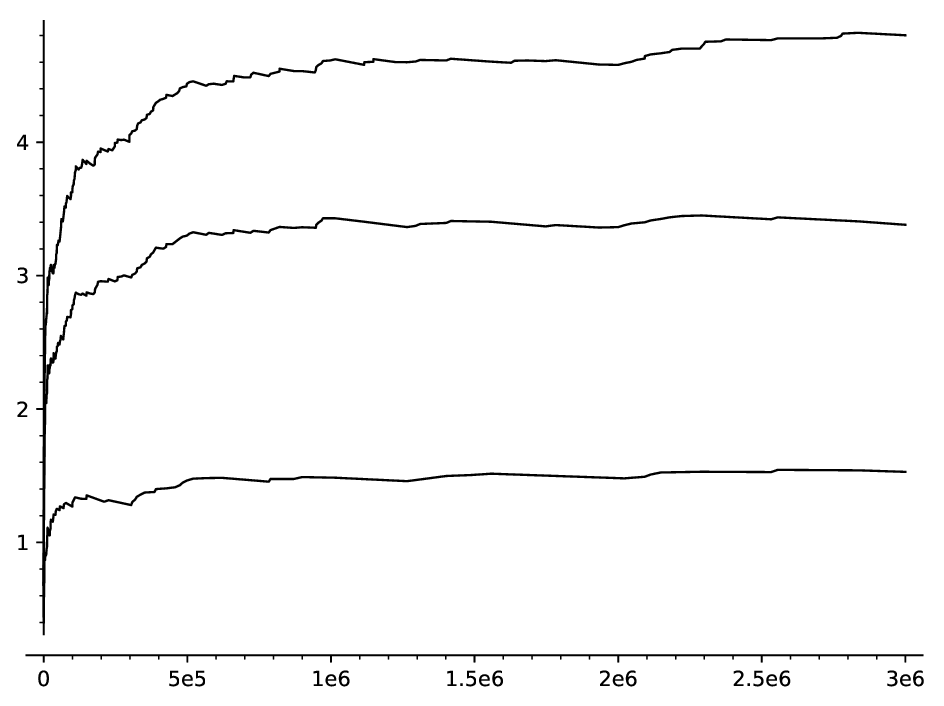}
\caption{15a1 $k = 5$: $n_{k,E}(X;L)/\log^{2}(X)$} \label{fig:15_5_acc_A}
\end{subfigure}\hspace*{\fill}
\begin{subfigure}[h]{0.4\linewidth}
\includegraphics[width=\linewidth]{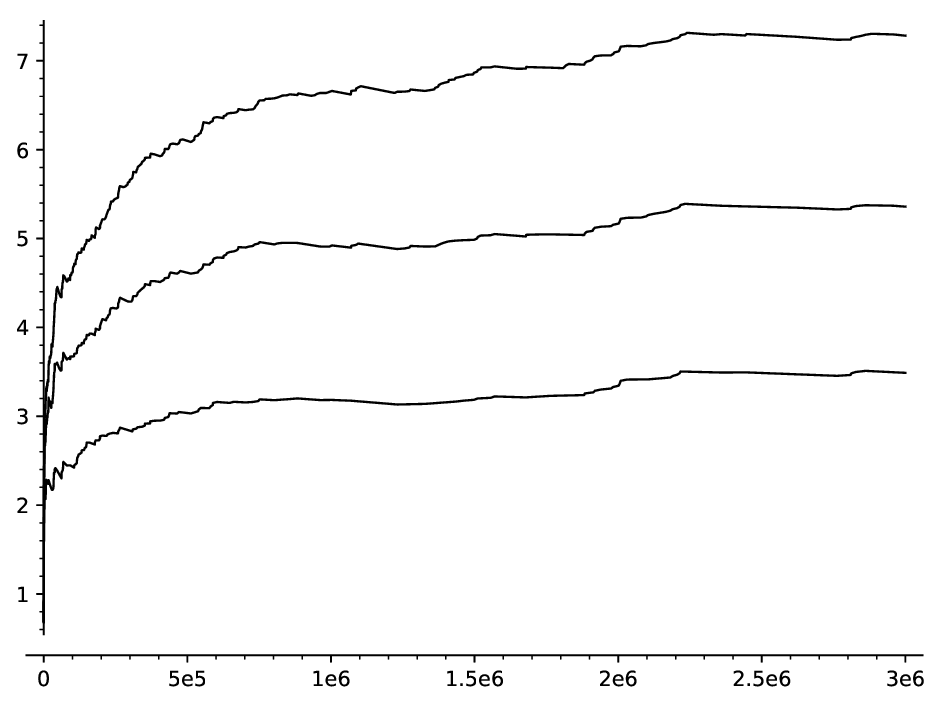}
\caption{17a1 $k = 5$: $n_{k,E}(X;L)/\log^{2}(X)$} \label{fig:17_5_acc_A}
\end{subfigure}
\hspace*{-.7cm}
\begin{subfigure}[h]{0.4\linewidth}
\includegraphics[width=\linewidth]{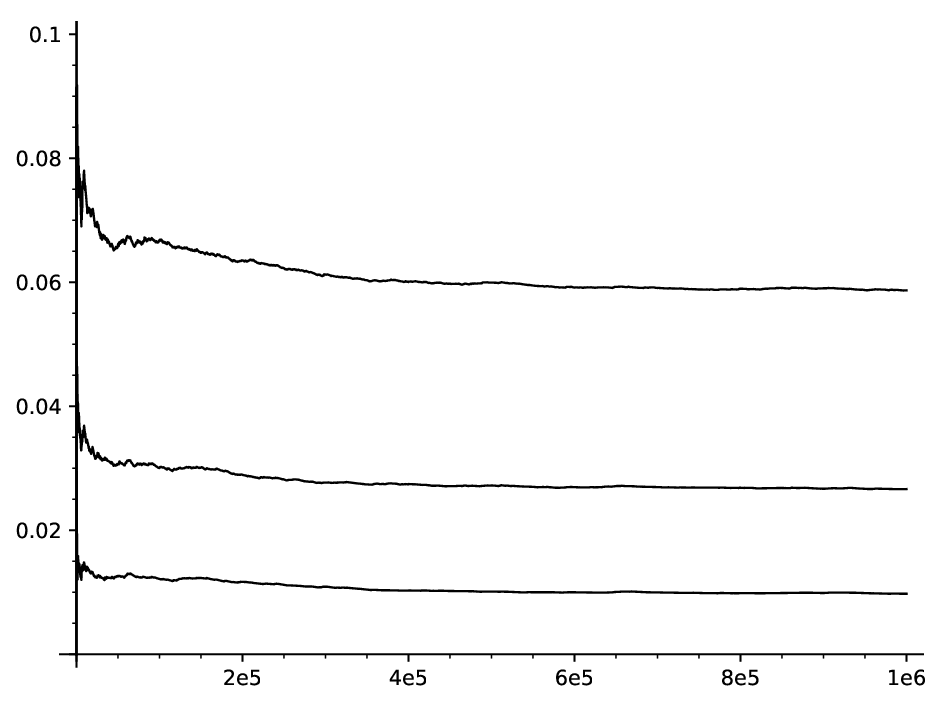}
\caption{15a1 $k = 6$: $n_{k,E}(X;L)/X^{1/2}\log^2(X)$} \label{fig:15_6_acc_A}
\end{subfigure}\hspace*{\fill}
\begin{subfigure}[h]{0.4\linewidth}
\includegraphics[width=\linewidth]{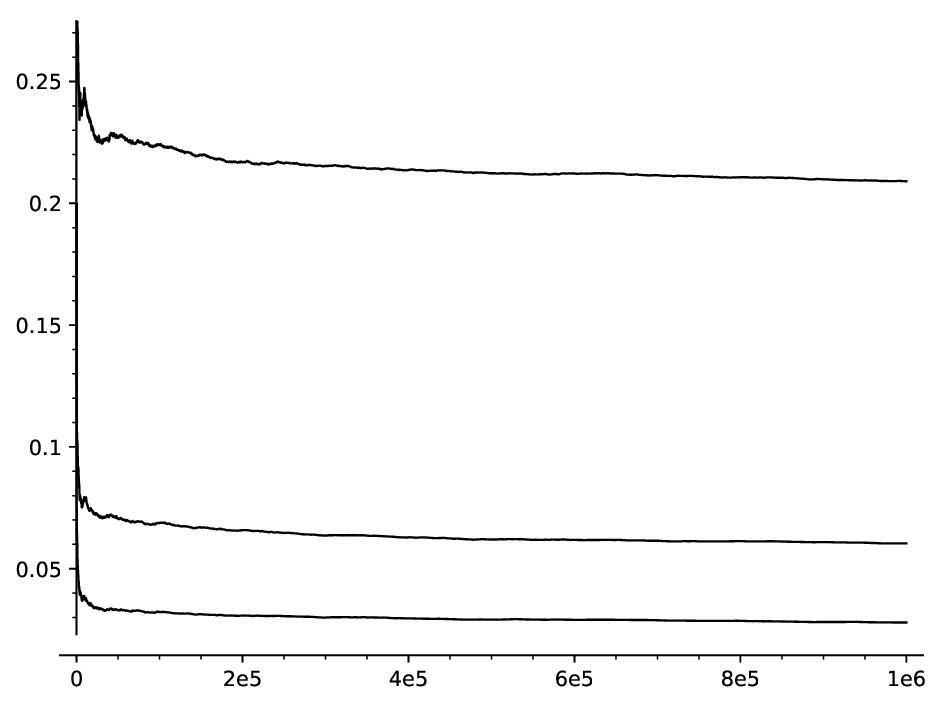}
\caption{17a1 $k = 6$: $n_{k,E}(X;L)/X^{1/2}\log^2(X)$} \label{fig:17_6_acc_A}
\end{subfigure}
\caption{Ratio~\eqref{ratio_A}: $L =$ 1, 2, 3 for $k =$ 3, 6 and $L =$ 1, 4, 5 for $k =$ 5 from the bottom to the top. Note that for each $E, k$ and a fixed $X$, the ratio in Equation~\eqref{ratio_A} for $L$ is less than or equal to that for $L'$ if $L \le L$ since $0 <  |A_{\chi}| \le L \le L'$.} \label{fig:A_acc_15_17}
\end{figure}

\clearpage

\begin{figure}[t!] % "[t!]" placement specifier just for this example
\hspace*{-.7cm}
\begin{subfigure}[h]{0.4\linewidth}
\includegraphics[width=\linewidth]{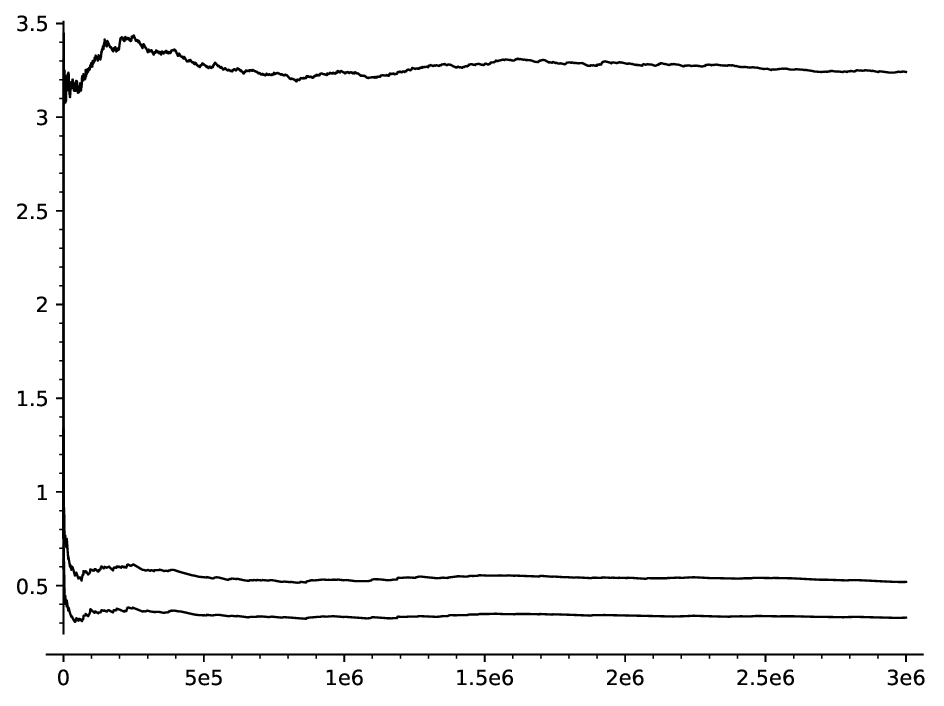}
\caption{19a1 $k = 3$: $n_{k,E}(X;L)/X^{1/2}$} \label{fig:19_3_acc_A}
\end{subfigure}\hspace*{\fill}
\begin{subfigure}[h]{0.4\linewidth}
\includegraphics[width=\linewidth]{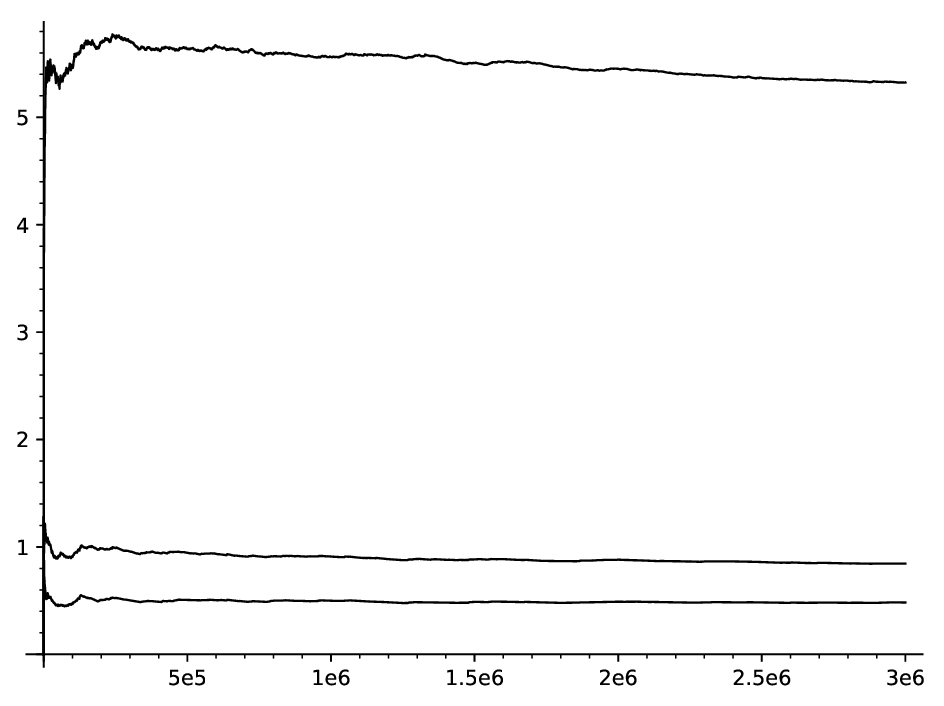}
\caption{37b1 $k = 3$: $n_{k,E}(X;L)/X^{1/2}$} \label{fig:37_3_acc_A}
\end{subfigure}
\hspace*{-.7cm}
\begin{subfigure}[h]{0.4\linewidth}
\includegraphics[width=\linewidth]{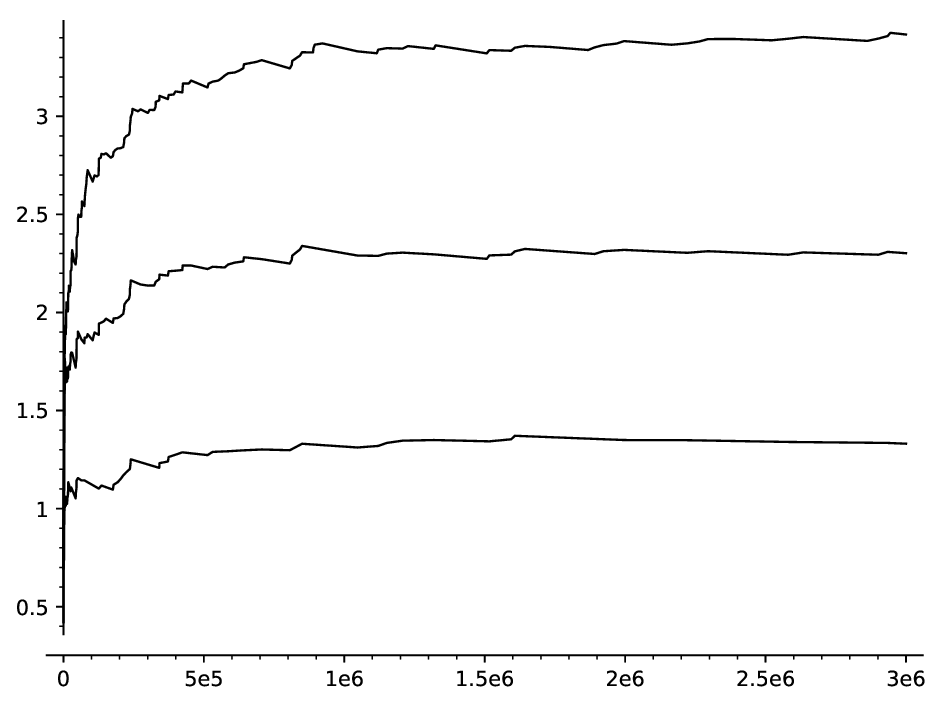}
\caption{19a1 $k = 5$: $n_{k,E}(X;L)/\log^{2}(X)$} \label{fig:19_5_acc_A}
\end{subfigure}\hspace*{\fill}
\begin{subfigure}[h]{0.4\linewidth}
\includegraphics[width=\linewidth]{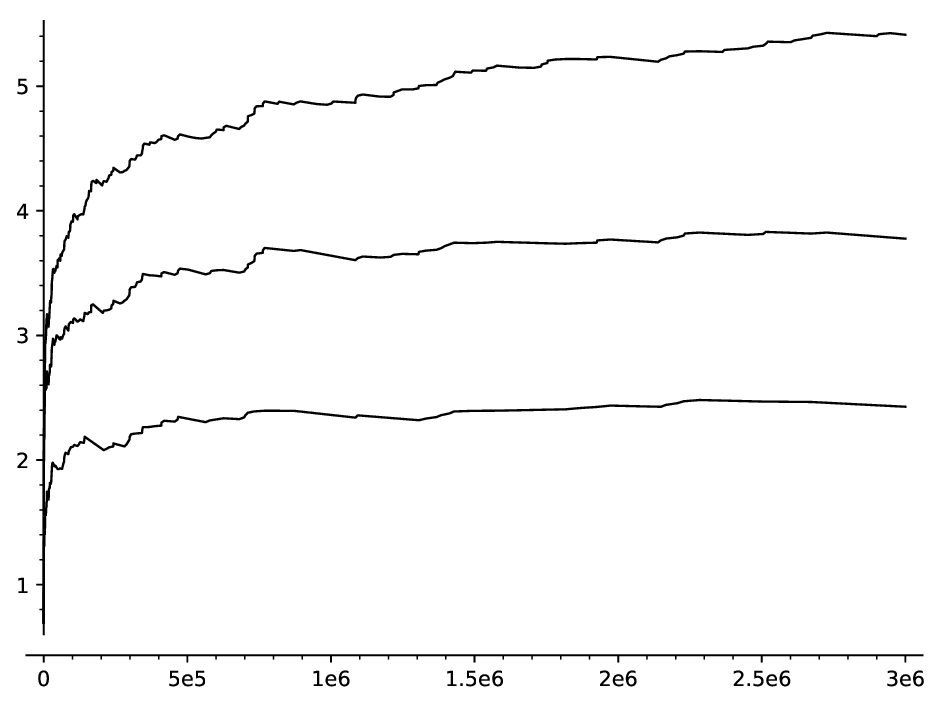}
\caption{37b1 $k = 5$: $n_{k,E}(X;L)/\log^{2}(X)$} \label{fig:37_5_acc_A}
\end{subfigure}
\hspace*{-.7cm}
\begin{subfigure}[h]{0.4\linewidth}
\includegraphics[width=\linewidth]{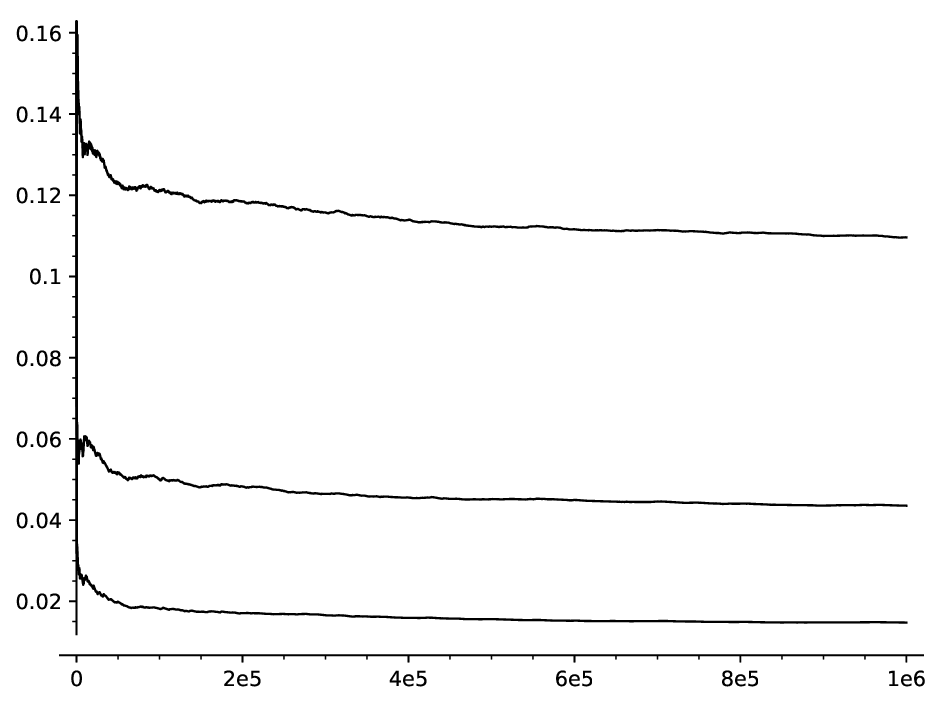}
\caption{19a1 $k = 6$: $n_{k,E}(X;L)/X^{1/2}\log^2(X)$} \label{fig:19_6_acc_A}
\end{subfigure}\hspace*{\fill}
\begin{subfigure}[h]{0.4\linewidth}
\includegraphics[width=\linewidth]{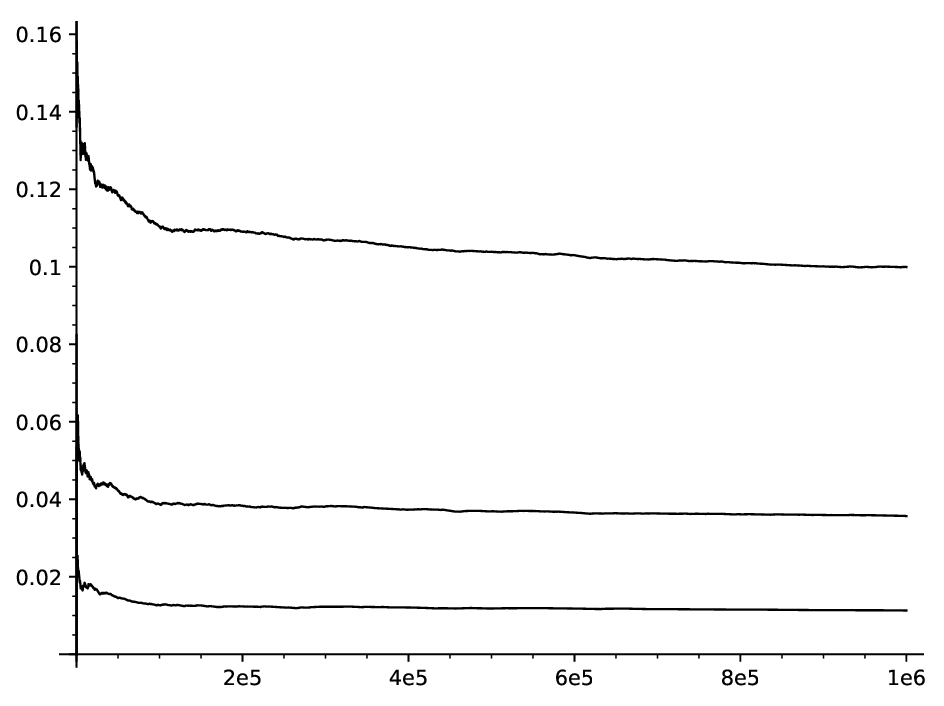}
\caption{37b1 $k = 6$: $n_{k,E}(X;L)/X^{1/2}\log^2(X)$} \label{fig:37_6_acc_A}
\end{subfigure}
\caption{Ratio~\eqref{ratio_A}: $L =$ 1, 2, 3 for $k =$ 3, 6 and $L =$ 1, 4, 5 for $k =$ 5 from the bottom to the top. Note that for each $E, k$ and a fixed $X$, the ratio in Equation~\eqref{ratio_A} for $L$ is less than or equal to that for $L'$ if $L \le L$ since $0 <  |A_{\chi}| \le L \le L'$.} \label{fig:A_acc_19_37}
\end{figure}

\clearpage

\begin{figure}[t!] % "[t!]" placement specifier just for this example
\hspace*{-.7cm}
\begin{subfigure}[b]{0.4\linewidth}
\includegraphics[width=\linewidth]{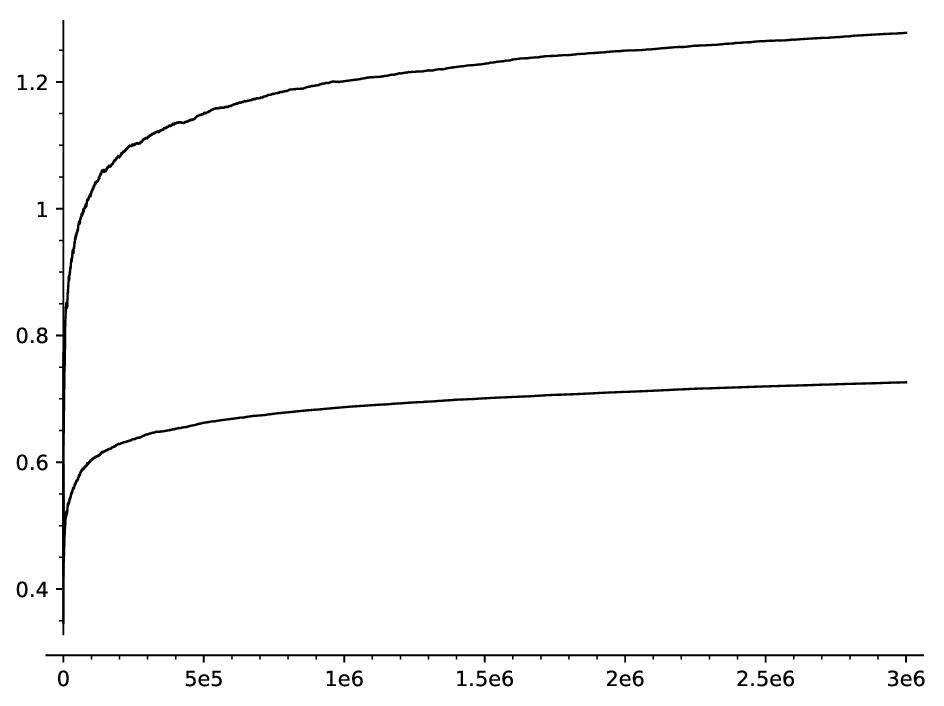}
\caption{11a1 $k = 3$: $m_{k,E}(X;c)/X^{c +1/2}$\\ Top to bottom $c =$ 0.3, 0.4} \label{fig:11_3_acc_c}
\end{subfigure}\hspace*{\fill}
\begin{subfigure}[b]{0.4\linewidth}
\includegraphics[width=\linewidth]{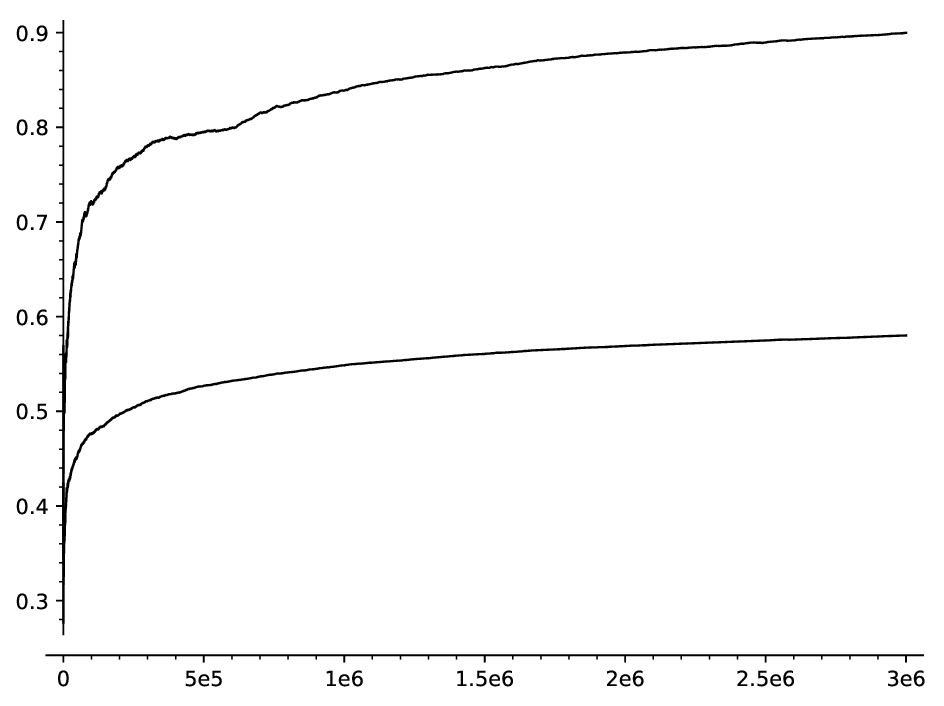}
\caption{14a1 $k = 3$: $m_{k,E}(X;c)/X^{c +1/2}$\\ Top to bottom $c =$ 0.3, 0.4} \label{fig:14_3_acc_c}
\end{subfigure}
\hspace*{-.7cm}
\begin{subfigure}[b]{0.4\linewidth}
\includegraphics[width=\linewidth]{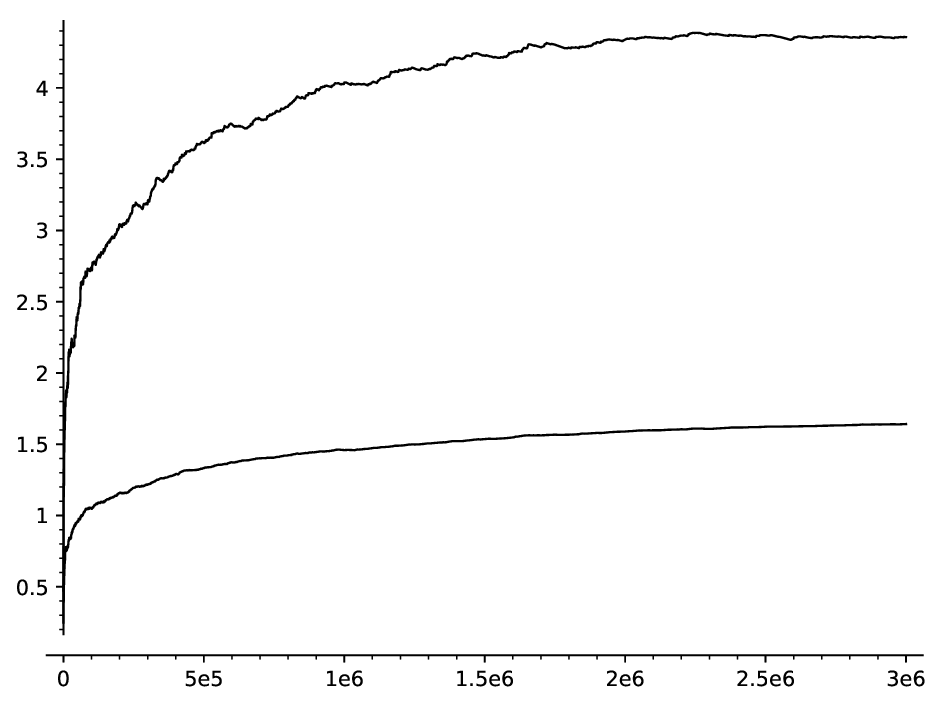}
\caption{11a1 $k = 5$: $m_{k,E}(X;c)/X^{c}\log(X)$\\ Top to bottom $c =$ 0.3, 0.5} \label{fig:11_5_acc_c}
\end{subfigure}\hspace*{\fill}
\begin{subfigure}[b]{0.4\linewidth}
\includegraphics[width=\linewidth]{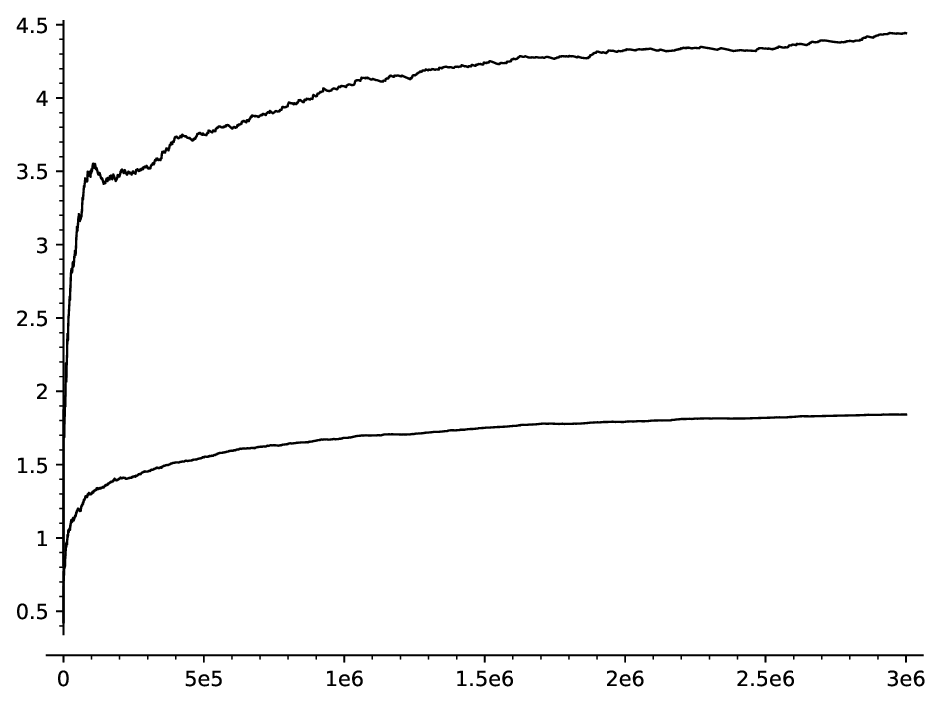}
\caption{14a1 $k = 5$: $m_{k,E}(X;c)/X^{c}\log(X)$\\ Top to bottom $c =$ 0.3, 0.5} \label{fig:14_5_acc_c}
\end{subfigure}
\hspace*{-.7cm}
\begin{subfigure}[b]{0.4\linewidth}
\includegraphics[width=\linewidth]{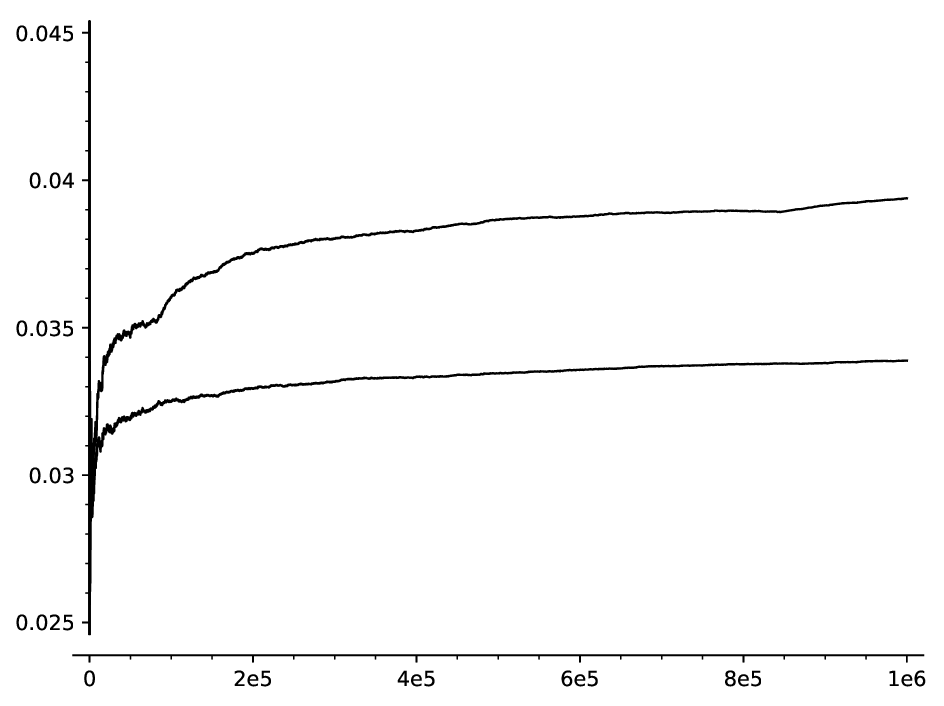}
\caption{11a1 $k = 6$: $m_{k,E}(X;c)/X^{c+1/2}\log^2(X)$\\ Top to bottom $c =$ 0.3, 0.4} \label{fig:11_6_acc_c}
\end{subfigure}\hspace*{\fill}
\begin{subfigure}[b]{0.4\linewidth}
\includegraphics[width=\linewidth]{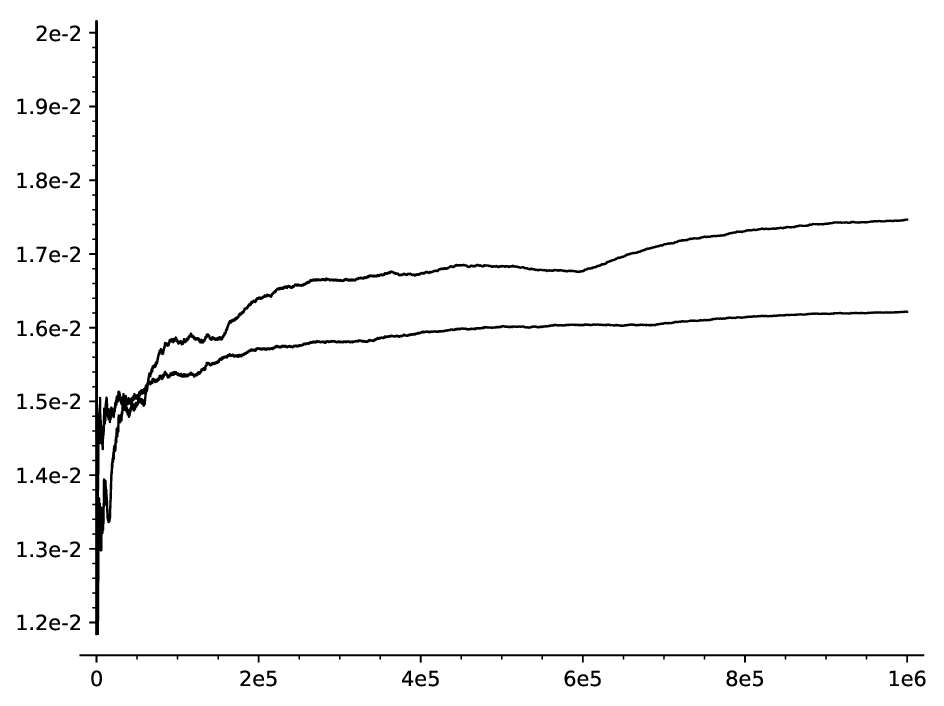}
\caption{14a1 $k = 6$: $m_{k,E}(X;c)/X^{c +1/2}\log^2(X)$\\ Top to bottom $c =$ 0.3, 0.4} \label{fig:14_6_acc_c}
\end{subfigure}
\caption{Ratio~\eqref{ratio_c} for $k = 3, 5, 6$ and $\phi(k)/4 -1 < c \le \phi(k)/4$} \label{fig:c_11_14_acc_3_5_6}
\end{figure}

\clearpage

\begin{figure}[t!] % "[t!]" placement specifier just for this example
\hspace*{-.7cm}
\begin{subfigure}[b]{0.4\linewidth}
\includegraphics[width=\linewidth]{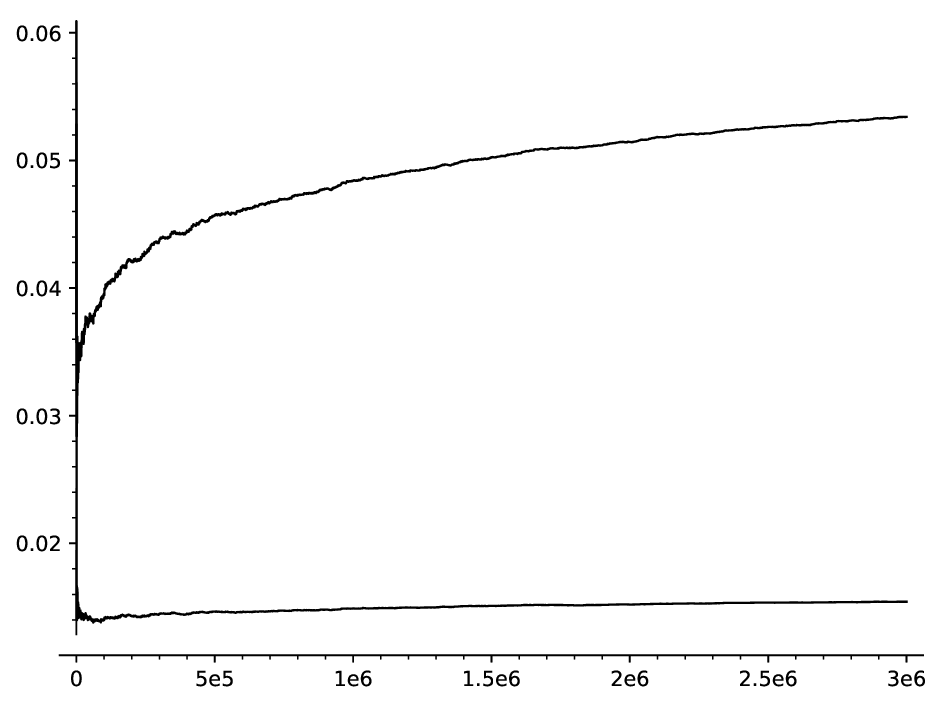}
\caption{11a1 $k = 7$: $m_{k,E}(X;c)/X^{c-1/2}\log^2(X)$\\ Top to bottom $c =$ 1.2, 1.3} \label{fig:11_7_acc_c}
\end{subfigure}\hspace*{\fill}
\begin{subfigure}[b]{0.4\linewidth}
\includegraphics[width=\linewidth]{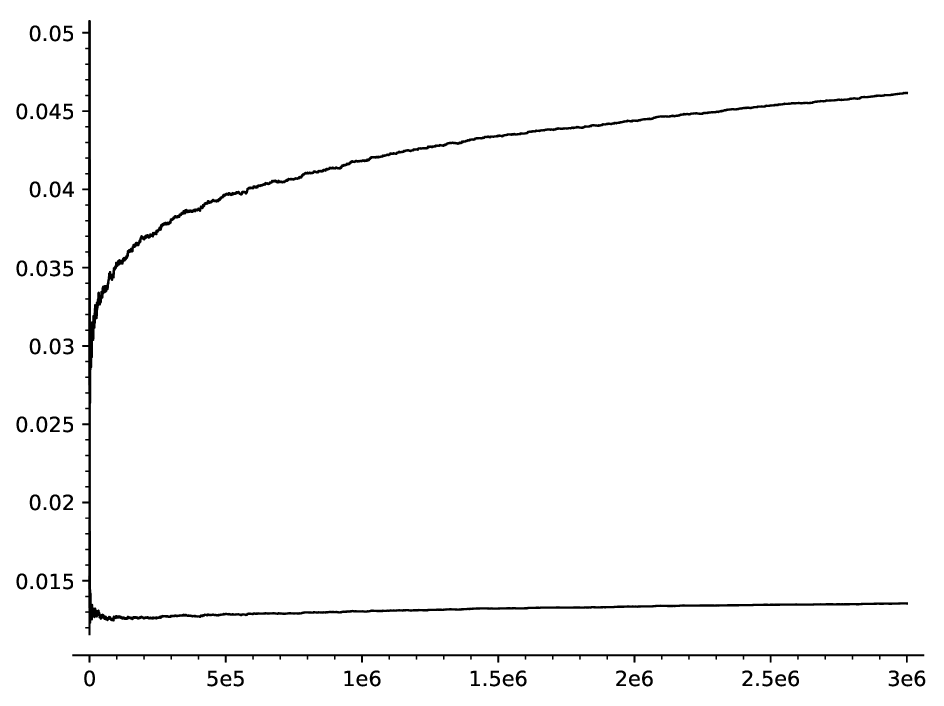}
\caption{14a1 $k = 7$: $m_{k,E}(X;c)/X^{c-1/2}\log^2(X)$\\ Top to bottom $c =$ 1.2, 1.3} \label{fig:14_7_acc_c}
\end{subfigure}
\hspace*{-.7cm}
\begin{subfigure}[b]{0.4\linewidth}
\includegraphics[width=\linewidth]{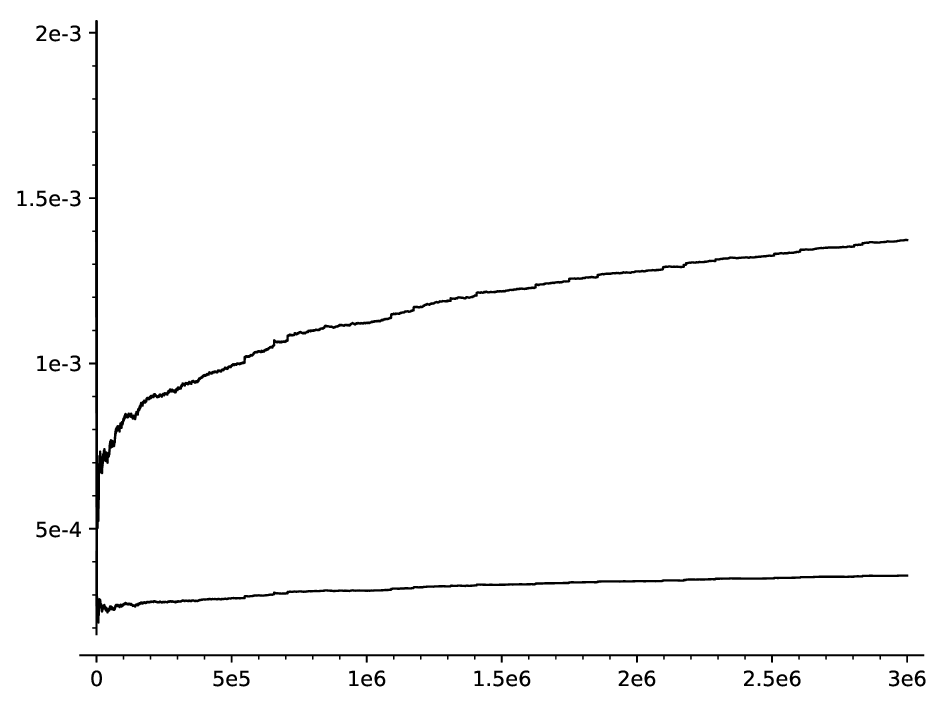}
\caption{11a1 $k = 13$: $m_{k,E}(X;c)/X^{c-2}\log^5(X)$\\ Top to bottom $c =$ 2.4, 2.5} \label{fig:11_13_acc_c}
\end{subfigure}\hspace*{\fill}
\begin{subfigure}[b]{0.4\linewidth}
\includegraphics[width=\linewidth]{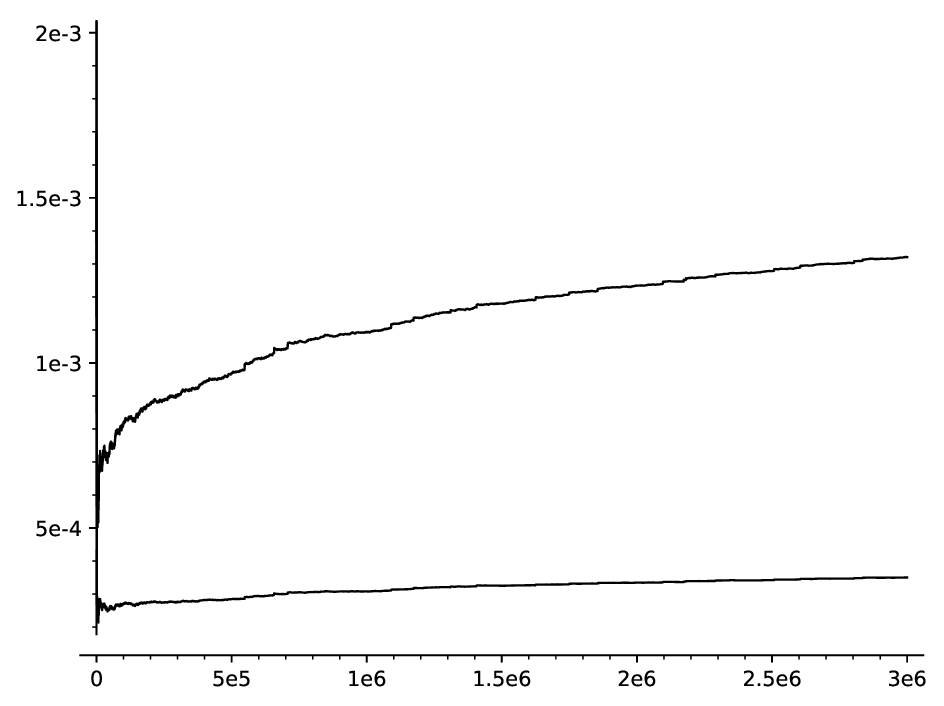}
\caption{14a1 $k = 13$: $m_{k,E}(X;c)/X^{c-2}\log^5(X)$\\ Top to bottom $c =$ 2.4, 2.5} \label{fig:14_13_acc_c}
\end{subfigure}
\caption{Ratio~\eqref{ratio_c} for $k = 7, 13$ and $\phi(k)/4 -1 \le c \le \phi(k)/4$} \label{fig:c_11_14_acc_7_13}
\end{figure}

\clearpage

\begin{figure}[t!] % "[t!]" placement specifier just for this example
\hspace*{-.7cm}
\begin{subfigure}[b]{0.4\linewidth}
\includegraphics[width=\linewidth]{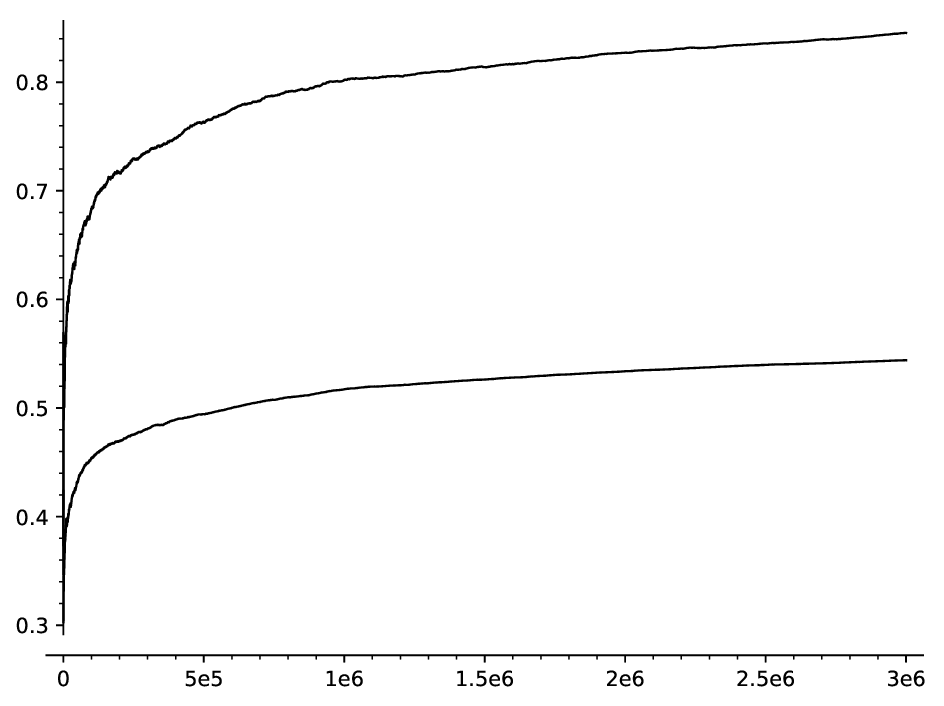}
\caption{15a1 $k = 3$: $m_{k,E}(X;c)/X^{c +1/2}$\\ Top to bottom $c =$ 0.3, 0.4} \label{fig:15_3_acc_c}
\end{subfigure}\hspace*{\fill}
\begin{subfigure}[b]{0.4\linewidth}
\includegraphics[width=\linewidth]{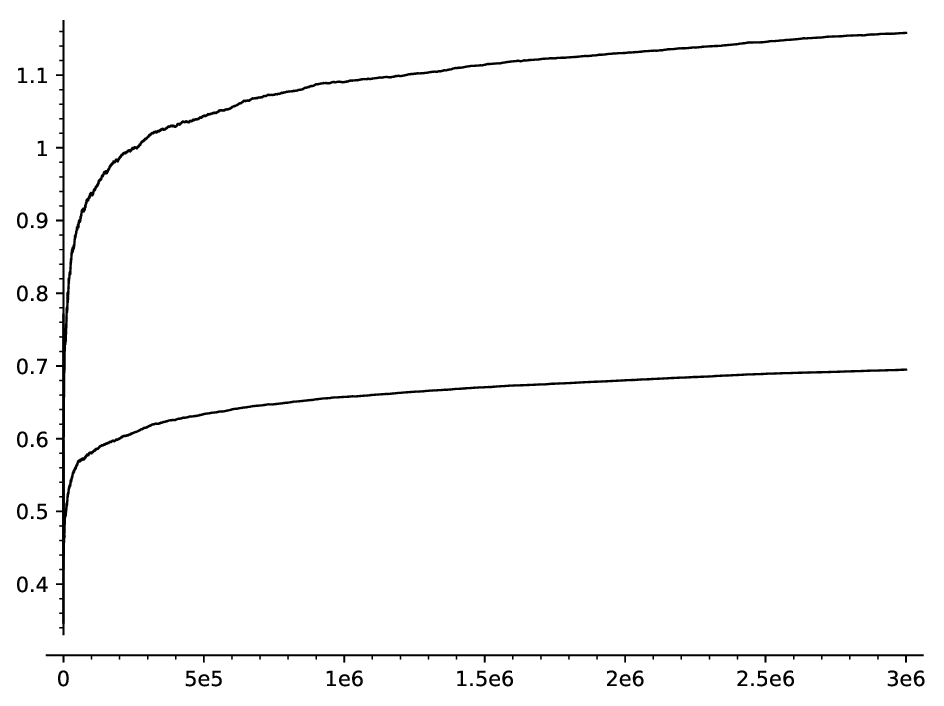}
\caption{17a1 $k = 3$: $m_{k,E}(X;c)/X^{c +1/2}$\\ Top to bottom $c =$ 0.3, 0.4} \label{fig:17_3_acc_c}
\end{subfigure}
\hspace*{-.7cm}
\begin{subfigure}[b]{0.4\linewidth}
\includegraphics[width=\linewidth]{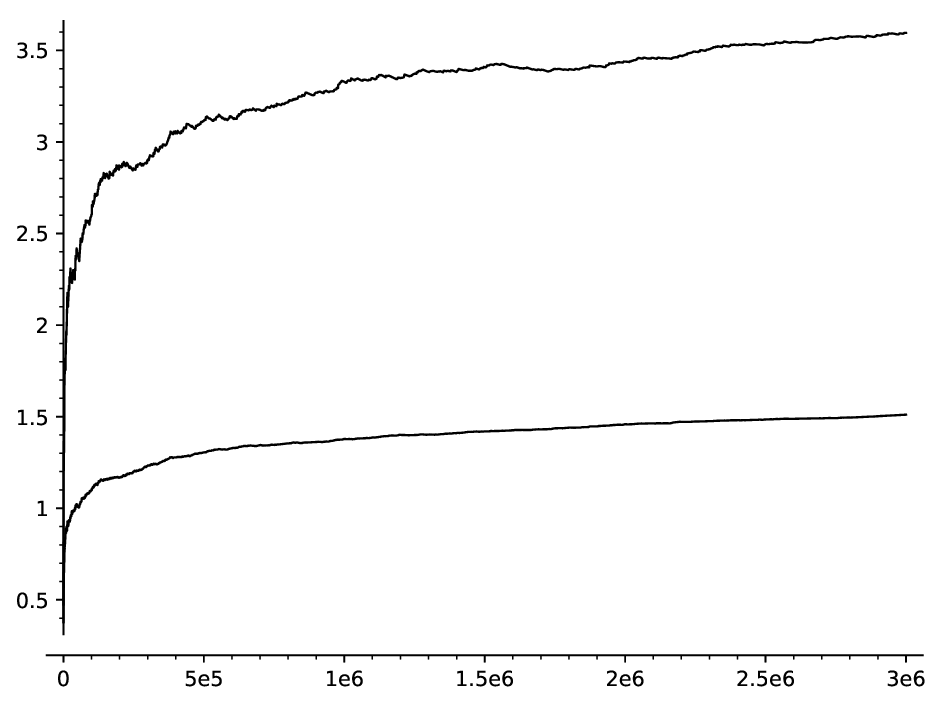}
\caption{15a1 $k = 5$: $m_{k,E}(X;c)/X^{c}\log(X)$\\ Top to bottom $c =$ 0.3, 0.5} \label{fig:15_5_acc_c}
\end{subfigure}\hspace*{\fill}
\begin{subfigure}[b]{0.4\linewidth}
\includegraphics[width=\linewidth]{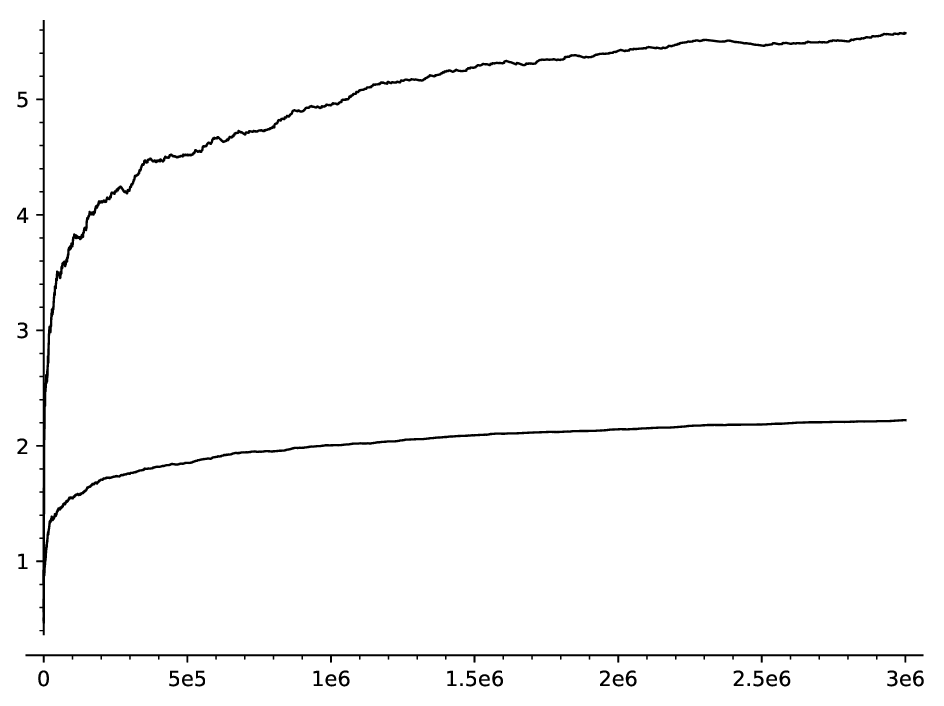}
\caption{17a1 $k = 5$: $m_{k,E}(X;c)/X^{c}\log(X)$\\ Top to bottom $c =$ 0.3, 0.5} \label{fig:17_5_acc_c}
\end{subfigure}
\hspace*{-.7cm}
\begin{subfigure}[b]{0.4\linewidth}
\includegraphics[width=\linewidth]{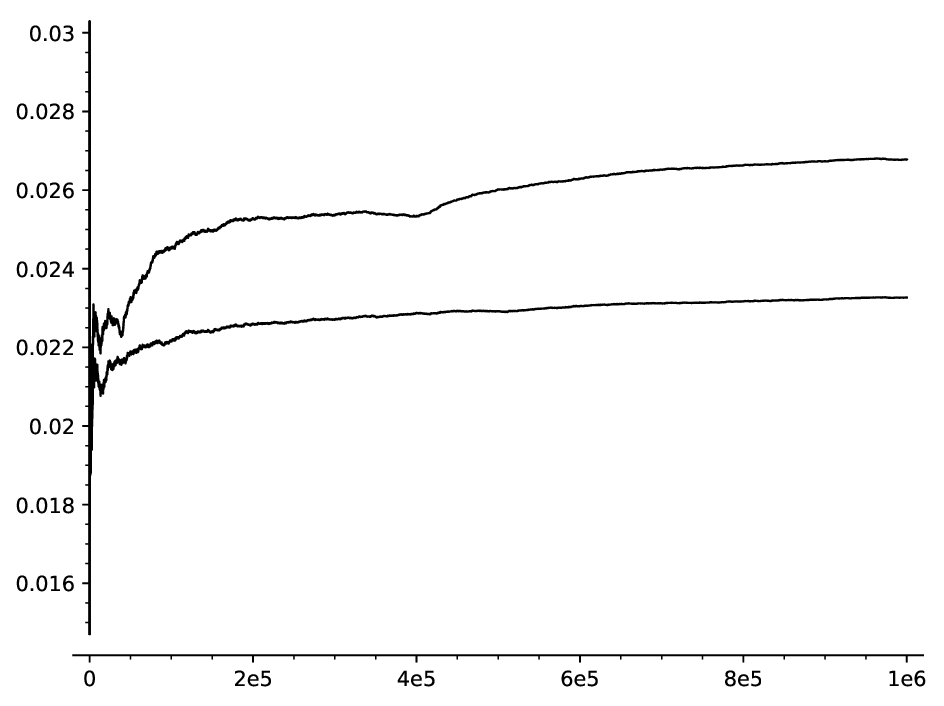}
\caption{15a1 $k = 6$: $m_{k,E}(X;c)/X^{c+1/2}\log^2(X)$\\ Top to bottom $c =$ 0.3, 0.4} \label{fig:15_6_acc_c}
\end{subfigure}\hspace*{\fill}
\begin{subfigure}[b]{0.4\linewidth}
\includegraphics[width=\linewidth]{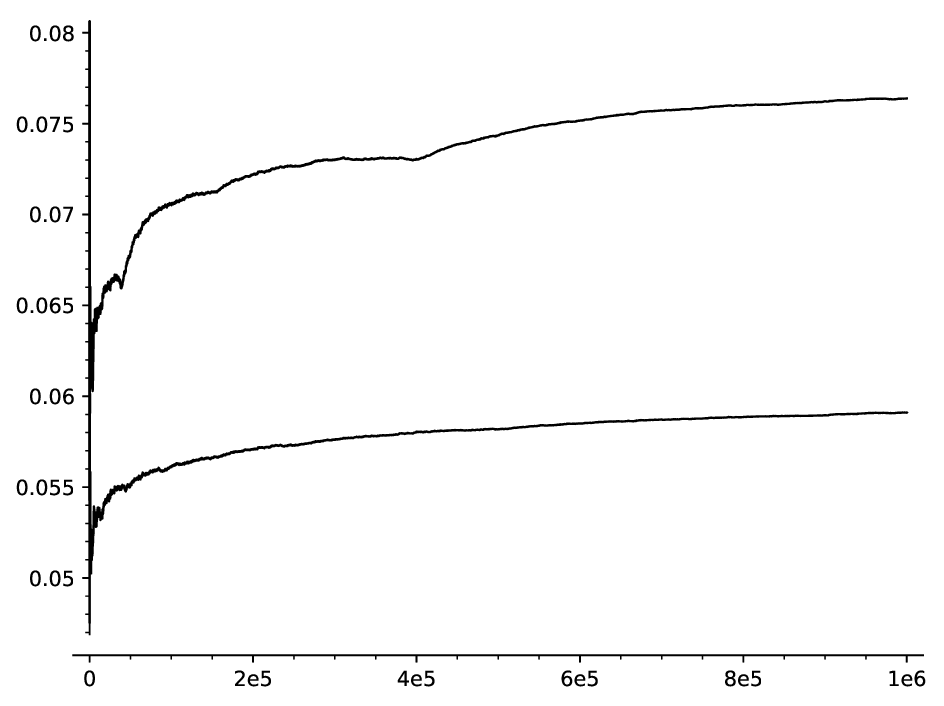}
\caption{17a1 $k = 6$: $m_{k,E}(X;c)/X^{c +1/2}\log^2(X)$\\ Top to bottom $c =$ 0.3, 0.4} \label{fig:17_6_acc_c}
\end{subfigure}
\caption{Ratio~\eqref{ratio_c} for $k = 3, 5, 6$ and $\phi(k)/4 -1 < c \le \phi(k)/4$} \label{fig:c_15_17_acc_3_5_6}
\end{figure}

\clearpage

\begin{figure}[t!] % "[t!]" placement specifier just for this example
\hspace*{-.7cm}
\begin{subfigure}[b]{0.4\linewidth}
\includegraphics[width=\linewidth]{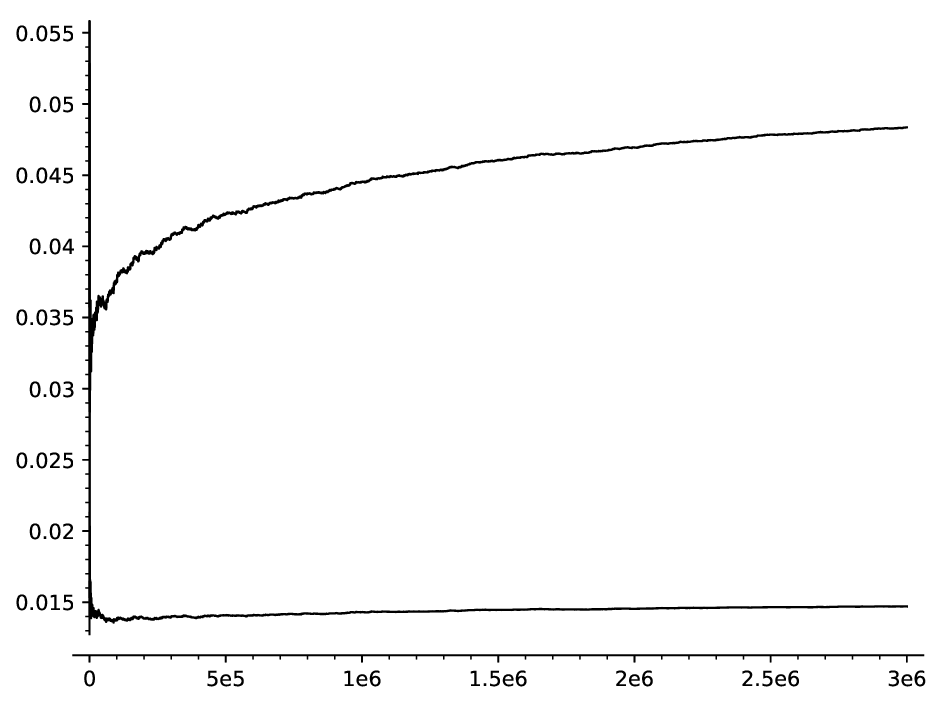}
\caption{15a1 $k = 7$: $m_{k,E}(X;c)/X^{c-1/2}\log^2(X)$\\ Top to bottom $c =$ 1.2, 1.3} \label{fig:15_7_acc_c}
\end{subfigure}\hspace*{\fill}
\begin{subfigure}[b]{0.4\linewidth}
\includegraphics[width=\linewidth]{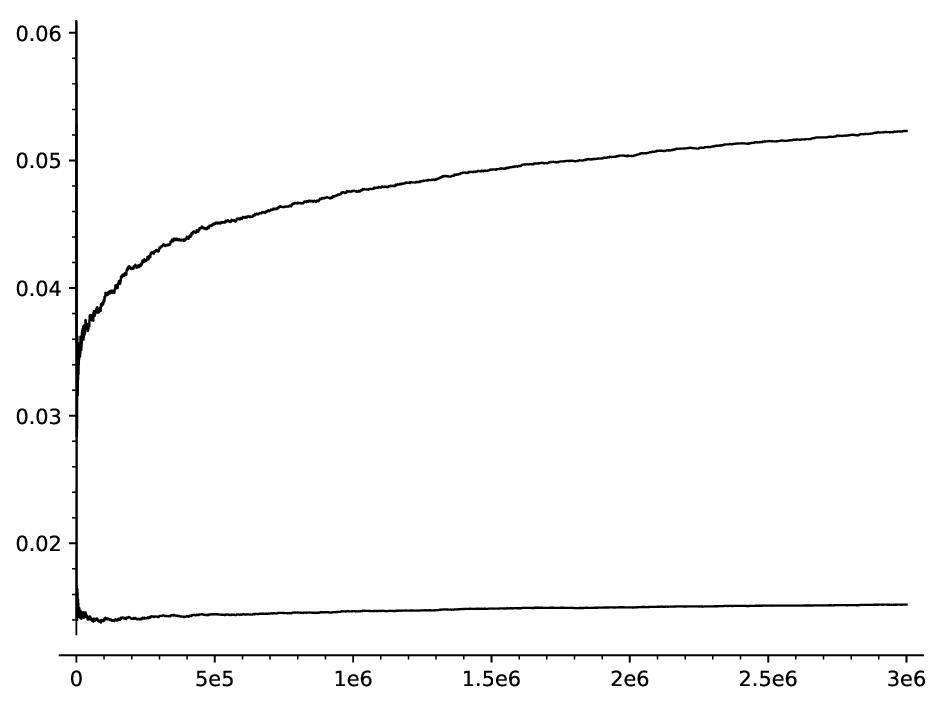}
\caption{17a1 $k = 7$: $m_{k,E}(X;c)/X^{c-1/2}\log^2(X)$\\ Top to bottom $c =$ 1.2, 1.3} \label{fig:17_7_acc_c}
\end{subfigure}
\hspace*{-.7cm}
\begin{subfigure}[b]{0.4\linewidth}
\includegraphics[width=\linewidth]{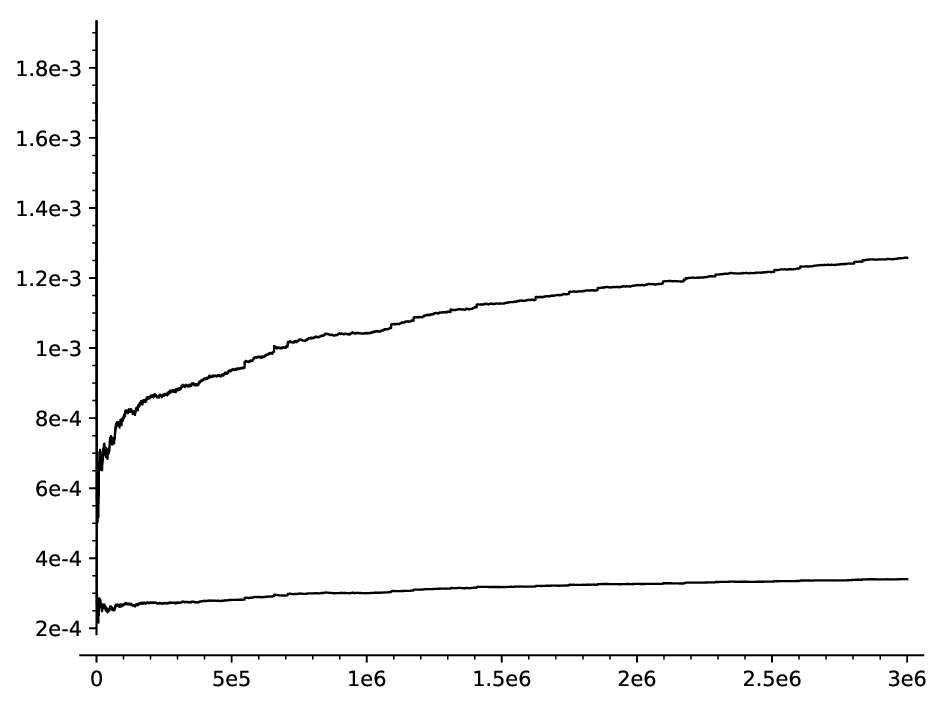}
\caption{15a1 $k = 13$: $m_{k,E}(X;c)/X^{c-2}\log^5(X)$\\ Top to bottom $c =$ 2.4, 2.5} \label{fig:15_13_acc_c}
\end{subfigure}\hspace*{\fill}
\begin{subfigure}[b]{0.4\linewidth}
\includegraphics[width=\linewidth]{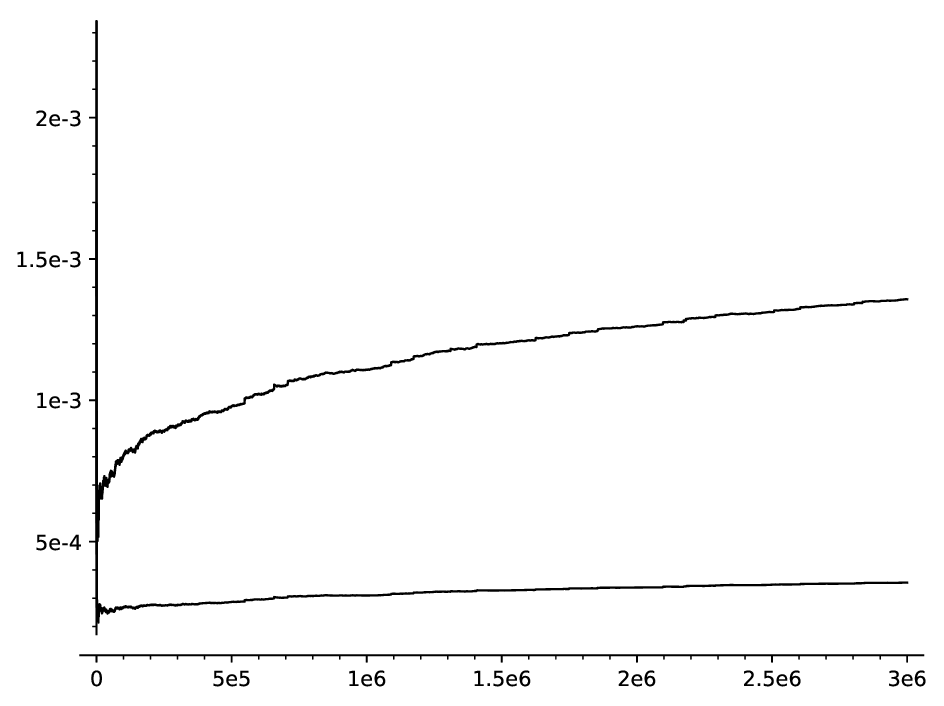}
\caption{17a1 $k = 13$: $m_{k,E}(X;c)/X^{c-2}\log^5(X)$\\ Top to bottom $c =$ 2.4, 2.5} \label{fig:17_13_acc_c}
\end{subfigure}
\caption{Ratio~\eqref{ratio_c} for $k = 7, 13$ and $\phi(k)/4 -1 \le c \le \phi(k)/4$} \label{fig:c_15_17_acc_7_13}
\end{figure}

\clearpage

\begin{figure}[t!] % "[t!]" placement specifier just for this example
\hspace*{-.7cm}
\begin{subfigure}[b]{0.4\linewidth}
\includegraphics[width=\linewidth]{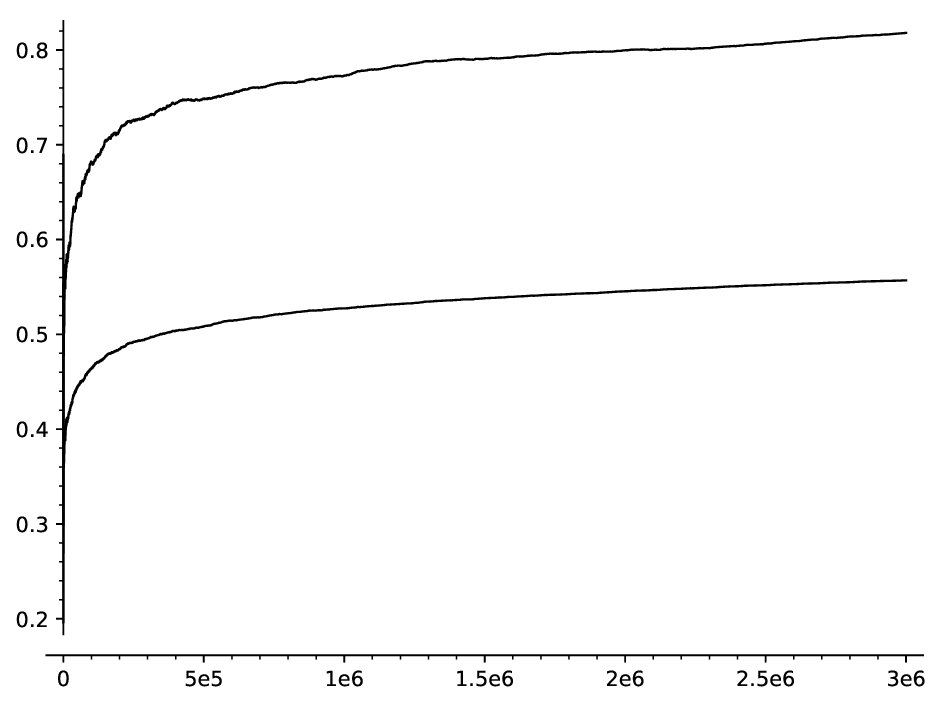}
\caption{19a1 $k = 3$: $m_{k,E}(X;c)/X^{c +1/2}$\\ Top to bottom $c =$ 0.3, 0.4} \label{fig:19_3_acc_c}
\end{subfigure}\hspace*{\fill}
\begin{subfigure}[b]{0.4\linewidth}
\includegraphics[width=\linewidth]{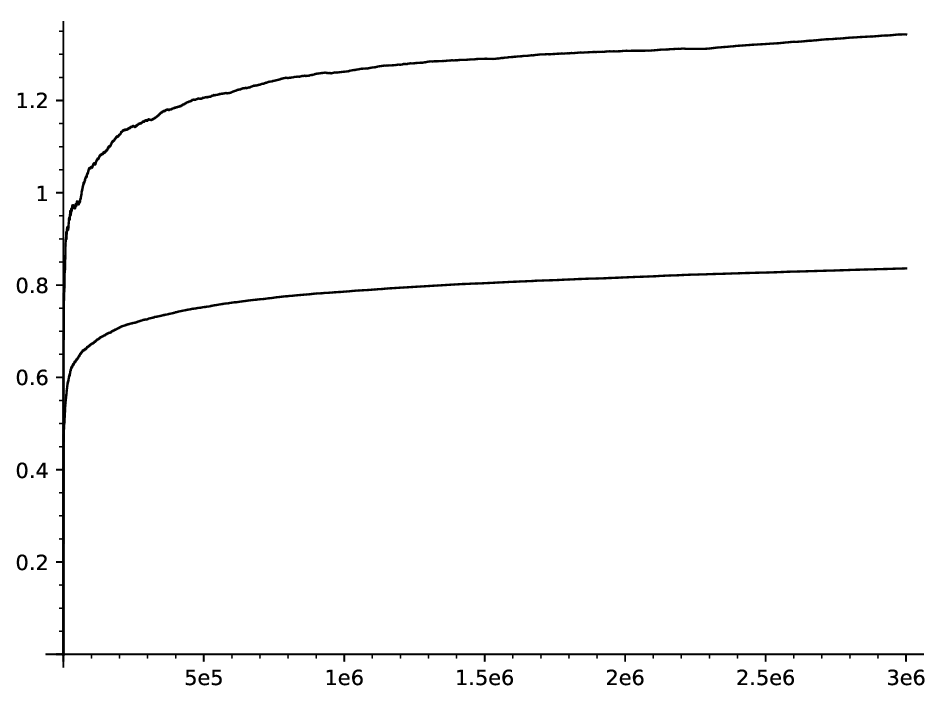}
\caption{37b1 $k = 3$: $m_{k,E}(X;c)/X^{c +1/2}$\\ Top to bottom $c =$ 0.3, 0.4} \label{fig:37_3_acc_c}
\end{subfigure}
\hspace*{-.7cm}
\begin{subfigure}[b]{0.4\linewidth}
\includegraphics[width=\linewidth]{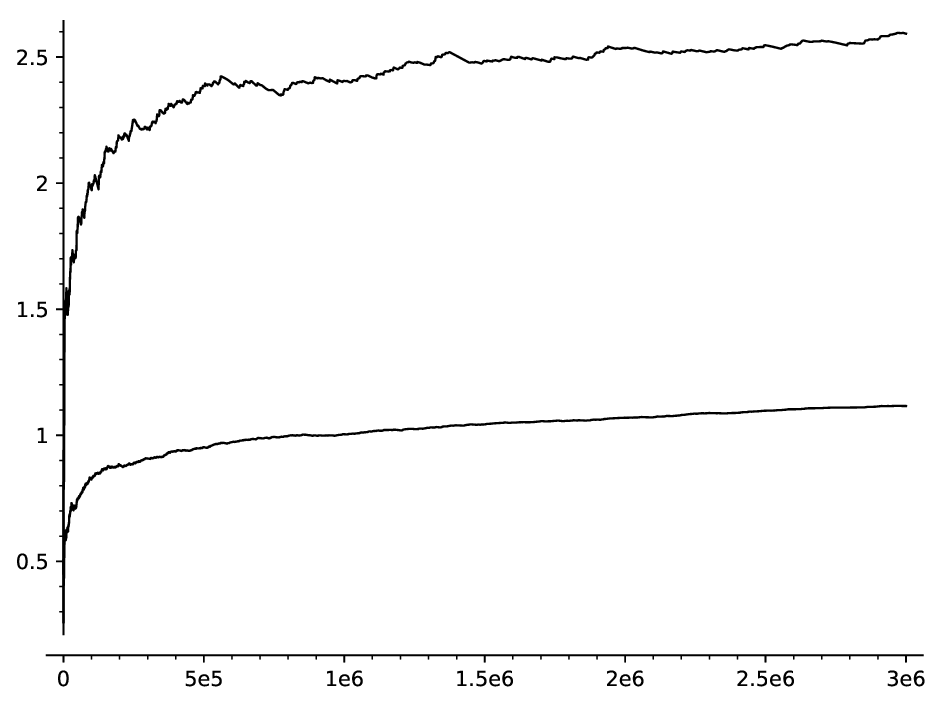}
\caption{19a1 $k = 5$: $m_{k,E}(X;c)/X^{c}\log(X)$\\ Top to bottom $c =$ 0.3, 0.5} \label{fig:19_5_acc_c}
\end{subfigure}\hspace*{\fill}
\begin{subfigure}[b]{0.4\linewidth}
\includegraphics[width=\linewidth]{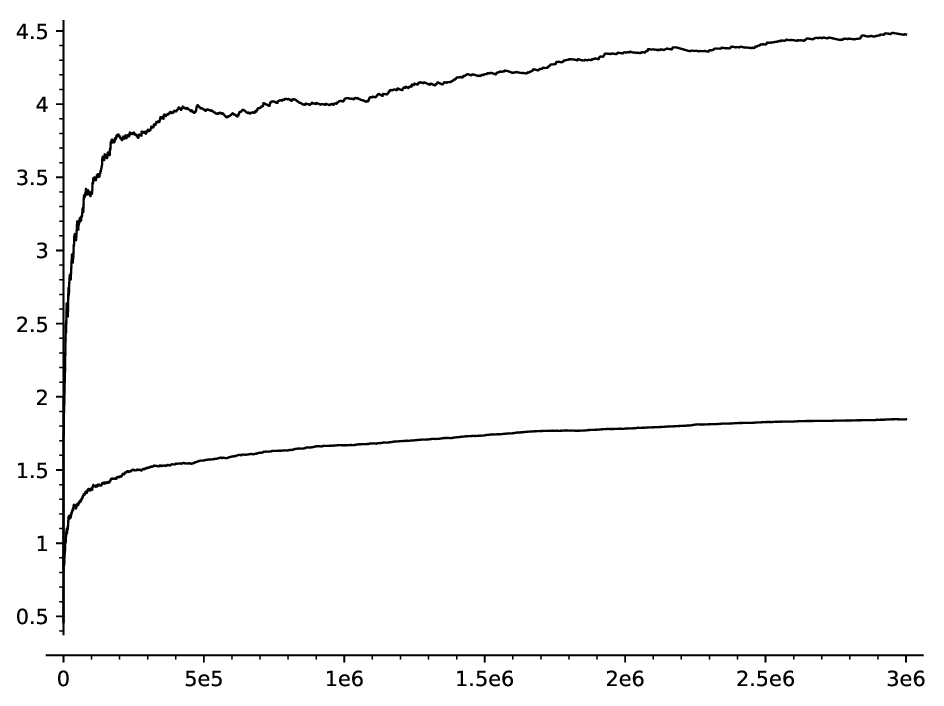}
\caption{37b1 $k = 5$: $m_{k,E}(X;c)/X^{c}\log(X)$\\ Top to bottom $c =$ 0.3, 0.5} \label{fig:37_5_acc_c}
\end{subfigure}
\hspace*{-.7cm}
\begin{subfigure}[b]{0.4\linewidth}
\includegraphics[width=\linewidth]{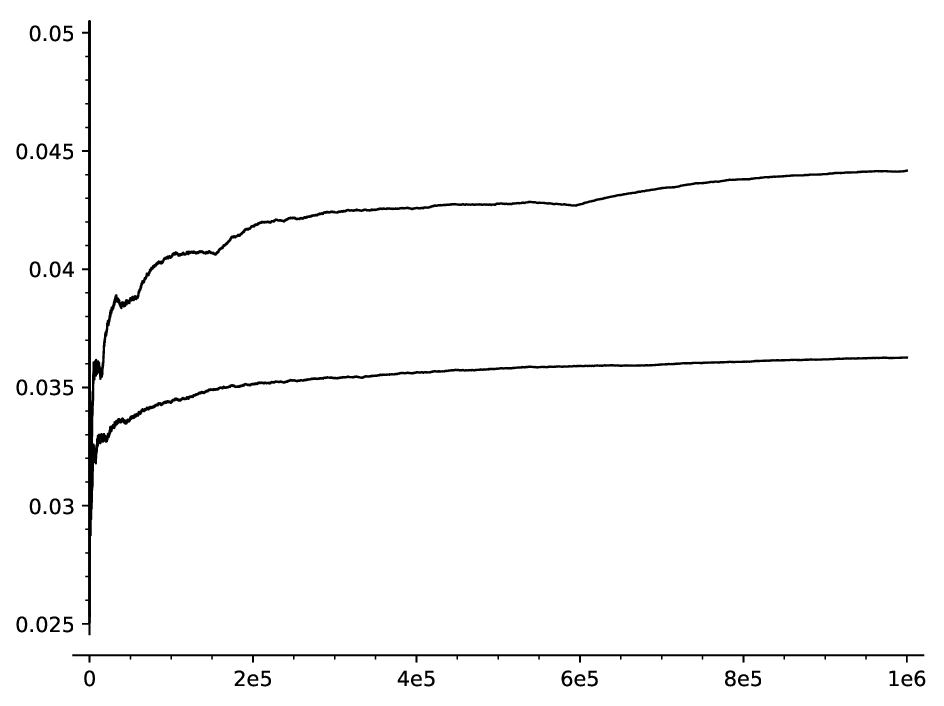}
\caption{19a1 $k = 6$: $m_{k,E}(X;c)/X^{c+1/2}\log^2(X)$\\ Top to bottom $c =$ 0.3, 0.4} \label{fig:19_6_acc_c}
\end{subfigure}\hspace*{\fill}
\begin{subfigure}[b]{0.4\linewidth}
\includegraphics[width=\linewidth]{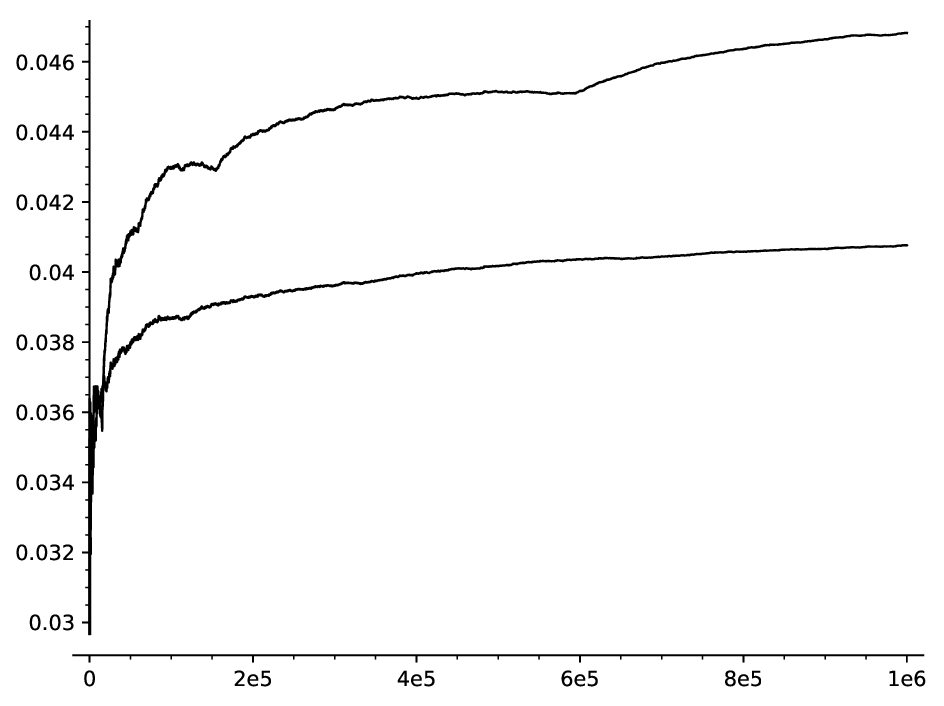}
\caption{37b1 $k = 6$: $m_{k,E}(X;c)/X^{c +1/2}\log^2(X)$\\ Top to bottom $c =$ 0.3, 0.4} \label{fig:37_6_acc_c}
\end{subfigure}
\caption{Ratio~\eqref{ratio_c} for $k = 3, 5, 6$ and $\phi(k)/4 -1 < c \le \phi(k)/4$} \label{fig:c_19_37_acc_3_5_6}
\end{figure}

\clearpage

\begin{figure}[t!] % "[t!]" placement specifier just for this example
\hspace*{-.7cm}
\begin{subfigure}[b]{0.4\linewidth}
\includegraphics[width=\linewidth]{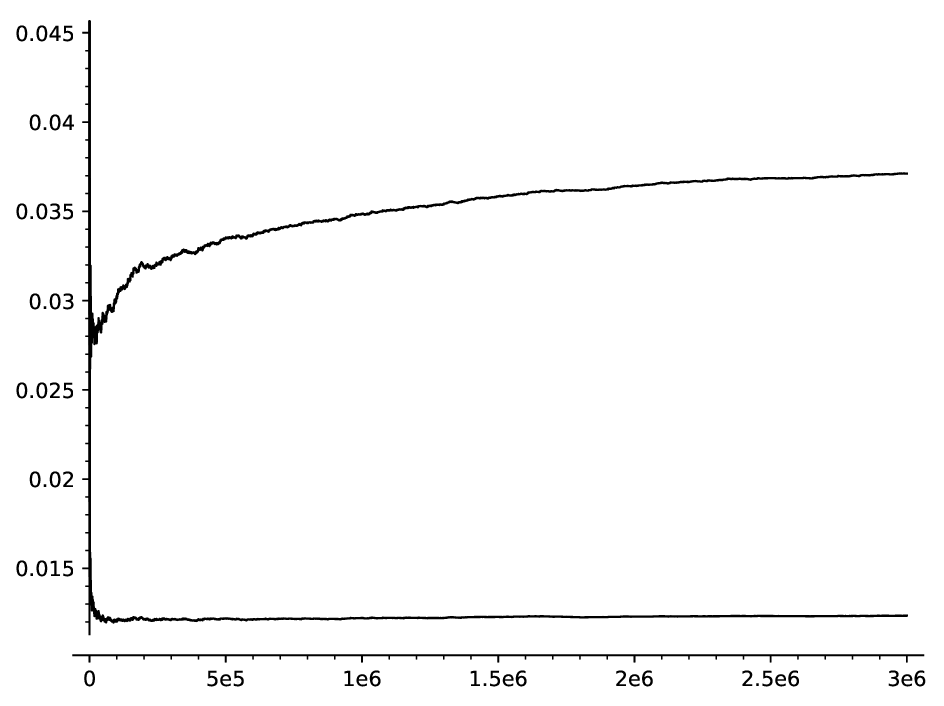}
\caption{19a1 $k = 7$: $m_{k,E}(X;c)/X^{c-1/2}\log^2(X)$\\ Top to bottom $c =$ 1.2, 1.3} \label{fig:19_7_acc_c}
\end{subfigure}\hspace*{\fill}
\begin{subfigure}[b]{0.4\linewidth}
\includegraphics[width=\linewidth]{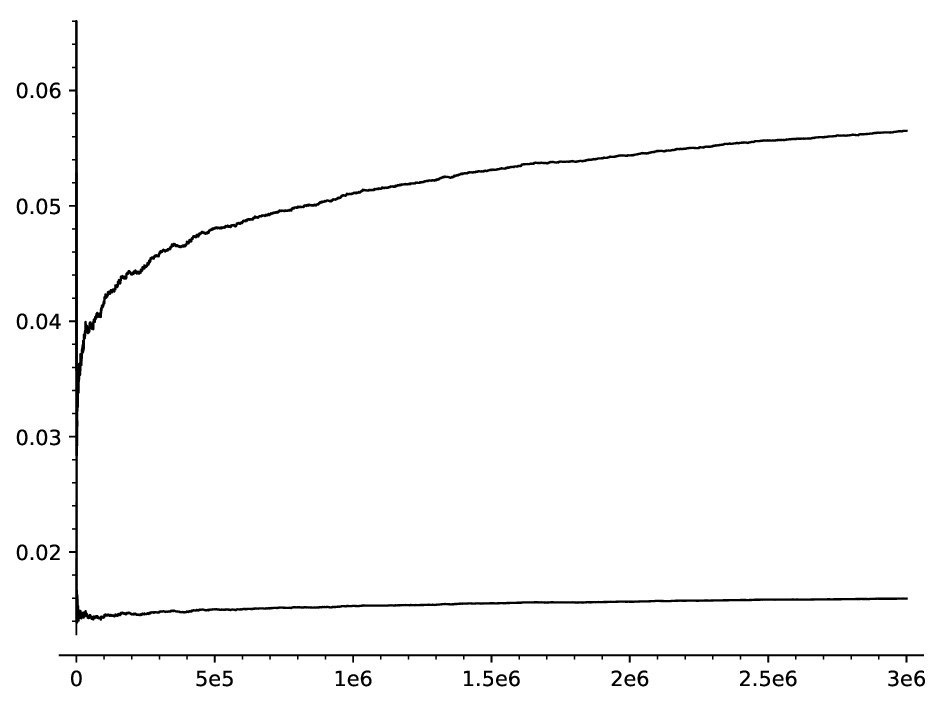}
\caption{37b1 $k = 7$: $m_{k,E}(X;c)/X^{c-1/2}\log^2(X)$\\ Top to bottom $c =$ 1.2, 1.3} \label{fig:37_7_acc_c}
\end{subfigure}
\hspace*{-.7cm}
\begin{subfigure}[b]{0.4\linewidth}
\includegraphics[width=\linewidth]{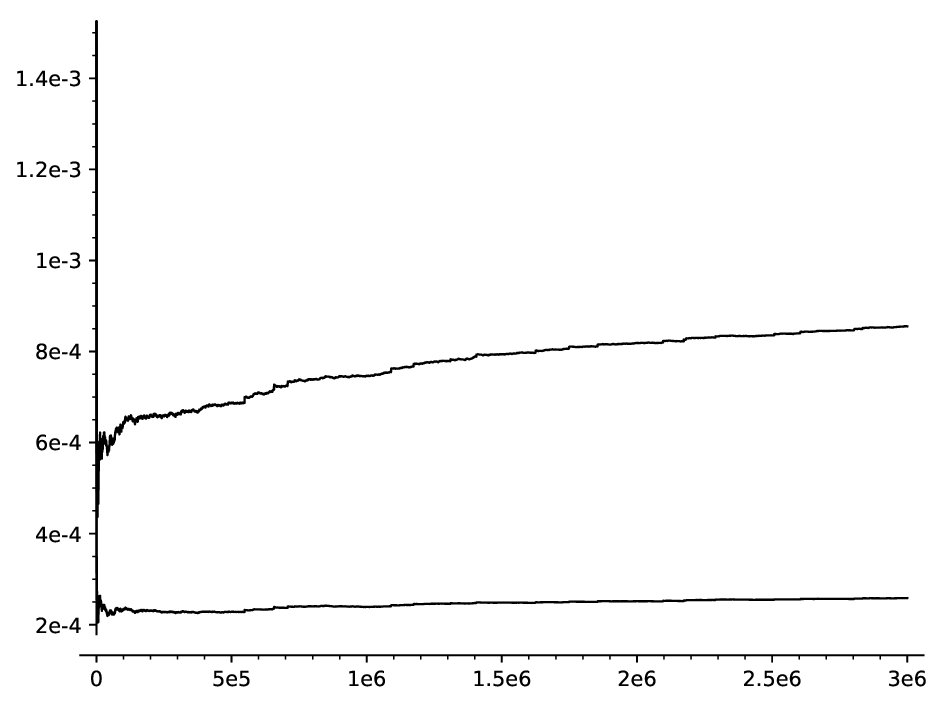}
\caption{19a1 $k = 13$: $m_{k,E}(X;c)/X^{c-2}\log^5(X)$\\ Top to bottom $c =$ 2.4, 2.5} \label{fig:19_13_acc_c}
\end{subfigure}\hspace*{\fill}
\begin{subfigure}[b]{0.4\linewidth}
\includegraphics[width=\linewidth]{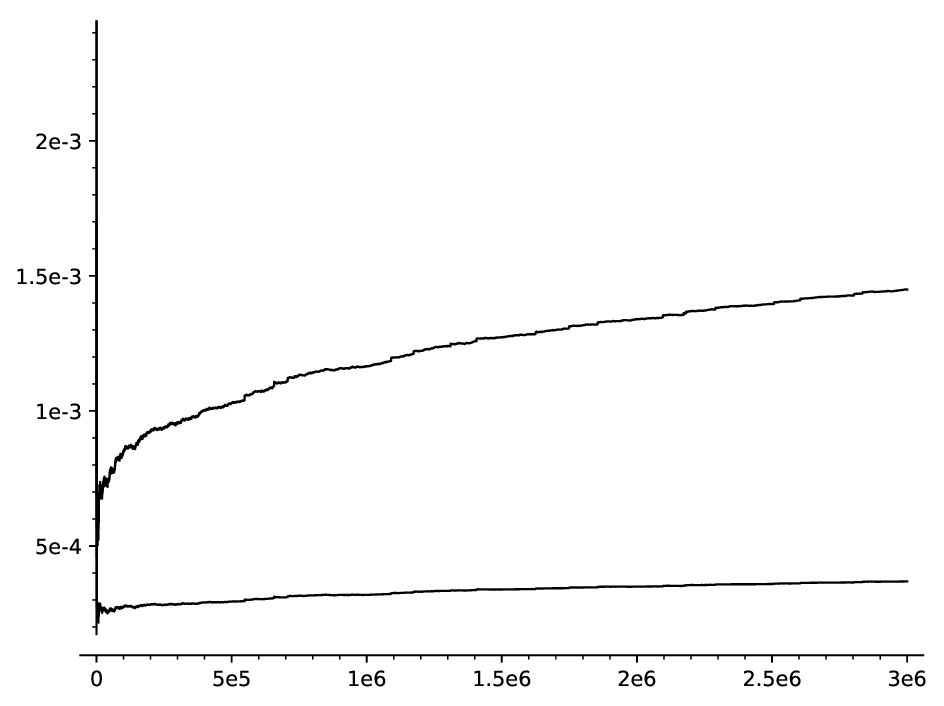}
\caption{37b1 $k = 13$: $m_{k,E}(X;c)/X^{c-2}\log^5(X)$\\ Top to bottom $c =$ 2.4, 2.5} \label{fig:37_13_acc_c}
\end{subfigure}
\caption{Ratio~\eqref{ratio_c} for $k = 7, 13$ and $\phi(k)/4 -1 \le c \le \phi(k)/4$} \label{fig:c_19_37_acc_7_13}
\end{figure}

\clearpage

\begin{figure}[t!] % "[t!]" placement specifier just for this example
\hspace*{-2.3cm}
\begin{subfigure}[b]{0.43\linewidth}
\includegraphics[width=\linewidth]{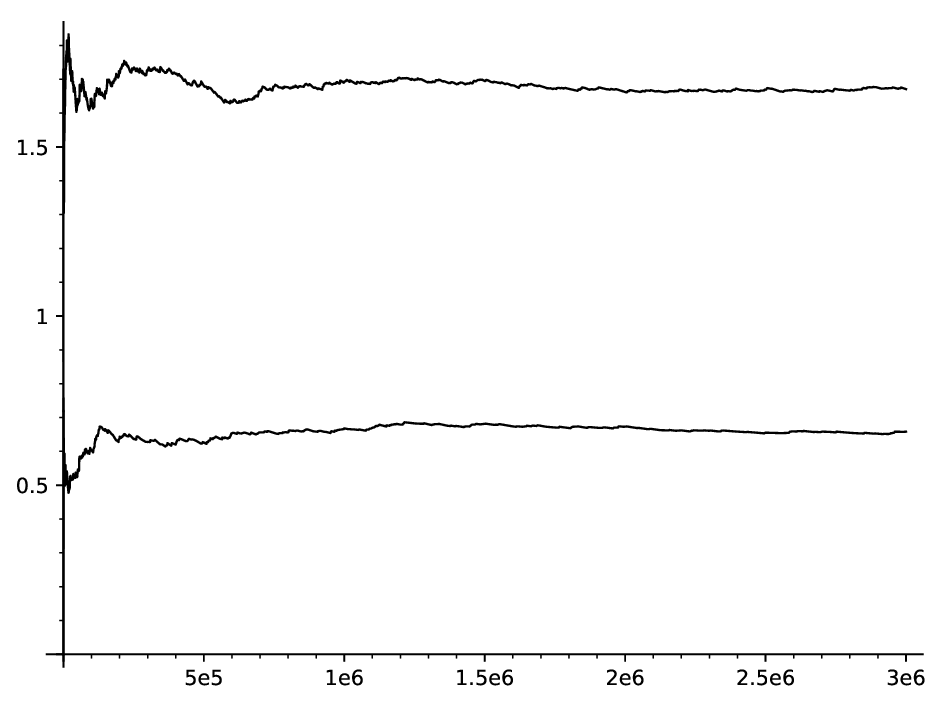}
\caption{$|l| = 1$: Top 1 bottom -1} \label{fig:11_3_A_1}
\end{subfigure}\hspace*{\fill}
\begin{subfigure}[b]{0.43\linewidth}
\includegraphics[width=\linewidth]{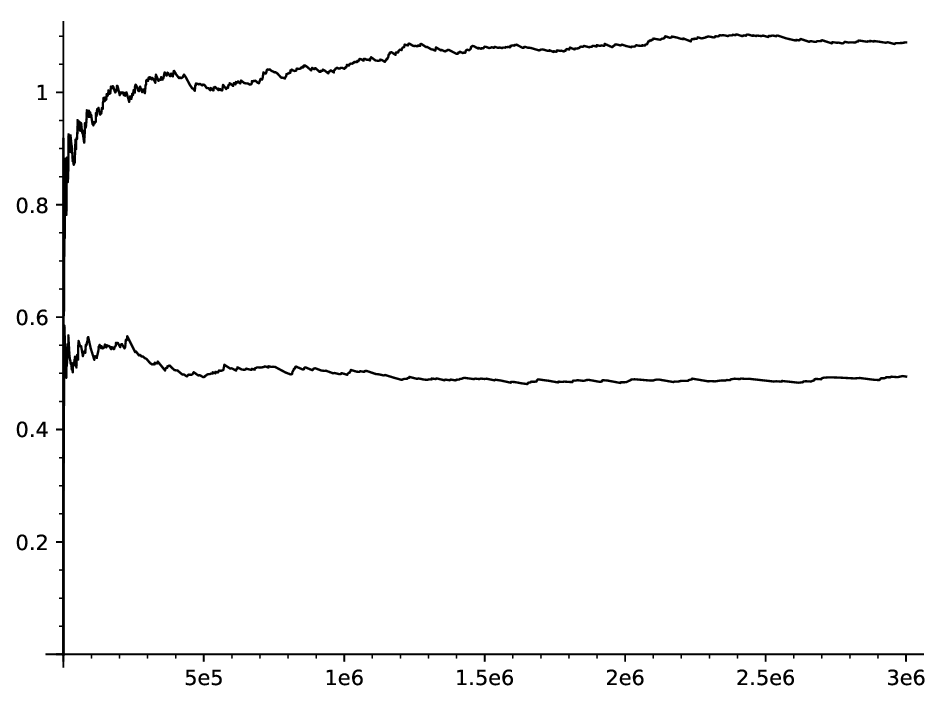}
\caption{$|l| = 2$: Top -2 bottom 2} \label{fig:11_3_A_2}
\end{subfigure}\hspace*{\fill}
\begin{subfigure}[b]{0.43\linewidth}
\includegraphics[width=\linewidth]{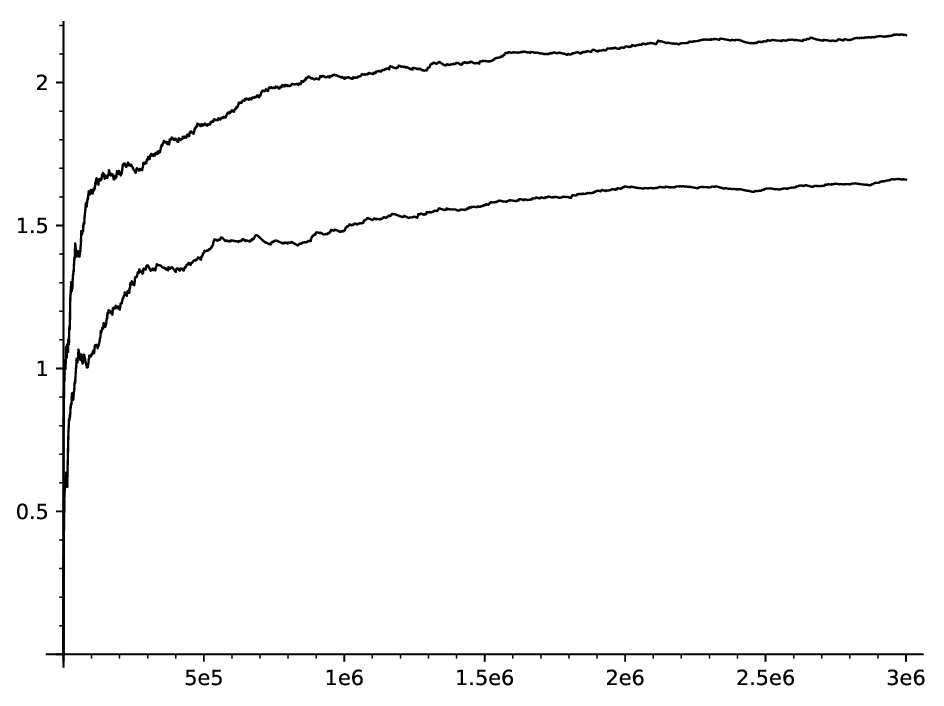}
\caption{$|l| = 3$: Top 3 bottom -3} \label{fig:11_3_A_3}
\end{subfigure}
\hspace*{-2.3cm}
\begin{subfigure}[b]{0.43\linewidth}
\includegraphics[width=\linewidth]{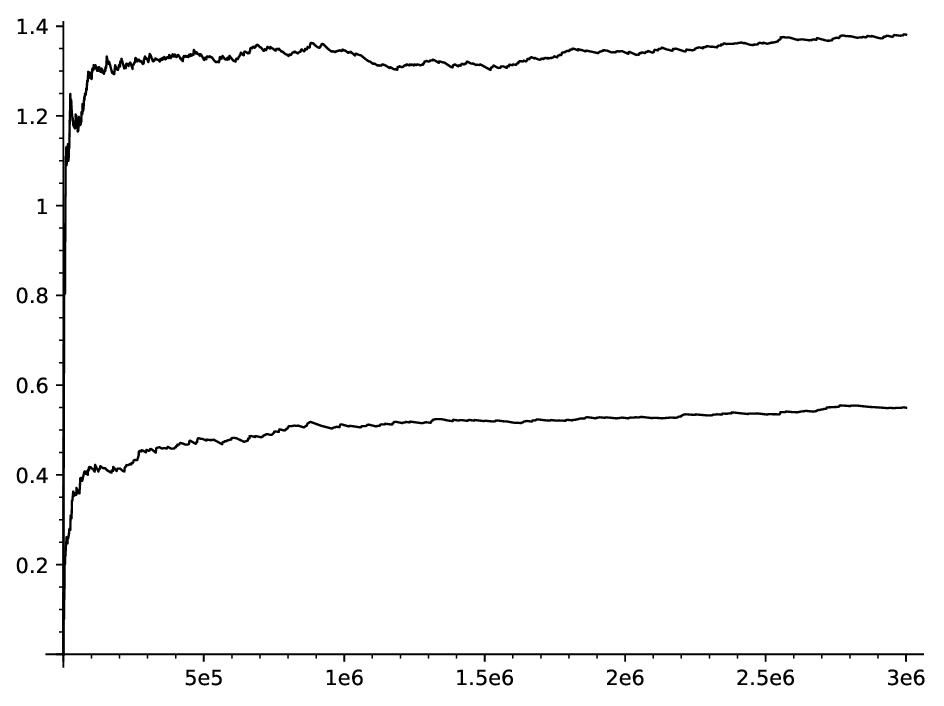}
\caption{$|l| = 4$: Top 4 bottom -4} \label{fig:11_3_A_4}
\end{subfigure}\hspace*{\fill}
\begin{subfigure}[b]{0.43\linewidth}
\includegraphics[width=\linewidth]{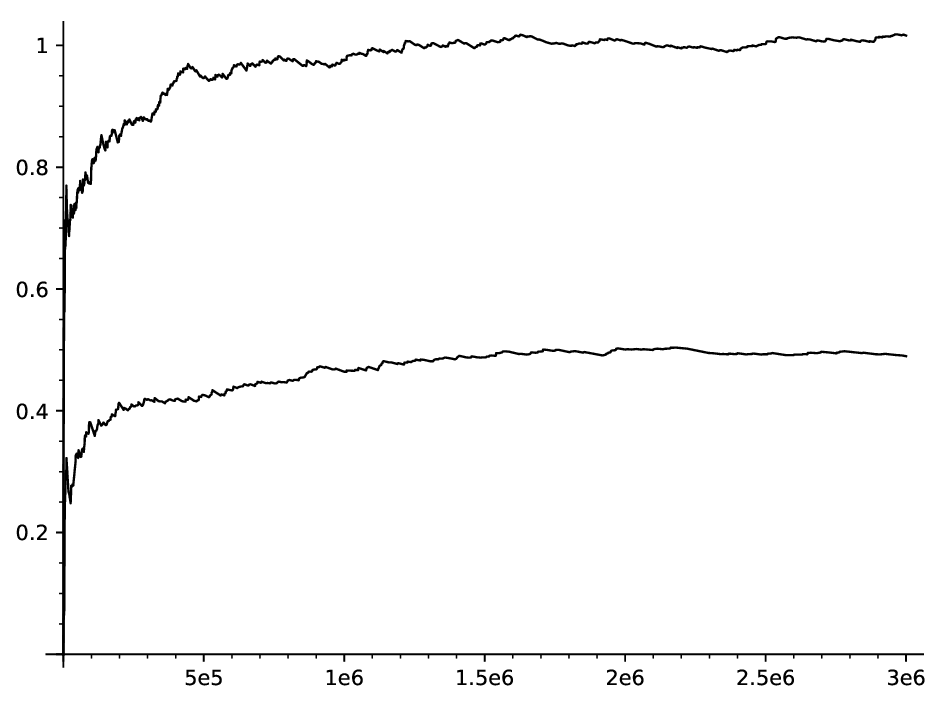}
\caption{$|l| = 5$: Top -5 bottom 5} \label{fig:11_3_A_5}
\end{subfigure}\hspace*{\fill}
\begin{subfigure}[b]{0.43\linewidth}
\includegraphics[width=\linewidth]{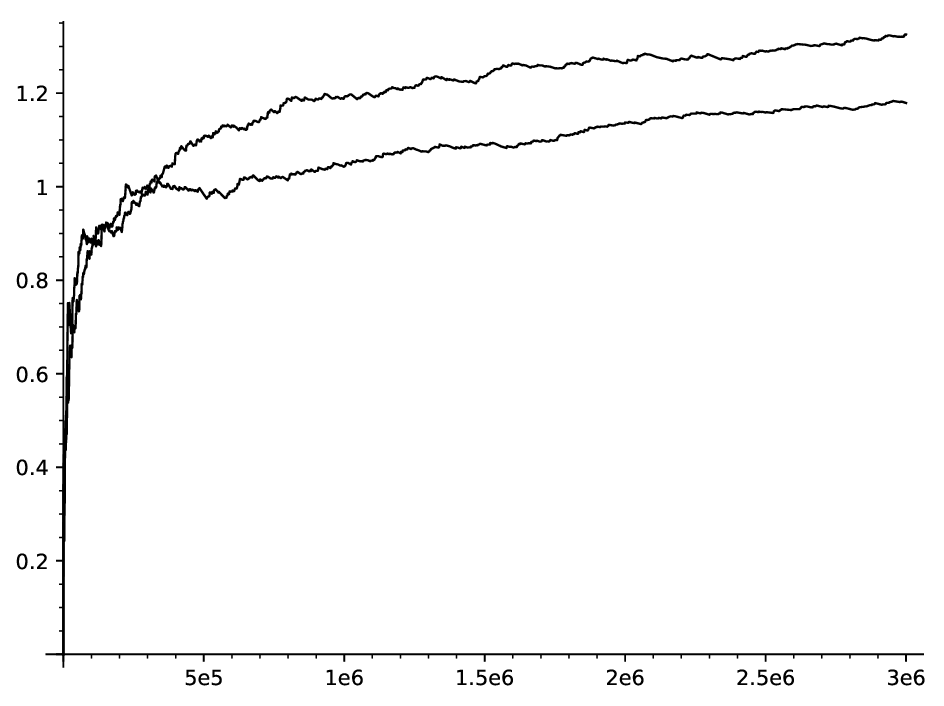}
\caption{$|l| = 6$: Top -6 bottom 6} \label{fig:11_3_A_6}
\end{subfigure}
\hspace*{-2.3cm}
\begin{subfigure}[b]{0.43\linewidth}
\includegraphics[width=\linewidth]{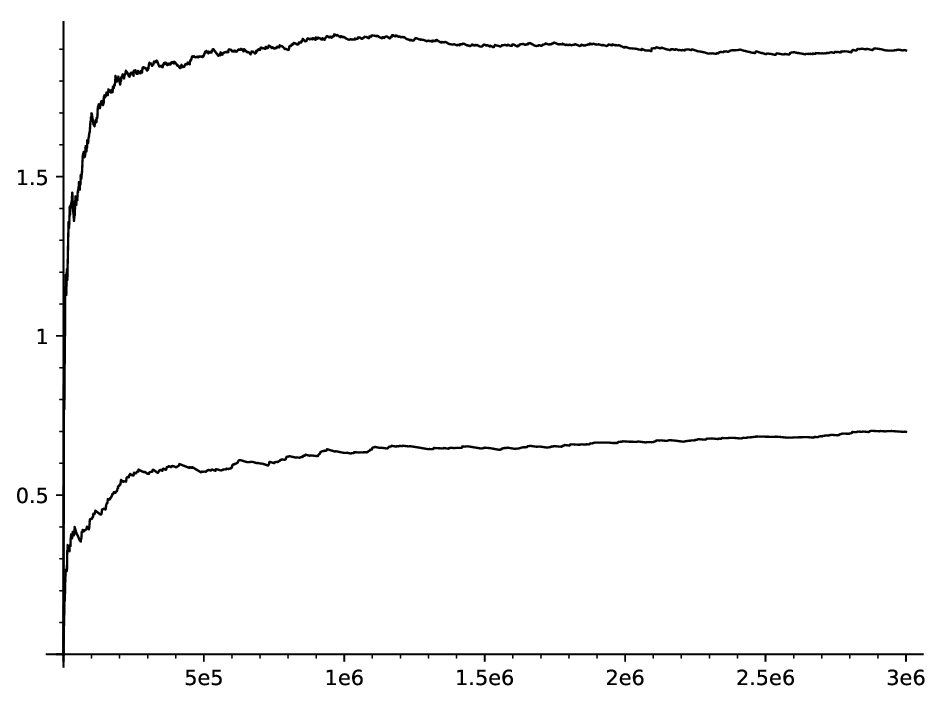}
\caption{$|l| = 7$: Top 7 bottom -7} \label{fig:11_3_A_7}
\end{subfigure}\hspace*{\fill}
\begin{subfigure}[b]{0.43\linewidth}
\includegraphics[width=\linewidth]{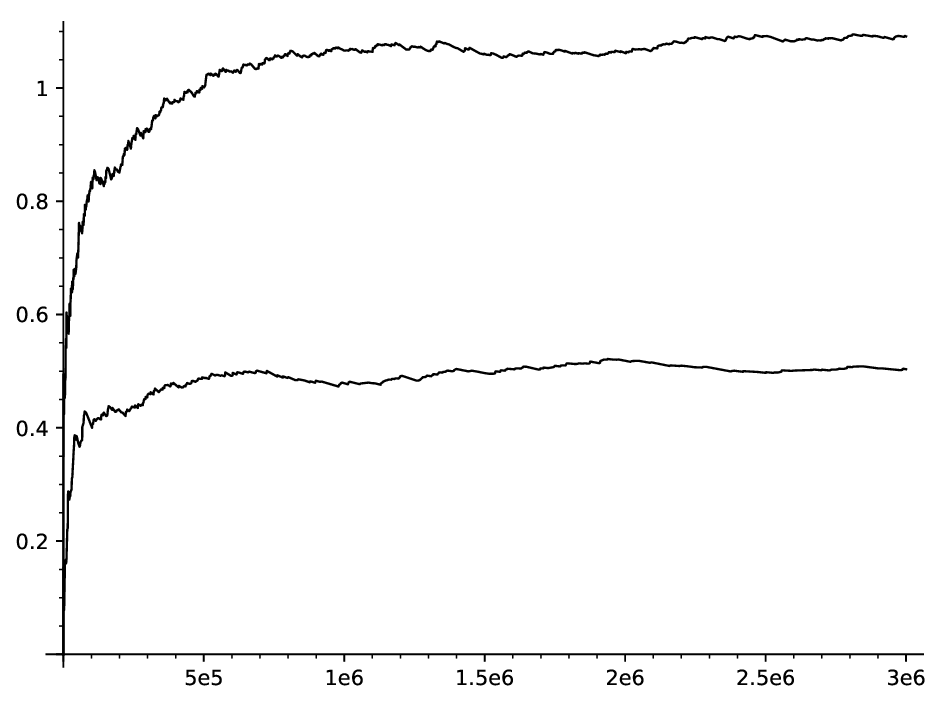}
\caption{$|l| = 8$: Top -8 bottom 8} \label{fig:11_3_A_8}
\end{subfigure}\hspace*{\fill}
\begin{subfigure}[b]{0.43\linewidth}
\includegraphics[width=\linewidth]{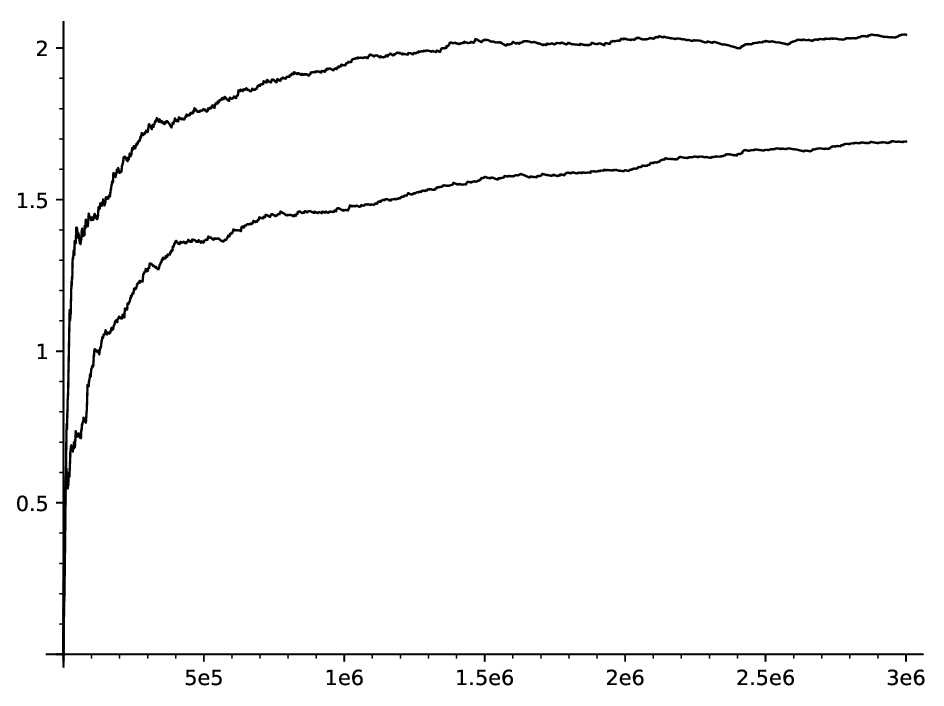}
\caption{$|l| = 9$: Top 9 bottom -9} \label{fig:11_3_A_9}
\end{subfigure}
\caption{Ratio~\eqref{ratio_A_exact} 11a1: $x(X;l)/X^{1/2}$ for $k = 3$} \label{fig:11a1_3_A_exact}
\end{figure}

\clearpage

\begin{figure}[t!] % "[t!]" placement specifier just for this example
\hspace*{-2.3cm}
\begin{subfigure}[b]{0.43\linewidth}
\includegraphics[width=\linewidth]{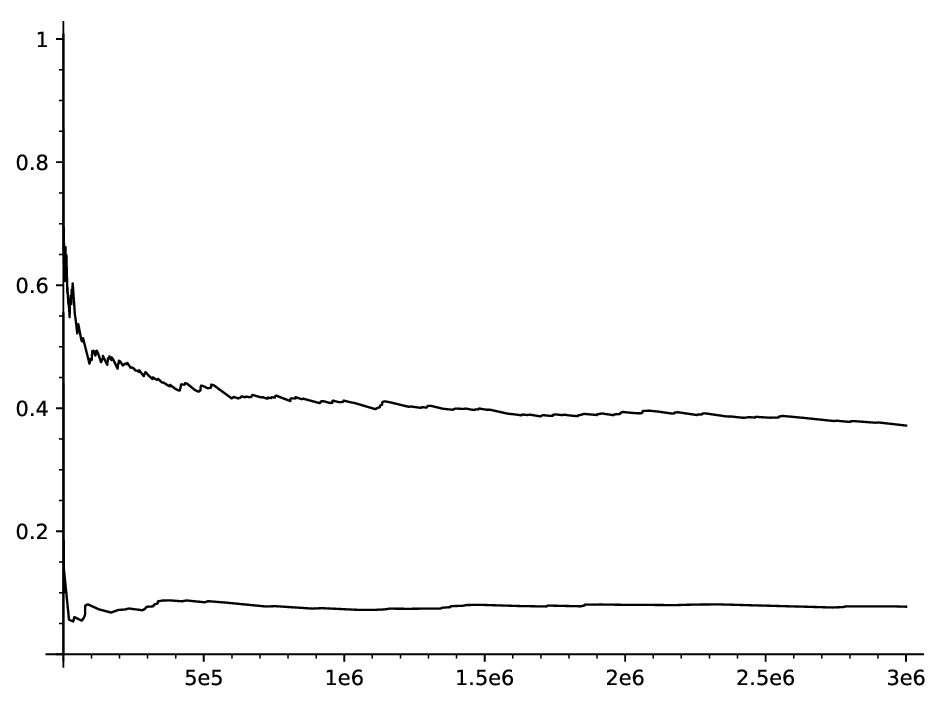}
\caption{$|l| = 1$: Top 1 bottom -1} \label{fig:14_3_A_1}
\end{subfigure}\hspace*{\fill}
\begin{subfigure}[b]{0.43\linewidth}
\includegraphics[width=\linewidth]{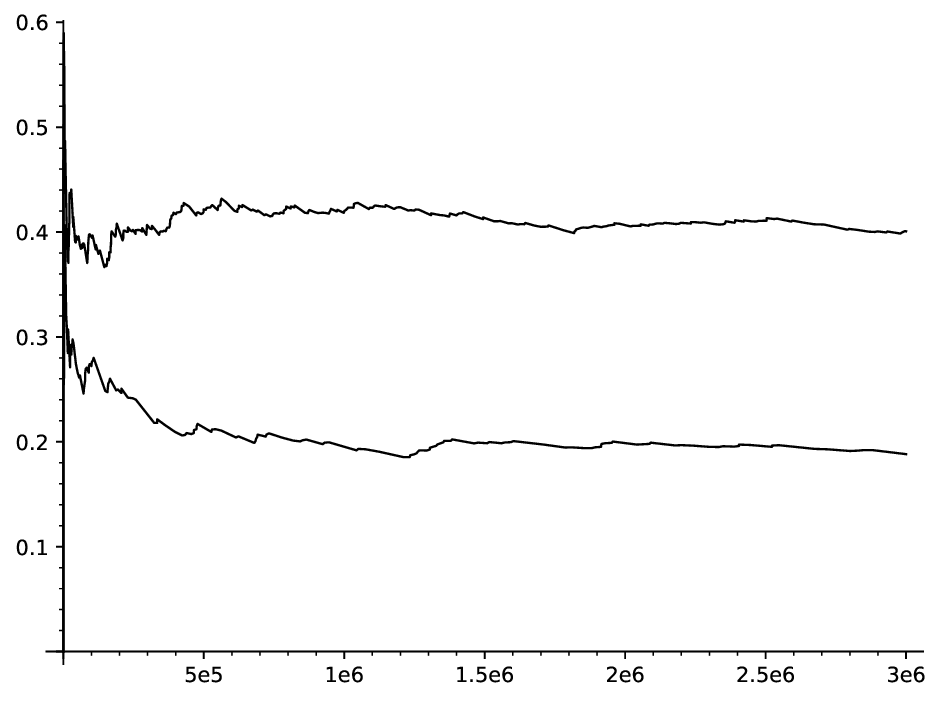}
\caption{$|l| = 2$: Top 2 bottom -2} \label{fig:14_3_A_2}
\end{subfigure}\hspace*{\fill}
\begin{subfigure}[b]{0.43\linewidth}
\includegraphics[width=\linewidth]{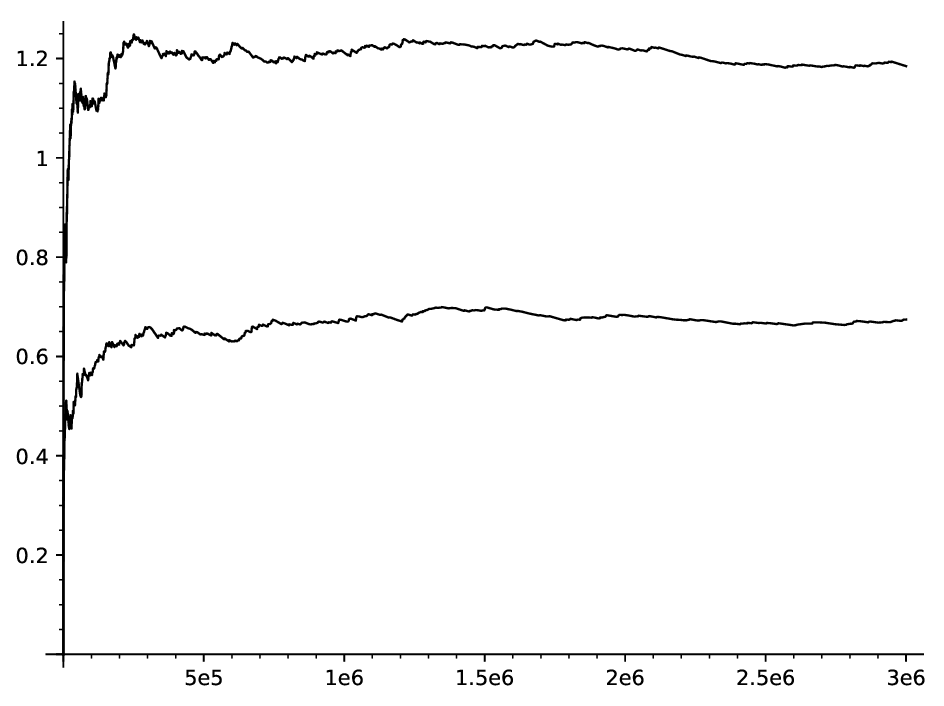}
\caption{$|l| = 3$: Top 3 bottom -3} \label{fig:14_3_A_3}
\end{subfigure}
\hspace*{-2.3cm}
\begin{subfigure}[b]{0.43\linewidth}
\includegraphics[width=\linewidth]{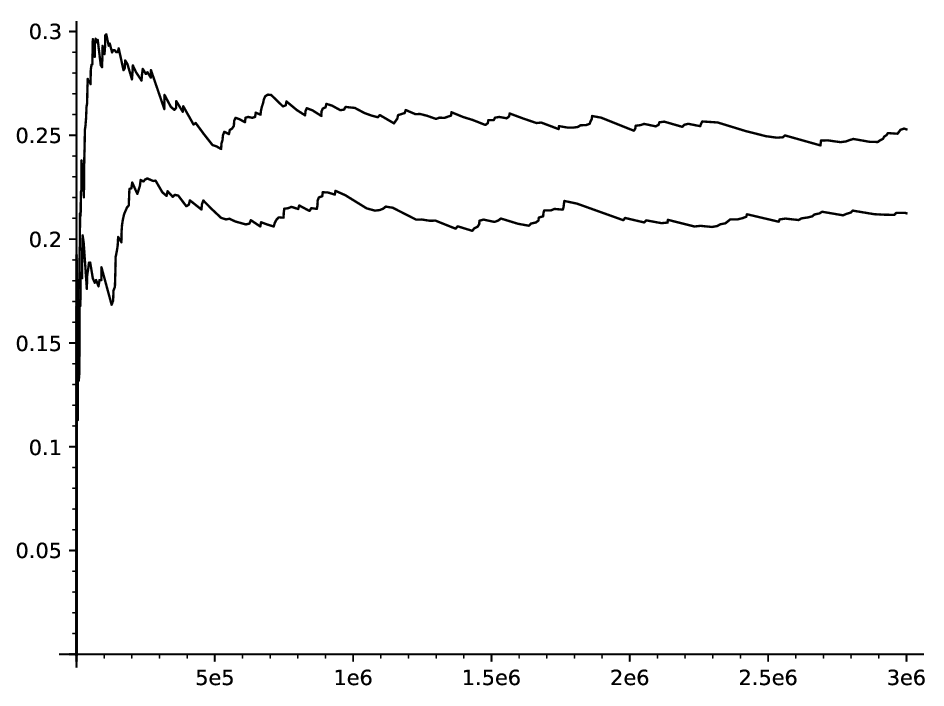}
\caption{$|l| = 4$: Top 4 bottom -4} \label{fig:14_3_A_4}
\end{subfigure}\hspace*{\fill}
\begin{subfigure}[b]{0.43\linewidth}
\includegraphics[width=\linewidth]{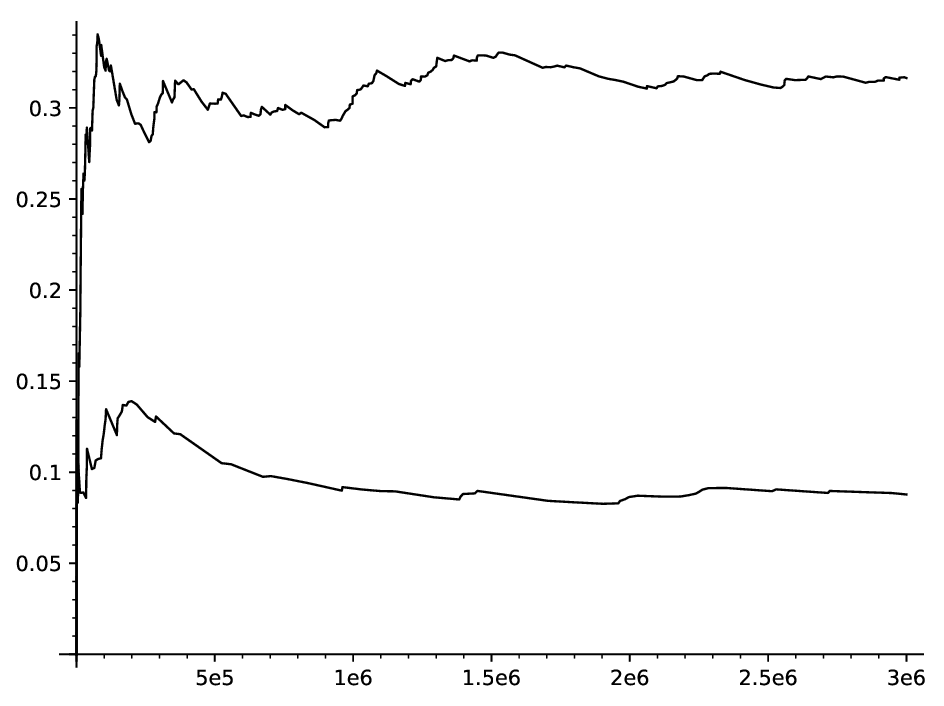}
\caption{$|l| = 5$: Top -5 bottom 5} \label{fig:14_3_A_5}
\end{subfigure}\hspace*{\fill}
\begin{subfigure}[b]{0.43\linewidth}
\includegraphics[width=\linewidth]{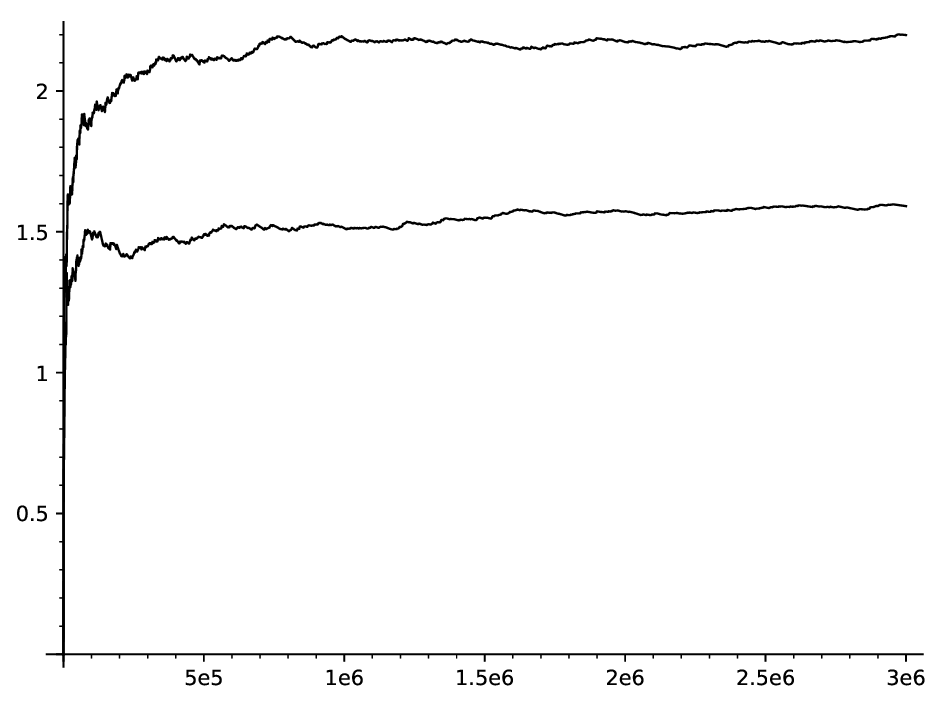}
\caption{$|l| = 6$: Top 6 bottom -6} \label{fig:14_3_A_6}
\end{subfigure}
\hspace*{-2.3cm}
\begin{subfigure}[b]{0.43\linewidth}
\includegraphics[width=\linewidth]{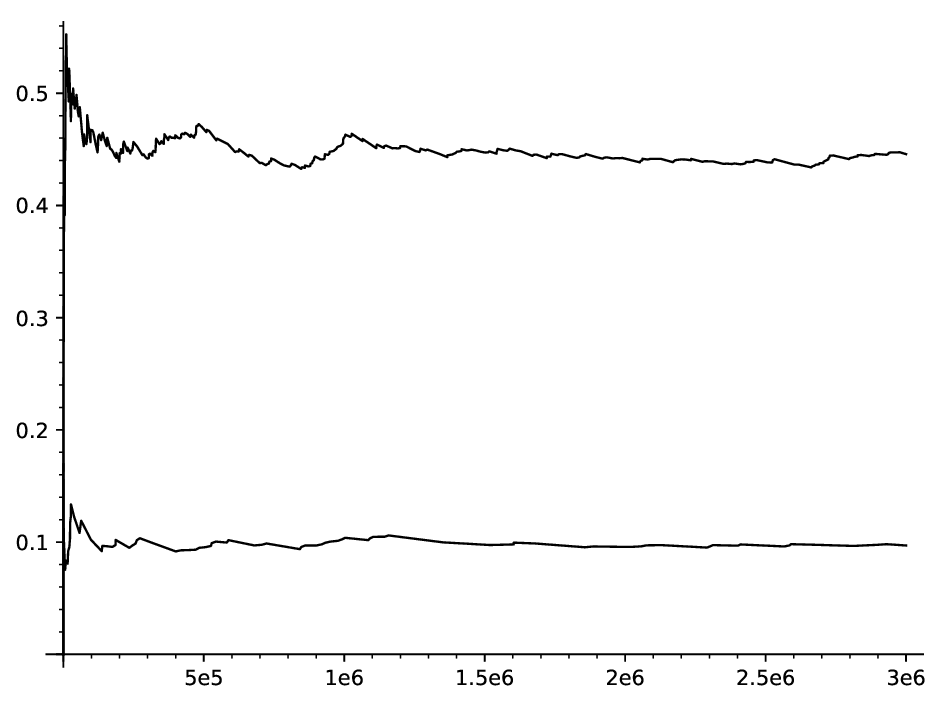}
\caption{$|l| = 7$: Top 7 bottom -7} \label{fig:14_3_A_7}
\end{subfigure}\hspace*{\fill}
\begin{subfigure}[b]{0.43\linewidth}
\includegraphics[width=\linewidth]{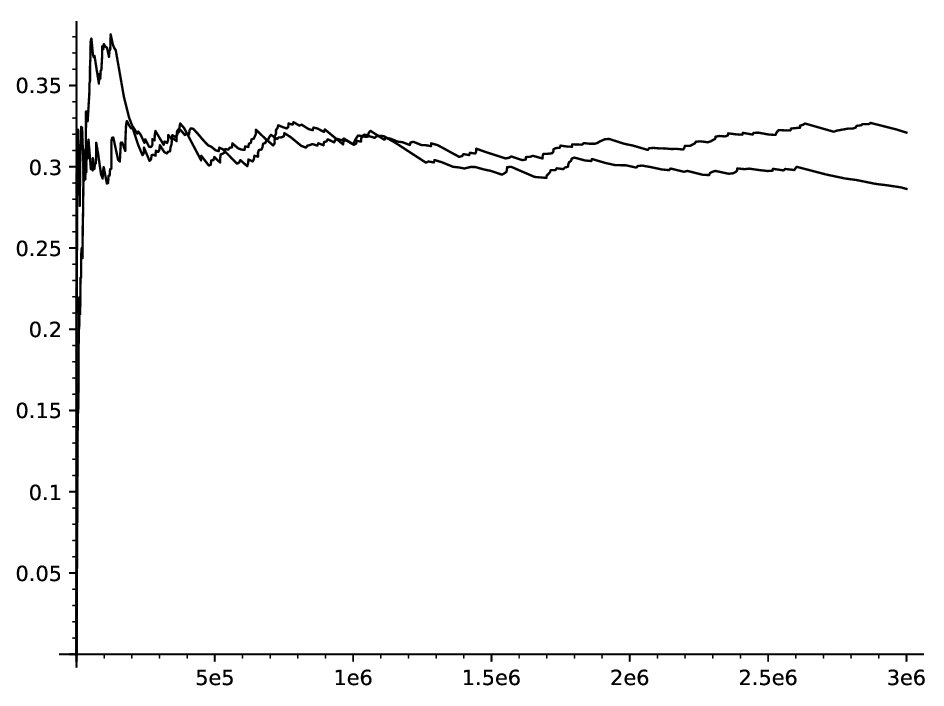}
\caption{$|l| = 8$: Top 8 bottom -8} \label{fig:14_3_A_8}
\end{subfigure}\hspace*{\fill}
\begin{subfigure}[b]{0.43\linewidth}
\includegraphics[width=\linewidth]{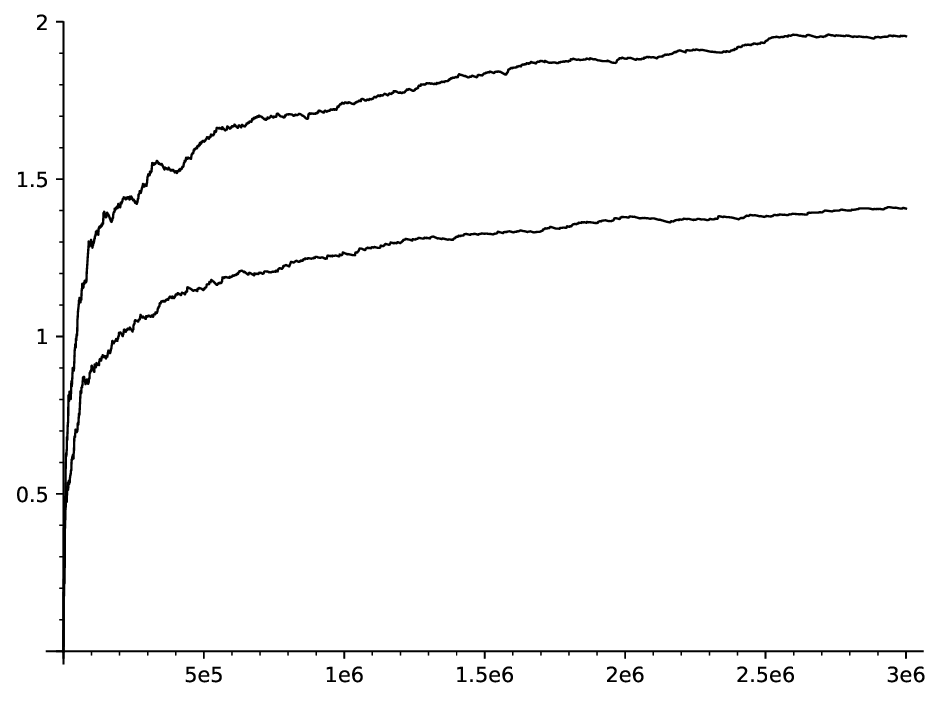}
\caption{$|l| = 9$: Top 9 bottom -9} \label{fig:14_3_A_9}
\end{subfigure}
\caption{Ratio~\eqref{ratio_A_exact} 14a1: $x(X;l)/X^{1/2}$ for $k = 3$} \label{fig:14a1_3_A_exact}
\end{figure}

\clearpage

\begin{figure}[t!] % "[t!]" placement specifier just for this example
\hspace*{-2.3cm}
\begin{subfigure}[b]{0.43\linewidth}
\includegraphics[width=\linewidth]{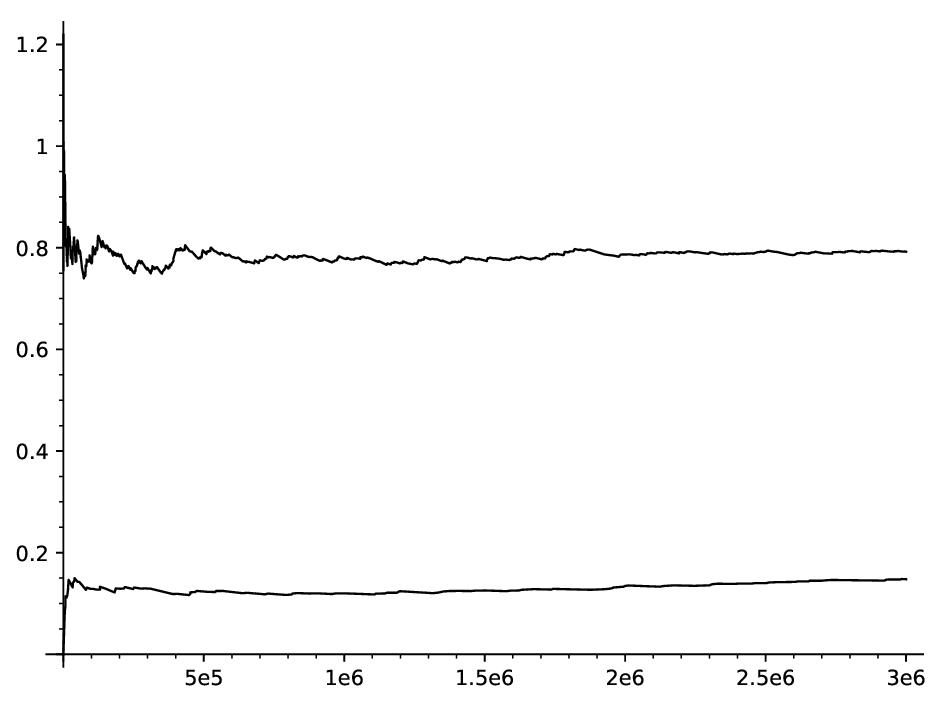}
\caption{$|l| = 1$: Top 1 bottom -1} \label{fig:15_3_A_1}
\end{subfigure}\hspace*{\fill}
\begin{subfigure}[b]{0.43\linewidth}
\includegraphics[width=\linewidth]{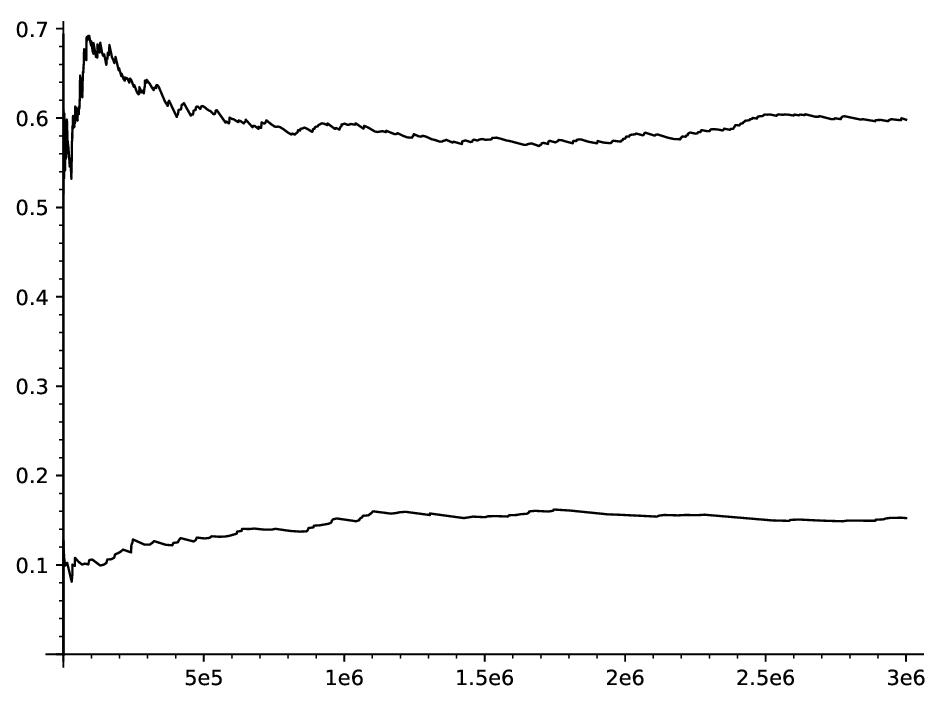}
\caption{$|l| = 2$: Top -2 bottom 2} \label{fig:15_3_A_2}
\end{subfigure}\hspace*{\fill}
\begin{subfigure}[b]{0.43\linewidth}
\includegraphics[width=\linewidth]{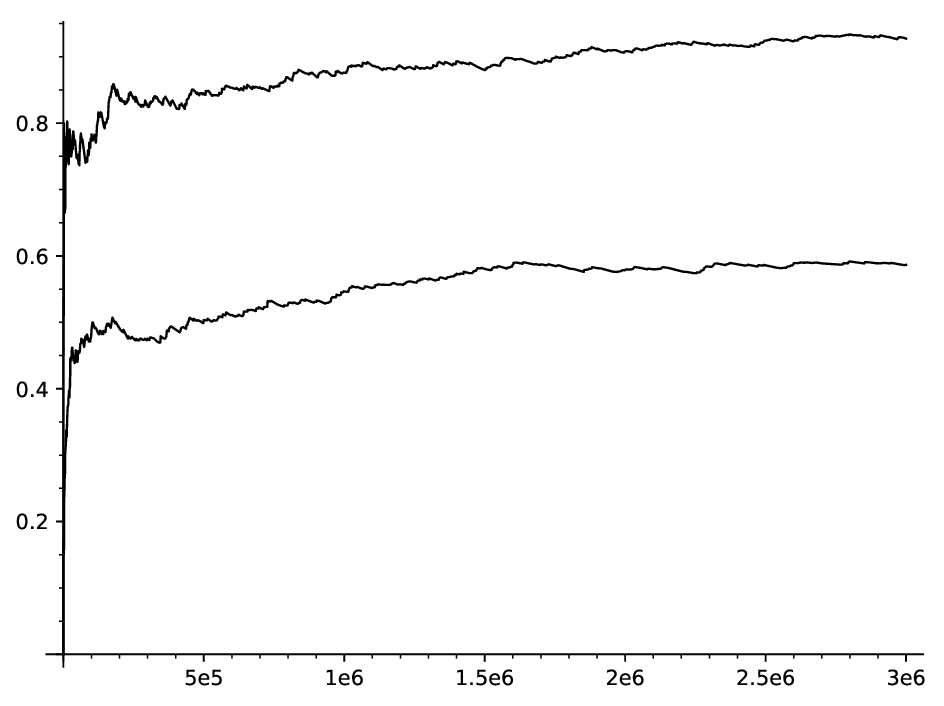}
\caption{$|l| = 3$: Top 3 bottom -3} \label{fig:15_3_A_3}
\end{subfigure}
\hspace*{-2.3cm}
\begin{subfigure}[b]{0.43\linewidth}
\includegraphics[width=\linewidth]{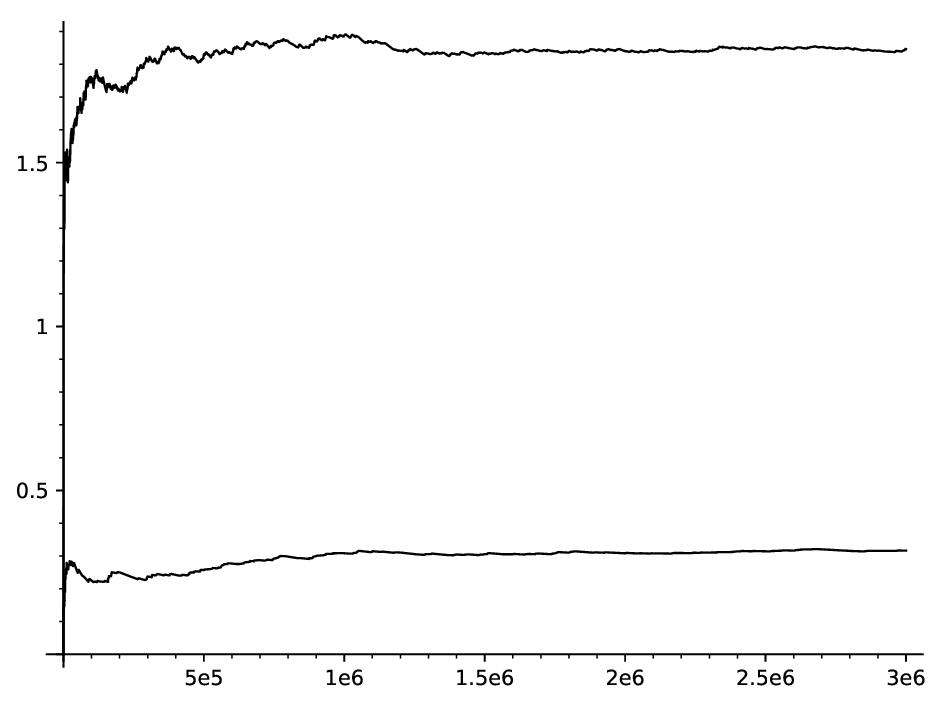}
\caption{$|l| = 4$: Top 4 bottom -4} \label{fig:15_3_A_4}
\end{subfigure}\hspace*{\fill}
\begin{subfigure}[b]{0.43\linewidth}
\includegraphics[width=\linewidth]{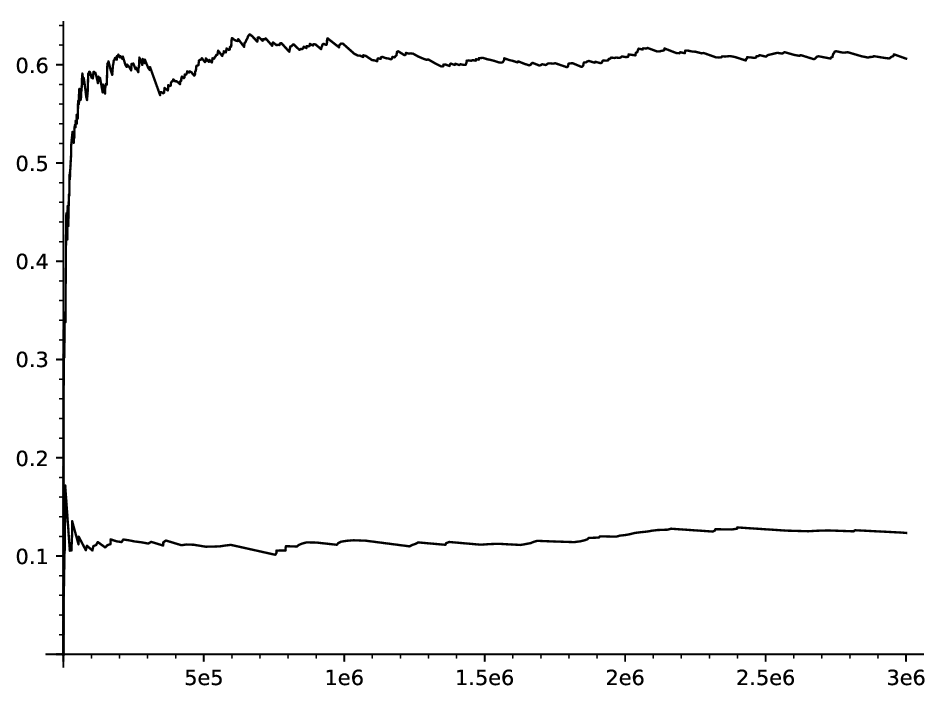}
\caption{$|l| = 5$: Top -5 bottom 5} \label{fig:15_3_A_5}
\end{subfigure}\hspace*{\fill}
\begin{subfigure}[b]{0.43\linewidth}
\includegraphics[width=\linewidth]{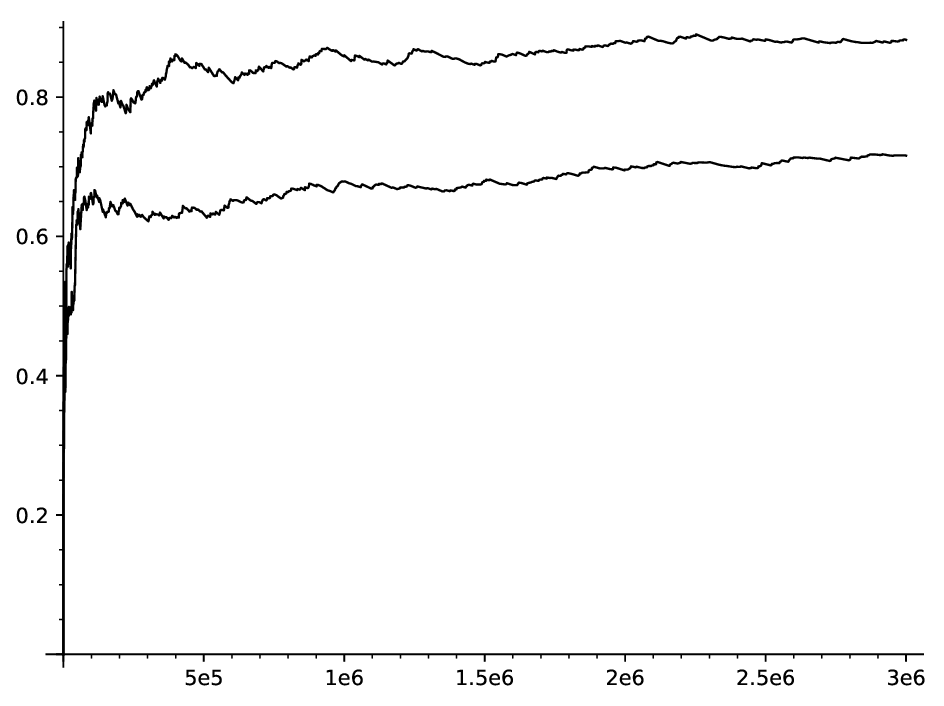}
\caption{$|l| = 6$: Top -6 bottom 6} \label{fig:15_3_A_6}
\end{subfigure}
\hspace*{-2.3cm}
\begin{subfigure}[b]{0.43\linewidth}
\includegraphics[width=\linewidth]{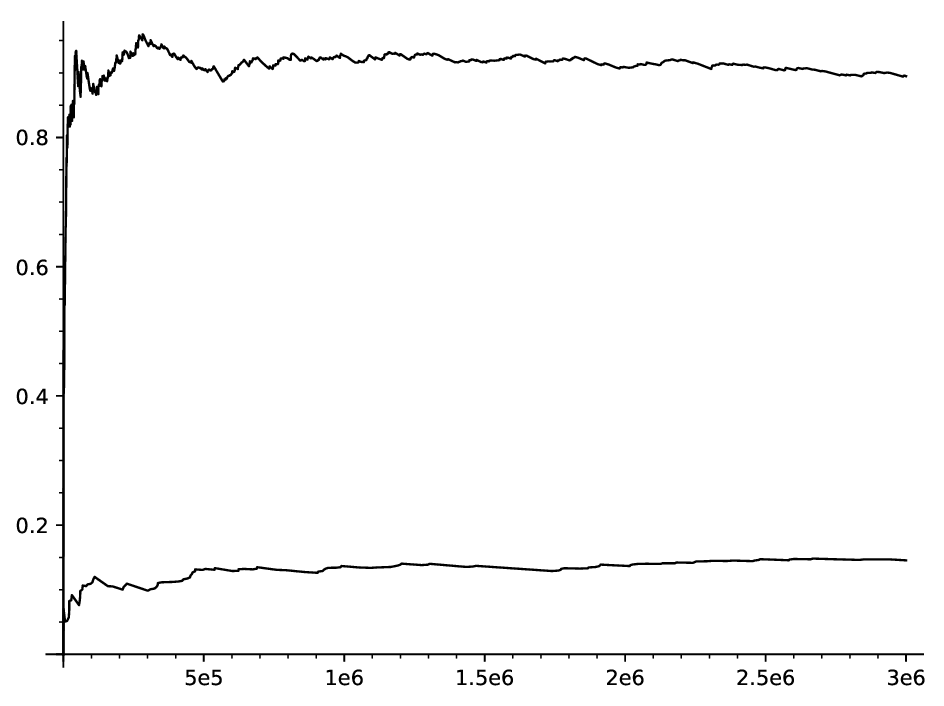}
\caption{$|l| = 7$: Top 7 bottom -7} \label{fig:15_3_A_7}
\end{subfigure}\hspace*{\fill}
\begin{subfigure}[b]{0.43\linewidth}
\includegraphics[width=\linewidth]{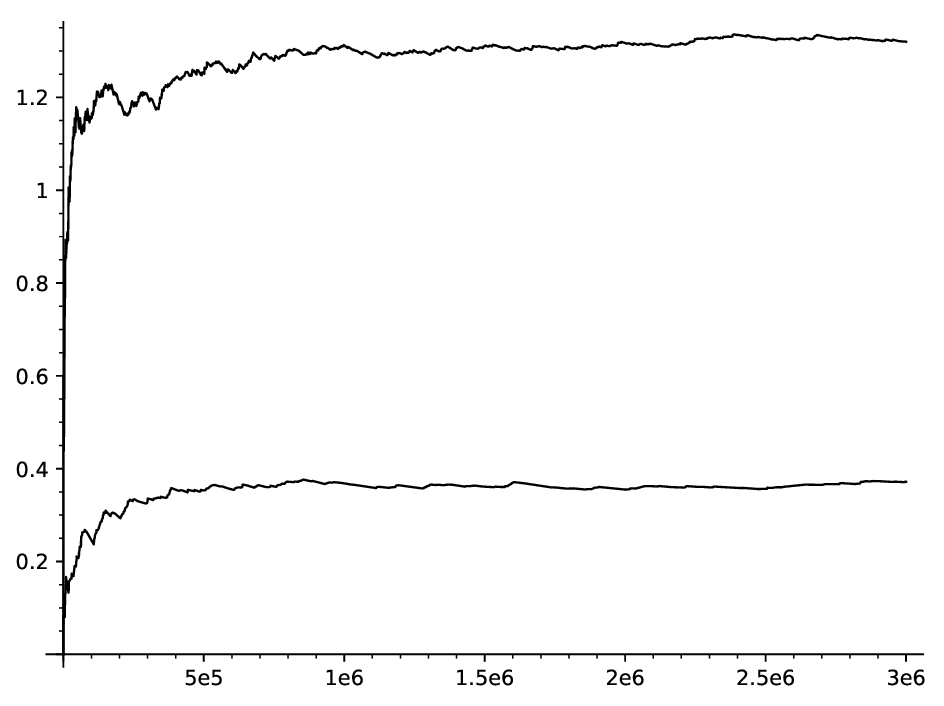}
\caption{$|l| = 8$: Top -8 bottom 8} \label{fig:15_3_A_8}
\end{subfigure}\hspace*{\fill}
\begin{subfigure}[b]{0.43\linewidth}
\includegraphics[width=\linewidth]{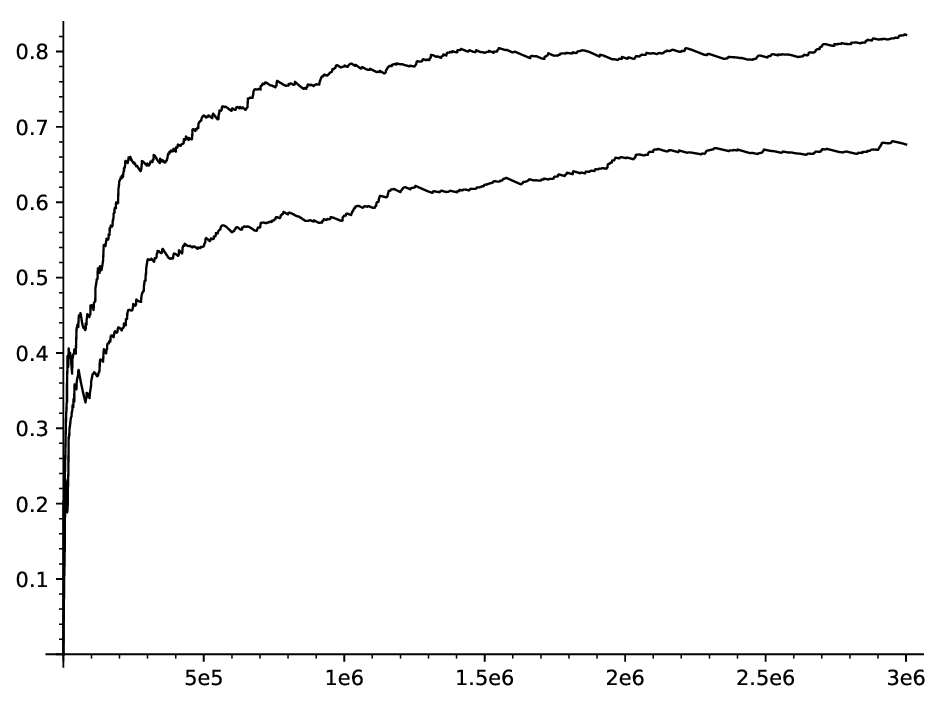}
\caption{$|l| = 9$: Top 9 bottom -9} \label{fig:15_3_A_9}
\end{subfigure}
\caption{Ratio~\eqref{ratio_A_exact} 15a1: $x(X;l)/X^{1/2}$ for $k = 3$} \label{fig:15a1_3_A_exact}
\end{figure}

\clearpage

\begin{figure}[t!] % "[t!]" placement specifier just for this example
\hspace*{-2.3cm}
\begin{subfigure}[b]{0.43\linewidth}
\includegraphics[width=\linewidth]{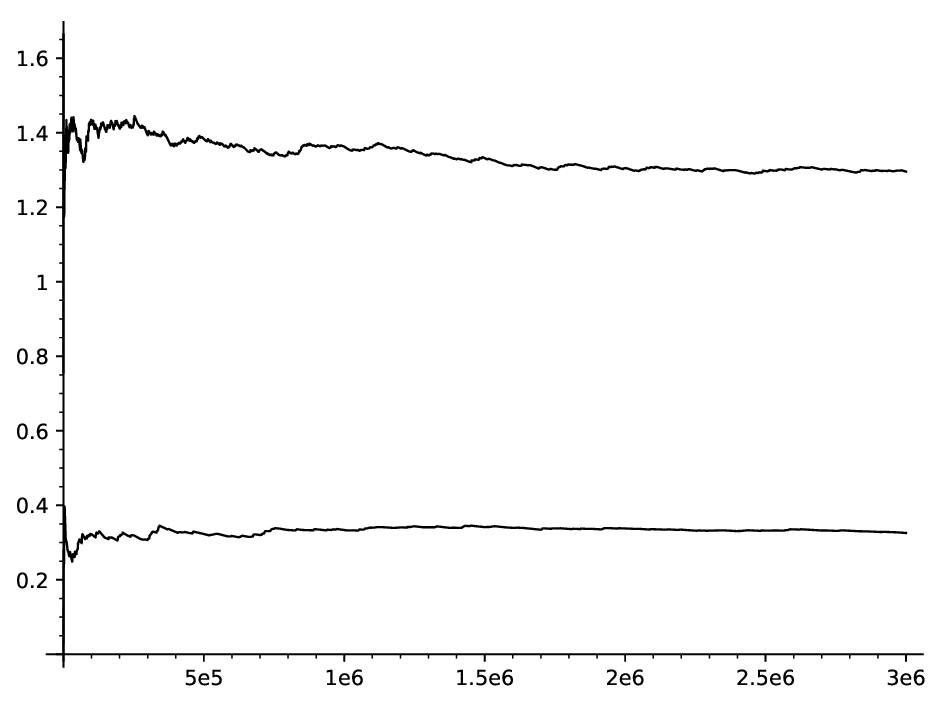}
\caption{$|l| = 1$: Top 1 bottom -1} \label{fig:17_3_A_1}
\end{subfigure}\hspace*{\fill}
\begin{subfigure}[b]{0.43\linewidth}
\includegraphics[width=\linewidth]{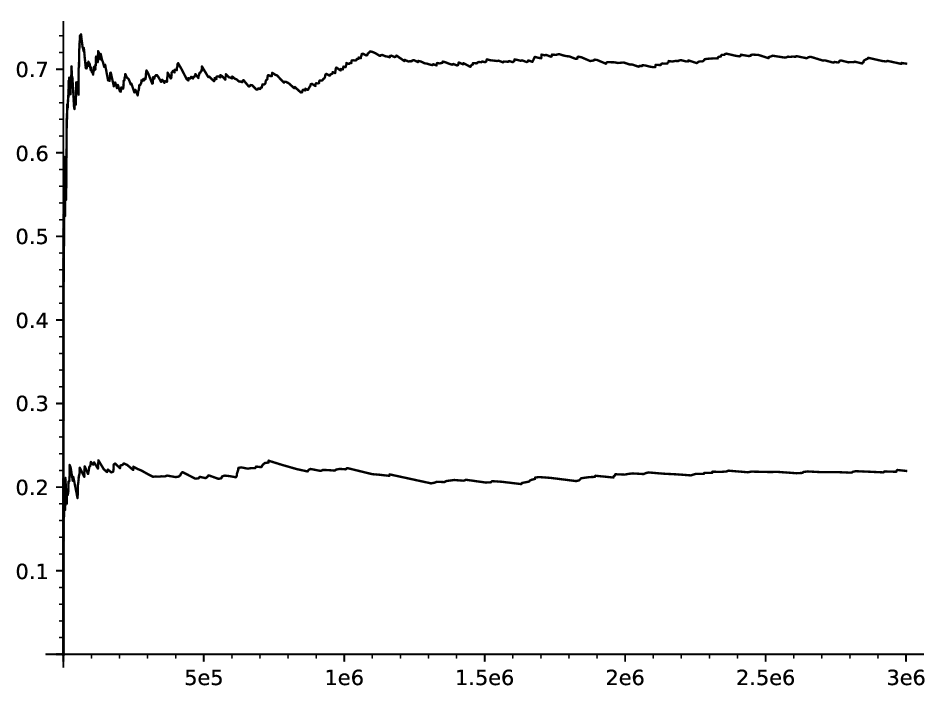}
\caption{$|l| = 2$: Top -2 bottom 2} \label{fig:17_3_A_2}
\end{subfigure}\hspace*{\fill}
\begin{subfigure}[b]{0.43\linewidth}
\includegraphics[width=\linewidth]{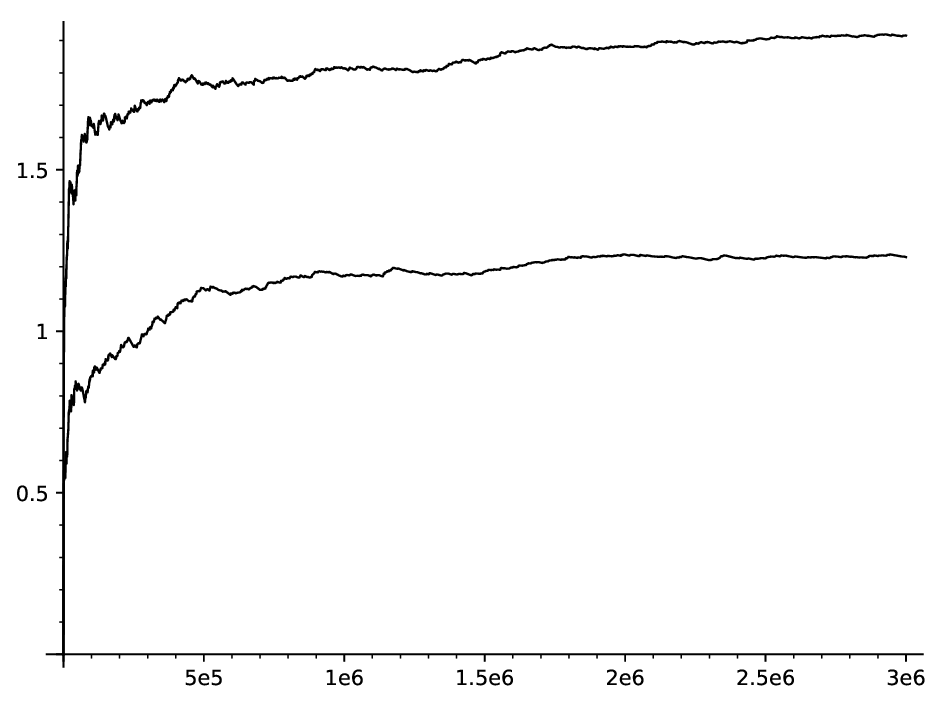}
\caption{$|l| = 3$: Top 3 bottom -3} \label{fig:17_3_A_3}
\end{subfigure}
\hspace*{-2.3cm}
\begin{subfigure}[b]{0.43\linewidth}
\includegraphics[width=\linewidth]{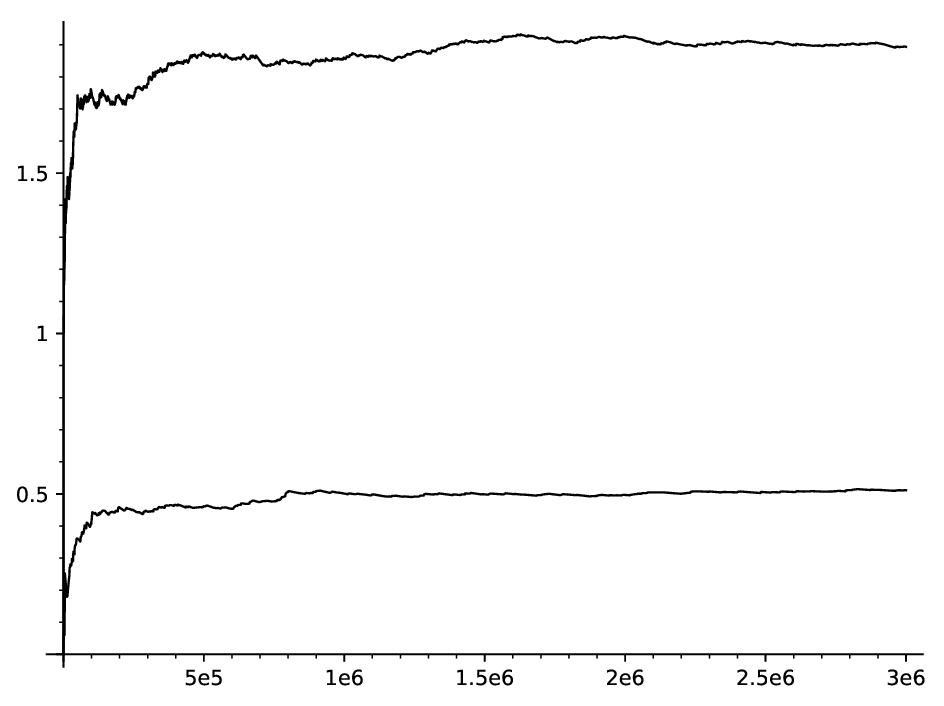}
\caption{$|l| = 4$: Top 4 bottom -4} \label{fig:17_3_A_4}
\end{subfigure}\hspace*{\fill}
\begin{subfigure}[b]{0.43\linewidth}
\includegraphics[width=\linewidth]{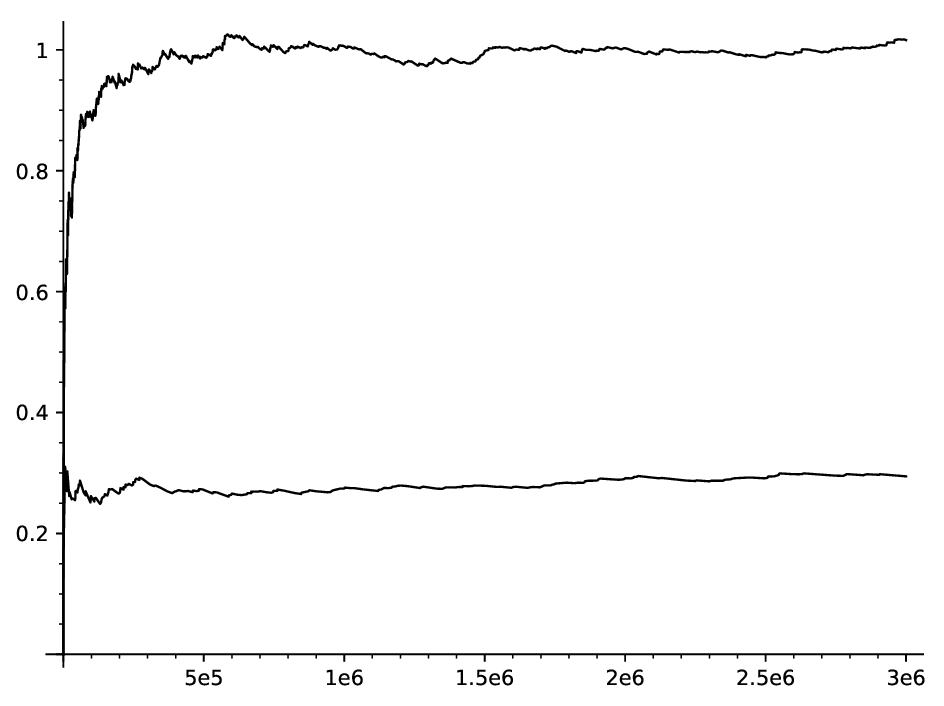}
\caption{$|l| = 5$: Top -5 bottom 5} \label{fig:17_3_A_5}
\end{subfigure}\hspace*{\fill}
\begin{subfigure}[b]{0.43\linewidth}
\includegraphics[width=\linewidth]{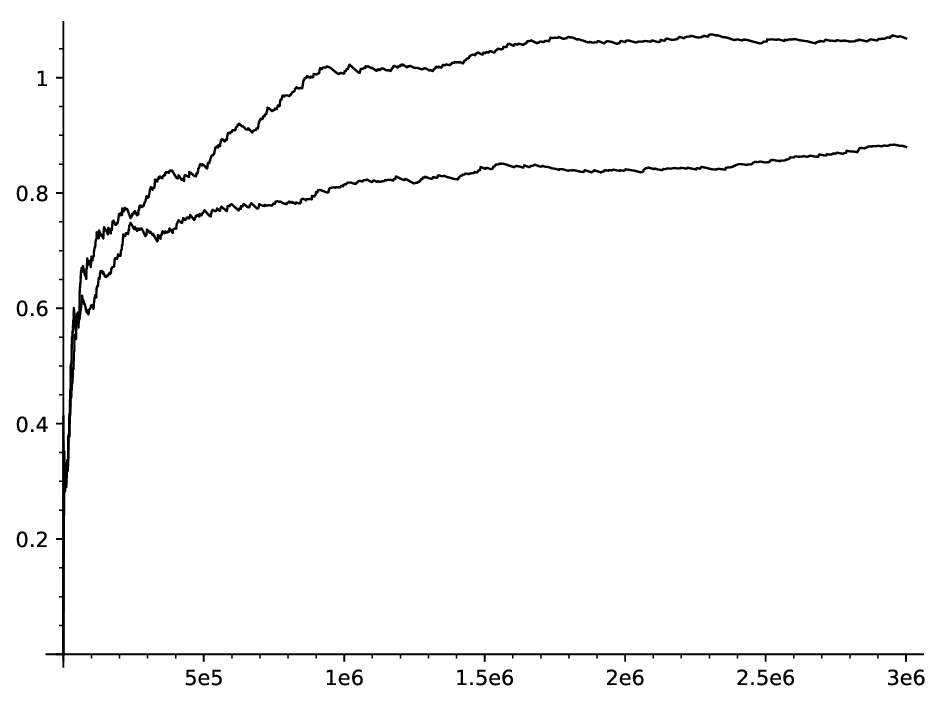}
\caption{$|l| = 6$: Top -6 bottom 6} \label{fig:17_3_A_6}
\end{subfigure}
\hspace*{-2.3cm}
\begin{subfigure}[b]{0.43\linewidth}
\includegraphics[width=\linewidth]{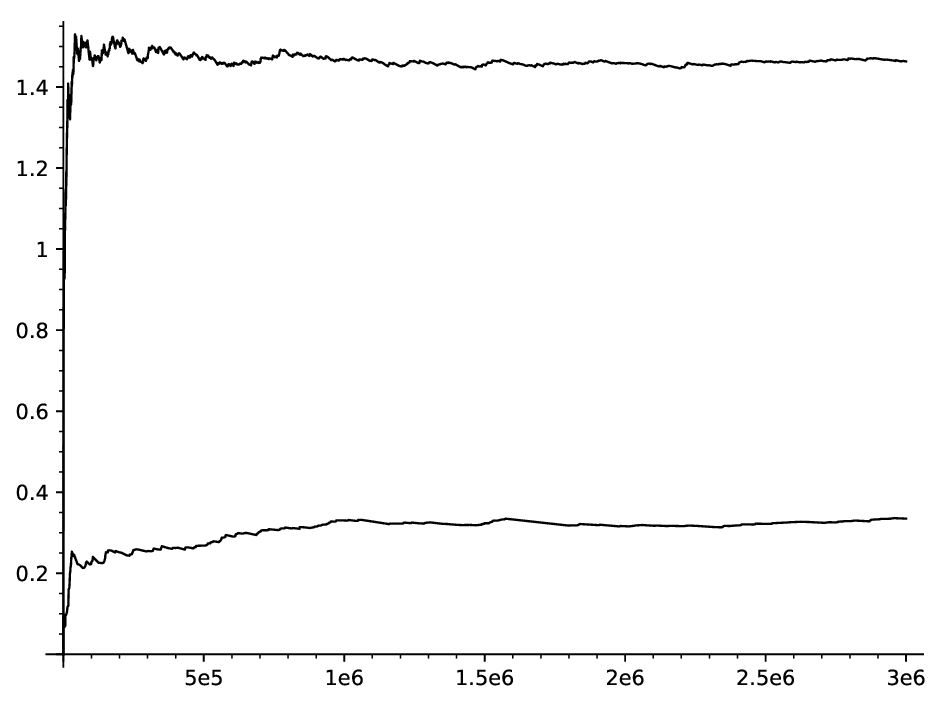}
\caption{$|l| = 7$: Top 7 bottom -7} \label{fig:17_3_A_7}
\end{subfigure}\hspace*{\fill}
\begin{subfigure}[b]{0.43\linewidth}
\includegraphics[width=\linewidth]{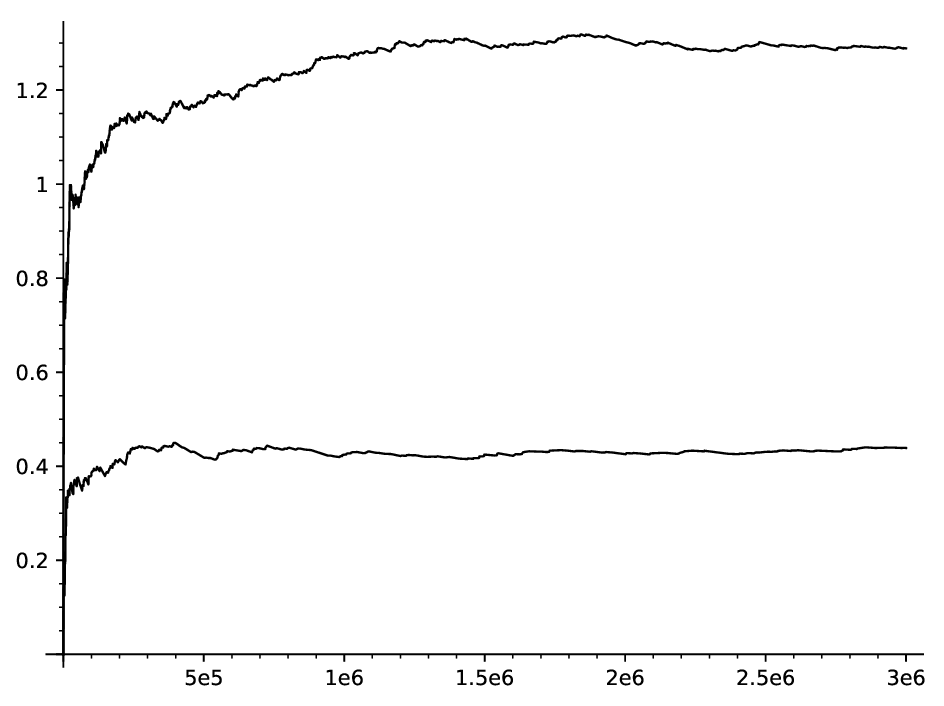}
\caption{$|l| = 8$: Top -8 bottom 8} \label{fig:17_3_A_8}
\end{subfigure}\hspace*{\fill}
\begin{subfigure}[b]{0.43\linewidth}
\includegraphics[width=\linewidth]{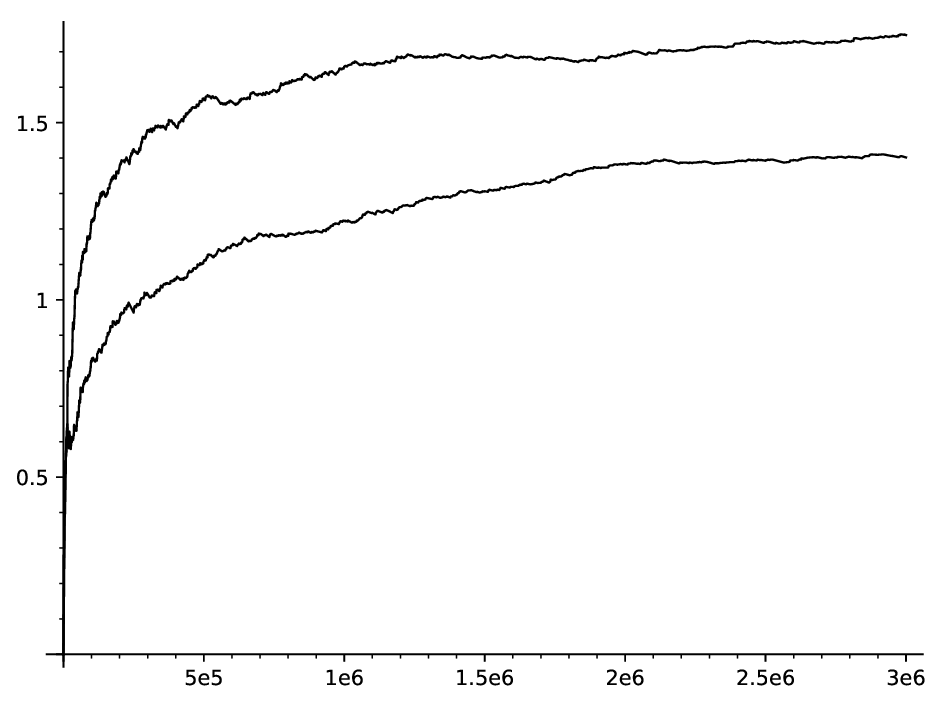}
\caption{$|l| = 9$: Top 9 bottom -9} \label{fig:17_3_A_9}
\end{subfigure}
\caption{Ratio~\eqref{ratio_A_exact} 17a1: $x(X;l)/X^{1/2}$ for $k = 3$} \label{fig:17a1_3_A_exact}
\end{figure}

\clearpage

\begin{figure}[t!] % "[t!]" placement specifier just for this example
\hspace*{-2.3cm}
\begin{subfigure}[b]{0.43\linewidth}
\includegraphics[width=\linewidth]{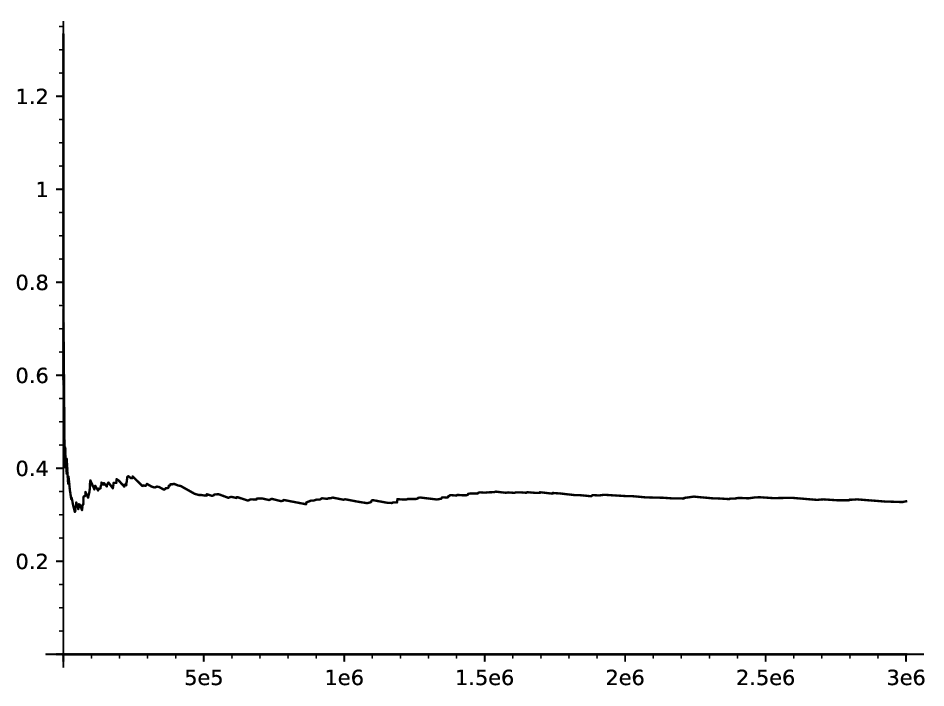}
\caption{$|l| = 1$: No -1 exists} \label{fig:19_3_A_1}
\end{subfigure}\hspace*{\fill}
\begin{subfigure}[b]{0.43\linewidth}
\includegraphics[width=\linewidth]{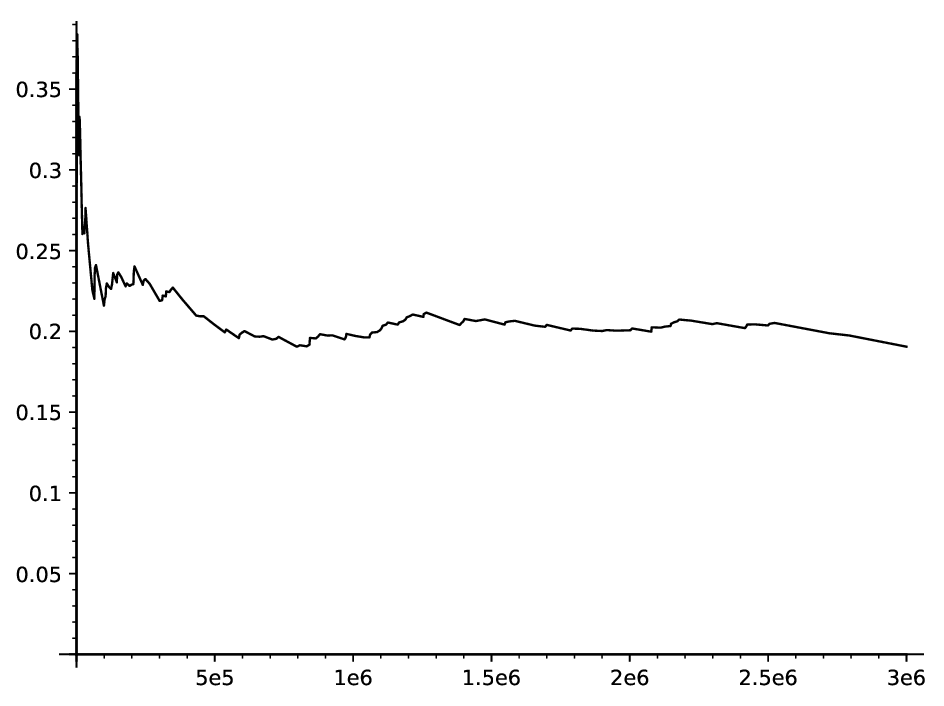}
\caption{$|l| = 2$: No 2 exists} \label{fig:19_3_A_2}
\end{subfigure}\hspace*{\fill}
\begin{subfigure}[b]{0.43\linewidth}
\includegraphics[width=\linewidth]{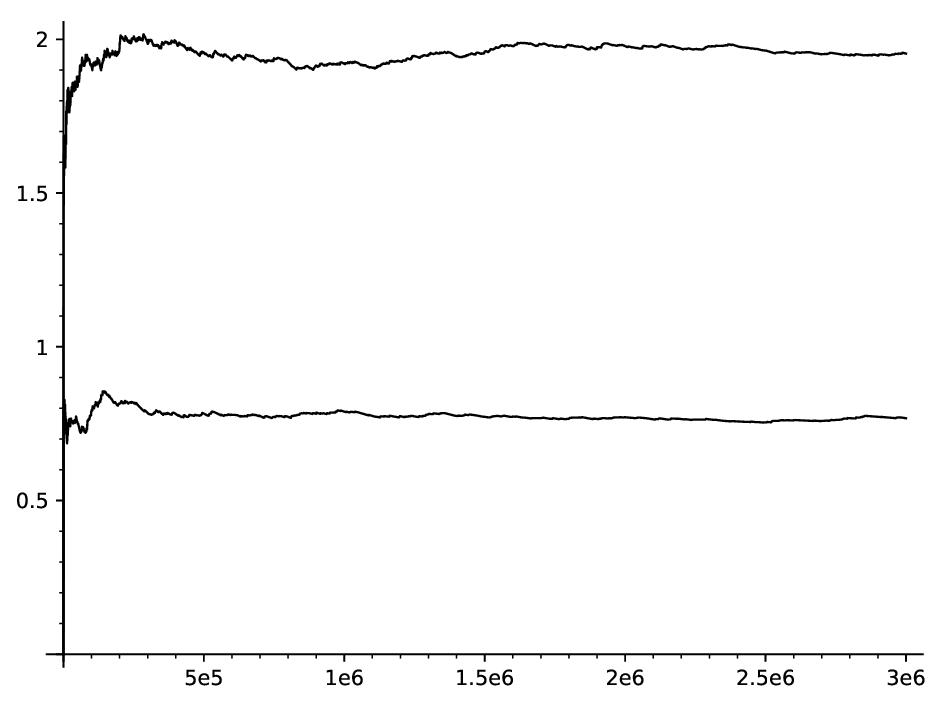}
\caption{$|l| = 3$: Top 3 bottom -3} \label{fig:19_3_A_3}
\end{subfigure}
\hspace*{-2.3cm}
\begin{subfigure}[b]{0.43\linewidth}
\includegraphics[width=\linewidth]{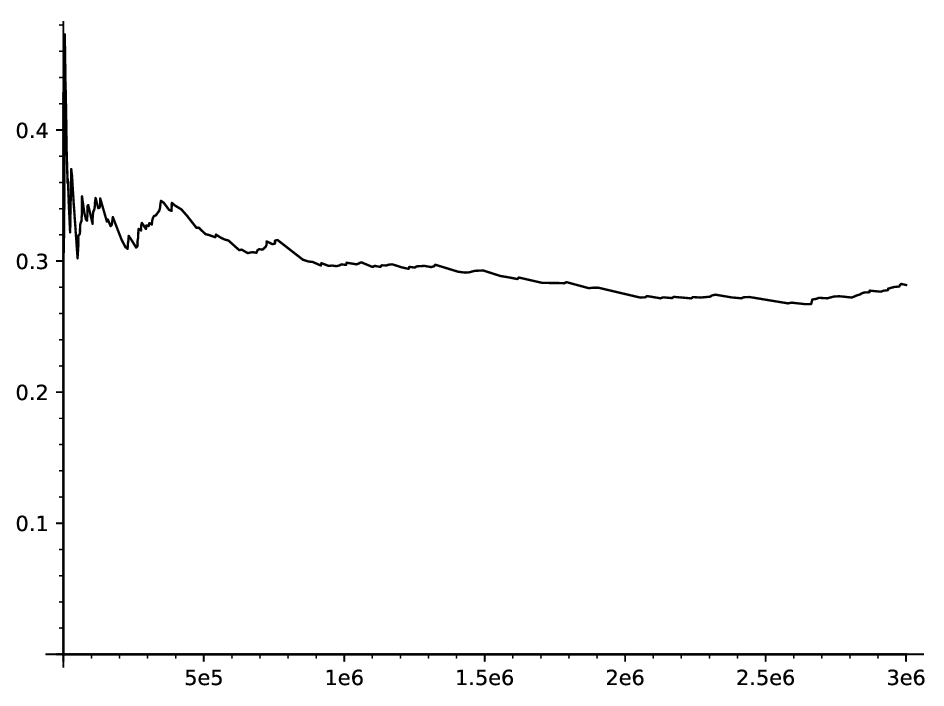}
\caption{$|l| = 4$: No -4 exists} \label{fig:19_3_A_4}
\end{subfigure}\hspace*{\fill}
\begin{subfigure}[b]{0.43\linewidth}
\includegraphics[width=\linewidth]{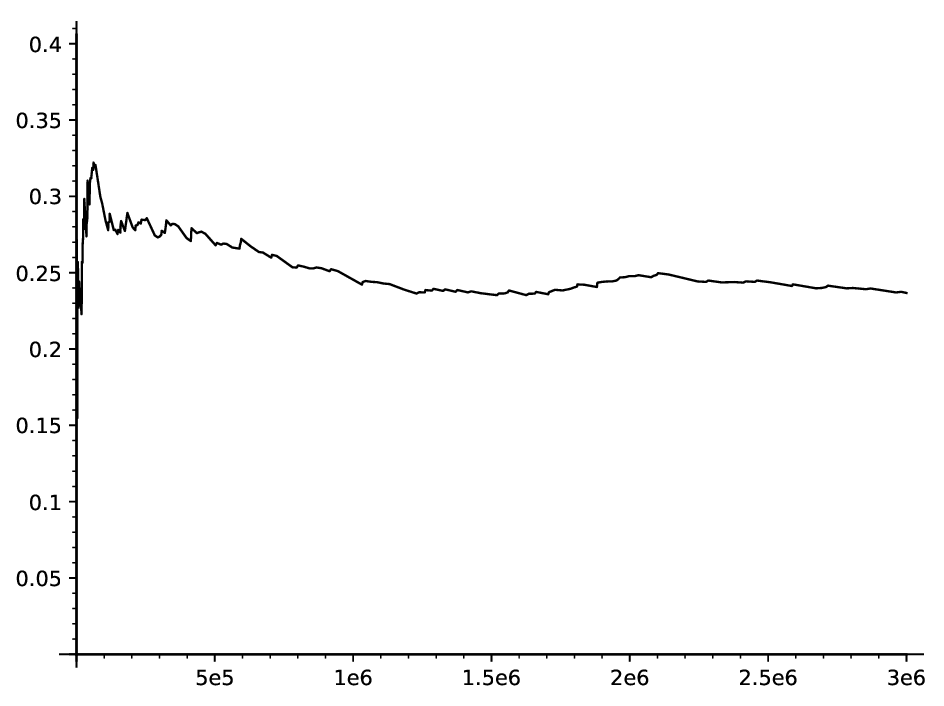}
\caption{$|l| = 5$: No 5 exists} \label{fig:19_3_A_5}
\end{subfigure}\hspace*{\fill}
\begin{subfigure}[b]{0.43\linewidth}
\includegraphics[width=\linewidth]{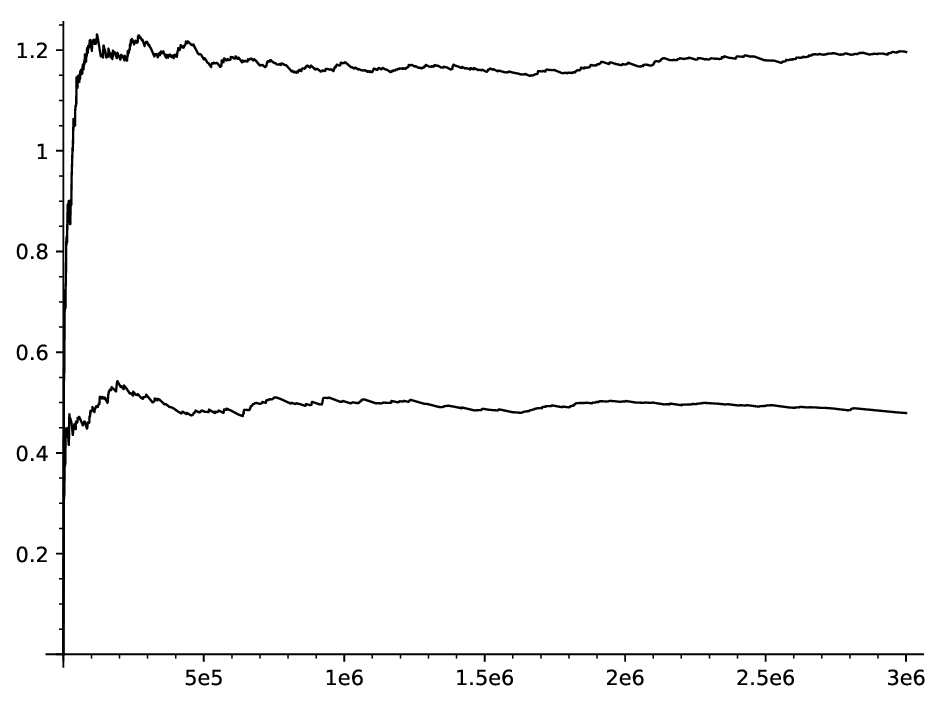}
\caption{$|l| = 6$: Top -6 bottom 6} \label{fig:19_3_A_6}
\end{subfigure}
\hspace*{-2.3cm}
\begin{subfigure}[b]{0.43\linewidth}
\includegraphics[width=\linewidth]{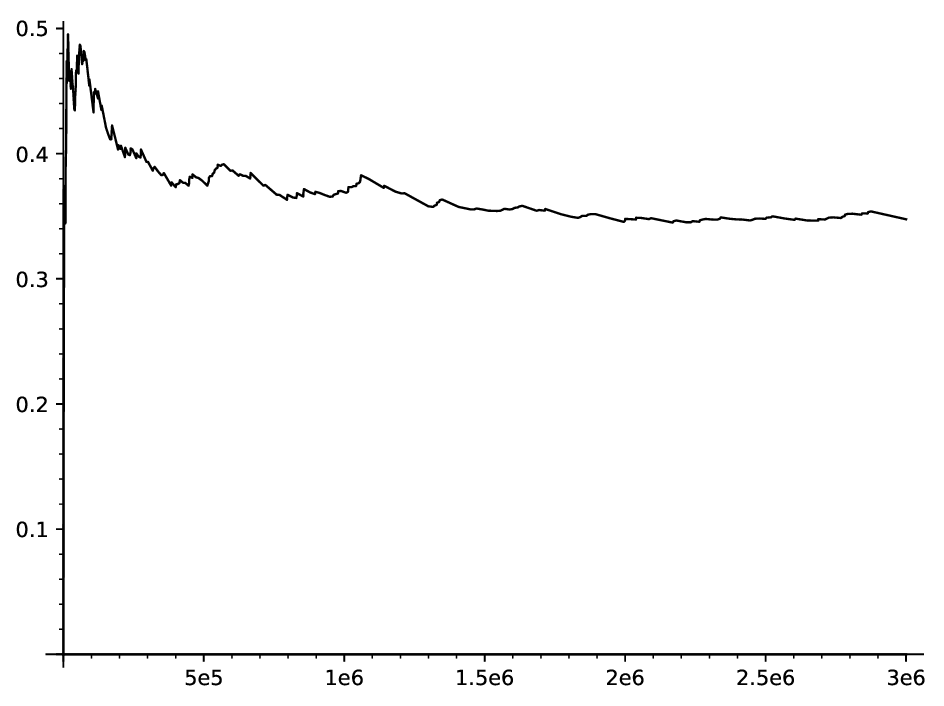}
\caption{$|l| = 7$: No -7 exists} \label{fig:19_3_A_7}
\end{subfigure}\hspace*{\fill}
\begin{subfigure}[b]{0.43\linewidth}
\includegraphics[width=\linewidth]{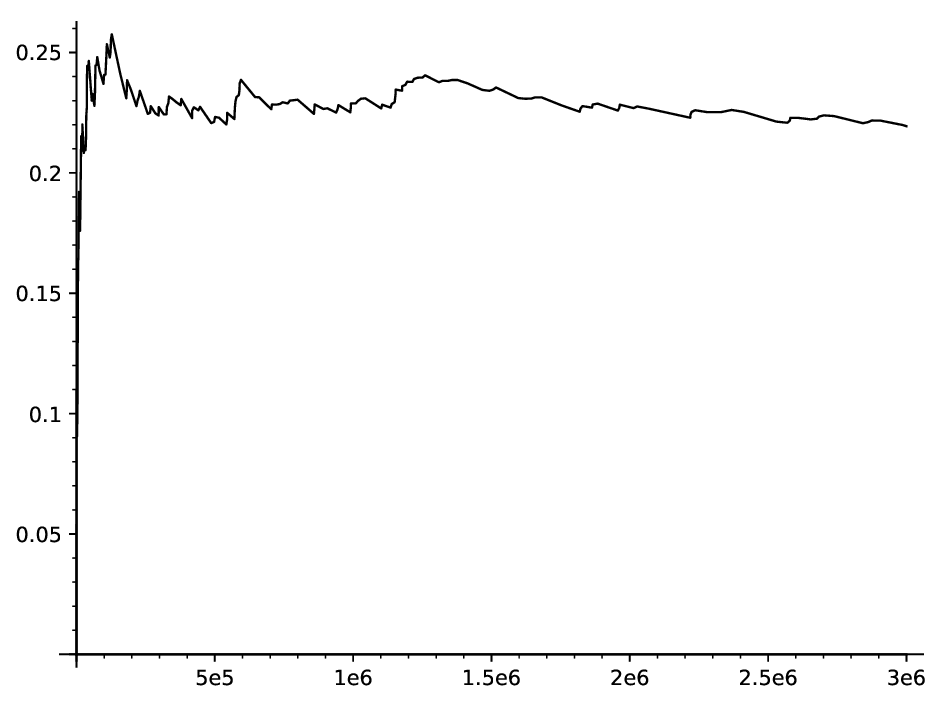}
\caption{$|l| = 8$: No 8 exists} \label{fig:19_3_A_8}
\end{subfigure}\hspace*{\fill}
\begin{subfigure}[b]{0.43\linewidth}
\includegraphics[width=\linewidth]{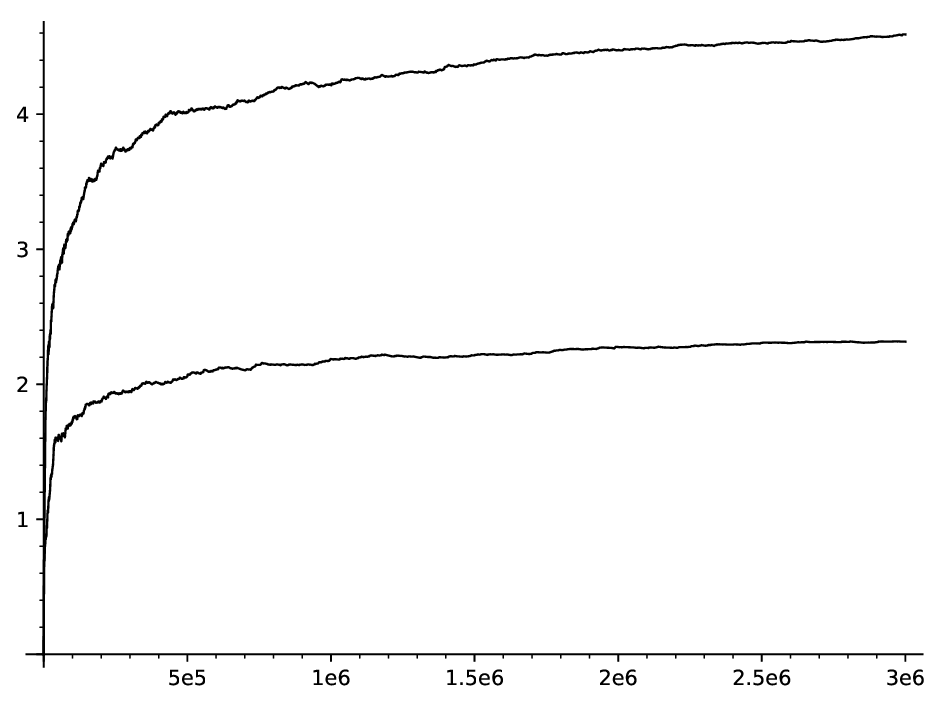}
\caption{$|l| = 9$: Top 9 bottom -9} \label{fig:19_3_A_9}
\end{subfigure}
\caption{Ratio~\eqref{ratio_A_exact} 19a1: $x(X;l)/X^{1/2}$ for $k = 3$} \label{fig:19a1_3_A_exact}
\end{figure}

\clearpage

\begin{figure}[t!] % "[t!]" placement specifier just for this example
\hspace*{-2.3cm}
\begin{subfigure}[b]{0.43\linewidth}
\includegraphics[width=\linewidth]{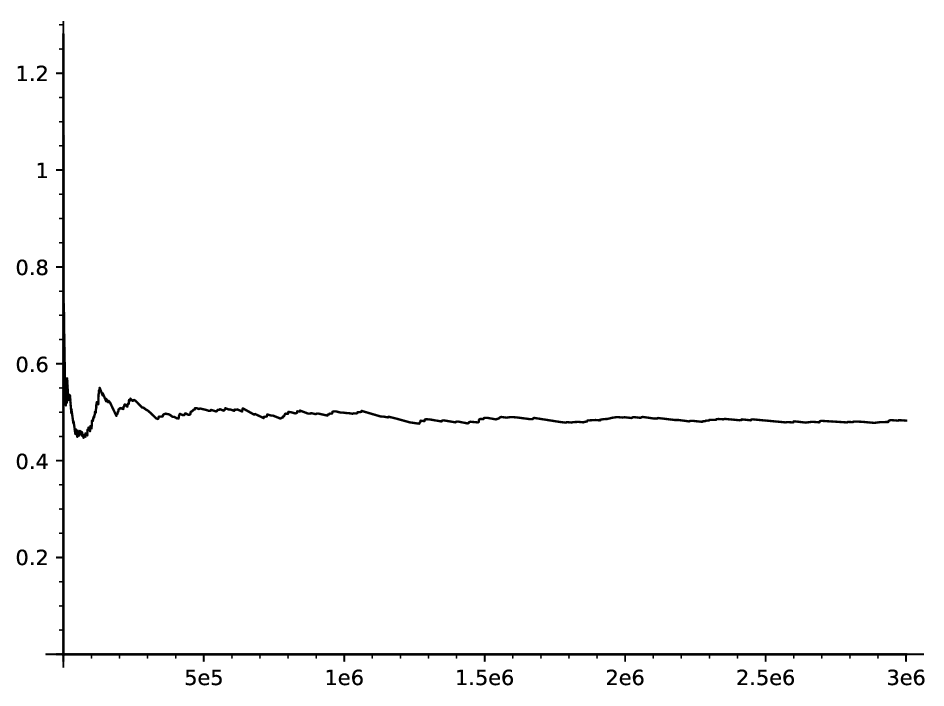}
\caption{$|l| = 1$: No -1 exists} \label{fig:37_3_A_1}
\end{subfigure}\hspace*{\fill}
\begin{subfigure}[b]{0.43\linewidth}
\includegraphics[width=\linewidth]{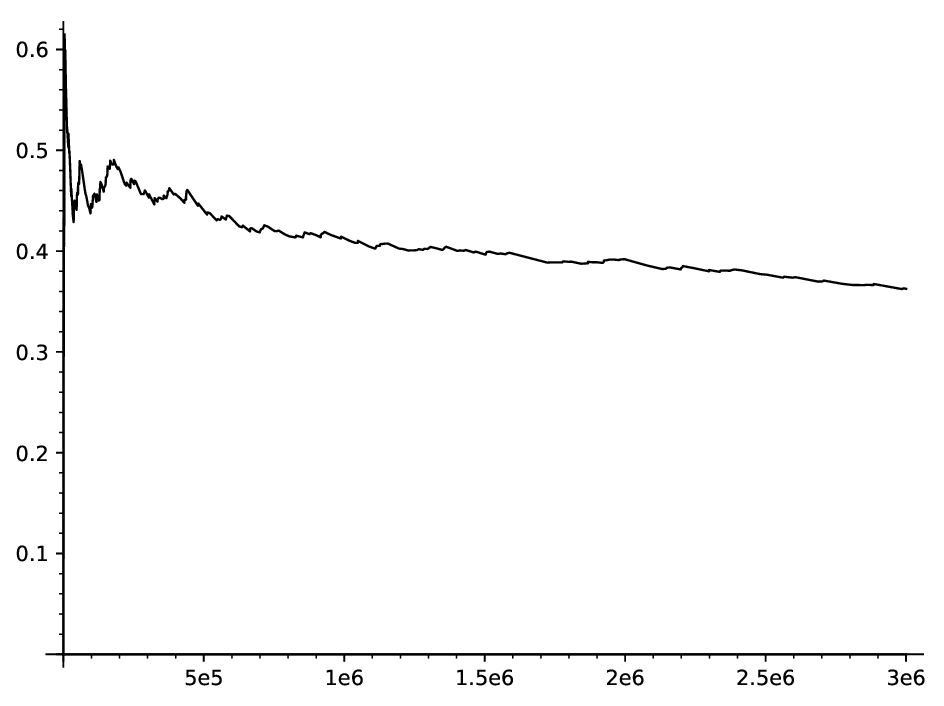}
\caption{$|l| = 2$: No 2 exists} \label{fig:37_3_A_2}
\end{subfigure}\hspace*{\fill}
\begin{subfigure}[b]{0.43\linewidth}
\includegraphics[width=\linewidth]{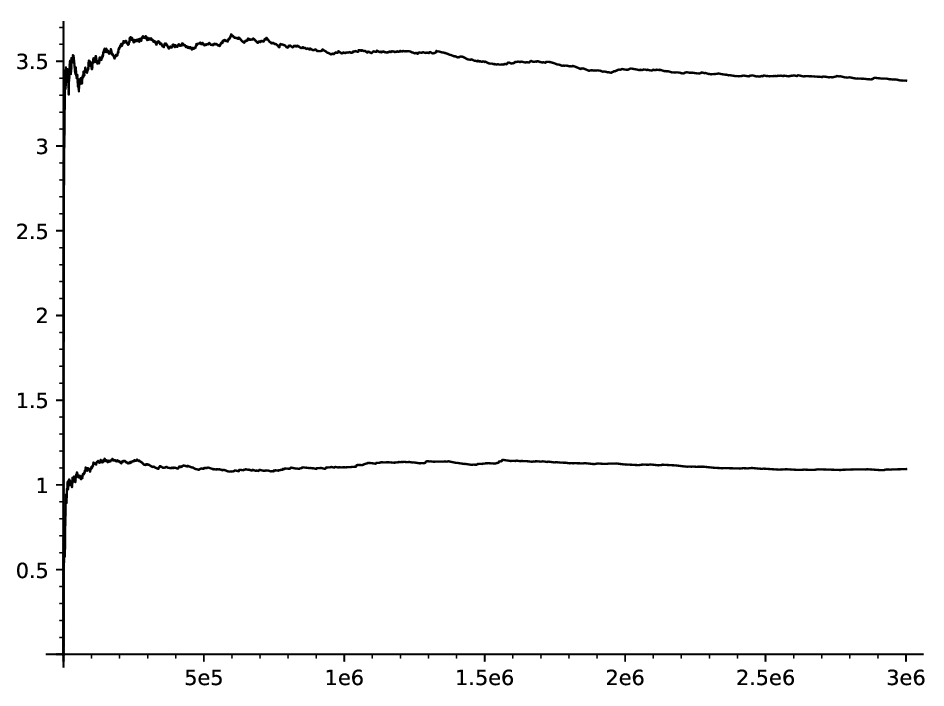}
\caption{$|l| = 3$: Top 3 bottom -3} \label{fig:37_3_A_3}
\end{subfigure}
\hspace*{-2.3cm}
\begin{subfigure}[b]{0.43\linewidth}
\includegraphics[width=\linewidth]{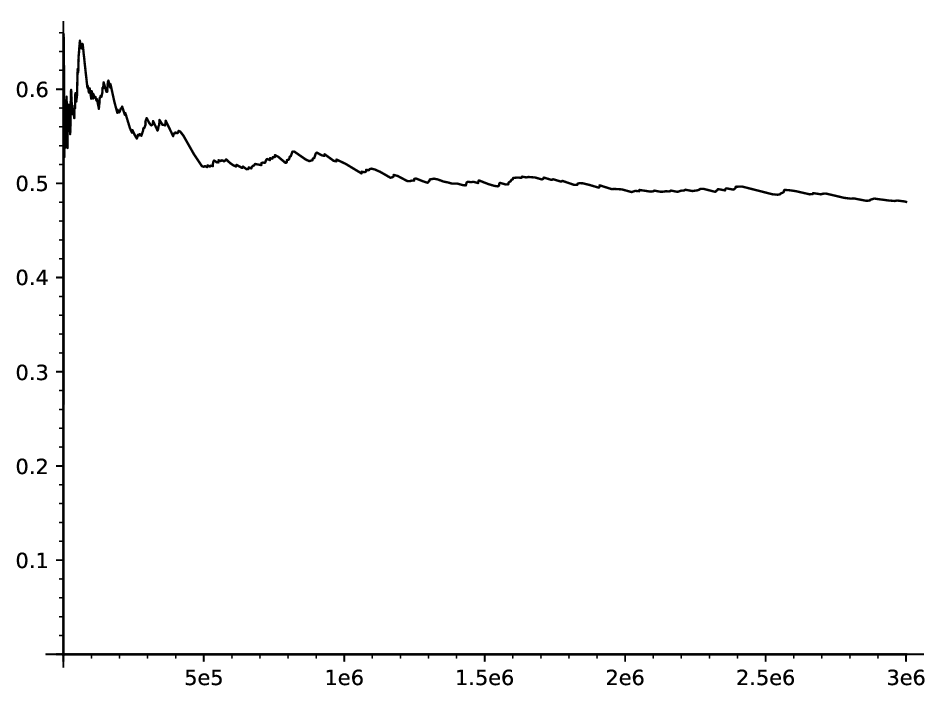}
\caption{$|l| = 4$: No -4 exists} \label{fig:37_3_A_4}
\end{subfigure}\hspace*{\fill}
\begin{subfigure}[b]{0.43\linewidth}
\includegraphics[width=\linewidth]{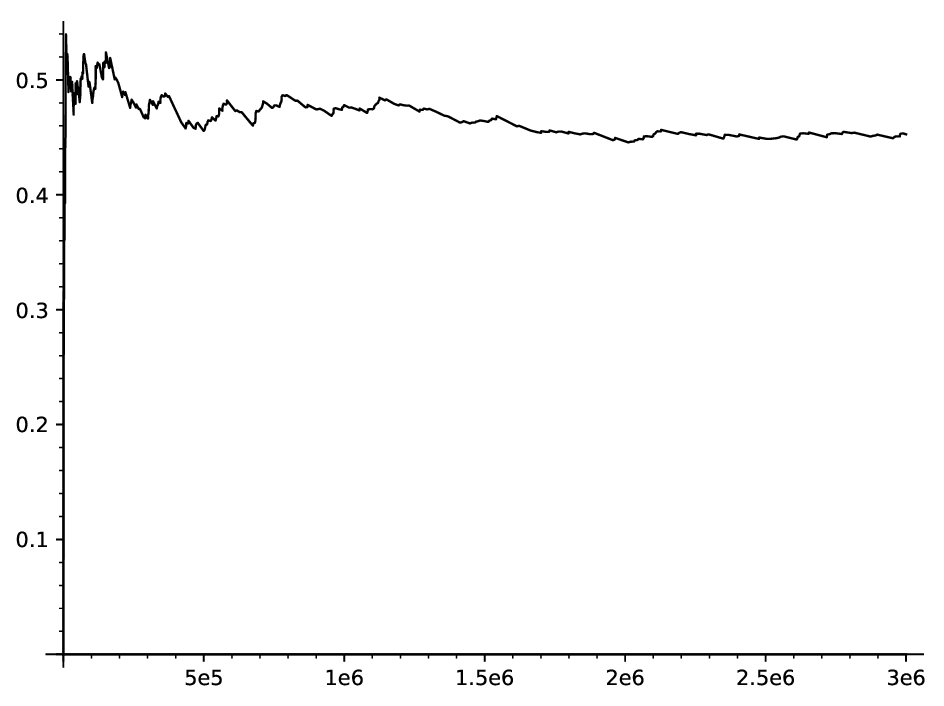}
\caption{$|l| = 5$: No 5 exists} \label{fig:37_3_A_5}
\end{subfigure}\hspace*{\fill}
\begin{subfigure}[b]{0.43\linewidth}
\includegraphics[width=\linewidth]{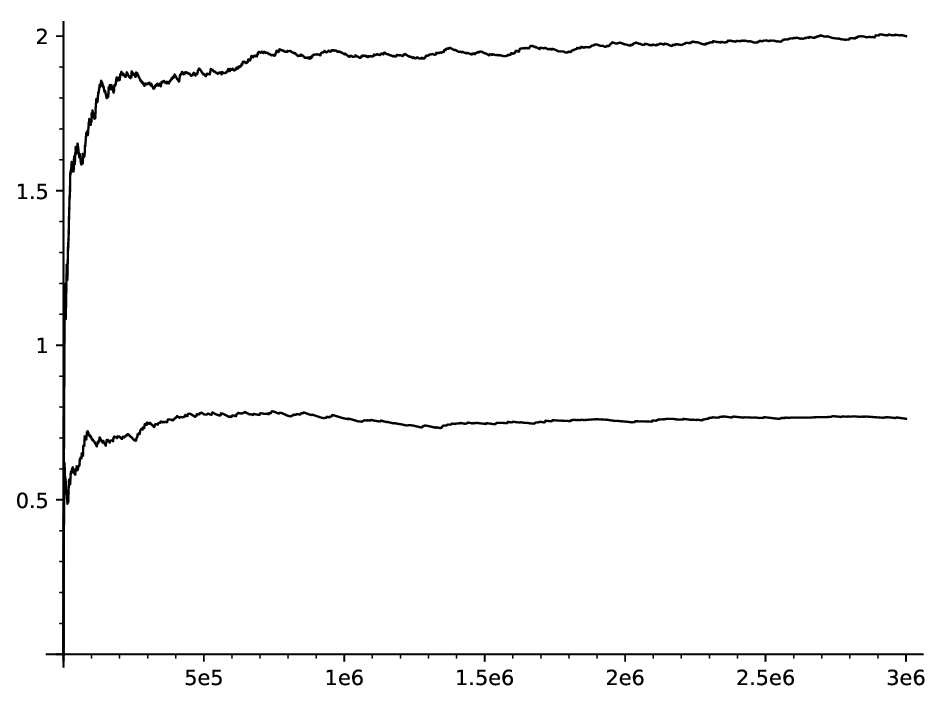}
\caption{$|l| = 6$: Top -6 bottom 6} \label{fig:37_3_A_6}
\end{subfigure}
\hspace*{-2.3cm}
\begin{subfigure}[b]{0.43\linewidth}
\includegraphics[width=\linewidth]{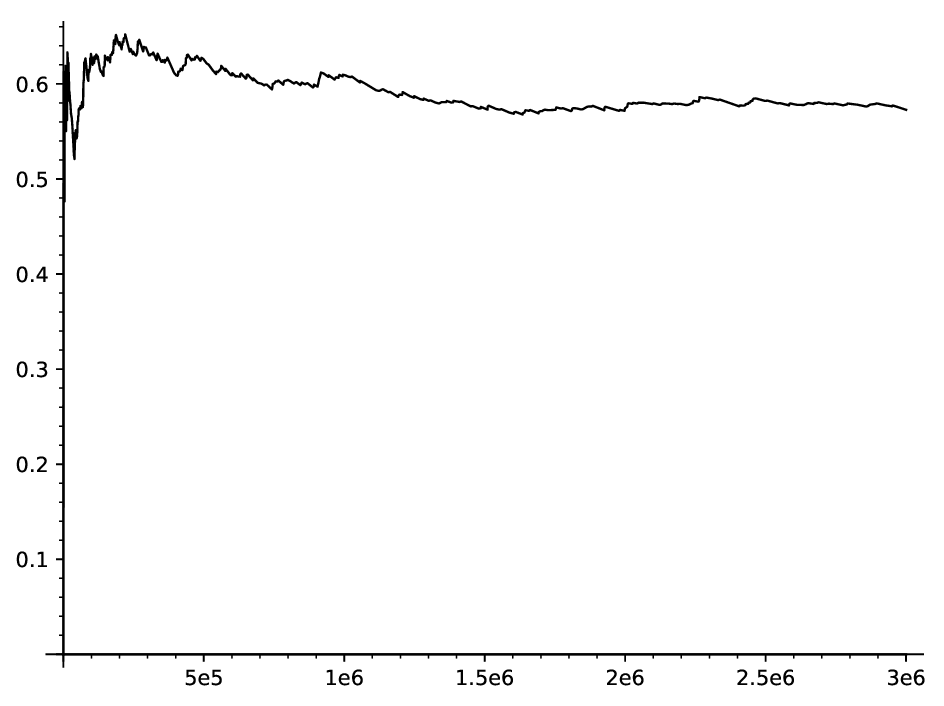}
\caption{$|l| = 7$: No -7 exists} \label{fig:37_3_A_7}
\end{subfigure}\hspace*{\fill}
\begin{subfigure}[b]{0.43\linewidth}
\includegraphics[width=\linewidth]{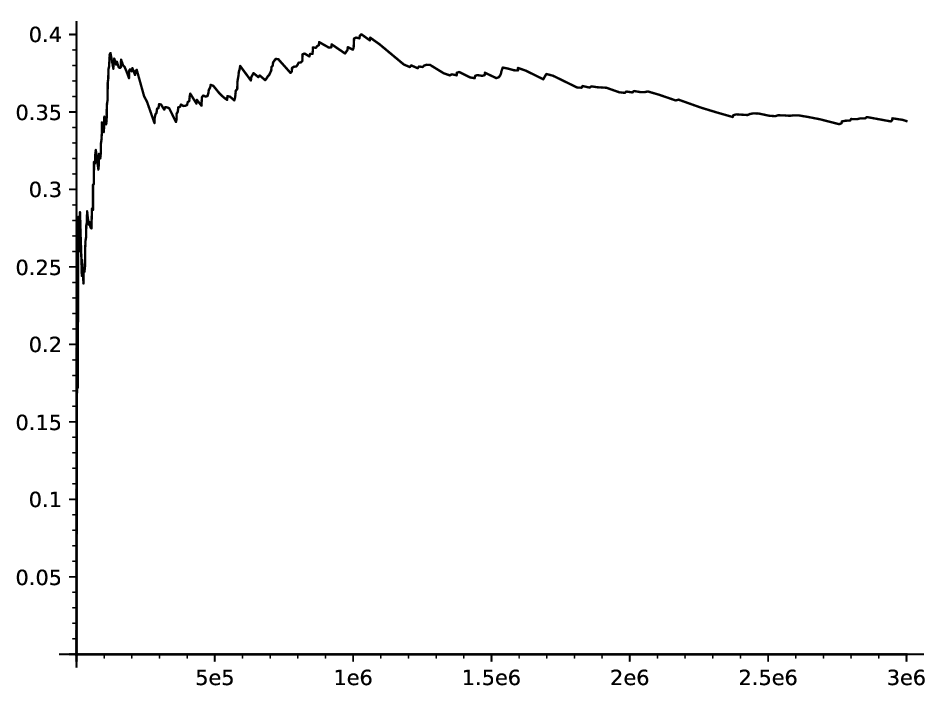}
\caption{$|l| = 8$: No 8 exists} \label{fig:37_3_A_8}
\end{subfigure}\hspace*{\fill}
\begin{subfigure}[b]{0.43\linewidth}
\includegraphics[width=\linewidth]{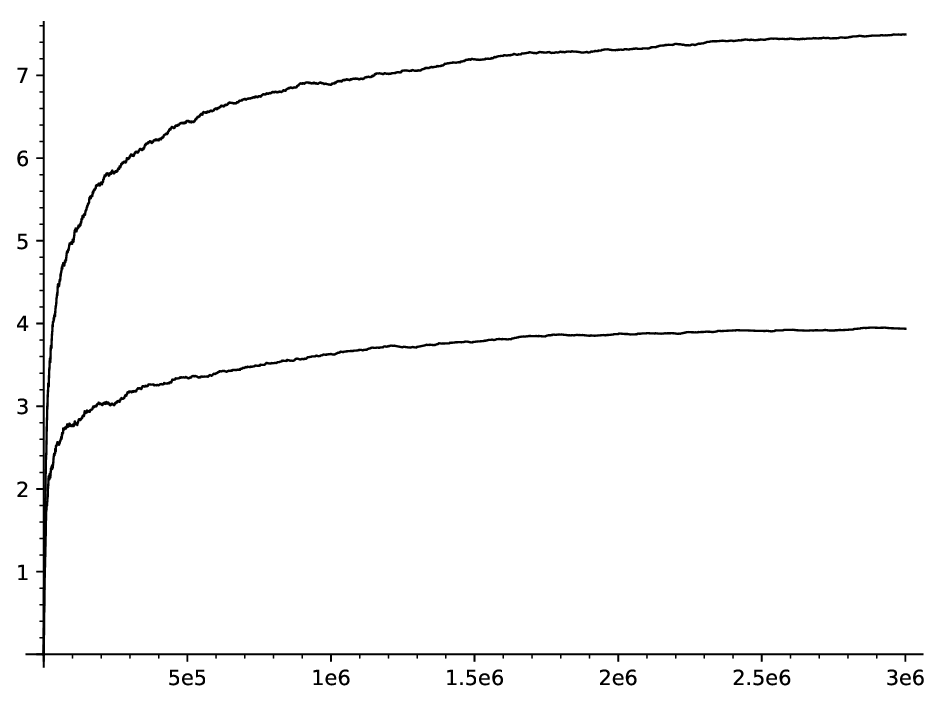}
\caption{$|l| = 9$: Top 9 bottom -9} \label{fig:37_3_A_9}
\end{subfigure}
\caption{Ratio~\eqref{ratio_A_exact} 37b1: $x(X;l)/X^{1/2}$ for $k = 3$} \label{fig:37b1_3_A_exact}
\end{figure}

\clearpage

\begin{figure}[t!] % "[t!]" placement specifier just for this example
\hspace*{-2.3cm}
\begin{subfigure}[b]{0.43\linewidth}
\includegraphics[width=\linewidth]{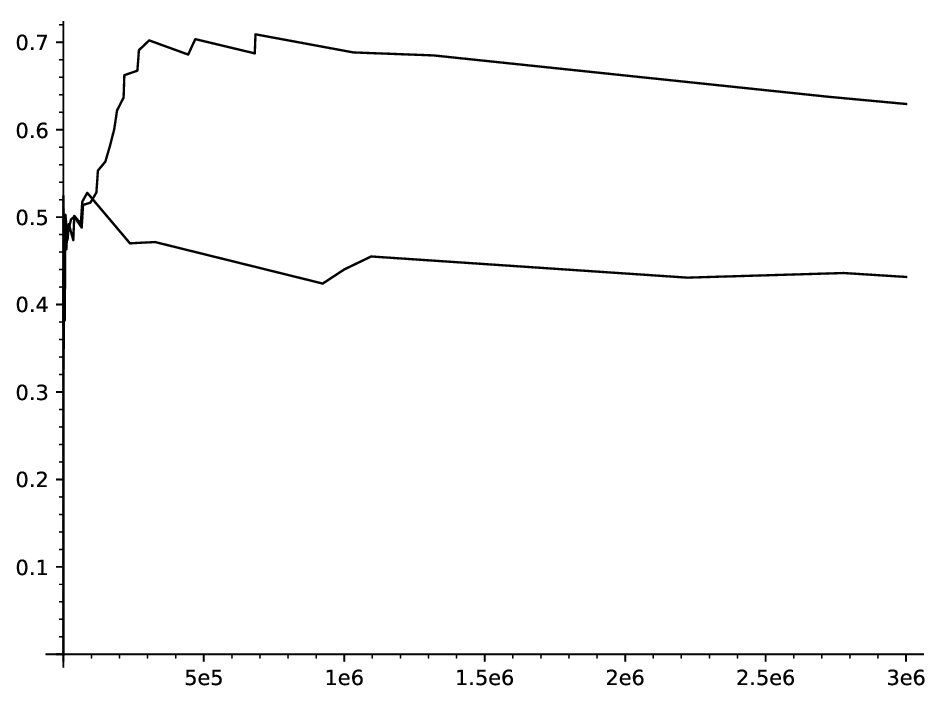}
\caption{$|l| = 1$: Top 1 bottom -1} \label{fig:11_5_A_1}
\end{subfigure}\hspace*{\fill}
\begin{subfigure}[b]{0.43\linewidth}
\includegraphics[width=\linewidth]{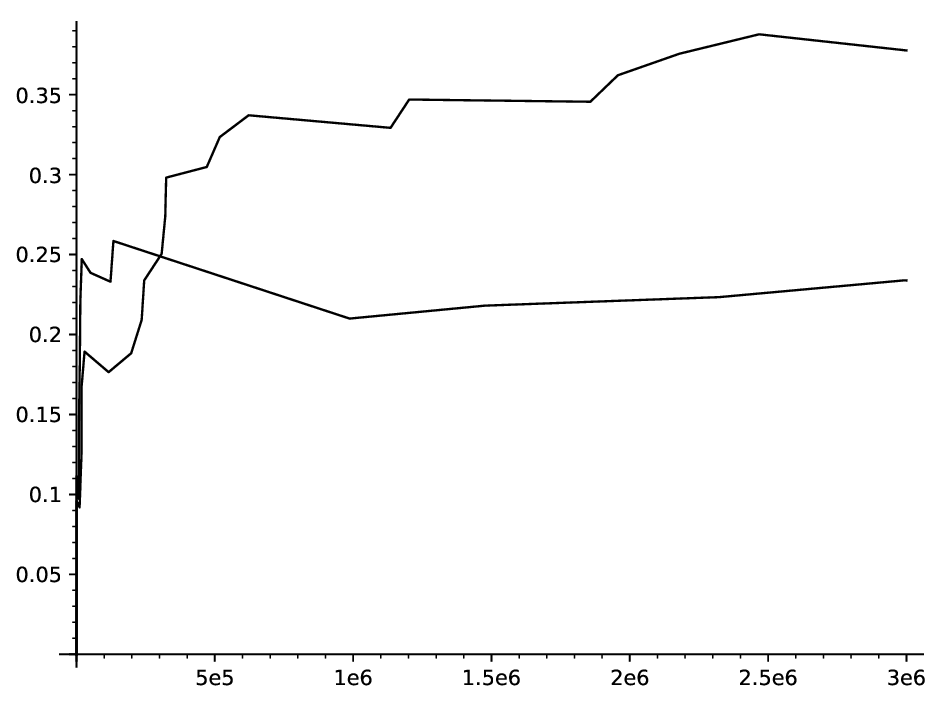}
\caption{$|l| = 4$: Top -4 bottom 4} \label{fig:11_5_A_4}
\end{subfigure}\hspace*{\fill}
\begin{subfigure}[b]{0.43\linewidth}
\includegraphics[width=\linewidth]{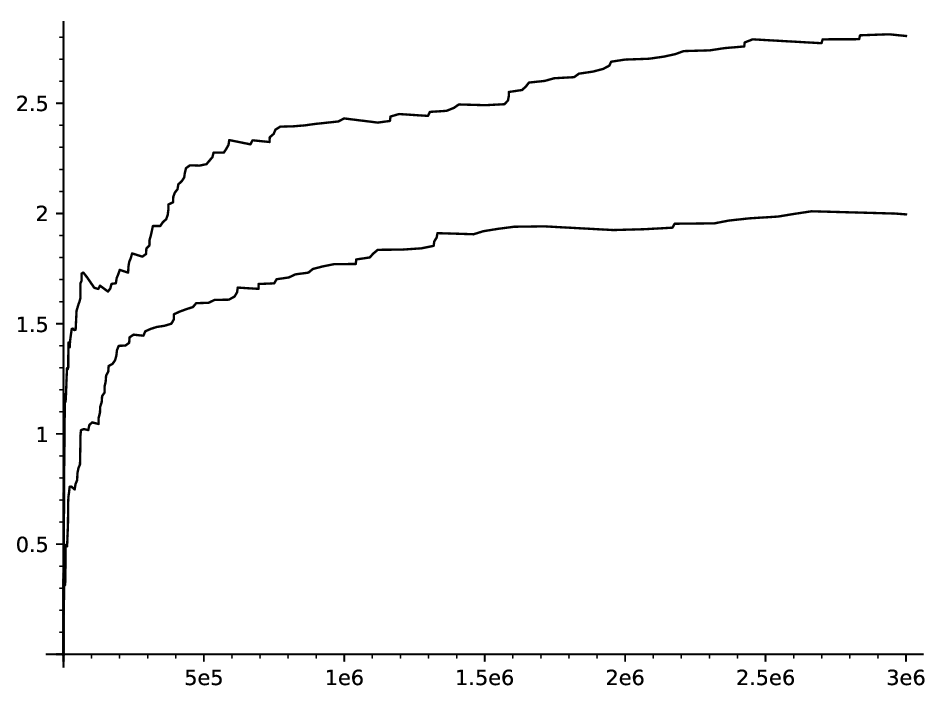}
\caption{$|l| = 5$: Top 5 bottom -5} \label{fig:11_5_A_5}
\end{subfigure}
\hspace*{-2.3cm}
\begin{subfigure}[b]{0.43\linewidth}
\includegraphics[width=\linewidth]{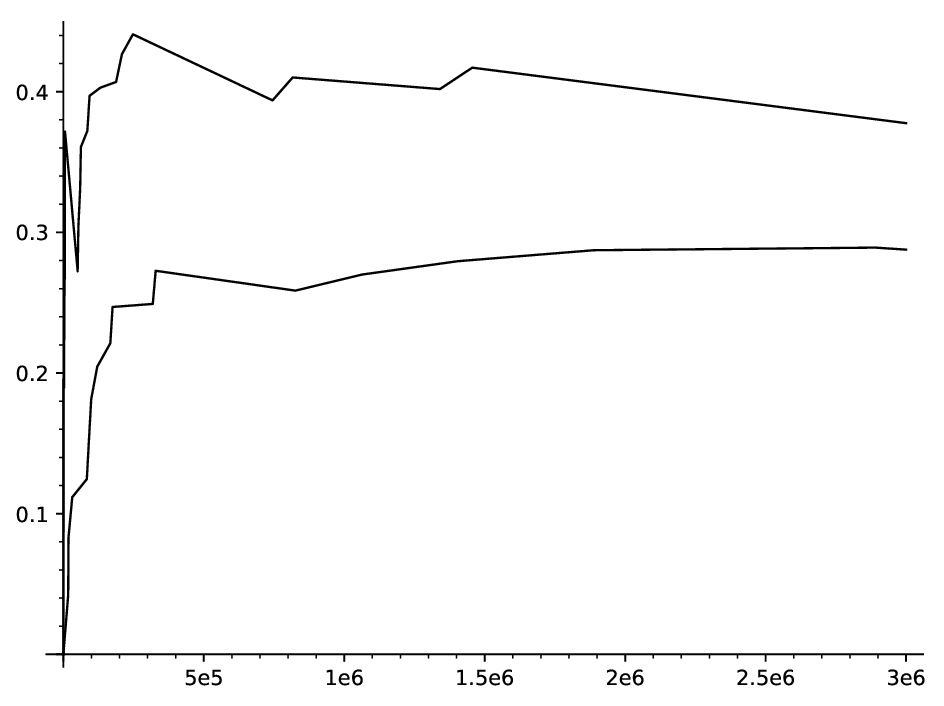}
\caption{$|l| = 9$: Top 9 bottom -9} \label{fig:11_5_A_9}
\end{subfigure}\hspace*{\fill}
\begin{subfigure}[b]{0.43\linewidth}
\includegraphics[width=\linewidth]{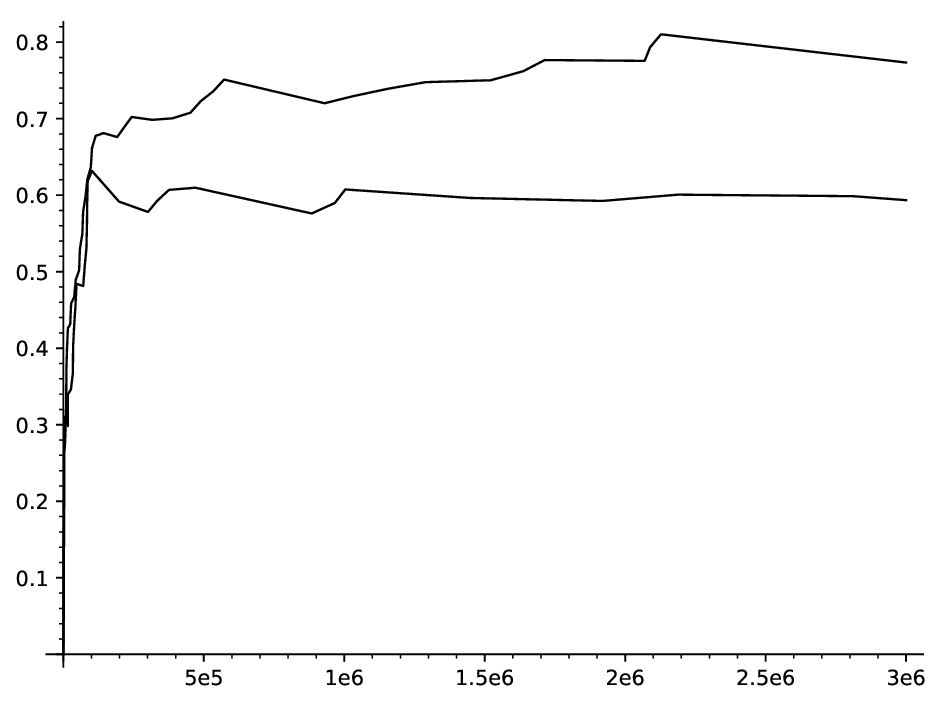}
\caption{$|l| = 11$: Top 11 bottom -11} \label{fig:11_5_A_11}
\end{subfigure}\hspace*{\fill}
\begin{subfigure}[b]{0.43\linewidth}
\includegraphics[width=\linewidth]{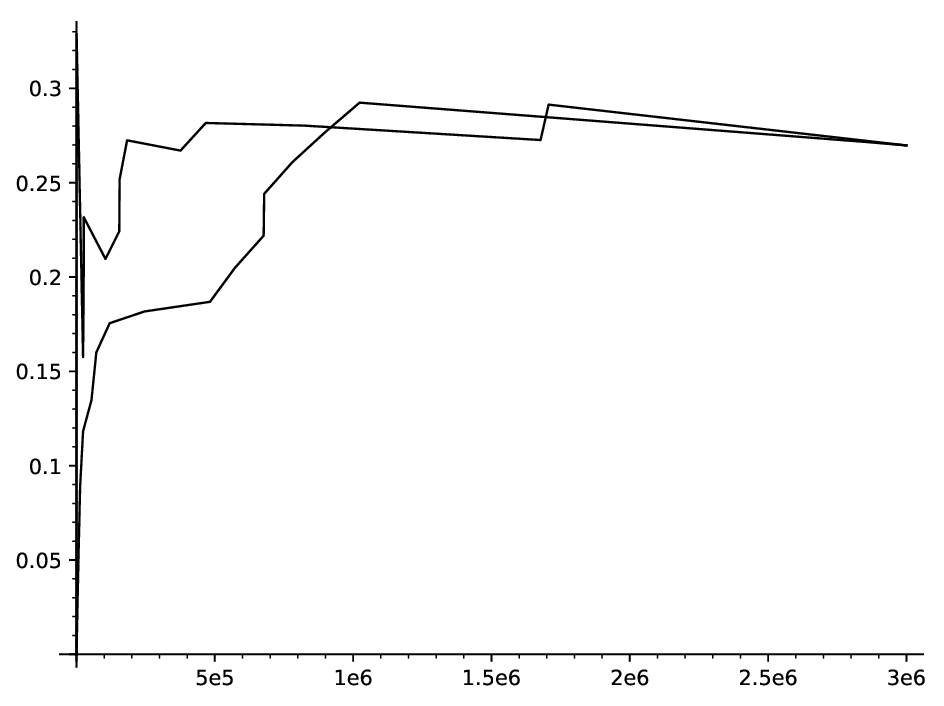}
\caption{$|l| = 16$: Top -16 bottom 16} \label{fig:11_5_A_16}
\end{subfigure}
\hspace*{-2.3cm}
\begin{subfigure}[b]{0.43\linewidth}
\includegraphics[width=\linewidth]{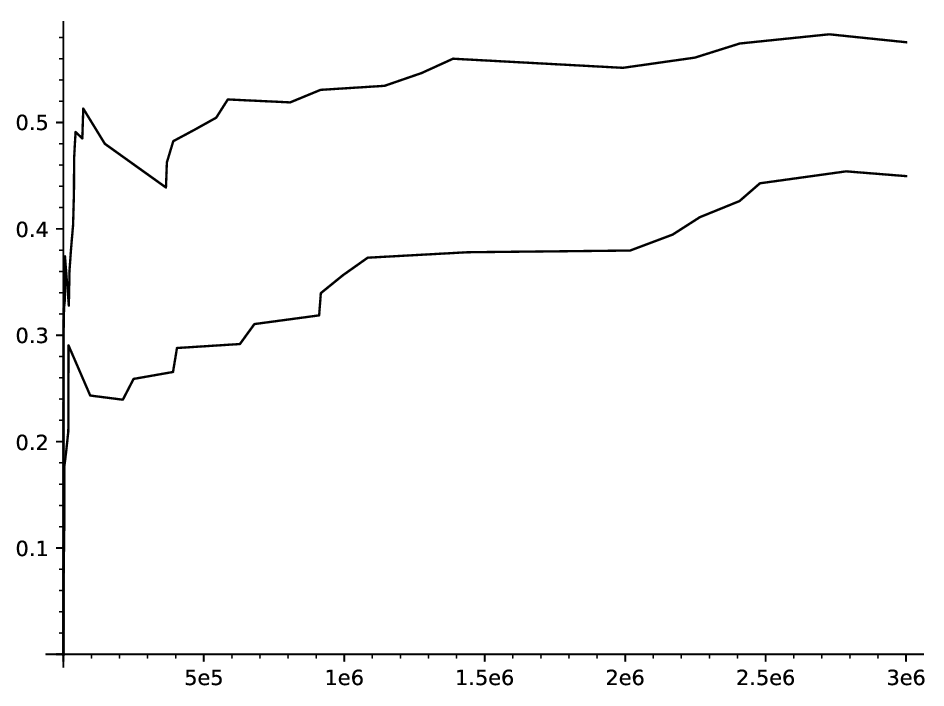}
\caption{$|l| = 19$: Top -19 bottom 19} \label{fig:11_5_A_19}
\end{subfigure}\hspace*{\fill}
\begin{subfigure}[b]{0.43\linewidth}
\includegraphics[width=\linewidth]{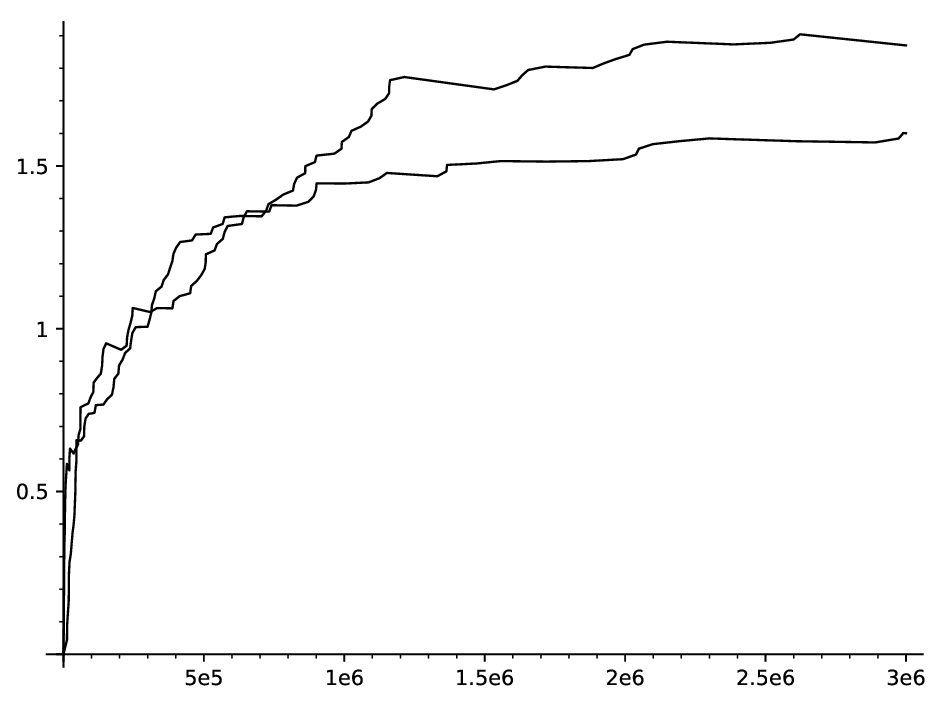}
\caption{$|l| = 20$: Top -20 bottom 20} \label{fig:11_5_A_20}
\end{subfigure}\hspace*{\fill}
\begin{subfigure}[b]{0.43\linewidth}
\includegraphics[width=\linewidth]{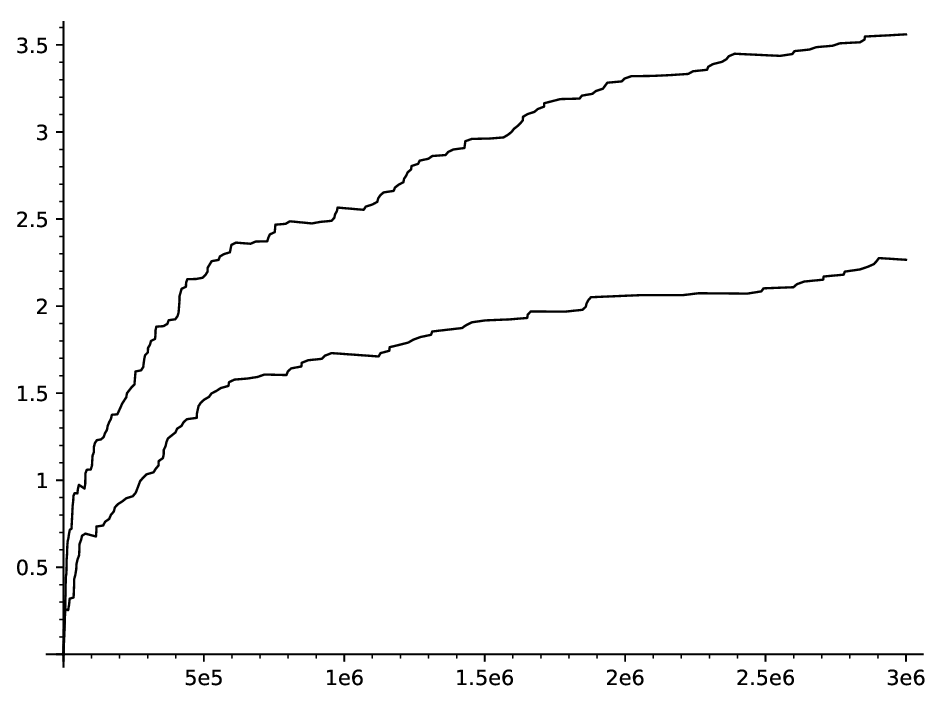}
\caption{$|l| = 25$: Top 25 bottom -25} \label{fig:11_5_A_25}
\end{subfigure}
\caption{Ratio~\eqref{ratio_A_exact} 11a1: $x(X;l)/\log^2(X)$ for $k = 5$} \label{fig:11a1_5_A_exact}
\end{figure}

\clearpage

\begin{figure}[t!] % "[t!]" placement specifier just for this example
\hspace*{-2.3cm}
\begin{subfigure}[b]{0.43\linewidth}
\includegraphics[width=\linewidth]{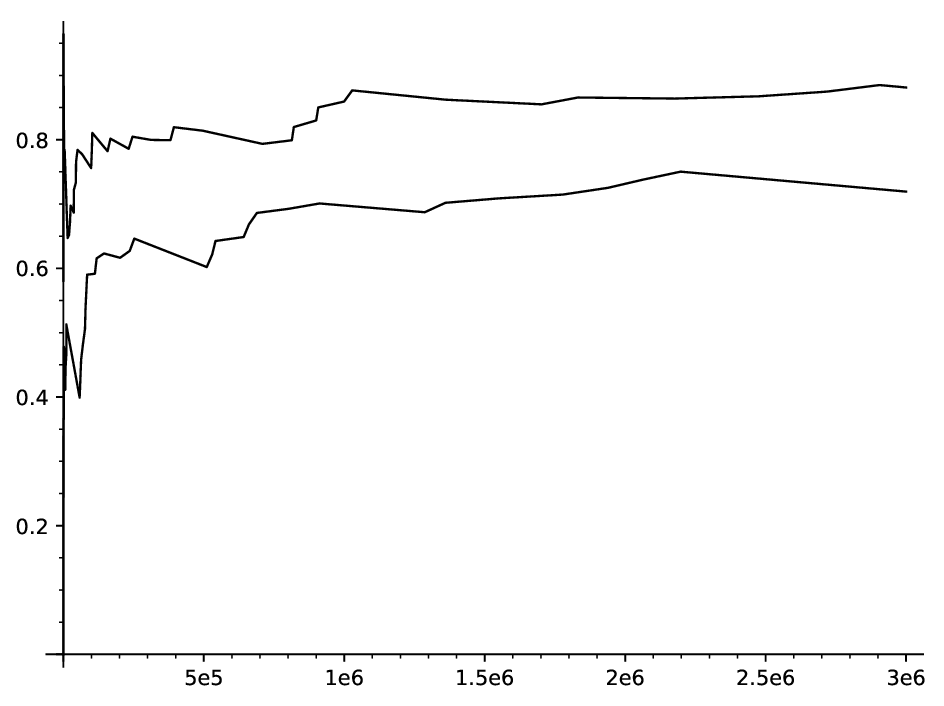}
\caption{$|l| = 1$: Top -1 bottom 1} \label{fig:14_5_A_1}
\end{subfigure}\hspace*{\fill}
\begin{subfigure}[b]{0.43\linewidth}
\includegraphics[width=\linewidth]{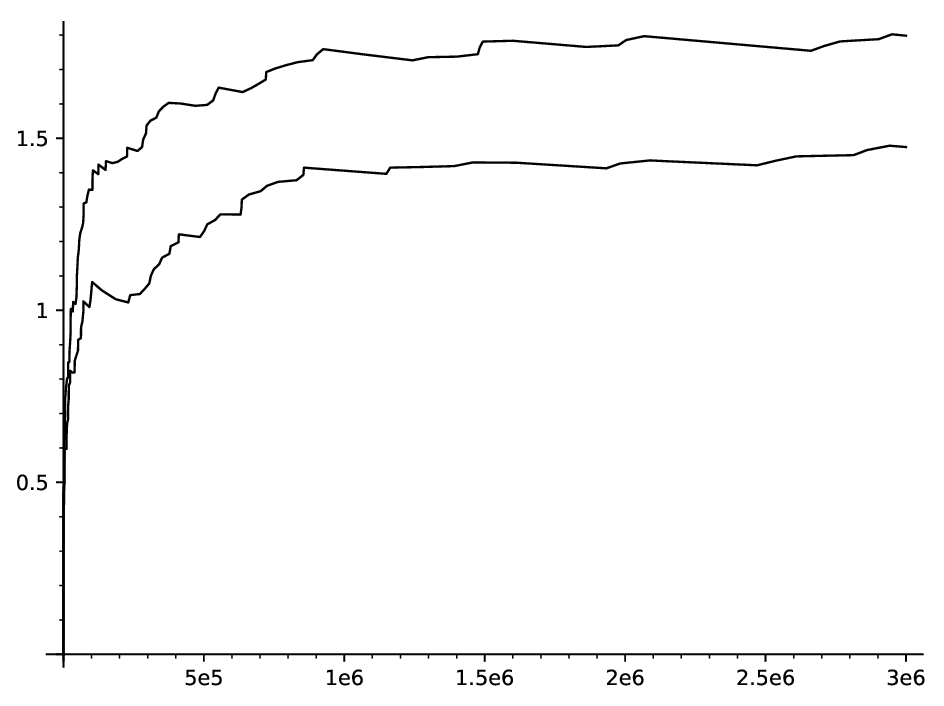}
\caption{$|l| = 4$: Top 4 bottom -4} \label{fig:14_5_A_4}
\end{subfigure}\hspace*{\fill}
\begin{subfigure}[b]{0.43\linewidth}
\includegraphics[width=\linewidth]{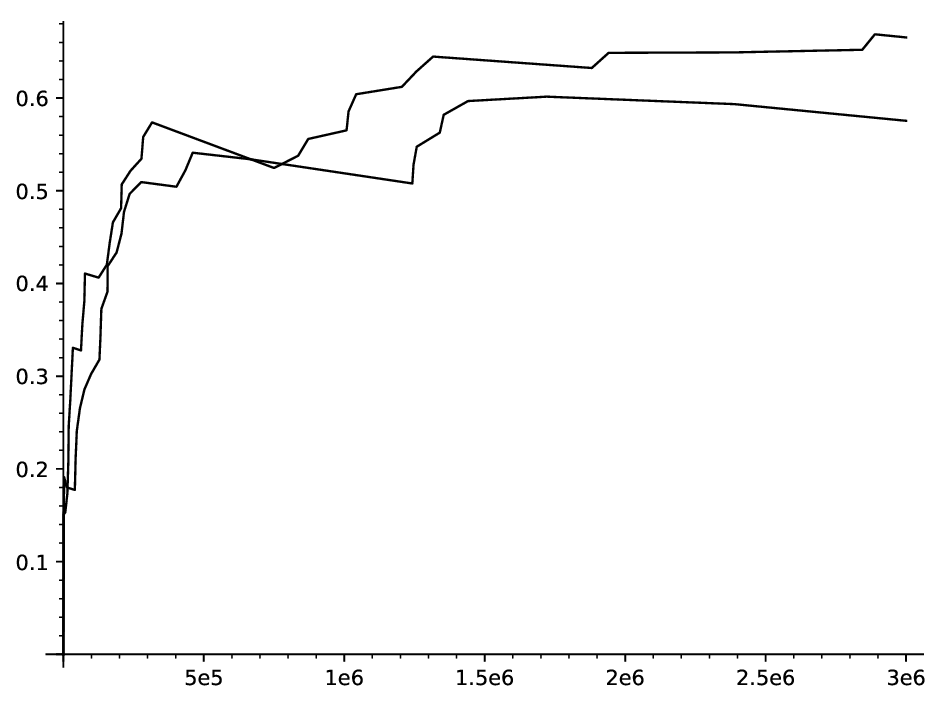}
\caption{$|l| = 5$: Top -5 bottom 5} \label{fig:14_5_A_5}
\end{subfigure}
\hspace*{-2.3cm}
\begin{subfigure}[b]{0.43\linewidth}
\includegraphics[width=\linewidth]{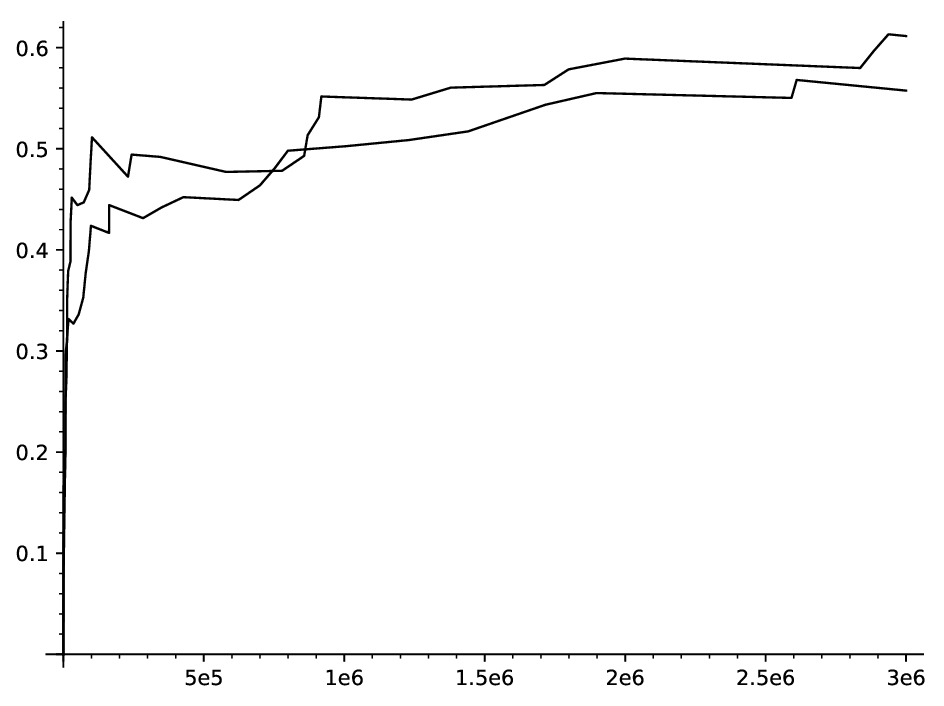}
\caption{$|l| = 9$: Top -9 bottom 9} \label{fig:14_5_A_9}
\end{subfigure}\hspace*{\fill}
\begin{subfigure}[b]{0.43\linewidth}
\includegraphics[width=\linewidth]{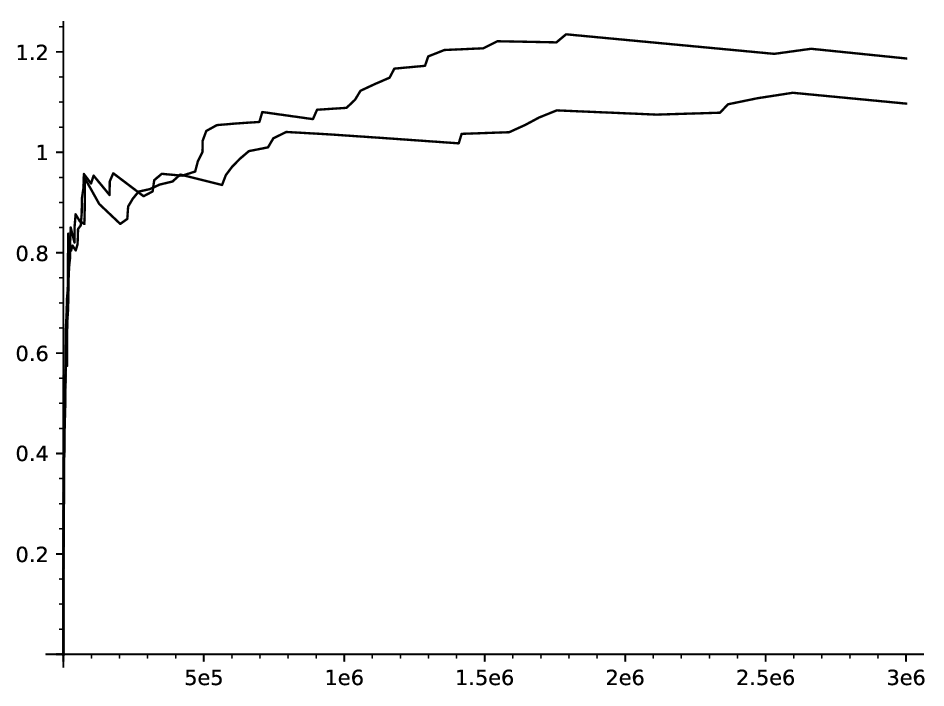}
\caption{$|l| = 11$: Top -11 bottom 11} \label{fig:14_5_A_11}
\end{subfigure}\hspace*{\fill}
\begin{subfigure}[b]{0.43\linewidth}
\includegraphics[width=\linewidth]{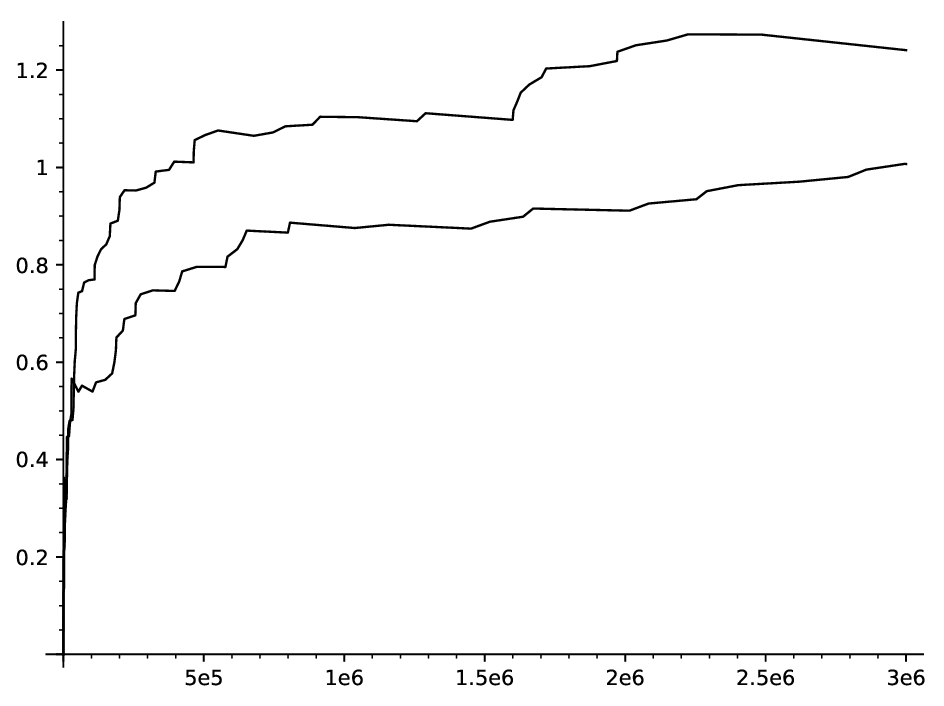}
\caption{$|l| = 16$: Top -16 bottom 16} \label{fig:14_5_A_16}
\end{subfigure}
\hspace*{-2.3cm}
\begin{subfigure}[b]{0.43\linewidth}
\includegraphics[width=\linewidth]{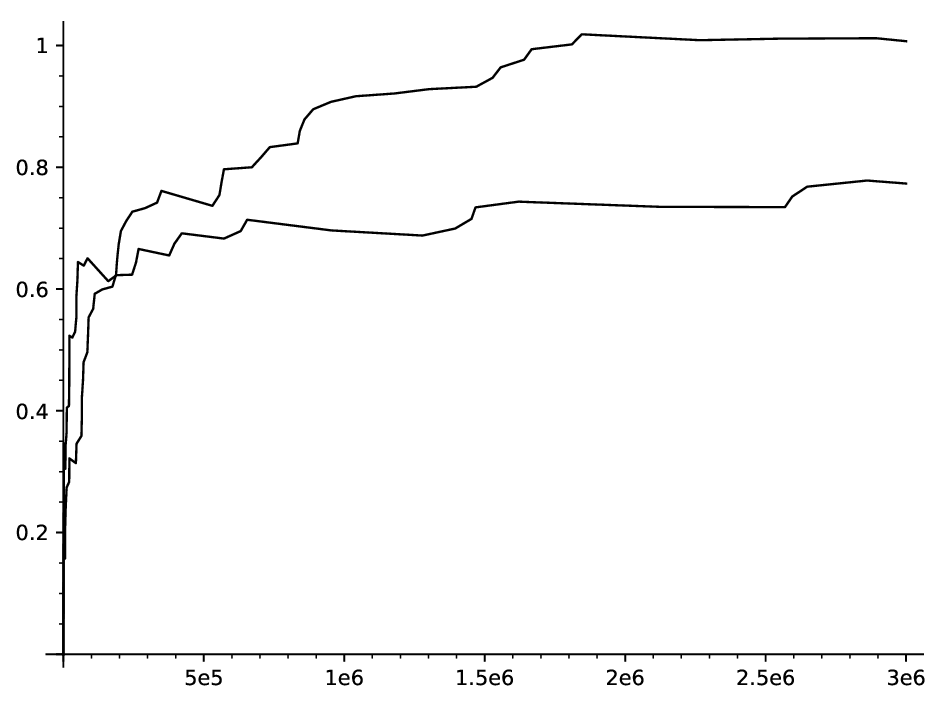}
\caption{$|l| = 19$: Top 19 bottom -19} \label{fig:14_5_A_19}
\end{subfigure}\hspace*{\fill}
\begin{subfigure}[b]{0.43\linewidth}
\includegraphics[width=\linewidth]{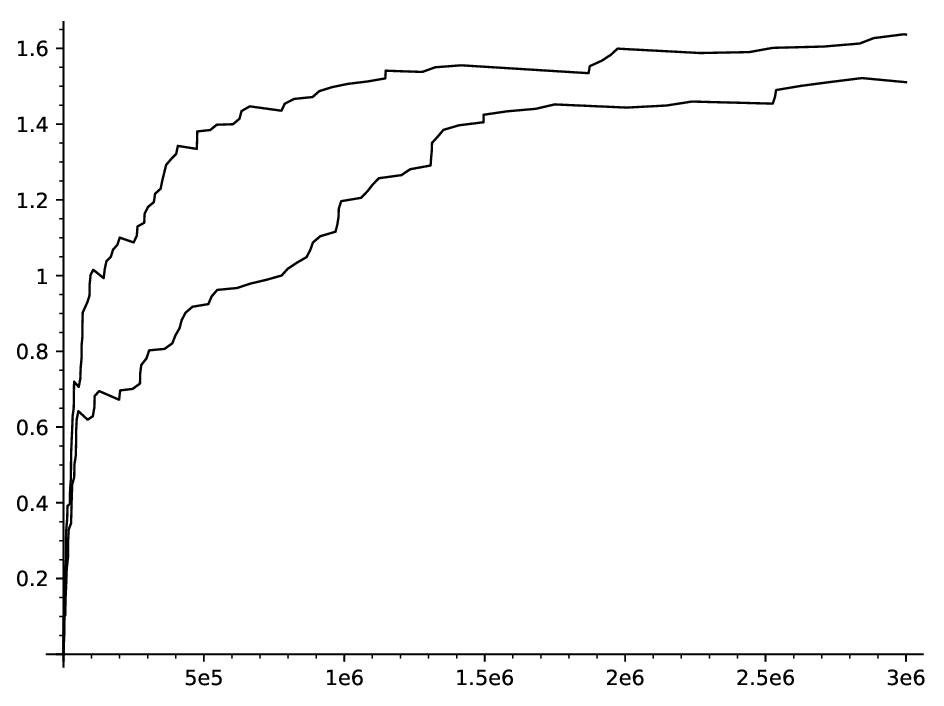}
\caption{$|l| = 20$: Top 20 bottom -20} \label{fig:14_5_A_20}
\end{subfigure}\hspace*{\fill}
\begin{subfigure}[b]{0.43\linewidth}
\includegraphics[width=\linewidth]{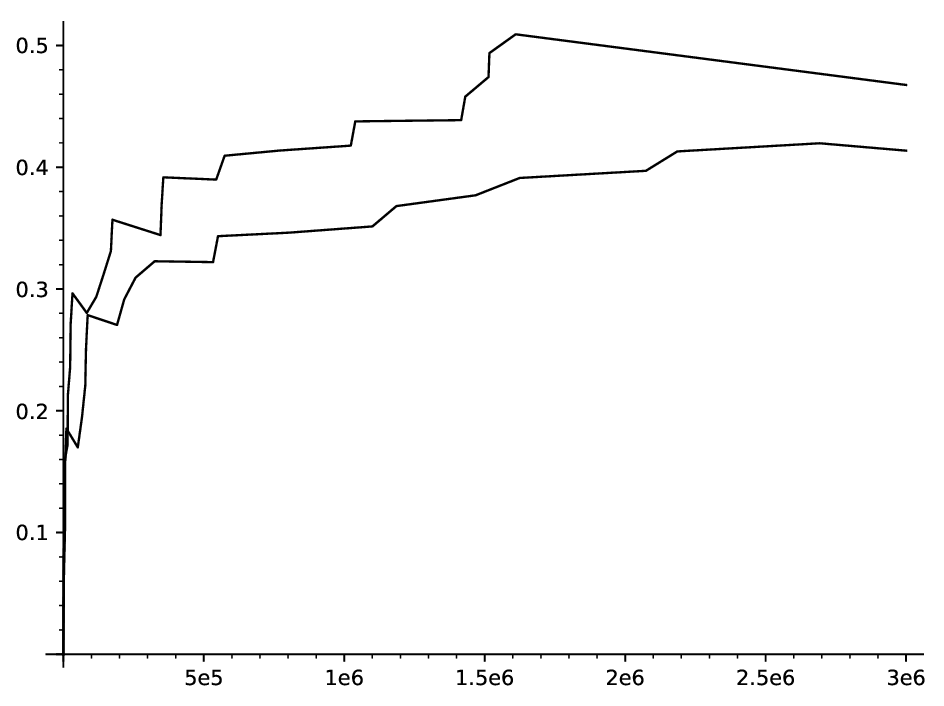}
\caption{$|l| = 25$: Top 25 bottom -25} \label{fig:14_5_A_25}
\end{subfigure}
\caption{Ratio~\eqref{ratio_A_exact} 14a1: $x(X;l)/\log^2(X)$ for $k = 5$} \label{fig:14a1_5_A_exact}
\end{figure}

\clearpage

\begin{figure}[t!] % "[t!]" placement specifier just for this example
\hspace*{-2.3cm}
\begin{subfigure}[b]{0.43\linewidth}
\includegraphics[width=\linewidth]{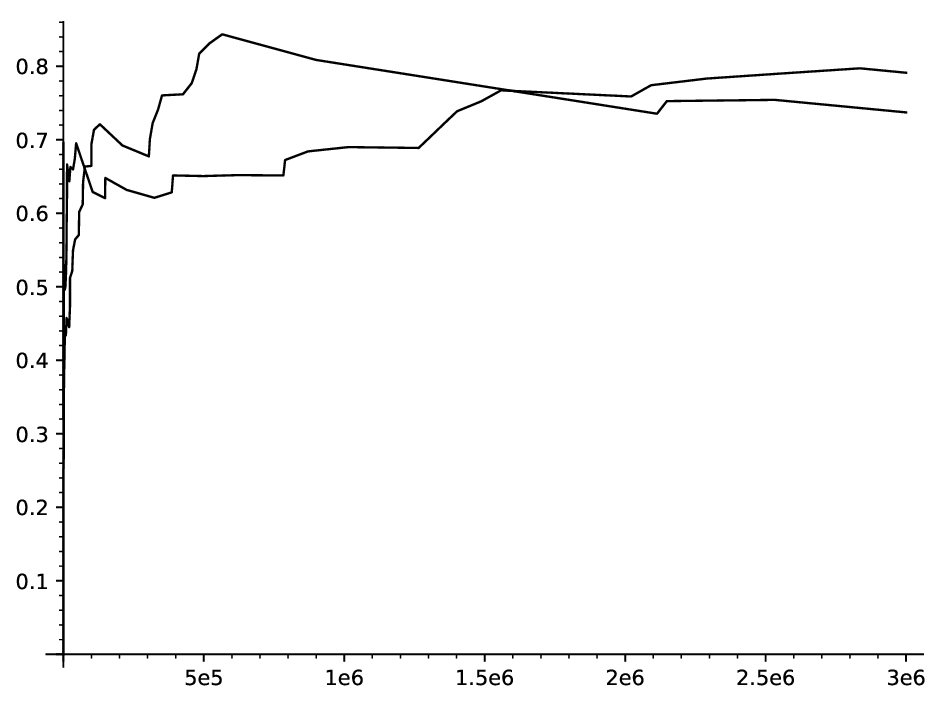}
\caption{$|l| = 1$: Top -1 bottom 1} \label{fig:15_5_A_1}
\end{subfigure}\hspace*{\fill}
\begin{subfigure}[b]{0.43\linewidth}
\includegraphics[width=\linewidth]{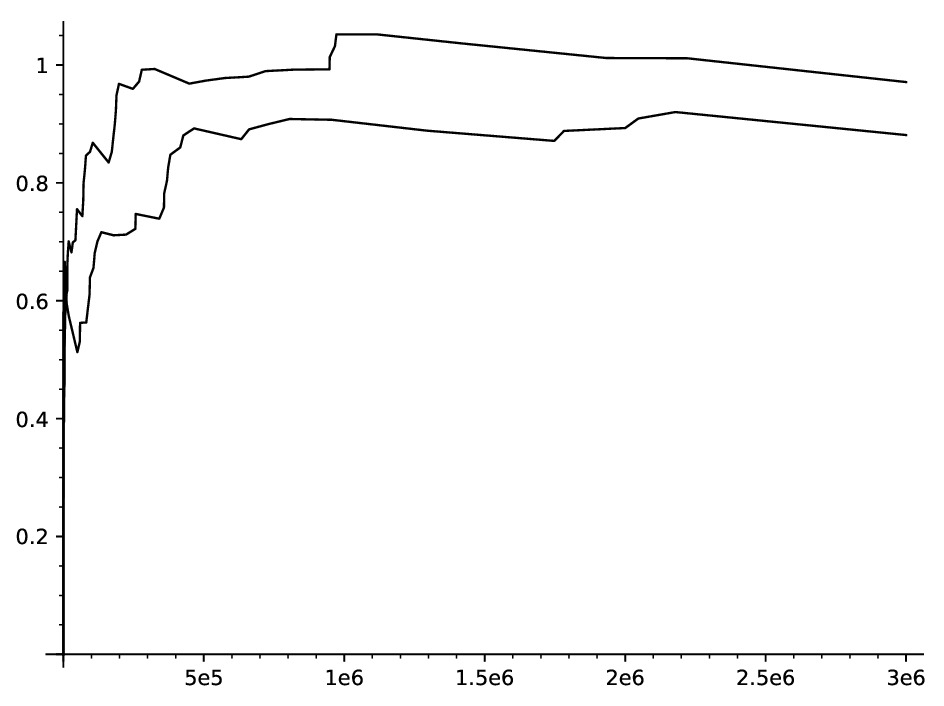}
\caption{$|l| = 4$: Top 4 bottom -4} \label{fig:15_5_A_4}
\end{subfigure}\hspace*{\fill}
\begin{subfigure}[b]{0.43\linewidth}
\includegraphics[width=\linewidth]{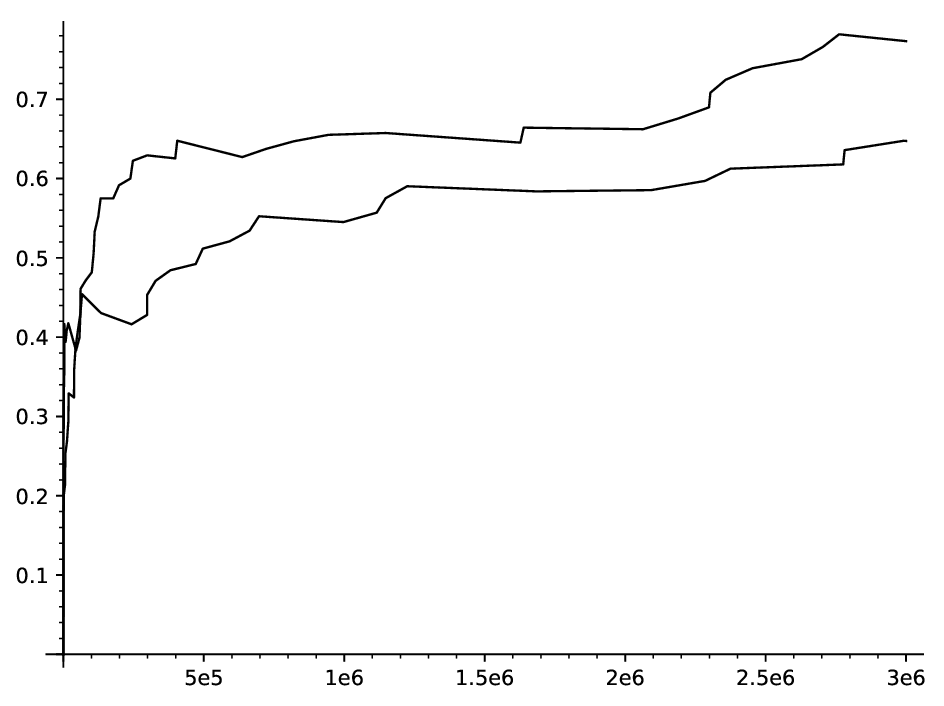}
\caption{$|l| = 5$: Top -5 bottom 5} \label{fig:15_5_A_5}
\end{subfigure}
\hspace*{-2.3cm}
\begin{subfigure}[b]{0.43\linewidth}
\includegraphics[width=\linewidth]{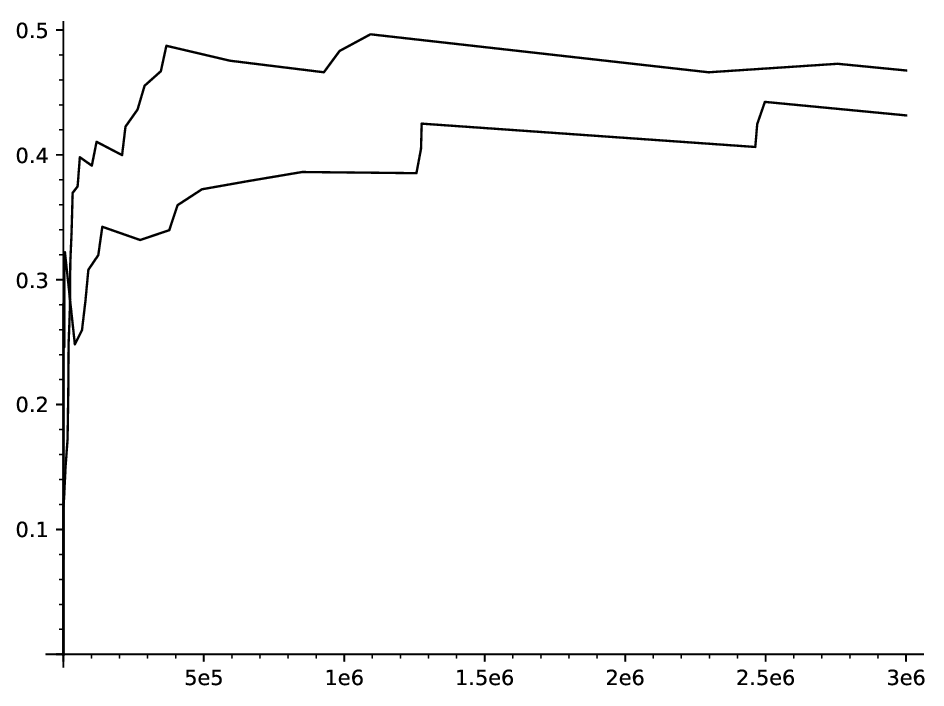}
\caption{$|l| = 9$: Top -9 bottom 9} \label{fig:15_5_A_9}
\end{subfigure}\hspace*{\fill}
\begin{subfigure}[b]{0.43\linewidth}
\includegraphics[width=\linewidth]{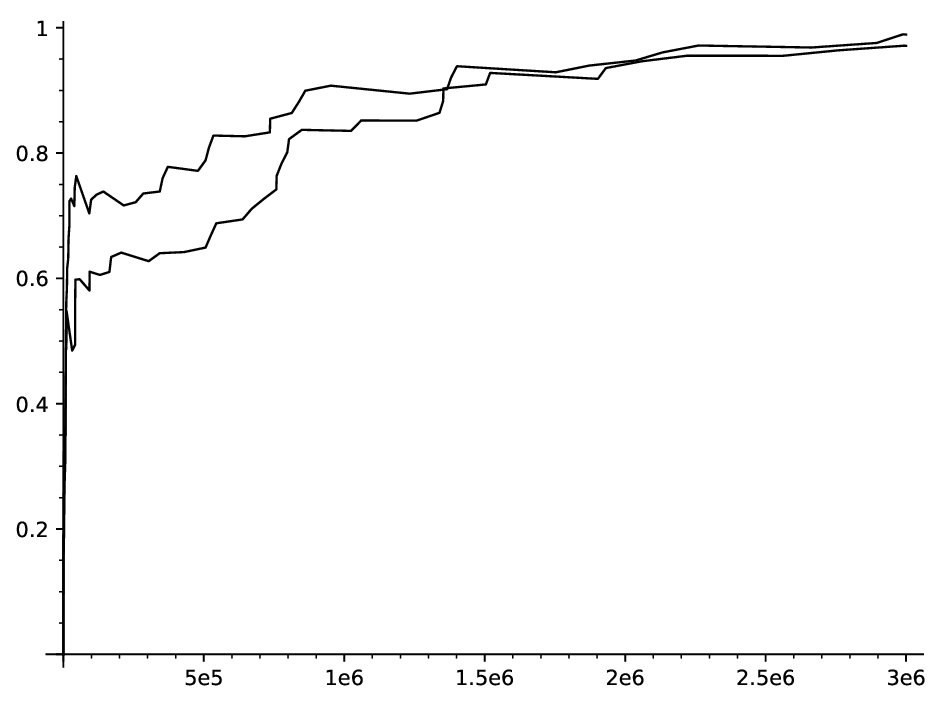}
\caption{$|l| = 11$: Top 11 bottom -11} \label{fig:15_5_A_11}
\end{subfigure}\hspace*{\fill}
\begin{subfigure}[b]{0.43\linewidth}
\includegraphics[width=\linewidth]{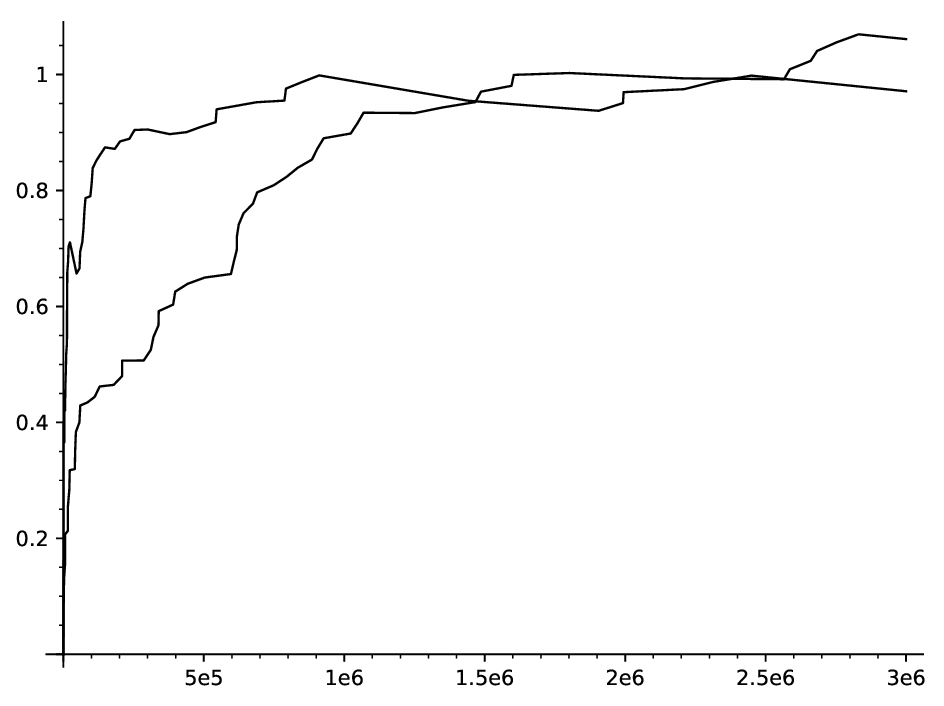}
\caption{$|l| = 16$: Top 16 bottom -16} \label{fig:15_5_A_16}
\end{subfigure}
\hspace*{-2.3cm}
\begin{subfigure}[b]{0.43\linewidth}
\includegraphics[width=\linewidth]{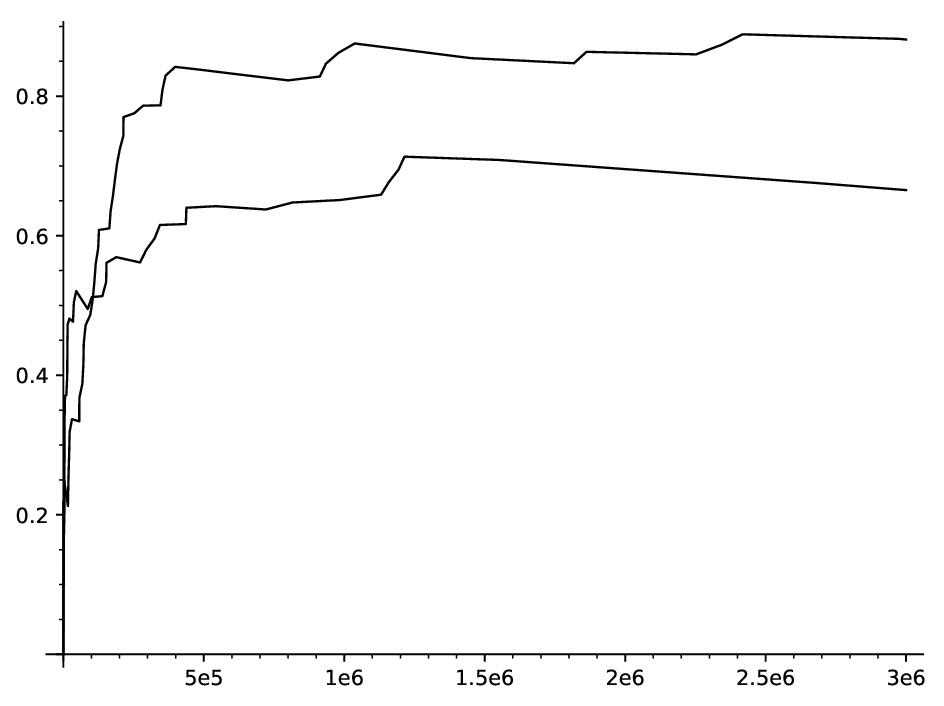}
\caption{$|l| = 19$: Top 19 bottom -19} \label{fig:15_5_A_19}
\end{subfigure}\hspace*{\fill}
\begin{subfigure}[b]{0.43\linewidth}
\includegraphics[width=\linewidth]{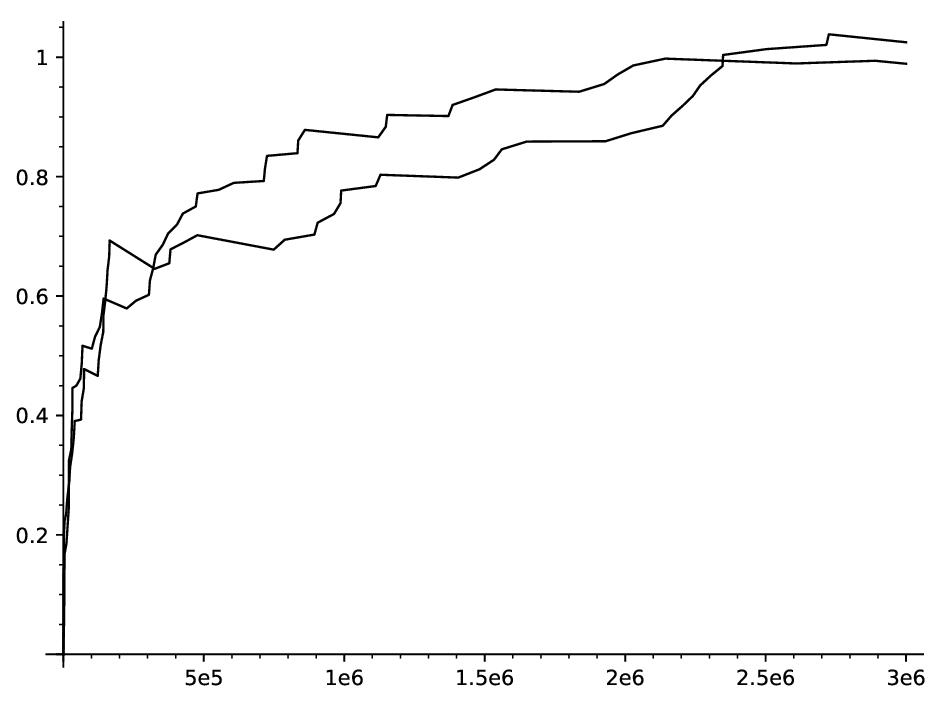}
\caption{$|l| = 20$: Top 20 bottom -20} \label{fig:15_5_A_20}
\end{subfigure}\hspace*{\fill}
\begin{subfigure}[b]{0.43\linewidth}
\includegraphics[width=\linewidth]{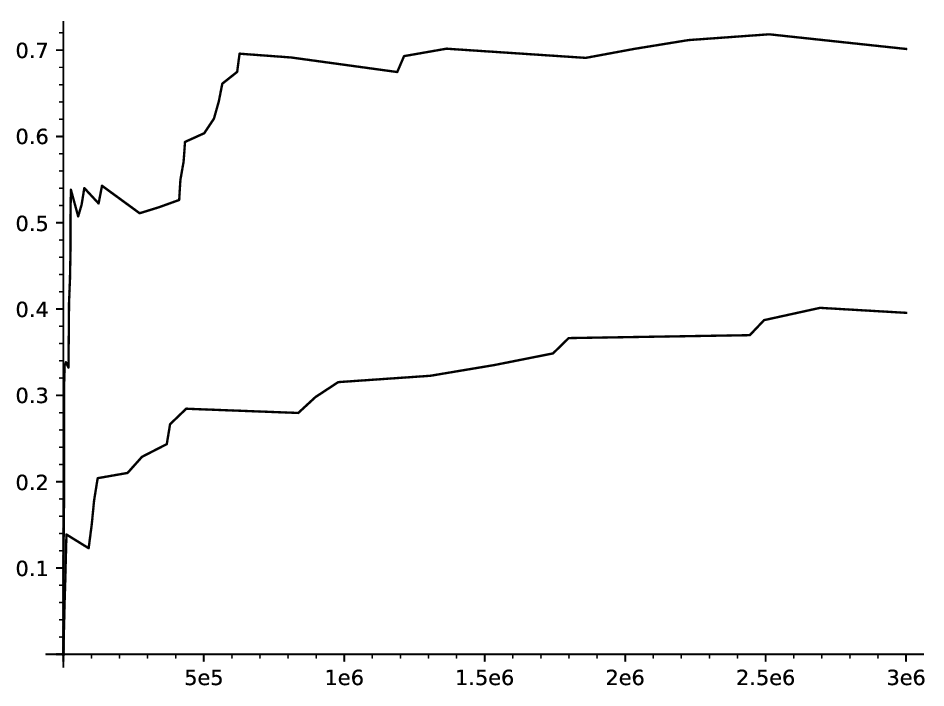}
\caption{$|l| = 25$: Top -25 bottom 25} \label{fig:15_5_A_25}
\end{subfigure}
\caption{Ratio~\eqref{ratio_A_exact} 15a1: $x(X;l)/\log^2(X)$ for $k = 5$} \label{fig:15a1_5_A_exact}
\end{figure}

\clearpage

\begin{figure}[t!] % "[t!]" placement specifier just for this example
\hspace*{-2.3cm}
\begin{subfigure}[b]{0.43\linewidth}
\includegraphics[width=\linewidth]{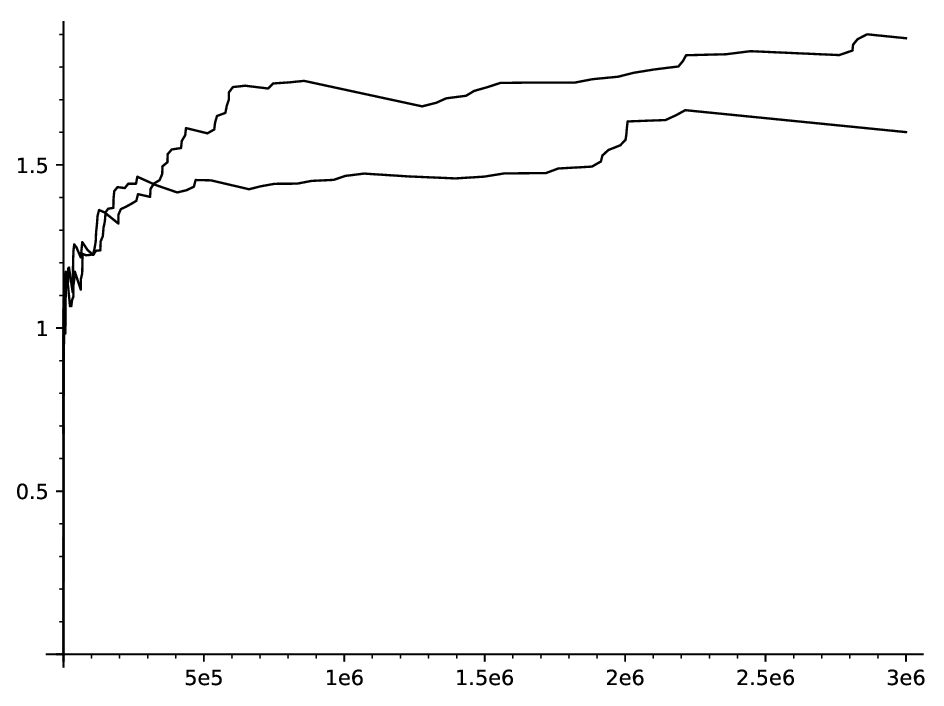}
\caption{$|l| = 1$: Top -1 bottom 1} \label{fig:17_5_A_1}
\end{subfigure}\hspace*{\fill}
\begin{subfigure}[b]{0.43\linewidth}
\includegraphics[width=\linewidth]{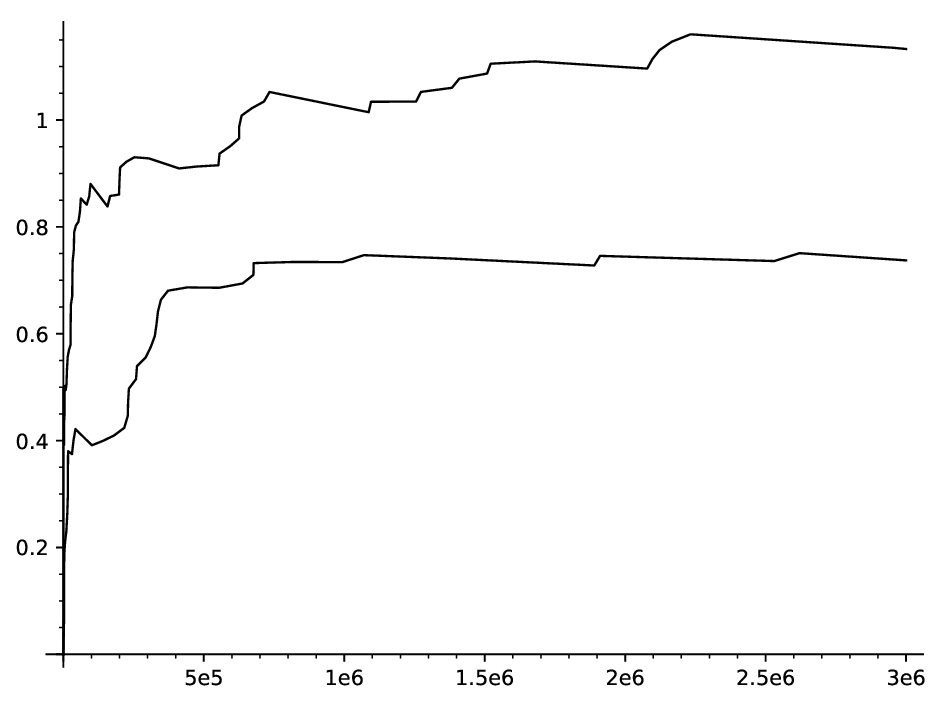}
\caption{$|l| = 4$: Top 4 bottom -4} \label{fig:17_5_A_4}
\end{subfigure}\hspace*{\fill}
\begin{subfigure}[b]{0.43\linewidth}
\includegraphics[width=\linewidth]{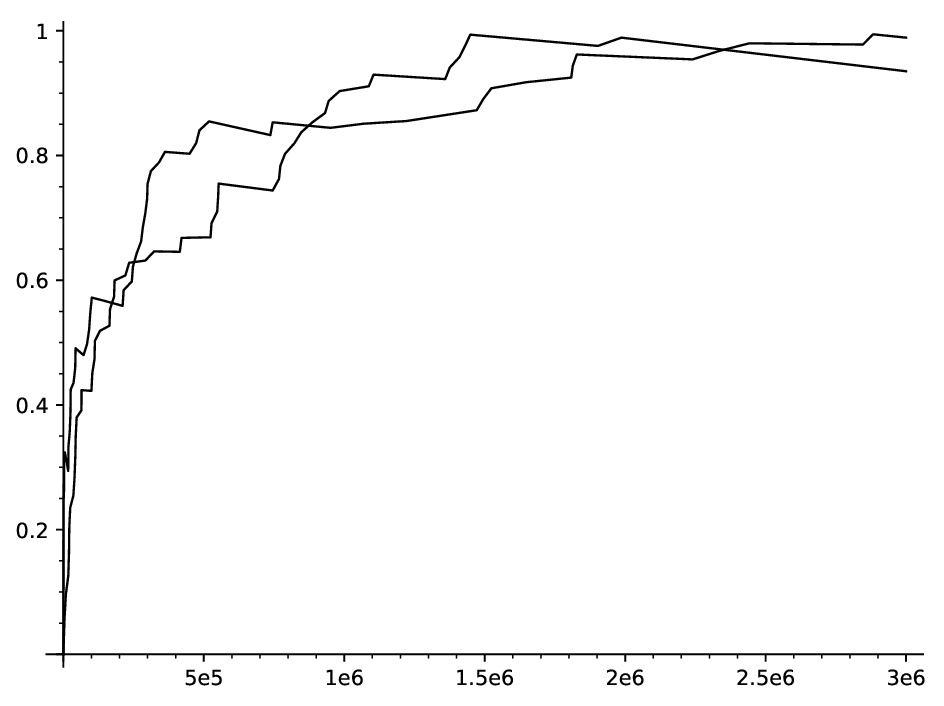}
\caption{$|l| = 5$: Top 5 bottom -5} \label{fig:17_5_A_5}
\end{subfigure}
\hspace*{-2.3cm}
\begin{subfigure}[b]{0.43\linewidth}
\includegraphics[width=\linewidth]{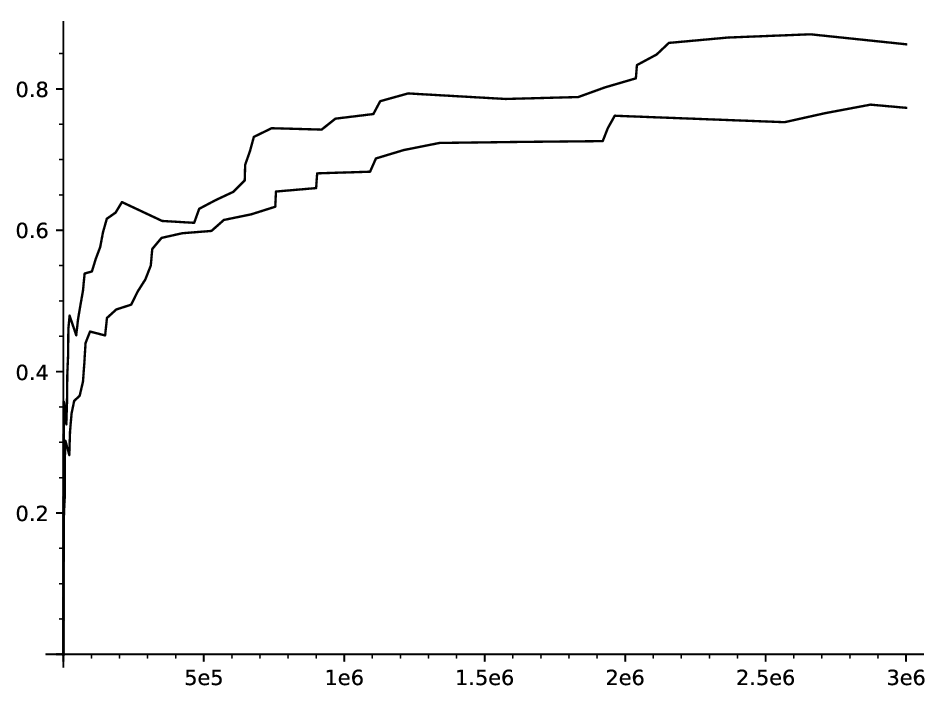}
\caption{$|l| = 9$: Top 9 bottom -9} \label{fig:17_5_A_9}
\end{subfigure}\hspace*{\fill}
\begin{subfigure}[b]{0.43\linewidth}
\includegraphics[width=\linewidth]{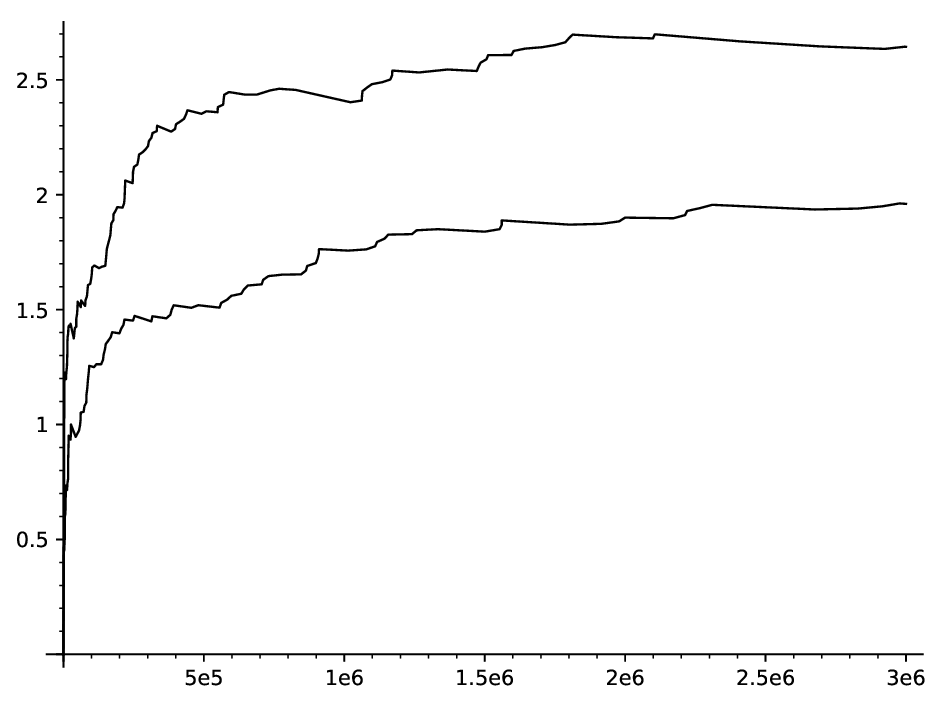}
\caption{$|l| = 11$: Top 11 bottom -11} \label{fig:17_5_A_11}
\end{subfigure}\hspace*{\fill}
\begin{subfigure}[b]{0.43\linewidth}
\includegraphics[width=\linewidth]{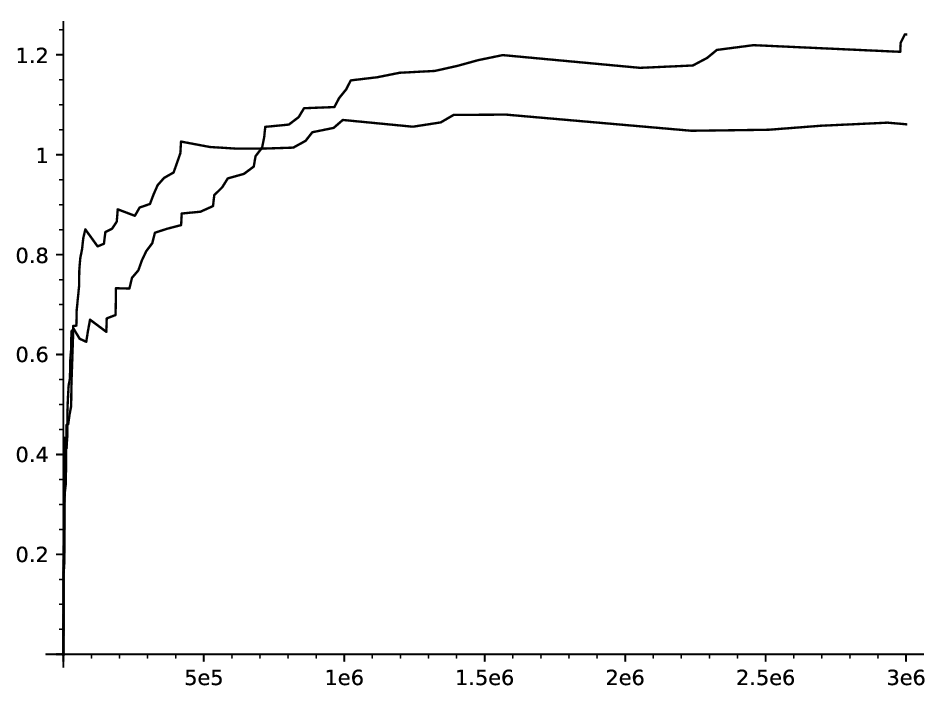}
\caption{$|l| = 16$: Top 16 bottom -16} \label{fig:17_5_A_16}
\end{subfigure}
\hspace*{-2.3cm}
\begin{subfigure}[b]{0.43\linewidth}
\includegraphics[width=\linewidth]{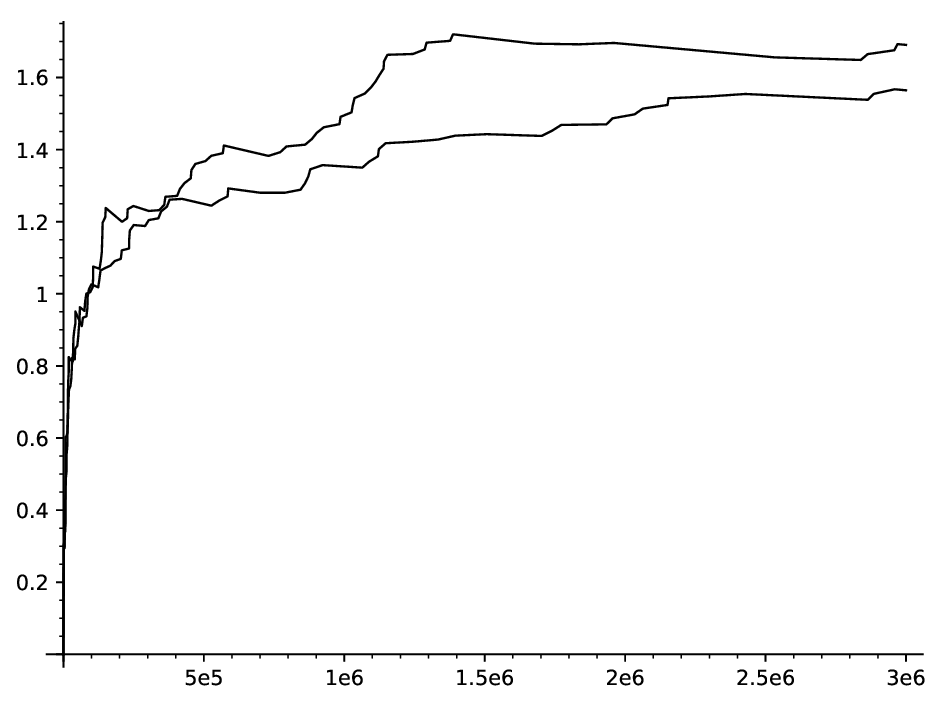}
\caption{$|l| = 19$: Top 19 bottom -19} \label{fig:17_5_A_19}
\end{subfigure}\hspace*{\fill}
\begin{subfigure}[b]{0.43\linewidth}
\includegraphics[width=\linewidth]{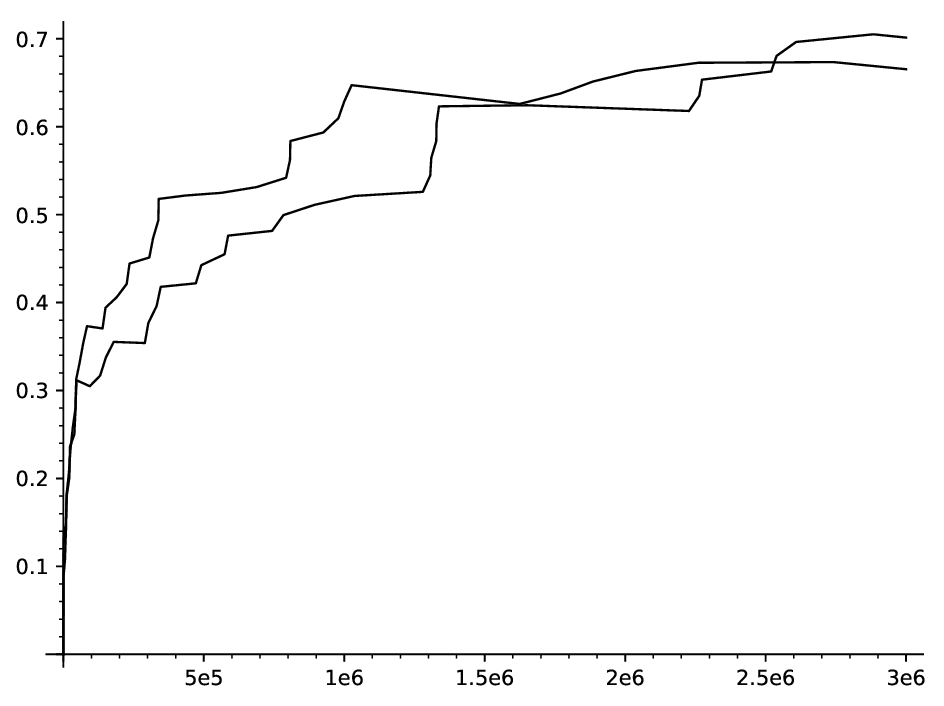}
\caption{$|l| = 20$: Top -20 bottom 20} \label{fig:17_5_A_20}
\end{subfigure}\hspace*{\fill}
\begin{subfigure}[b]{0.43\linewidth}
\includegraphics[width=\linewidth]{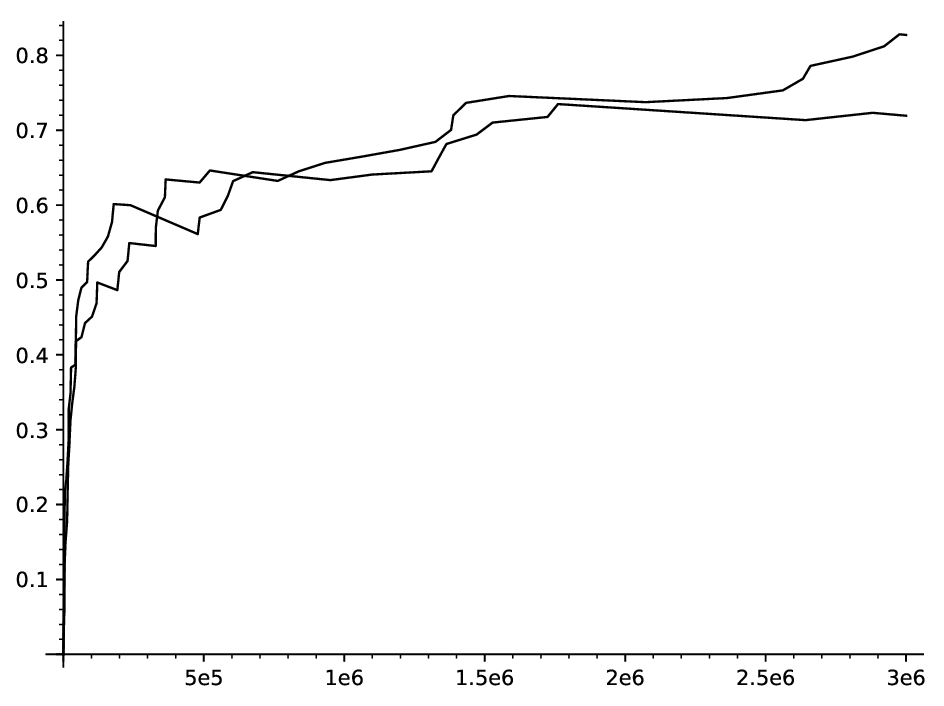}
\caption{$|l| = 25$: Top 25 bottom -25} \label{fig:17_5_A_25}
\end{subfigure}
\caption{Ratio~\eqref{ratio_A_exact} 17a1: $x(X;l)/\log^2(X)$ for $k = 5$} \label{fig:17a1_5_A_exact}
\end{figure}

\clearpage

\begin{figure}[t!] % "[t!]" placement specifier just for this example
\hspace*{-2.3cm}
\begin{subfigure}[b]{0.43\linewidth}
\includegraphics[width=\linewidth]{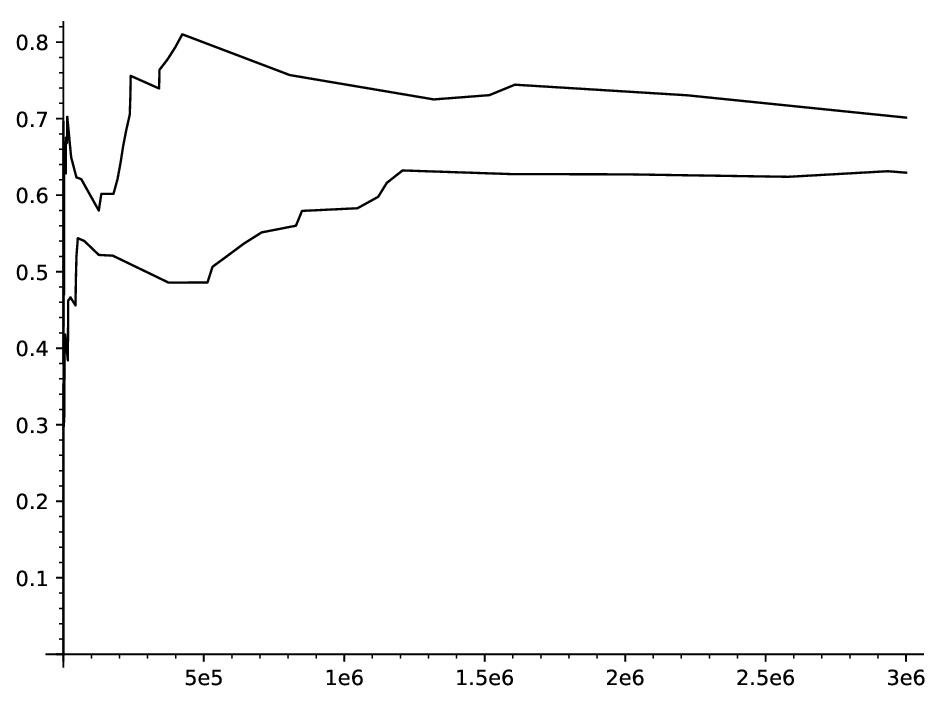}
\caption{$|l| = 1$: Top -1 bottom 1} \label{fig:19_5_A_1}
\end{subfigure}\hspace*{\fill}
\begin{subfigure}[b]{0.43\linewidth}
\includegraphics[width=\linewidth]{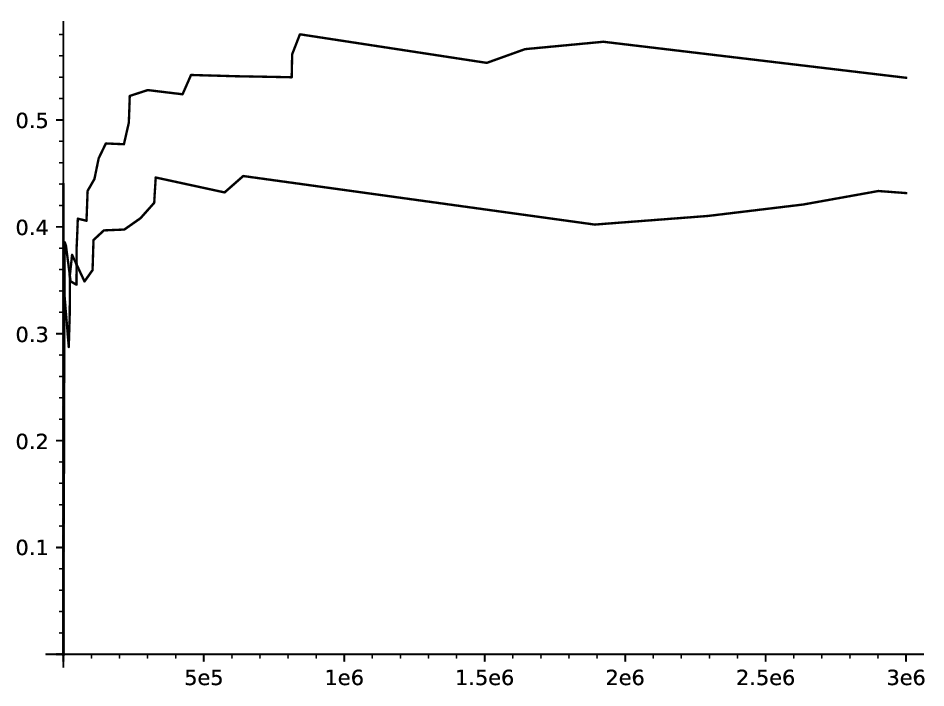}
\caption{$|l| = 4$: Top -4 bottom 4} \label{fig:19_5_A_4}
\end{subfigure}\hspace*{\fill}
\begin{subfigure}[b]{0.43\linewidth}
\includegraphics[width=\linewidth]{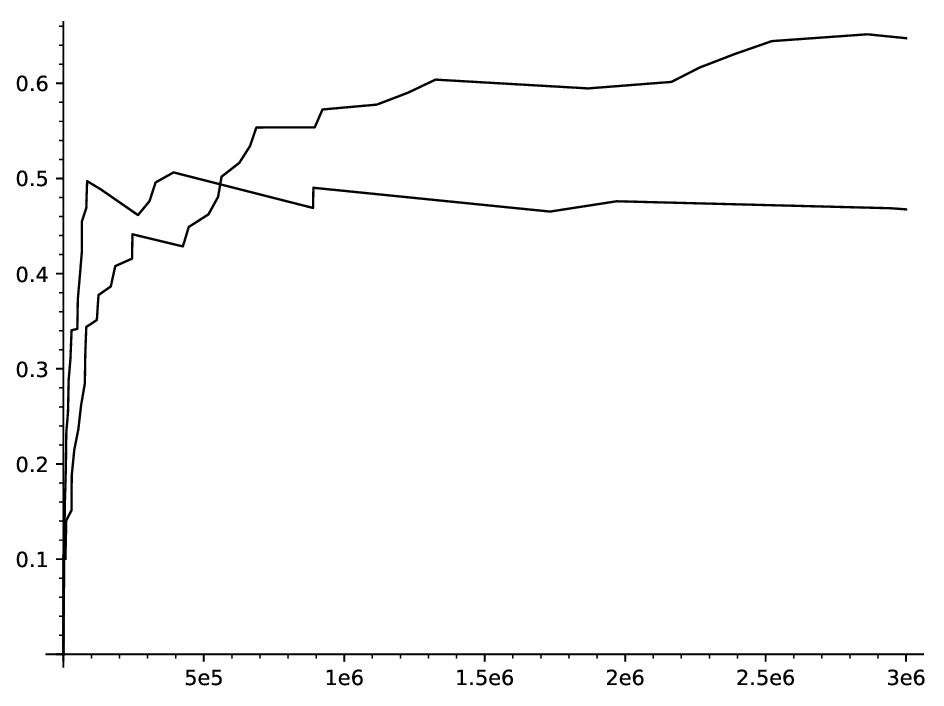}
\caption{$|l| = 5$: Top 5 bottom -5} \label{fig:19_5_A_5}
\end{subfigure}
\hspace*{-2.3cm}
\begin{subfigure}[b]{0.43\linewidth}
\includegraphics[width=\linewidth]{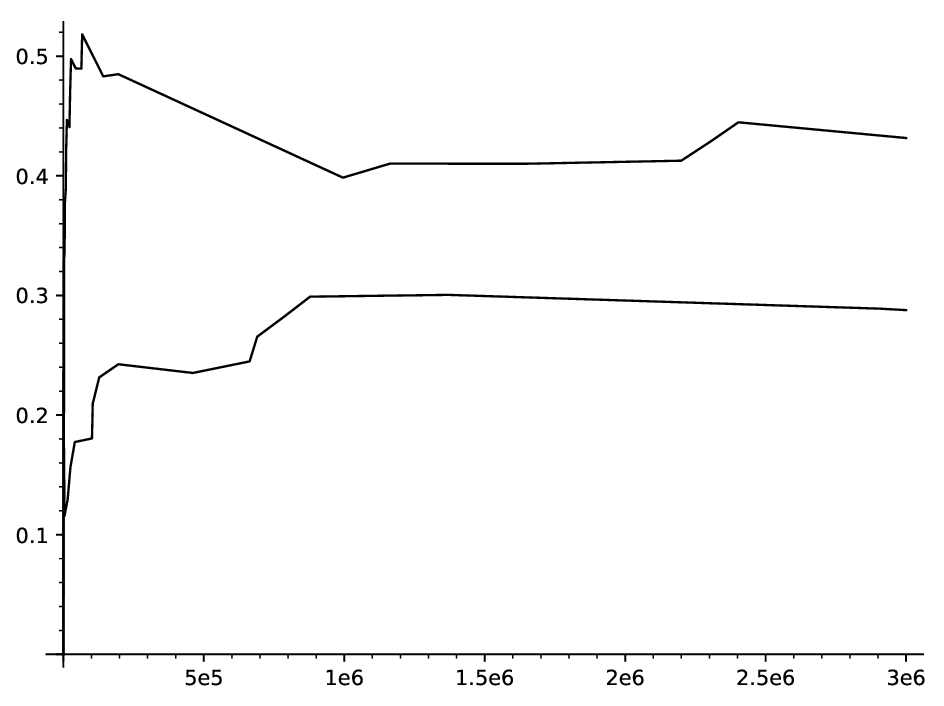}
\caption{$|l| = 9$: Top 9 bottom -9} \label{fig:19_5_A_9}
\end{subfigure}\hspace*{\fill}
\begin{subfigure}[b]{0.43\linewidth}
\includegraphics[width=\linewidth]{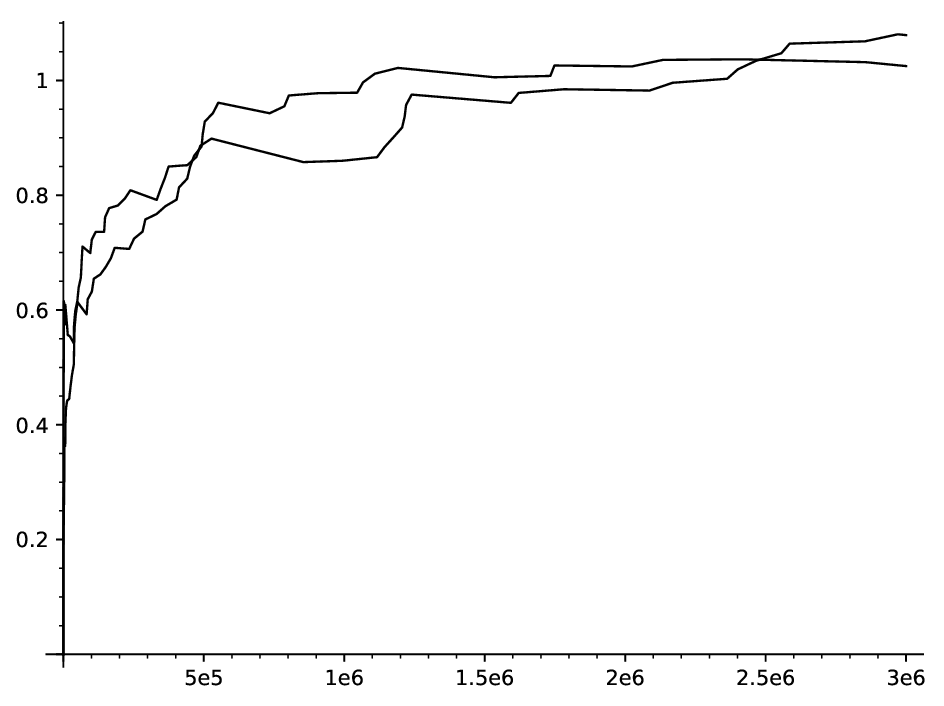}
\caption{$|l| = 11$: Top -11 bottom 11} \label{fig:19_5_A_11}
\end{subfigure}\hspace*{\fill}
\begin{subfigure}[b]{0.43\linewidth}
\includegraphics[width=\linewidth]{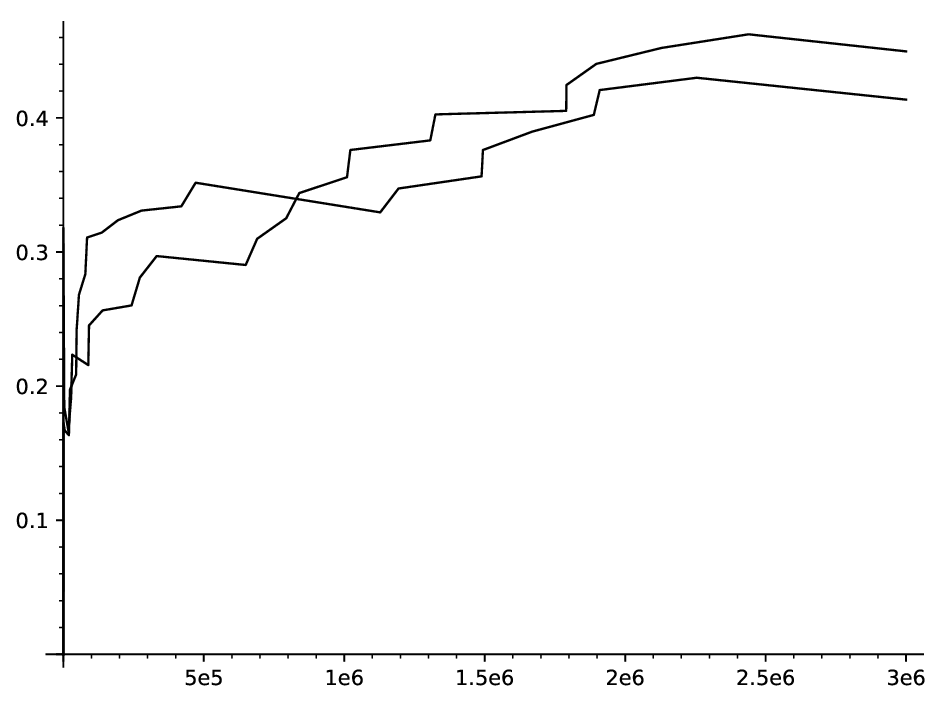}
\caption{$|l| = 16$: Top -16 bottom 16} \label{fig:19_5_A_16}
\end{subfigure}
\hspace*{-2.3cm}
\begin{subfigure}[b]{0.43\linewidth}
\includegraphics[width=\linewidth]{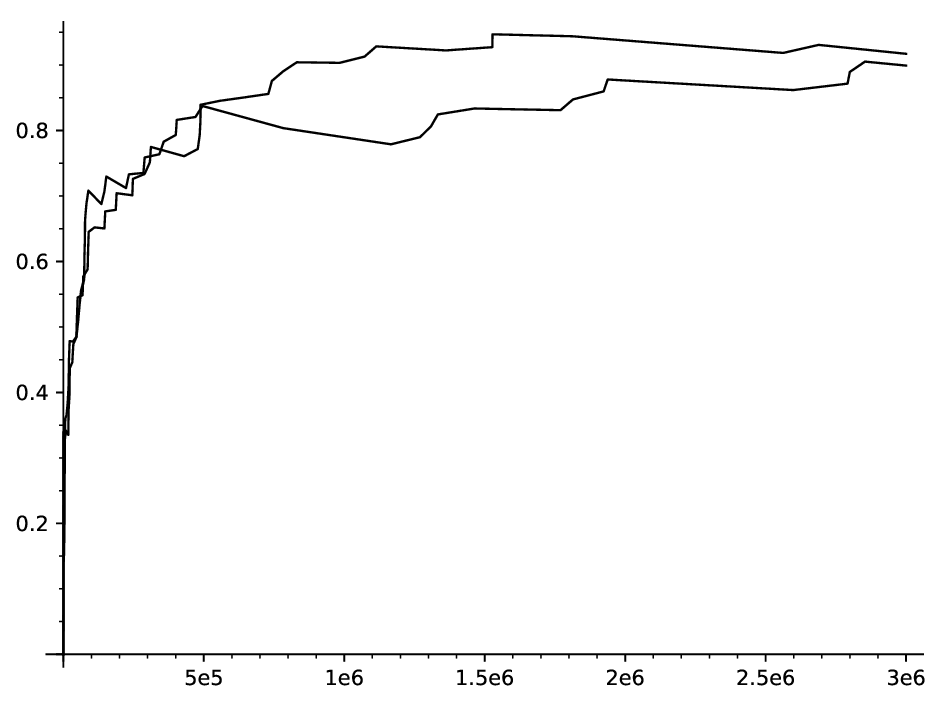}
\caption{$|l| = 19$: Top -19 bottom 19} \label{fig:19_5_A_19}
\end{subfigure}\hspace*{\fill}
\begin{subfigure}[b]{0.43\linewidth}
\includegraphics[width=\linewidth]{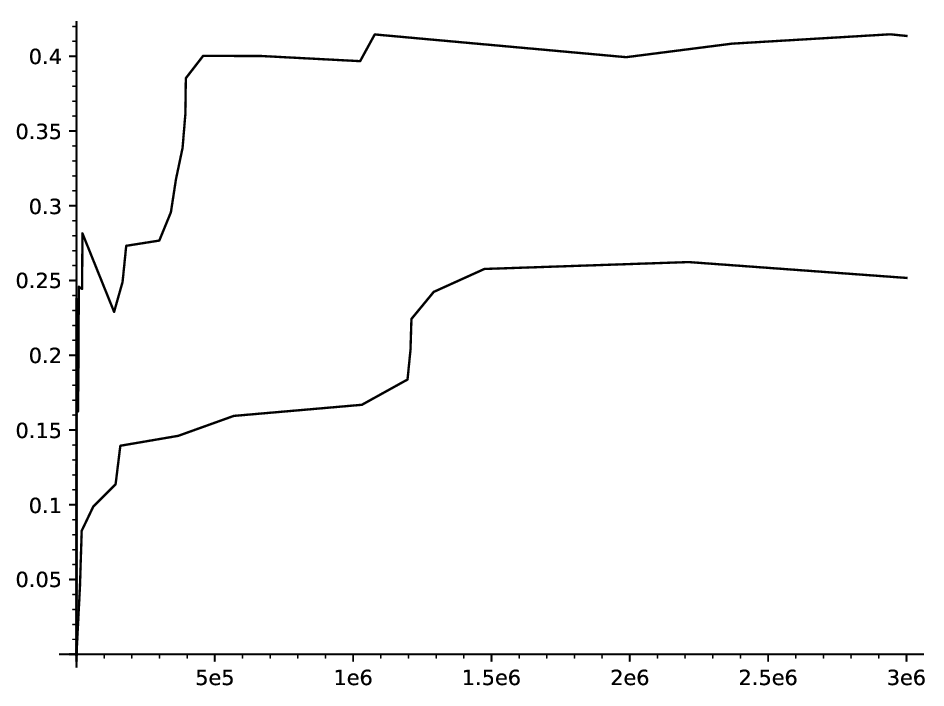}
\caption{$|l| = 20$: Top -20 bottom 20} \label{fig:19_5_A_20}
\end{subfigure}\hspace*{\fill}
\begin{subfigure}[b]{0.43\linewidth}
\includegraphics[width=\linewidth]{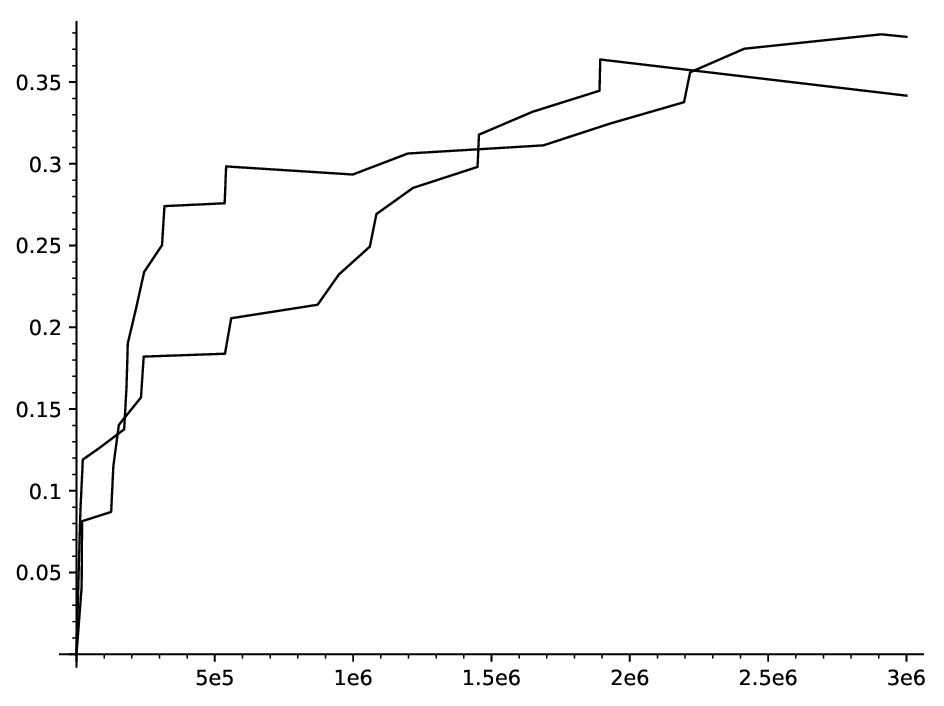}
\caption{$|l| = 25$: Top -25 bottom 25} \label{fig:19_5_A_25}
\end{subfigure}
\caption{Ratio~\eqref{ratio_A_exact} 19a1: $x(X;l)/\log^2(X)$ for $k = 5$} \label{fig:19a1_5_A_exact}
\end{figure}

\clearpage

\begin{figure}[t!] % "[t!]" placement specifier just for this example
\hspace*{-2.3cm}
\begin{subfigure}[b]{0.43\linewidth}
\includegraphics[width=\linewidth]{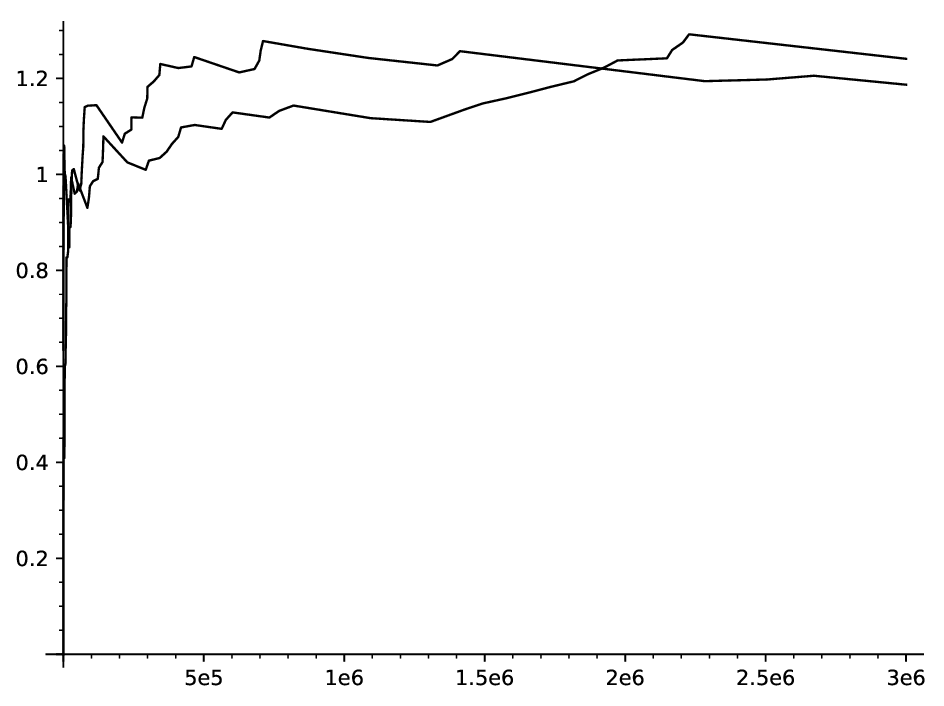}
\caption{$|l| = 1$: Top 1 bottom -1} \label{fig:37_5_A_1}
\end{subfigure}\hspace*{\fill}
\begin{subfigure}[b]{0.43\linewidth}
\includegraphics[width=\linewidth]{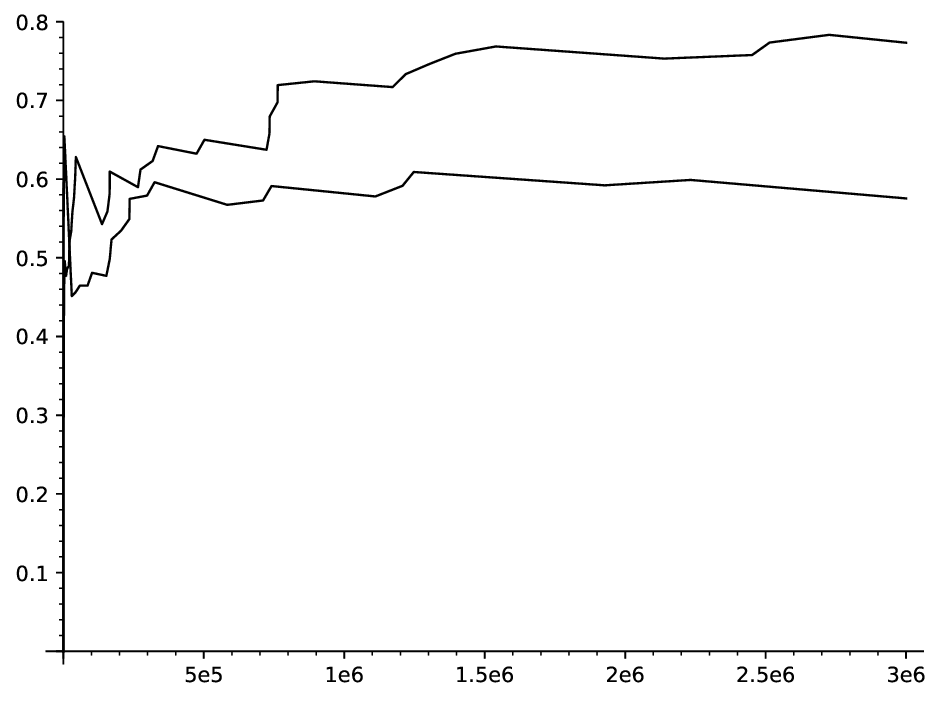}
\caption{$|l| = 4$: Top -4 bottom 4} \label{fig:37_5_A_4}
\end{subfigure}\hspace*{\fill}
\begin{subfigure}[b]{0.43\linewidth}
\includegraphics[width=\linewidth]{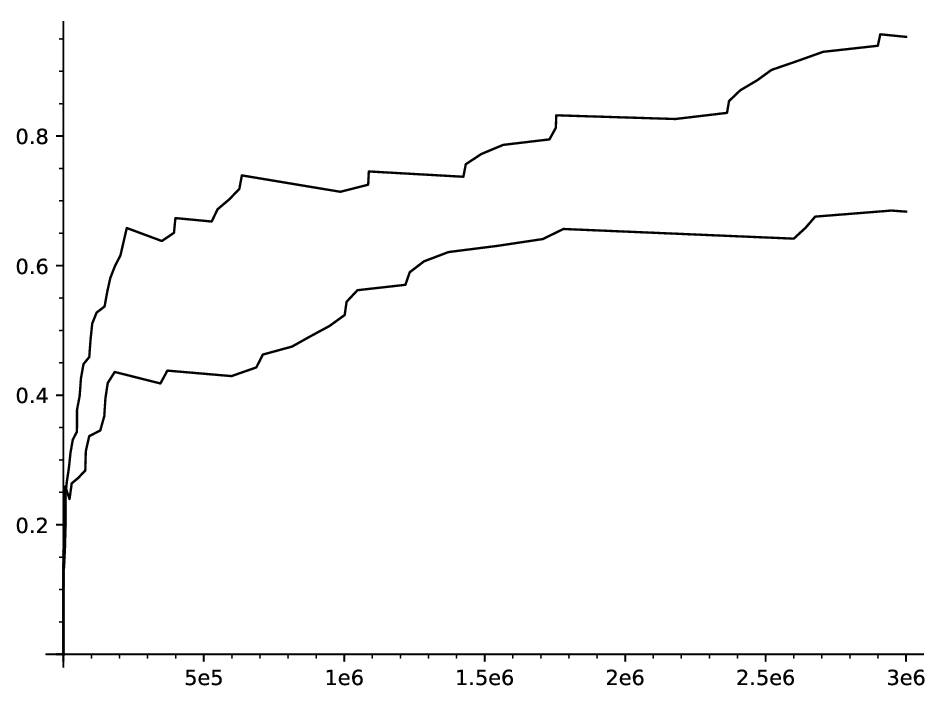}
\caption{$|l| = 5$: Top -5 bottom 5} \label{fig:37_5_A_5}
\end{subfigure}
\hspace*{-2.3cm}
\begin{subfigure}[b]{0.43\linewidth}
\includegraphics[width=\linewidth]{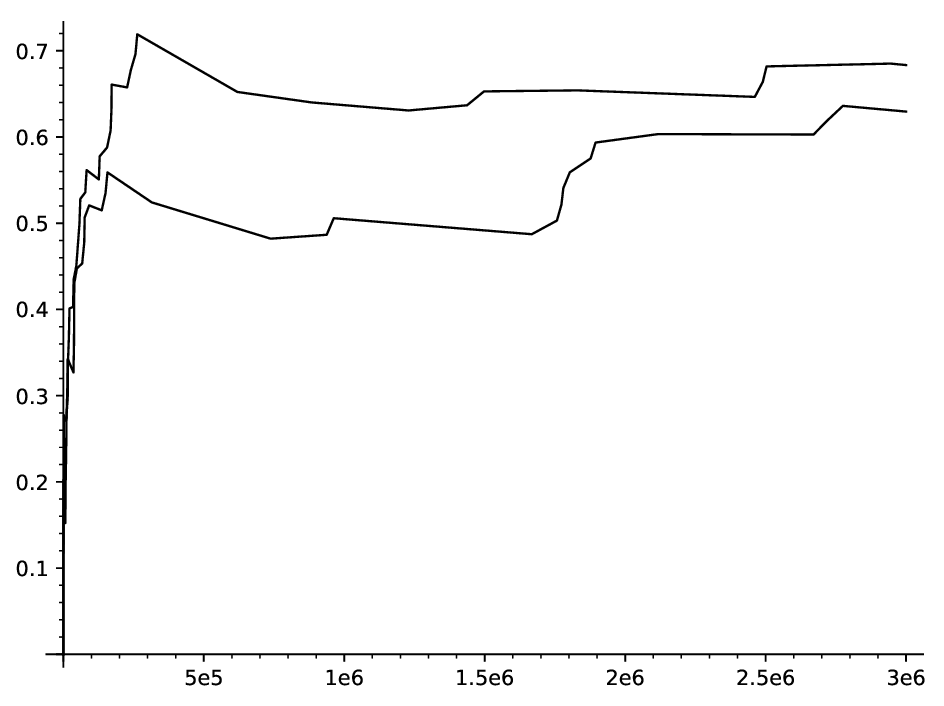}
\caption{$|l| = 9$: Top -9 bottom 9} \label{fig:37_5_A_9}
\end{subfigure}\hspace*{\fill}
\begin{subfigure}[b]{0.43\linewidth}
\includegraphics[width=\linewidth]{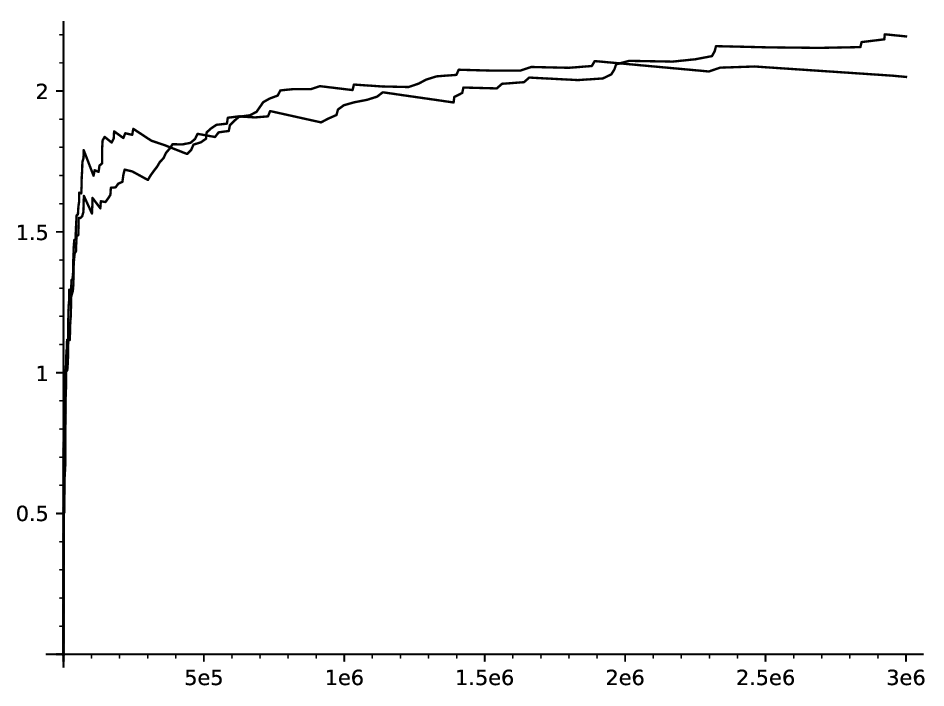}
\caption{$|l| = 11$: Top -11 bottom 11} \label{fig:37_5_A_11}
\end{subfigure}\hspace*{\fill}
\begin{subfigure}[b]{0.43\linewidth}
\includegraphics[width=\linewidth]{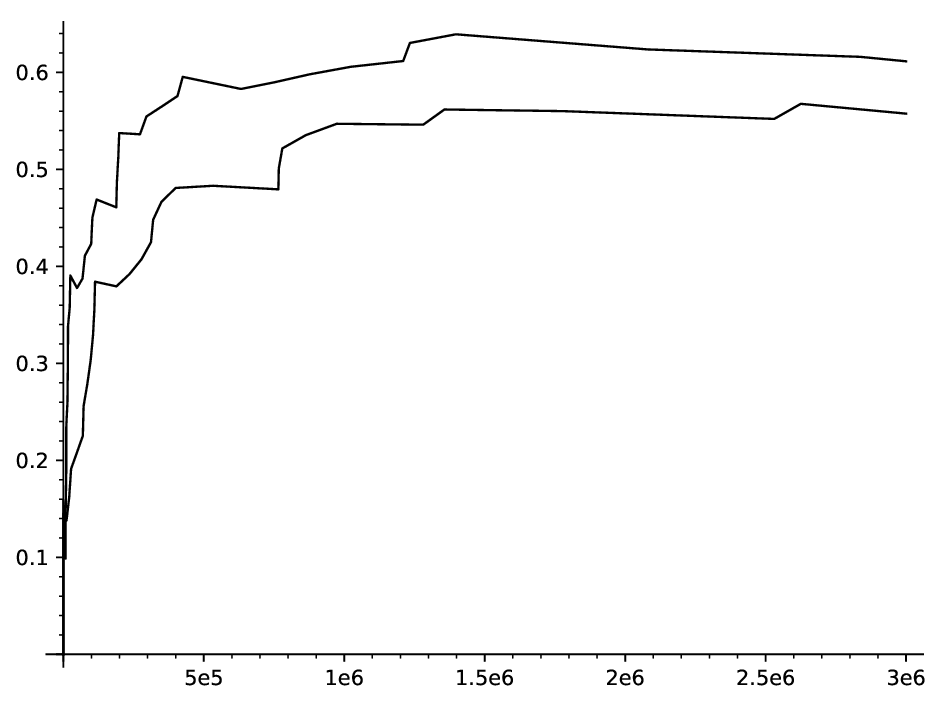}
\caption{$|l| = 16$: Top 16 bottom -16} \label{fig:37_5_A_16}
\end{subfigure}
\hspace*{-2.3cm}
\begin{subfigure}[b]{0.43\linewidth}
\includegraphics[width=\linewidth]{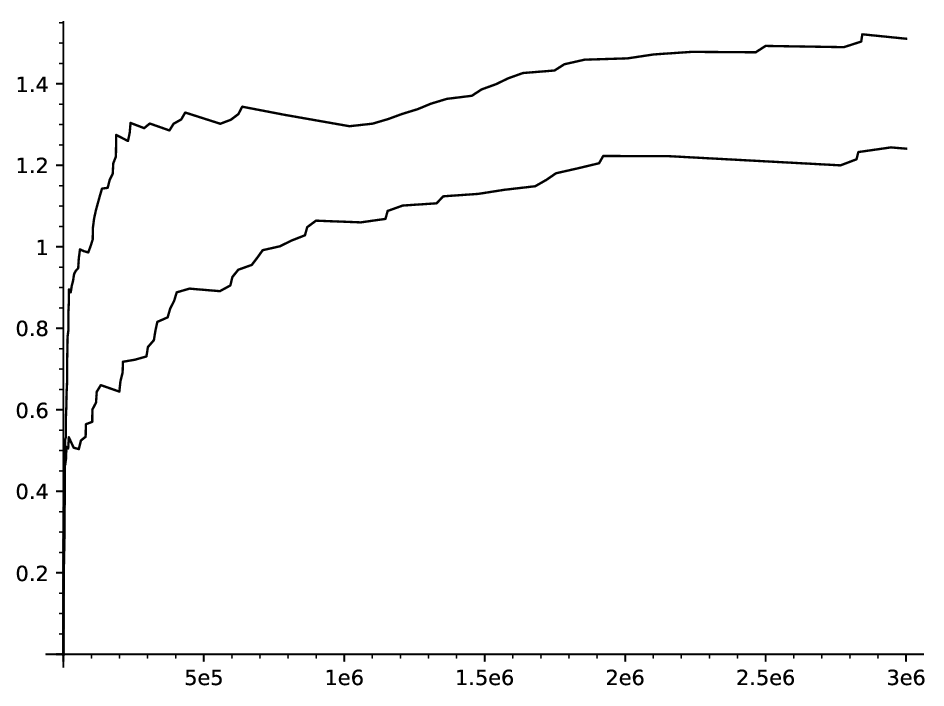}
\caption{$|l| = 19$: Top 19 bottom -19} \label{fig:37_5_A_19}
\end{subfigure}\hspace*{\fill}
\begin{subfigure}[b]{0.43\linewidth}
\includegraphics[width=\linewidth]{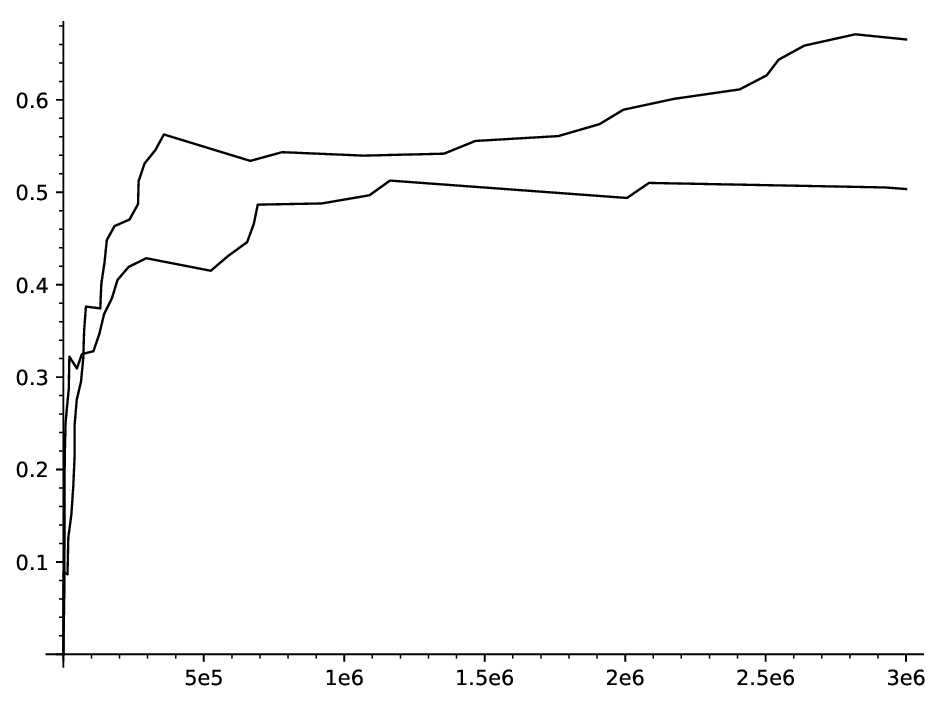}
\caption{$|l| = 20$: Top -20 bottom 20} \label{fig:37_5_A_20}
\end{subfigure}\hspace*{\fill}
\begin{subfigure}[b]{0.43\linewidth}
\includegraphics[width=\linewidth]{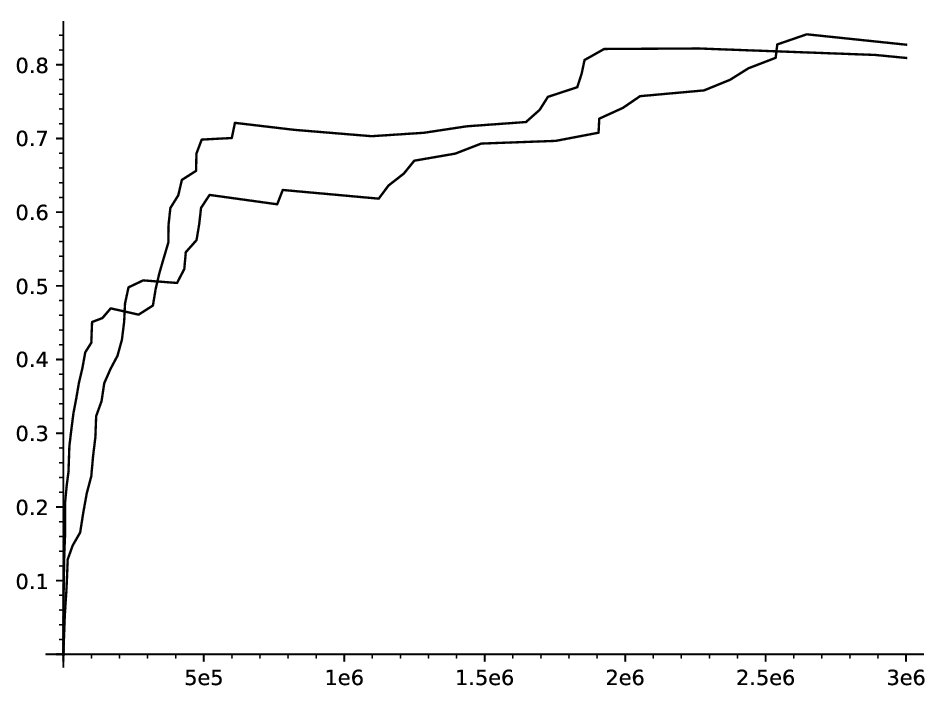}
\caption{$|l| = 25$: Top 25 bottom -25} \label{fig:37_5_A_25}
\end{subfigure}
\caption{Ratio~\eqref{ratio_A_exact} 37b1: $x(X;l)/\log^2(X)$ for $k = 5$} \label{fig:37b1_5_A_exact}
\end{figure}

\clearpage

\begin{figure}[t!] % "[t!]" placement specifier just for this example
\hspace*{-2.3cm}
\begin{subfigure}[b]{0.43\linewidth}
\includegraphics[width=\linewidth]{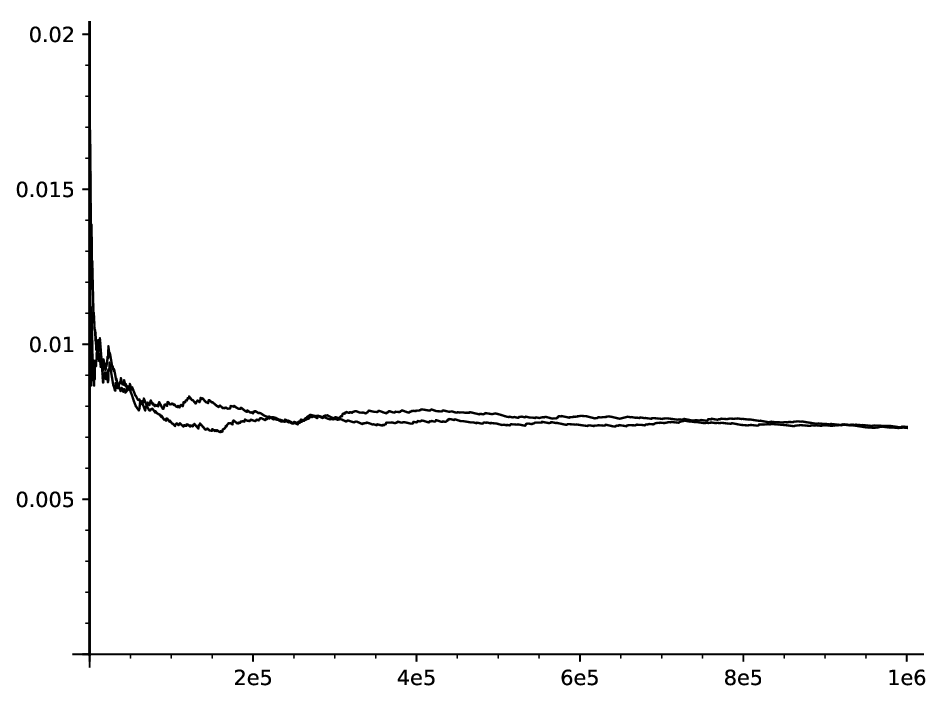}
\caption{$|l| = 1$: Top -1 bottom 1} \label{fig:11_6_A_1}
\end{subfigure}\hspace*{\fill}
\begin{subfigure}[b]{0.43\linewidth}
\includegraphics[width=\linewidth]{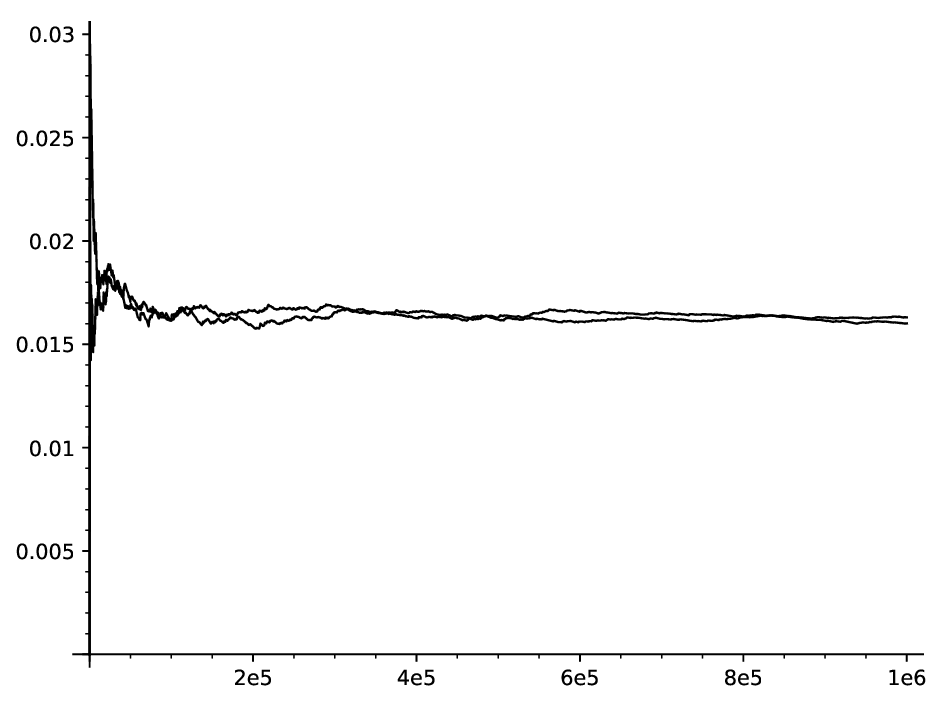}
\caption{$|l| = 2$: Top -2 bottom 2} \label{fig:11_6_A_2}
\end{subfigure}\hspace*{\fill}
\begin{subfigure}[b]{0.43\linewidth}
\includegraphics[width=\linewidth]{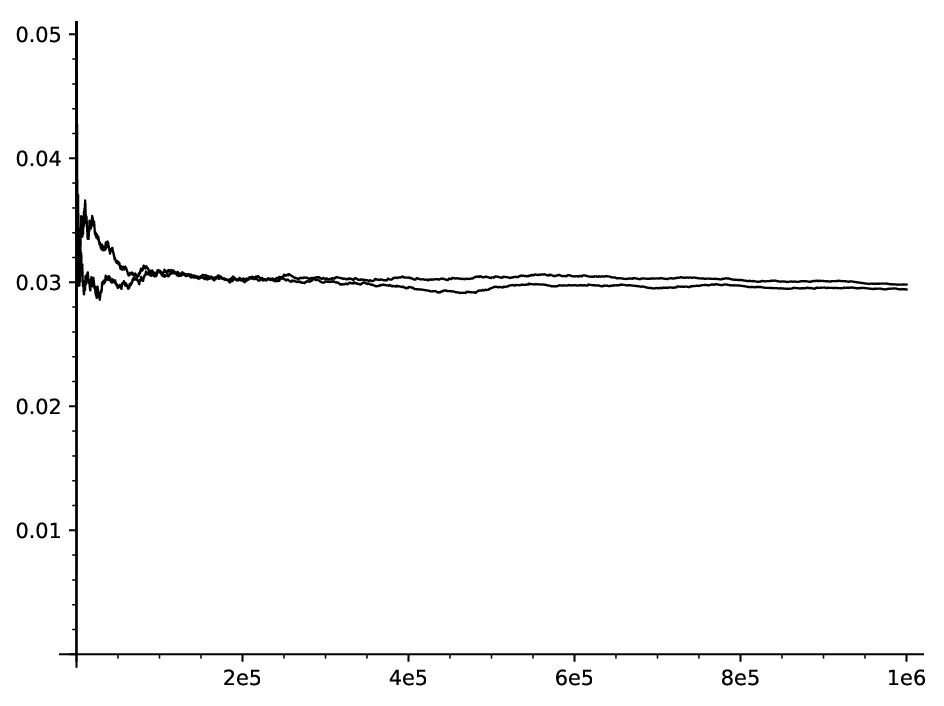}
\caption{$|l| = 3$: Top 3 bottom -3} \label{fig:11_6_A_3}
\end{subfigure}
\hspace*{-2.3cm}
\begin{subfigure}[b]{0.43\linewidth}
\includegraphics[width=\linewidth]{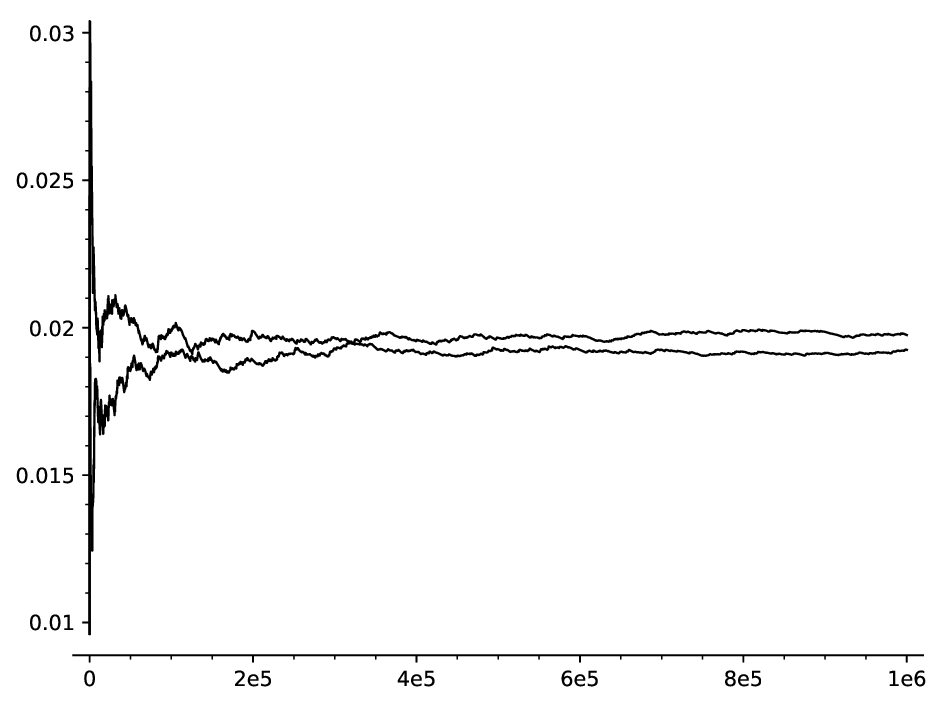}
\caption{$|l| = 4$: Top -4 bottom 4} \label{fig:11_6_A_4}
\end{subfigure}\hspace*{\fill}
\begin{subfigure}[b]{0.43\linewidth}
\includegraphics[width=\linewidth]{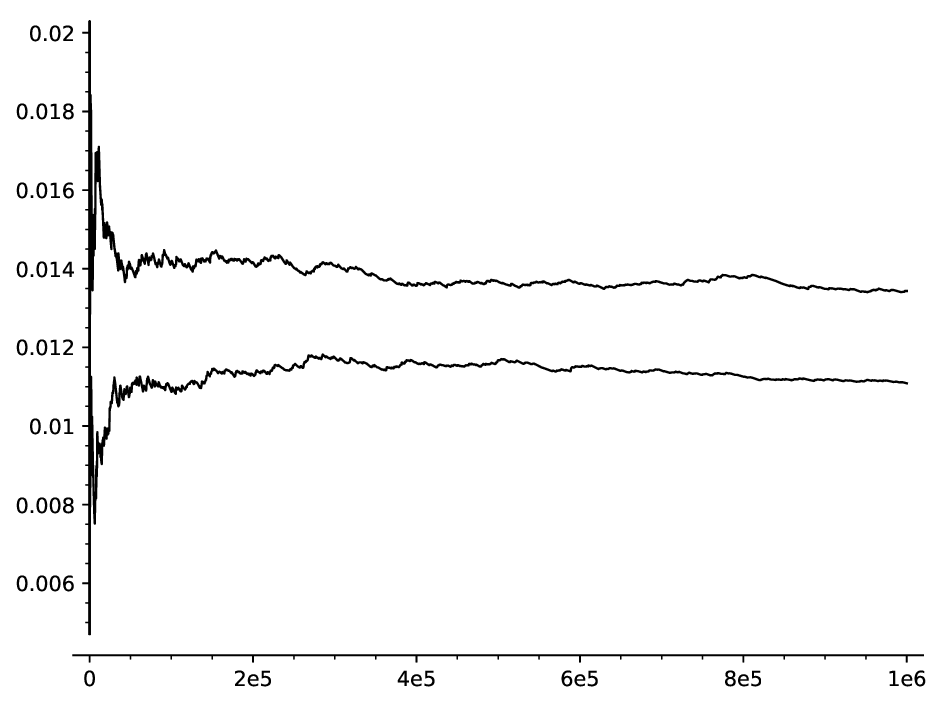}
\caption{$|l| = 5$: Top -5 bottom 5} \label{fig:11_6_A_5}
\end{subfigure}\hspace*{\fill}
\begin{subfigure}[b]{0.43\linewidth}
\includegraphics[width=\linewidth]{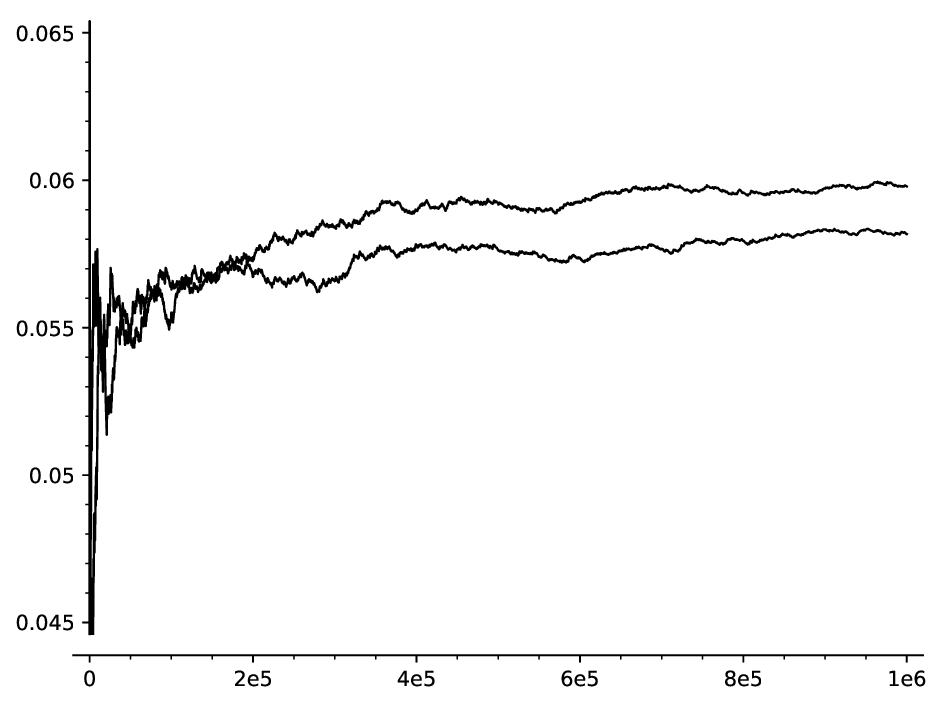}
\caption{$|l| = 6$: Top -6 bottom 6} \label{fig:11_6_A_6}
\end{subfigure}
\hspace*{-2.3cm}
\begin{subfigure}[b]{0.43\linewidth}
\includegraphics[width=\linewidth]{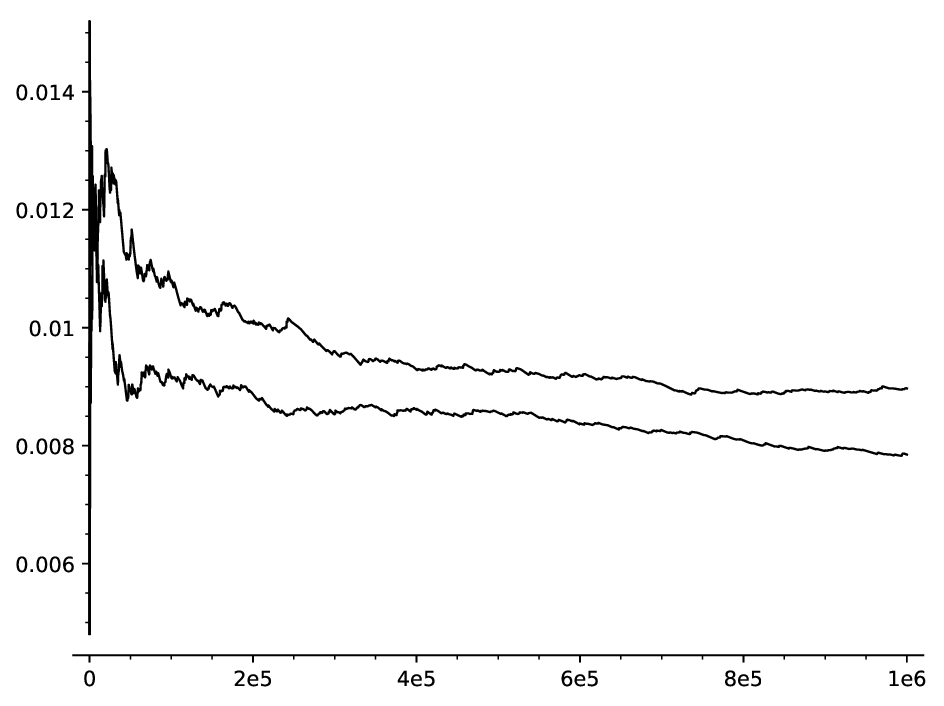}
\caption{$|l| = 7$: Top -7 bottom 7} \label{fig:11_6_A_7}
\end{subfigure}\hspace*{\fill}
\begin{subfigure}[b]{0.43\linewidth}
\includegraphics[width=\linewidth]{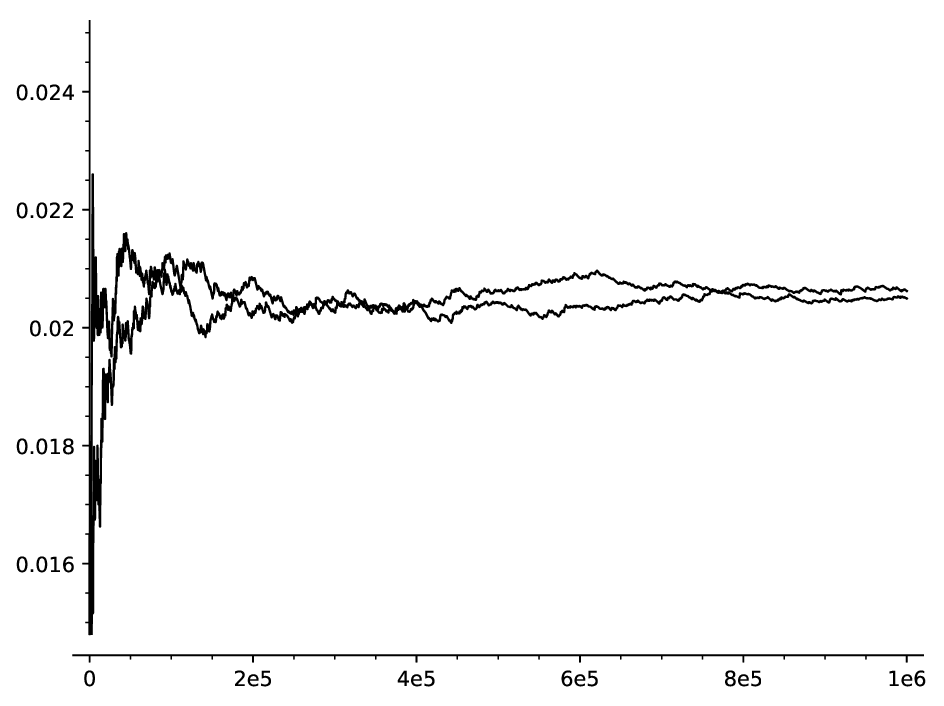}
\caption{$|l| = 8$: Top 8 bottom -8} \label{fig:11_6_A_8}
\end{subfigure}\hspace*{\fill}
\begin{subfigure}[b]{0.43\linewidth}
\includegraphics[width=\linewidth]{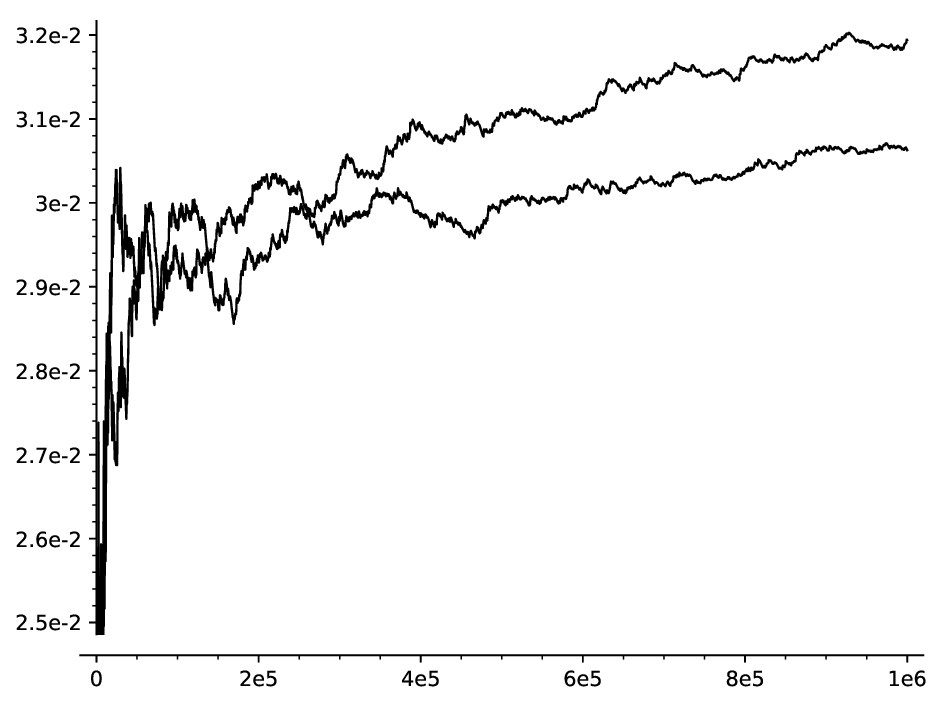}
\caption{$|l| = 9$: Top -9 bottom 9} \label{fig:11_6_A_9}
\end{subfigure}
\caption{Ratio~\eqref{ratio_A_exact} 11a1: $x(X;l)/X^{1/2}\log^2(X)$ for $k = 6$} \label{fig:11a1_6_A_exact}
\end{figure}

\clearpage

\begin{figure}[t!] % "[t!]" placement specifier just for this example
\hspace*{-2.3cm}
\begin{subfigure}[b]{0.43\linewidth}
\includegraphics[width=\linewidth]{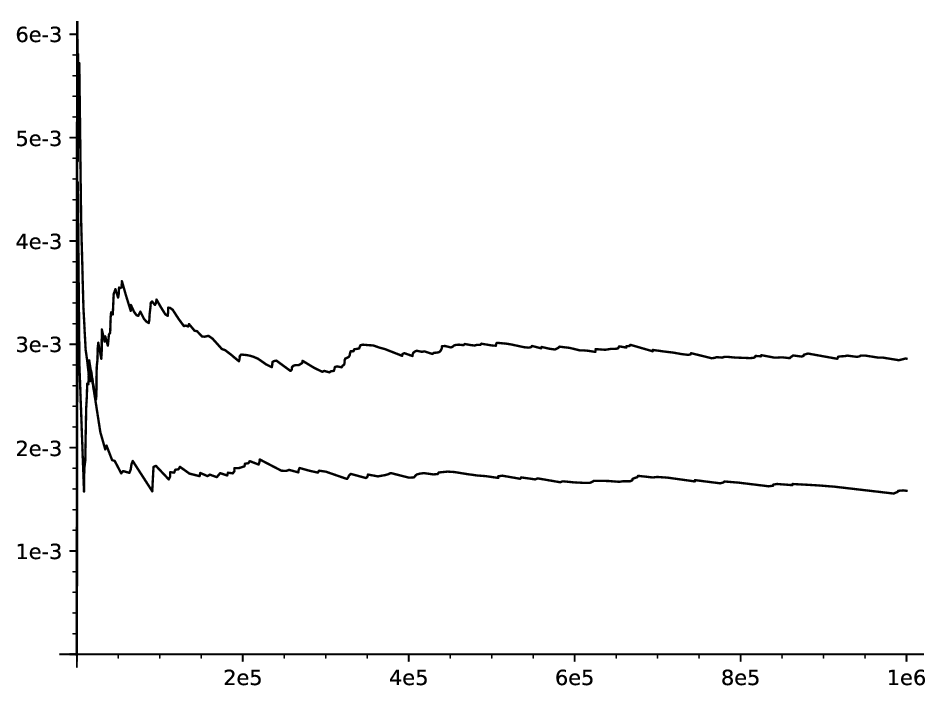}
\caption{$|l| = 1$: Top -1 bottom 1} \label{fig:14_6_A_1}
\end{subfigure}\hspace*{\fill}
\begin{subfigure}[b]{0.43\linewidth}
\includegraphics[width=\linewidth]{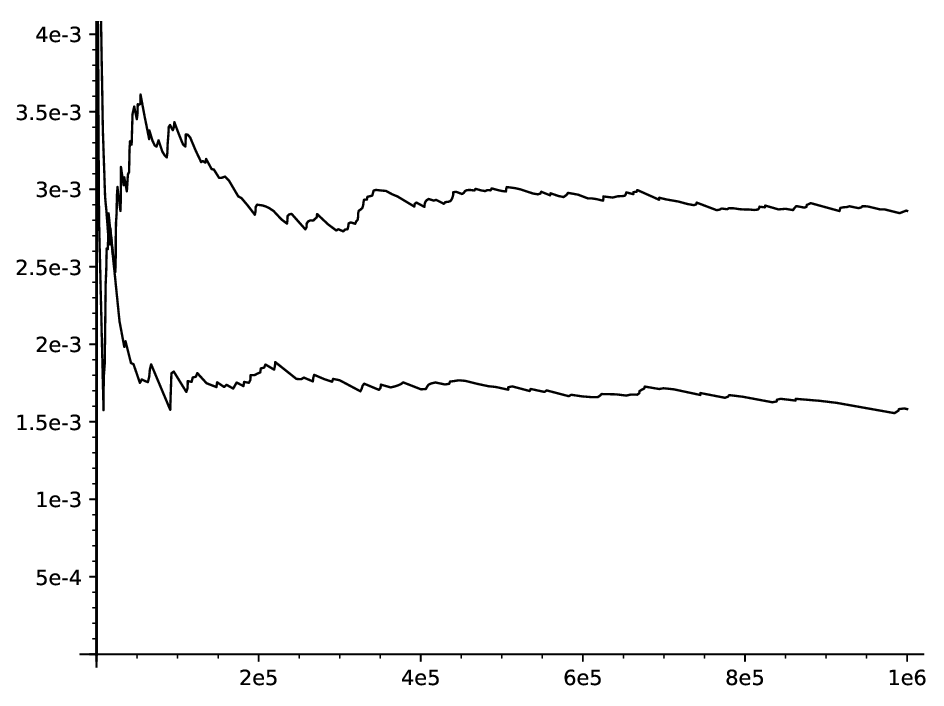}
\caption{$|l| = 2$: Top -2 bottom 2} \label{fig:14_6_A_2}
\end{subfigure}\hspace*{\fill}
\begin{subfigure}[b]{0.43\linewidth}
\includegraphics[width=\linewidth]{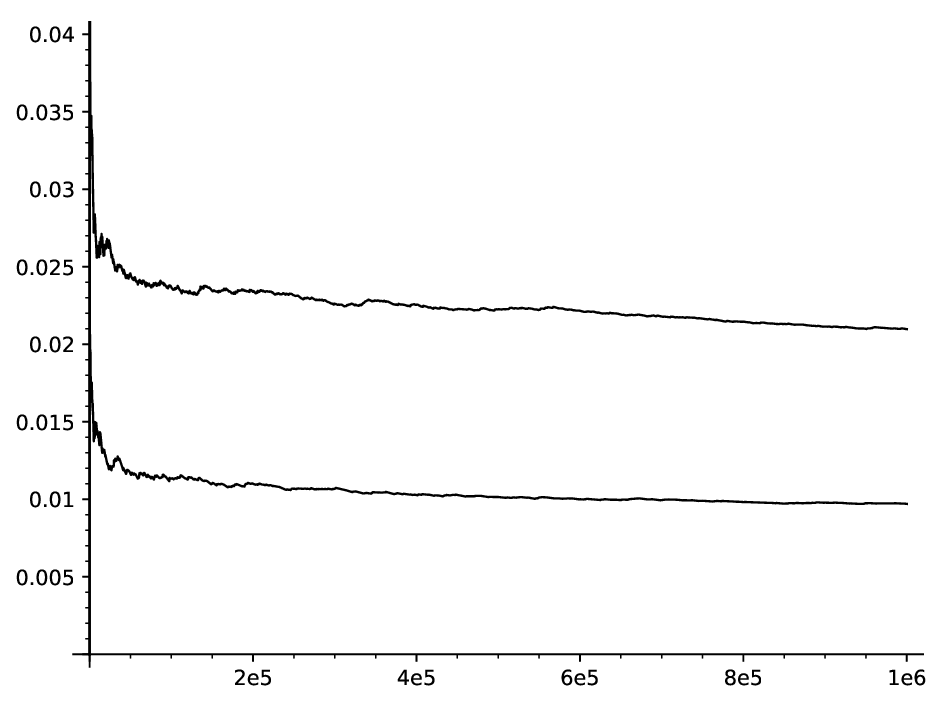}
\caption{$|l| = 3$: Top -3 bottom 3} \label{fig:14_6_A_3}
\end{subfigure}
\hspace*{-2.3cm}
\begin{subfigure}[b]{0.43\linewidth}
\includegraphics[width=\linewidth]{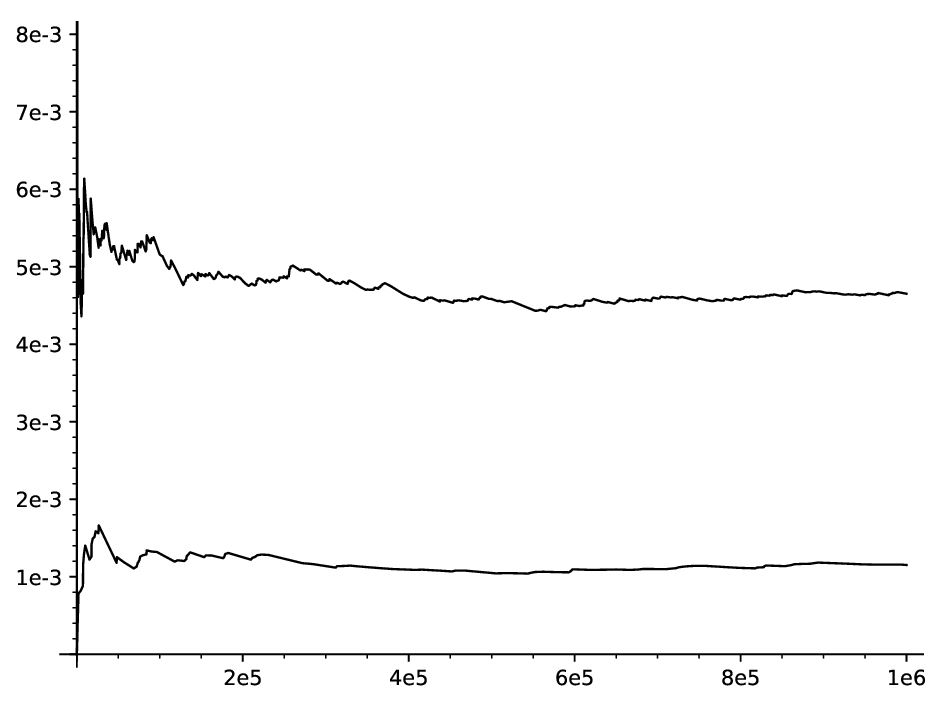}
\caption{$|l| = 4$: Top 4 bottom -4} \label{fig:14_6_A_4}
\end{subfigure}\hspace*{\fill}
\begin{subfigure}[b]{0.43\linewidth}
\includegraphics[width=\linewidth]{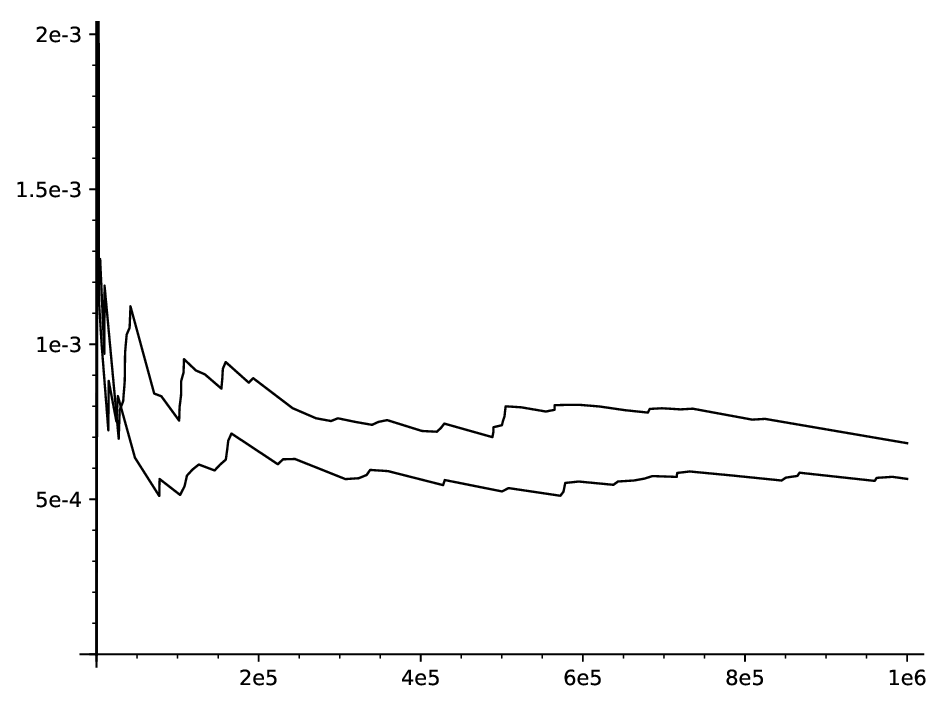}
\caption{$|l| = 5$: Top 5 bottom -5} \label{fig:14_6_A_5}
\end{subfigure}\hspace*{\fill}
\begin{subfigure}[b]{0.43\linewidth}
\includegraphics[width=\linewidth]{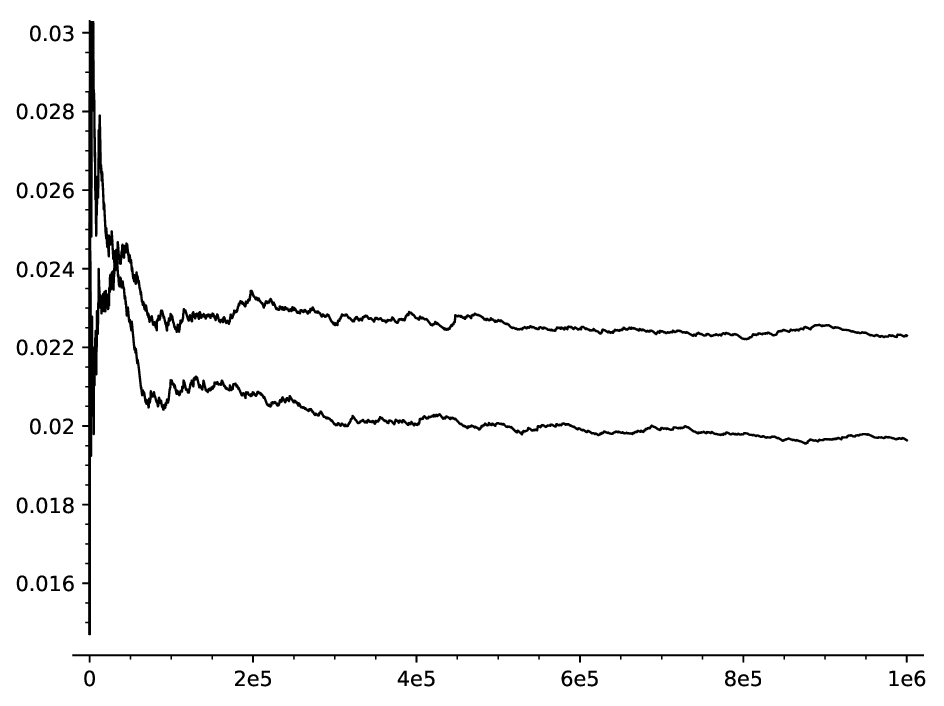}
\caption{$|l| = 6$: Top -6 bottom 6} \label{fig:14_6_A_6}
\end{subfigure}
\hspace*{-2.3cm}
\begin{subfigure}[b]{0.43\linewidth}
\includegraphics[width=\linewidth]{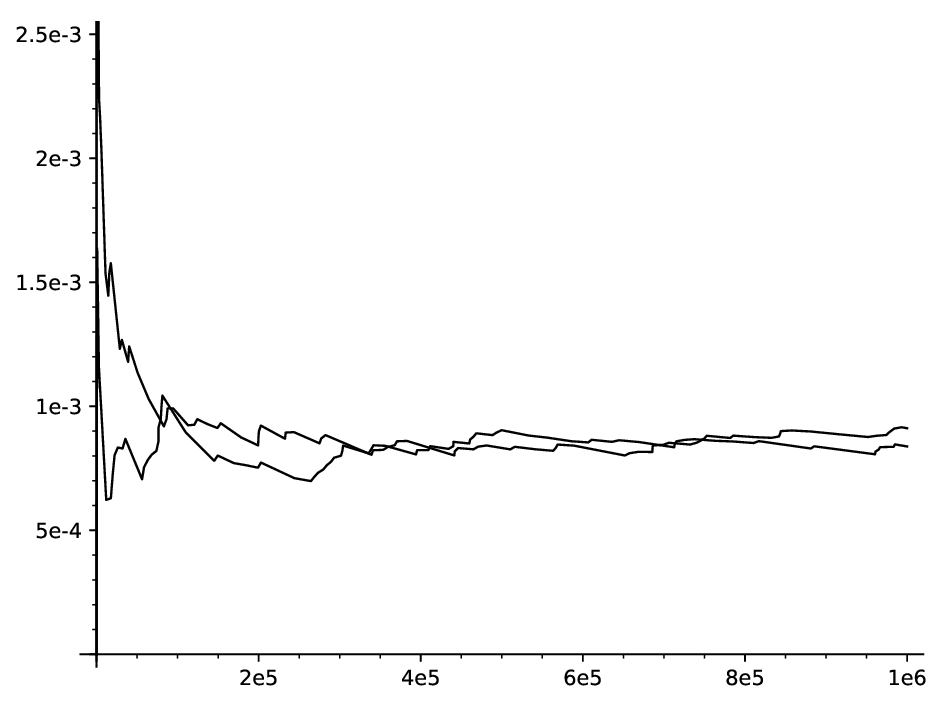}
\caption{$|l| = 7$: Top 7 bottom -7} \label{fig:14_6_A_7}
\end{subfigure}\hspace*{\fill}
\begin{subfigure}[b]{0.43\linewidth}
\includegraphics[width=\linewidth]{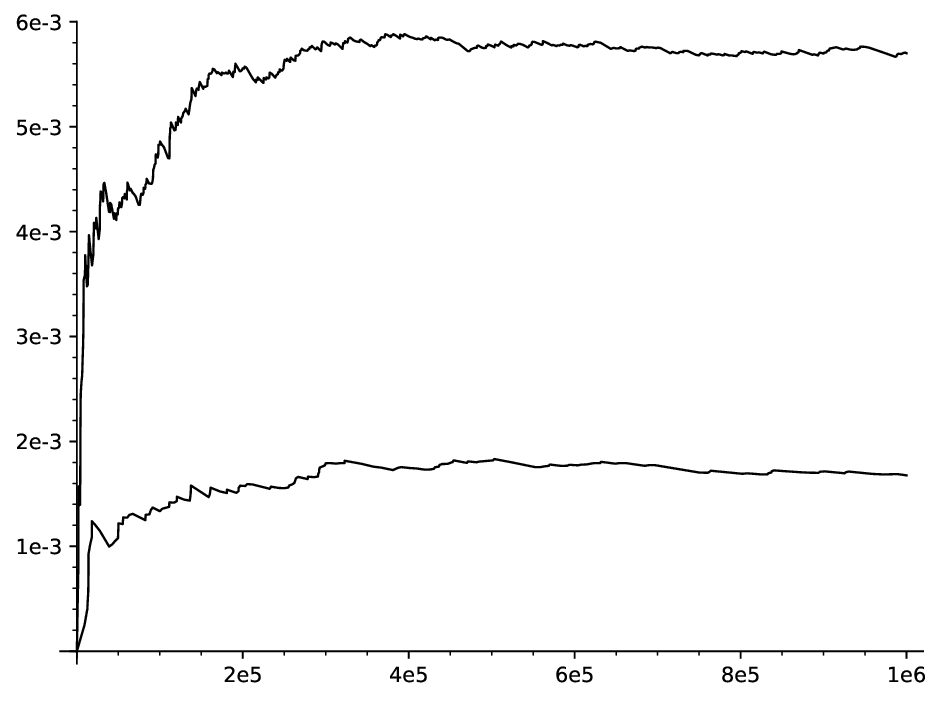}
\caption{$|l| = 8$: Top -8 bottom 8} \label{fig:14_6_A_8}
\end{subfigure}\hspace*{\fill}
\begin{subfigure}[b]{0.43\linewidth}
\includegraphics[width=\linewidth]{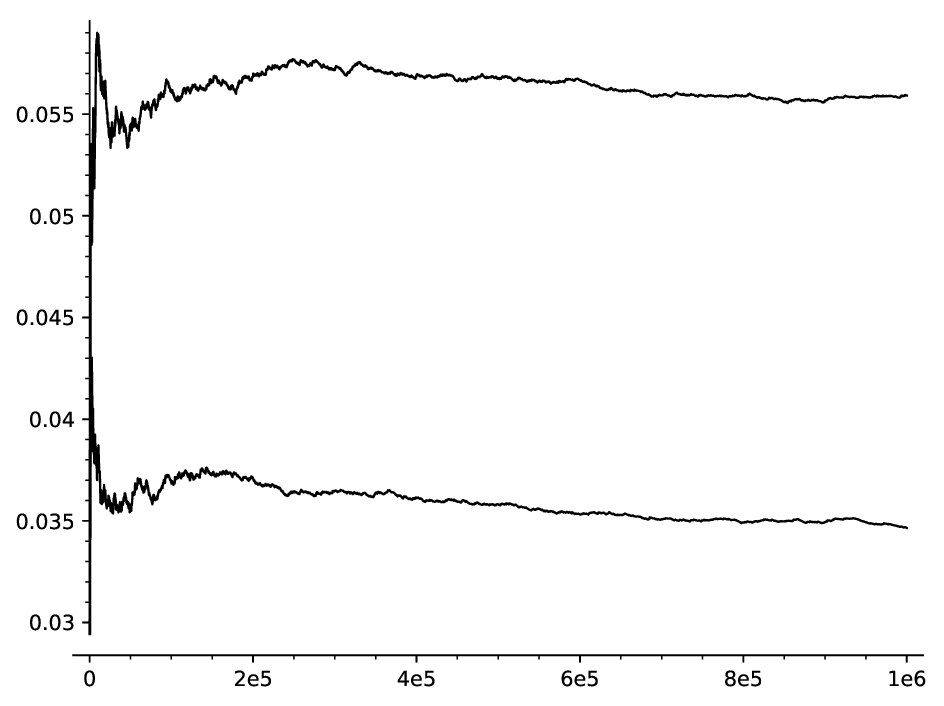}
\caption{$|l| = 9$: Top -9 bottom 9} \label{fig:14_6_A_9}
\end{subfigure}
\caption{Ratio~\eqref{ratio_A_exact} 14a1: $x(X;l)/X^{1/2}\log^2(X)$ for $k = 6$} \label{fig:14a1_6_A_exact}
\end{figure}

\clearpage

\begin{figure}[t!] % "[t!]" placement specifier just for this example
\hspace*{-2.3cm}
\begin{subfigure}[b]{0.43\linewidth}
\includegraphics[width=\linewidth]{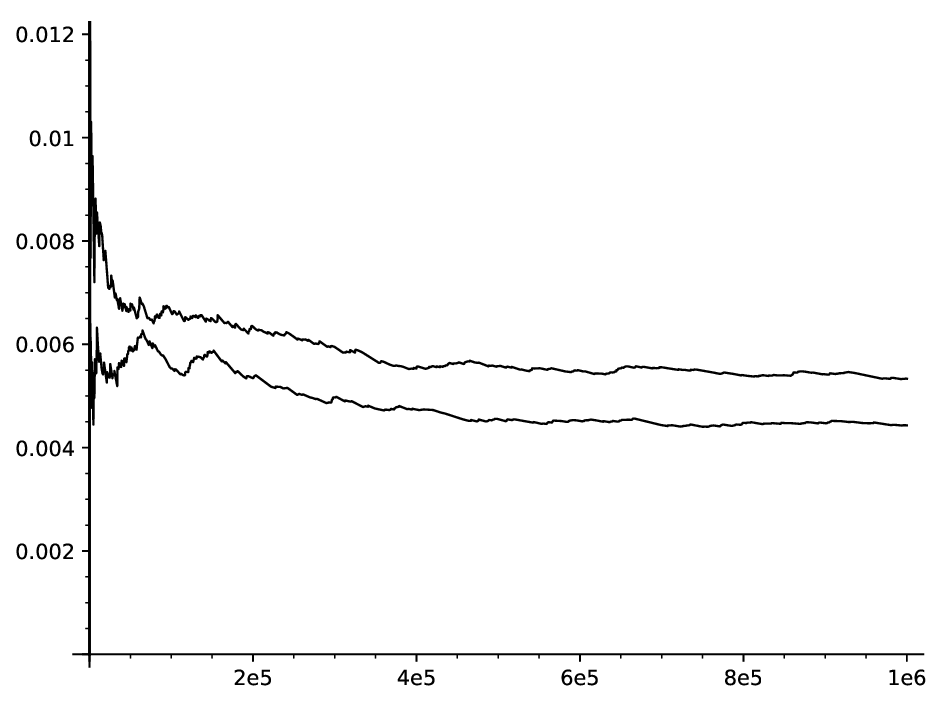}
\caption{$|l| = 1$: Top 1 bottom -1} \label{fig:15_6_A_1}
\end{subfigure}\hspace*{\fill}
\begin{subfigure}[b]{0.43\linewidth}
\includegraphics[width=\linewidth]{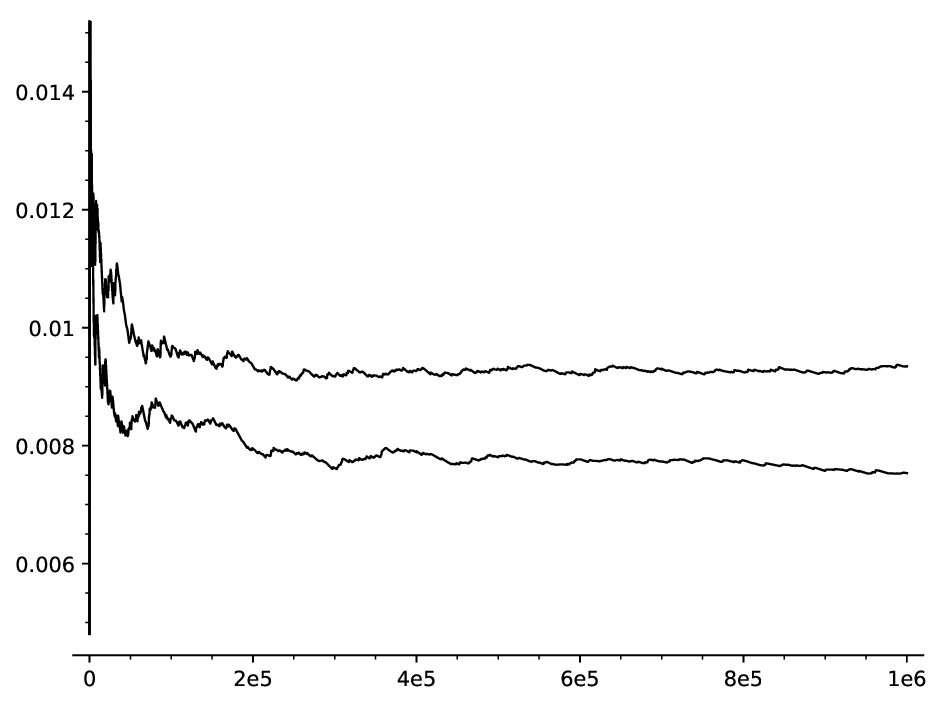}
\caption{$|l| = 2$: Top 2 bottom -2} \label{fig:15_6_A_2}
\end{subfigure}\hspace*{\fill}
\begin{subfigure}[b]{0.43\linewidth}
\includegraphics[width=\linewidth]{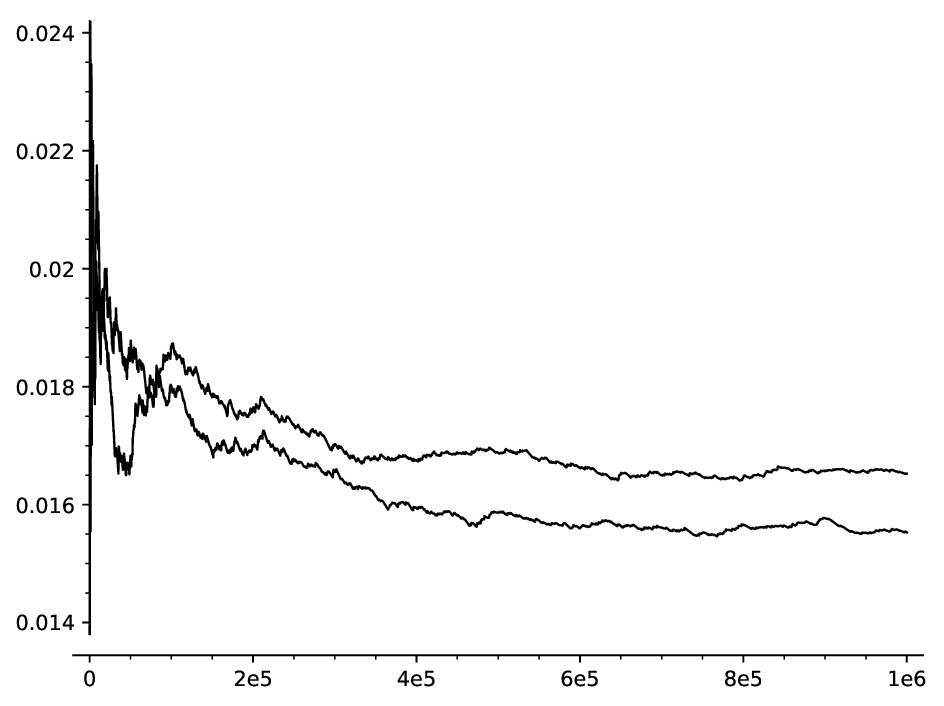}
\caption{$|l| = 3$: Top -3 bottom 3} \label{fig:15_6_A_3}
\end{subfigure}
\hspace*{-2.3cm}
\begin{subfigure}[b]{0.43\linewidth}
\includegraphics[width=\linewidth]{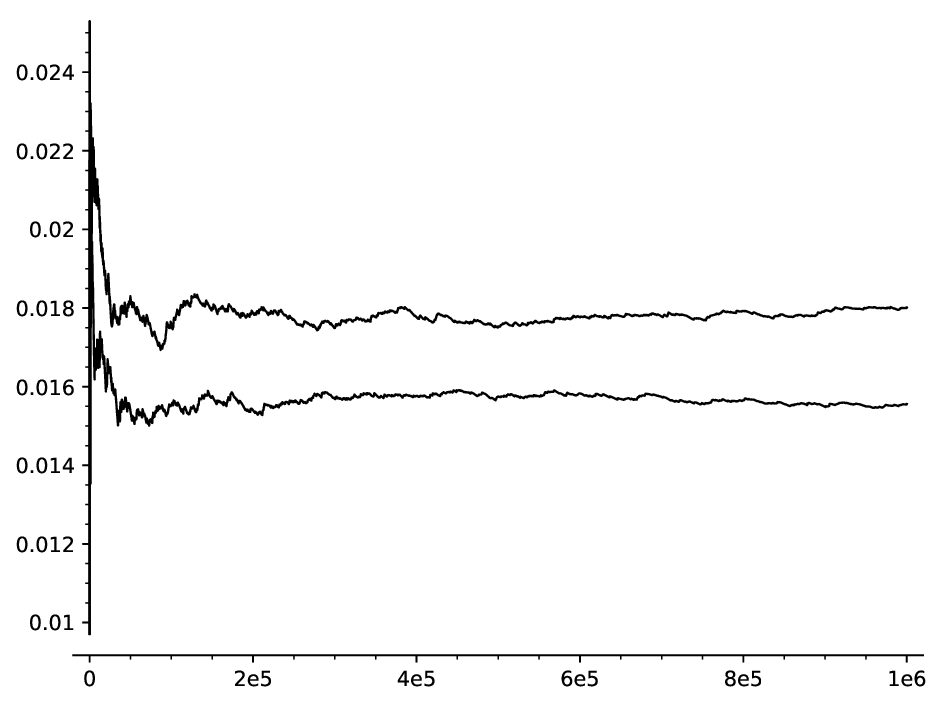}
\caption{$|l| = 4$: Top 4 bottom -4} \label{fig:15_6_A_4}
\end{subfigure}\hspace*{\fill}
\begin{subfigure}[b]{0.43\linewidth}
\includegraphics[width=\linewidth]{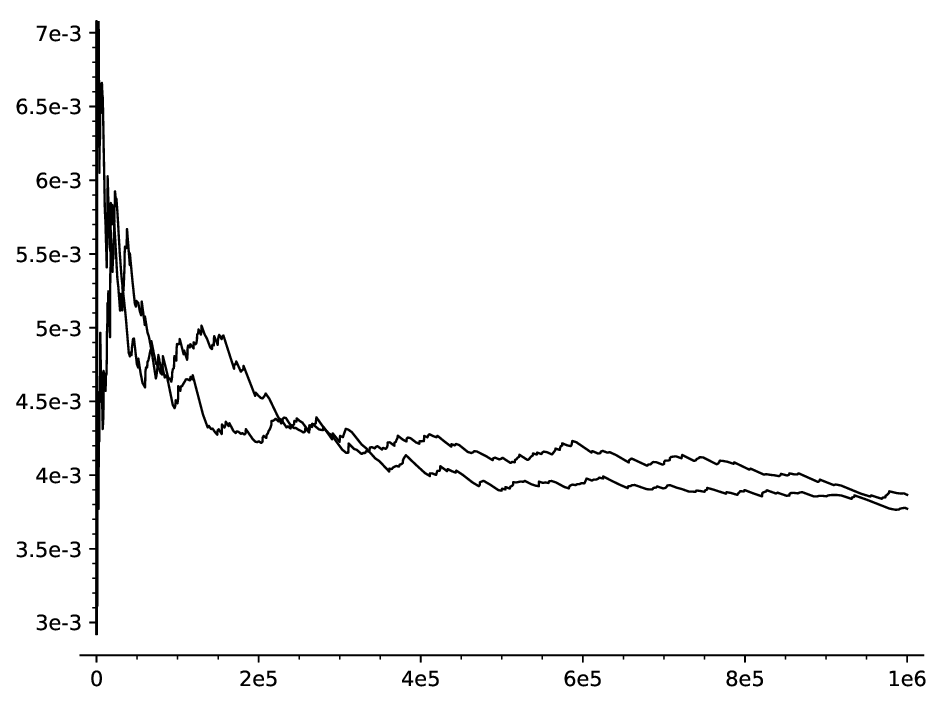}
\caption{$|l| = 5$: Top 5 bottom -5} \label{fig:15_6_A_5}
\end{subfigure}\hspace*{\fill}
\begin{subfigure}[b]{0.43\linewidth}
\includegraphics[width=\linewidth]{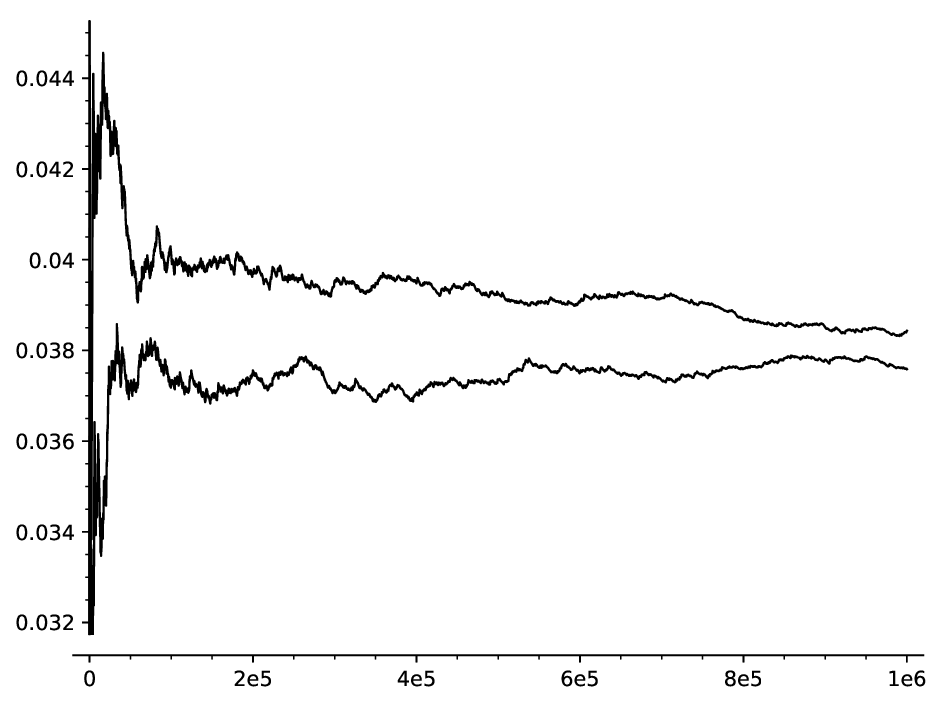}
\caption{$|l| = 6$: Top 6 bottom -6} \label{fig:15_6_A_6}
\end{subfigure}
\hspace*{-2.3cm}
\begin{subfigure}[b]{0.43\linewidth}
\includegraphics[width=\linewidth]{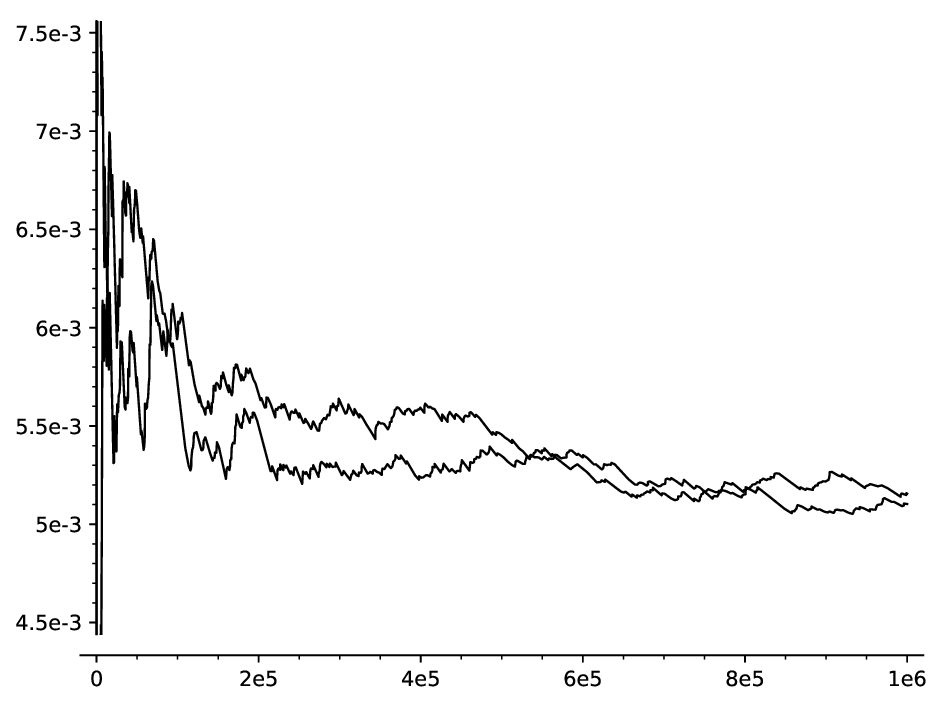}
\caption{$|l| = 7$: Top -7 bottom 7} \label{fig:15_6_A_7}
\end{subfigure}\hspace*{\fill}
\begin{subfigure}[b]{0.43\linewidth}
\includegraphics[width=\linewidth]{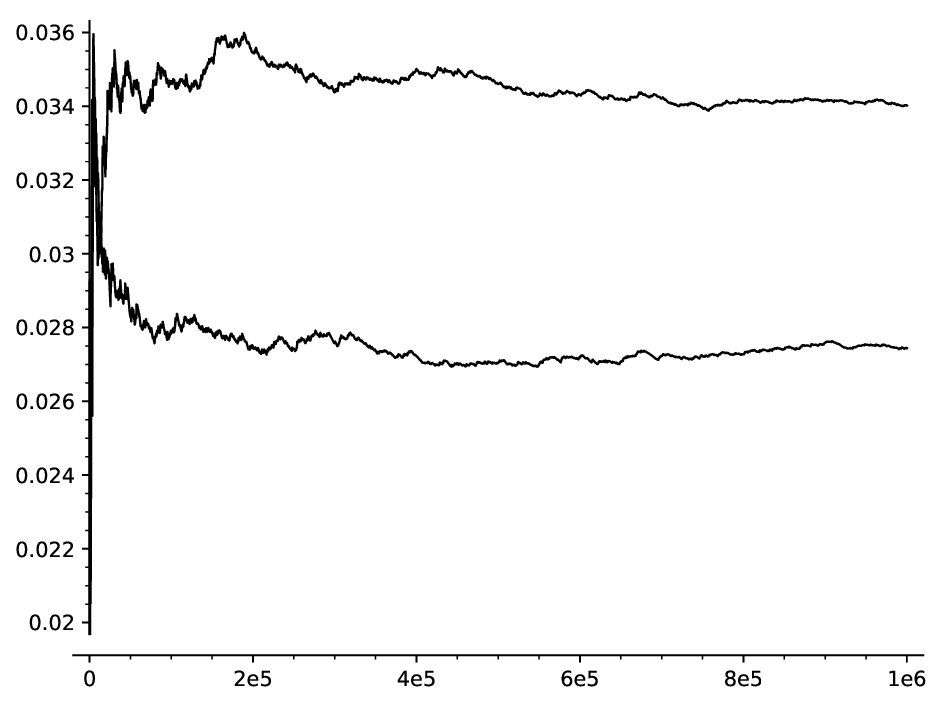}
\caption{$|l| = 8$: Top 8 bottom -8} \label{fig:15_6_A_8}
\end{subfigure}\hspace*{\fill}
\begin{subfigure}[b]{0.43\linewidth}
\includegraphics[width=\linewidth]{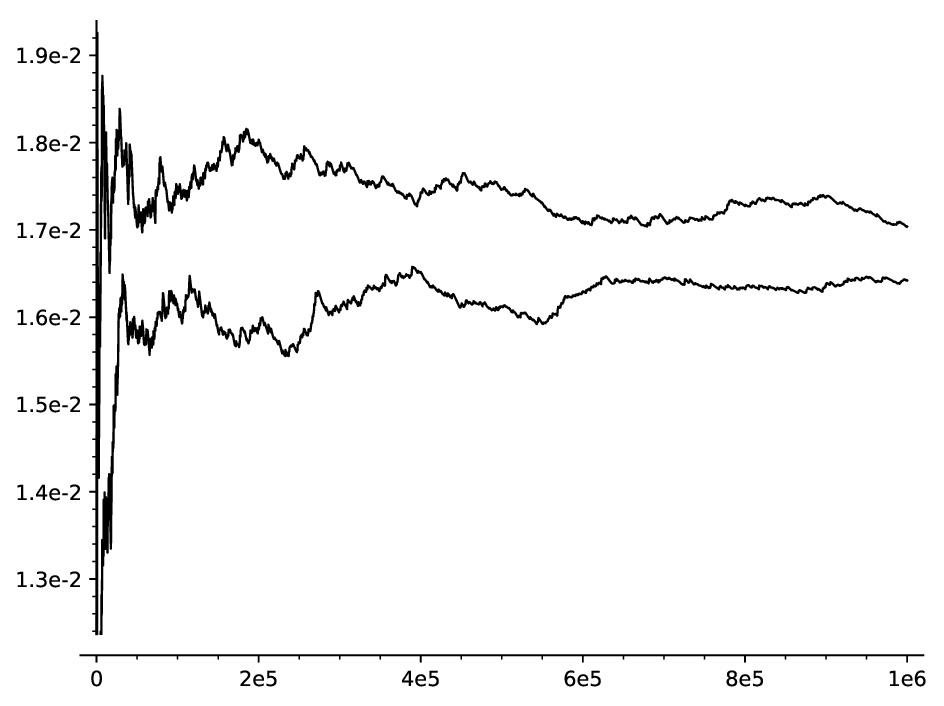}
\caption{$|l| = 9$: Top 9 bottom -9} \label{fig:15_6_A_9}
\end{subfigure}
\caption{Ratio~\eqref{ratio_A_exact} 15a1: $x(X;l)/X^{1/2}\log^2(X)$ for $k = 6$} \label{fig:15a1_6_A_exact}
\end{figure}

\clearpage

\begin{figure}[t!] % "[t!]" placement specifier just for this example
\hspace*{-2.3cm}
\begin{subfigure}[b]{0.43\linewidth}
\includegraphics[width=\linewidth]{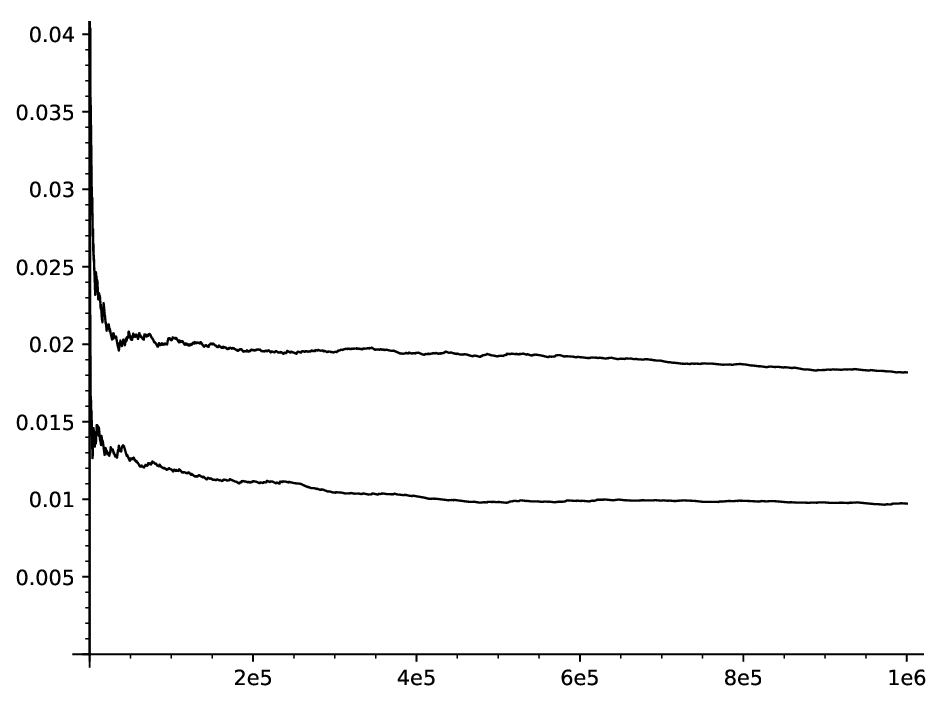}
\caption{$|l| = 1$: Top 1 bottom -1} \label{fig:17_6_A_1}
\end{subfigure}\hspace*{\fill}
\begin{subfigure}[b]{0.43\linewidth}
\includegraphics[width=\linewidth]{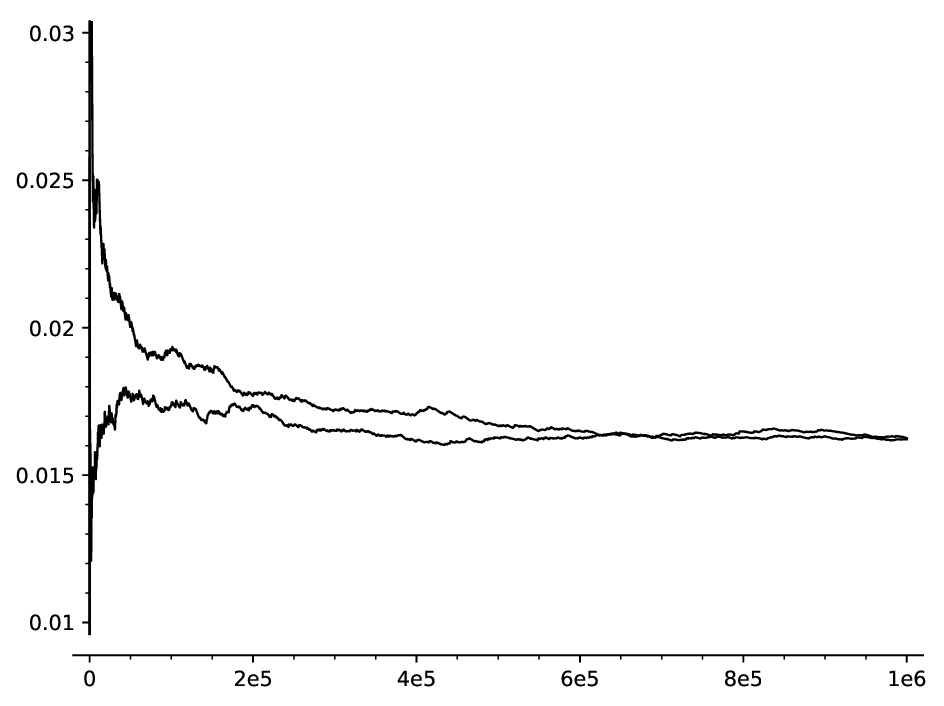}
\caption{$|l| = 2$: Top -2 bottom 2} \label{fig:17_6_A_2}
\end{subfigure}\hspace*{\fill}
\begin{subfigure}[b]{0.43\linewidth}
\includegraphics[width=\linewidth]{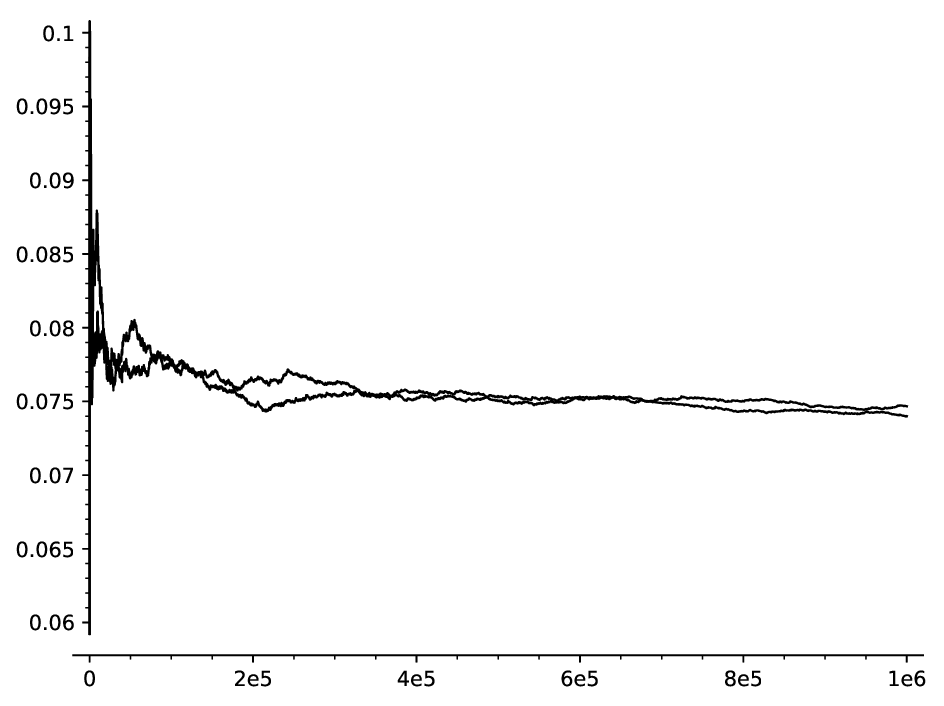}
\caption{$|l| = 3$: Top 3 bottom -3} \label{fig:17_6_A_3}
\end{subfigure}
\hspace*{-2.3cm}
\begin{subfigure}[b]{0.43\linewidth}
\includegraphics[width=\linewidth]{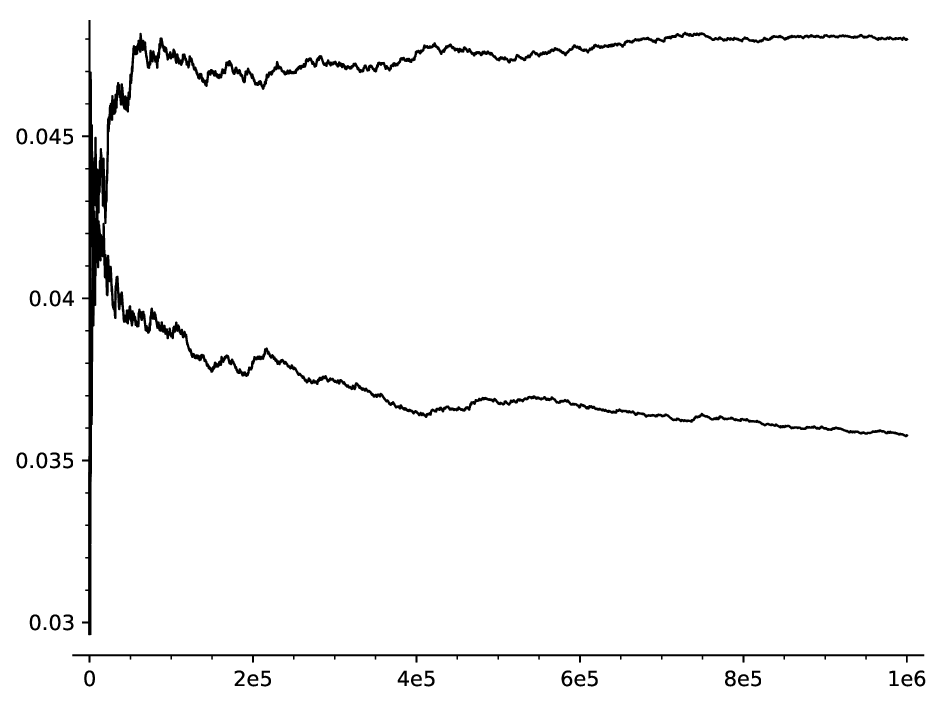}
\caption{$|l| = 4$: Top 4 bottom -4} \label{fig:17_6_A_4}
\end{subfigure}\hspace*{\fill}
\begin{subfigure}[b]{0.43\linewidth}
\includegraphics[width=\linewidth]{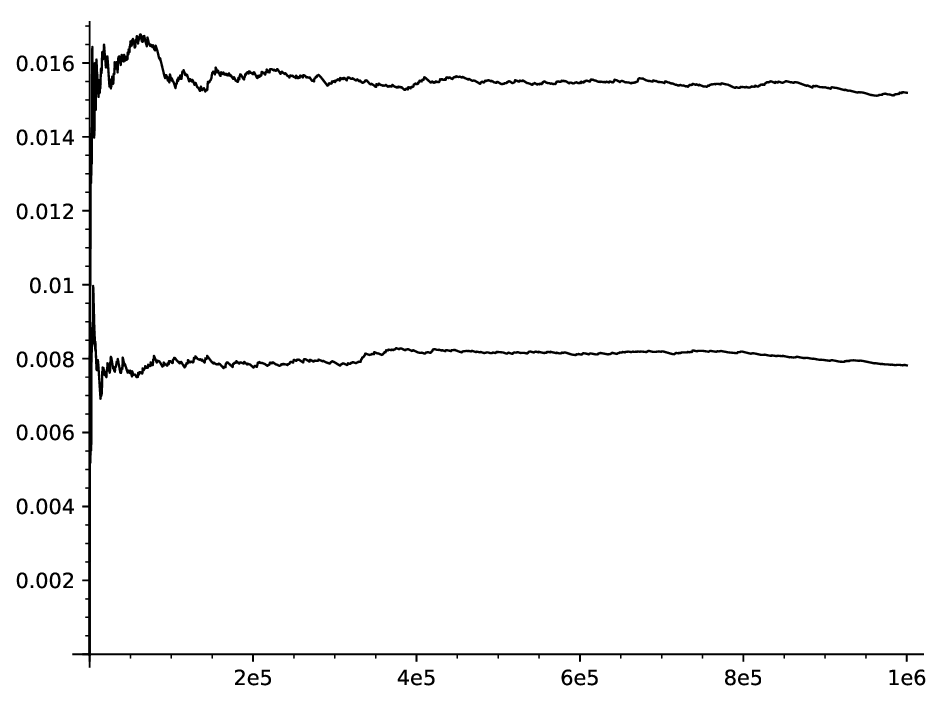}
\caption{$|l| = 5$: Top -5 bottom 5} \label{fig:17_6_A_5}
\end{subfigure}\hspace*{\fill}
\begin{subfigure}[b]{0.43\linewidth}
\includegraphics[width=\linewidth]{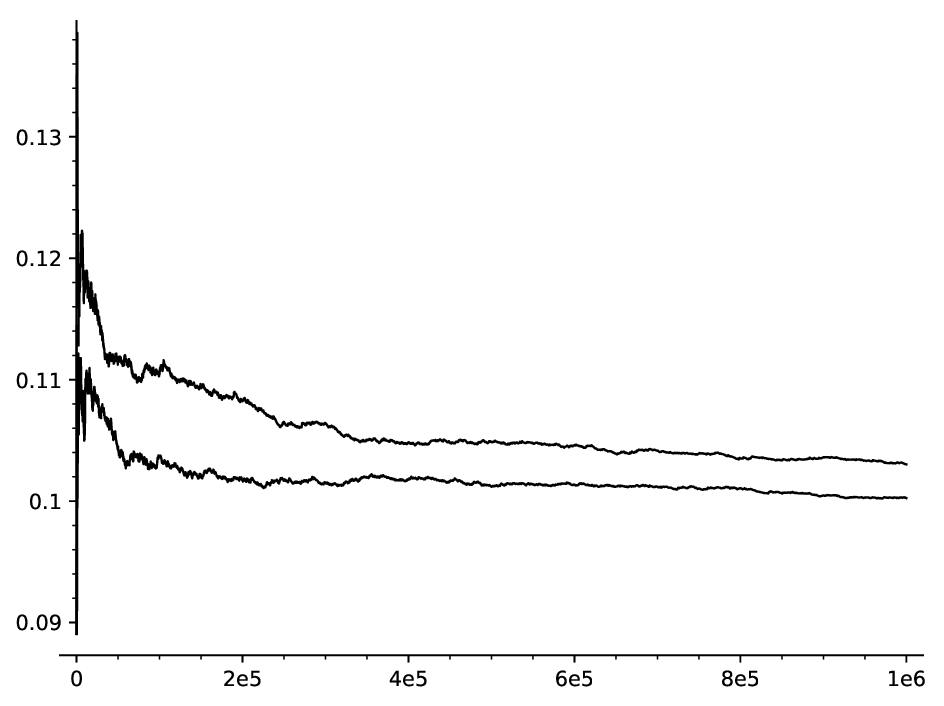}
\caption{$|l| = 6$: Top 6 bottom -6} \label{fig:17_6_A_6}
\end{subfigure}
\hspace*{-2.3cm}
\begin{subfigure}[b]{0.43\linewidth}
\includegraphics[width=\linewidth]{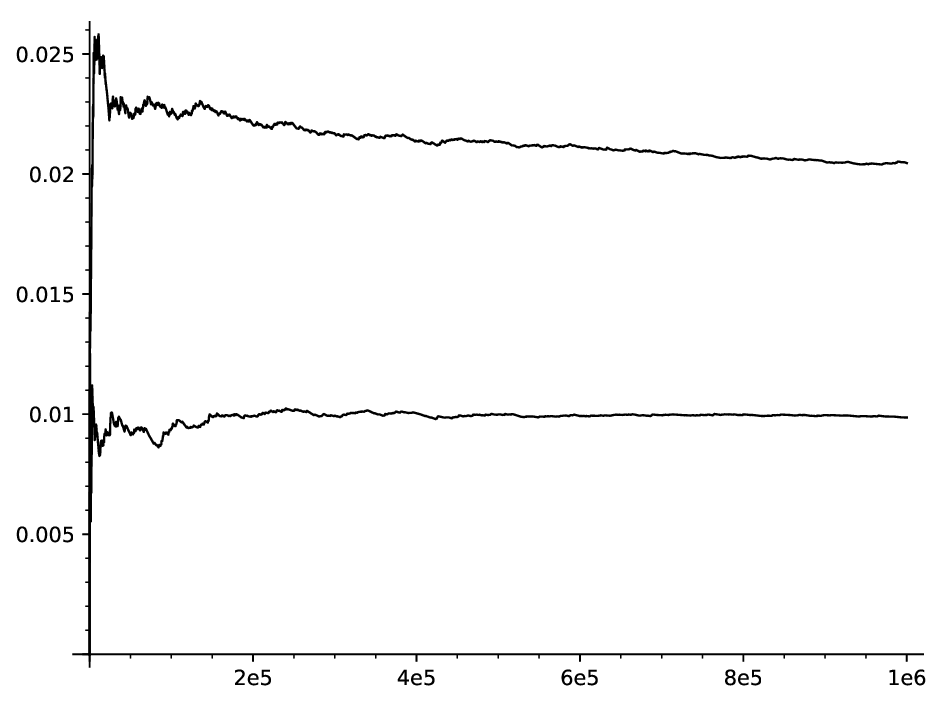}
\caption{$|l| = 7$: Top 7 bottom -7} \label{fig:17_6_A_7}
\end{subfigure}\hspace*{\fill}
\begin{subfigure}[b]{0.43\linewidth}
\includegraphics[width=\linewidth]{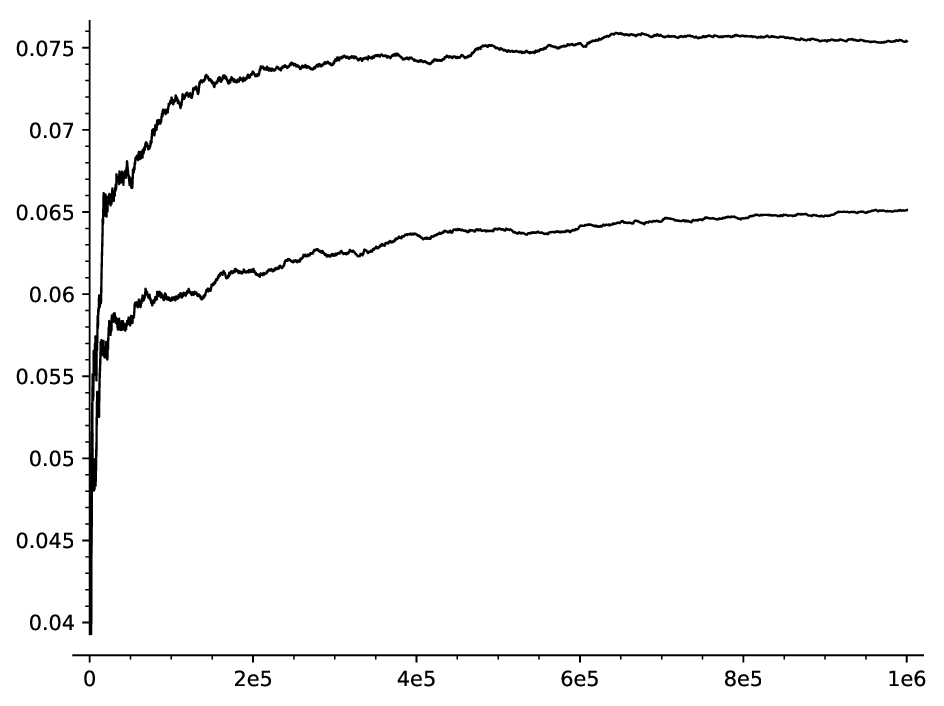}
\caption{$|l| = 8$: Top 8 bottom -8} \label{fig:17_6_A_8}
\end{subfigure}\hspace*{\fill}
\begin{subfigure}[b]{0.43\linewidth}
\includegraphics[width=\linewidth]{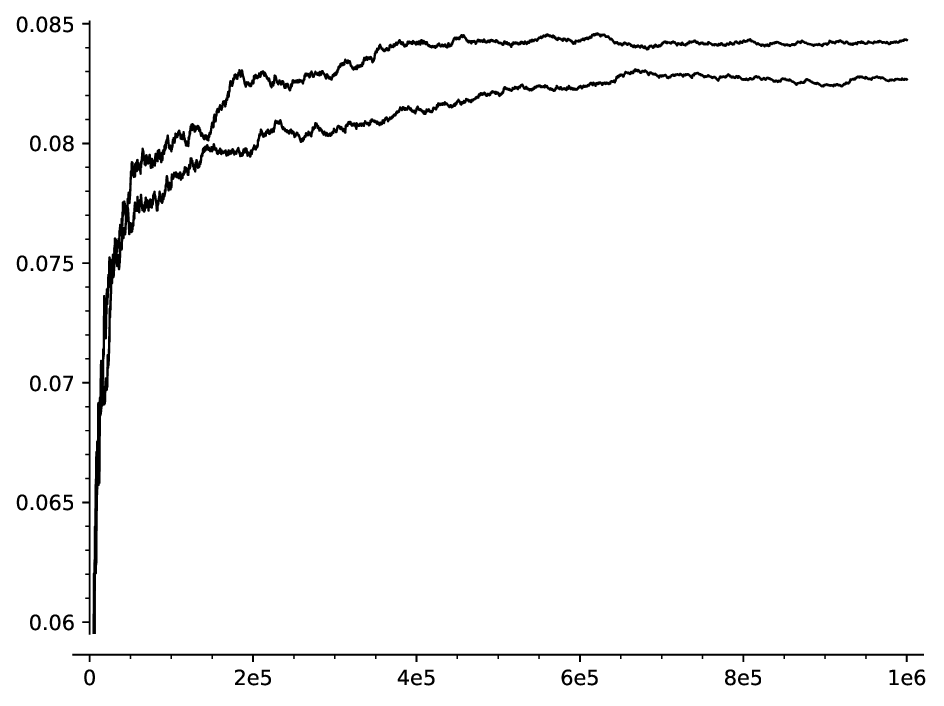}
\caption{$|l| = 9$: Top 9 bottom -9} \label{fig:17_6_A_9}
\end{subfigure}
\caption{Ratio~\eqref{ratio_A_exact} 17a1: $x(X;l)/X^{1/2}\log^2(X)$ for $k = 6$} \label{fig:17a1_6_A_exact}
\end{figure}

\clearpage

\begin{figure}[t!] % "[t!]" placement specifier just for this example
\hspace*{-2.3cm}
\begin{subfigure}[b]{0.43\linewidth}
\includegraphics[width=\linewidth]{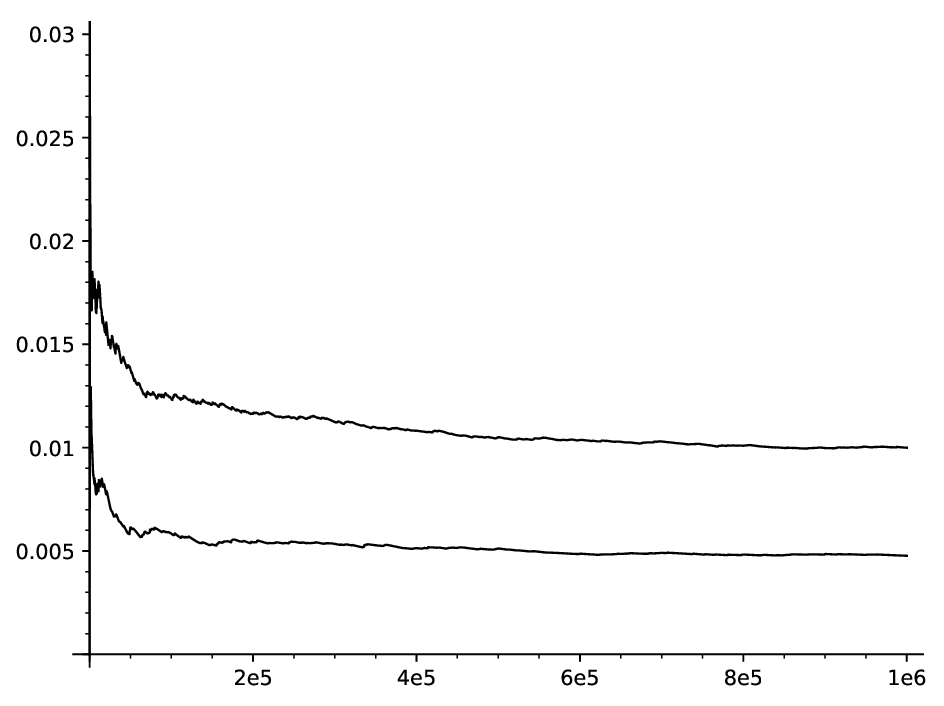}
\caption{$|l| = 1$: Top -1 bottom 1} \label{fig:19_6_A_1}
\end{subfigure}\hspace*{\fill}
\begin{subfigure}[b]{0.43\linewidth}
\includegraphics[width=\linewidth]{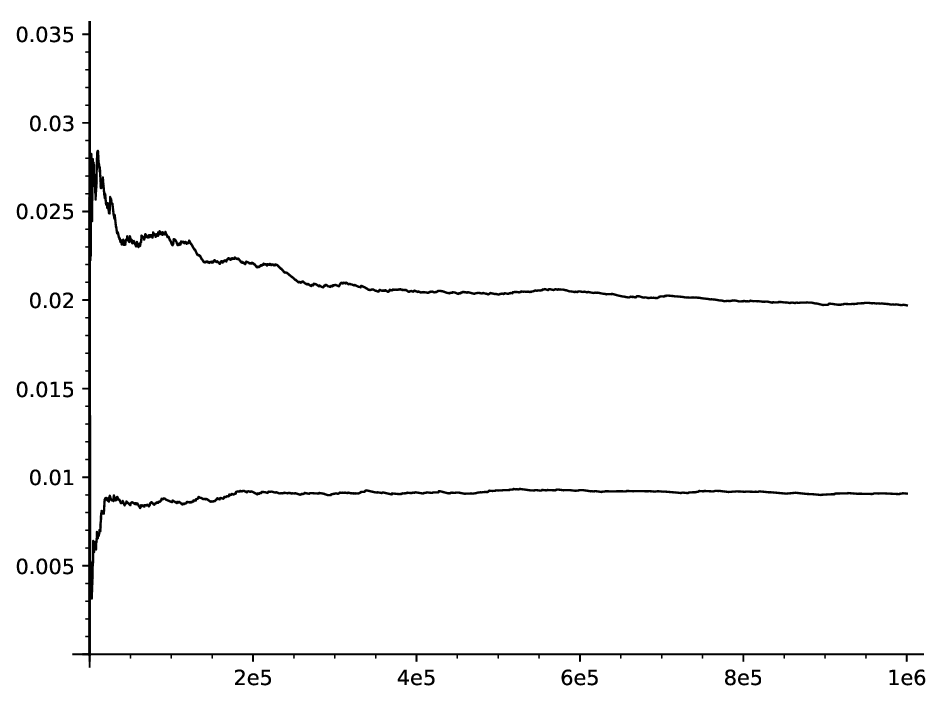}
\caption{$|l| = 2$: Top 2 bottom -2} \label{fig:19_6_A_2}
\end{subfigure}\hspace*{\fill}
\begin{subfigure}[b]{0.43\linewidth}
\includegraphics[width=\linewidth]{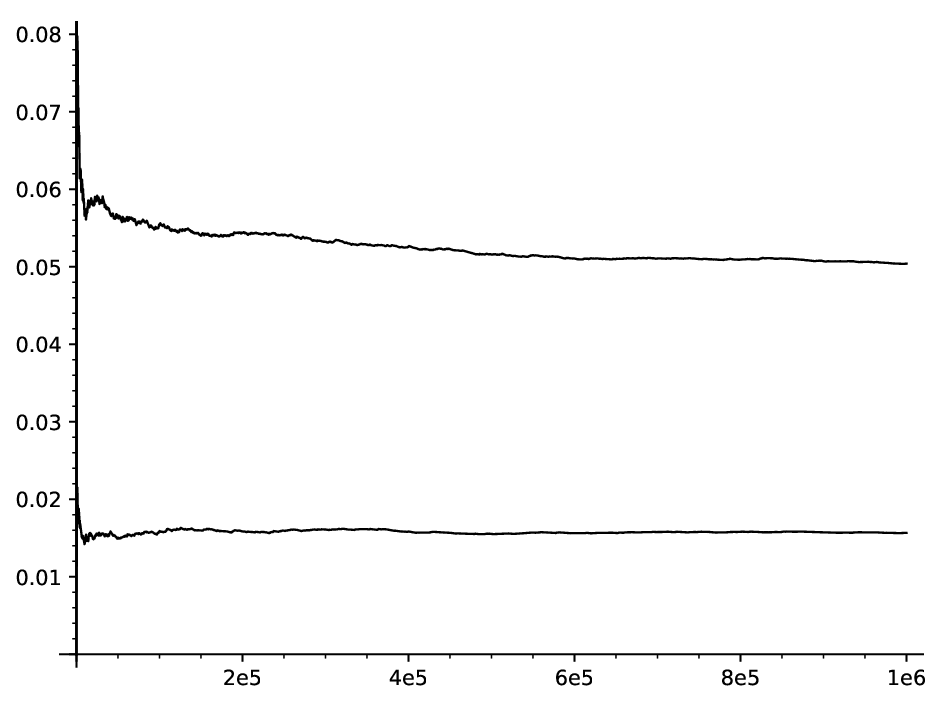}
\caption{$|l| = 3$: Top -3 bottom 3} \label{fig:19_6_A_3}
\end{subfigure}
\hspace*{-2.3cm}
\begin{subfigure}[b]{0.43\linewidth}
\includegraphics[width=\linewidth]{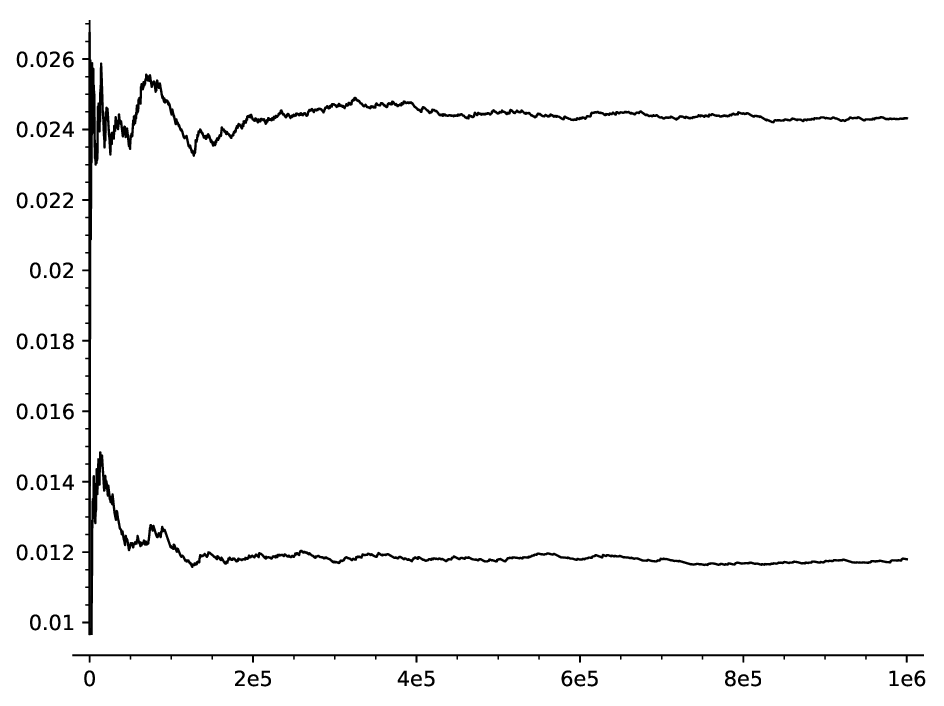}
\caption{$|l| = 4$: Top -4 bottom 4} \label{fig:19_6_A_4}
\end{subfigure}\hspace*{\fill}
\begin{subfigure}[b]{0.43\linewidth}
\includegraphics[width=\linewidth]{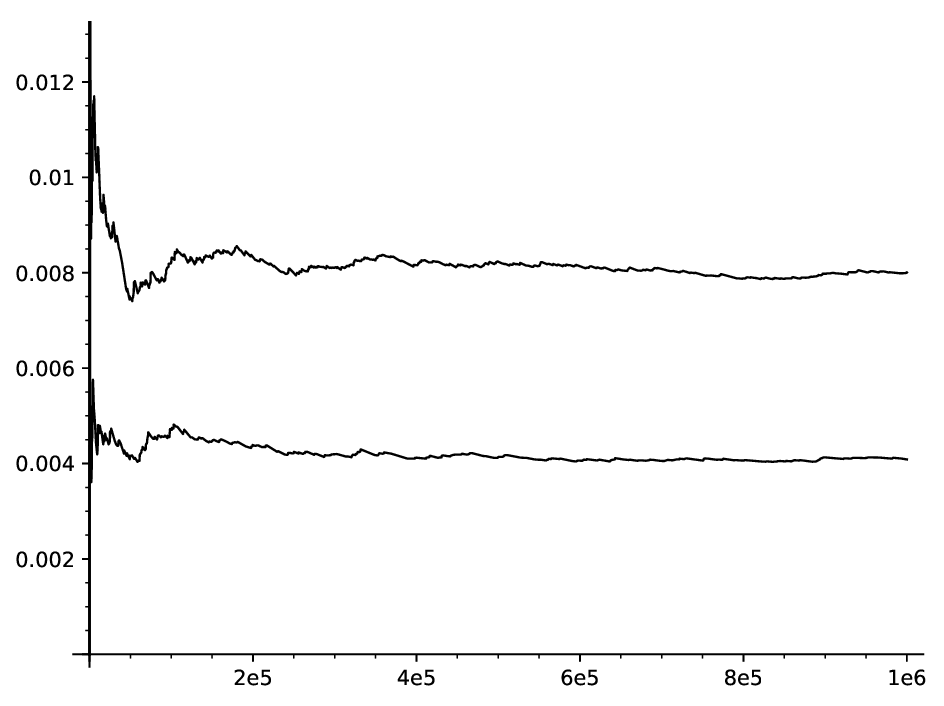}
\caption{$|l| = 5$: Top 5 bottom -5} \label{fig:19_6_A_5}
\end{subfigure}\hspace*{\fill}
\begin{subfigure}[b]{0.43\linewidth}
\includegraphics[width=\linewidth]{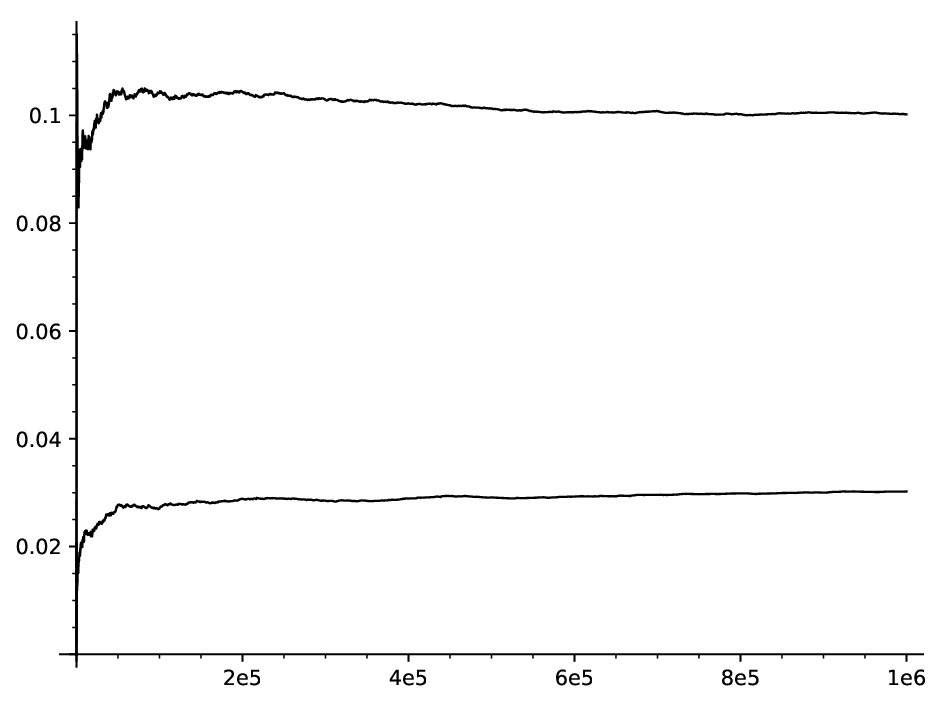}
\caption{$|l| = 6$: Top 6 bottom -6} \label{fig:19_6_A_6}
\end{subfigure}
\hspace*{-2.3cm}
\begin{subfigure}[b]{0.43\linewidth}
\includegraphics[width=\linewidth]{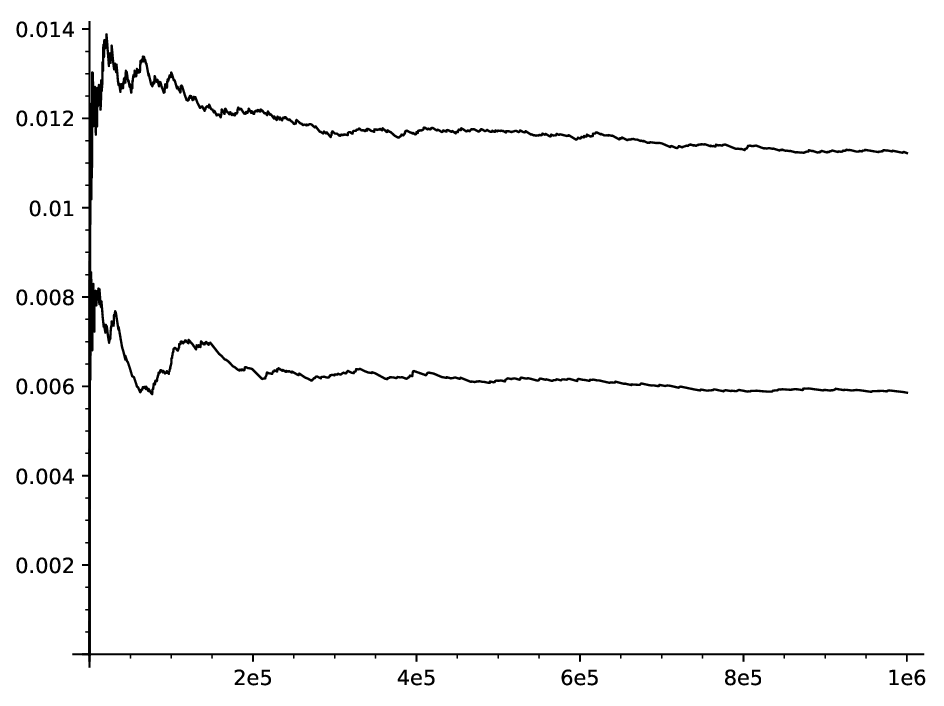}
\caption{$|l| = 7$: Top -7 bottom 7} \label{fig:19_6_A_7}
\end{subfigure}\hspace*{\fill}
\begin{subfigure}[b]{0.43\linewidth}
\includegraphics[width=\linewidth]{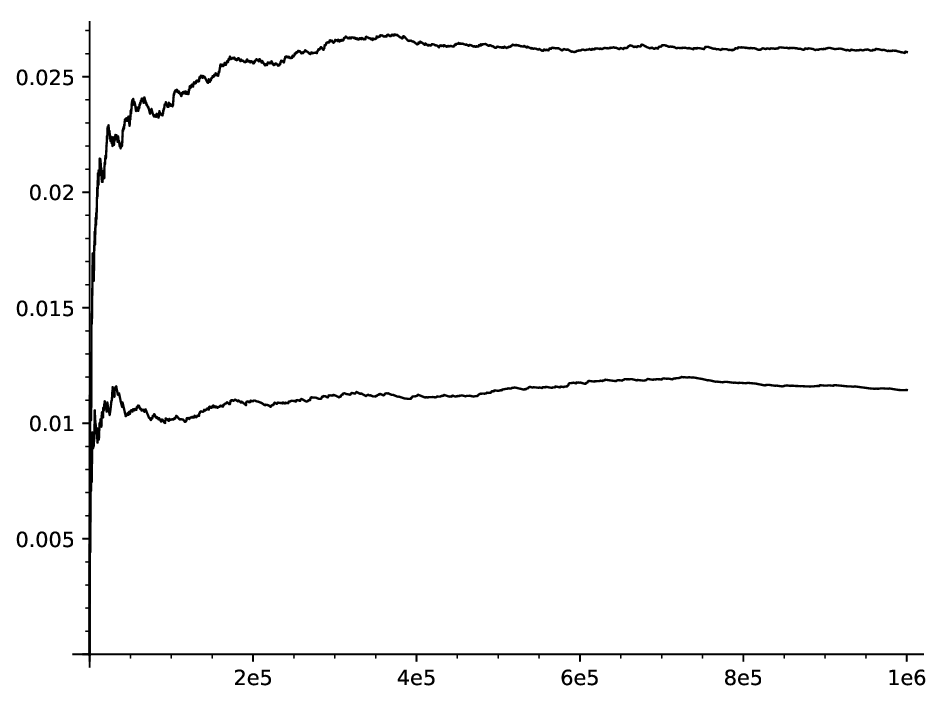}
\caption{$|l| = 8$: Top 8 bottom -8} \label{fig:19_6_A_8}
\end{subfigure}\hspace*{\fill}
\begin{subfigure}[b]{0.43\linewidth}
\includegraphics[width=\linewidth]{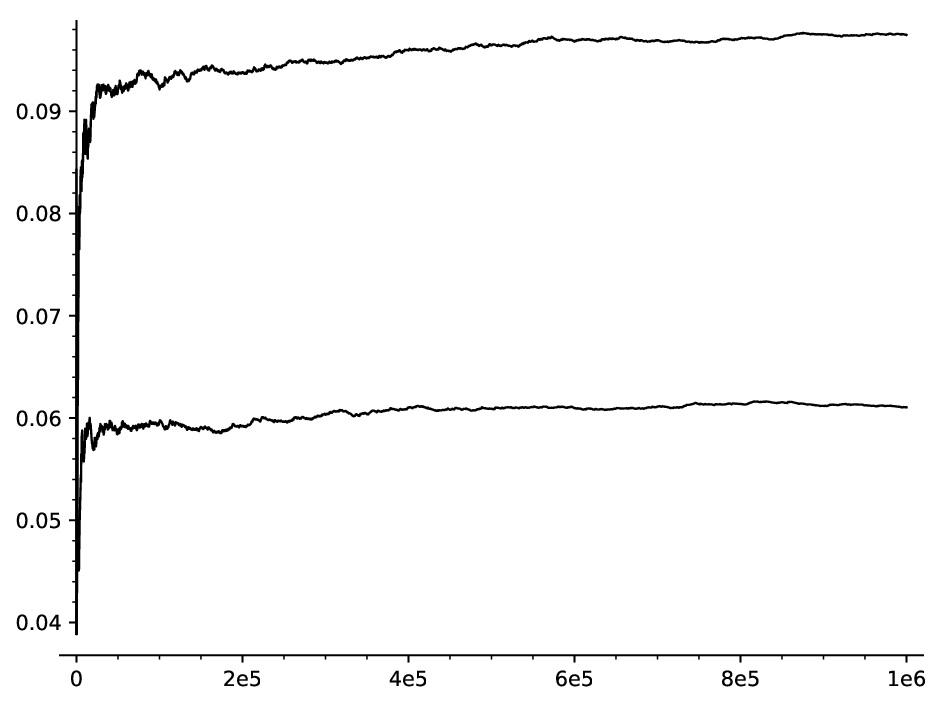}
\caption{$|l| = 9$: Top -9 bottom 9} \label{fig:19_6_A_9}
\end{subfigure}
\caption{Ratio~\eqref{ratio_A_exact} 19a1: $x(X;l)/X^{1/2}\log^2(X)$ for $k = 6$} \label{fig:19a1_6_A_exact}
\end{figure}

\clearpage

\begin{figure}[t!] % "[t!]" placement specifier just for this example
\hspace*{-2.3cm}
\begin{subfigure}[b]{0.43\linewidth}
\includegraphics[width=\linewidth]{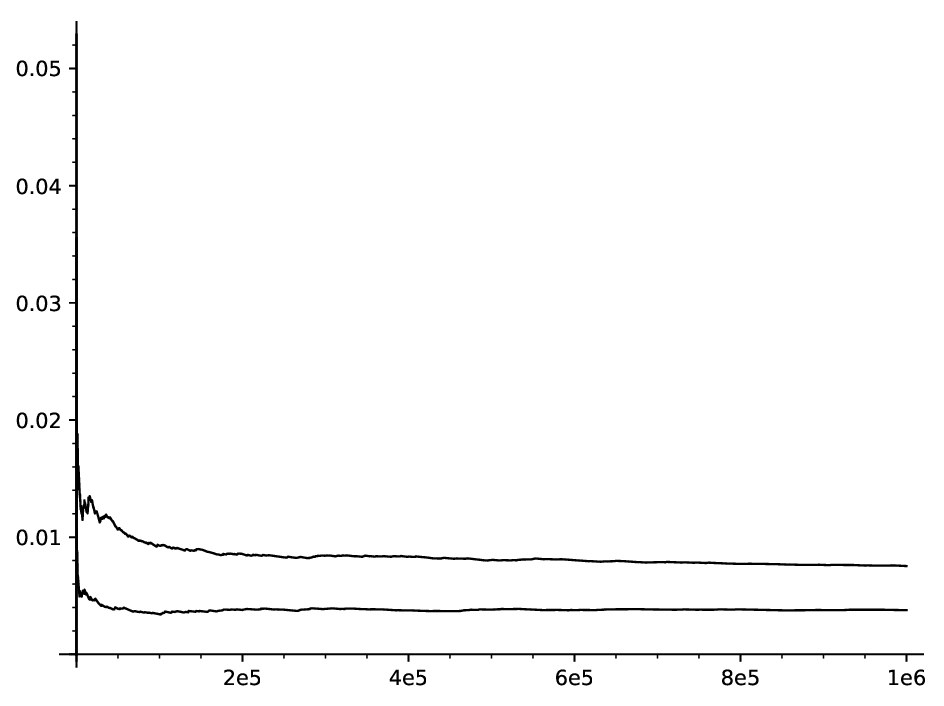}
\caption{$|l| = 1$: Top 1 bottom -1} \label{fig:37_6_A_1}
\end{subfigure}\hspace*{\fill}
\begin{subfigure}[b]{0.43\linewidth}
\includegraphics[width=\linewidth]{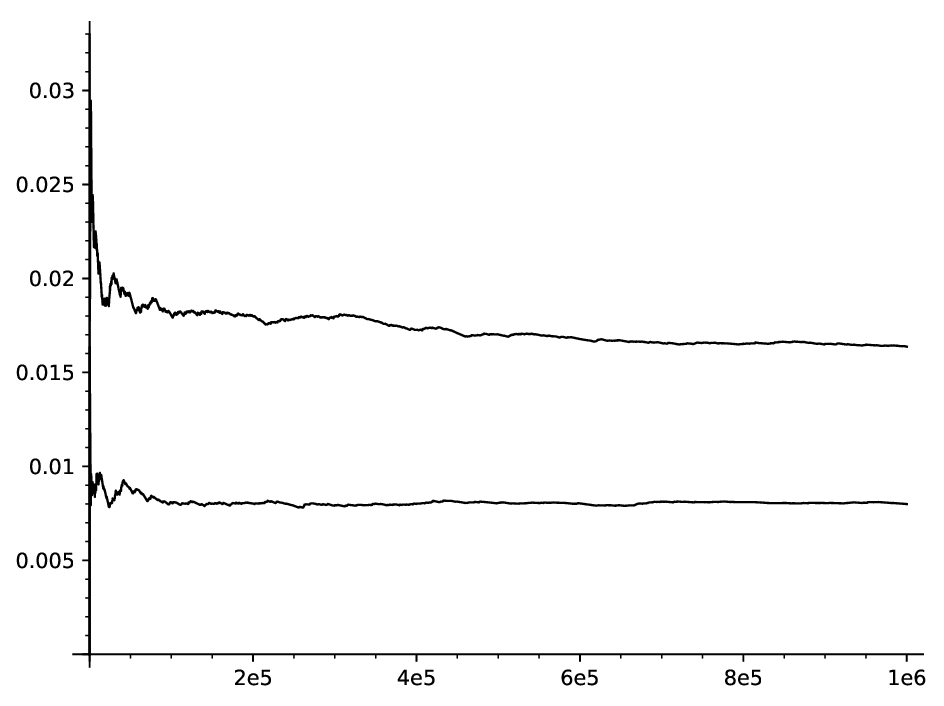}
\caption{$|l| = 2$: Top -2 bottom 2} \label{fig:37_6_A_2}
\end{subfigure}\hspace*{\fill}
\begin{subfigure}[b]{0.43\linewidth}
\includegraphics[width=\linewidth]{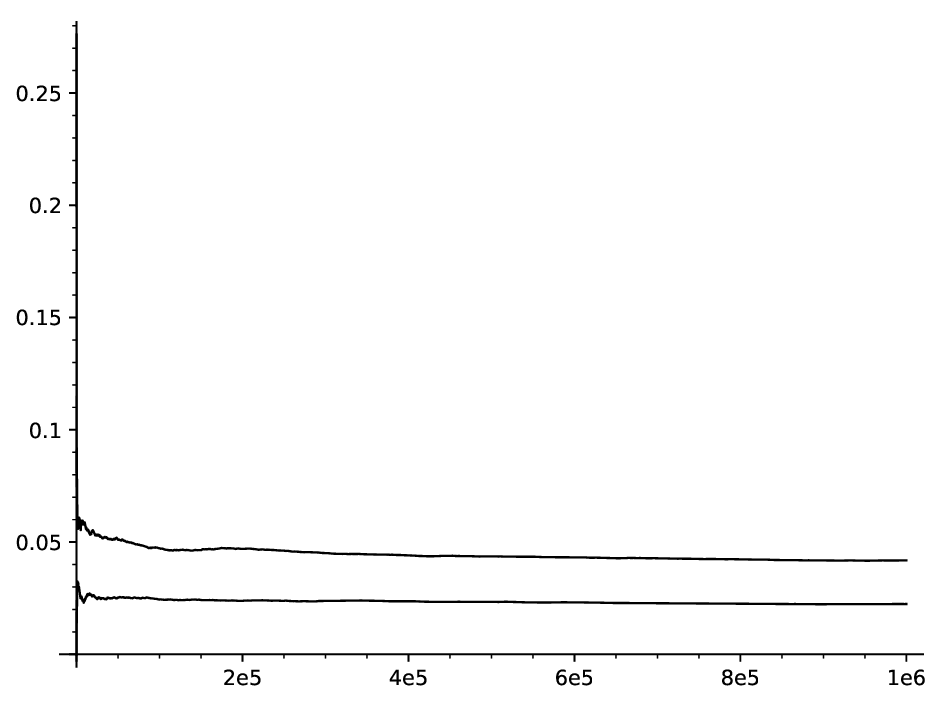}
\caption{$|l| = 3$: Top 3 bottom -3} \label{fig:37_6_A_3}
\end{subfigure}
\hspace*{-2.3cm}
\begin{subfigure}[b]{0.43\linewidth}
\includegraphics[width=\linewidth]{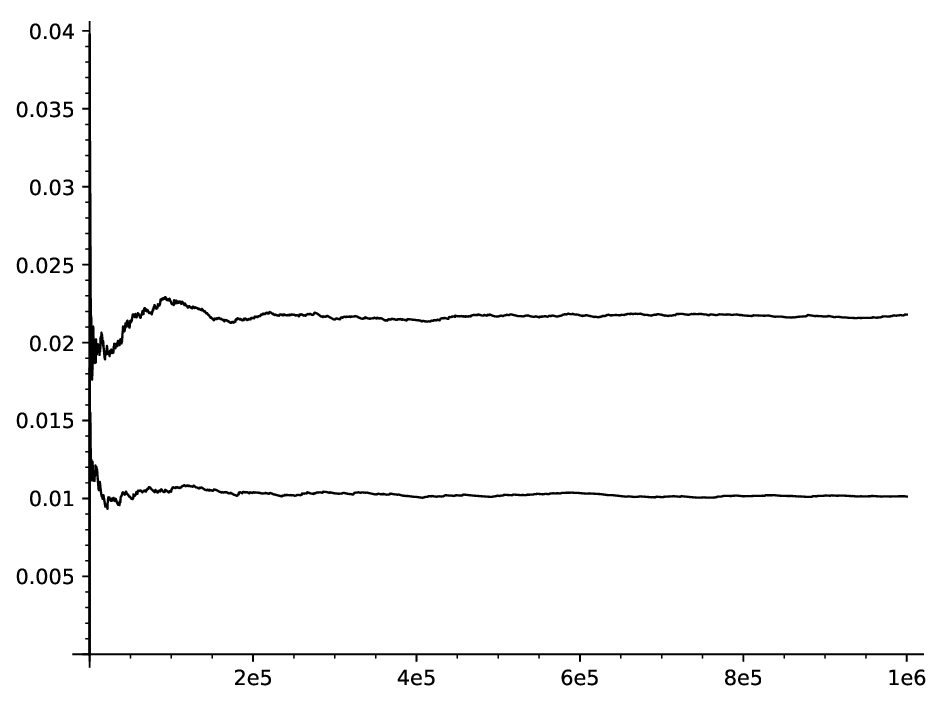}
\caption{$|l| = 4$: Top 4 bottom -4} \label{fig:37_6_A_4}
\end{subfigure}\hspace*{\fill}
\begin{subfigure}[b]{0.43\linewidth}
\includegraphics[width=\linewidth]{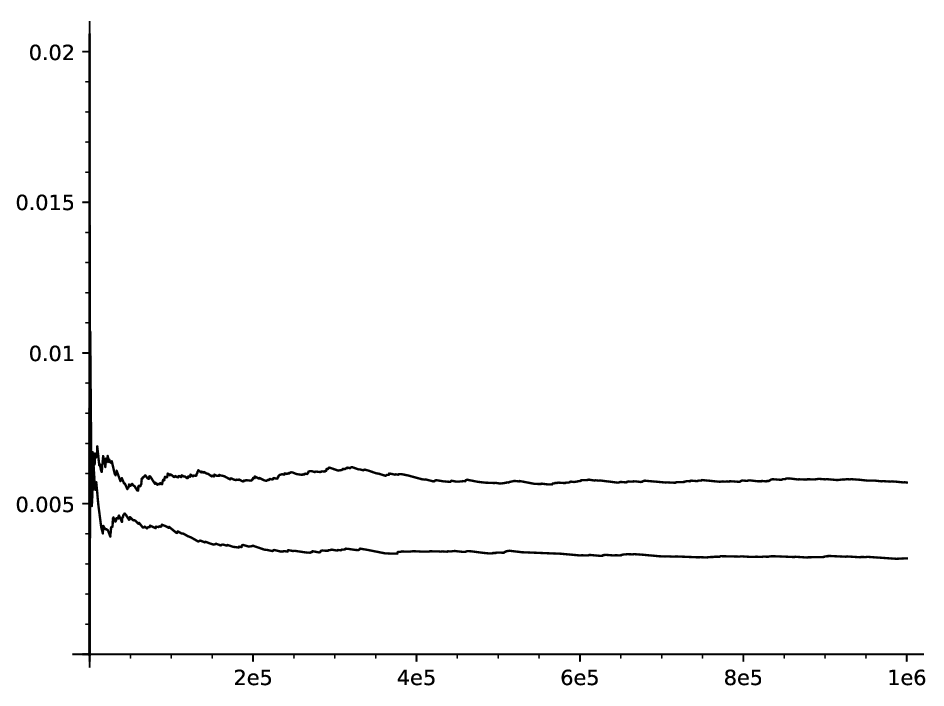}
\caption{$|l| = 5$: Top -5 bottom 5} \label{fig:37_6_A_5}
\end{subfigure}\hspace*{\fill}
\begin{subfigure}[b]{0.43\linewidth}
\includegraphics[width=\linewidth]{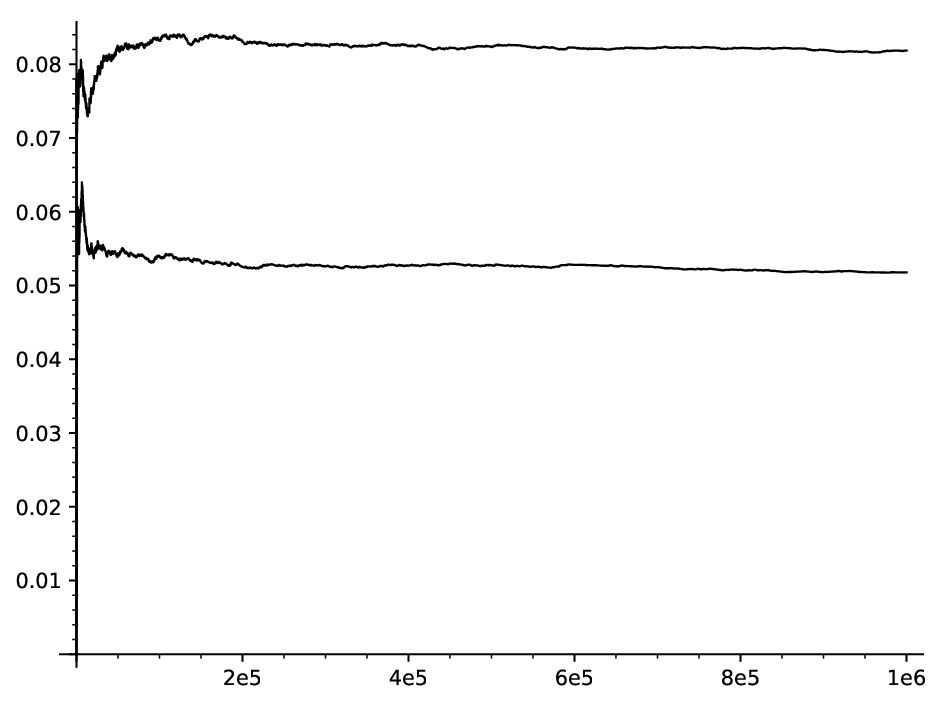}
\caption{$|l| = 6$: Top -6 bottom 6} \label{fig:37_6_A_6}
\end{subfigure}
\hspace*{-2.3cm}
\begin{subfigure}[b]{0.43\linewidth}
\includegraphics[width=\linewidth]{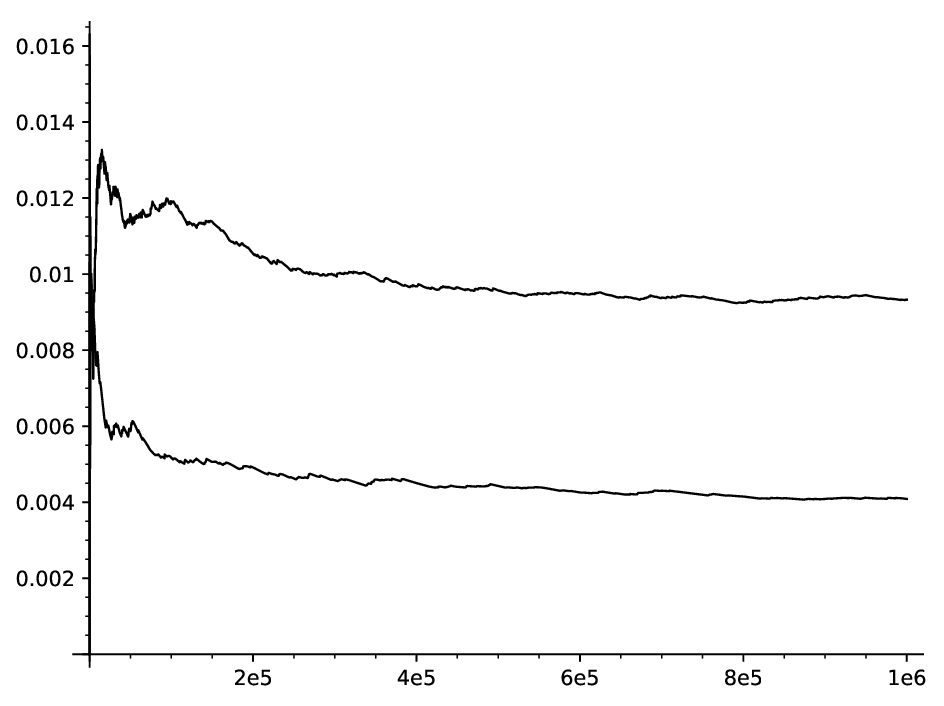}
\caption{$|l| = 7$: Top 7 bottom -7} \label{fig:37_6_A_7}
\end{subfigure}\hspace*{\fill}
\begin{subfigure}[b]{0.43\linewidth}
\includegraphics[width=\linewidth]{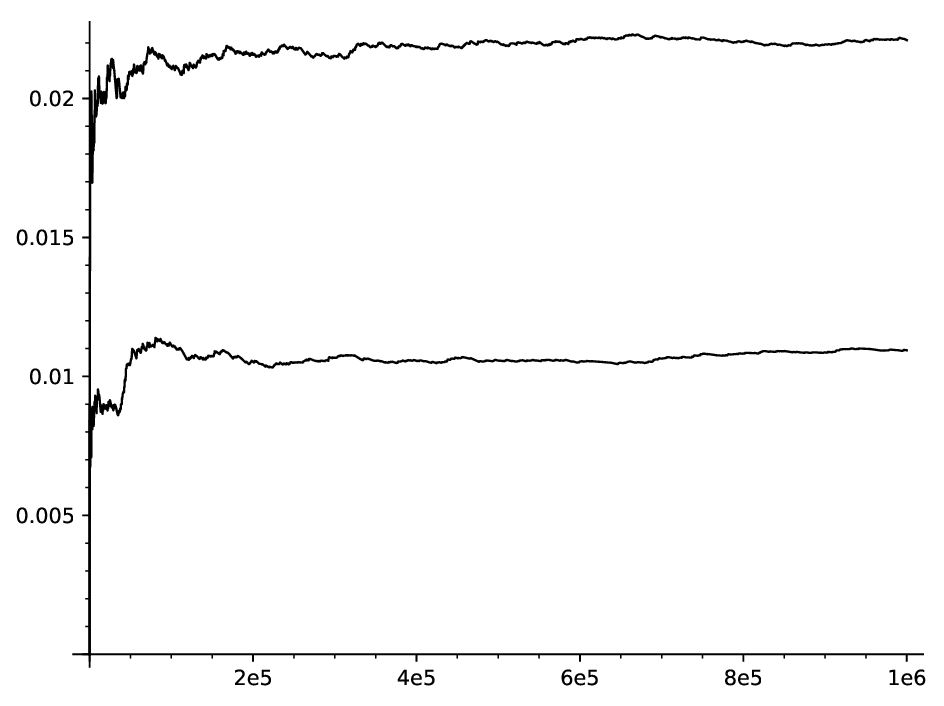}
\caption{$|l| = 8$: Top -8 bottom 8} \label{fig:37_6_A_8}
\end{subfigure}\hspace*{\fill}
\begin{subfigure}[b]{0.43\linewidth}
\includegraphics[width=\linewidth]{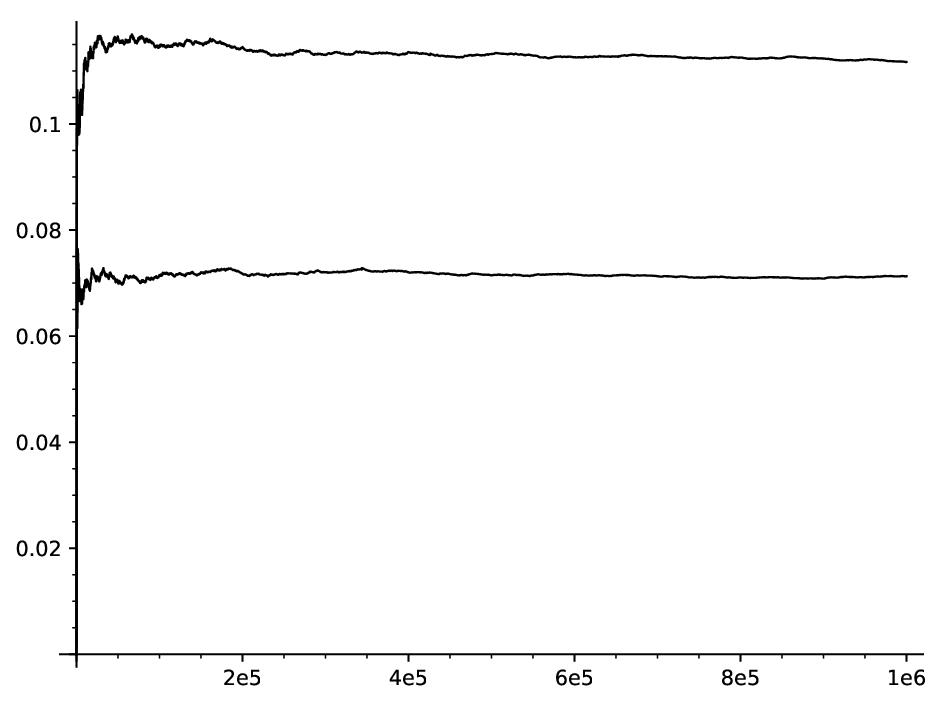}
\caption{$|l| = 9$: Top 9 bottom -9} \label{fig:37_6_A_9}
\end{subfigure}
\caption{Ratio~\eqref{ratio_A_exact} 37b1: $x(X;l)/X^{1/2}\log^2(X)$ for $k = 6$} \label{fig:37b1_6_A_exact}
\end{figure}

\clearpage

\begin{figure}[b!] % "[t!]" placement specifier just for this example
\hspace*{-.7cm}
\begin{subfigure}[b]{0.4\linewidth}
\includegraphics[width=\linewidth]{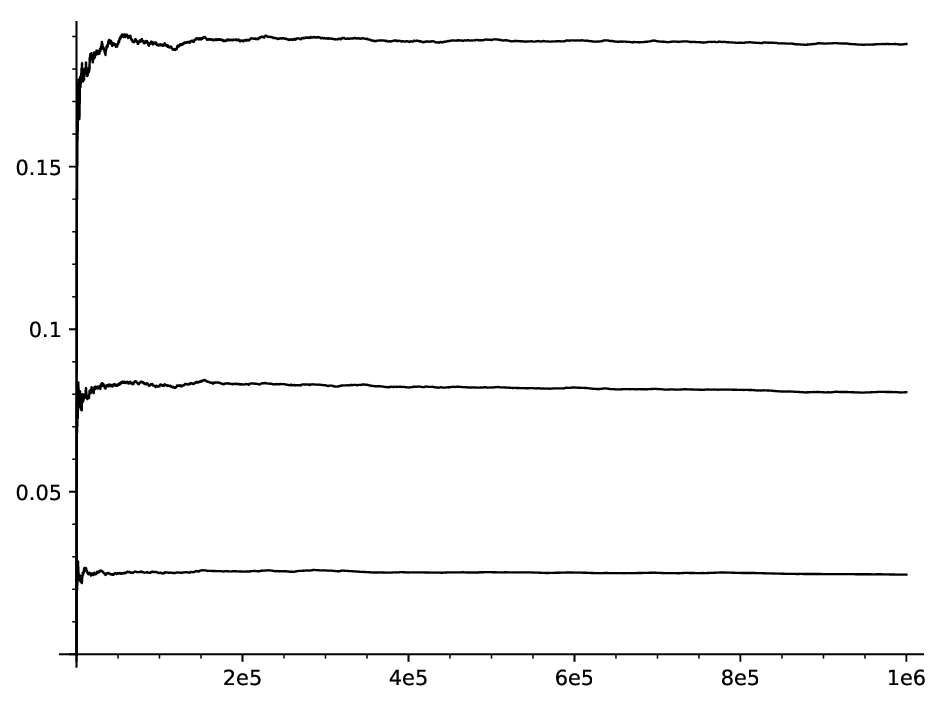}
\caption{11a1: $n_{6,E}^+(X;L)/X^{1/2}\log^2(X)$} \label{fig:11_6_even_acc_A}
\end{subfigure}\hspace*{\fill}
\begin{subfigure}[b]{0.4\linewidth}
\includegraphics[width=\linewidth]{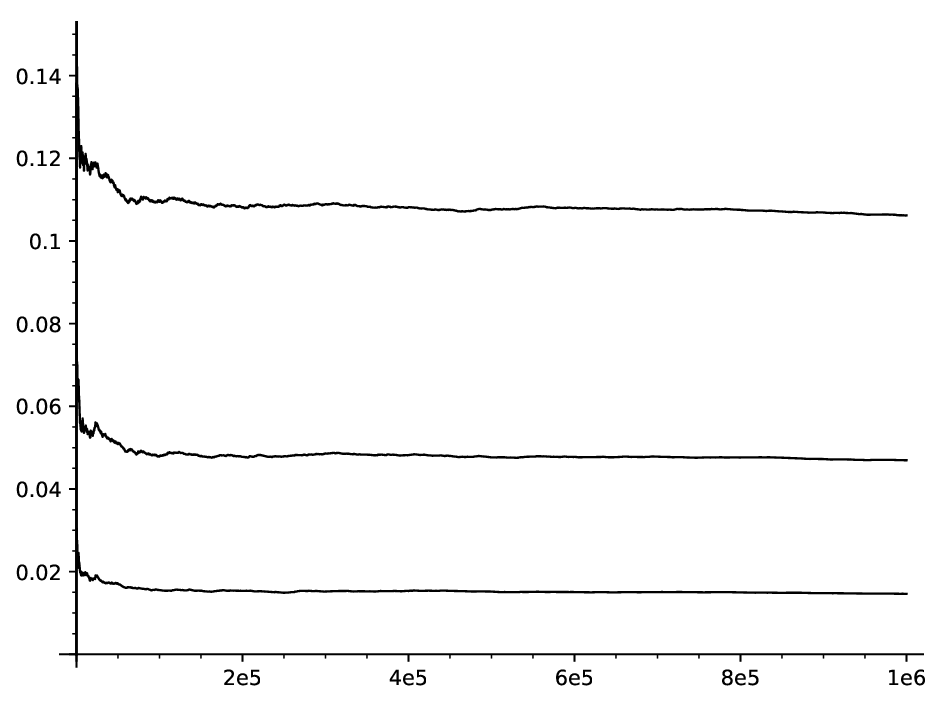}
\caption{11a1: $n_{6,E}^-(X;L)/X^{1/2}\log^2(X)$} \label{fig:11_6_odd_acc_A}
\end{subfigure}
\hspace*{-.7cm}
\begin{subfigure}[b]{0.4\linewidth}
\includegraphics[width=\linewidth]{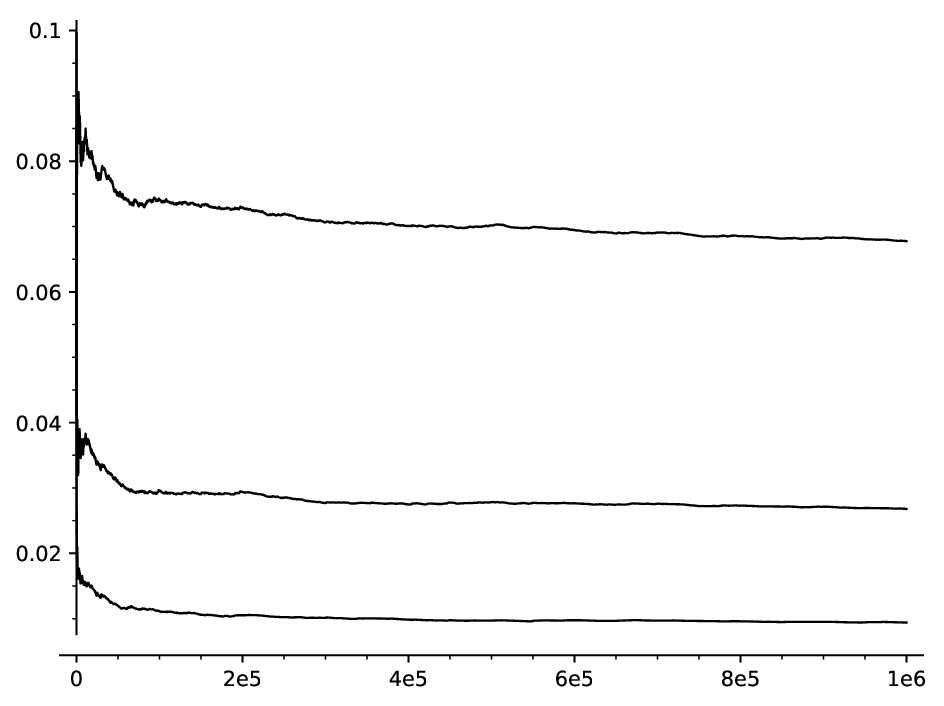}
\caption{14a1: $n_{6,E}^+(X;L)/X^{1/2}\log^2(X)$} \label{fig:14_6_even_acc_A}
\end{subfigure}\hspace*{\fill}
\begin{subfigure}[b]{0.4\linewidth}
\includegraphics[width=\linewidth]{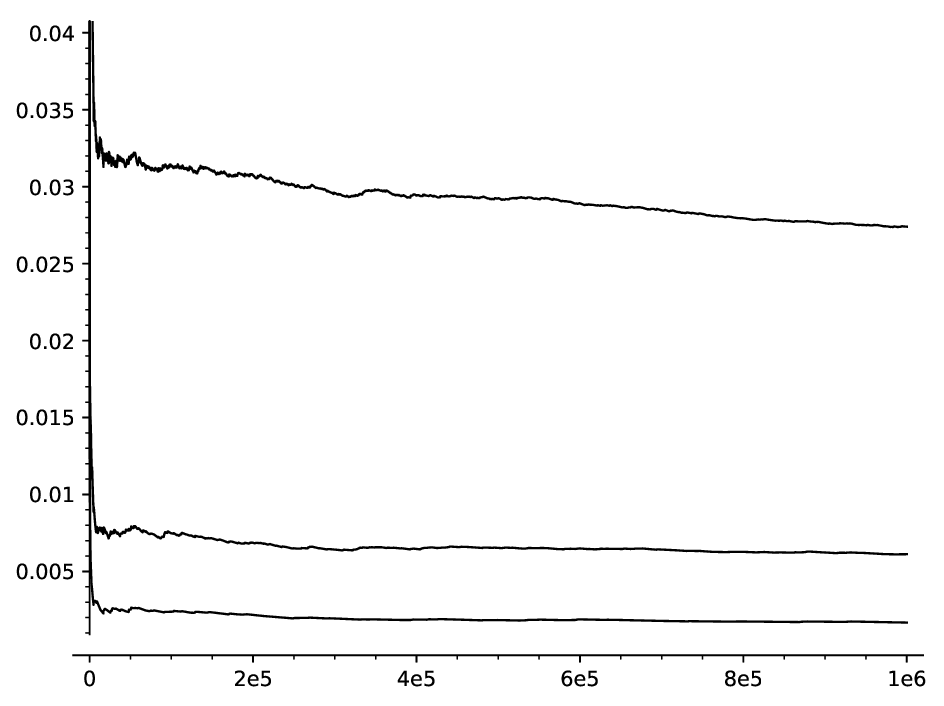}
\caption{14a1: $n_{6,E}^-(X;L)/X^{1/2}\log^2(X)$} \label{fig:14_6_odd_acc_A}
\end{subfigure}
\hspace*{-.7cm}
\begin{subfigure}[b]{0.4\linewidth}
\includegraphics[width=\linewidth]{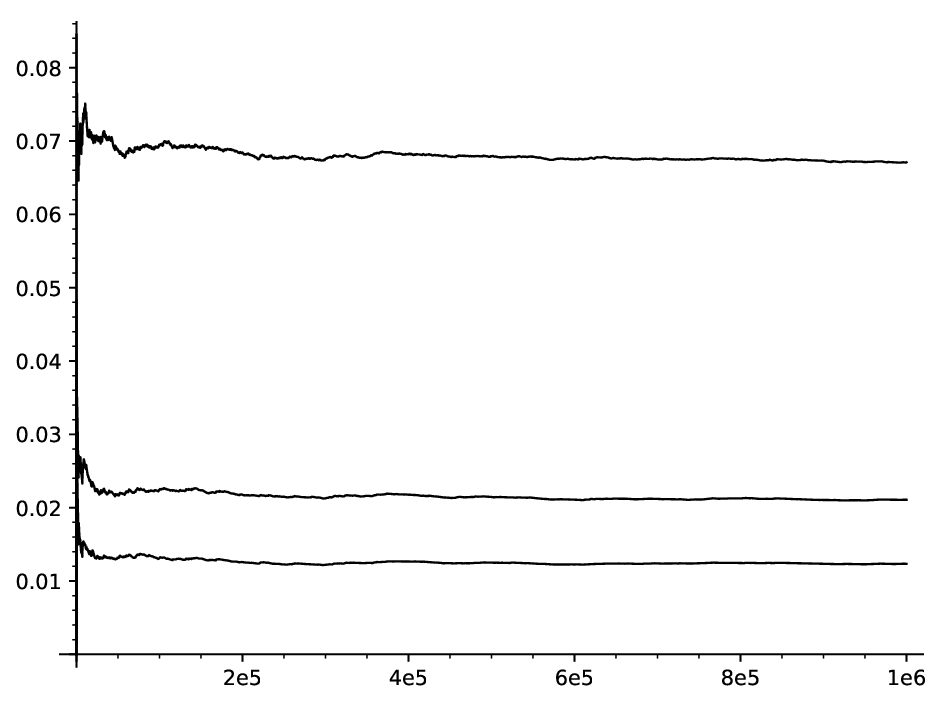}
\caption{15a1: $n_{6,E}^+(X;L)/X^{1/2}\log^2(X)$} \label{fig:15_6_even_acc_A}
\end{subfigure}\hspace*{\fill}
\begin{subfigure}[b]{0.4\linewidth}
\includegraphics[width=\linewidth]{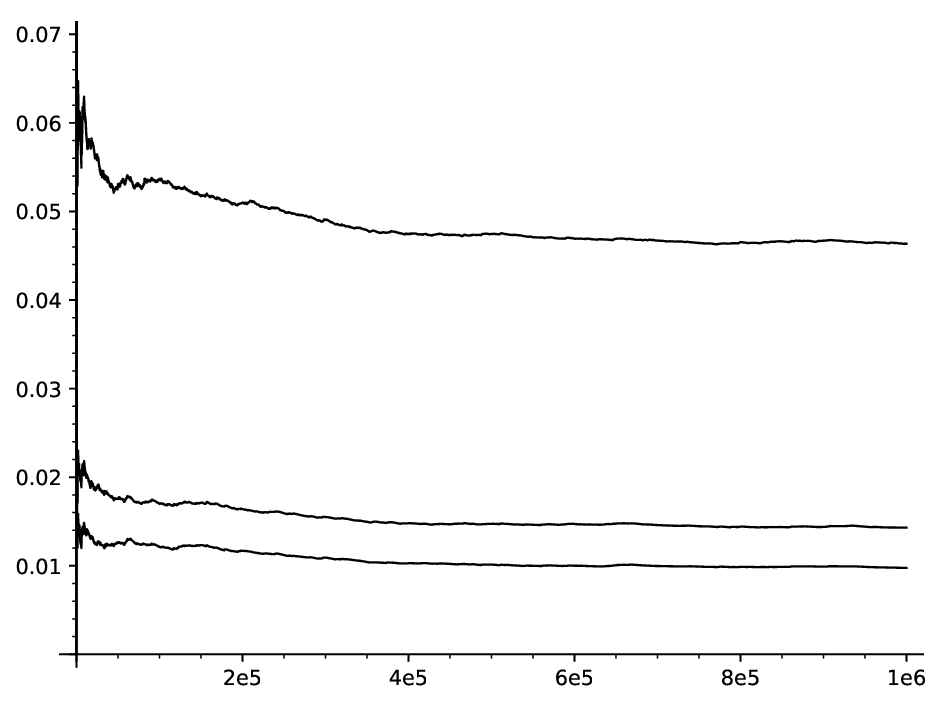}
\caption{15a1: $n_{6,E}^-(X;L)/X^{1/2}\log^2(X)$} \label{fig:15_6_odd_acc_A}
\end{subfigure}
\caption{11a1, 14a1, 15a1: Ratio~\eqref{ratio_N_pm} $n_{6,E}^\pm(X;L)/X^{1/2}\log^2(X)$ depending on $\chi(-1) = \pm 1$ for $k = 6$ and $L = $ 1, 2, 3. Note that the larger $L$ the higher its ratio graph is depicted.} \label{fig:6_even_odd_A_acc_11_14_15}
\end{figure}

\clearpage

\begin{figure}[b!] % "[t!]" placement specifier just for this example
\hspace*{-.7cm}
\begin{subfigure}[b]{0.4\linewidth}
\includegraphics[width=\linewidth]{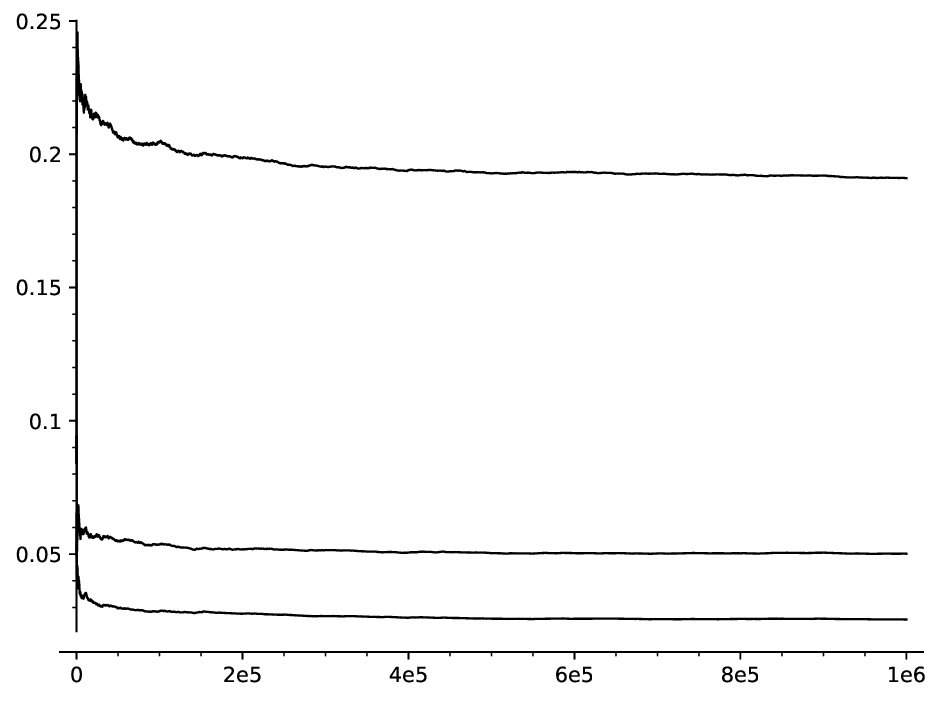}
\caption{17a1: $n_{6,E}^+(X;L)/X^{1/2}\log^2(X)$} \label{fig:17_6_even_acc_A}
\end{subfigure}\hspace*{\fill}
\begin{subfigure}[b]{0.4\linewidth}
\includegraphics[width=\linewidth]{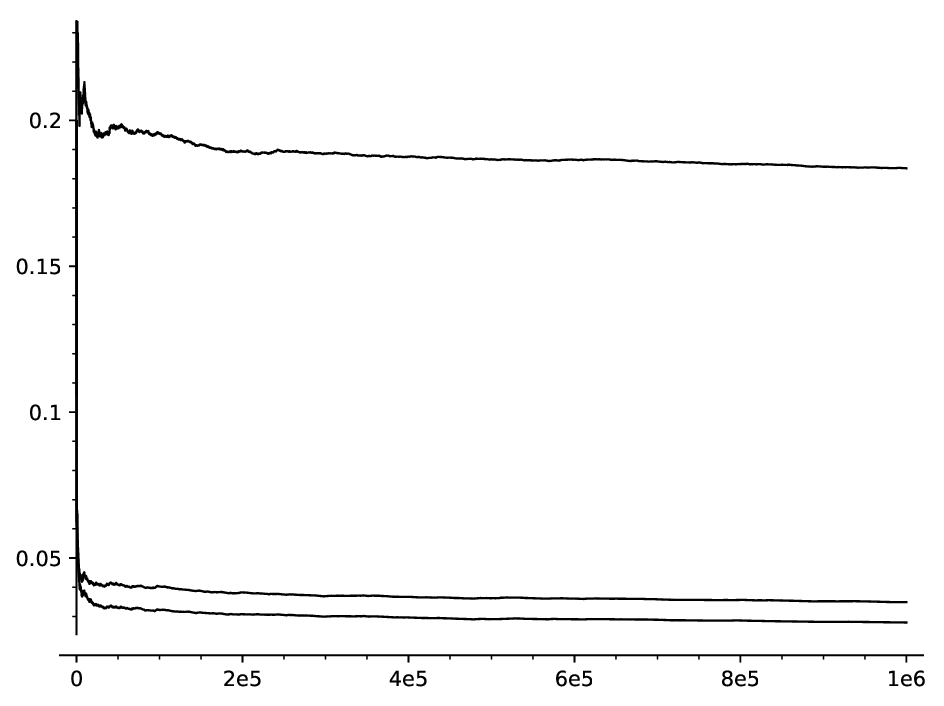}
\caption{17a1: $n_{6,E}^-(X;L)/X^{1/2}\log^2(X)$} \label{fig:17_6_odd_acc_A}
\end{subfigure}
\hspace*{-.7cm}
\begin{subfigure}[b]{0.4\linewidth}
\includegraphics[width=\linewidth]{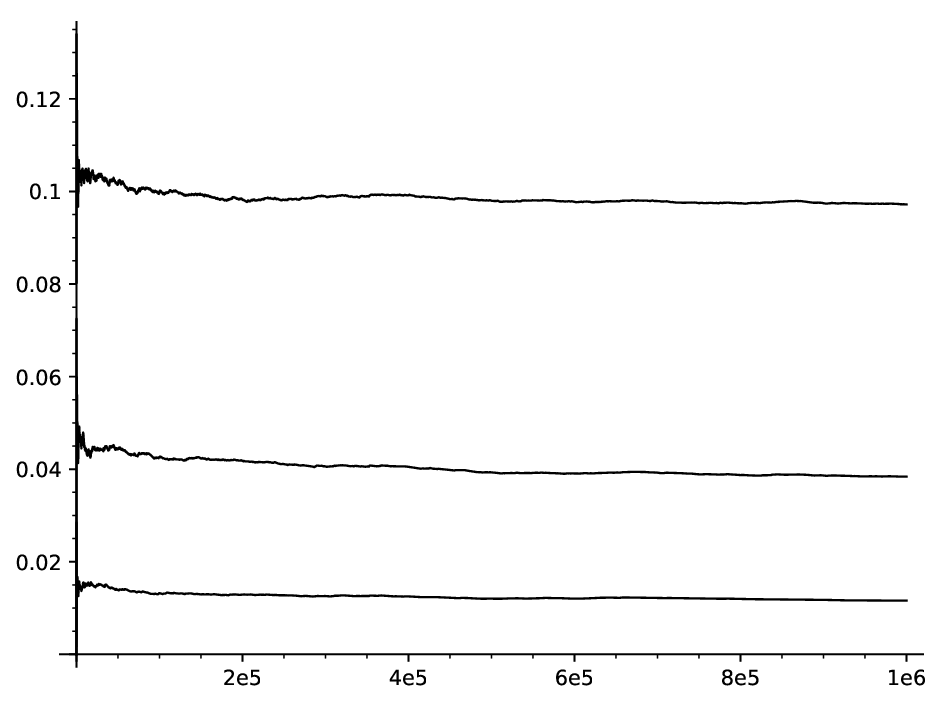}
\caption{19a1: $n_{6,E}^+(X;L)/X^{1/2}\log^2(X)$} \label{fig:19_6_even_acc_A}
\end{subfigure}\hspace*{\fill}
\begin{subfigure}[b]{0.4\linewidth}
\includegraphics[width=\linewidth]{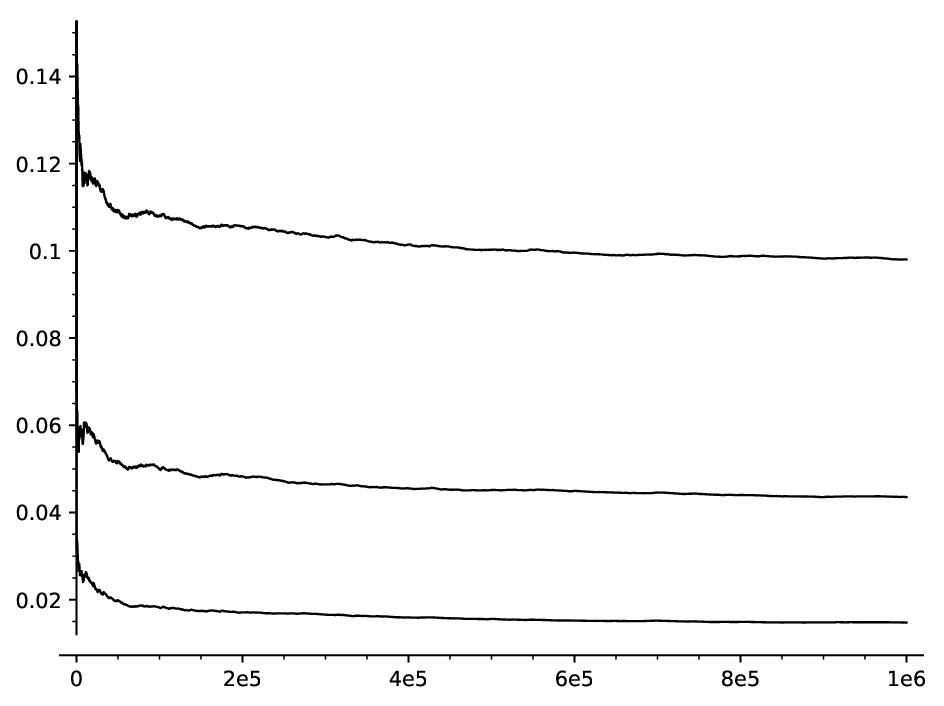}
\caption{19a1: $n_{6,E}^-(X;L)/X^{1/2}\log^2(X)$} \label{fig:19_6_odd_acc_A}
\end{subfigure}
\hspace*{-.7cm}
\begin{subfigure}[b]{0.4\linewidth}
\includegraphics[width=\linewidth]{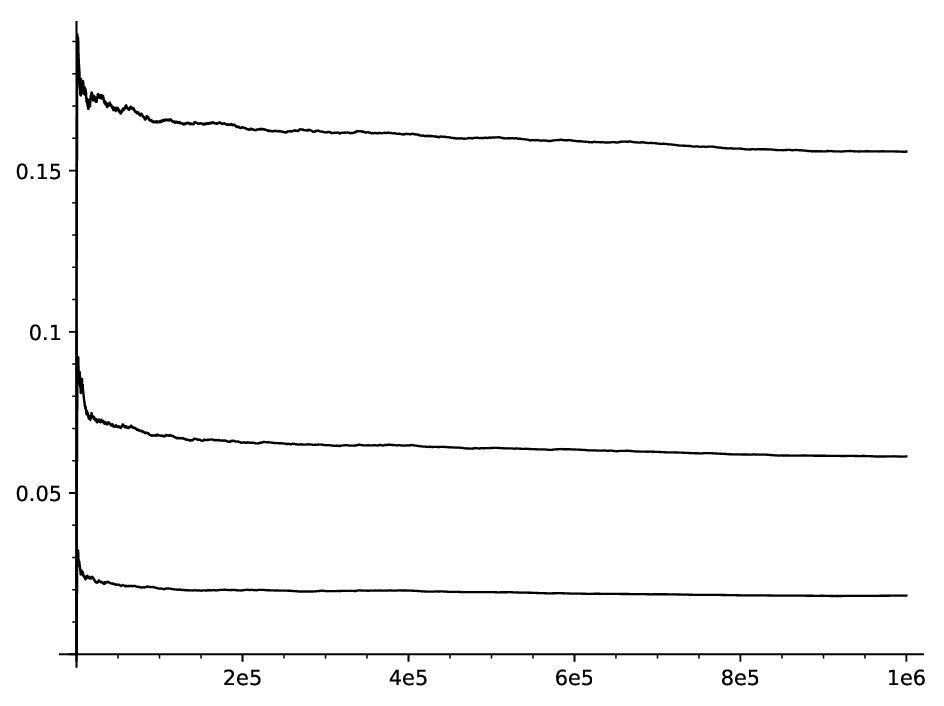}
\caption{37b1: $n_{6,E}^+(X;L)/X^{1/2}\log^2(X)$} \label{fig:37_6_even_acc_A}
\end{subfigure}\hspace*{\fill}
\begin{subfigure}[b]{0.4\linewidth}
\includegraphics[width=\linewidth]{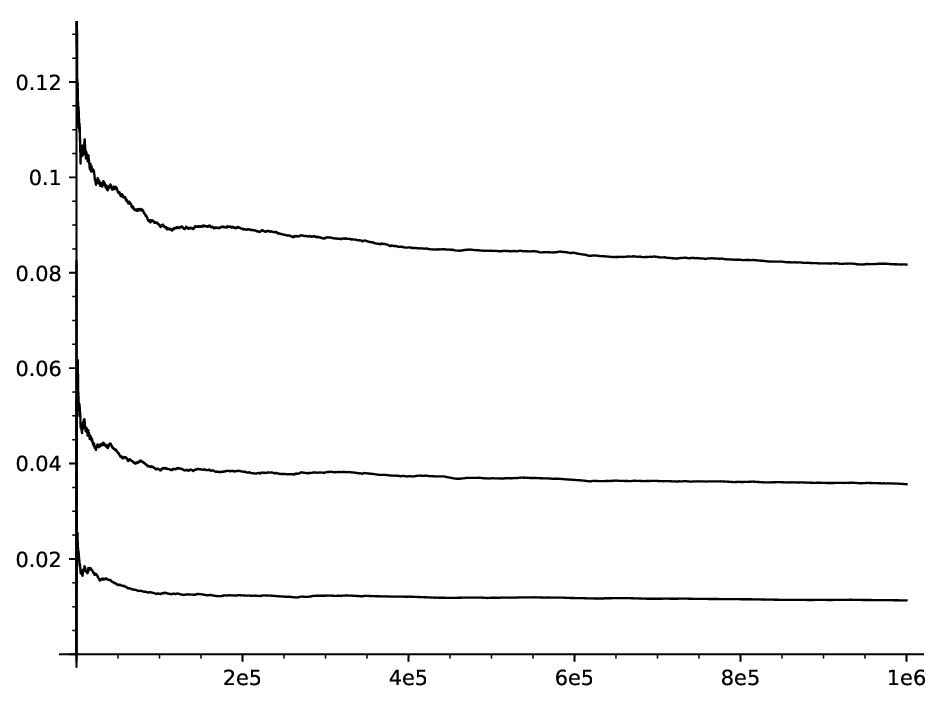}
\caption{37b1: $n_{6,E}^-(X;L)/X^{1/2}\log^2(X)$} \label{fig:37_6_odd_acc_A}
\end{subfigure}
\caption{17a1, 19a1, 37b1: Ratio~\eqref{ratio_N_pm} $n_{6,E}^\pm(X;L)/X^{1/2}\log^2(X)$ depending on $\chi(-1) = \pm 1$ for $k = 6$ and $L = $ 1, 2, 3. Note that the larger $L$ the higher its ratio graph is depicted.} \label{fig:6_even_odd_A_acc_17_19_37}
\end{figure}

\clearpage

\begin{figure}[t!] % "[t!]" placement specifier just for this example
\hspace*{-.7cm}
\begin{subfigure}[b]{0.4\linewidth}
\includegraphics[width=\linewidth]{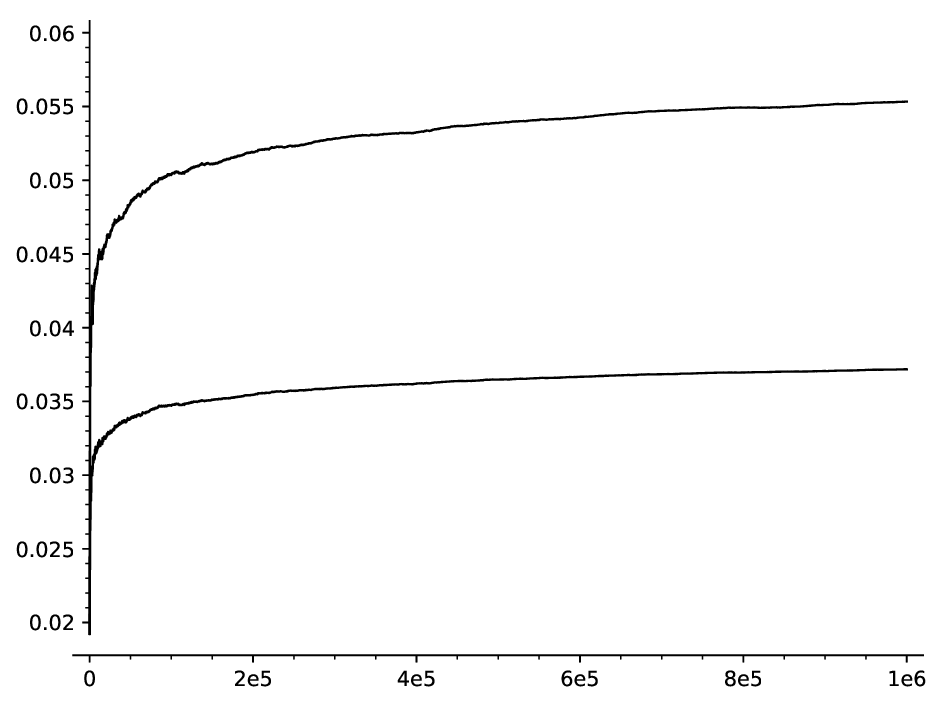}
\caption{11a1:  $m_{6,E}^+(X;c)/X^{c +1/2}\log^2(X)$ Top to bottom $c =$ 0.3, 0.4} \label{fig:11_6_even_acc_c}
\end{subfigure}\hspace*{\fill}
\begin{subfigure}[b]{0.4\linewidth}
\includegraphics[width=\linewidth]{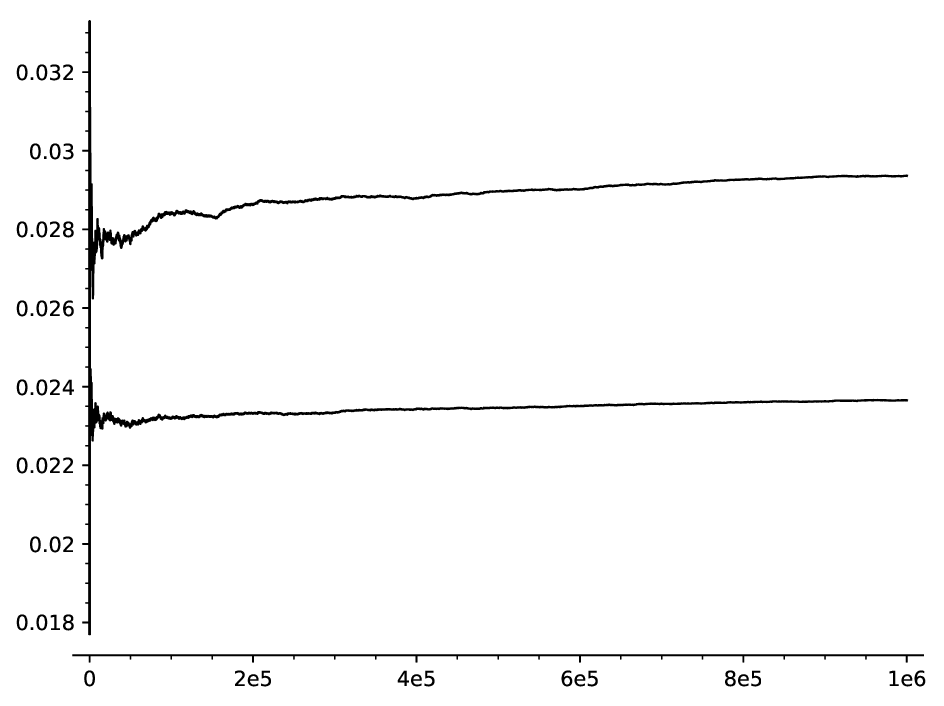}
\caption{11a1: $m_{6,E}^-(X;c)/X^{c +1/2}\log^2(X)$ Top to bottom $c =$ 0.3, 0.4} \label{fig:11_6_odd_acc_c}
\end{subfigure}
\hspace*{-.7cm}
\begin{subfigure}[b]{0.4\linewidth}
\includegraphics[width=\linewidth]{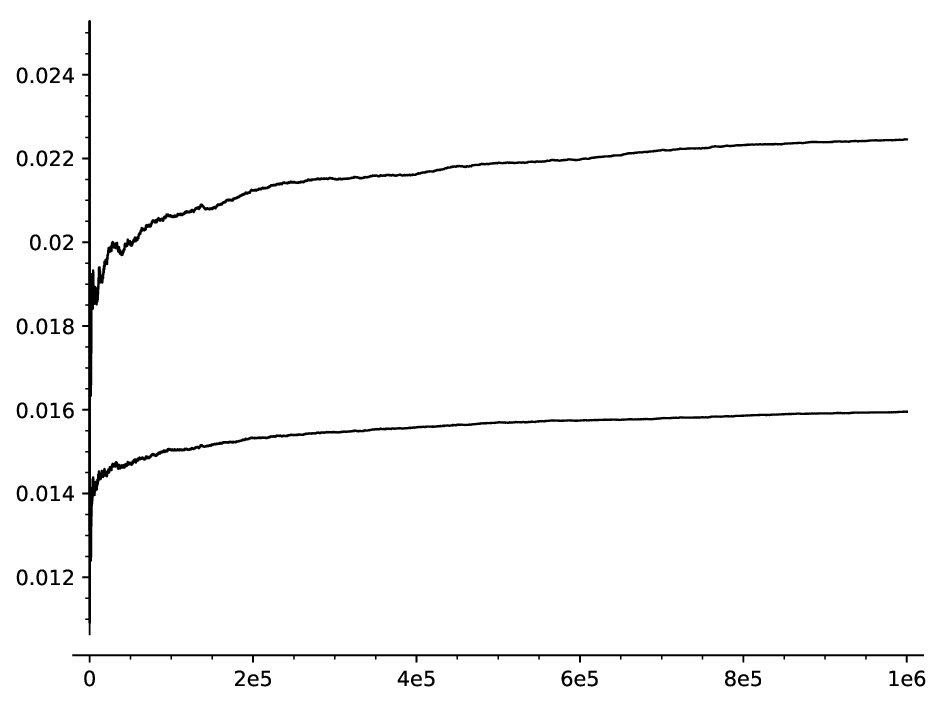}
\caption{14a1: $m_{6,E}^+(X;c)/X^{c +1/2}\log^2(X)$ Top to bottom $c =$ 0.3, 0.4} \label{fig:14_6_even_acc_c}
\end{subfigure}\hspace*{\fill}
\begin{subfigure}[b]{0.4\linewidth}
\includegraphics[width=\linewidth]{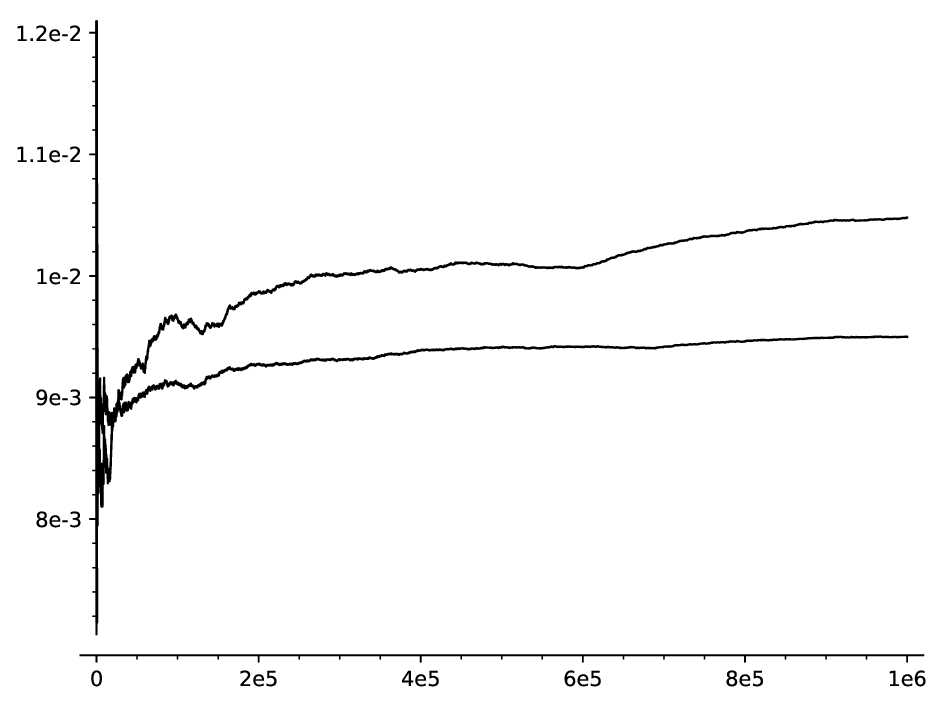}
\caption{14a1: $m_{6,E}^-(X;c)/X^{c +1/2}\log^2(X)$ Top to bottom $c =$ 0.3, 0.4} \label{fig:14_6_odd_acc_c}
\end{subfigure}
\hspace*{-.7cm}
\begin{subfigure}[b]{0.4\linewidth}
\includegraphics[width=\linewidth]{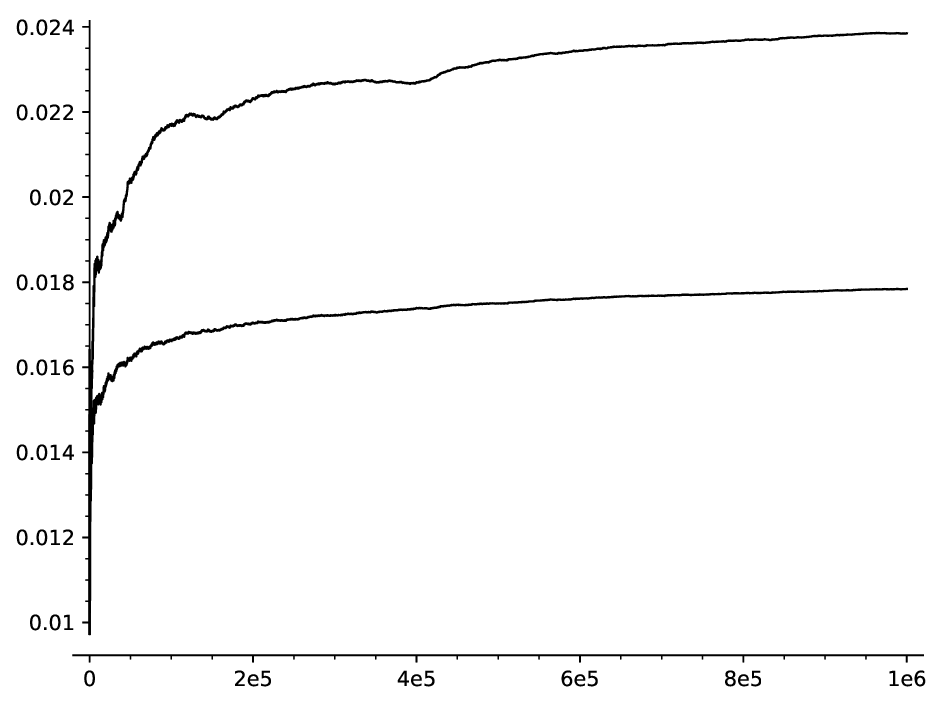}
\caption{15a1: $m_{6,E}^+(X;c)/X^{c +1/2}\log^2(X)$ Top to bottom $c =$ 0.3, 0.4} \label{fig:15_6_even_acc_c}
\end{subfigure}\hspace*{\fill}
\begin{subfigure}[b]{0.4\linewidth}
\includegraphics[width=\linewidth]{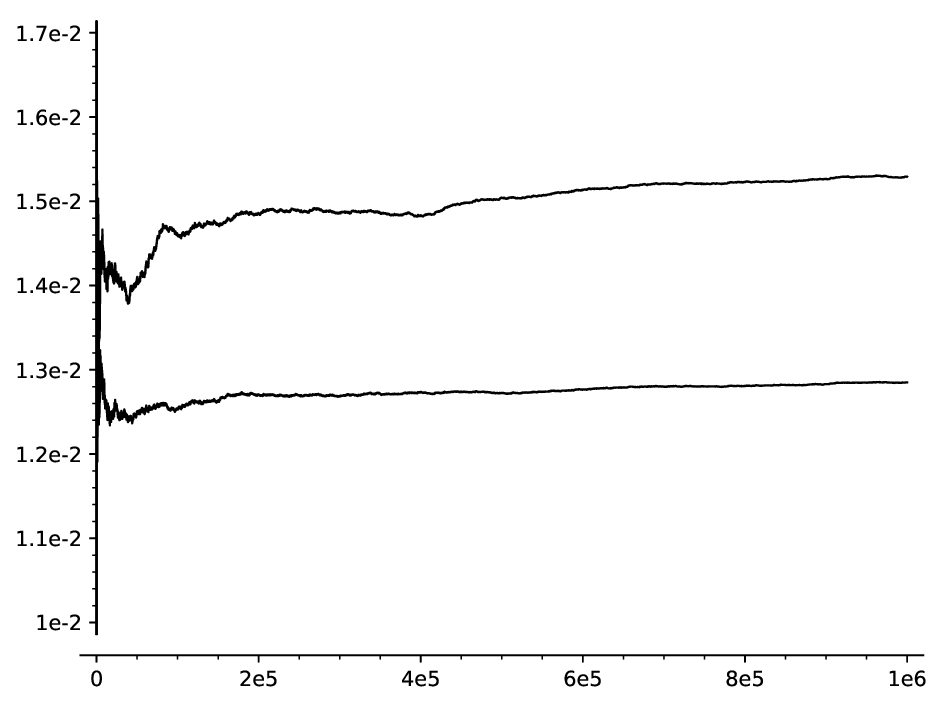}
\caption{15a1: $m_{6,E}^-(X;c)/X^{c +1/2}\log^2(X)$ Top to bottom $c =$ 0.3, 0.4} \label{fig:15_6_odd_acc_c}
\end{subfigure}
\caption{11a1, 14a1, 15a1: Ratio~\eqref{ratio_M_pm} $m_{6,E}^\pm(X;c)/X^{c +1/2}\log^2(X)$ depending on $\chi(-1) = \pm 1$ for $k = 6$ and $c = $ 0.3, 0.4 } \label{fig:c_11_14_15_pm_acc_6}
\end{figure}
\clearpage

\begin{figure}[t!] % "[t!]" placement specifier just for this example
\hspace*{-.7cm}
\begin{subfigure}[b]{0.4\linewidth}
\includegraphics[width=\linewidth]{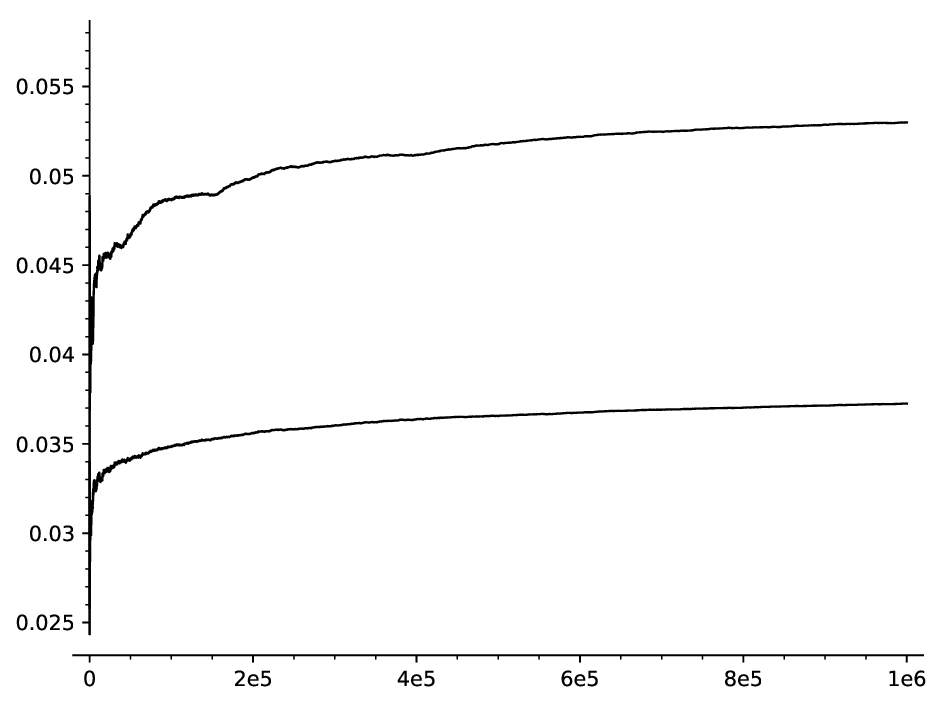}
\caption{17a1:  $m_{6,E}^+(X;c)/X^{c +1/2}\log^2(X)$ Top to bottom $c =$ 0.3, 0.4} \label{fig:17_6_even_acc_c}
\end{subfigure}\hspace*{\fill}
\begin{subfigure}[b]{0.4\linewidth}
\includegraphics[width=\linewidth]{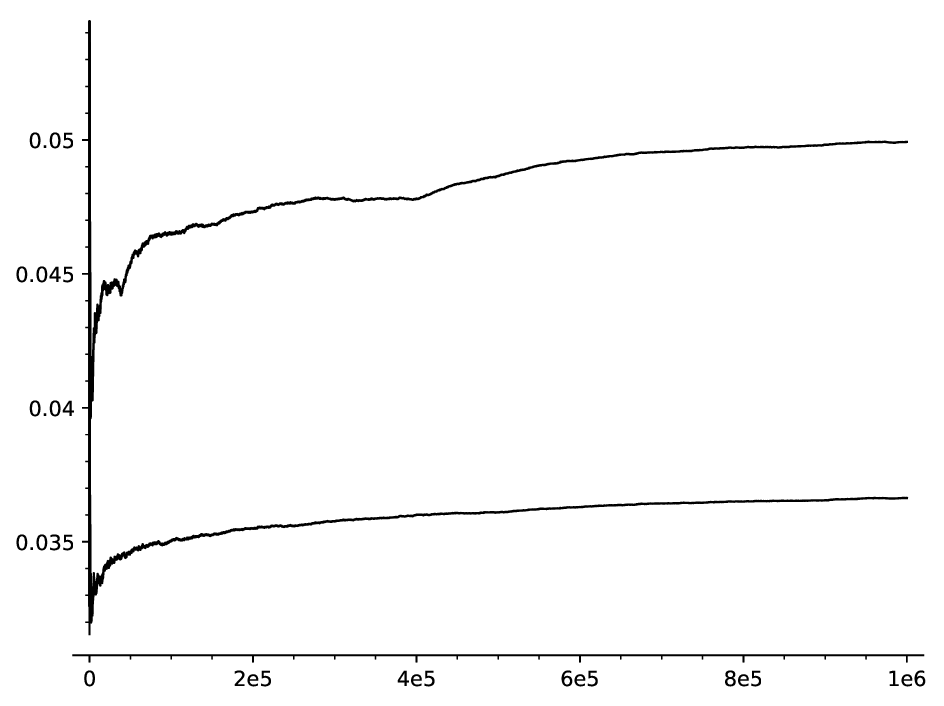}
\caption{17a1: $m_{6,E}^-(X;c)/X^{c +1/2}\log^2(X)$ Top to bottom $c =$ 0.3, 0.4} \label{fig:17_6_odd_acc_c}
\end{subfigure}
\hspace*{-.7cm}
\begin{subfigure}[b]{0.4\linewidth}
\includegraphics[width=\linewidth]{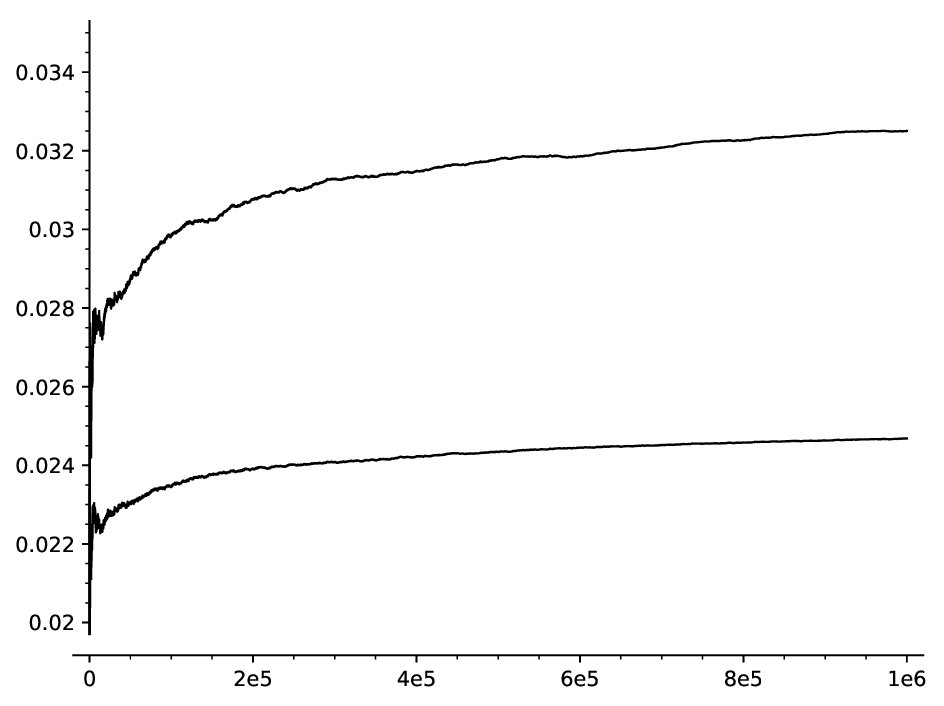}
\caption{19a1: $m_{6,E}^+(X;c)/X^{c +1/2}\log^2(X)$ Top to bottom $c =$ 0.3, 0.4} \label{fig:19_6_even_acc_c}
\end{subfigure}\hspace*{\fill}
\begin{subfigure}[b]{0.4\linewidth}
\includegraphics[width=\linewidth]{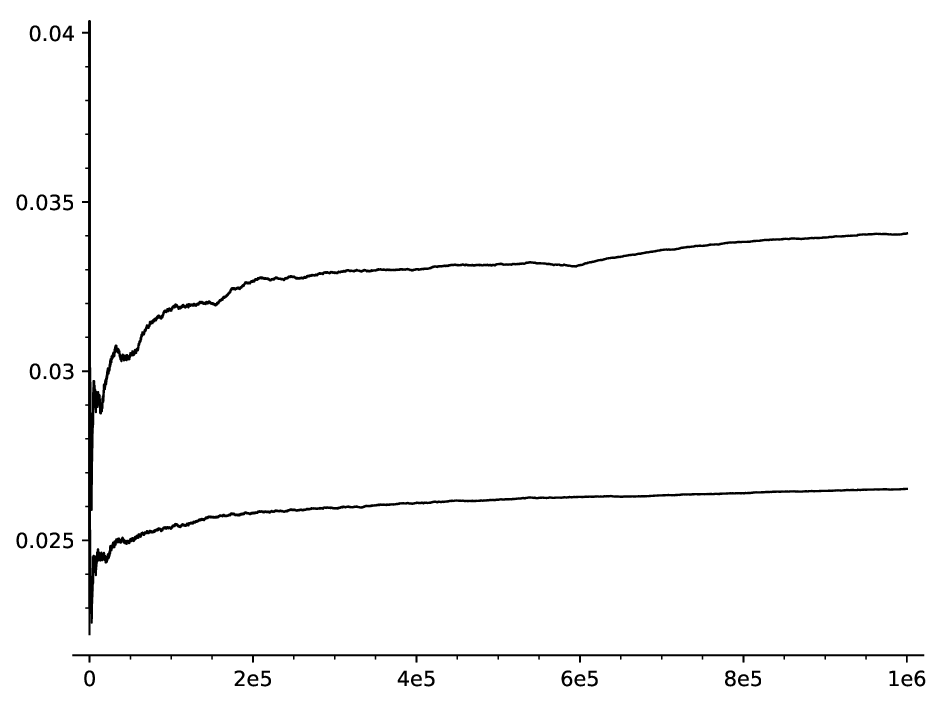}
\caption{19a1: $m_{6,E}^-(X;c)/X^{c +1/2}\log^2(X)$ Top to bottom $c =$ 0.3, 0.4} \label{fig:19_6_odd_acc_c}
\end{subfigure}
\hspace*{-.7cm}
\begin{subfigure}[b]{0.4\linewidth}
\includegraphics[width=\linewidth]{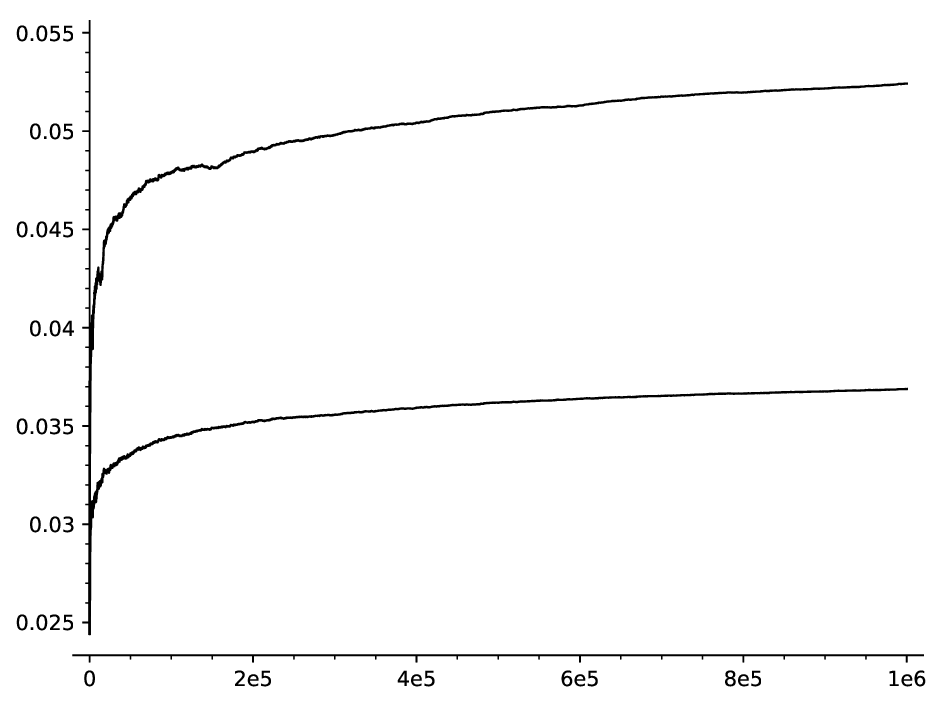}
\caption{37b1: $m_{6,E}^+(X;c)/X^{c +1/2}\log^2(X)$ Top to bottom $c =$ 0.3, 0.4} \label{fig:37_6_even_acc_c}
\end{subfigure}\hspace*{\fill}
\begin{subfigure}[b]{0.4\linewidth}
\includegraphics[width=\linewidth]{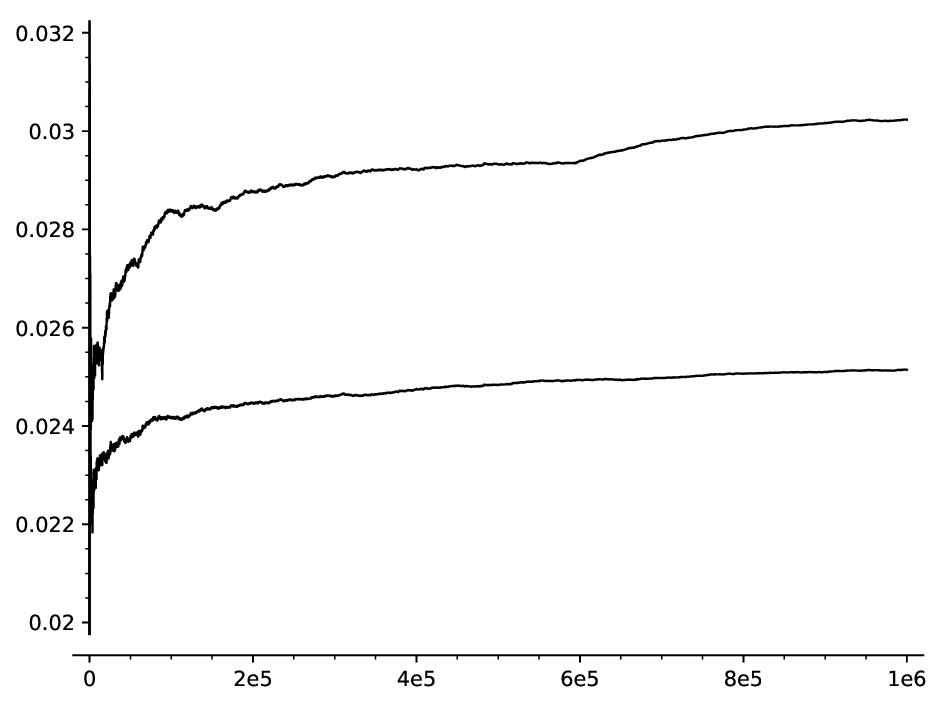}
\caption{37b1: $m_{6,E}^-(X;c)/X^{c +1/2}\log^2(X)$ Top to bottom $c =$ 0.3, 0.4} \label{fig:37_6_odd_acc_c}
\end{subfigure}
\caption{17a1, 19a1, 37b1: Ratio~\eqref{ratio_M_pm} $m_{6,E}^\pm(X;c)/X^{c +1/2}\log^2(X)$ depending on $\chi(-1) = \pm 1$ for $k = 6$ and $c = $ 0.3, 0.4} \label{fig:c_17_19_37_pm_acc_6}
\end{figure}

\clearpage

\begin{figure}[t] % "[t!]" placement specifier just for this example
\hspace*{-2.3cm}
\begin{subfigure}[b]{0.4\linewidth}
\includegraphics[width=\linewidth]{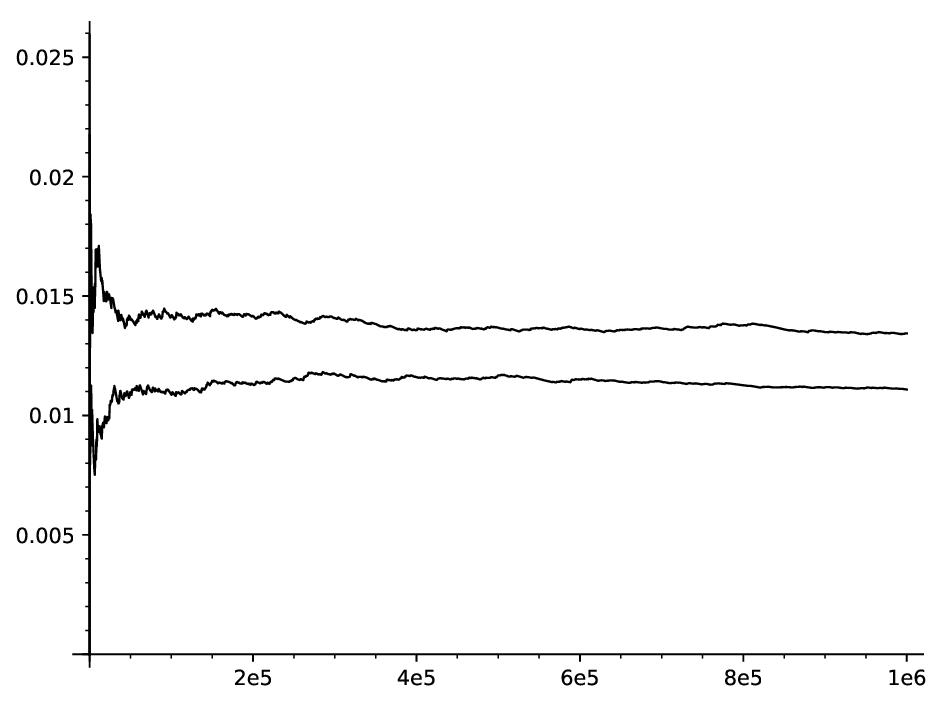}
\caption{$|l| = 1$: Top -1 bottom 1} \label{fig:11_6_even_A_1}
\end{subfigure}\hspace*{\fill}
\begin{subfigure}[b]{0.4\linewidth}
\includegraphics[width=\linewidth]{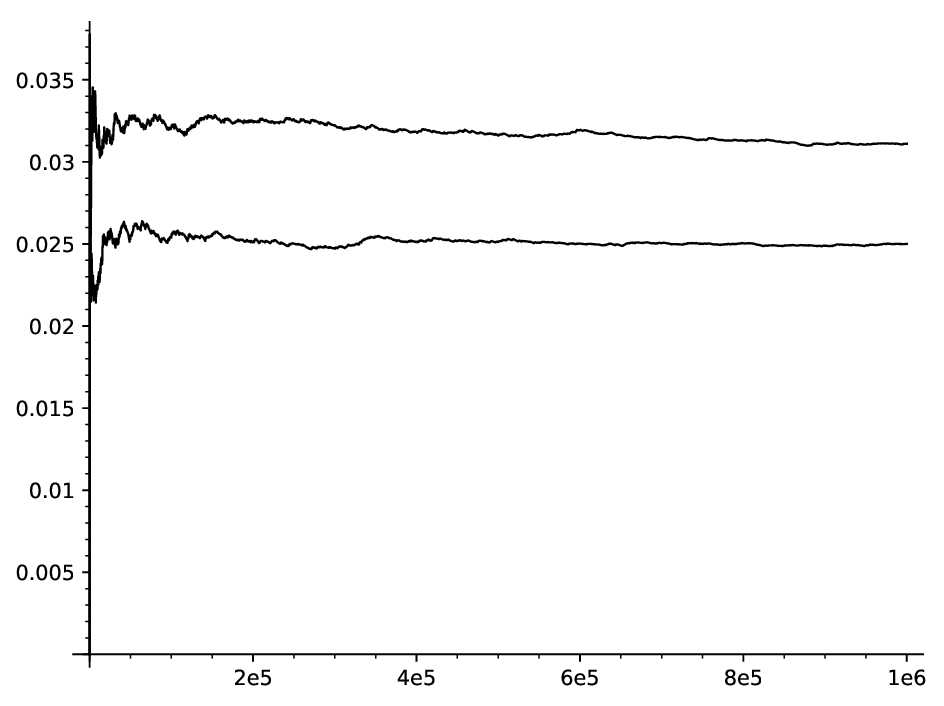}
\caption{$|l| = 2$: Top 2 bottom -2} \label{fig:11_6_even_A_2}
\end{subfigure}\hspace*{\fill}
\begin{subfigure}[b]{0.4\linewidth}
\includegraphics[width=\linewidth]{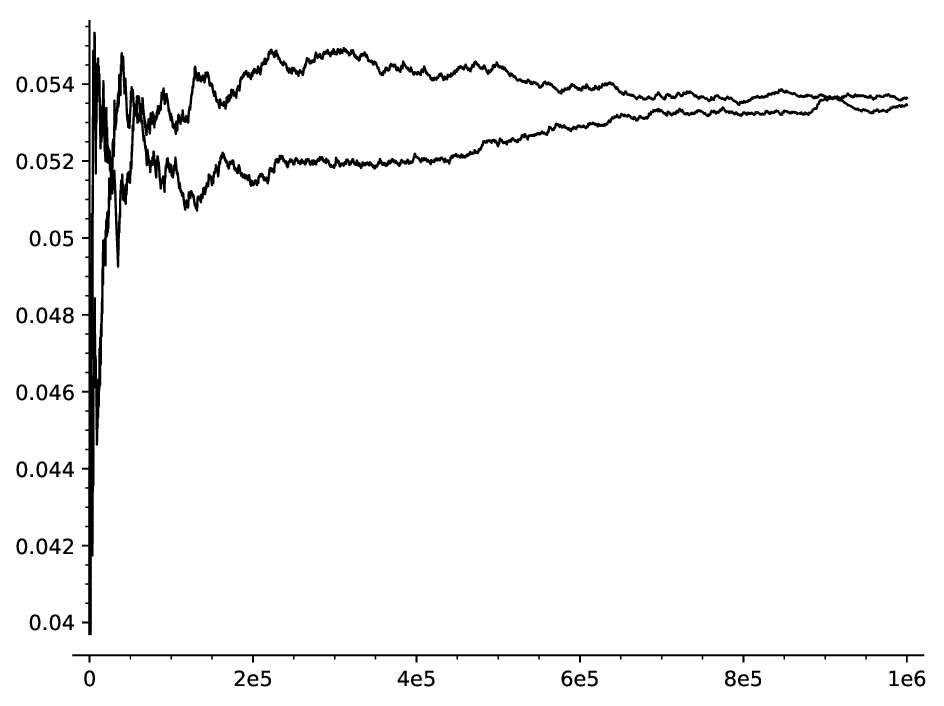}
\caption{$|l| = 3$: Top -3 bottom 3} \label{fig:11_6_even_A_3}
\end{subfigure}
\hspace*{-2.3cm}
\begin{subfigure}[b]{0.4\linewidth}
\includegraphics[width=\linewidth]{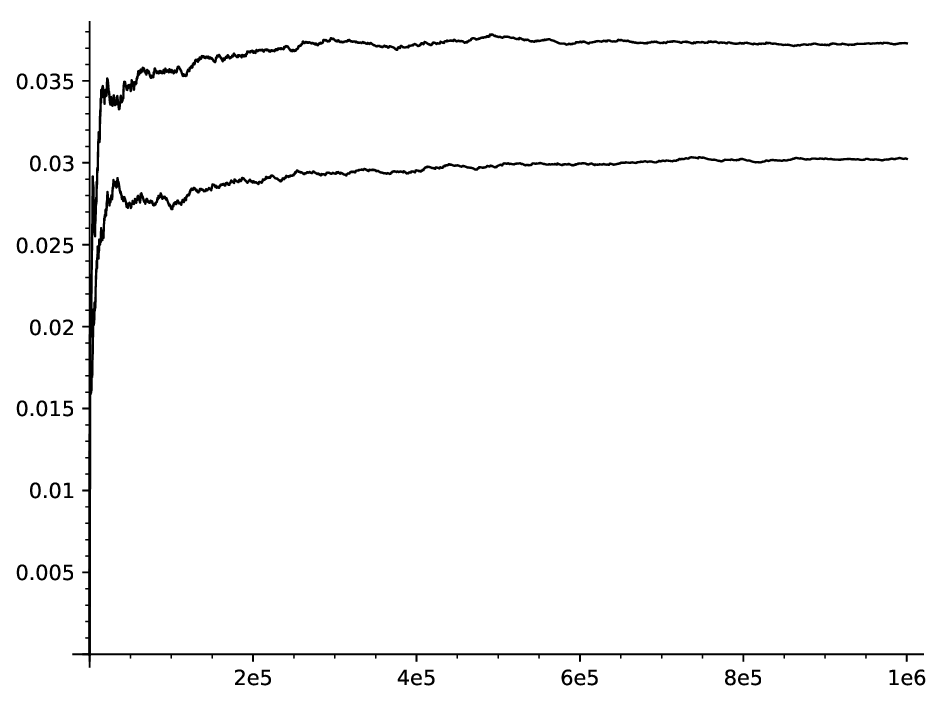}
\caption{$|l| = 4$: Top -4 bottom 4} \label{fig:11_6_even_A_4}
\end{subfigure}\hspace*{\fill}
\begin{subfigure}[b]{0.4\linewidth}
\includegraphics[width=\linewidth]{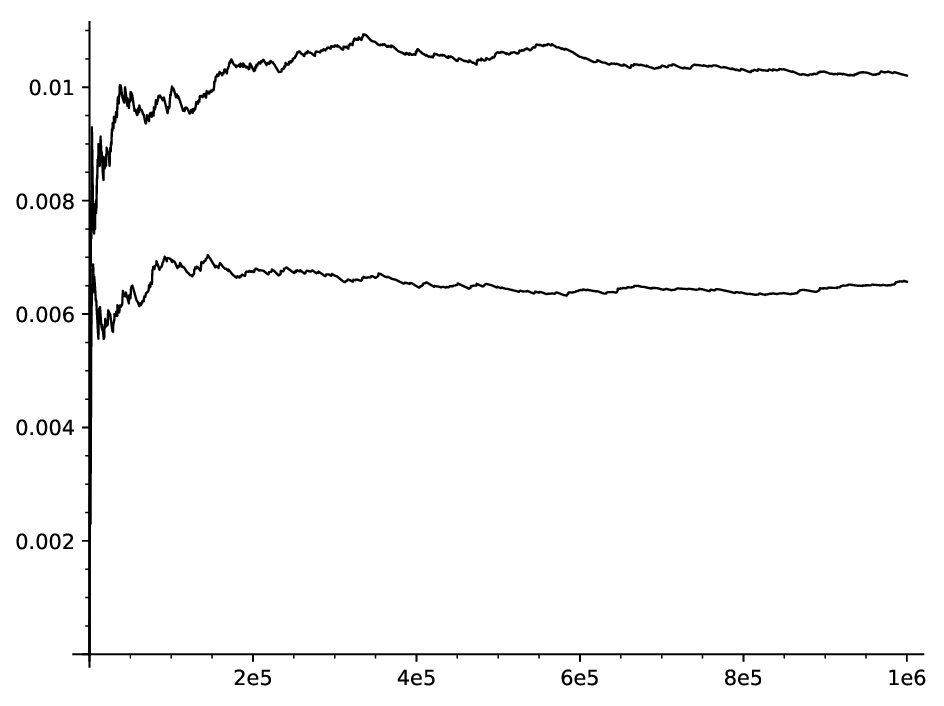}
\caption{$|l| = 5$: Top 5 bottom -5} \label{fig:11_6_even_A_5}
\end{subfigure}\hspace*{\fill}
\begin{subfigure}[b]{0.4\linewidth}
\includegraphics[width=\linewidth]{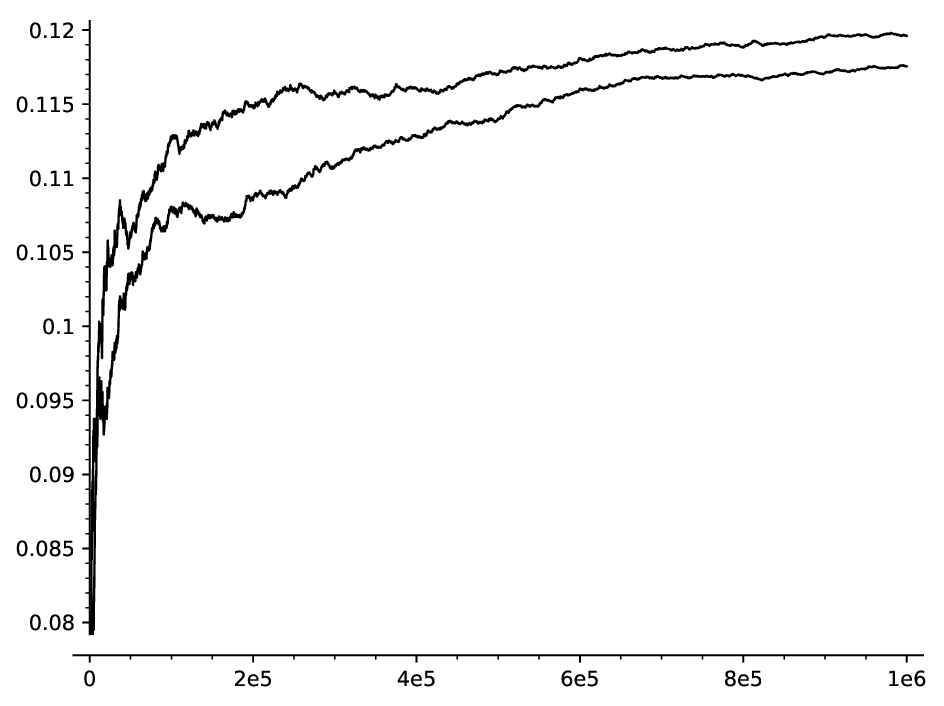}
\caption{$|l| = 6$: Top 6 bottom -6} \label{fig:11_6_even_A_6}
\end{subfigure}
\hspace*{-2.3cm}
\begin{subfigure}[b]{0.4\linewidth}
\includegraphics[width=\linewidth]{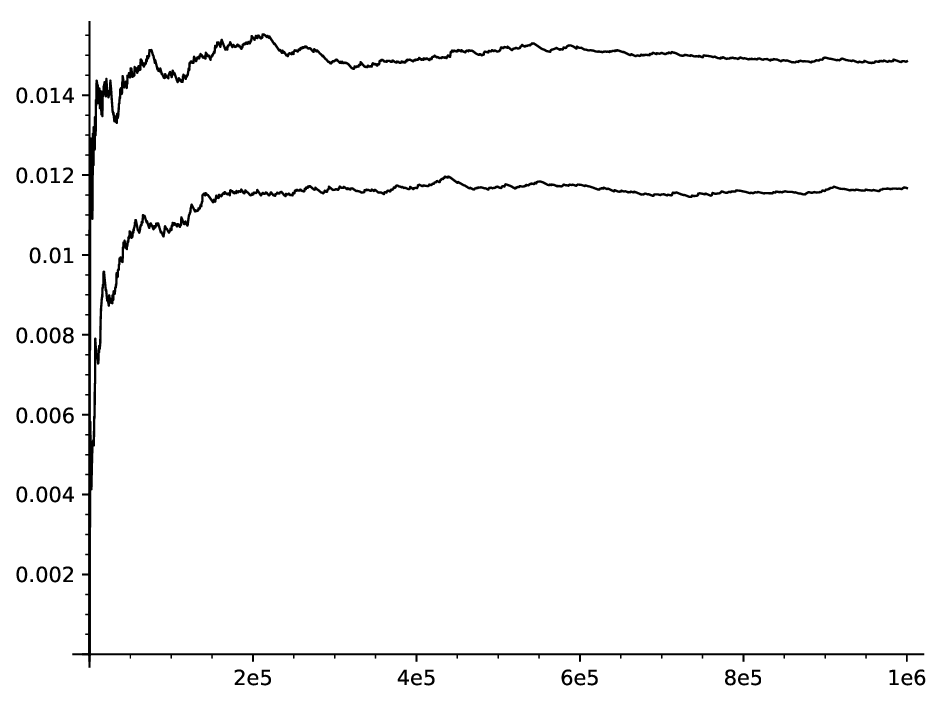}
\caption{$|l| = 7$: Top -7 bottom 7} \label{fig:11_6_even_A_7}
\end{subfigure}\hspace*{\fill}
\begin{subfigure}[b]{0.4\linewidth}
\includegraphics[width=\linewidth]{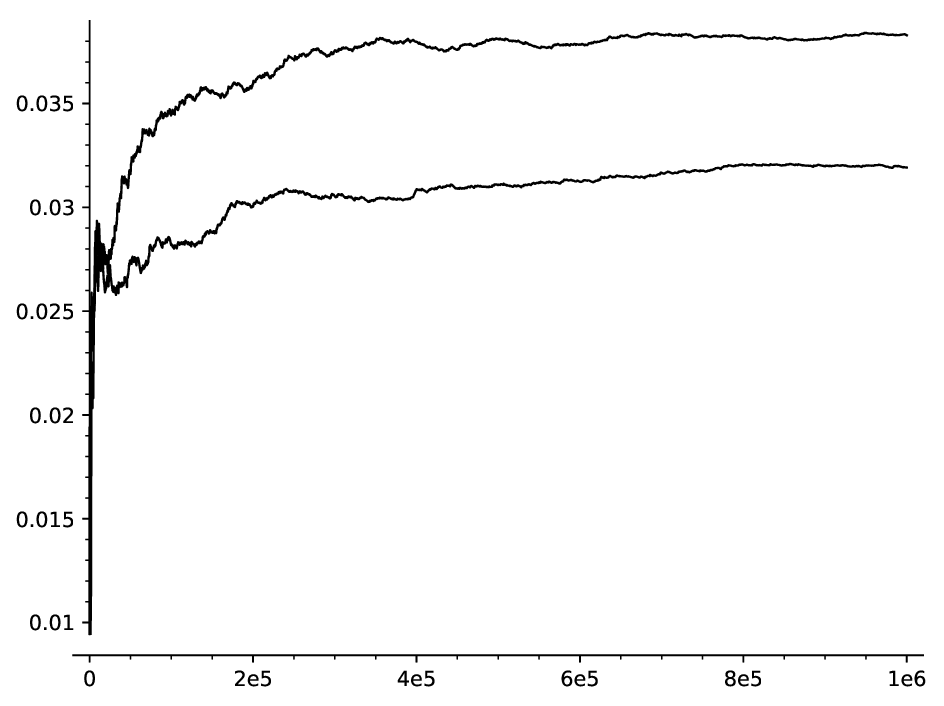}
\caption{$|l| = 8$: Top 8 bottom -8} \label{fig:11_6_even_A_8}
\end{subfigure}\hspace*{\fill}
\begin{subfigure}[b]{0.4\linewidth}
\includegraphics[width=\linewidth]{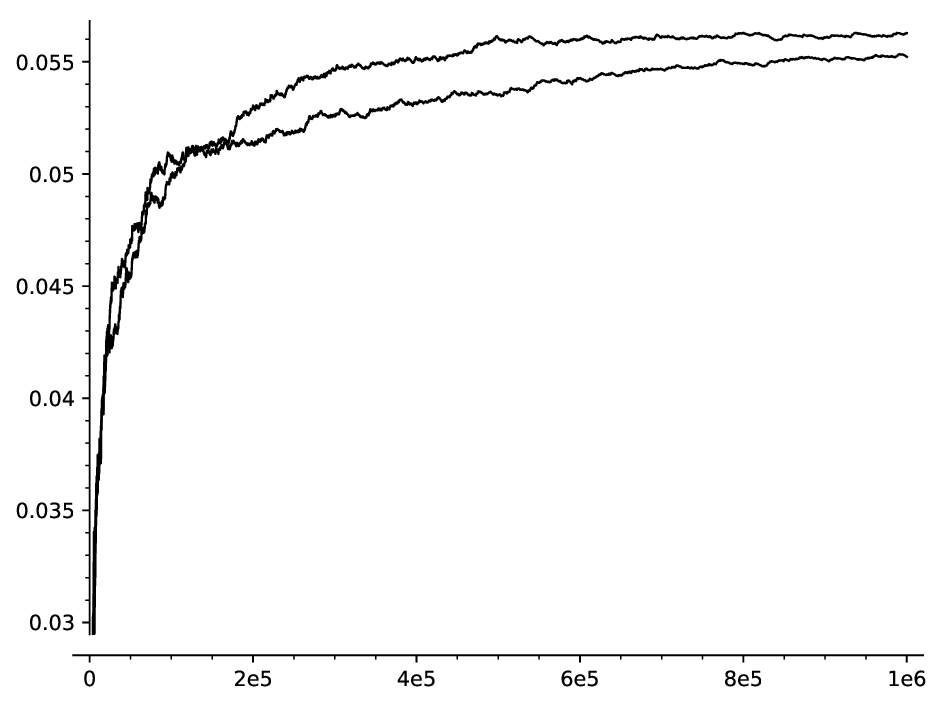}
\caption{$|l| = 9$: Top 9 bottom -9} \label{fig:11_6_even_A_9}
\end{subfigure}
\caption{11a1: Ratio~\eqref{ratio_n_pm} $x_{6,E}^+(X;l)/X^{1/2}\log^2(X)$} \label{fig:11a1_6_even_A_exact}
\end{figure}

\begin{figure}[b] % "[t!]" placement specifier just for this example
\hspace*{-2.3cm}
\begin{subfigure}[b]{0.4\linewidth}
\includegraphics[width=\linewidth]{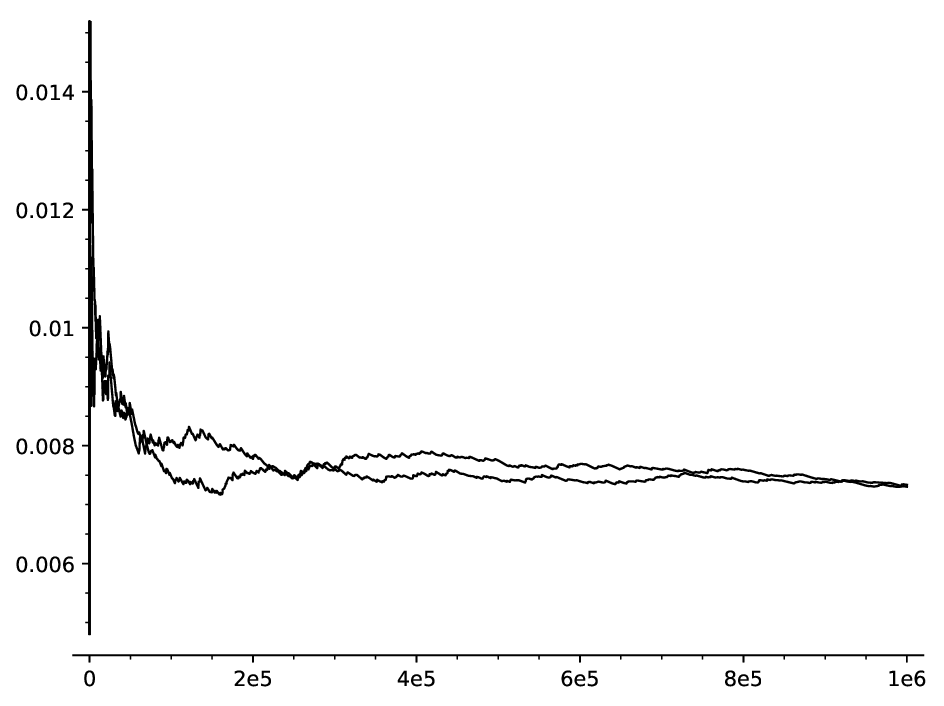}
\caption{$|l| = 1$: Top -1 bottom 1} \label{fig:11_6_odd_A_1}
\end{subfigure}\hspace*{\fill}
\begin{subfigure}[b]{0.4\linewidth}
\includegraphics[width=\linewidth]{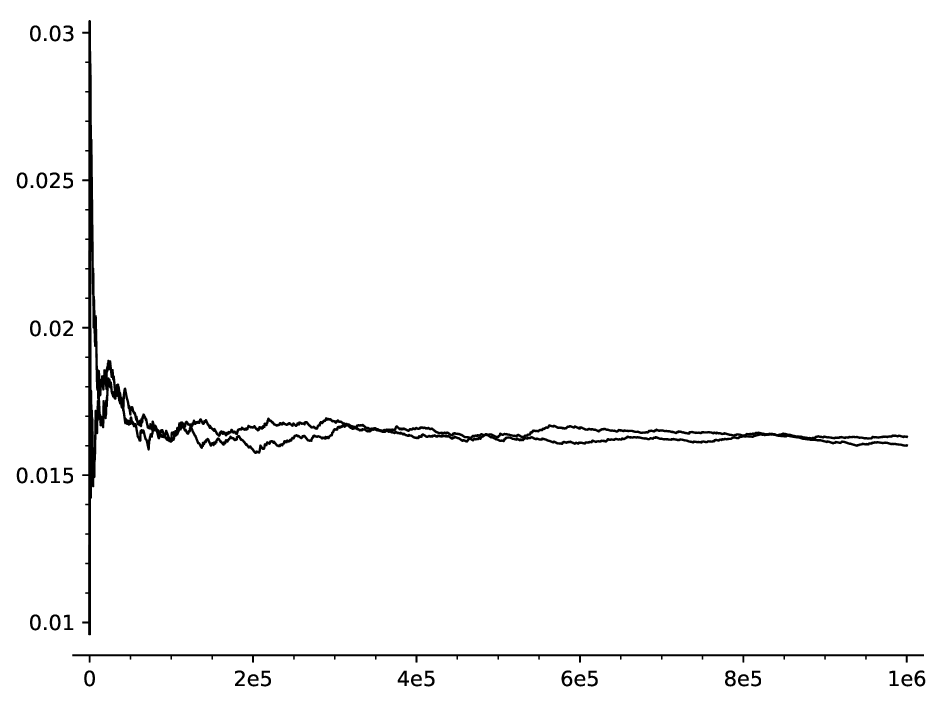}
\caption{$|l| = 2$: Top 2 bottom -2} \label{fig:11_6_odd_A_2}
\end{subfigure}\hspace*{\fill}
\begin{subfigure}[b]{0.4\linewidth}
\includegraphics[width=\linewidth]{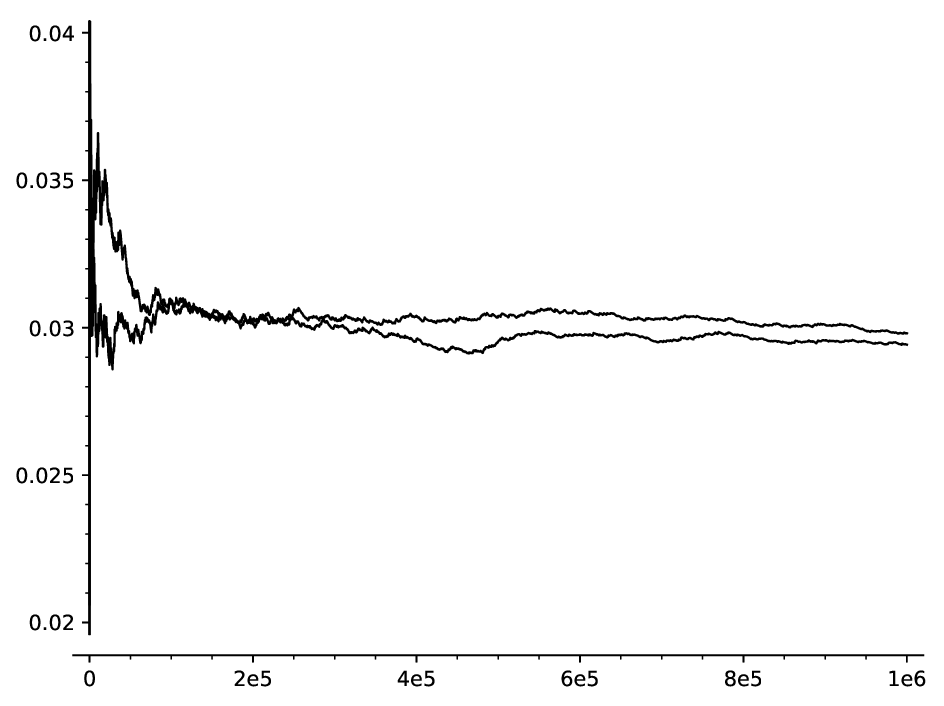}
\caption{$|l| = 3$: Top -3 bottom 3} \label{fig:11_6_odd_A_3}
\end{subfigure}
\hspace*{-2.3cm}
\begin{subfigure}[b]{0.4\linewidth}
\includegraphics[width=\linewidth]{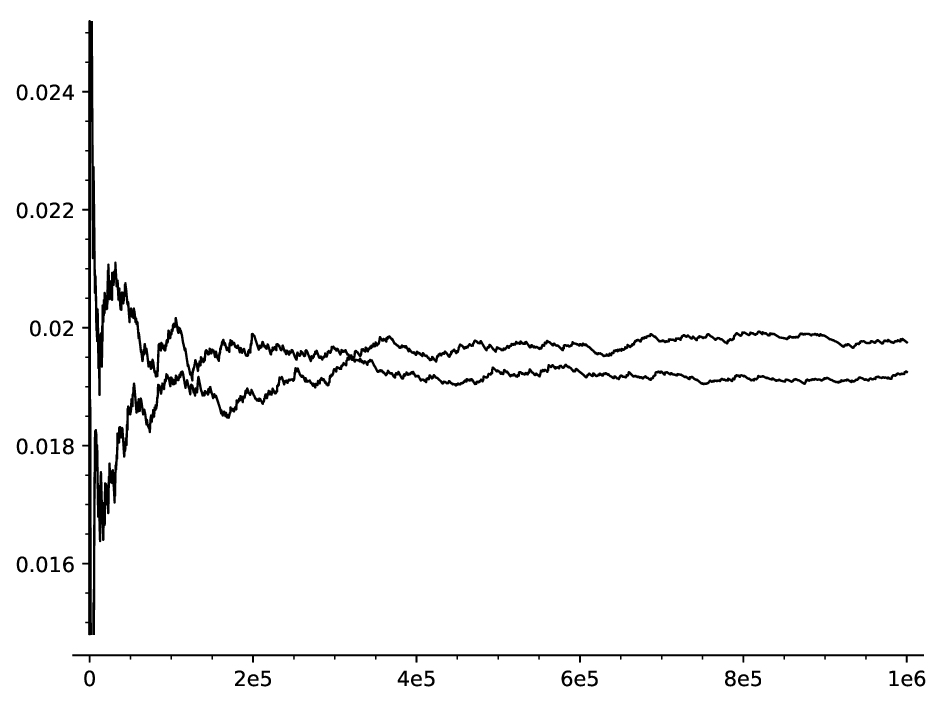}
\caption{$|l| = 4$: Top -4 bottom 4} \label{fig:11_6_odd_A_4}
\end{subfigure}\hspace*{\fill}
\begin{subfigure}[b]{0.4\linewidth}
\caption{$|l| = 5$: Not occur} \label{fig:11_6_odd_A_5}
\end{subfigure}\hspace*{\fill}
\begin{subfigure}[b]{0.4\linewidth}
\includegraphics[width=\linewidth]{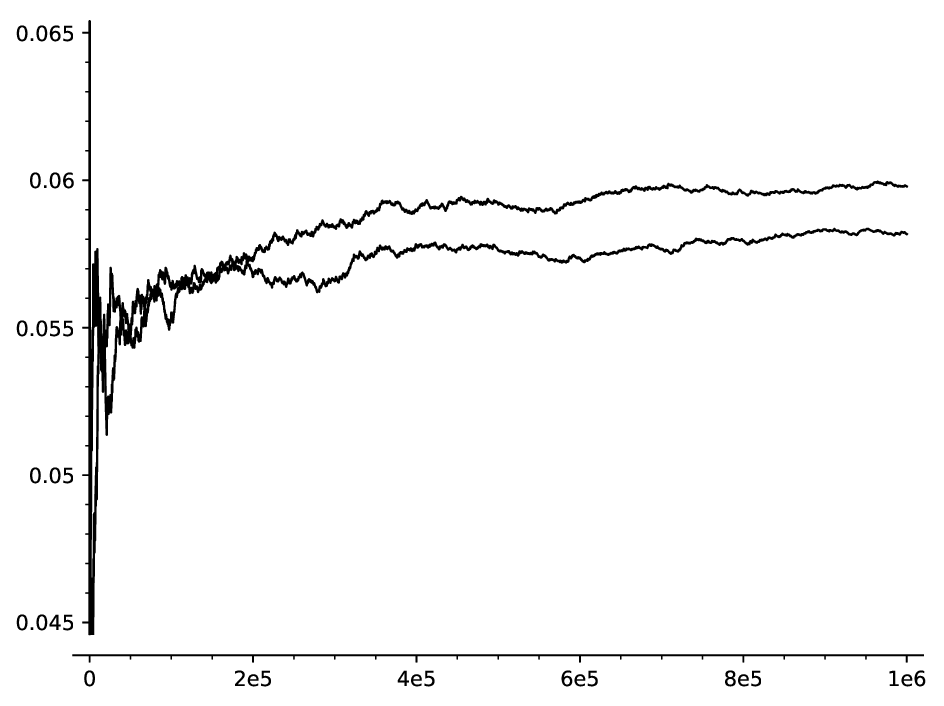}
\caption{$|l| = 6$: Top 6 bottom -6} \label{fig:11_6_odd_A_6}
\end{subfigure}
\hspace*{-2.3cm}
\begin{subfigure}[b]{0.4\linewidth}
\includegraphics[width=\linewidth]{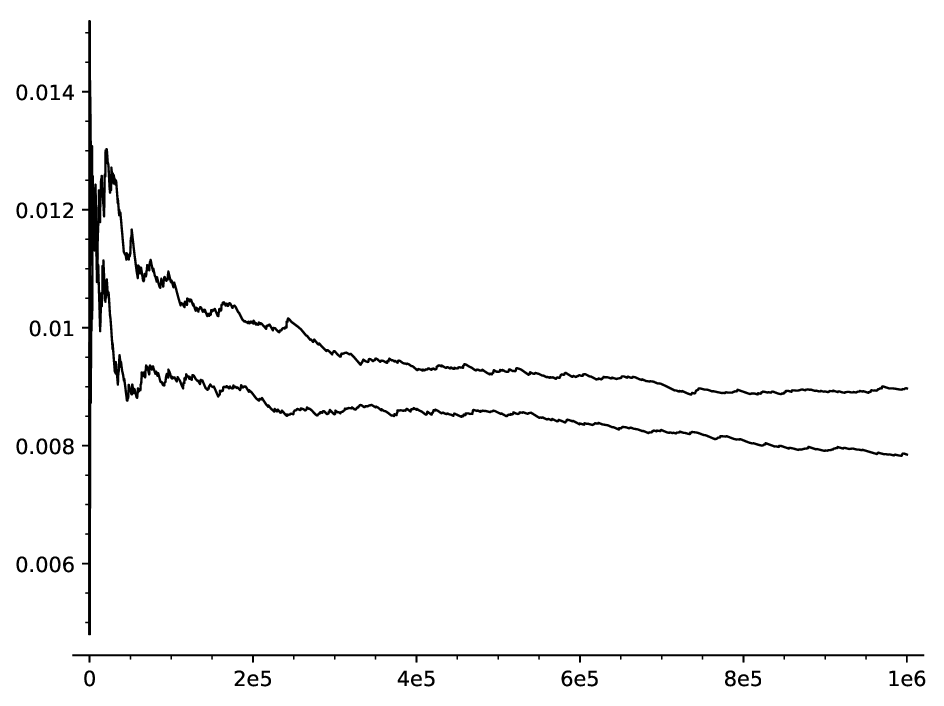}
\caption{$|l| = 7$: Top -7 bottom 7} \label{fig:11_6_odd_A_7}
\end{subfigure}\hspace*{\fill}
\begin{subfigure}[b]{0.4\linewidth}
\includegraphics[width=\linewidth]{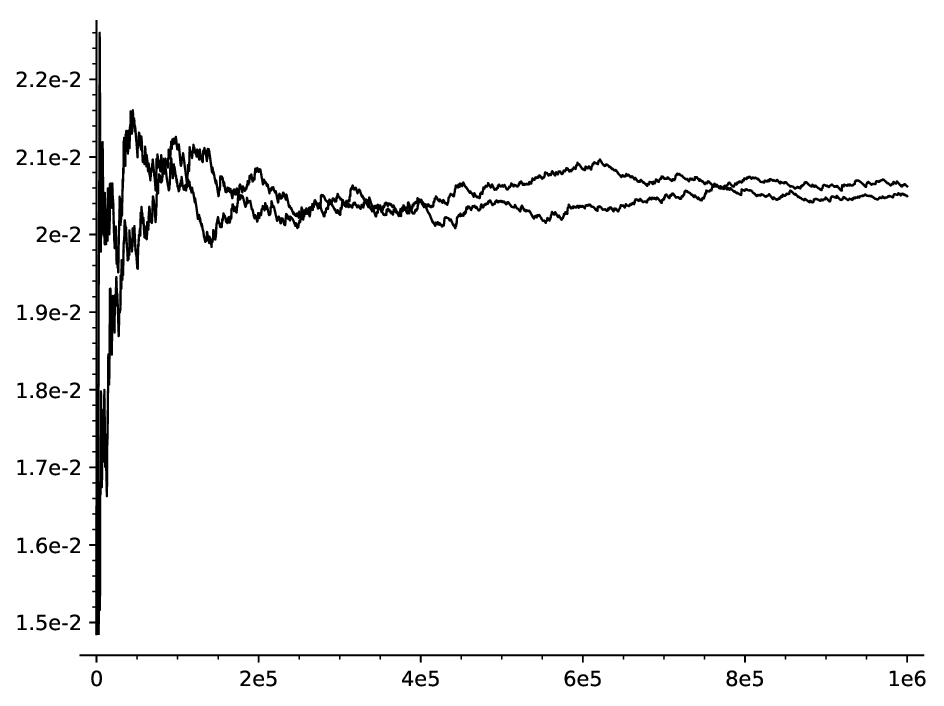}
\caption{$|l| = 8$: Top 8 bottom -8} \label{fig:11_6_od_A_8}
\end{subfigure}\hspace*{\fill}
\begin{subfigure}[b]{0.4\linewidth}
\includegraphics[width=\linewidth]{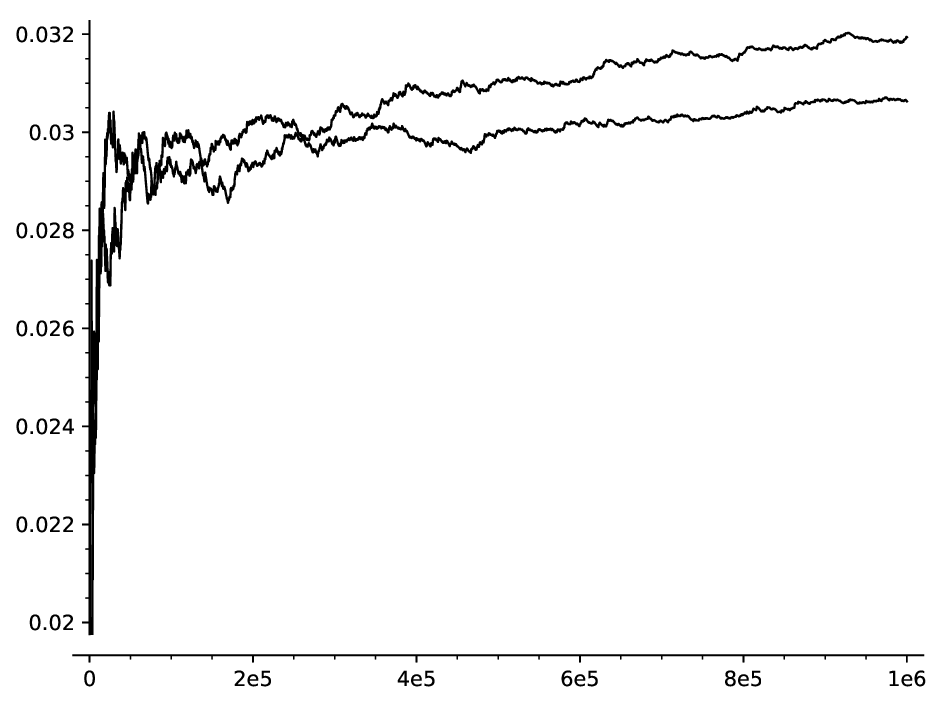}
\caption{$|l| = 9$: Top 9 bottom -9} \label{fig:11_6_odd_A_9}
\end{subfigure}
\caption{11a1: Ratio~\eqref{ratio_n_pm} $x_{6,E}^-(X;l)/X^{1/2}\log^2(X)$} \label{fig:11a1_6_odd_A_exact}
\end{figure}

\clearpage

\begin{figure}[t] % "[t!]" placement specifier just for this example
\hspace*{-2.3cm}
\begin{subfigure}[b]{0.4\linewidth}
\includegraphics[width=\linewidth]{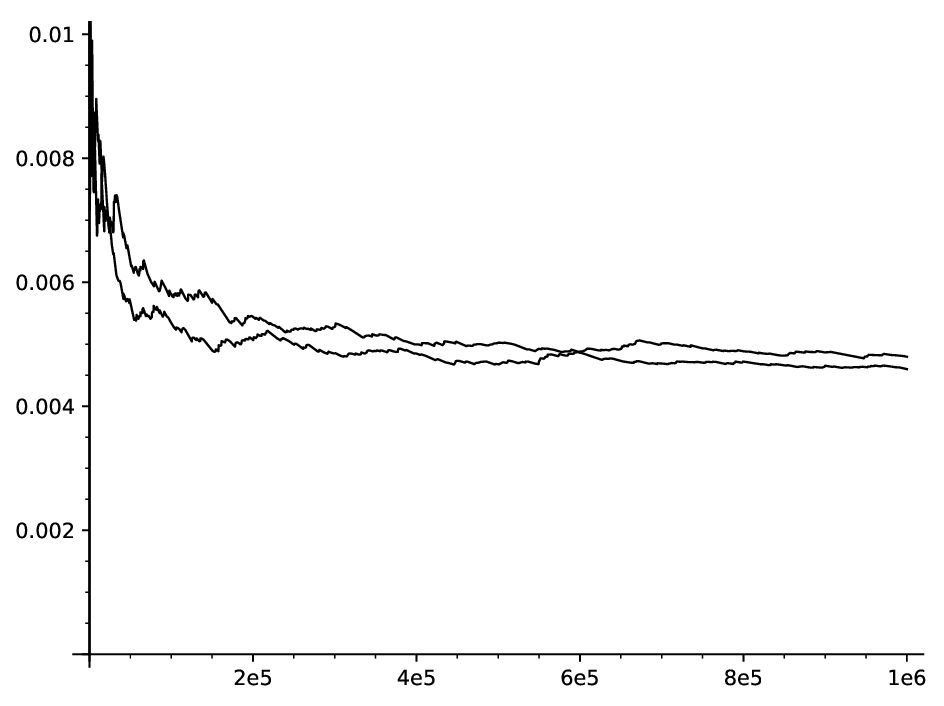}
\caption{$|l| = 1$: Top 1 bottom -1} \label{fig:14_6_even_A_1}
\end{subfigure}\hspace*{\fill}
\begin{subfigure}[b]{0.4\linewidth}
\includegraphics[width=\linewidth]{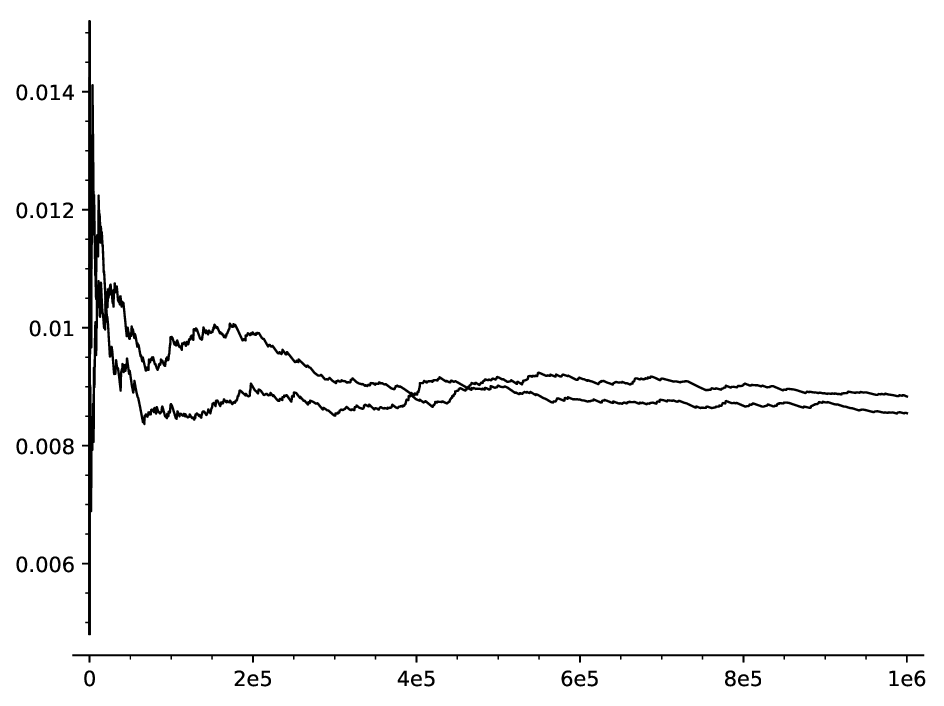}
\caption{$|l| = 2$: Top 2 bottom -2} \label{fig:14_6_even_A_2}
\end{subfigure}\hspace*{\fill}
\begin{subfigure}[b]{0.4\linewidth}
\includegraphics[width=\linewidth]{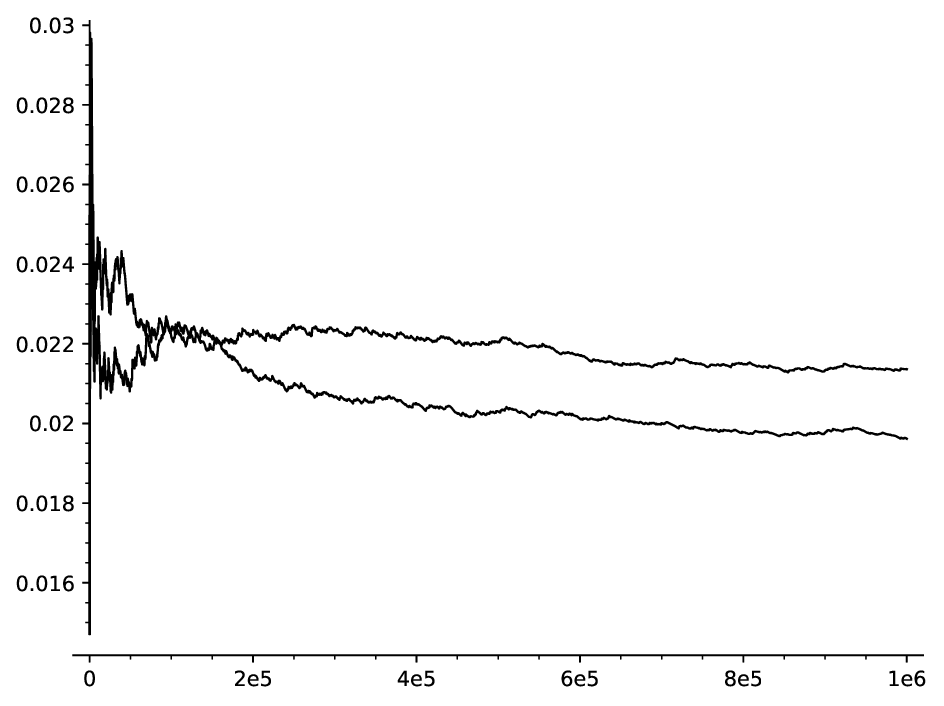}
\caption{$|l| = 3$: Top -3 bottom 3} \label{fig:14_6_even_A_3}
\end{subfigure}
\hspace*{-2.3cm}
\begin{subfigure}[b]{0.4\linewidth}
\includegraphics[width=\linewidth]{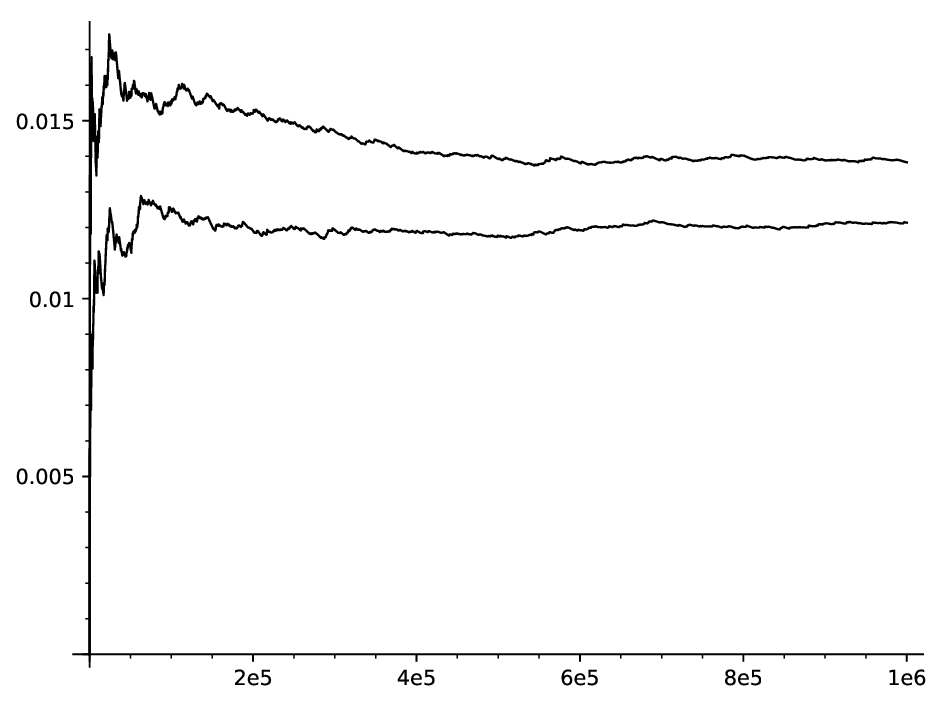}
\caption{$|l| = 4$: Top -4 bottom 4} \label{fig:14_6_even_A_4}
\end{subfigure}\hspace*{\fill}
\begin{subfigure}[b]{0.4\linewidth}
\includegraphics[width=\linewidth]{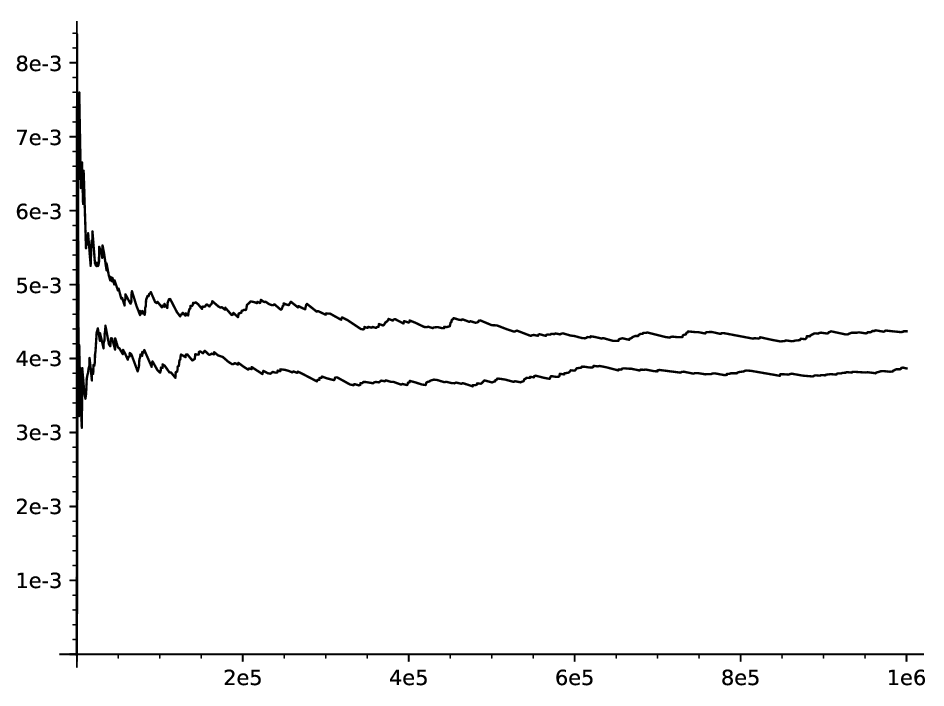}
\caption{$|l| = 5$: Top -5 bottom 5} \label{fig:14_6_even_A_5}
\end{subfigure}\hspace*{\fill}
\begin{subfigure}[b]{0.4\linewidth}
\includegraphics[width=\linewidth]{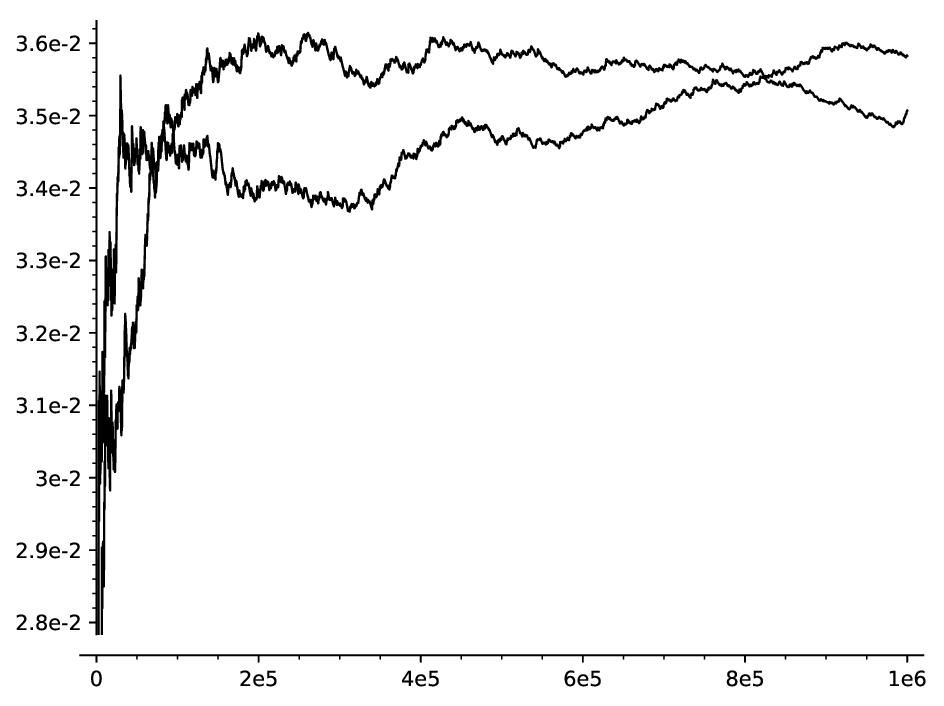}
\caption{$|l| = 6$: Top -6 bottom 6} \label{fig:14_6_even_A_6}
\end{subfigure}
\hspace*{-2.3cm}
\begin{subfigure}[b]{0.4\linewidth}
\includegraphics[width=\linewidth]{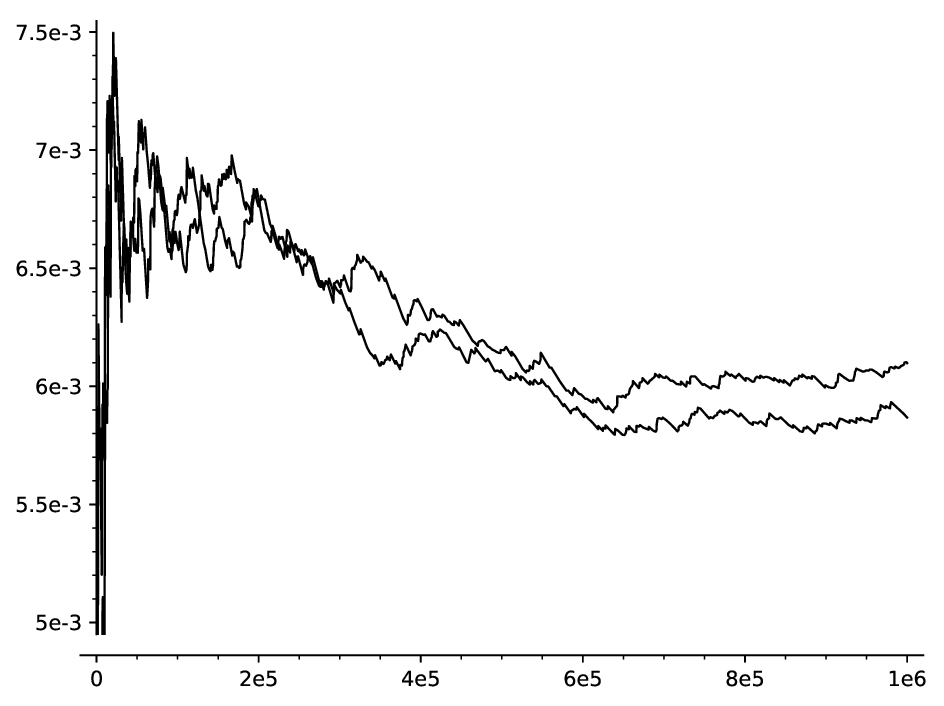}
\caption{$|l| = 7$: Top -7 bottom 7} \label{fig:14_6_even_A_7}
\end{subfigure}\hspace*{\fill}
\begin{subfigure}[b]{0.4\linewidth}
\includegraphics[width=\linewidth]{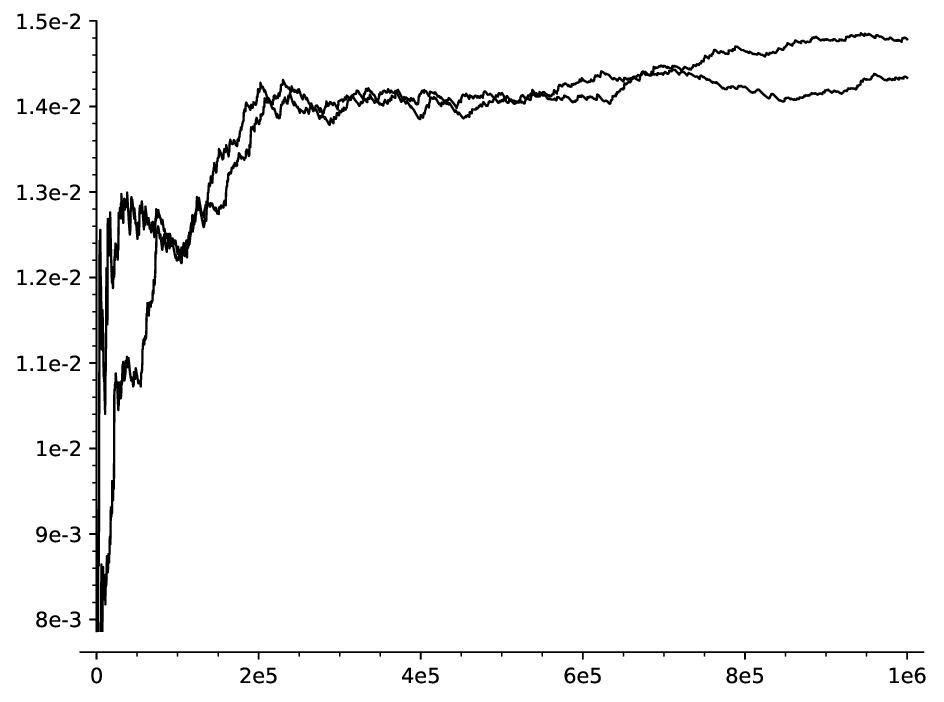}
\caption{$|l| = 8$: Top -8 bottom 8} \label{fig:14_6_even_A_8}
\end{subfigure}\hspace*{\fill}
\begin{subfigure}[b]{0.4\linewidth}
\includegraphics[width=\linewidth]{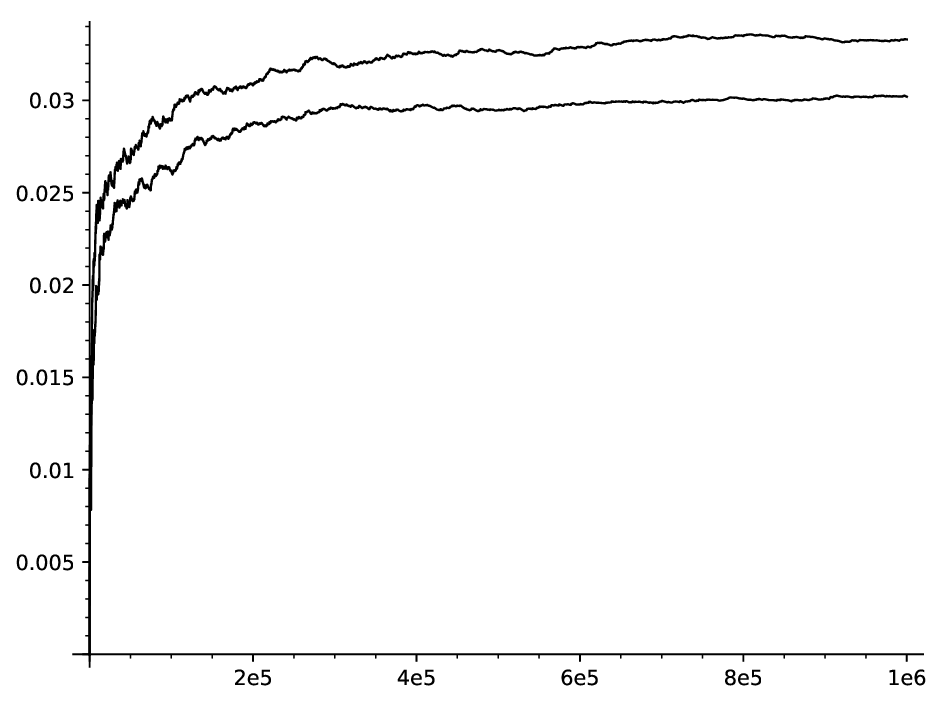}
\caption{$|l| = 9$: Top -9 bottom 9} \label{fig:14_6_even_A_9}
\end{subfigure}
\caption{14a1: Ratio~\eqref{ratio_n_pm} $x_{6,E}^+(X;l)/X^{1/2}\log^2(X)$} \label{fig:14a1_6_even_A_exact}
\end{figure}

\begin{figure}[b] % "[t!]" placement specifier just for this example
\hspace*{-2.3cm}
\begin{subfigure}[b]{0.4\linewidth}
\includegraphics[width=\linewidth]{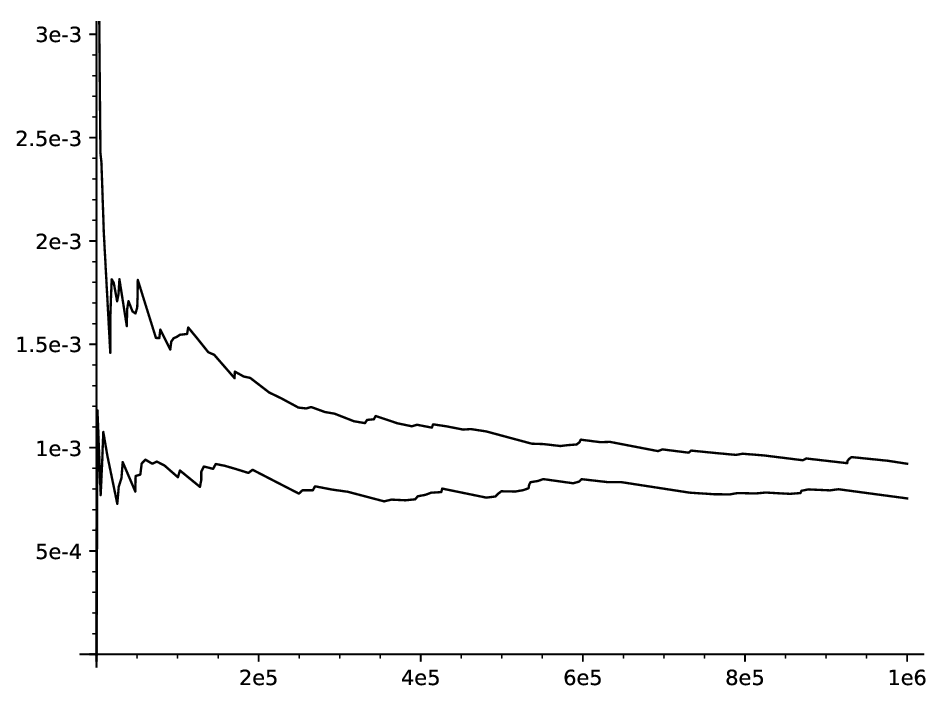}
\caption{$|l| = 1$: Top -1 bottom 1} \label{fig:14_6_odd_A_1}
\end{subfigure}\hspace*{\fill}
\begin{subfigure}[b]{0.4\linewidth}
\includegraphics[width=\linewidth]{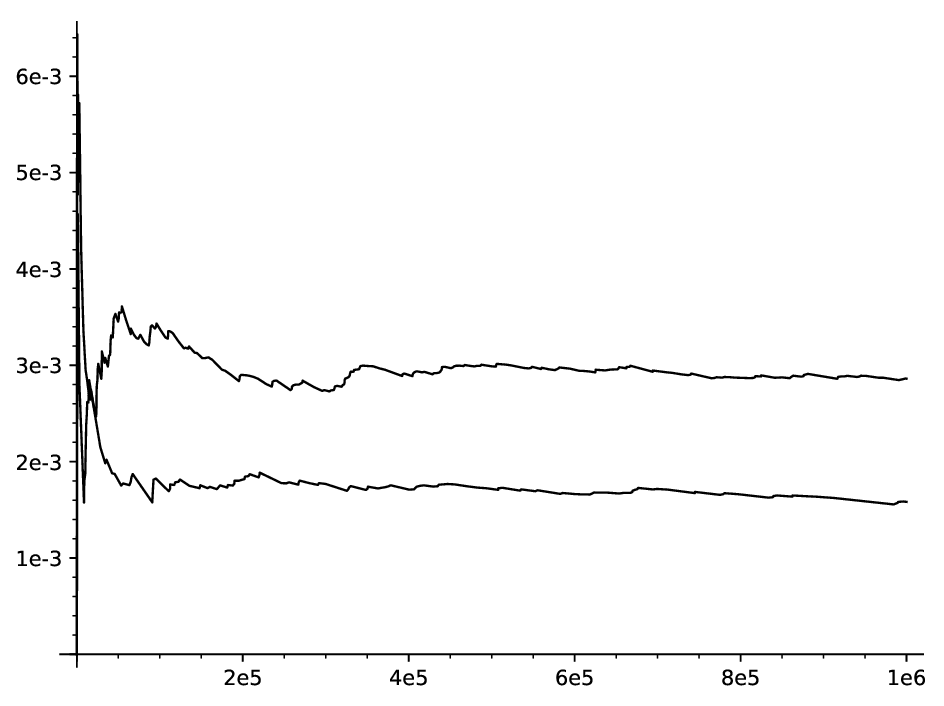}
\caption{$|l| = 2$: Top -2 bottom 2} \label{fig:14_6_odd_A_2}
\end{subfigure}\hspace*{\fill}
\begin{subfigure}[b]{0.4\linewidth}
\includegraphics[width=\linewidth]{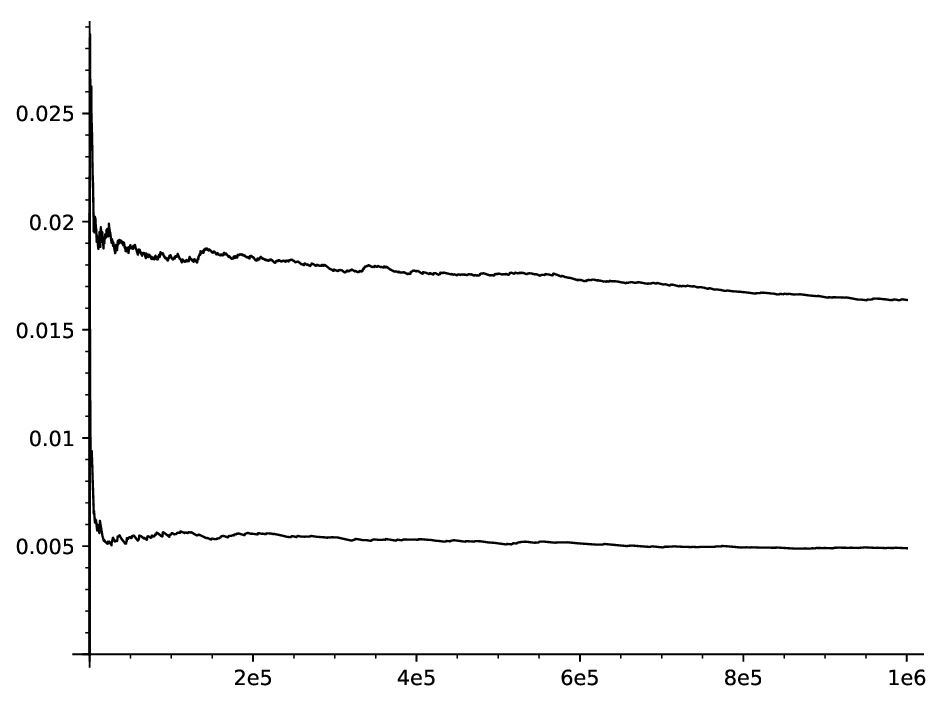}
\caption{$|l| = 3$: Top -3 bottom 3} \label{fig:14_6_odd_A_3}
\end{subfigure}
\hspace*{-2.3cm}
\begin{subfigure}[b]{0.4\linewidth}
\includegraphics[width=\linewidth]{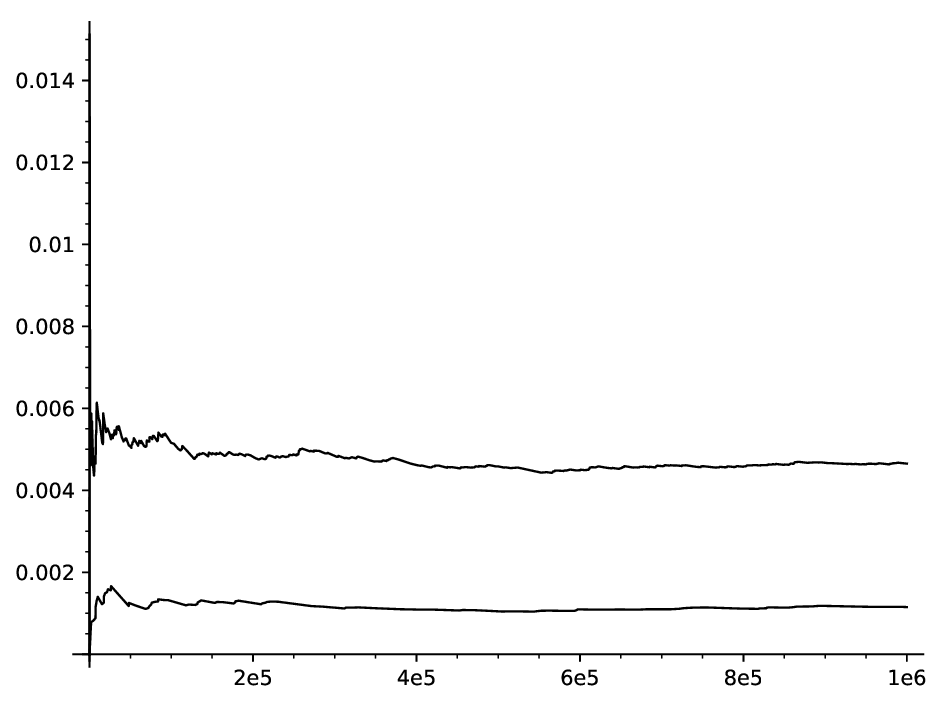}
\caption{$|l| = 4$: Top 4 bottom -4} \label{fig:14_6_odd_A_4}
\end{subfigure}\hspace*{\fill}
\begin{subfigure}[b]{0.4\linewidth}
\includegraphics[width=\linewidth]{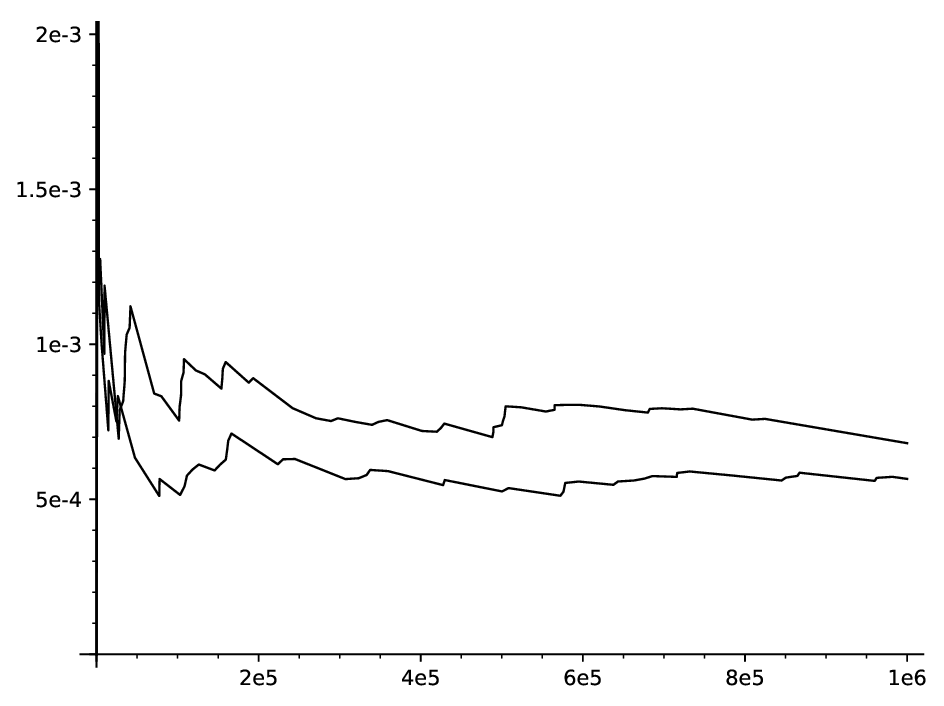}
\caption{$|l| = 5$: Top 5 bottom -5} \label{fig:14_6_odd_A_5}
\end{subfigure}\hspace*{\fill}
\begin{subfigure}[b]{0.4\linewidth}
\includegraphics[width=\linewidth]{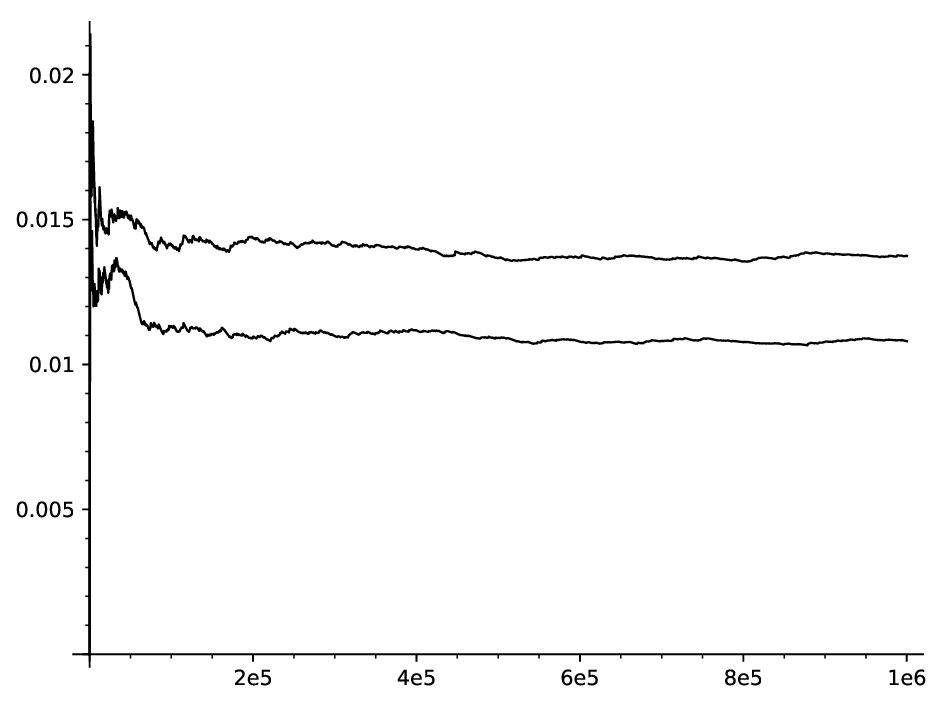}
\caption{$|l| = 6$: Top -6 bottom 6} \label{fig:14_6_odd_A_6}
\end{subfigure}
\hspace*{-2.3cm}
\begin{subfigure}[b]{0.4\linewidth}
\includegraphics[width=\linewidth]{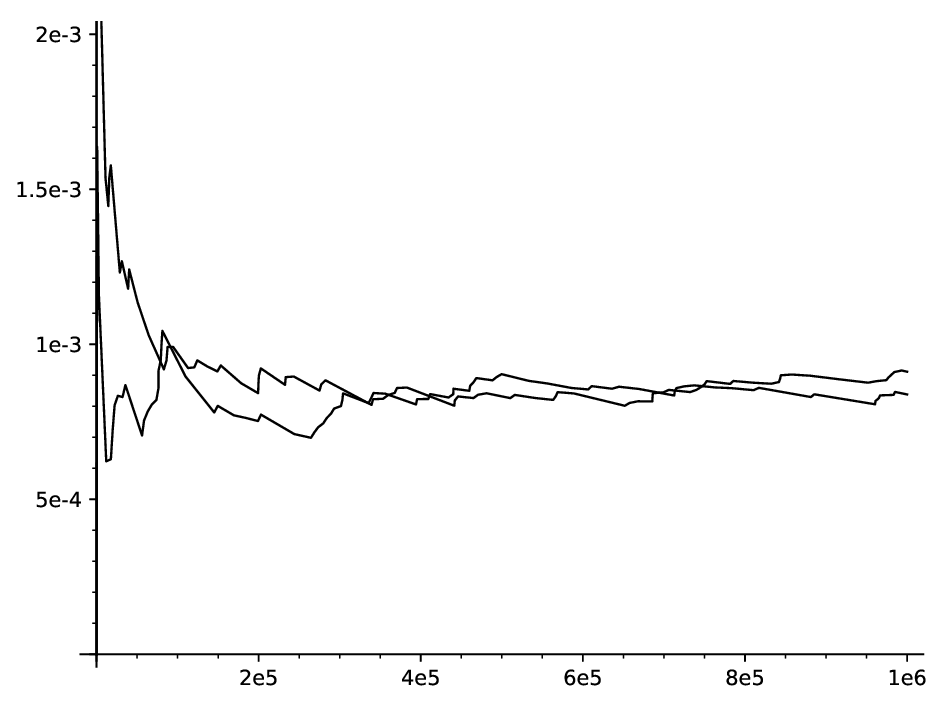}
\caption{$|l| = 7$: Top 7 bottom -7} \label{fig:14_6_odd_A_7}
\end{subfigure}\hspace*{\fill}
\begin{subfigure}[b]{0.4\linewidth}
\includegraphics[width=\linewidth]{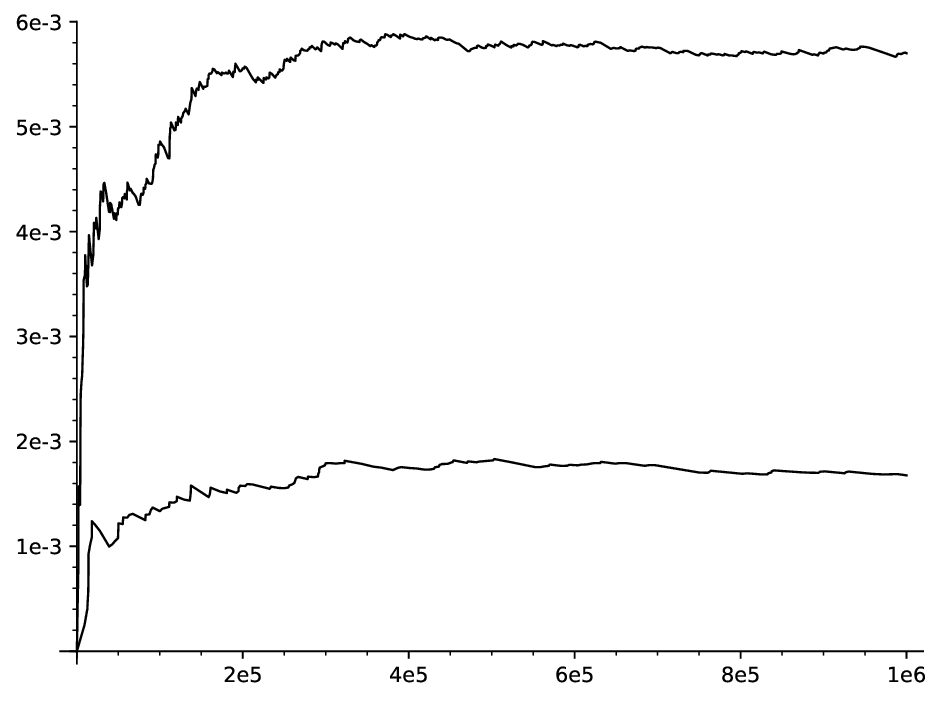}
\caption{$|l| = 8$: Top -8 bottom 8} \label{fig:14_6_od_A_8}
\end{subfigure}\hspace*{\fill}
\begin{subfigure}[b]{0.4\linewidth}
\includegraphics[width=\linewidth]{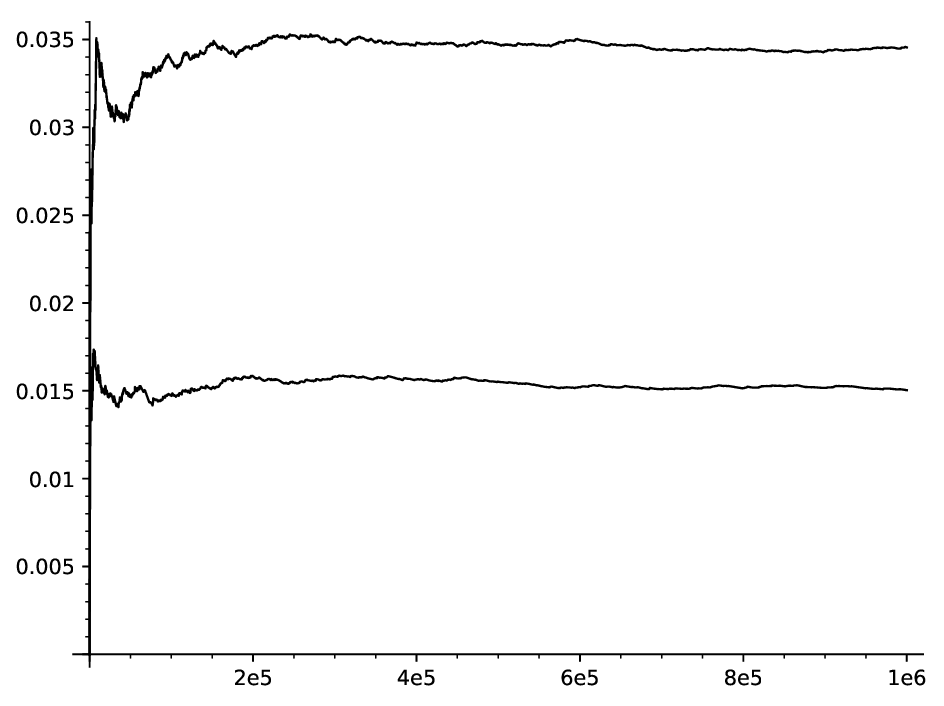}
\caption{$|l| = 9$: Top -9 bottom 9} \label{fig:14_6_odd_A_9}
\end{subfigure}
\caption{14a1: Ratio~\eqref{ratio_n_pm} $x_{6,E}^-(X;l)/X^{1/2}\log^2(X)$} \label{fig:14a1_6_odd_A_exact}
\end{figure}

\clearpage

\begin{figure}[t] % "[t!]" placement specifier just for this example
\hspace*{-2.3cm}
\begin{subfigure}[b]{0.4\linewidth}
\includegraphics[width=\linewidth]{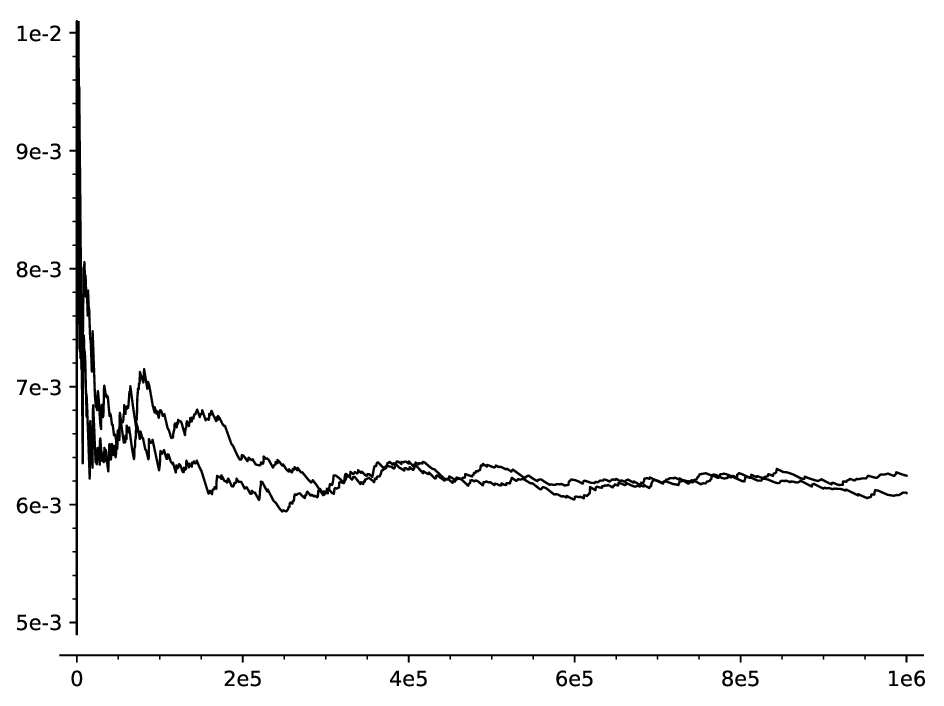}
\caption{$|l| = 1$: Top 1 bottom -1} \label{fig:15_6_even_A_1}
\end{subfigure}\hspace*{\fill}
\begin{subfigure}[b]{0.4\linewidth}
\includegraphics[width=\linewidth]{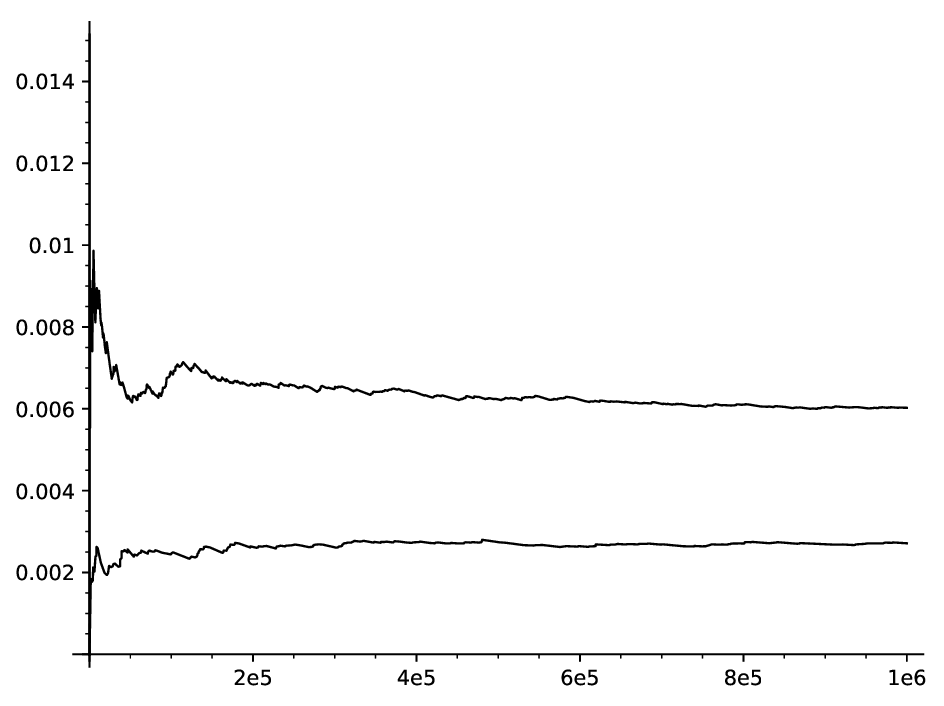}
\caption{$|l| = 2$: Top 2 bottom -2} \label{fig:15_6_even_A_2}
\end{subfigure}\hspace*{\fill}
\begin{subfigure}[b]{0.4\linewidth}
\includegraphics[width=\linewidth]{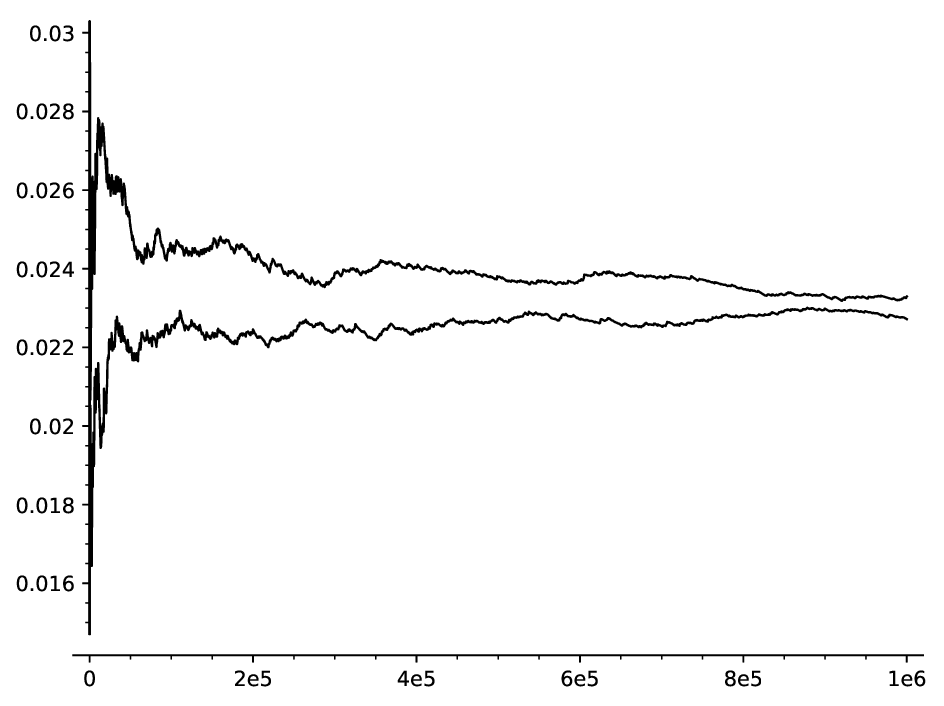}
\caption{$|l| = 3$: Top 3 bottom -3} \label{fig:15_6_even_A_3}
\end{subfigure}
\hspace*{-2.3cm}
\begin{subfigure}[b]{0.4\linewidth}
\includegraphics[width=\linewidth]{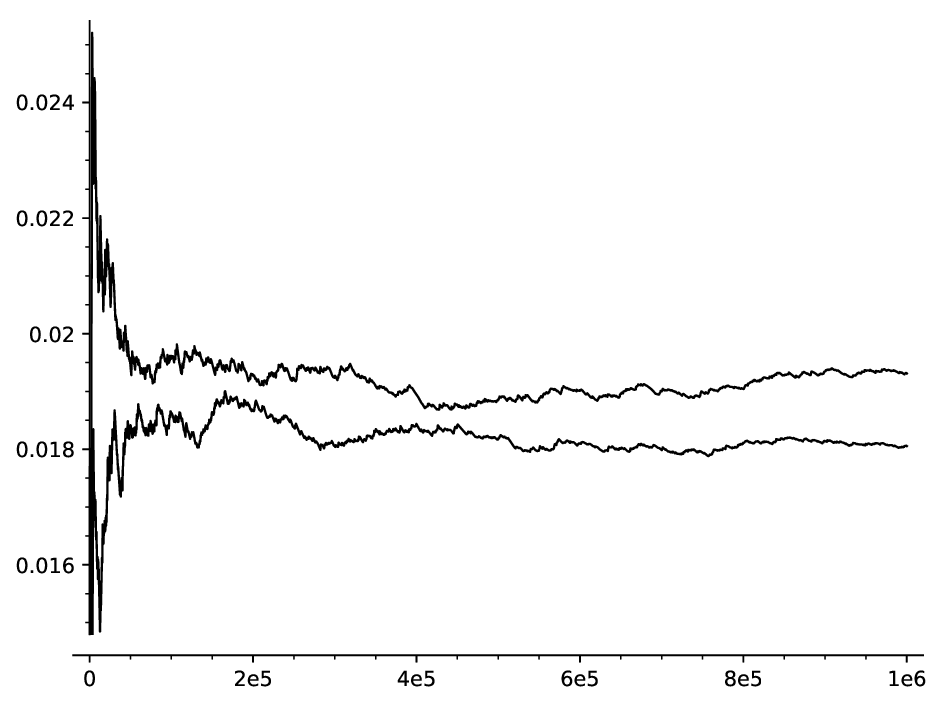}
\caption{$|l| = 4$: Top -4 bottom 4} \label{fig:15_6_even_A_4}
\end{subfigure}\hspace*{\fill}
\begin{subfigure}[b]{0.4\linewidth}
\includegraphics[width=\linewidth]{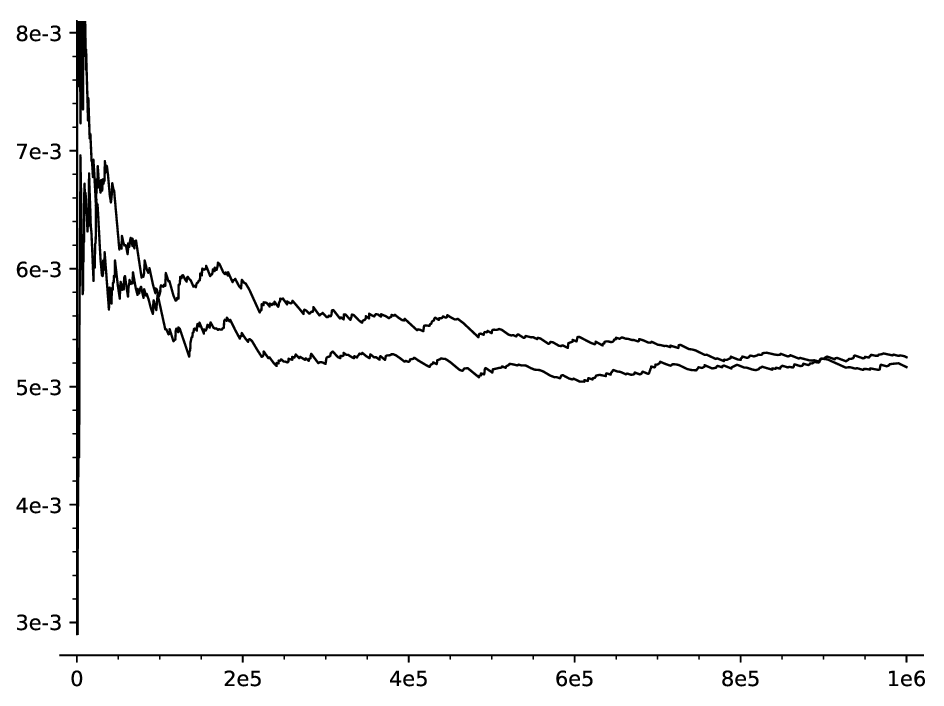}
\caption{$|l| = 5$: Top 5 bottom -5} \label{fig:15_6_even_A_5}
\end{subfigure}\hspace*{\fill}
\begin{subfigure}[b]{0.4\linewidth}
\includegraphics[width=\linewidth]{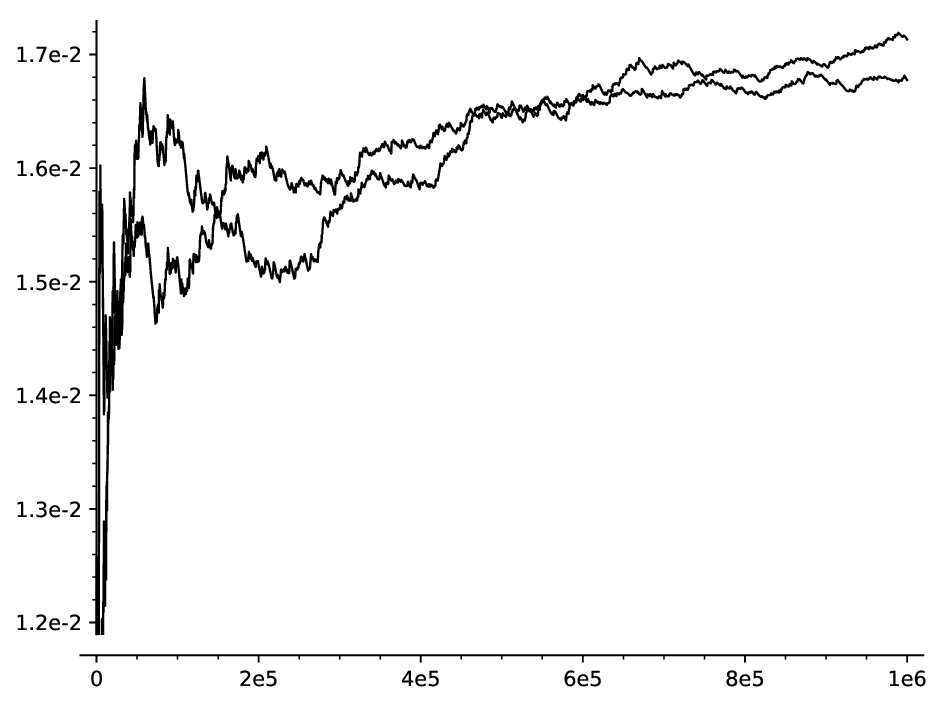}
\caption{$|l| = 6$: Top 6 bottom -6} \label{fig:15_6_even_A_6}
\end{subfigure}
\hspace*{-2.3cm}
\begin{subfigure}[b]{0.4\linewidth}
\includegraphics[width=\linewidth]{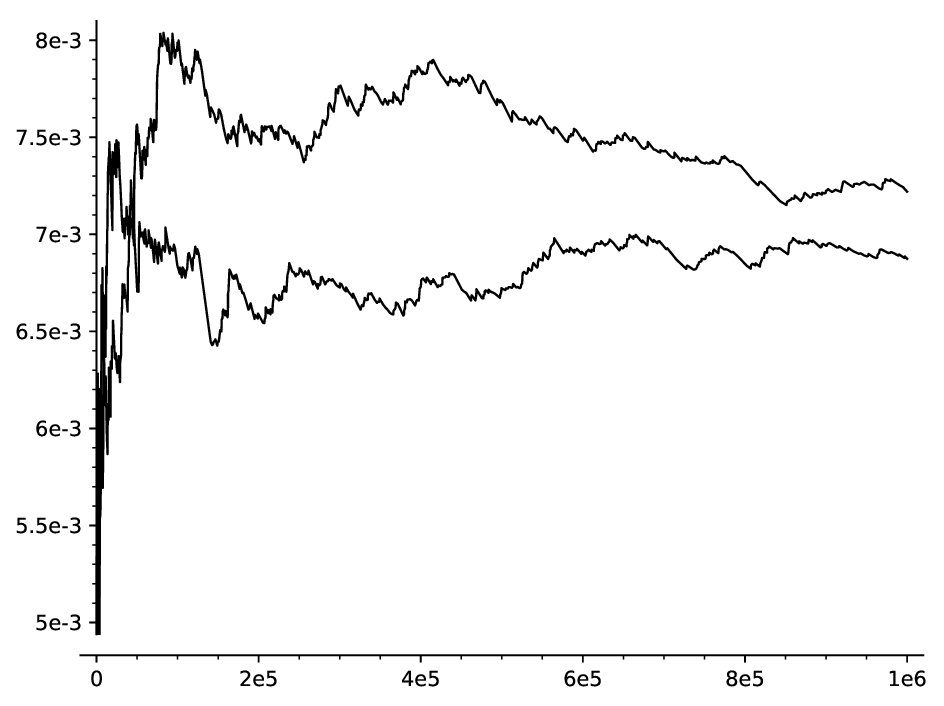}
\caption{$|l| = 7$: Top 7 bottom -7} \label{fig:15_6_even_A_7}
\end{subfigure}\hspace*{\fill}
\begin{subfigure}[b]{0.4\linewidth}
\includegraphics[width=\linewidth]{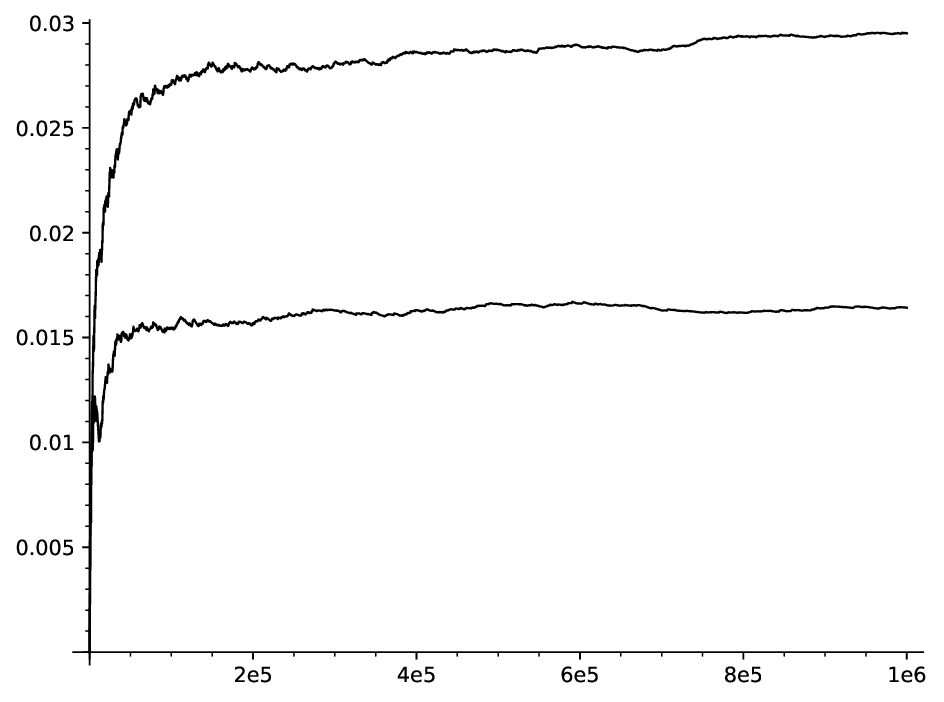}
\caption{$|l| = 8$: Top 8 bottom -8} \label{fig:15_6_even_A_8}
\end{subfigure}\hspace*{\fill}
\begin{subfigure}[b]{0.4\linewidth}
\includegraphics[width=\linewidth]{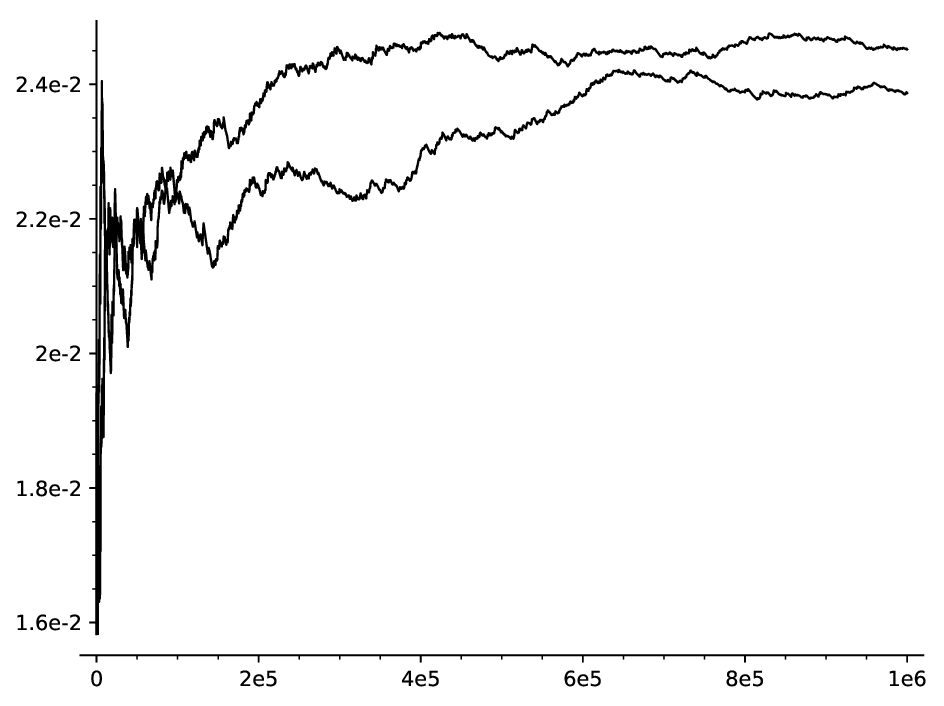}
\caption{$|l| = 9$: Top 9 bottom -9} \label{fig:15_6_even_A_9}
\end{subfigure}
\caption{15a1: Ratio~\eqref{ratio_n_pm} $x_{6,E}^+(X;l)/X^{1/2}\log^2(X)$} \label{fig:15a1_6_even_A_exact}
\end{figure}

\begin{figure}[b] % "[t!]" placement specifier just for this example
\hspace*{-2.3cm}
\begin{subfigure}[b]{0.4\linewidth}
\includegraphics[width=\linewidth]{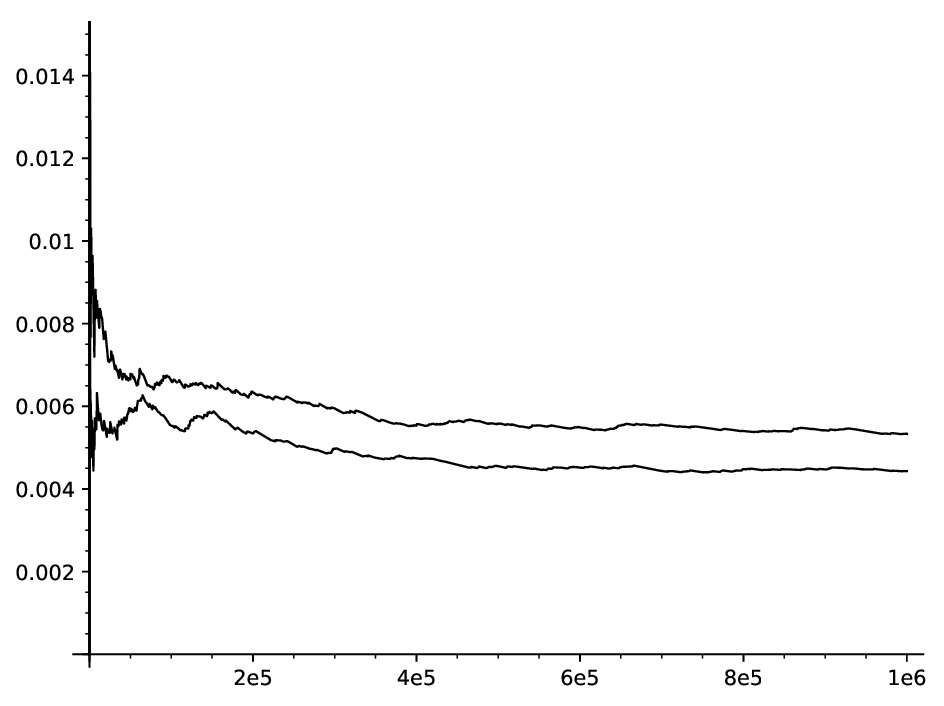}
\caption{$|l| = 1$: Top 1 bottom -1} \label{fig:15_6_odd_A_1}
\end{subfigure}\hspace*{\fill}
\begin{subfigure}[b]{0.4\linewidth}
\includegraphics[width=\linewidth]{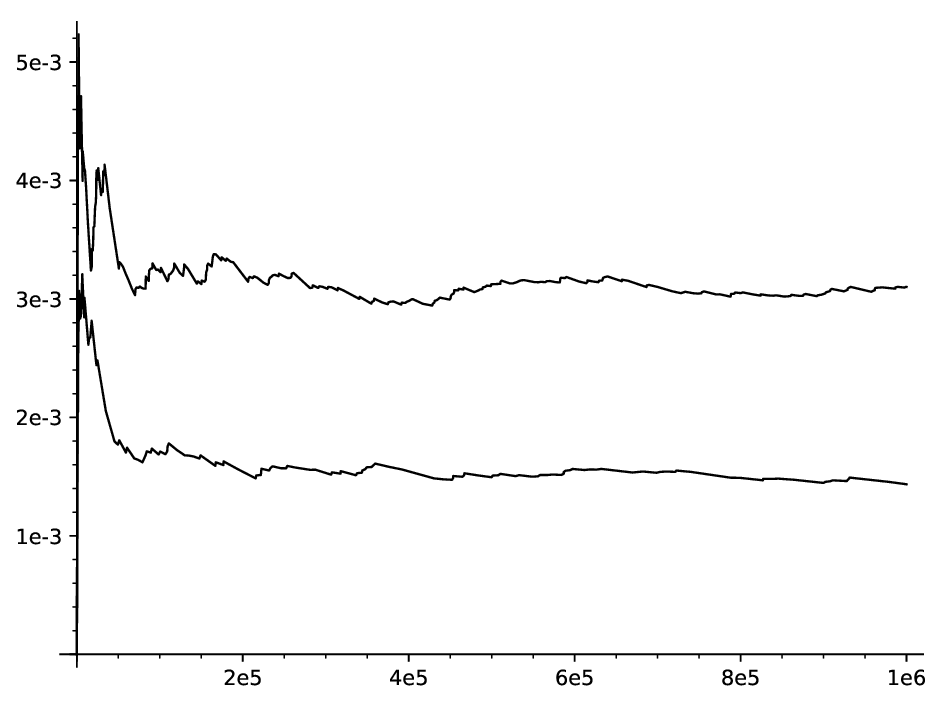}
\caption{$|l| = 2$: Top 2 bottom -2} \label{fig:15_6_odd_A_2}
\end{subfigure}\hspace*{\fill}
\begin{subfigure}[b]{0.4\linewidth}
\includegraphics[width=\linewidth]{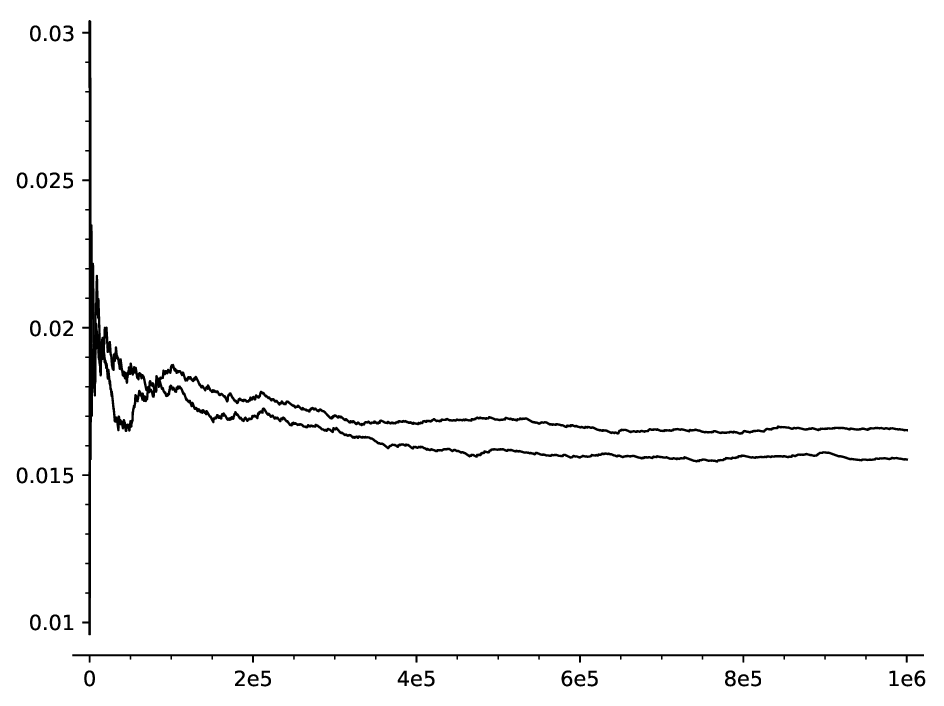}
\caption{$|l| = 3$: Top -3 bottom 3} \label{fig:15_6_odd_A_3}
\end{subfigure}
\hspace*{-2.3cm}
\begin{subfigure}[b]{0.4\linewidth}
\includegraphics[width=\linewidth]{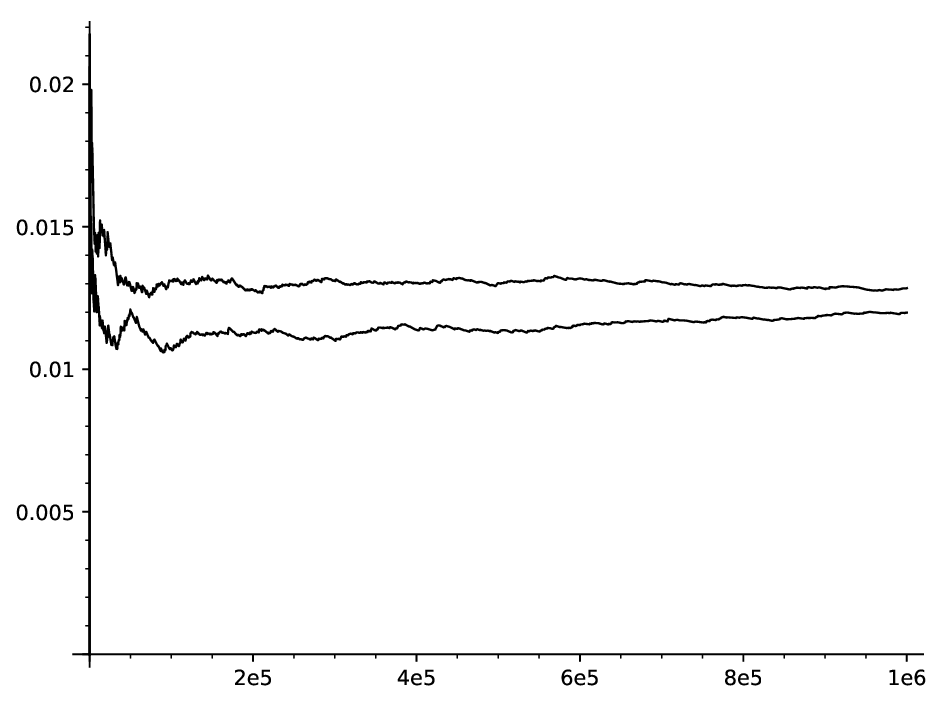}
\caption{$|l| = 4$: Top -4 bottom 4} \label{fig:15_6_odd_A_4}
\end{subfigure}\hspace*{\fill}
\begin{subfigure}[b]{0.4\linewidth}
\includegraphics[width=\linewidth]{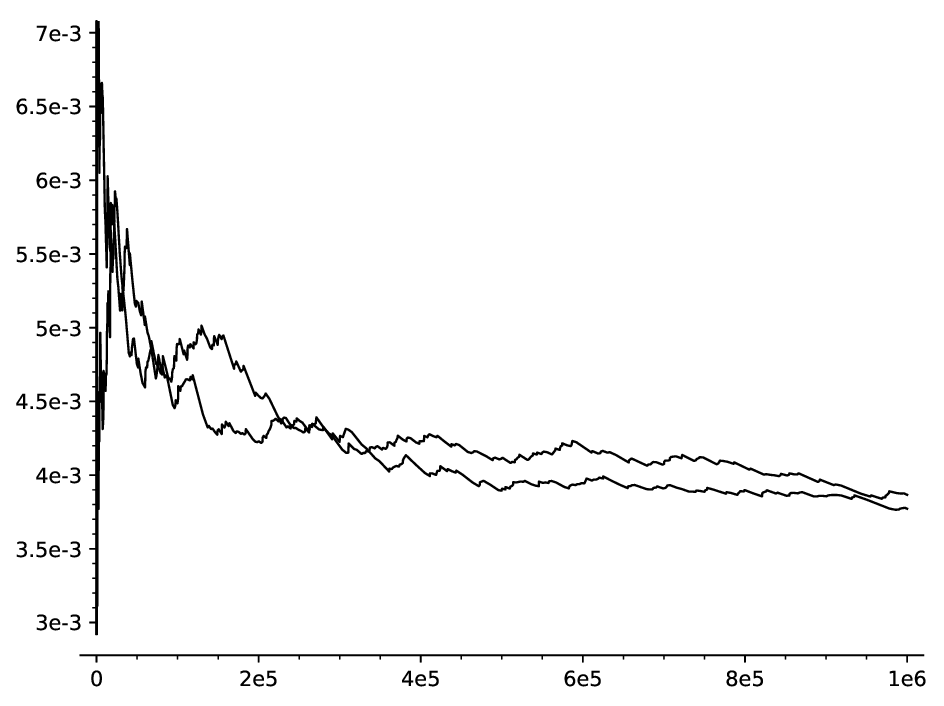}
\caption{$|l| = 5$: Top 5 bottom -5} \label{fig:15_6_odd_A_5}
\end{subfigure}\hspace*{\fill}
\begin{subfigure}[b]{0.4\linewidth}
\includegraphics[width=\linewidth]{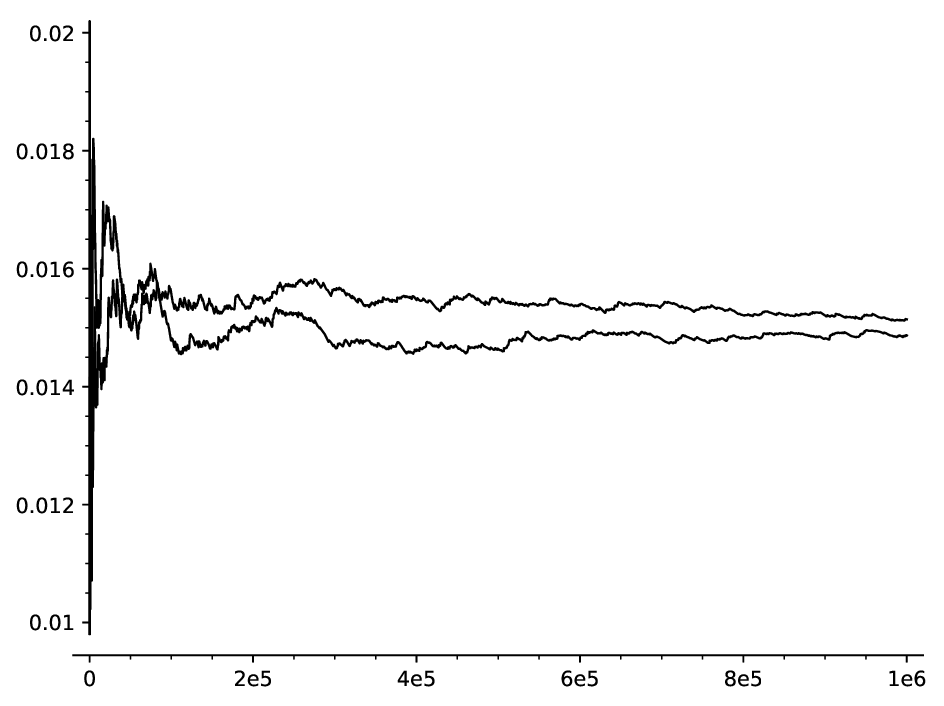}
\caption{$|l| = 6$: Top 6 bottom -6} \label{fig:15_6_odd_A_6}
\end{subfigure}
\hspace*{-2.3cm}
\begin{subfigure}[b]{0.4\linewidth}
\includegraphics[width=\linewidth]{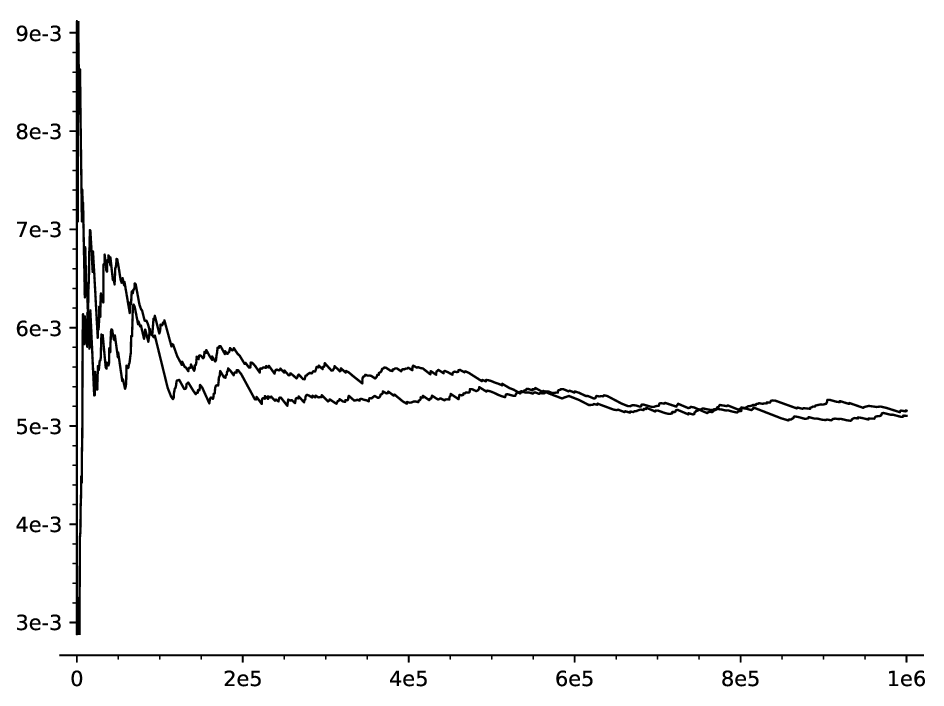}
\caption{$|l| = 7$: Top -7 bottom 7} \label{fig:15_6_odd_A_7}
\end{subfigure}\hspace*{\fill}
\begin{subfigure}[b]{0.4\linewidth}
\includegraphics[width=\linewidth]{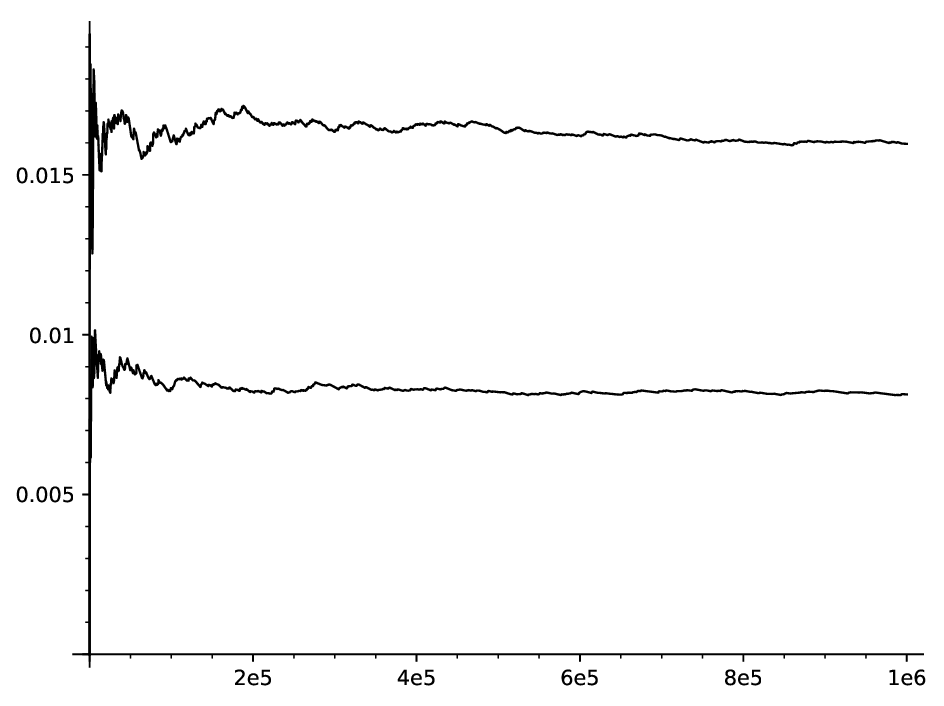}
\caption{$|l| = 8$: Top 8 bottom -8} \label{fig:15_6_od_A_8}
\end{subfigure}\hspace*{\fill}
\begin{subfigure}[b]{0.4\linewidth}
\includegraphics[width=\linewidth]{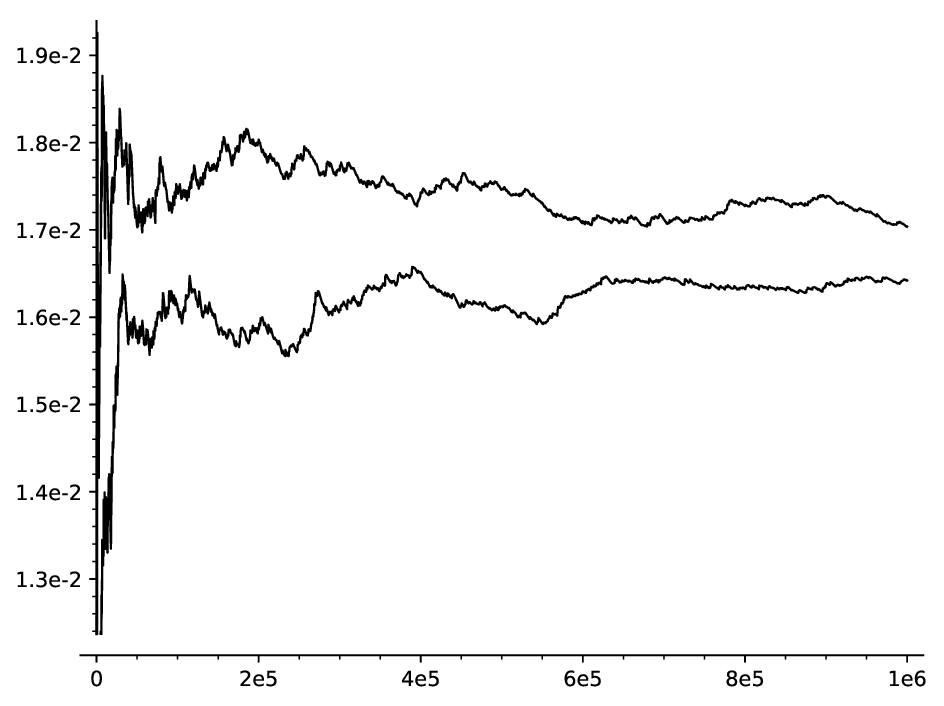}
\caption{$|l| = 9$: Top 9 bottom -9} \label{fig:15_6_odd_A_9}
\end{subfigure}
\caption{15a1: Ratio~\eqref{ratio_n_pm} $x_{6,E}^-(X;l)/X^{1/2}\log^2(X)$} \label{fig:15a1_6_odd_A_exact}
\end{figure}

\clearpage

\begin{figure}[t] % "[t!]" placement specifier just for this example
\hspace*{-2.3cm}
\begin{subfigure}[b]{0.4\linewidth}
\includegraphics[width=\linewidth]{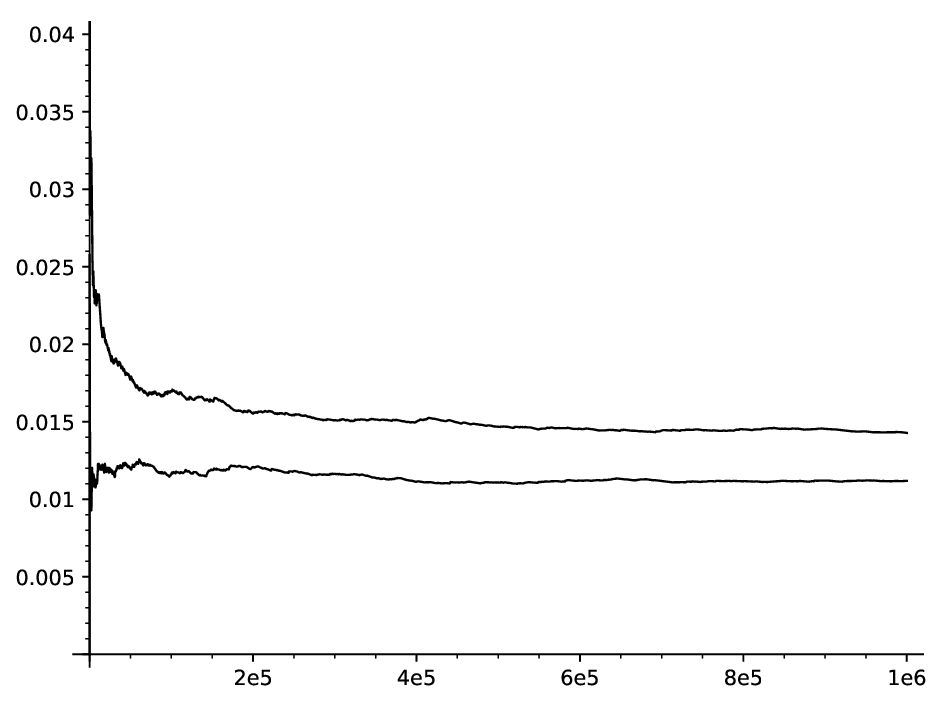}
\caption{$|l| = 1$: Top -1 bottom 1} \label{fig:17_6_even_A_1}
\end{subfigure}\hspace*{\fill}
\begin{subfigure}[b]{0.4\linewidth}
\includegraphics[width=\linewidth]{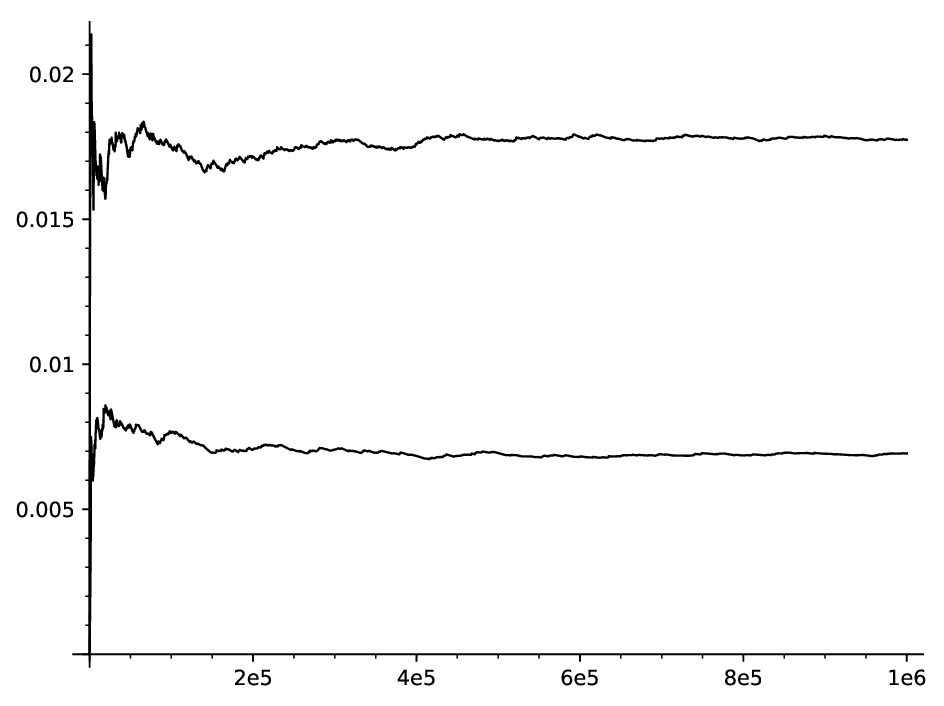}
\caption{$|l| = 2$: Top 2 bottom -2} \label{fig:17_6_even_A_2}
\end{subfigure}\hspace*{\fill}
\begin{subfigure}[b]{0.4\linewidth}
\includegraphics[width=\linewidth]{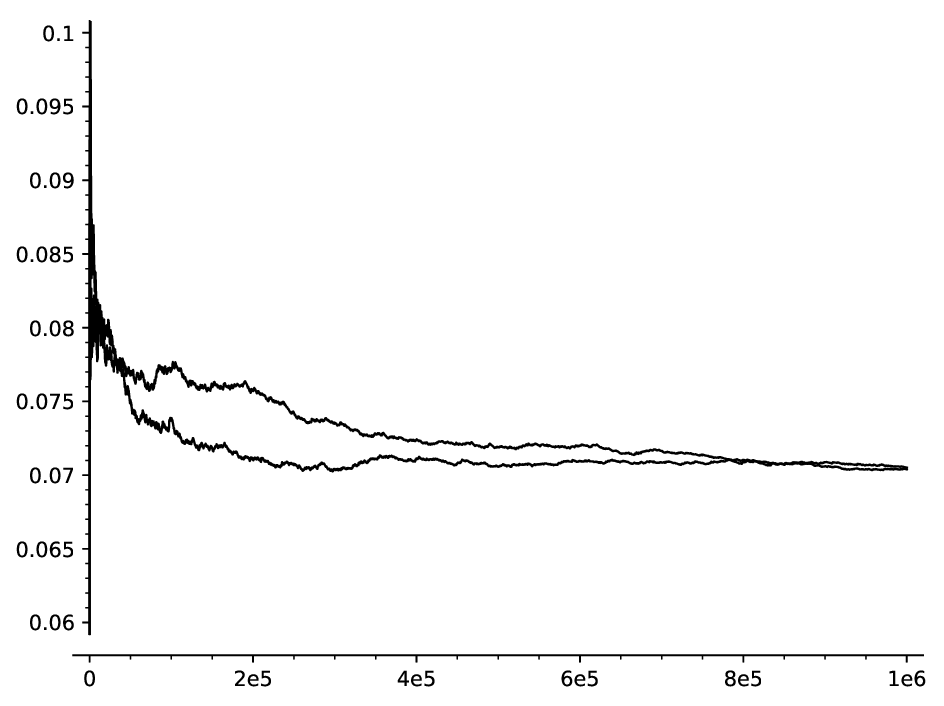}
\caption{$|l| = 3$: Top 3 bottom -3} \label{fig:17_6_even_A_3}
\end{subfigure}
\hspace*{-2.3cm}
\begin{subfigure}[b]{0.4\linewidth}
\includegraphics[width=\linewidth]{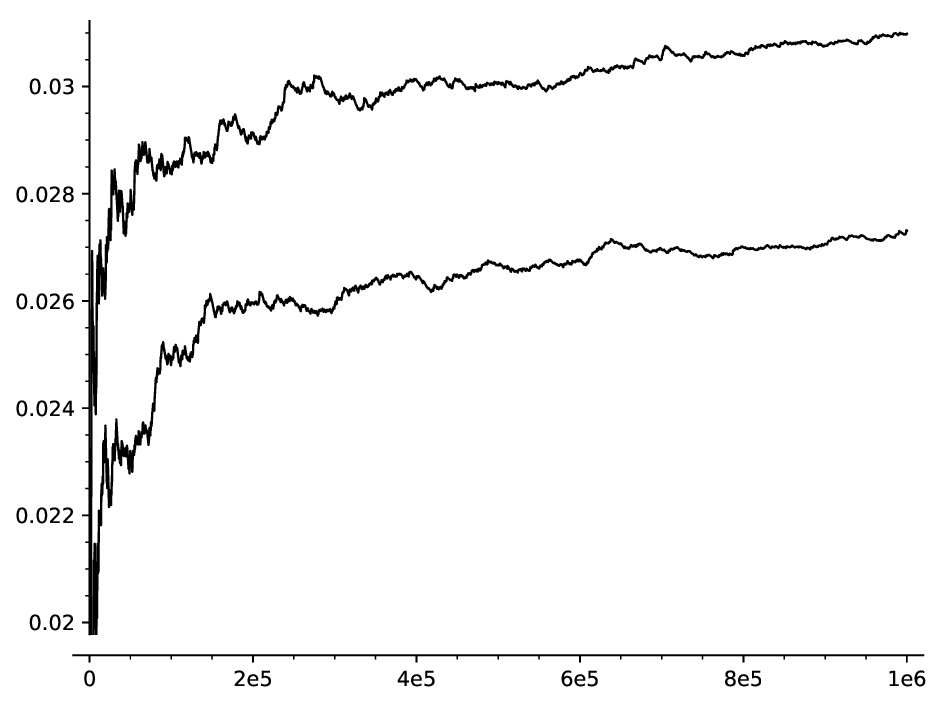}
\caption{$|l| = 4$: Top -4 bottom 4} \label{fig:17_6_even_A_4}
\end{subfigure}\hspace*{\fill}
\begin{subfigure}[b]{0.4\linewidth}
\includegraphics[width=\linewidth]{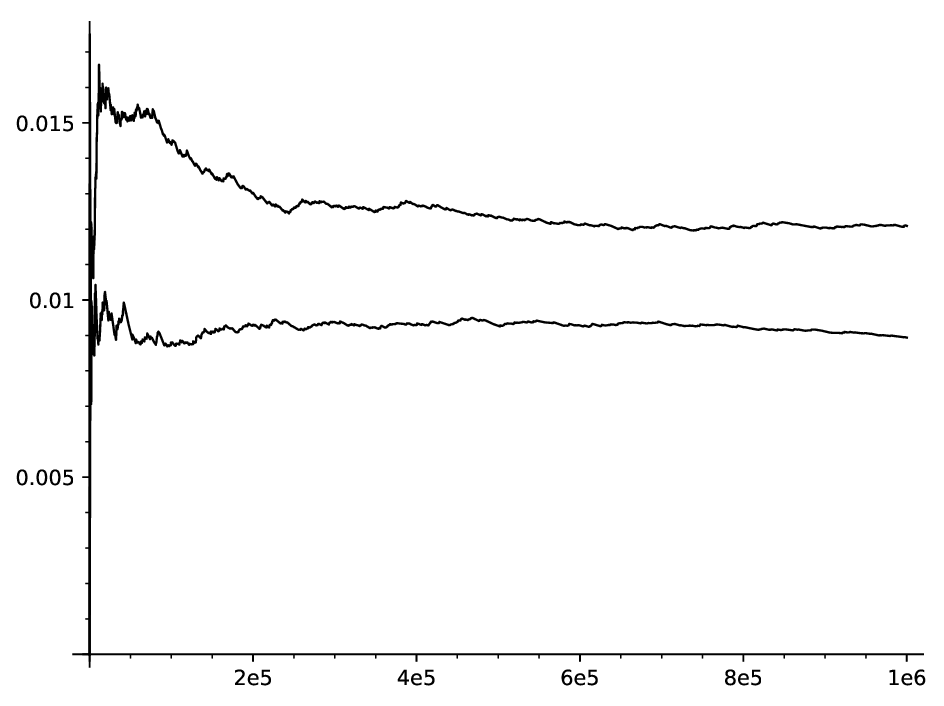}
\caption{$|l| = 5$: Top 5 bottom -5} \label{fig:17_6_even_A_5}
\end{subfigure}\hspace*{\fill}
\begin{subfigure}[b]{0.4\linewidth}
\includegraphics[width=\linewidth]{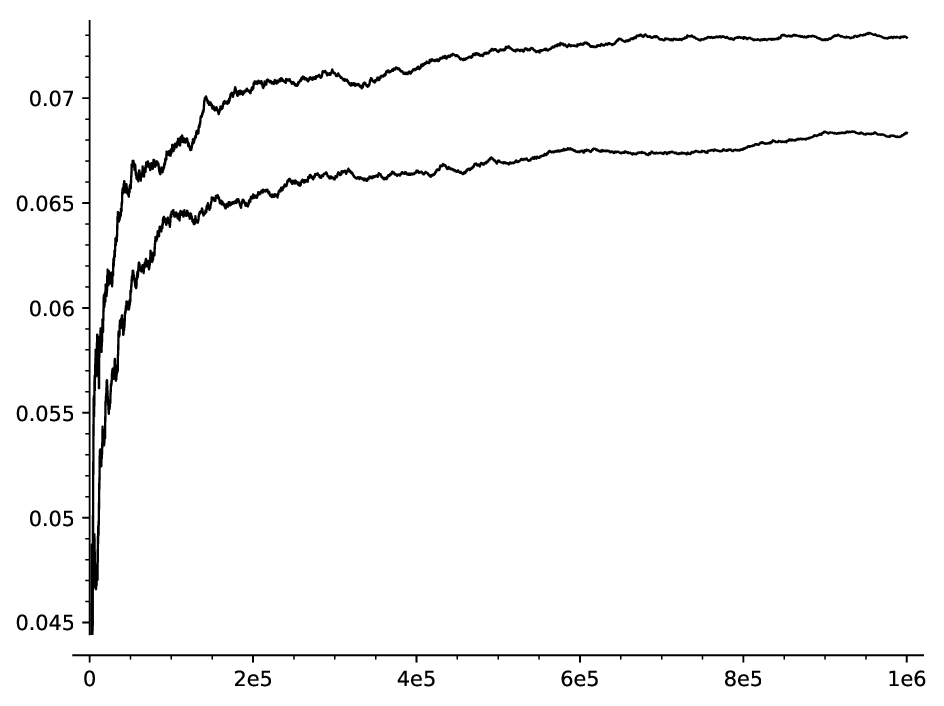}
\caption{$|l| = 6$: Top 6 bottom -6} \label{fig:17_6_even_A_6}
\end{subfigure}
\hspace*{-2.3cm}
\begin{subfigure}[b]{0.4\linewidth}
\includegraphics[width=\linewidth]{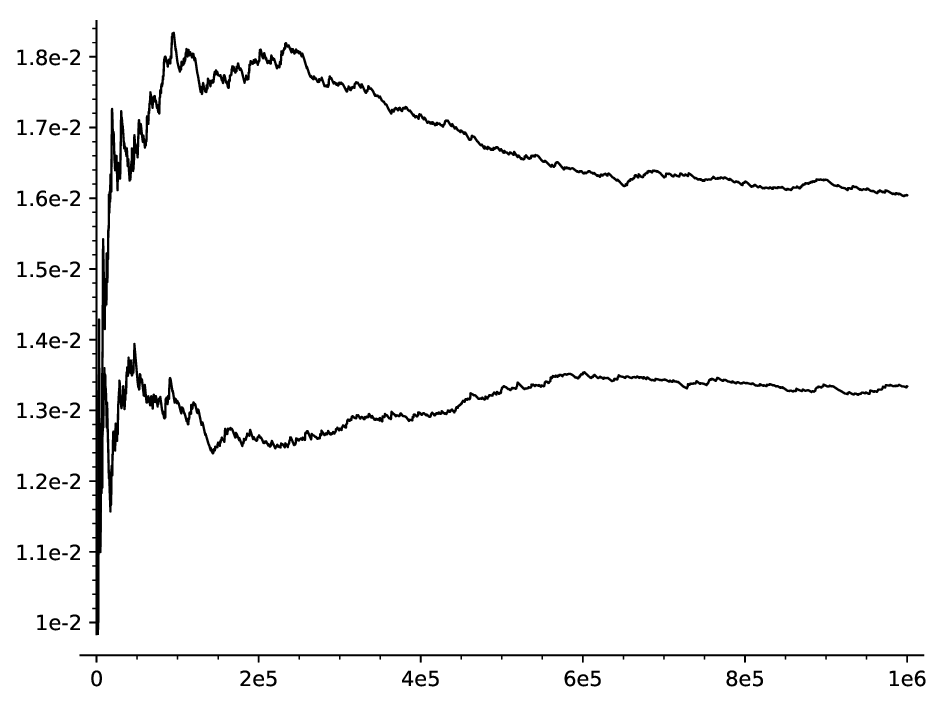}
\caption{$|l| = 7$: Top -7 bottom 7} \label{fig:17_6_even_A_7}
\end{subfigure}\hspace*{\fill}
\begin{subfigure}[b]{0.4\linewidth}
\includegraphics[width=\linewidth]{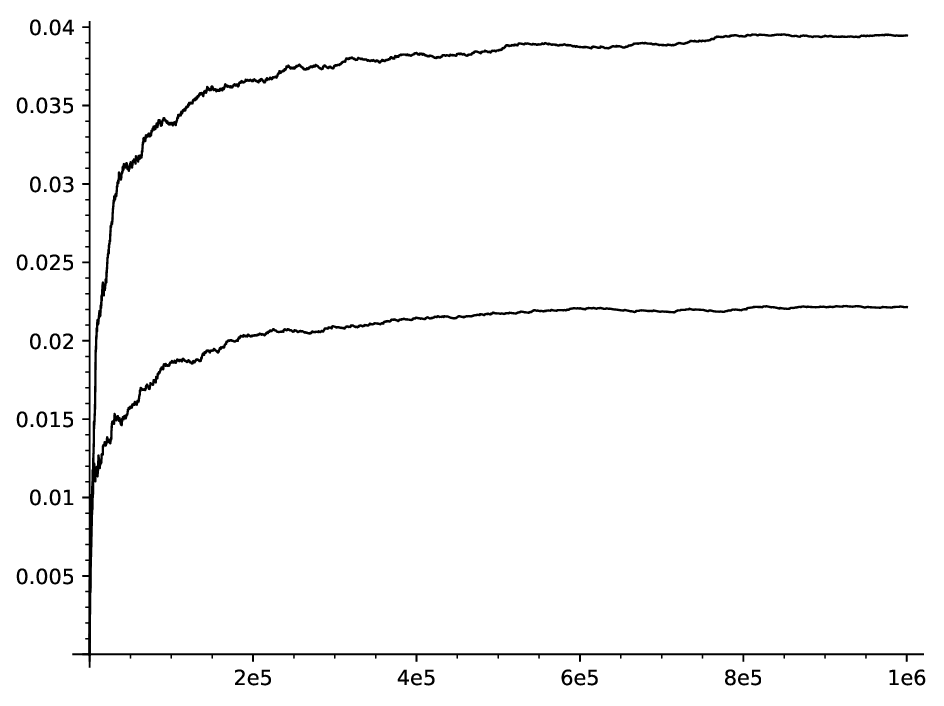}
\caption{$|l| = 8$: Top 8 bottom -8} \label{fig:17_6_even_A_8}
\end{subfigure}\hspace*{\fill}
\begin{subfigure}[b]{0.4\linewidth}
\includegraphics[width=\linewidth]{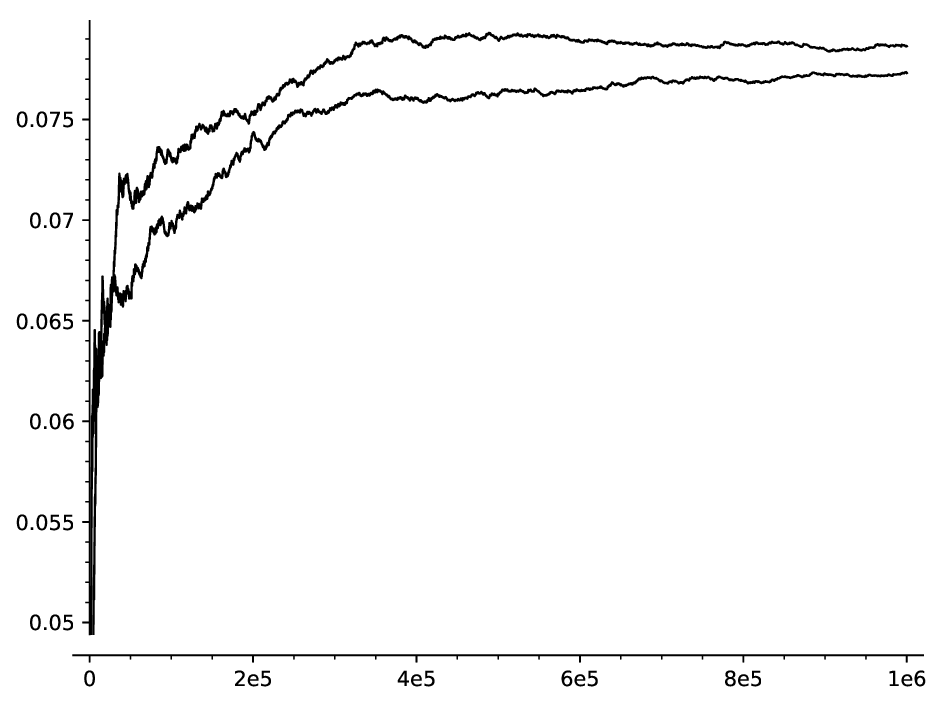}
\caption{$|l| = 9$: Top 9 bottom -9} \label{fig:17_6_even_A_9}
\end{subfigure}
\caption{17a1: Ratio~\eqref{ratio_n_pm} $x_{6,E}^+(X;l)/X^{1/2}\log^2(X)$} \label{fig:17a1_6_even_A_exact}
\end{figure}

\begin{figure}[b] % "[t!]" placement specifier just for this example
\hspace*{-2.3cm}
\begin{subfigure}[b]{0.4\linewidth}
\includegraphics[width=\linewidth]{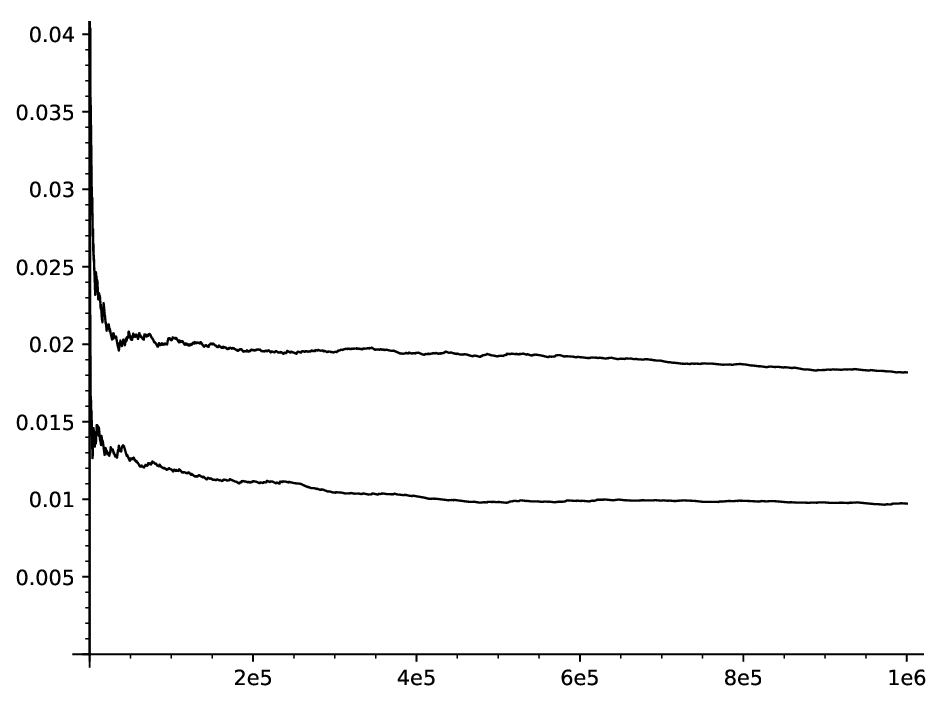}
\caption{$|l| = 1$: Top 1 bottom -1} \label{fig:17_6_odd_A_1}
\end{subfigure}\hspace*{\fill}
\begin{subfigure}[b]{0.4\linewidth}
\includegraphics[width=\linewidth]{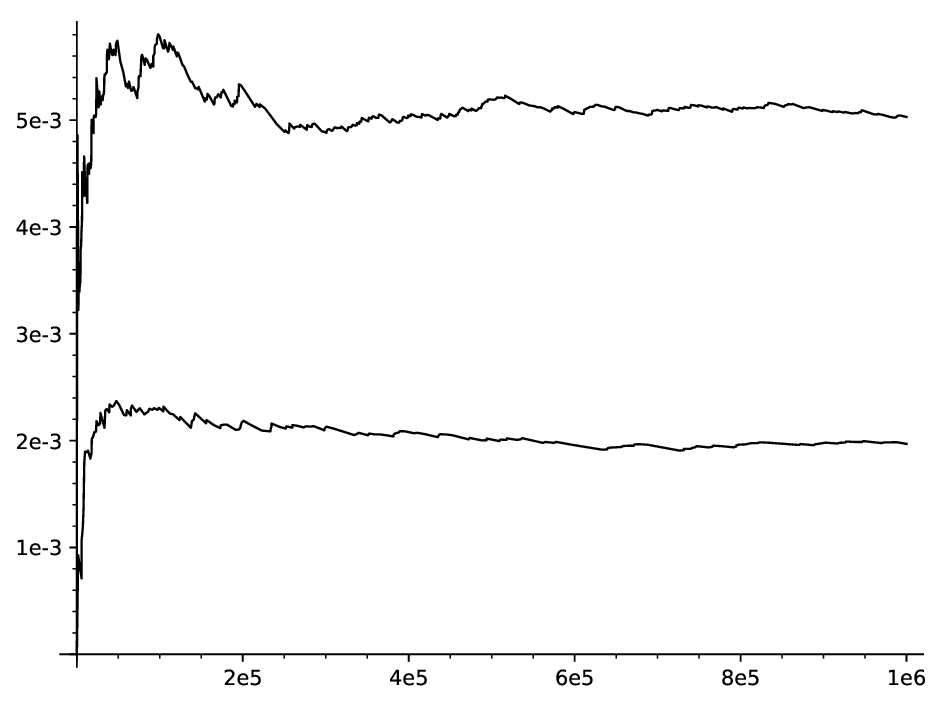}
\caption{$|l| = 2$: Top 2 bottom -2} \label{fig:17_6_odd_A_2}
\end{subfigure}\hspace*{\fill}
\begin{subfigure}[b]{0.4\linewidth}
\includegraphics[width=\linewidth]{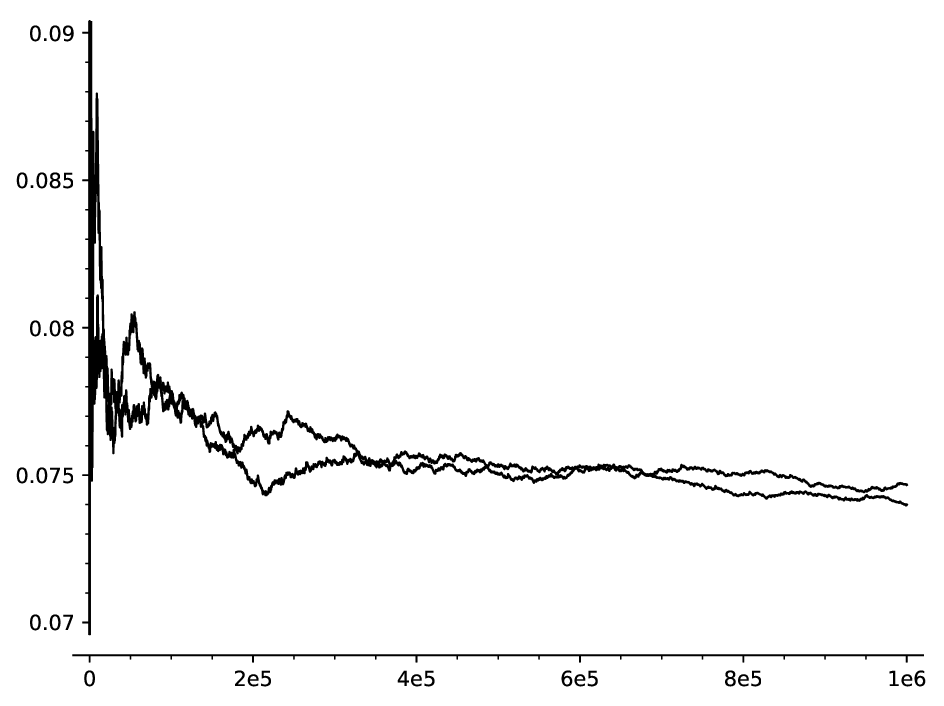}
\caption{$|l| = 3$: Top 3 bottom -3} \label{fig:17_6_odd_A_3}
\end{subfigure}
\hspace*{-2.3cm}
\begin{subfigure}[b]{0.4\linewidth}
\includegraphics[width=\linewidth]{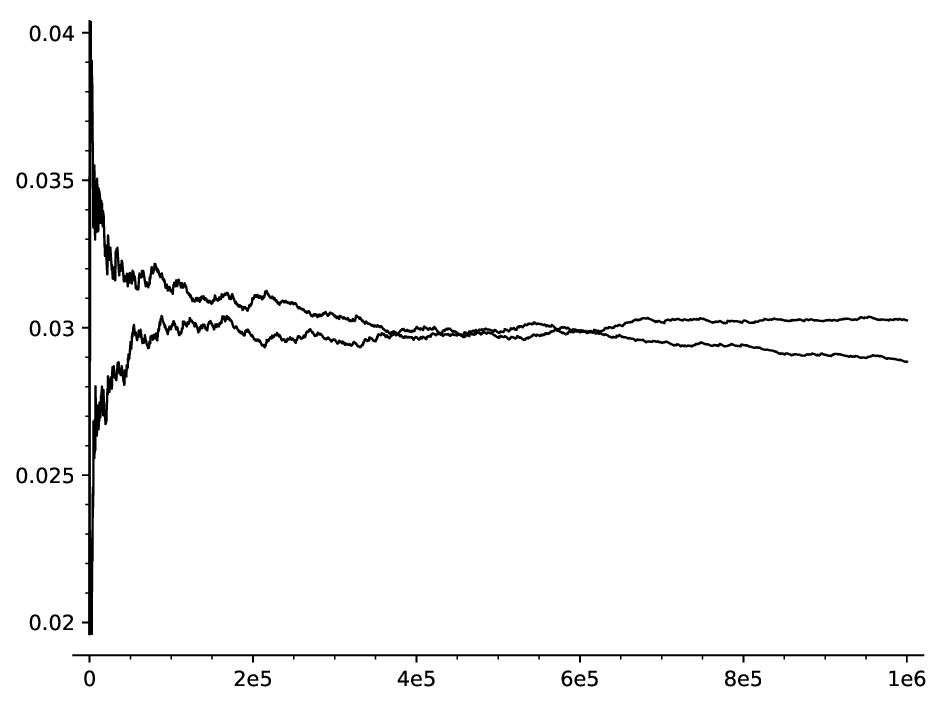}
\caption{$|l| = 4$: Top 4 bottom -4} \label{fig:17_6_odd_A_4}
\end{subfigure}\hspace*{\fill}
\begin{subfigure}[b]{0.4\linewidth}
\includegraphics[width=\linewidth]{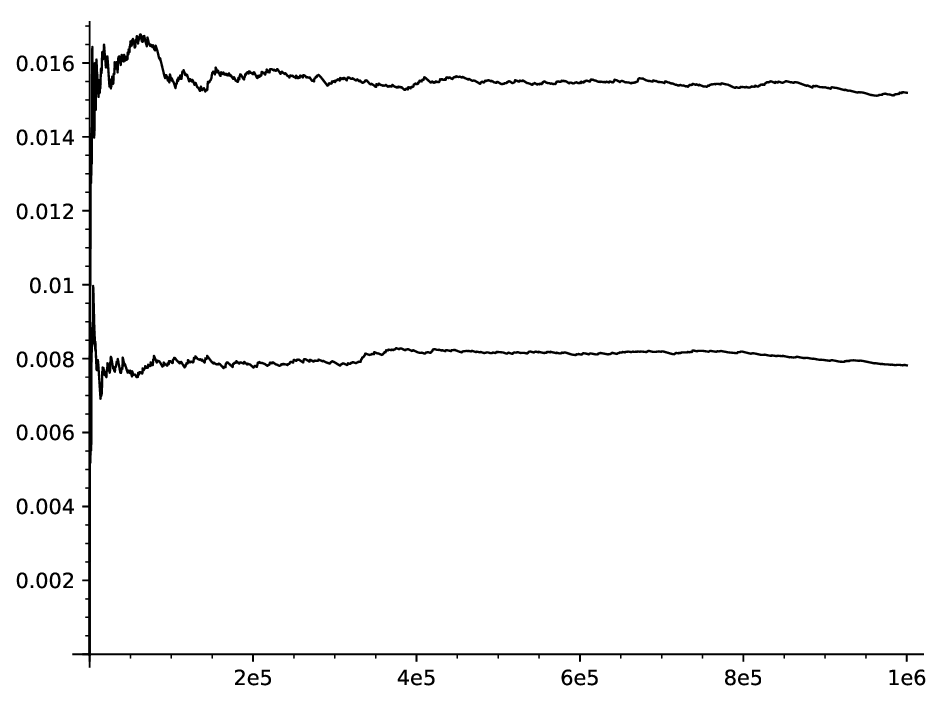}
\caption{$|l| = 5$: Top -5 bottom 5} \label{fig:17_6_odd_A_5}
\end{subfigure}\hspace*{\fill}
\begin{subfigure}[b]{0.4\linewidth}
\includegraphics[width=\linewidth]{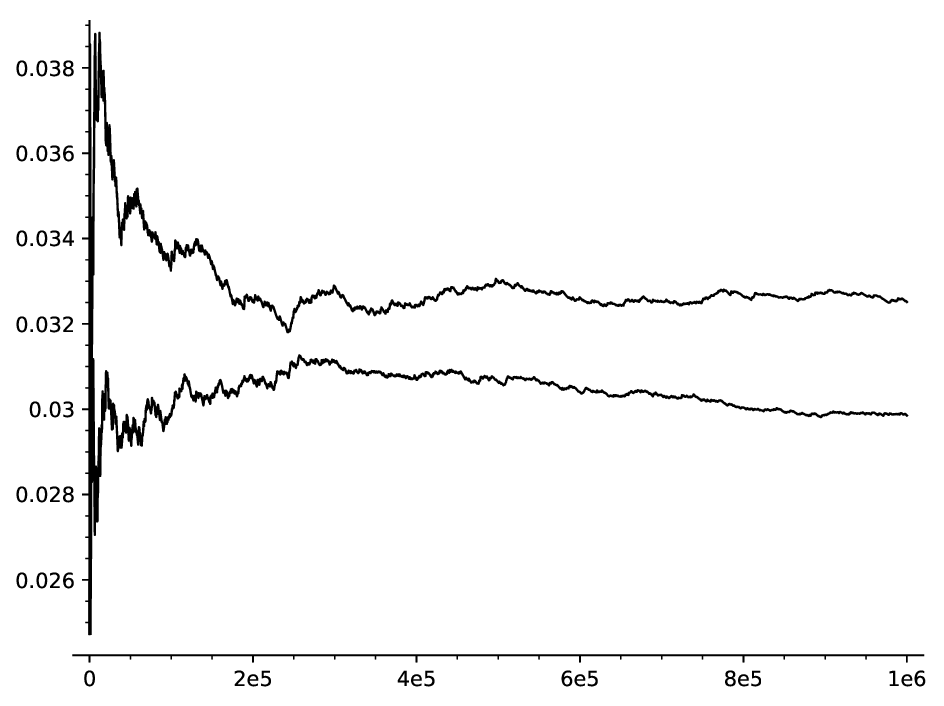}
\caption{$|l| = 6$: Top 6 bottom -6} \label{fig:17_6_odd_A_6}
\end{subfigure}
\hspace*{-2.3cm}
\begin{subfigure}[b]{0.4\linewidth}
\includegraphics[width=\linewidth]{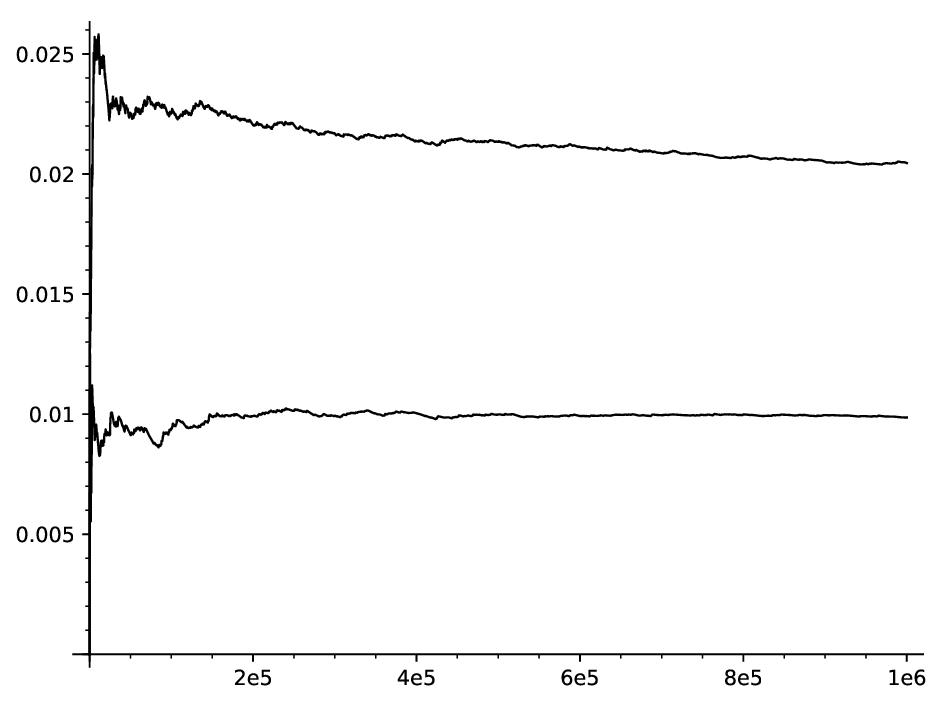}
\caption{$|l| = 7$: Top 7 bottom -7} \label{fig:17_6_odd_A_7}
\end{subfigure}\hspace*{\fill}
\begin{subfigure}[b]{0.4\linewidth}
\includegraphics[width=\linewidth]{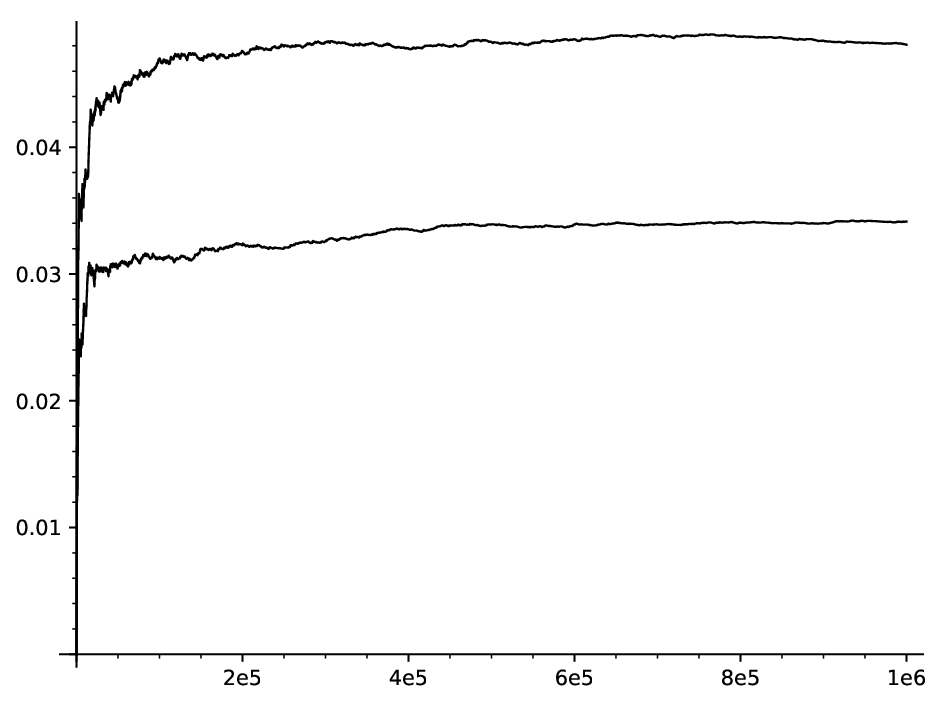}
\caption{$|l| = 8$: Top 8 bottom -8} \label{fig:17_6_od_A_8}
\end{subfigure}\hspace*{\fill}
\begin{subfigure}[b]{0.4\linewidth}
\includegraphics[width=\linewidth]{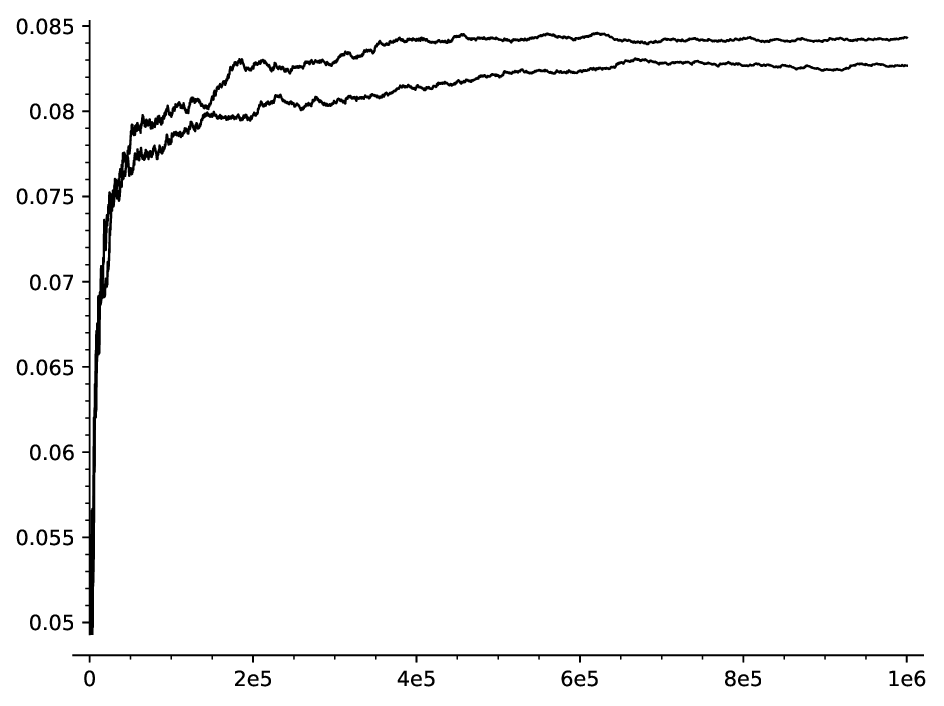}
\caption{$|l| = 9$: Top 9 bottom -9} \label{fig:17_6_odd_A_9}
\end{subfigure}
\caption{17a1: Ratio~\eqref{ratio_n_pm} $x_{6,E}^-(X;l)/X^{1/2}\log^2(X)$} \label{fig:17a1_6_odd_A_exact}
\end{figure}

\clearpage

\begin{figure}[t] % "[t!]" placement specifier just for this example
\hspace*{-2.3cm}
\begin{subfigure}[b]{0.4\linewidth}
\includegraphics[width=\linewidth]{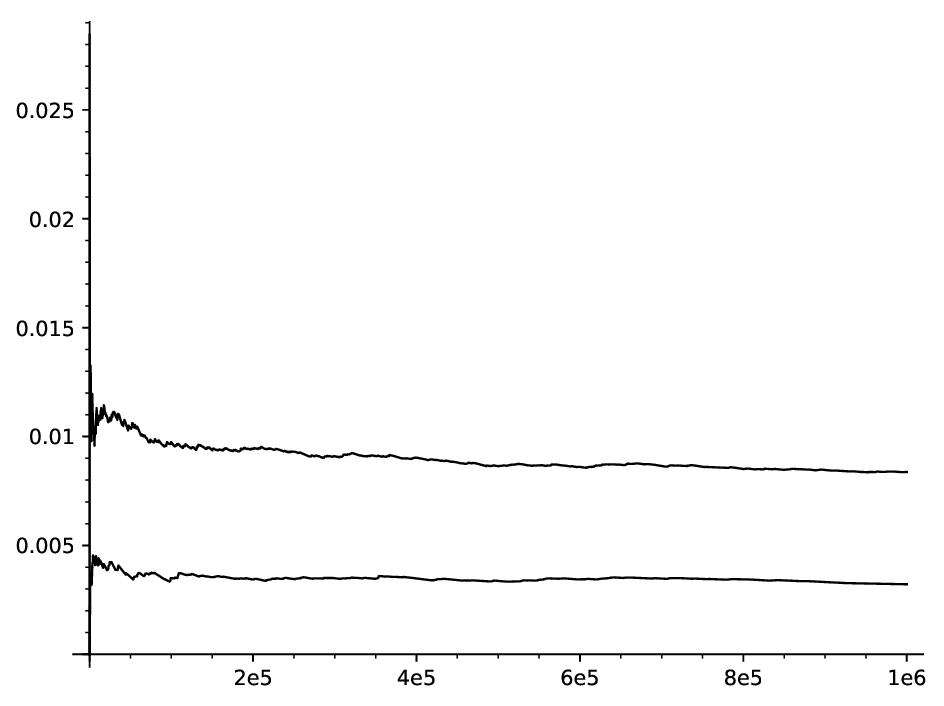}
\caption{$|l| = 1$: Top -1 bottom 1} \label{fig:19_6_even_A_1}
\end{subfigure}\hspace*{\fill}
\begin{subfigure}[b]{0.4\linewidth}
\includegraphics[width=\linewidth]{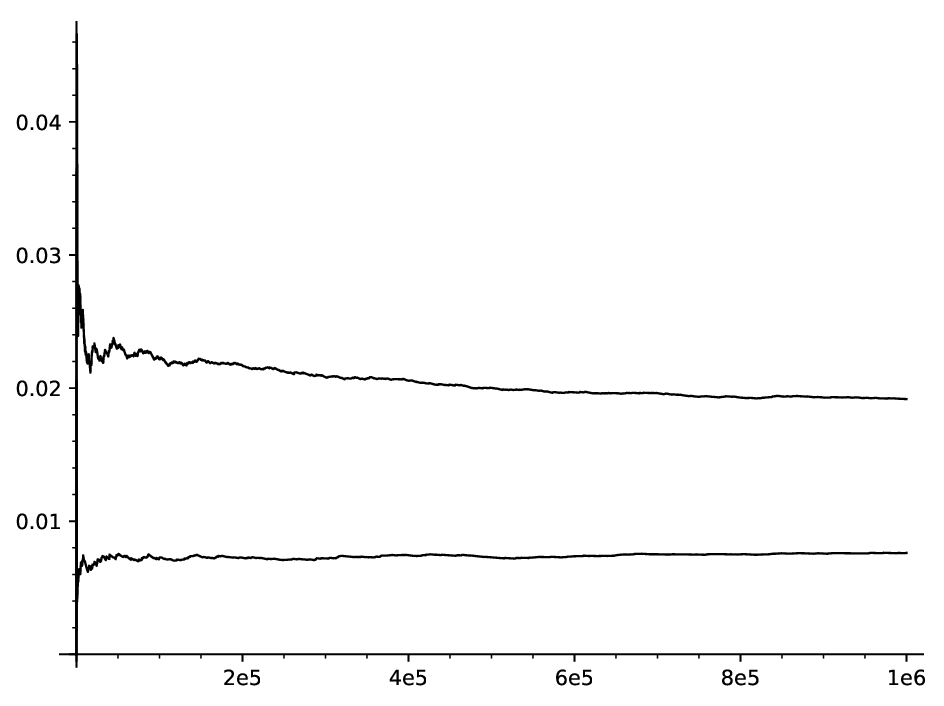}
\caption{$|l| = 2$: Top 2 bottom -2} \label{fig:19_6_even_A_2}
\end{subfigure}\hspace*{\fill}
\begin{subfigure}[b]{0.4\linewidth}
\includegraphics[width=\linewidth]{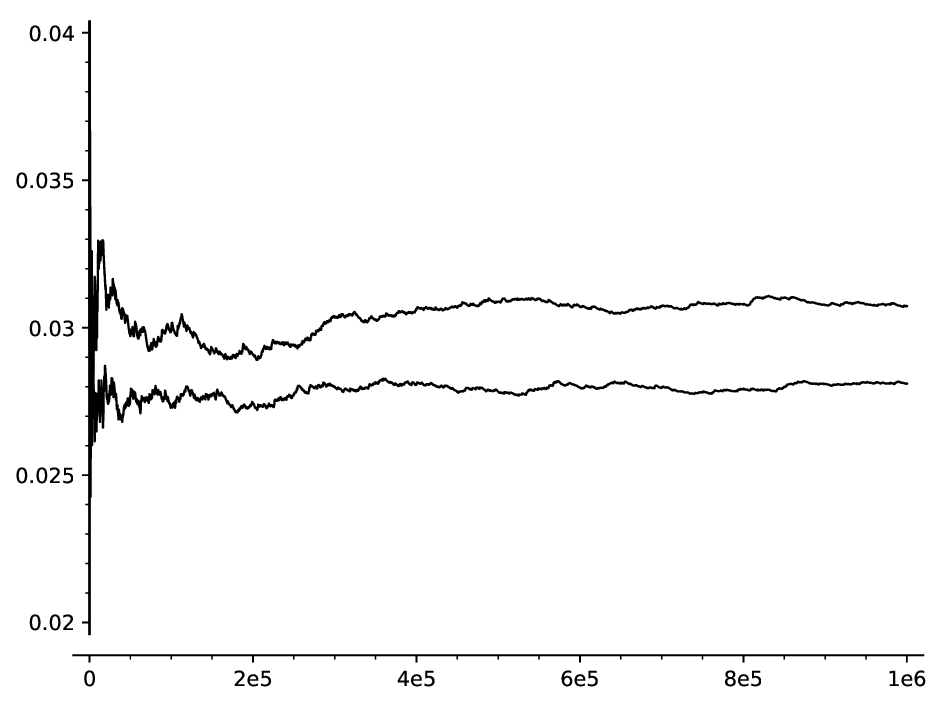}
\caption{$|l| = 3$: Top 3 bottom -3} \label{fig:19_6_even_A_3}
\end{subfigure}
\hspace*{-2.3cm}
\begin{subfigure}[b]{0.4\linewidth}
\includegraphics[width=\linewidth]{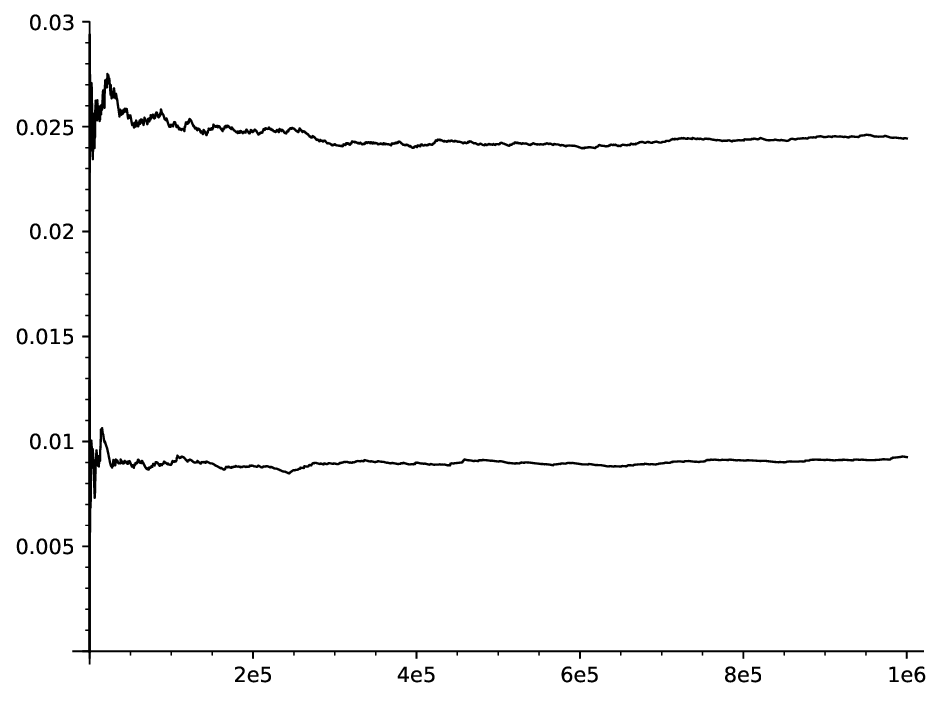}
\caption{$|l| = 4$: Top -4 bottom 4} \label{fig:19_6_even_A_4}
\end{subfigure}\hspace*{\fill}
\begin{subfigure}[b]{0.4\linewidth}
\includegraphics[width=\linewidth]{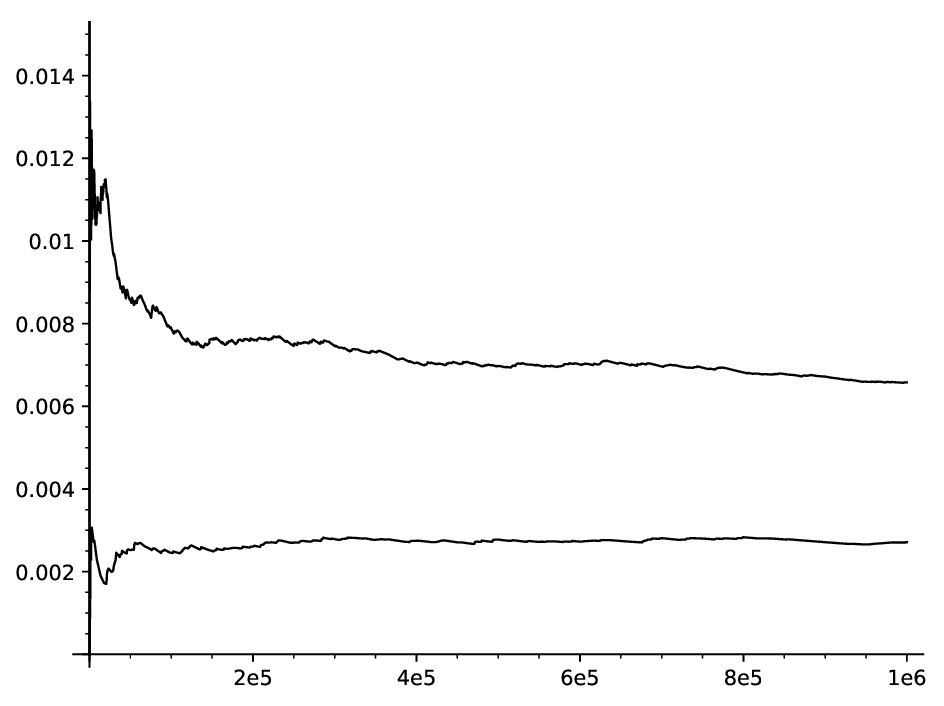}
\caption{$|l| = 5$: Top 5 bottom -5} \label{fig:19_6_even_A_5}
\end{subfigure}\hspace*{\fill}
\begin{subfigure}[b]{0.4\linewidth}
\includegraphics[width=\linewidth]{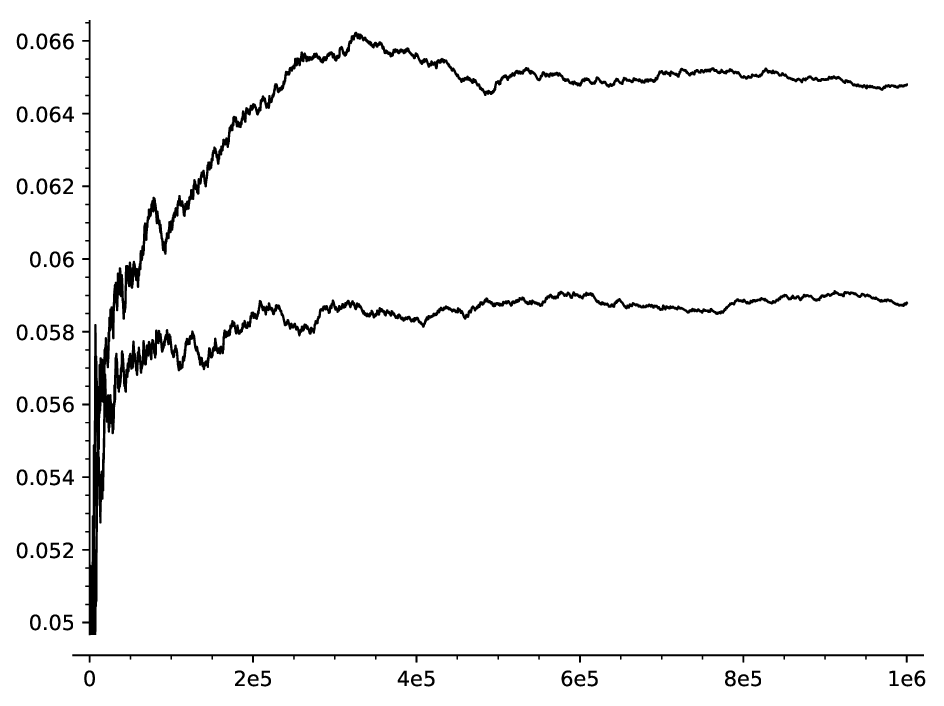}
\caption{$|l| = 6$: Top -6 bottom 6} \label{fig:19_6_even_A_6}
\end{subfigure}
\hspace*{-2.3cm}
\begin{subfigure}[b]{0.4\linewidth}
\includegraphics[width=\linewidth]{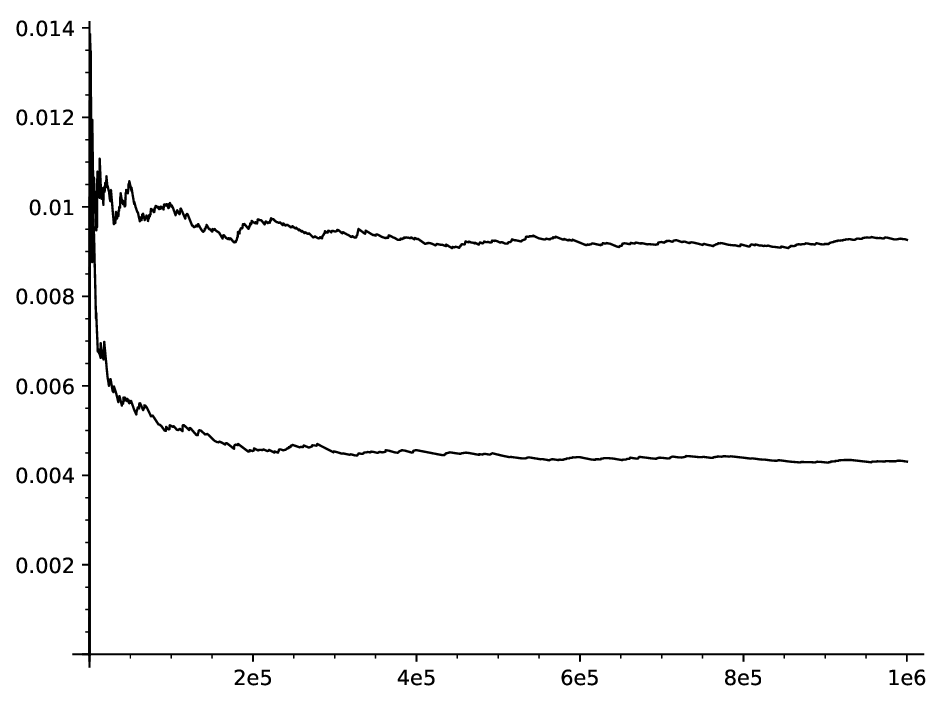}
\caption{$|l| = 7$: Top -7 bottom 7} \label{fig:19_6_even_A_7}
\end{subfigure}\hspace*{\fill}
\begin{subfigure}[b]{0.4\linewidth}
\includegraphics[width=\linewidth]{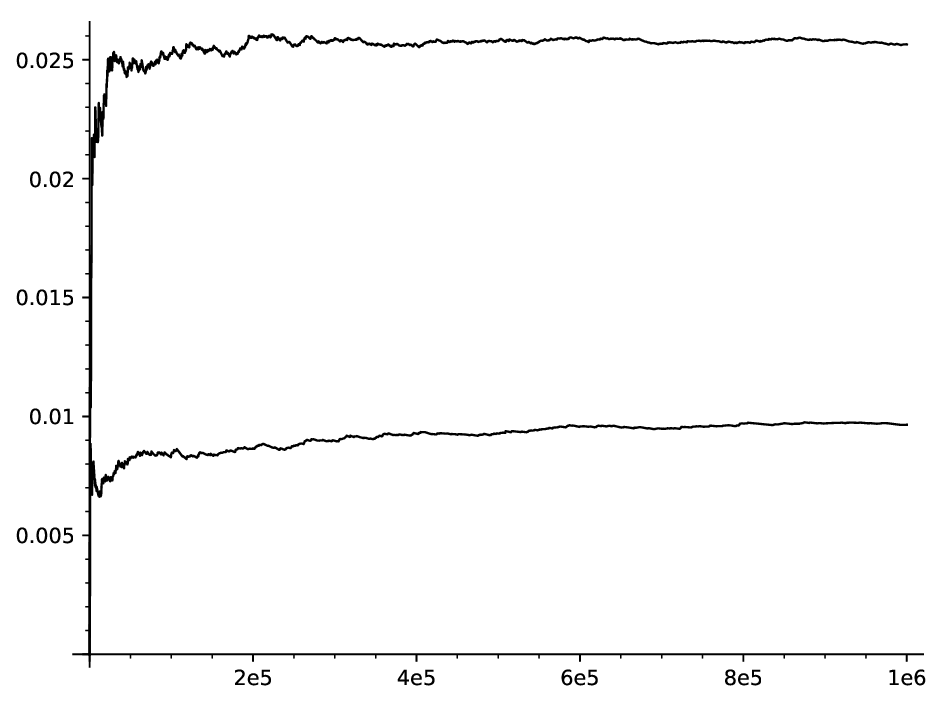}
\caption{$|l| = 8$: Top 8 bottom -8} \label{fig:19_6_even_A_8}
\end{subfigure}\hspace*{\fill}
\begin{subfigure}[b]{0.4\linewidth}
\includegraphics[width=\linewidth]{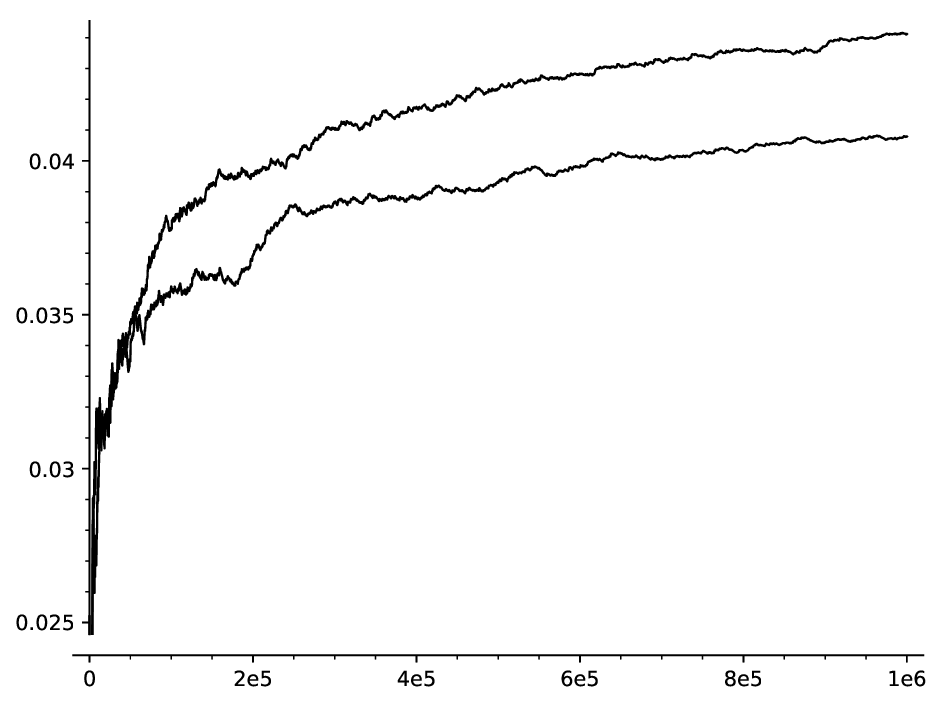}
\caption{$|l| = 9$: Top 9 bottom -9} \label{fig:19_6_even_A_9}
\end{subfigure}
\caption{19a1: Ratio~\eqref{ratio_n_pm} $x_{6,E}^+(X;l)/X^{1/2}\log^2(X)$} \label{fig:19a1_6_even_A_exact}
\end{figure}

\begin{figure}[b] % "[t!]" placement specifier just for this example
\hspace*{-2.3cm}
\begin{subfigure}[b]{0.4\linewidth}
\includegraphics[width=\linewidth]{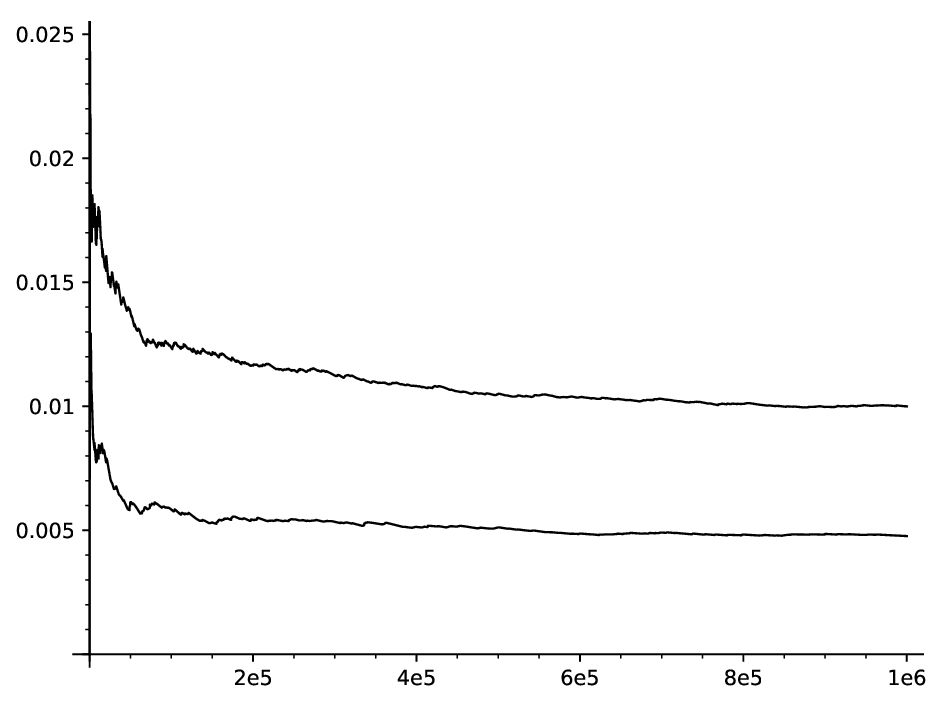}
\caption{$|l| = 1$: Top -1 bottom 1} \label{fig:19_6_odd_A_1}
\end{subfigure}\hspace*{\fill}
\begin{subfigure}[b]{0.4\linewidth}
\includegraphics[width=\linewidth]{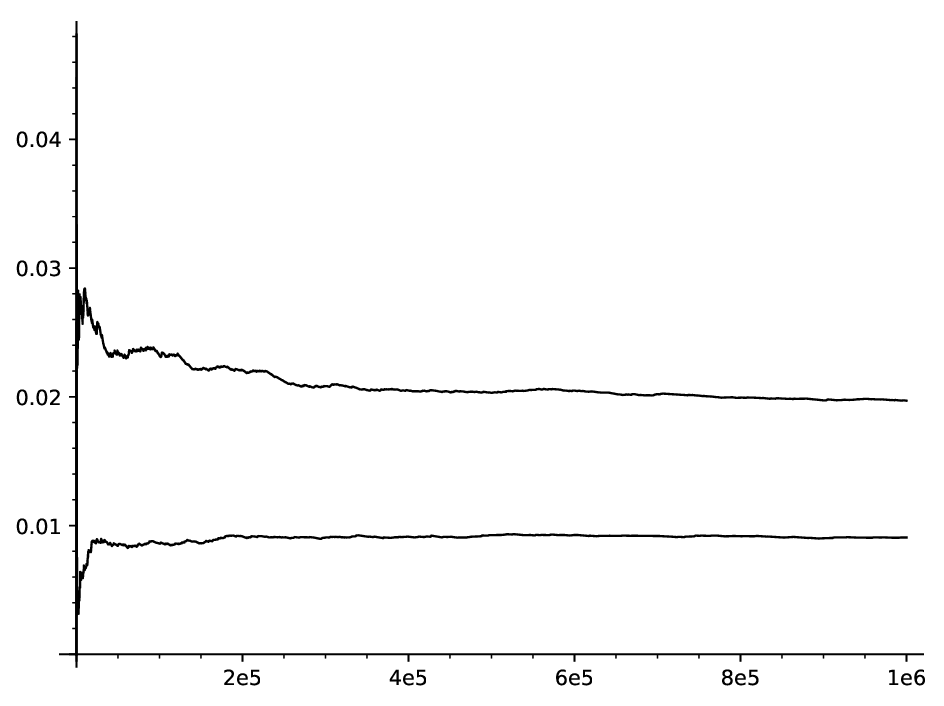}
\caption{$|l| = 2$: Top 2 bottom -2} \label{fig:19_6_odd_A_2}
\end{subfigure}\hspace*{\fill}
\begin{subfigure}[b]{0.4\linewidth}
\includegraphics[width=\linewidth]{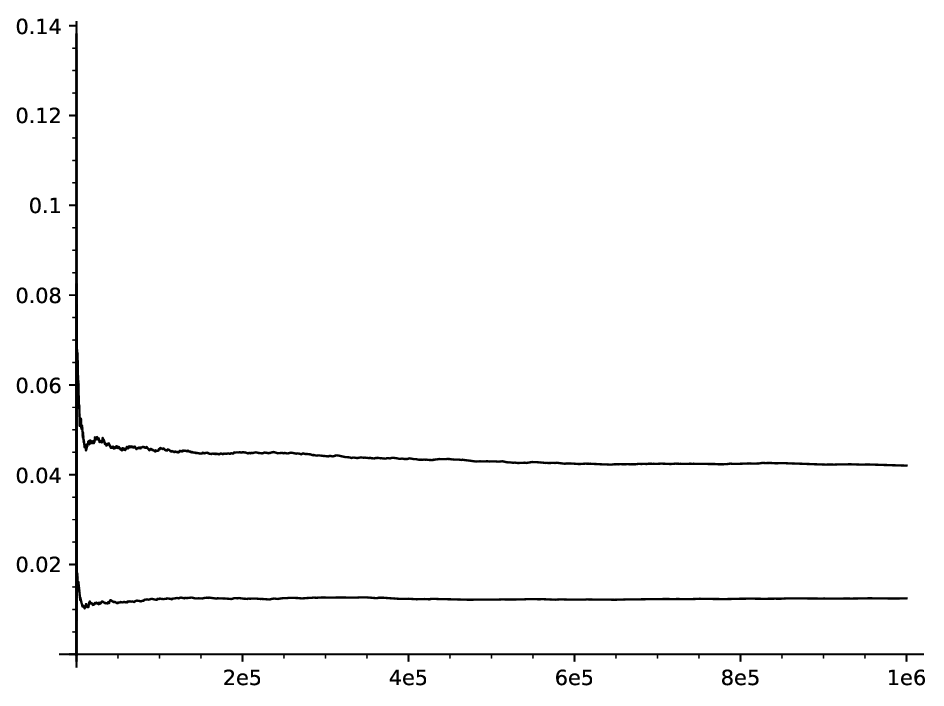}
\caption{$|l| = 3$: Top -3 bottom 3} \label{fig:19_6_odd_A_3}
\end{subfigure}
\hspace*{-2.3cm}
\begin{subfigure}[b]{0.4\linewidth}
\includegraphics[width=\linewidth]{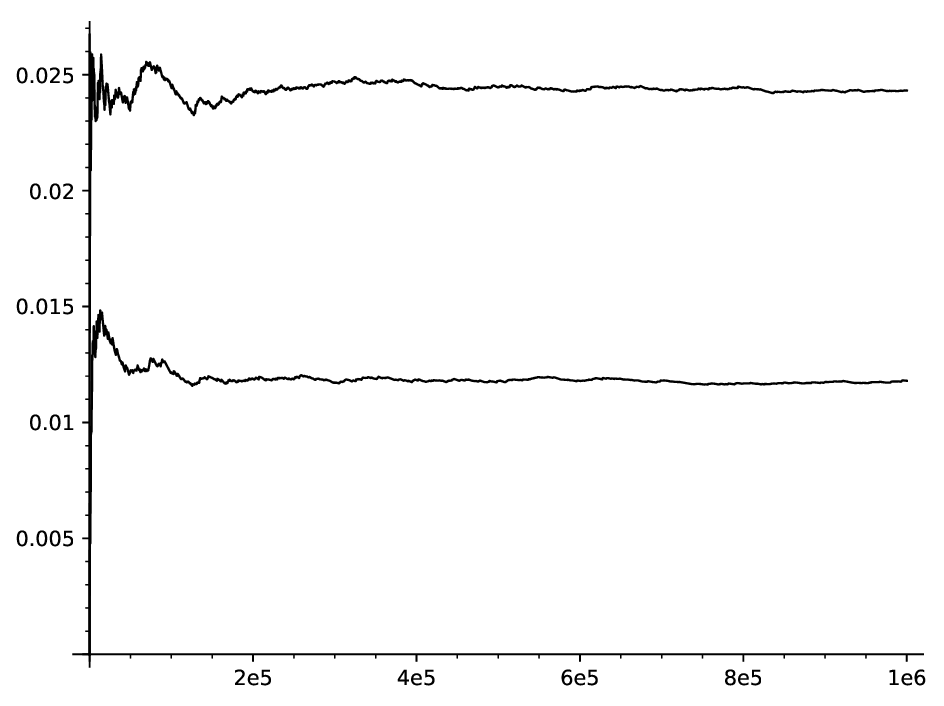}
\caption{$|l| = 4$: Top -4 bottom 4} \label{fig:19_6_odd_A_4}
\end{subfigure}\hspace*{\fill}
\begin{subfigure}[b]{0.4\linewidth}
\includegraphics[width=\linewidth]{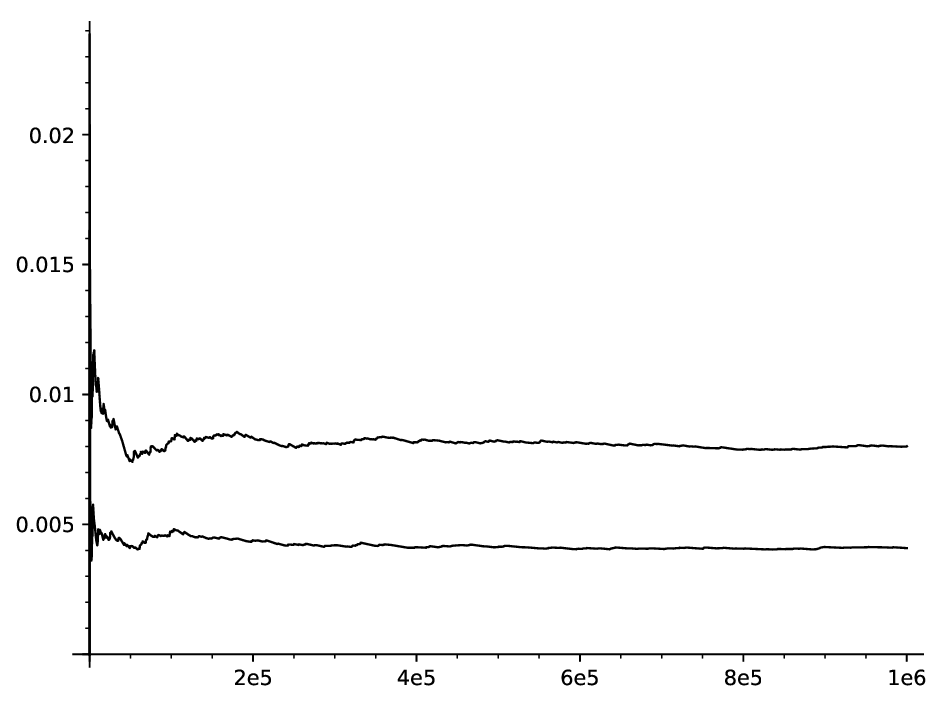}
\caption{$|l| = 5$: Top 5 bottom -5} \label{fig:19_6_odd_A_5}
\end{subfigure}\hspace*{\fill}
\begin{subfigure}[b]{0.4\linewidth}
\includegraphics[width=\linewidth]{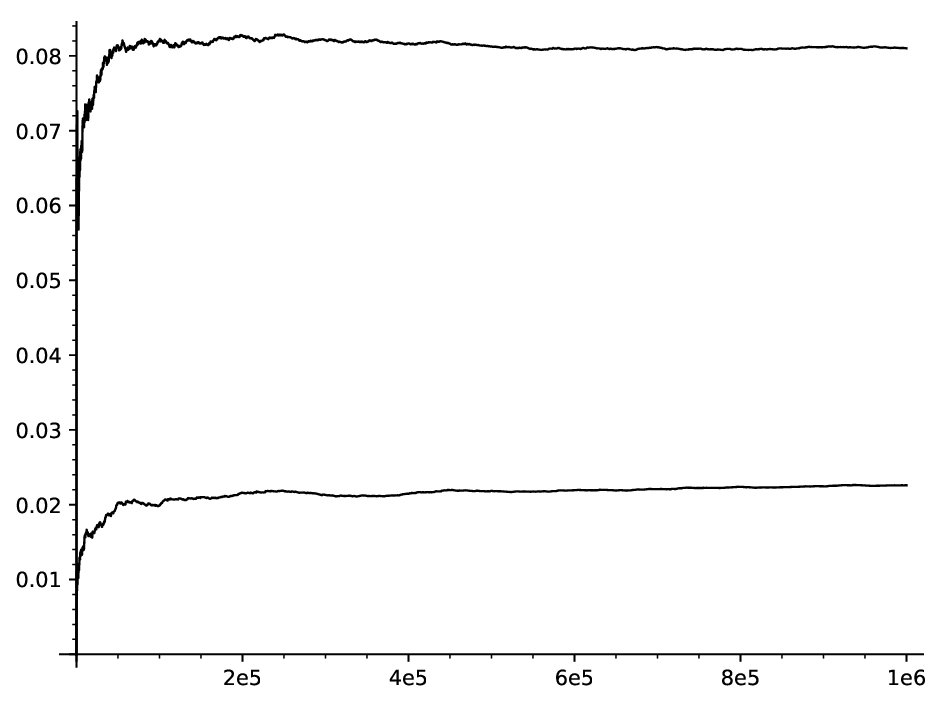}
\caption{$|l| = 6$: Top 6 bottom -6} \label{fig:19_6_odd_A_6}
\end{subfigure}
\hspace*{-2.3cm}
\begin{subfigure}[b]{0.4\linewidth}
\includegraphics[width=\linewidth]{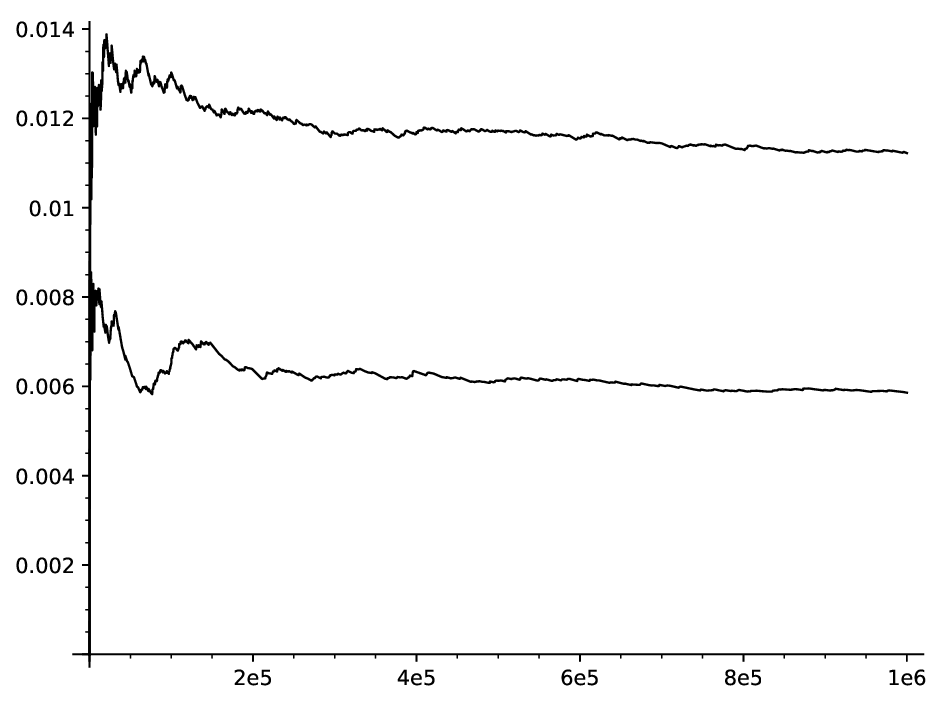}
\caption{$|l| = 7$: Top -7 bottom 7} \label{fig:19_6_odd_A_7}
\end{subfigure}\hspace*{\fill}
\begin{subfigure}[b]{0.4\linewidth}
\includegraphics[width=\linewidth]{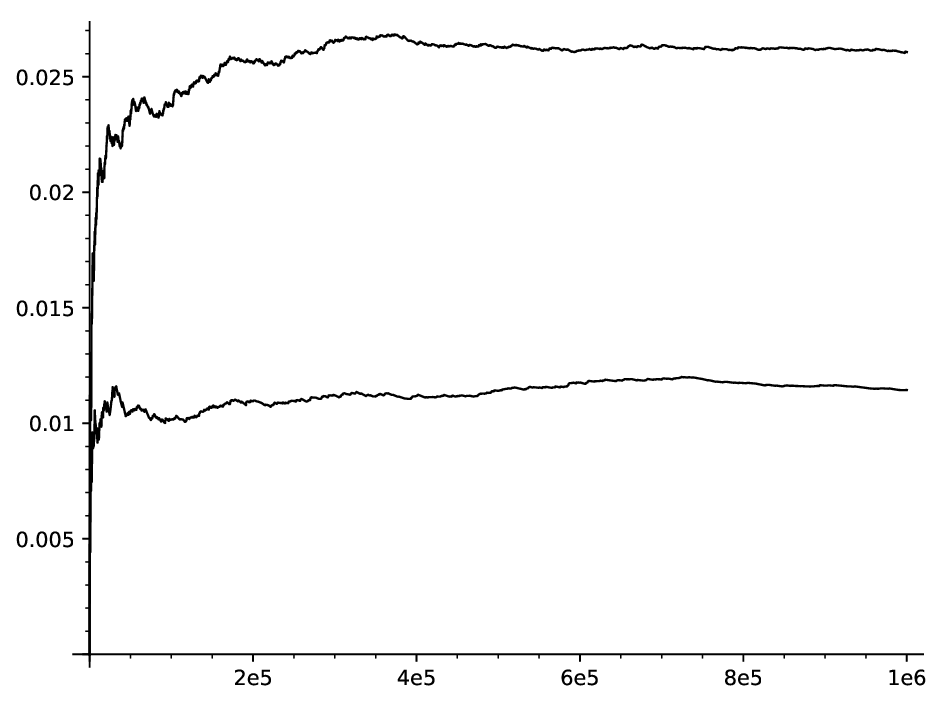}
\caption{$|l| = 8$: Top 8 bottom -8} \label{fig:19_6_od_A_8}
\end{subfigure}\hspace*{\fill}
\begin{subfigure}[b]{0.4\linewidth}
\includegraphics[width=\linewidth]{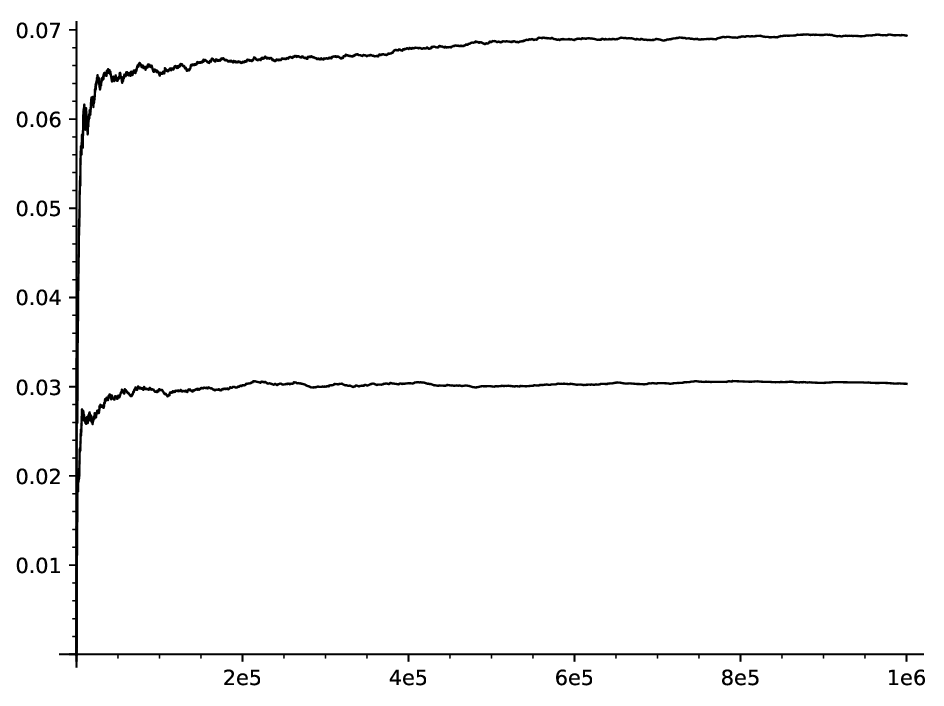}
\caption{$|l| = 9$: Top -9 bottom 9} \label{fig:19_6_odd_A_9}
\end{subfigure}
\caption{19a1: Ratio~\eqref{ratio_n_pm} $x_{6,E}^-(X;l)/X^{1/2}\log^2(X)$} \label{fig:19a1_6_odd_A_exact}
\end{figure}

\clearpage

\begin{figure}[t] % "[t!]" placement specifier just for this example
\hspace*{-2.3cm}
\begin{subfigure}[b]{0.4\linewidth}
\includegraphics[width=\linewidth]{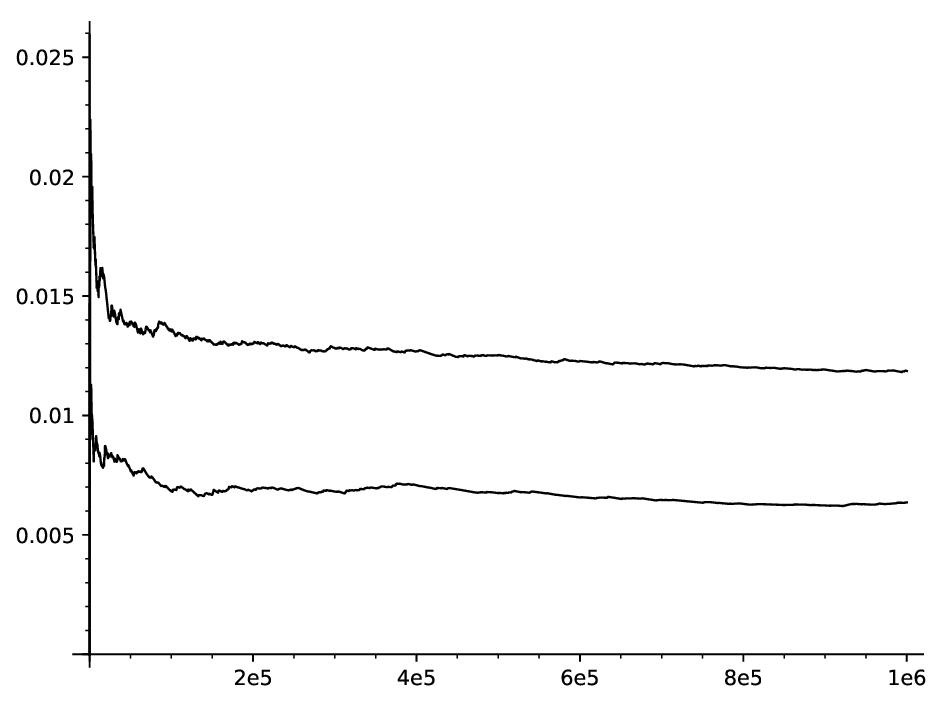}
\caption{$|l| = 1$: Top -1 bottom 1} \label{fig:37_6_even_A_1}
\end{subfigure}\hspace*{\fill}
\begin{subfigure}[b]{0.4\linewidth}
\includegraphics[width=\linewidth]{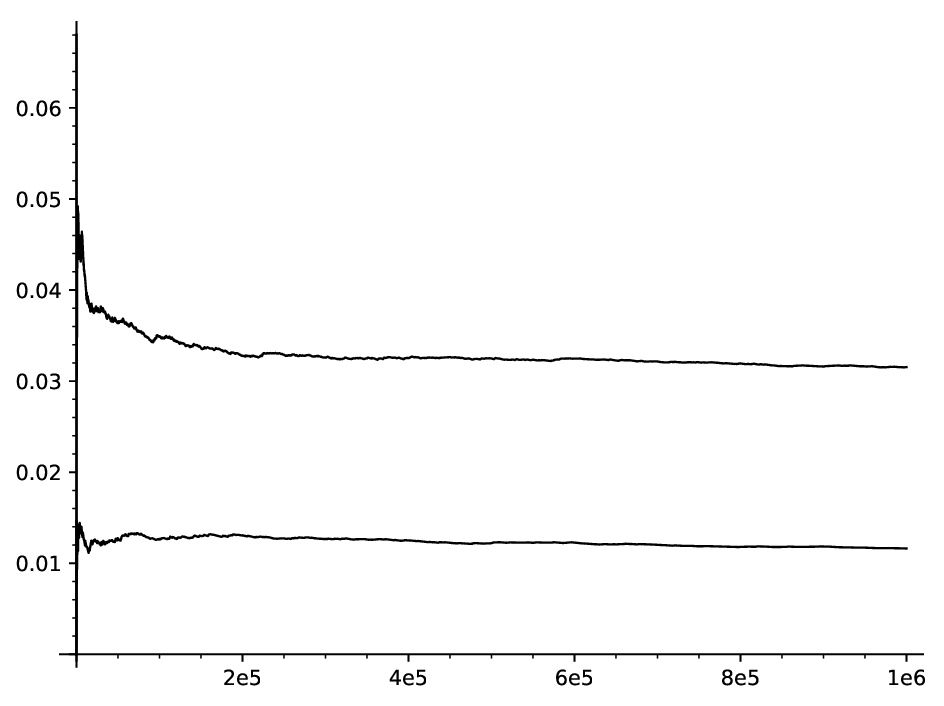}
\caption{$|l| = 2$: Top 2 bottom -2} \label{fig:37_6_even_A_2}
\end{subfigure}\hspace*{\fill}
\begin{subfigure}[b]{0.4\linewidth}
\includegraphics[width=\linewidth]{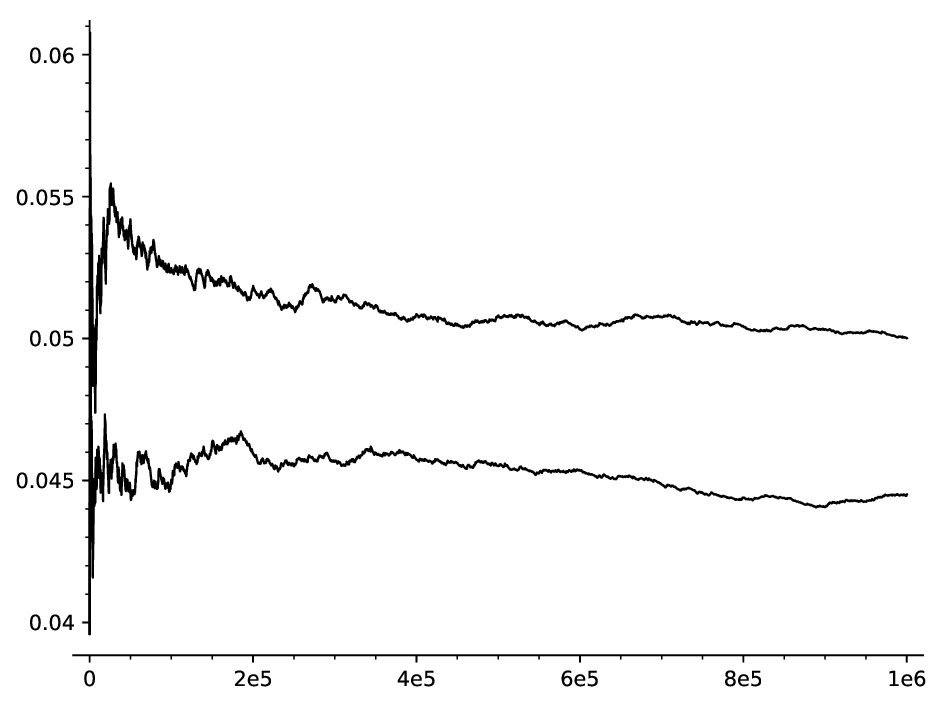}
\caption{$|l| = 3$: Top 3 bottom -3} \label{fig:37_6_even_A_3}
\end{subfigure}
\hspace*{-2.3cm}
\begin{subfigure}[b]{0.4\linewidth}
\includegraphics[width=\linewidth]{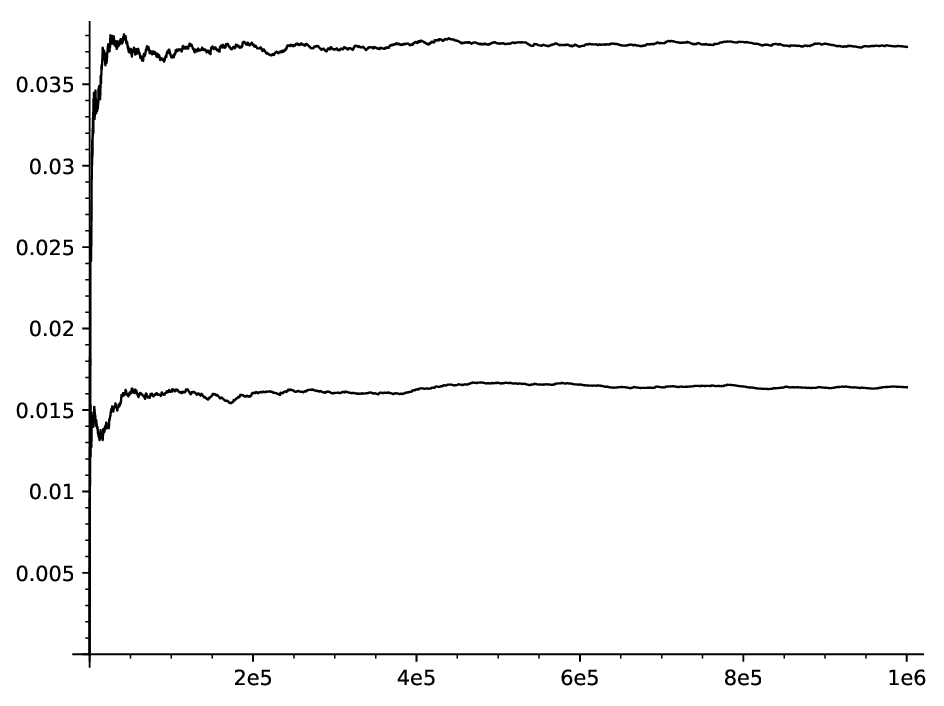}
\caption{$|l| = 4$: Top -4 bottom 4} \label{fig:37_6_even_A_4}
\end{subfigure}\hspace*{\fill}
\begin{subfigure}[b]{0.4\linewidth}
\includegraphics[width=\linewidth]{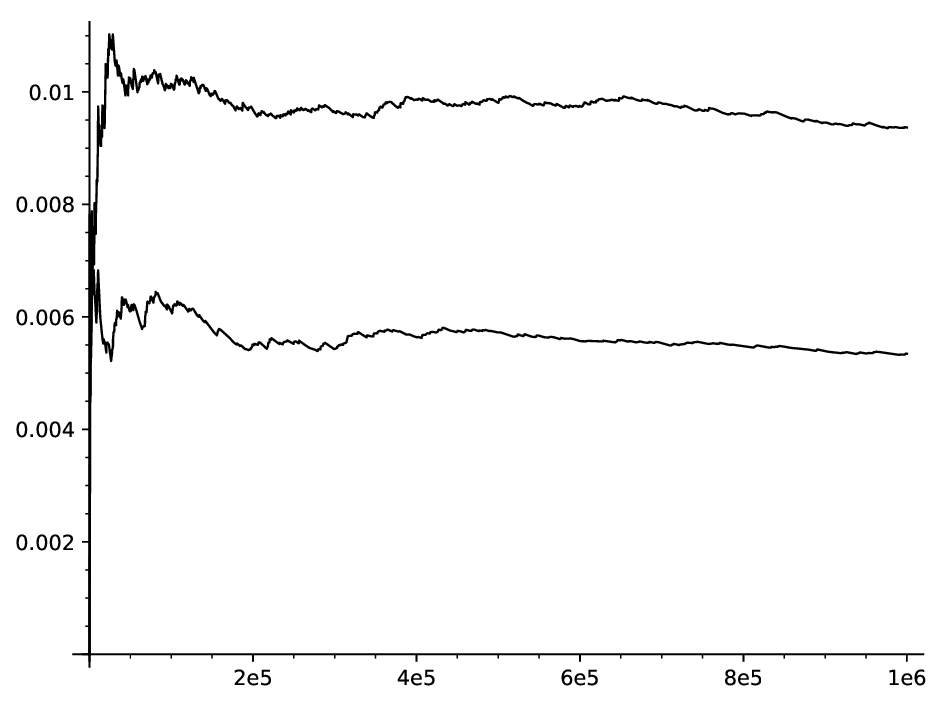}
\caption{$|l| = 5$: Top 5 bottom -5} \label{fig:37_6_even_A_5}
\end{subfigure}\hspace*{\fill}
\begin{subfigure}[b]{0.4\linewidth}
\includegraphics[width=\linewidth]{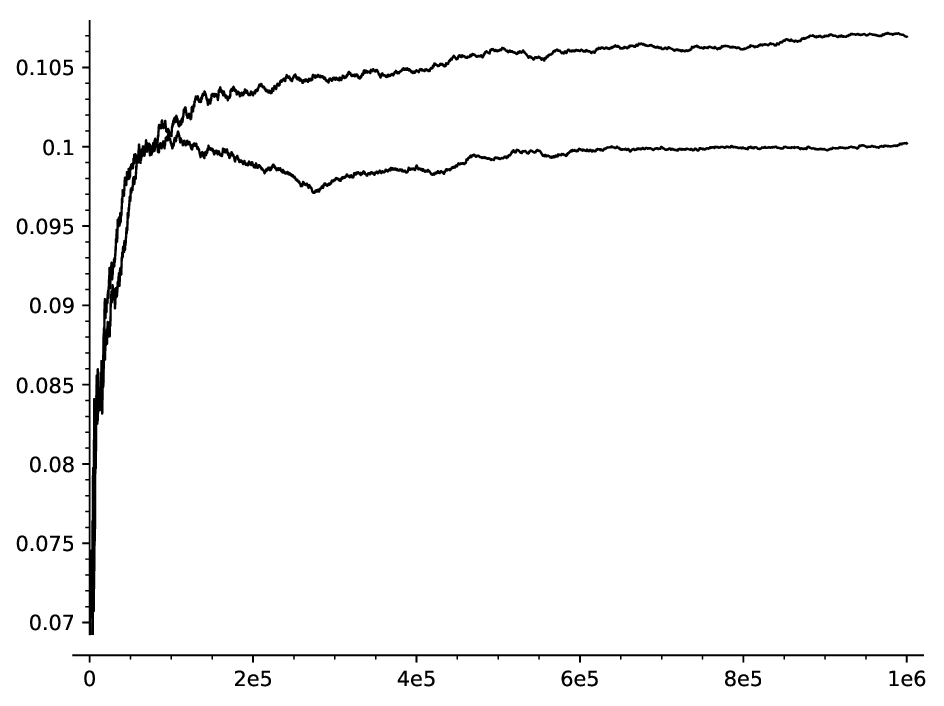}
\caption{$|l| = 6$: Top -6 bottom 6} \label{fig:37_6_even_A_6}
\end{subfigure}
\hspace*{-2.3cm}
\begin{subfigure}[b]{0.4\linewidth}
\includegraphics[width=\linewidth]{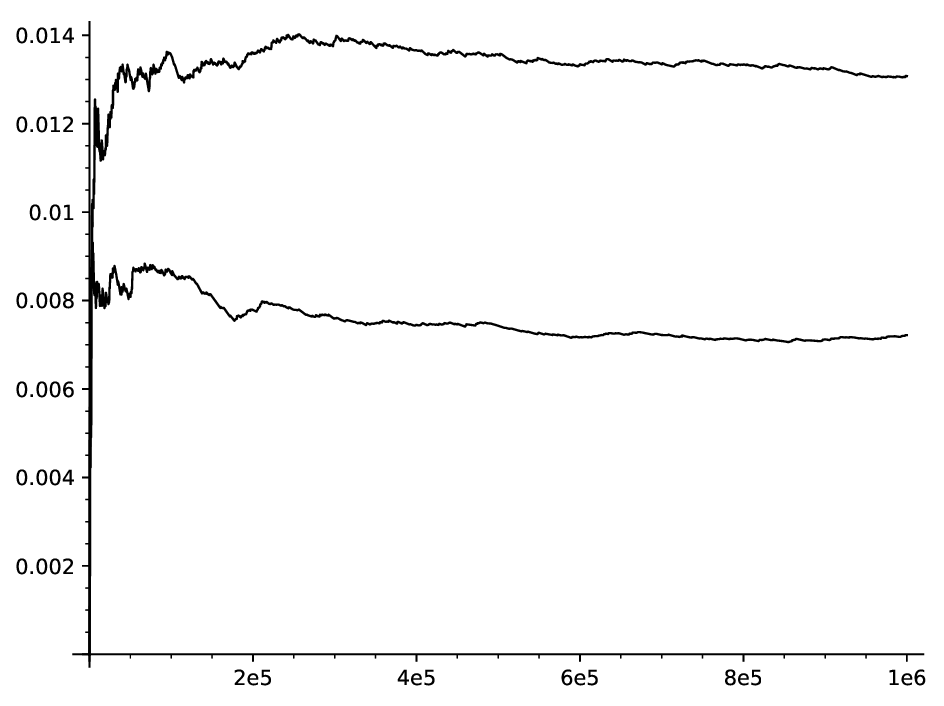}
\caption{$|l| = 7$: Top -7 bottom 7} \label{fig:37_6_even_A_7}
\end{subfigure}\hspace*{\fill}
\begin{subfigure}[b]{0.4\linewidth}
\includegraphics[width=\linewidth]{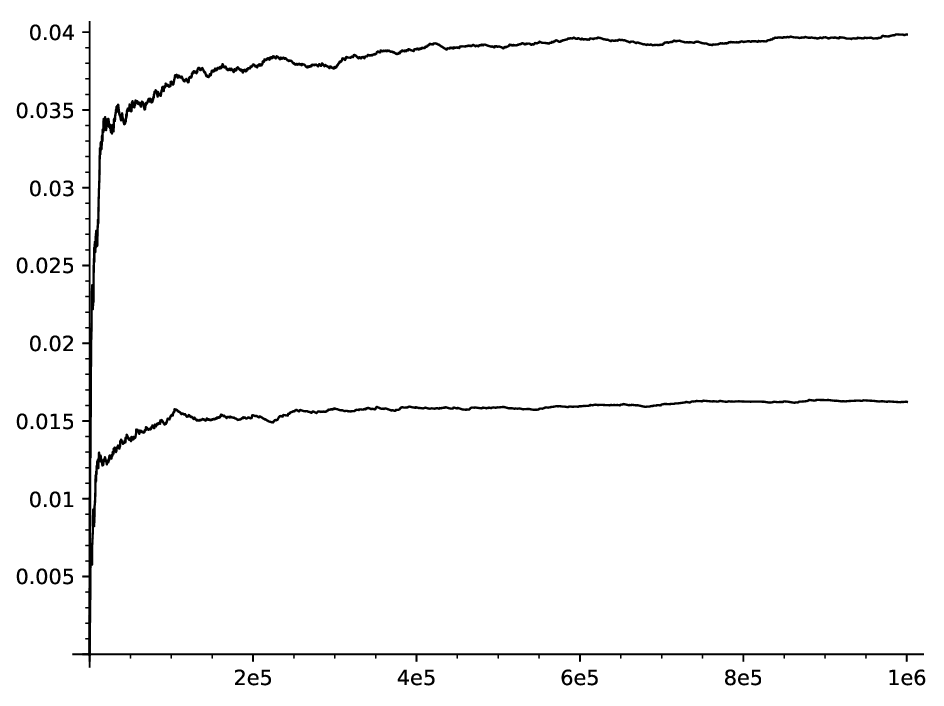}
\caption{$|l| = 8$: Top 8 bottom -8} \label{fig:37_6_even_A_8}
\end{subfigure}\hspace*{\fill}
\begin{subfigure}[b]{0.4\linewidth}
\includegraphics[width=\linewidth]{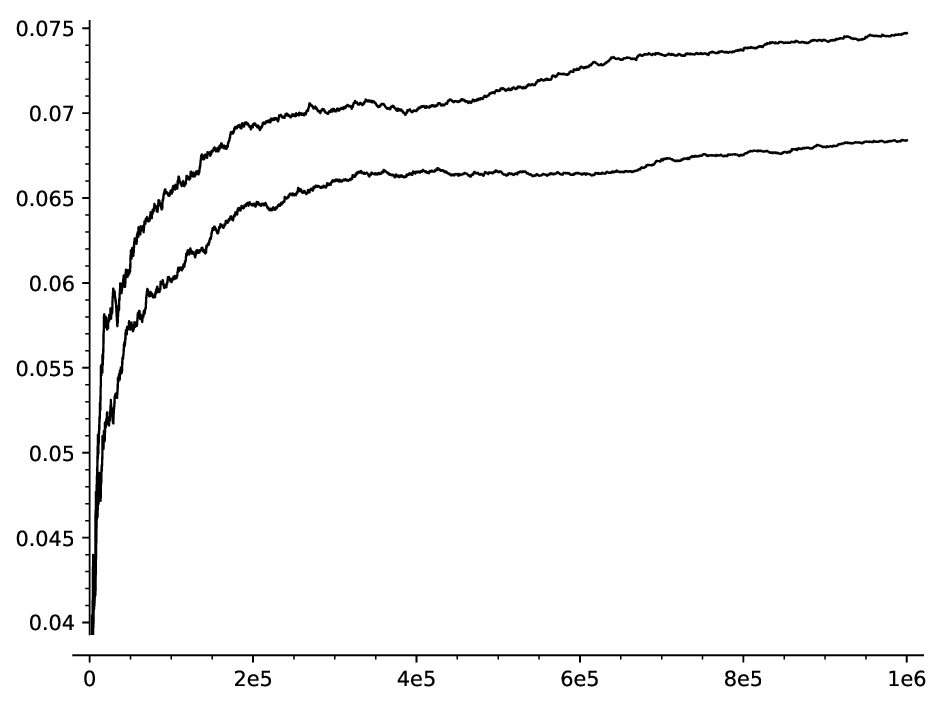}
\caption{$|l| = 9$: Top 9 bottom -9} \label{fig:37_6_even_A_9}
\end{subfigure}
\caption{37b1: Ratio~\eqref{ratio_n_pm} $x_{6,E}^+(X;l)/X^{1/2}\log^2(X)$} \label{fig:37b1_6_even_A_exact}
\end{figure}

\begin{figure}[b] % "[t!]" placement specifier just for this example
\hspace*{-2.3cm}
\begin{subfigure}[b]{0.4\linewidth}
\includegraphics[width=\linewidth]{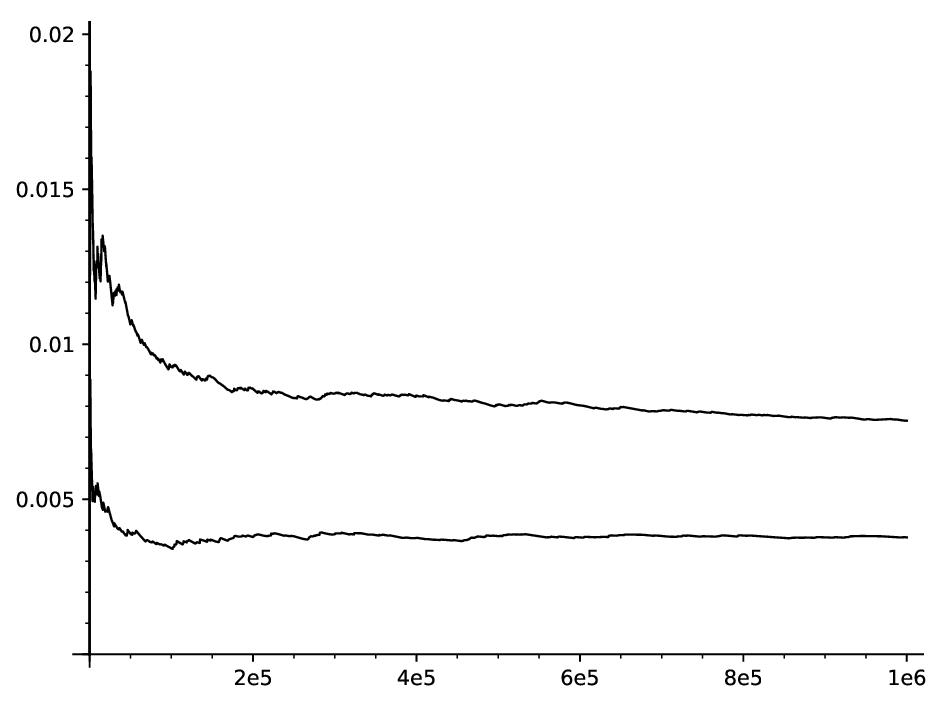}
\caption{$|l| = 1$: Top 1 bottom -1} \label{fig:37_6_odd_A_1}
\end{subfigure}\hspace*{\fill}
\begin{subfigure}[b]{0.4\linewidth}
\includegraphics[width=\linewidth]{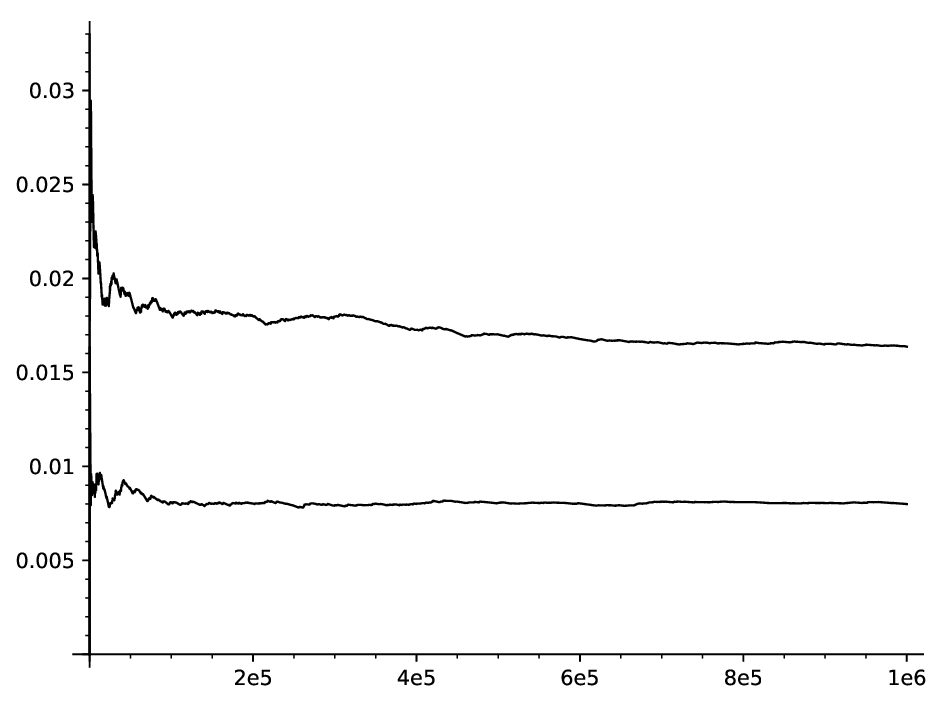}
\caption{$|l| = 2$: Top -2 bottom 2} \label{fig:37_6_odd_A_2}
\end{subfigure}\hspace*{\fill}
\begin{subfigure}[b]{0.4\linewidth}
\includegraphics[width=\linewidth]{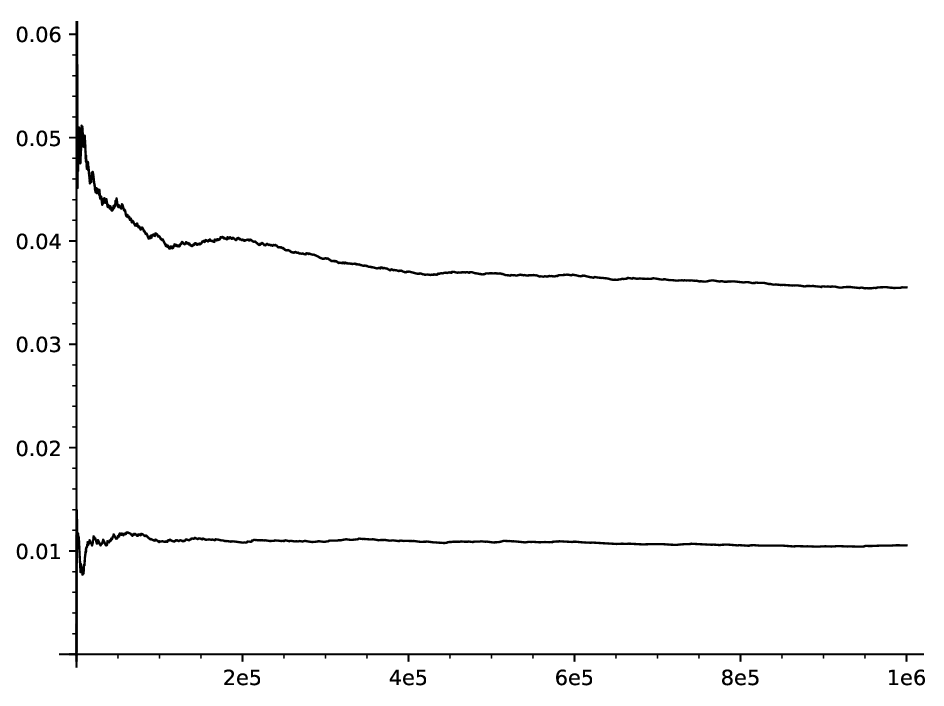}
\caption{$|l| = 3$: Top 3 bottom -3} \label{fig:37_6_odd_A_3}
\end{subfigure}
\hspace*{-2.3cm}
\begin{subfigure}[b]{0.4\linewidth}
\includegraphics[width=\linewidth]{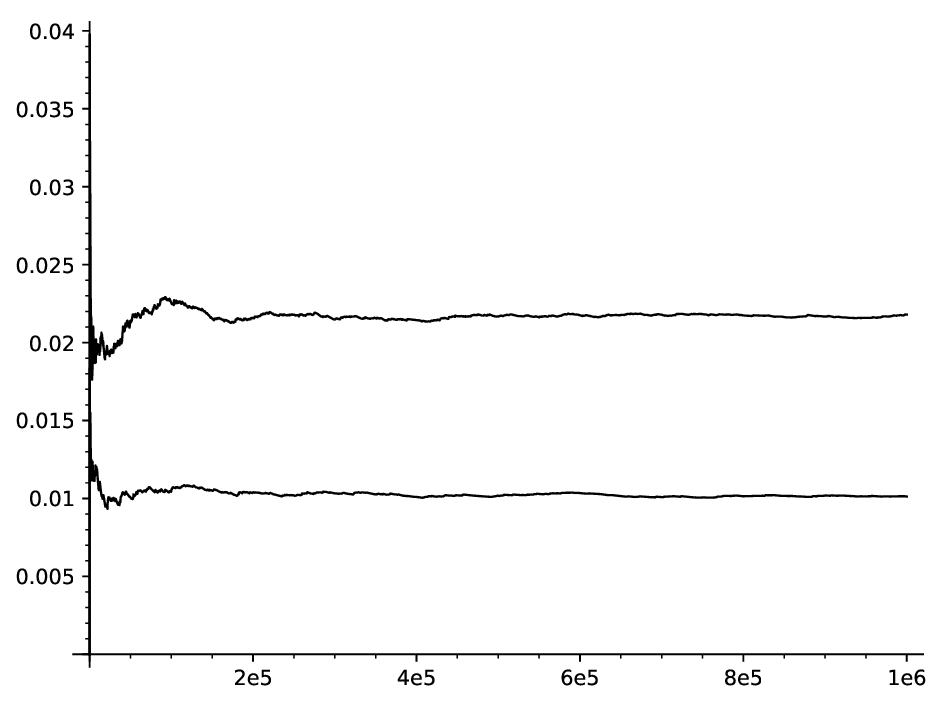}
\caption{$|l| = 4$: Top 4 bottom -4} \label{fig:37_6_odd_A_4}
\end{subfigure}\hspace*{\fill}
\begin{subfigure}[b]{0.4\linewidth}
\includegraphics[width=\linewidth]{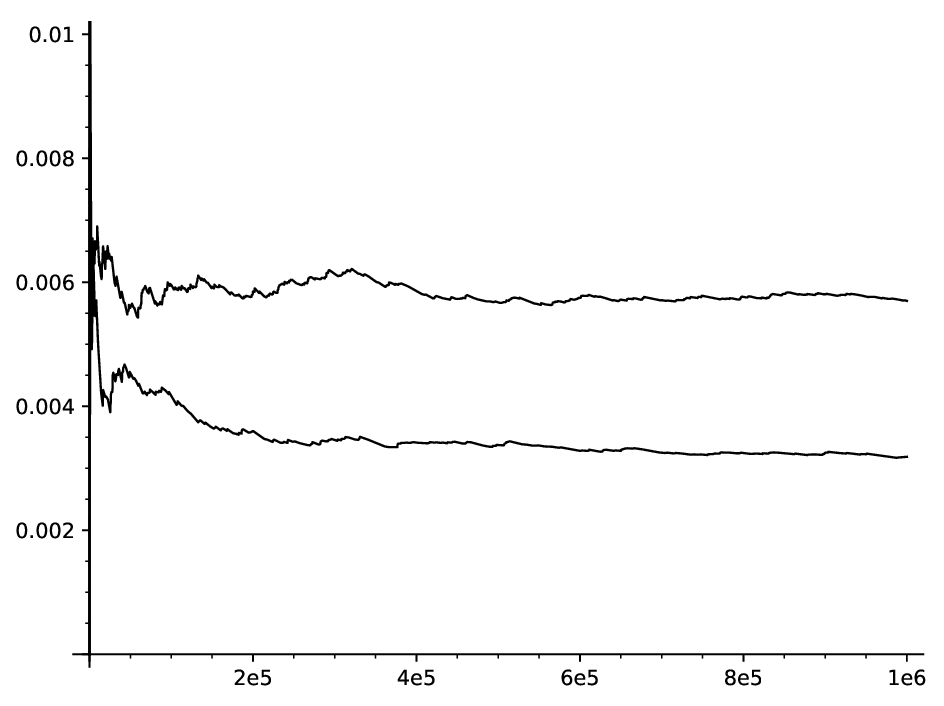}
\caption{$|l| = 5$: Top -5 bottom 5} \label{fig:37_6_odd_A_5}
\end{subfigure}\hspace*{\fill}
\begin{subfigure}[b]{0.4\linewidth}
\includegraphics[width=\linewidth]{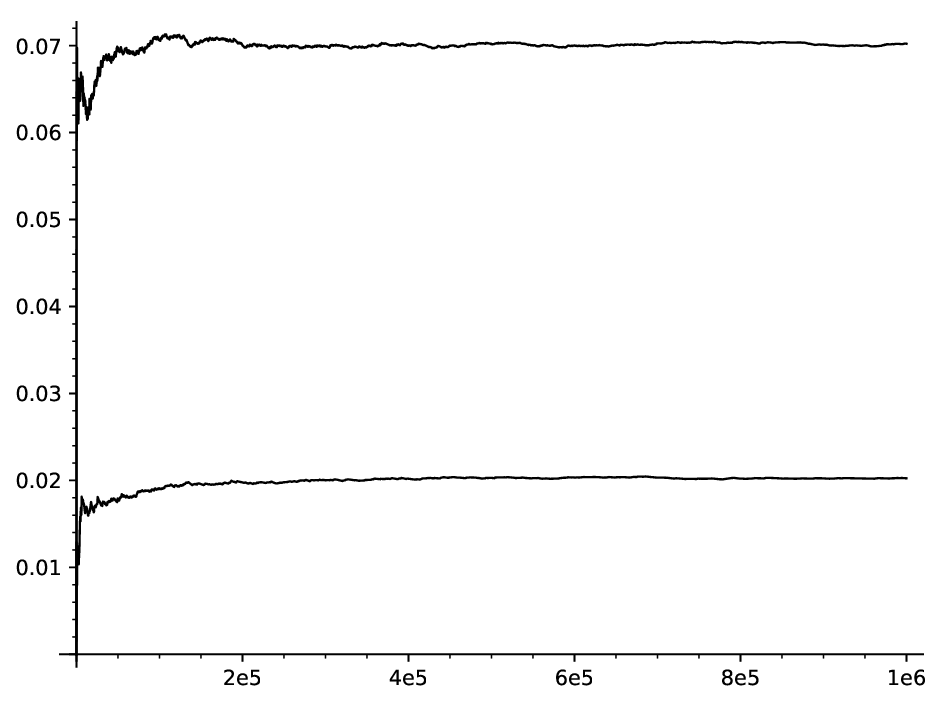}
\caption{$|l| = 6$: Top -6 bottom 6} \label{fig:37_6_odd_A_6}
\end{subfigure}
\hspace*{-2.3cm}
\begin{subfigure}[b]{0.4\linewidth}
\includegraphics[width=\linewidth]{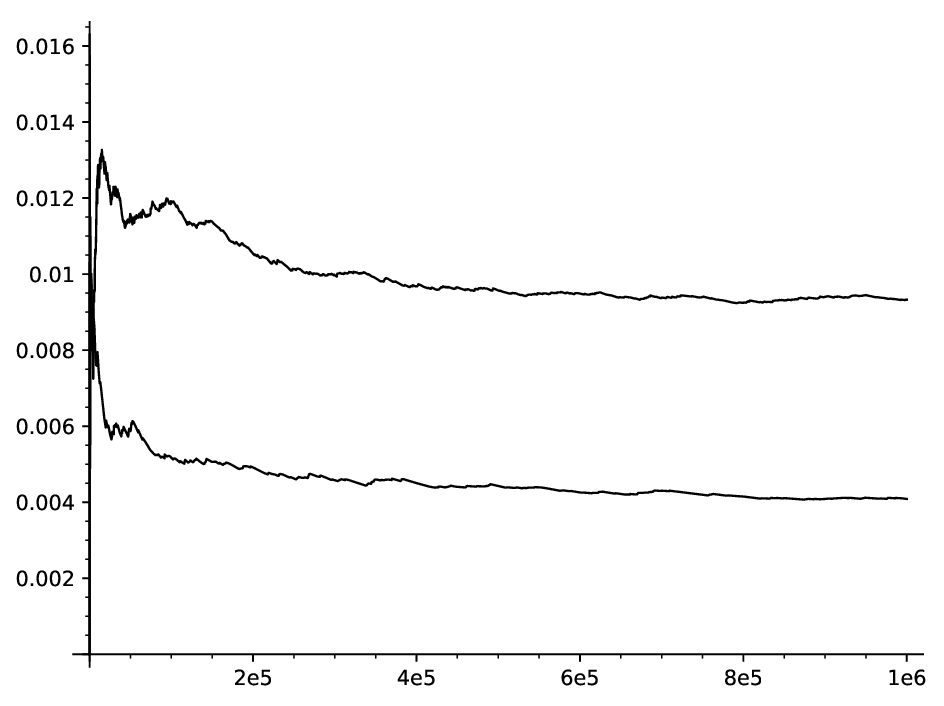}
\caption{$|l| = 7$: Top 7 bottom -7} \label{fig:37_6_odd_A_7}
\end{subfigure}\hspace*{\fill}
\begin{subfigure}[b]{0.4\linewidth}
\includegraphics[width=\linewidth]{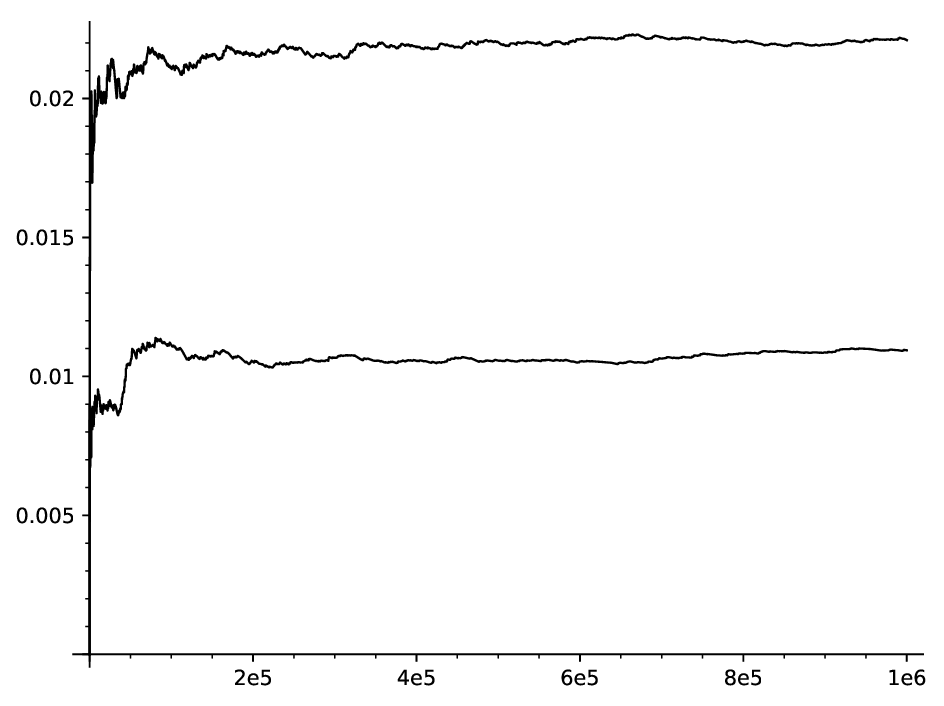}
\caption{$|l| = 8$: Top -8 bottom 8} \label{fig:37_6_od_A_8}
\end{subfigure}\hspace*{\fill}
\begin{subfigure}[b]{0.4\linewidth}
\includegraphics[width=\linewidth]{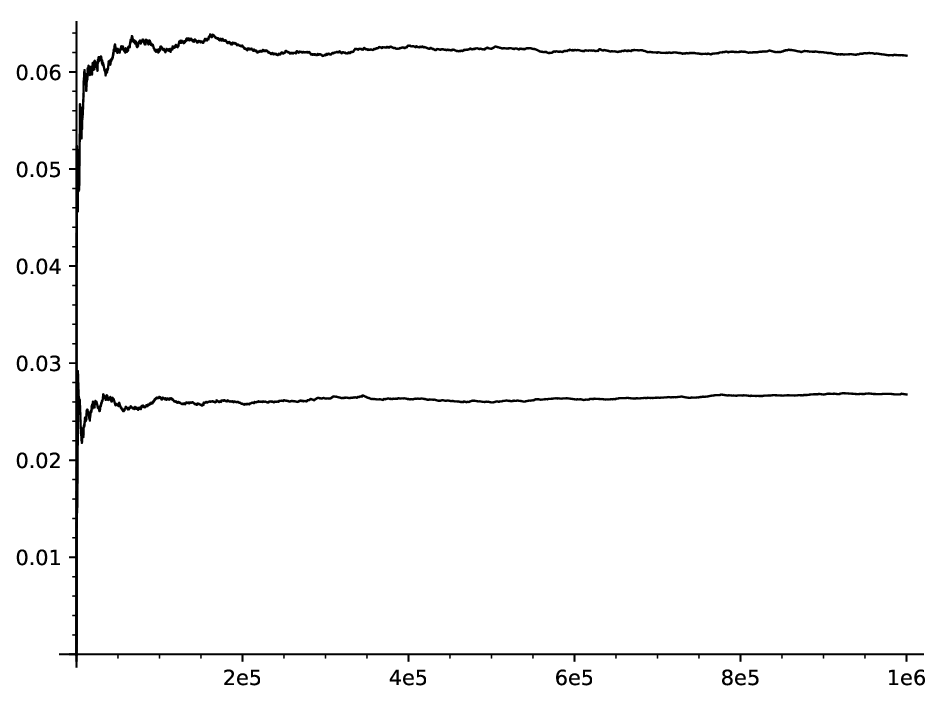}
\caption{$|l| = 9$: Top 9 bottom -9} \label{fig:37_6_odd_A_9}
\end{subfigure}
\caption{37b1: Ratio~\eqref{ratio_n_pm} $x_{6,E}^-(X;l)/X^{1/2}\log^2(X)$} \label{fig:37b1_6_odd_A_exact}
\end{figure}

\clearpage

\begin{figure}[b!] % "[t!]" placement specifier just for this example
\hspace*{-.7cm}
\begin{subfigure}[b]{0.4\linewidth}
\includegraphics[width=\linewidth]{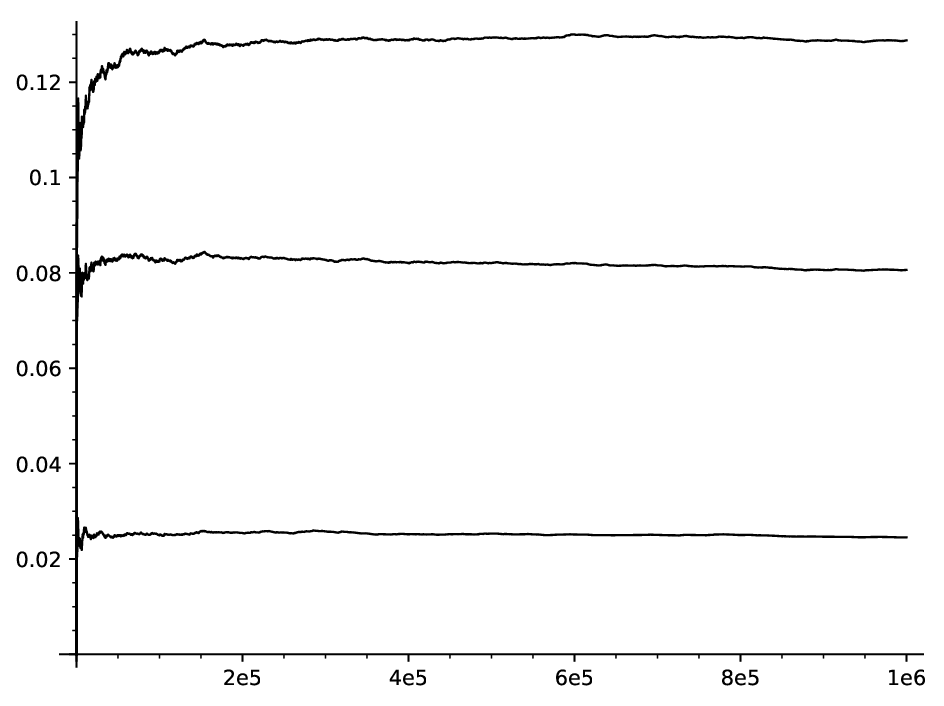}
\caption{11a1: $(\alpha, \beta) = (1,3)$} \label{fig:11_6_1_3_acc_A_orders}
\end{subfigure}\hspace*{\fill}
\begin{subfigure}[b]{0.4\linewidth}
\includegraphics[width=\linewidth]{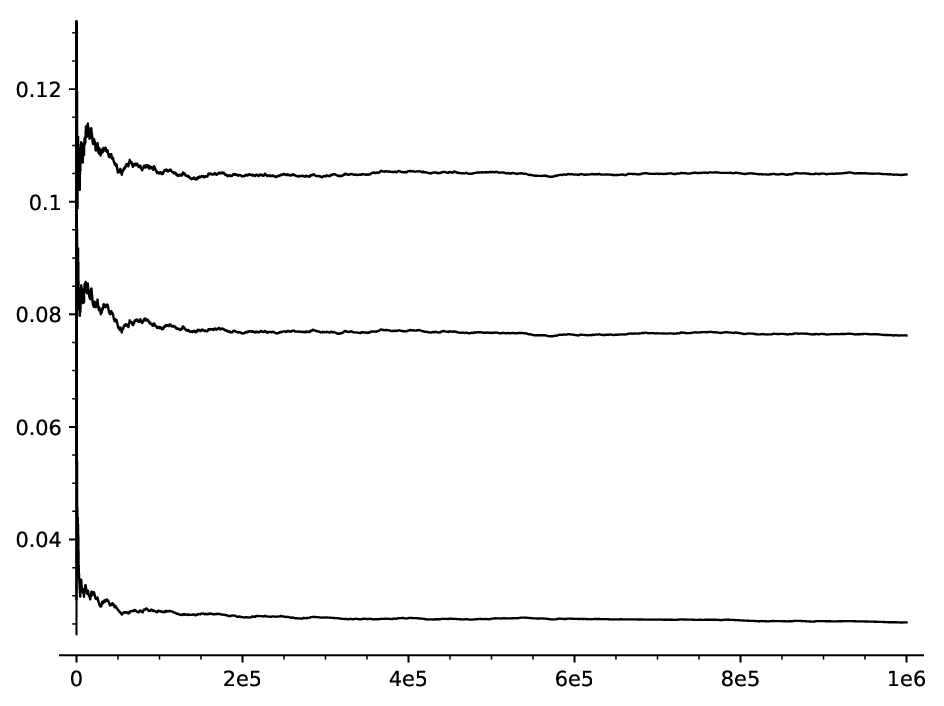}
\caption{11a1: $(\alpha, \beta) = (2,3)$} \label{fig:11_6_2_3_acc_A_orders}
\end{subfigure}
\hspace*{-.7cm}
\begin{subfigure}[b]{0.4\linewidth}
\includegraphics[width=\linewidth]{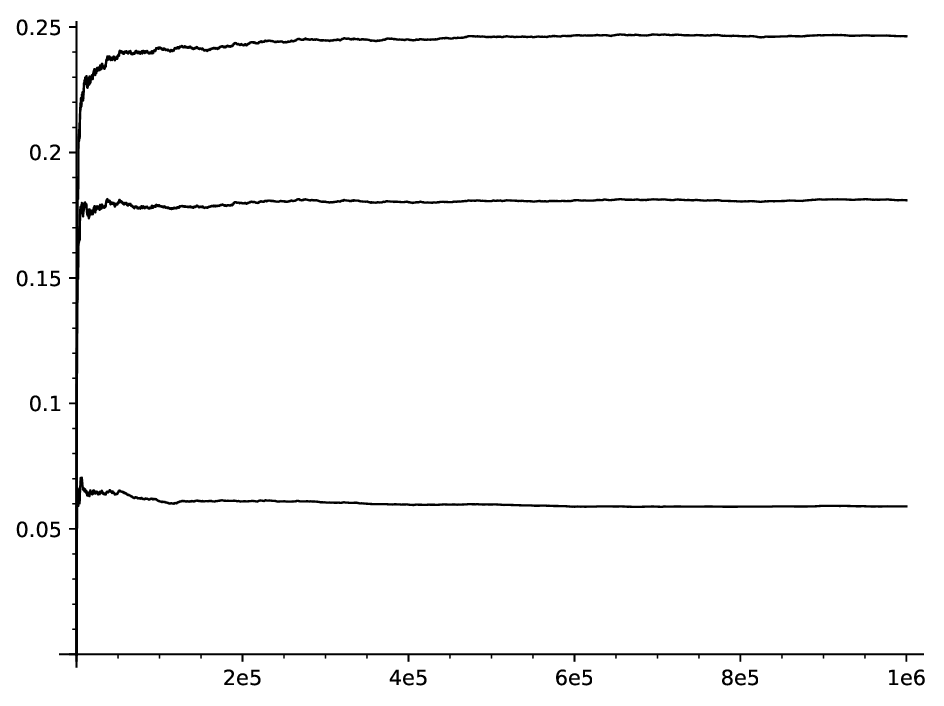}
\caption{11a1: $(\alpha, \beta) = (1,6)$} \label{fig:11_6_1_6_acc_A_orders}
\end{subfigure}\hspace*{\fill}
\begin{subfigure}[b]{0.4\linewidth}
\includegraphics[width=\linewidth]{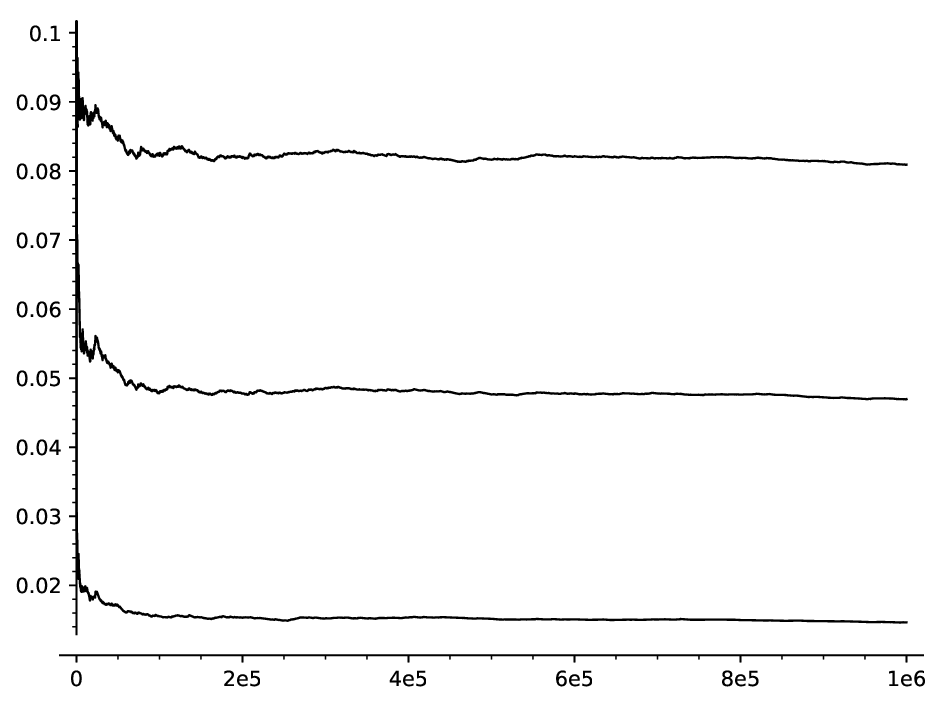}
\caption{11a1: $(\alpha, \beta) = (2,6)$} \label{fig:11_6_2_6_acc_A_orders}
\end{subfigure}
\hspace*{-.7cm}
\begin{subfigure}[b]{0.4\linewidth}
\includegraphics[width=\linewidth]{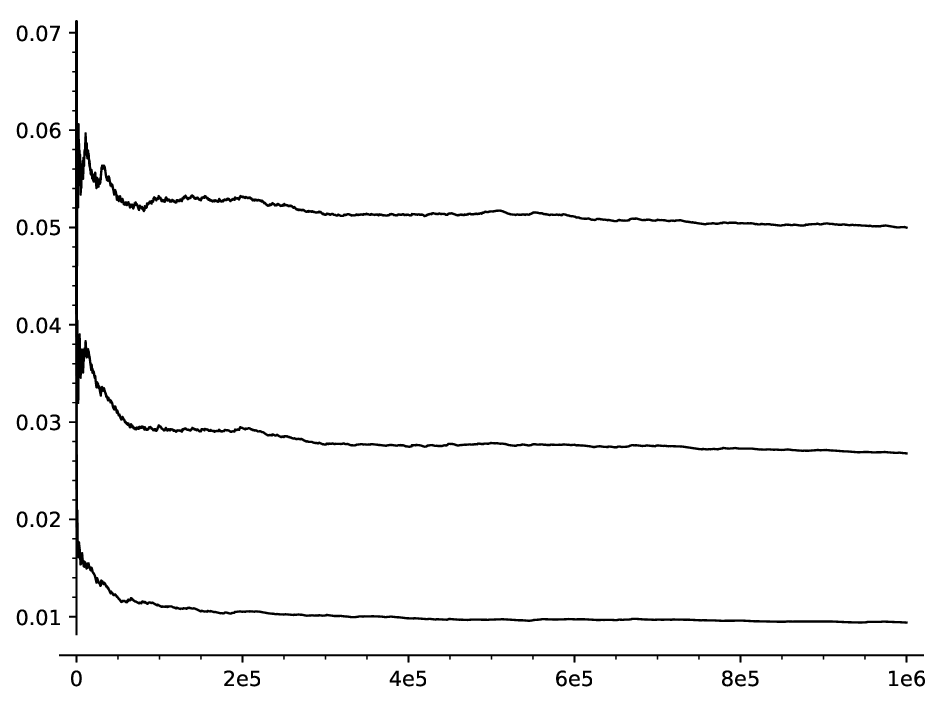}
\caption{14a1: $(\alpha, \beta) = (1,3)$} \label{fig:14_6_1_3_acc_A_orders}
\end{subfigure}\hspace*{\fill}
\begin{subfigure}[b]{0.4\linewidth}
\includegraphics[width=\linewidth]{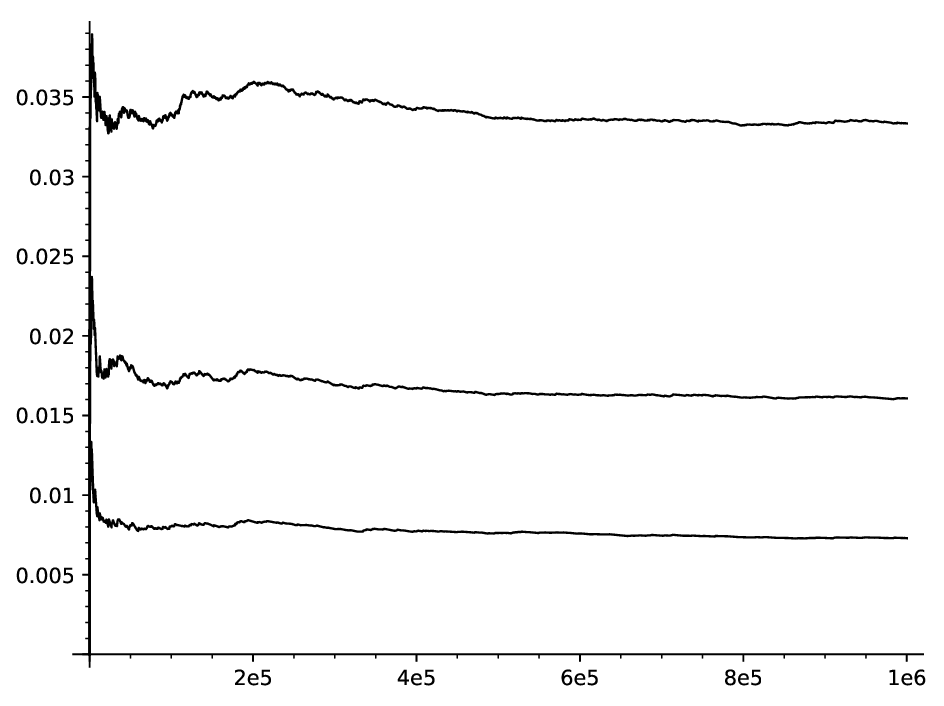}
\caption{14a1: $(\alpha, \beta) = (2,3)$} \label{fig:14_6_2_3_acc_A_orders}
\end{subfigure}
\hspace*{-.7cm}
\begin{subfigure}[b]{0.4\linewidth}
\includegraphics[width=\linewidth]{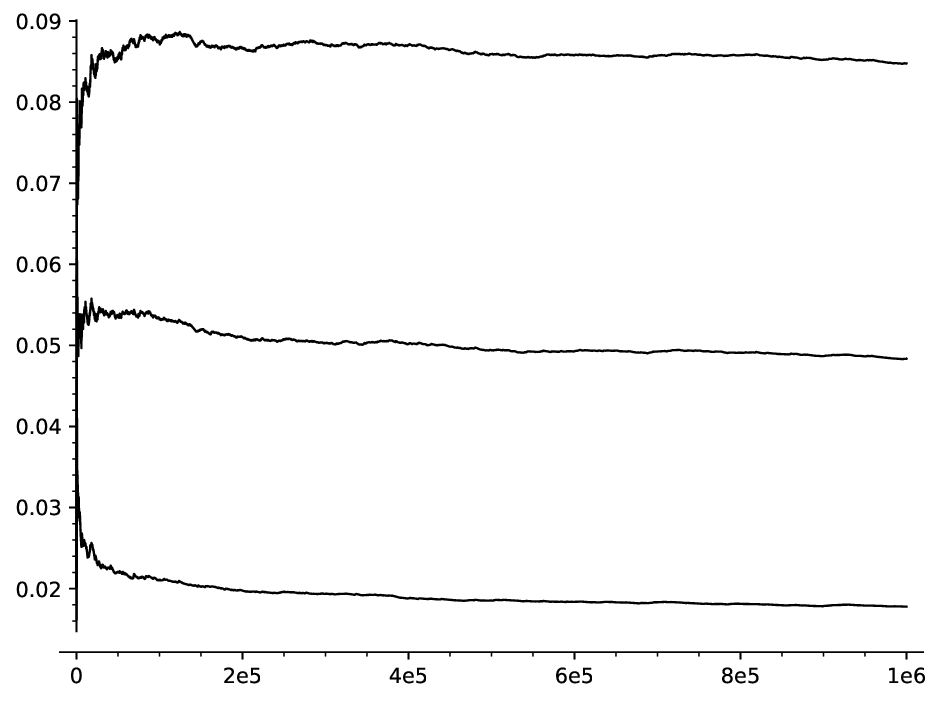}
\caption{14a1: $(\alpha, \beta) = (1,6)$} \label{fig:14_6_1_6_acc_A_orders}
\end{subfigure}\hspace*{\fill}
\begin{subfigure}[b]{0.4\linewidth}
\includegraphics[width=\linewidth]{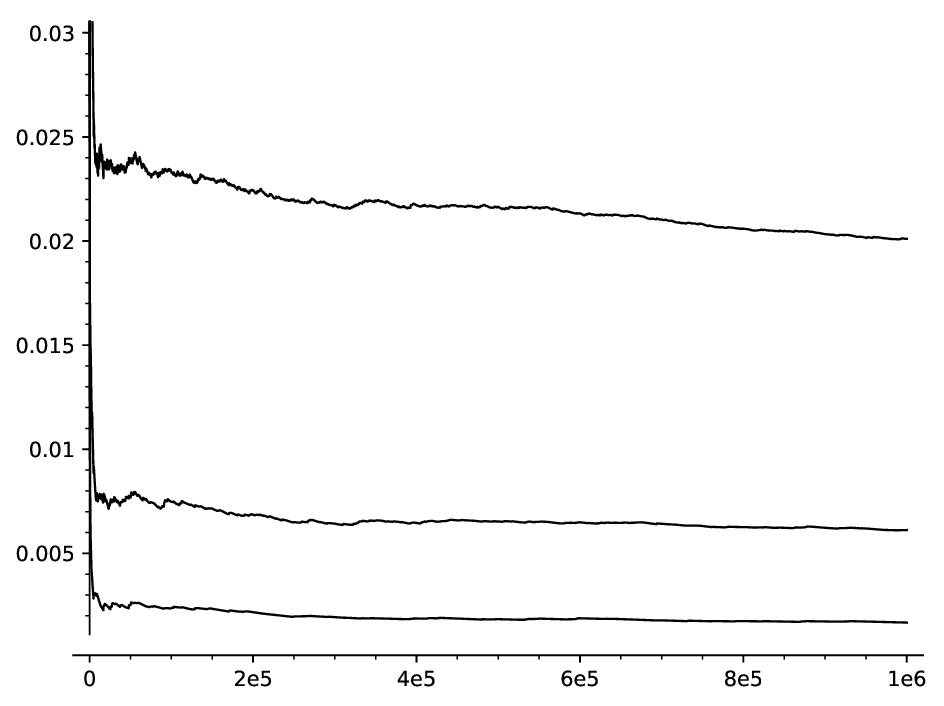}
\caption{14a1: $(\alpha, \beta) = (2,6)$} \label{fig:14_6_2_6_acc_A_orders}
\end{subfigure}
\caption{11a1, 14a1: Ratio~\eqref{ratio_N_orders} $n_{6,E}^{(\alpha, \beta)}(X;L)/X^{1/2}\log^2(X)$ for $L = $ 1, 2, 3 and $k = 6$. Note that the larger $L$ the higher its ratio graph is depicted.} \label{fig:6_alpha_beta_A_acc_11_14}
\end{figure}

\clearpage

\begin{figure}[b!] % "[t!]" placement specifier just for this example
\hspace*{-.7cm}
\begin{subfigure}[b]{0.4\linewidth}
\includegraphics[width=\linewidth]{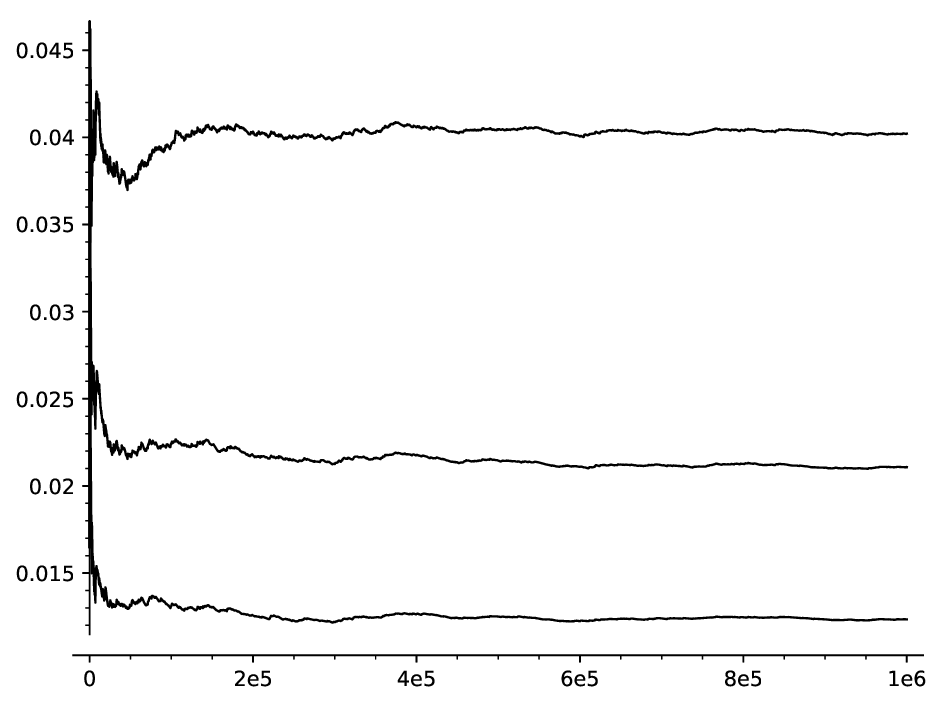}
\caption{15a1: $(\alpha, \beta) = (1,3)$} \label{fig:15_6_1_3_acc_A_orders}
\end{subfigure}\hspace*{\fill}
\begin{subfigure}[b]{0.4\linewidth}
\includegraphics[width=\linewidth]{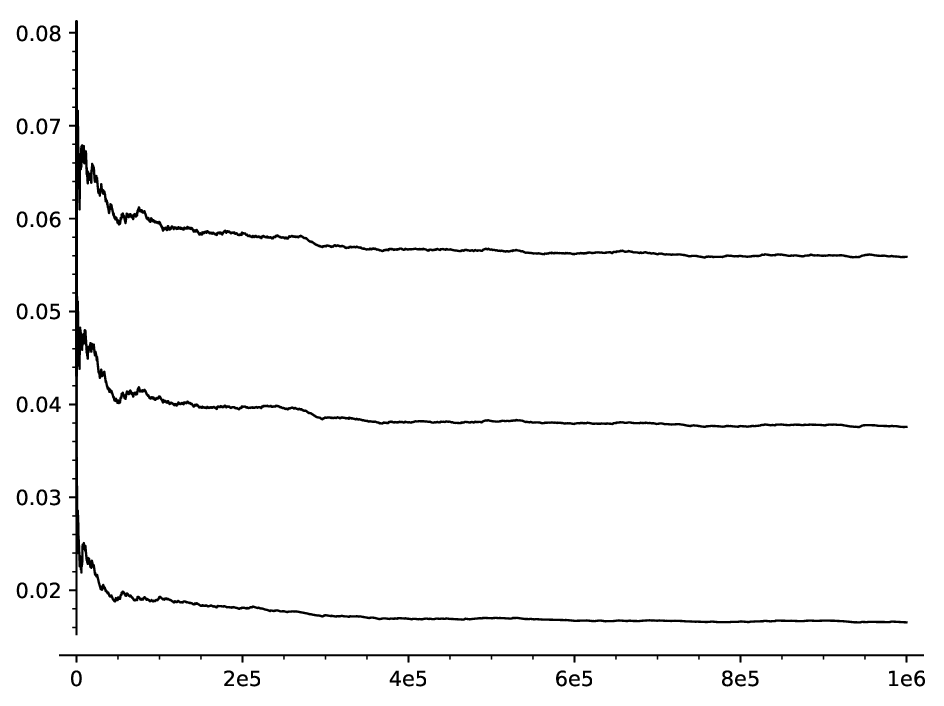}
\caption{15a1: $(\alpha, \beta) = (2,3)$} \label{fig:15_6_2_3_acc_A_orders}
\end{subfigure}
\hspace*{-.7cm}
\begin{subfigure}[b]{0.4\linewidth}
\includegraphics[width=\linewidth]{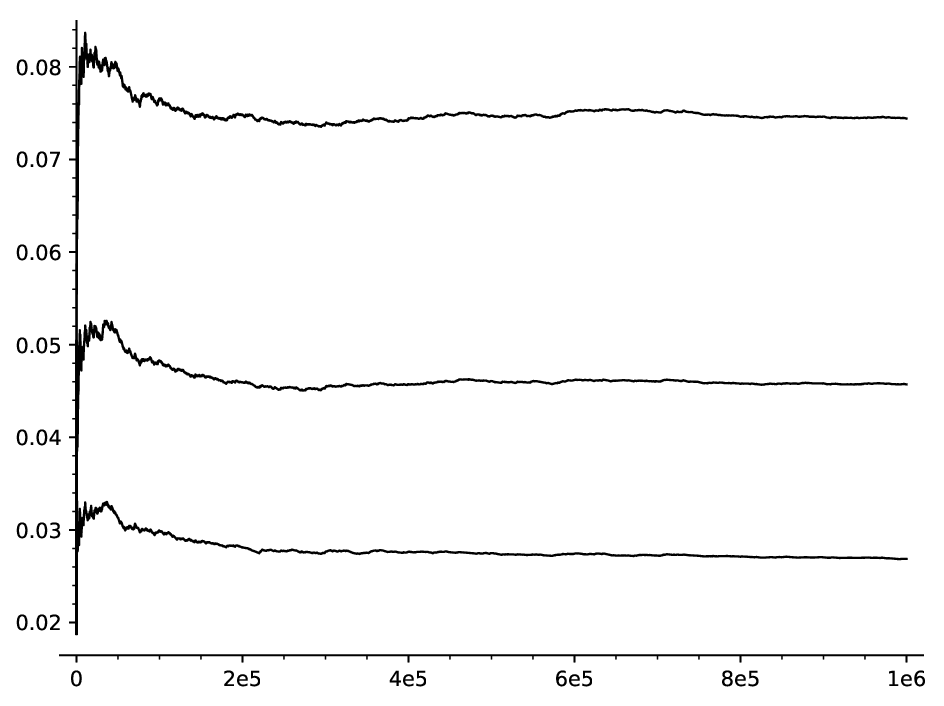}
\caption{15a1: $(\alpha, \beta) = (1,6)$} \label{fig:15_6_1_6_acc_A_orders}
\end{subfigure}\hspace*{\fill}
\begin{subfigure}[b]{0.4\linewidth}
\includegraphics[width=\linewidth]{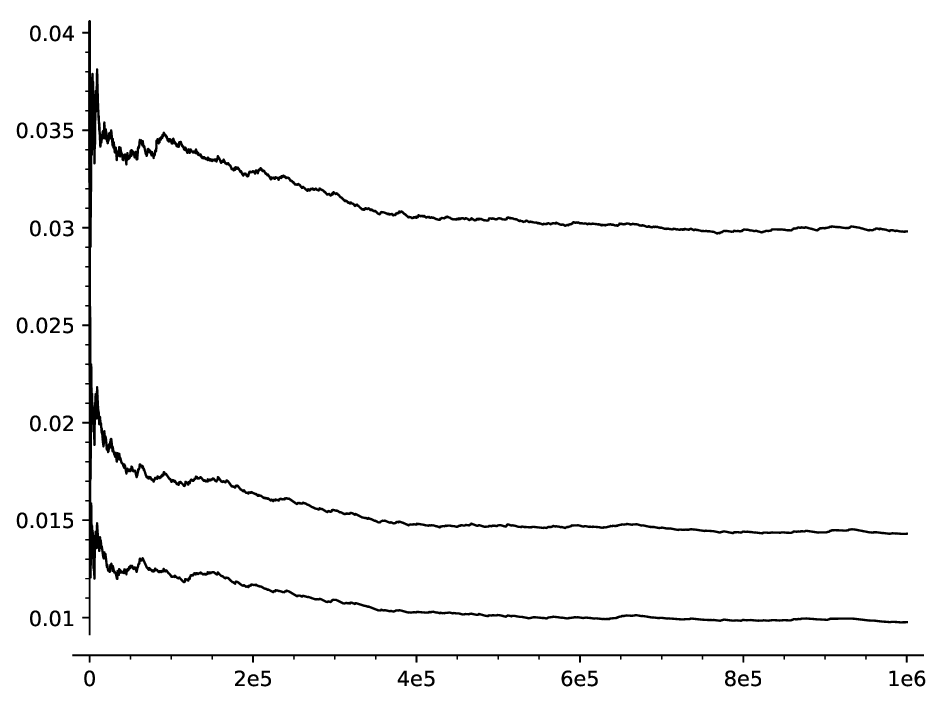}
\caption{15a1: $(\alpha, \beta) = (2,6)$} \label{fig:15_6_2_6_acc_A_orders}
\end{subfigure}
\hspace*{-.7cm}
\begin{subfigure}[b]{0.4\linewidth}
\includegraphics[width=\linewidth]{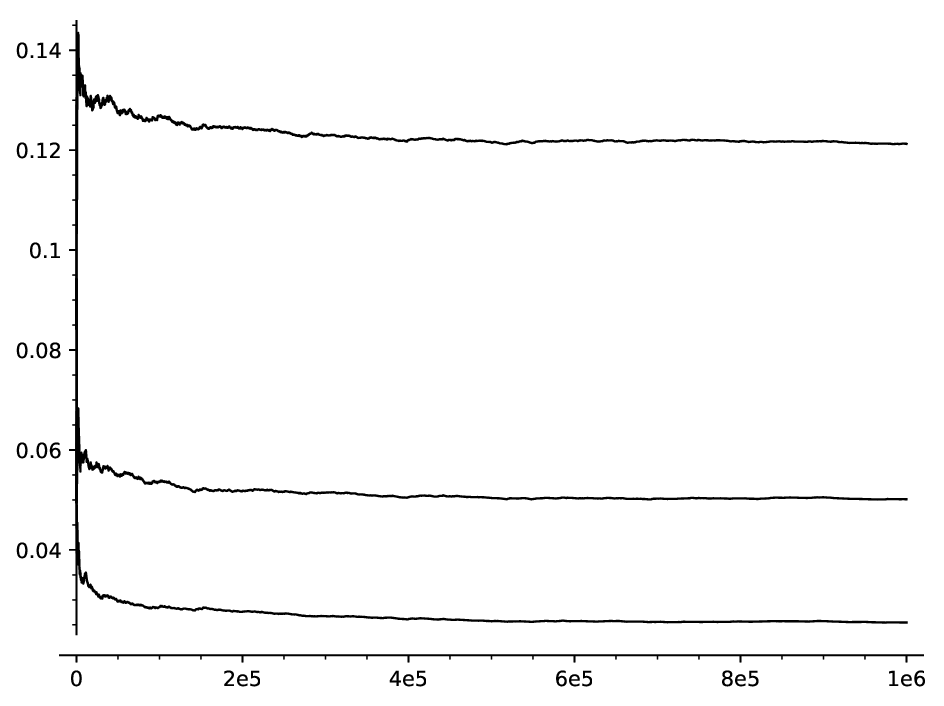}
\caption{17a1: $(\alpha, \beta) = (1,3)$} \label{fig:17_6_1_3_acc_A_orders}
\end{subfigure}\hspace*{\fill}
\begin{subfigure}[b]{0.4\linewidth}
\includegraphics[width=\linewidth]{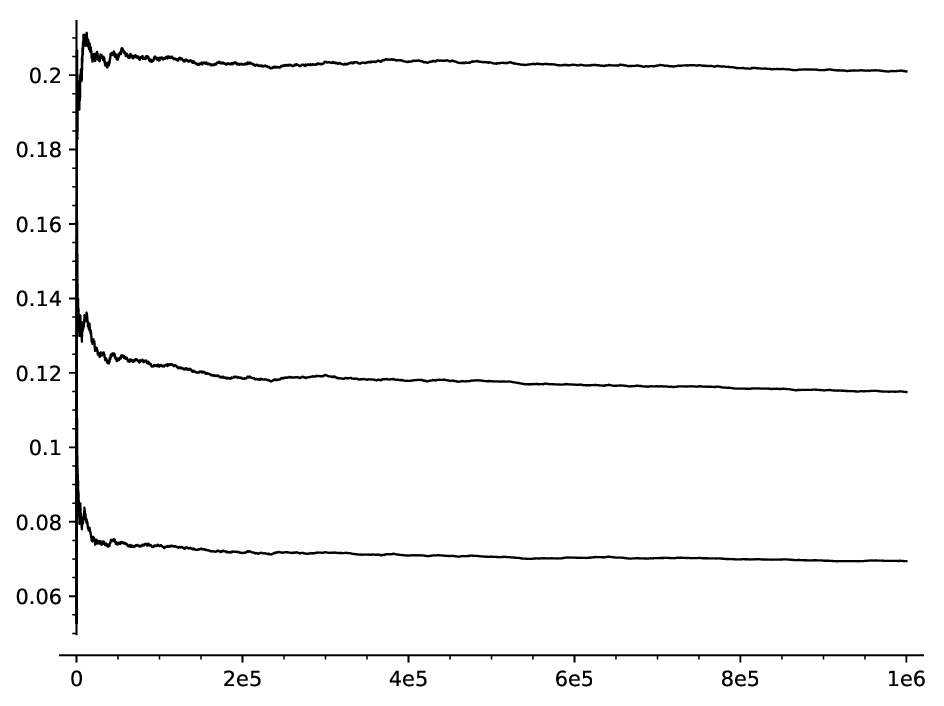}
\caption{17a1: $(\alpha, \beta) = (2,3)$} \label{fig:17_6_2_3_acc_A_orders}
\end{subfigure}
\hspace*{-.7cm}
\begin{subfigure}[b]{0.4\linewidth}
\includegraphics[width=\linewidth]{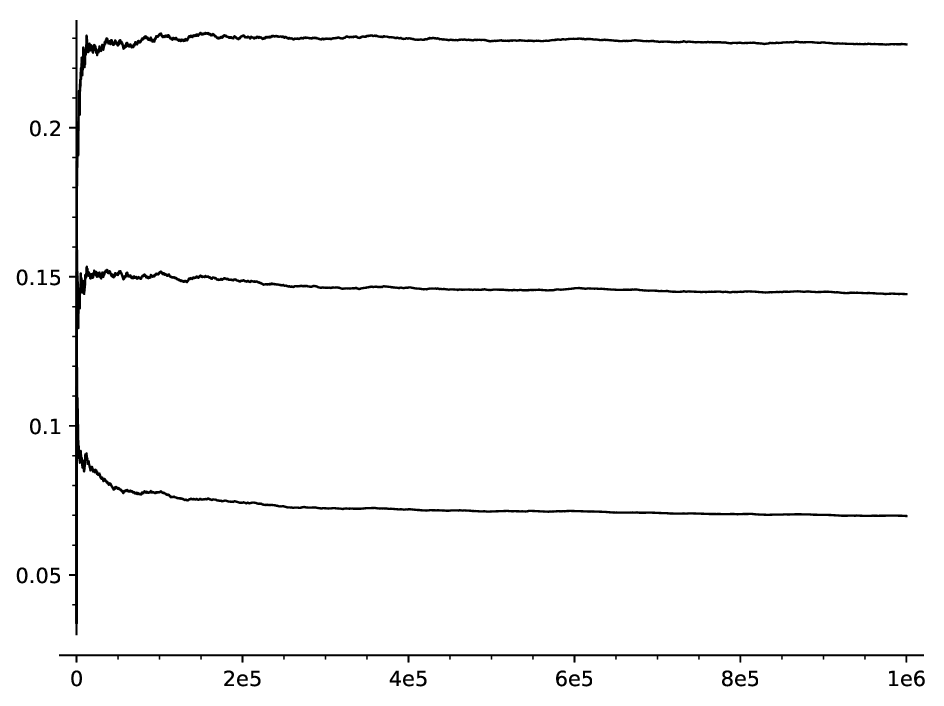}
\caption{17a1: $(\alpha, \beta) = (1,6)$} \label{fig:17_6_1_6_acc_A_orders}
\end{subfigure}\hspace*{\fill}
\begin{subfigure}[b]{0.4\linewidth}
\includegraphics[width=\linewidth]{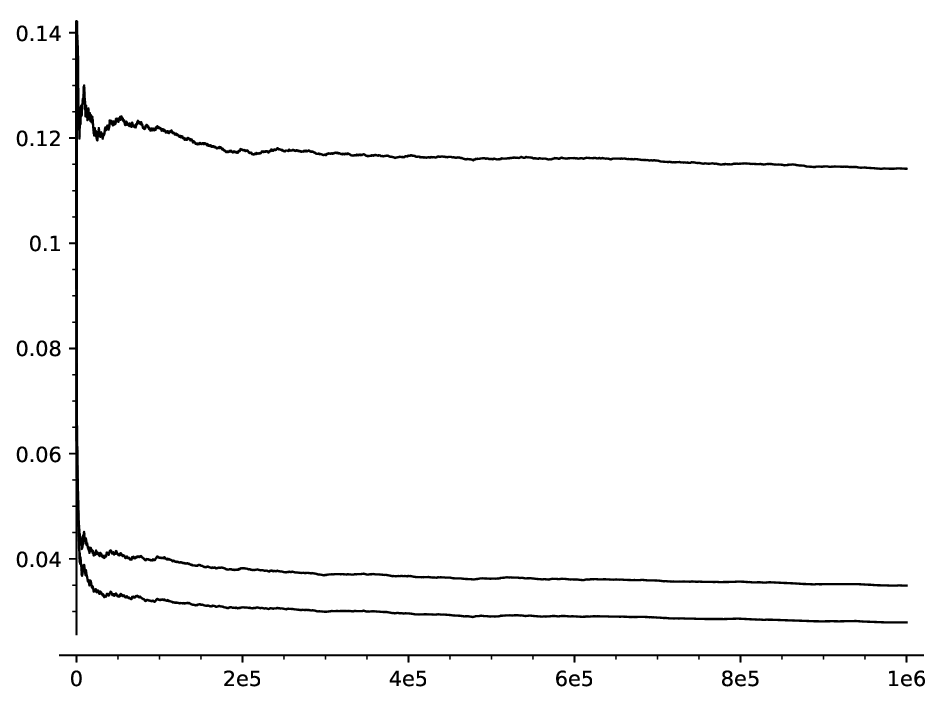}
\caption{17a1: $(\alpha, \beta) = (2,6)$} \label{fig:17_6_2_6_acc_A_orders}
\end{subfigure}
\caption{15a1, 17a1: Ratio~\eqref{ratio_N_orders} $n_{6,E}^{(\alpha, \beta)}(X;L)/X^{1/2}\log^2(X)$ for $L = $ 1, 2, 3 and $k = 6$. Note that the larger $L$ the higher its ratio graph is depicted.} \label{fig:6_alpha_beta_A_acc_15_17}
\end{figure}
\clearpage

\begin{figure}[b!] % "[t!]" placement specifier just for this example
\hspace*{-.7cm}
\begin{subfigure}[b]{0.4\linewidth}
\includegraphics[width=\linewidth]{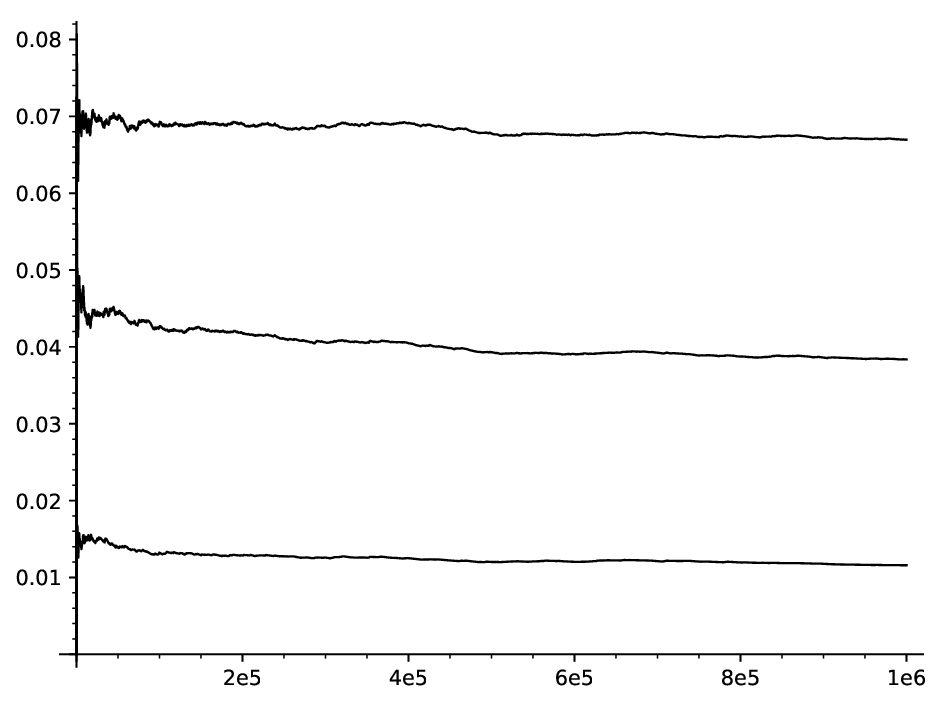}
\caption{19a1: $(\alpha, \beta) = (1,3)$} \label{fig:19_6_1_3_acc_A_orders}
\end{subfigure}\hspace*{\fill}
\begin{subfigure}[b]{0.4\linewidth}
\includegraphics[width=\linewidth]{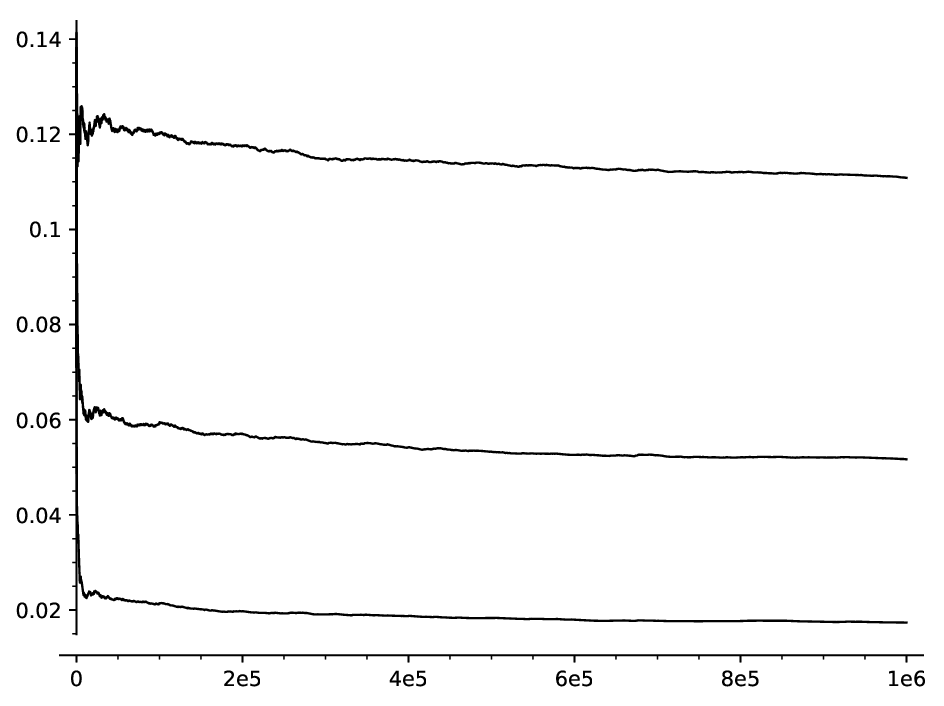}
\caption{19a1: $(\alpha, \beta) = (2,3)$} \label{fig:19_6_2_3_acc_A_orders}
\end{subfigure}
\hspace*{-.7cm}
\begin{subfigure}[b]{0.4\linewidth}
\includegraphics[width=\linewidth]{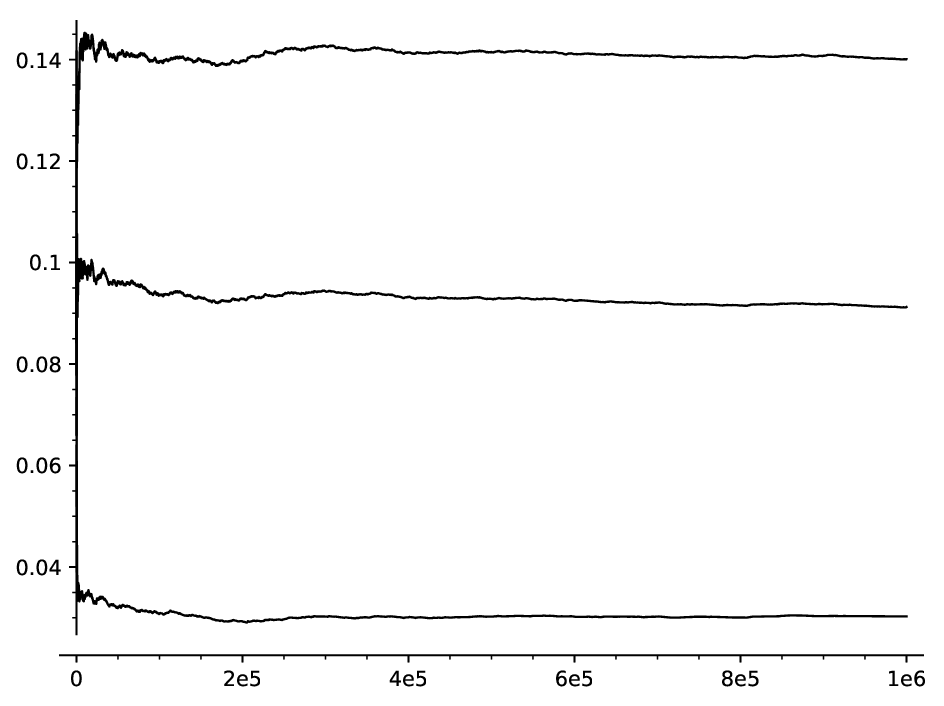}
\caption{19a1: $(\alpha, \beta) = (1,6)$} \label{fig:19_6_1_6_acc_A_orders}
\end{subfigure}\hspace*{\fill}
\begin{subfigure}[b]{0.4\linewidth}
\includegraphics[width=\linewidth]{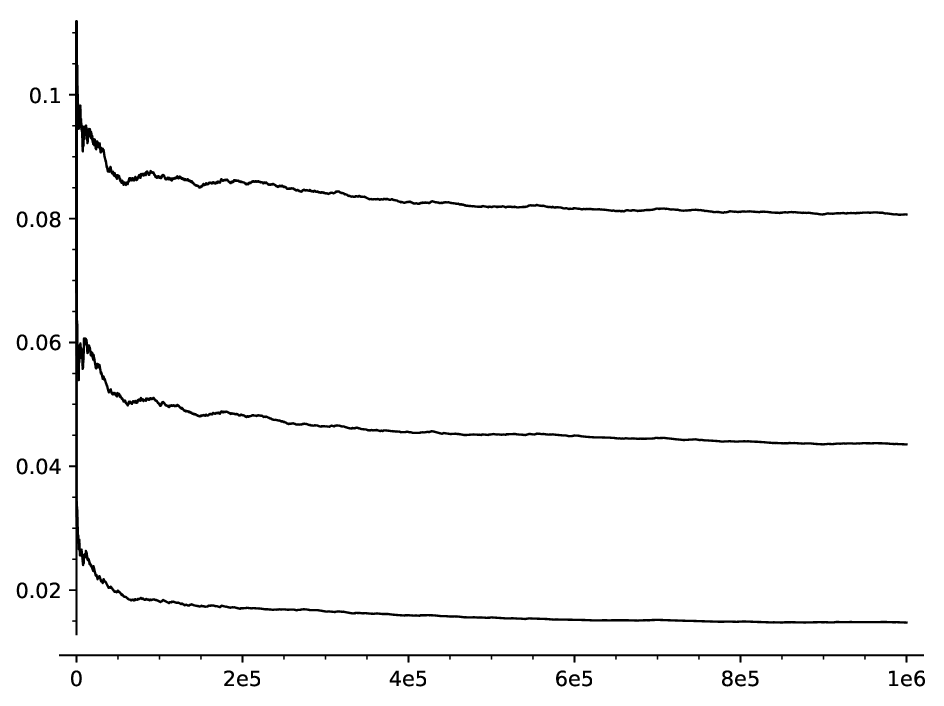}
\caption{19a1: $(\alpha, \beta) = (2,6)$} \label{fig:19_6_2_6_acc_A_orders}
\end{subfigure}
\hspace*{-.7cm}
\begin{subfigure}[b]{0.4\linewidth}
\includegraphics[width=\linewidth]{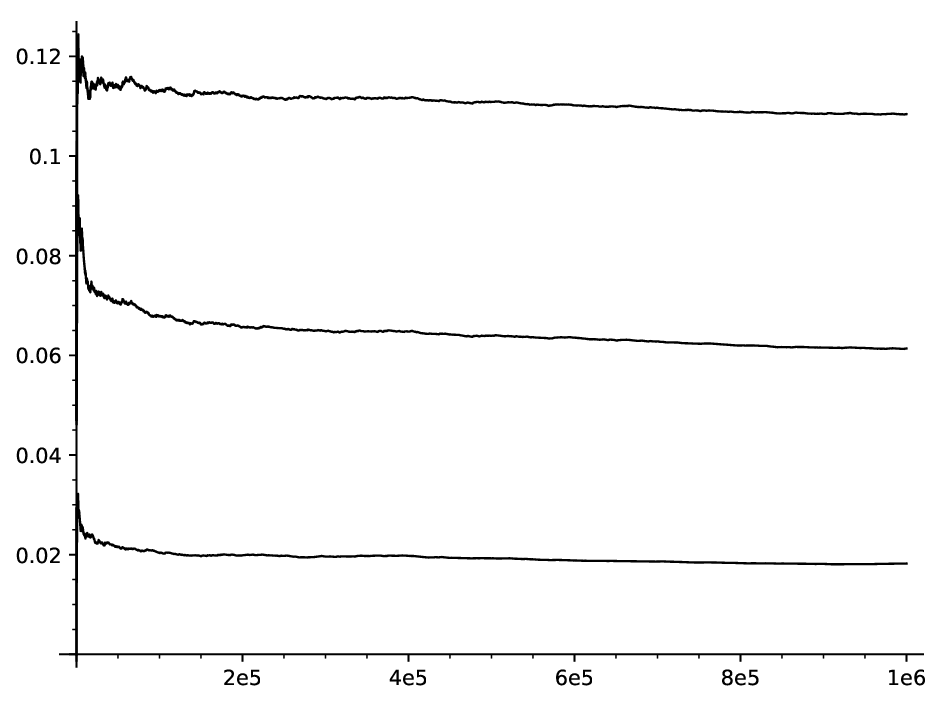}
\caption{37b1: $(\alpha, \beta) = (1,3)$} \label{fig:37_6_1_3_acc_A_orders}
\end{subfigure}\hspace*{\fill}
\begin{subfigure}[b]{0.4\linewidth}
\includegraphics[width=\linewidth]{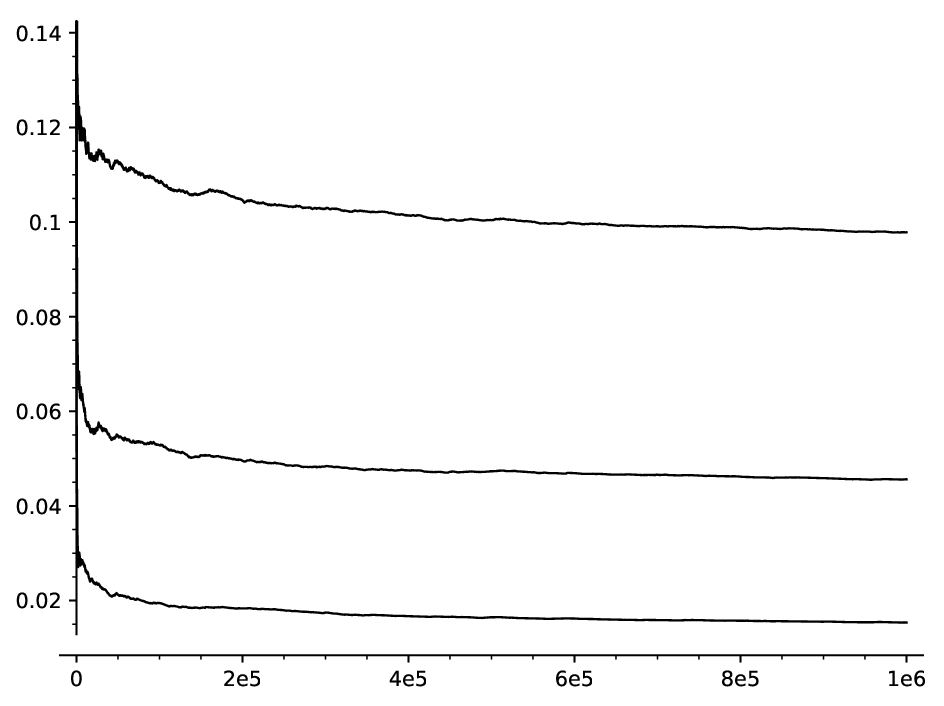}
\caption{37b1: $(\alpha, \beta) = (2,3)$} \label{fig:37_6_2_3_acc_A_orders}
\end{subfigure}
\hspace*{-.7cm}
\begin{subfigure}[b]{0.4\linewidth}
\includegraphics[width=\linewidth]{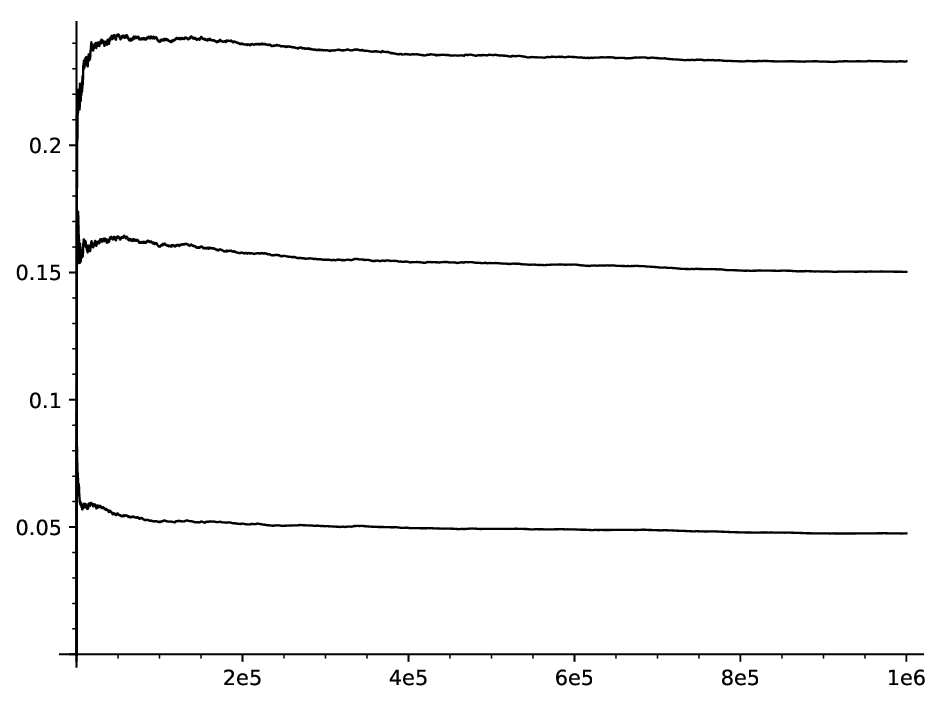}
\caption{37b1: $(\alpha, \beta) = (1,6)$} \label{fig:37_6_1_6_acc_A_orders}
\end{subfigure}\hspace*{\fill}
\begin{subfigure}[b]{0.4\linewidth}
\includegraphics[width=\linewidth]{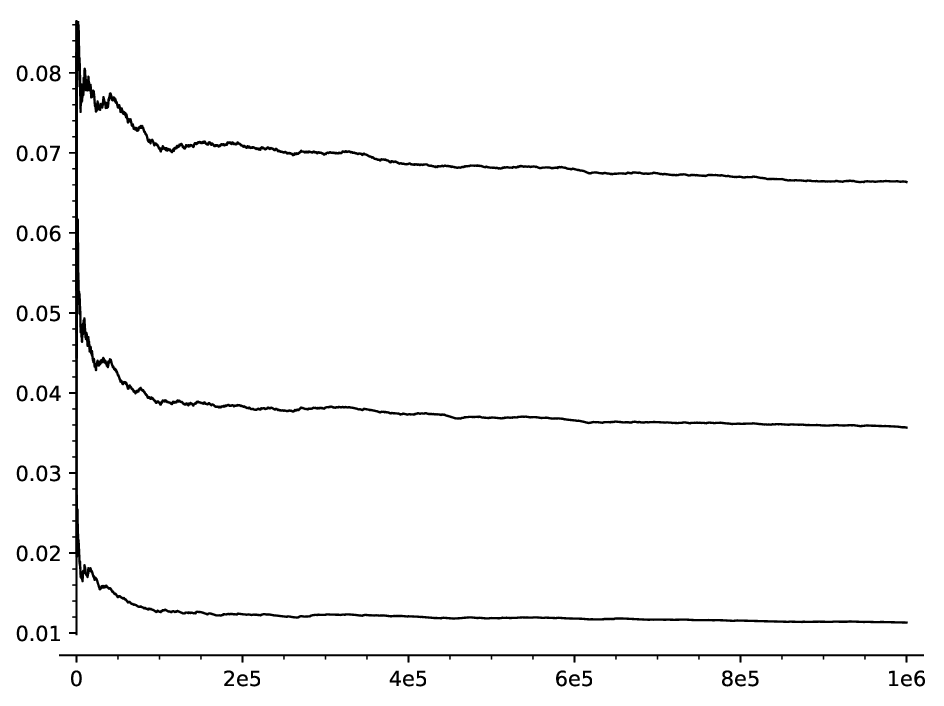}
\caption{37b1: $(\alpha, \beta) = (2,6)$} \label{fig:37_6_2_6_acc_A_orders}
\end{subfigure}
\caption{19a1, 37b1: Ratio~\eqref{ratio_N_orders} $n_{6,E}^{(\alpha, \beta)}(X;L)/X^{1/2}\log^2(X)$ for $L =$ 1, 2, 3 and $k = 6$. Note that the larger $L$ the higher its ratio graph is depicted.} \label{fig:6_alpha_beta_A_acc_19_37}
\end{figure}

\clearpage

\begin{figure}[b!] % "[t!]" placement specifier just for this example
\hspace*{-.7cm}
\begin{subfigure}[b]{0.4\linewidth}
\includegraphics[width=\linewidth]{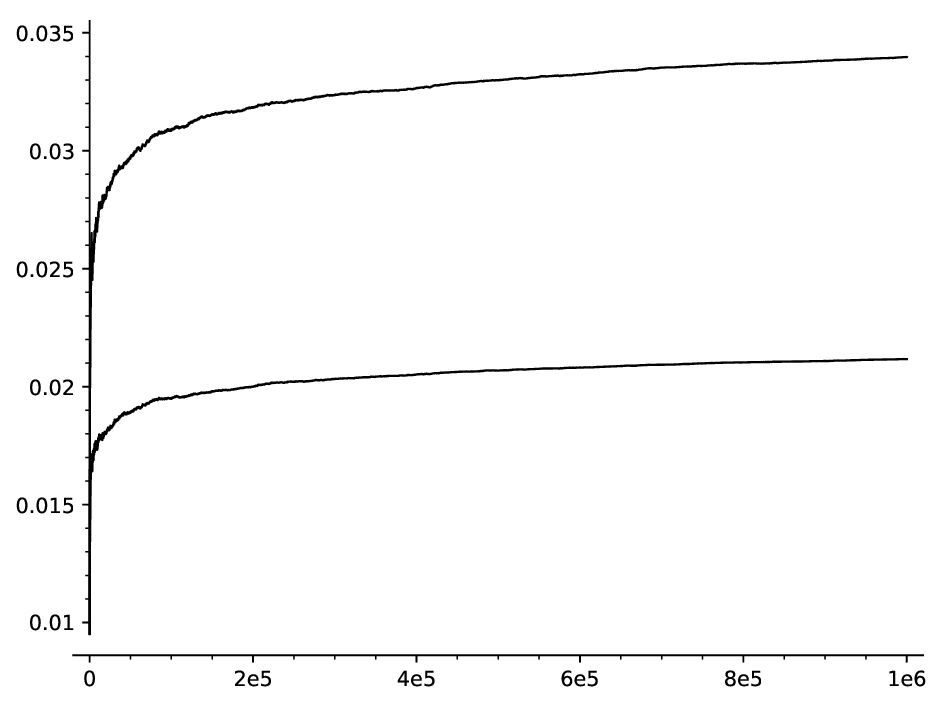}
\captionsetup{width=1.4\linewidth}
\caption{11a1: $(\alpha, \beta) = (1,3)$ Top to bottom $c =$ 0.3, 0.4} \label{fig:11_6_1_3_acc_c_orders}
\end{subfigure}\hspace*{\fill}
\begin{subfigure}[b]{0.4\linewidth}
\includegraphics[width=\linewidth]{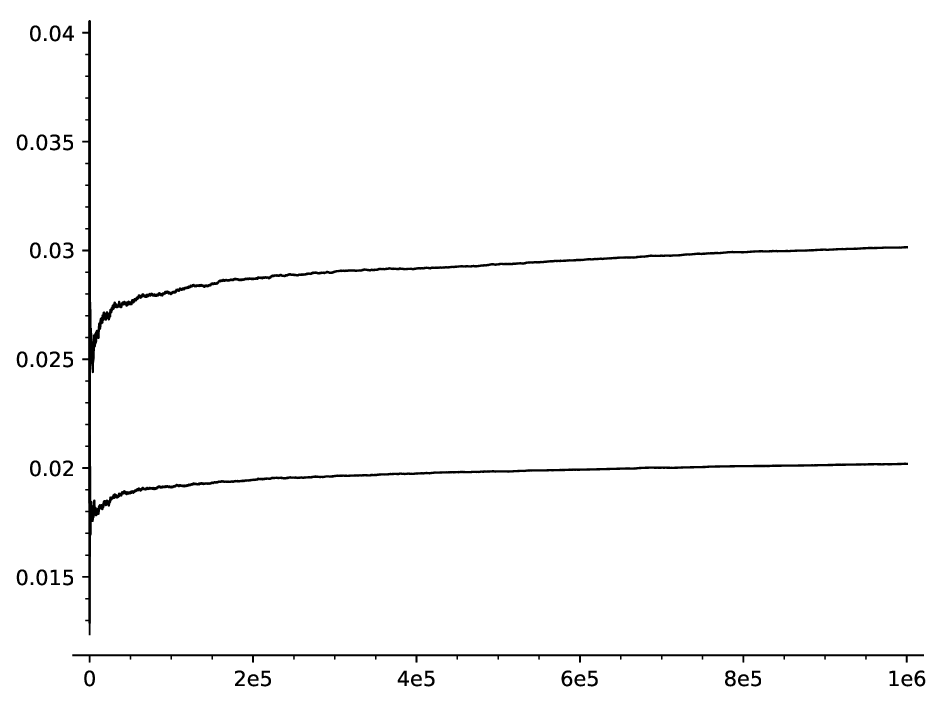}
\captionsetup{width=1.4\linewidth}
\caption{11a1: $(\alpha, \beta) = (2,3)$ Top to bottom $c =$ 0.3, 0.4} \label{fig:11_6_2_3_acc_c_orders}
\end{subfigure}
\hspace*{-.7cm}
\begin{subfigure}[b]{0.4\linewidth}
\includegraphics[width=\linewidth]{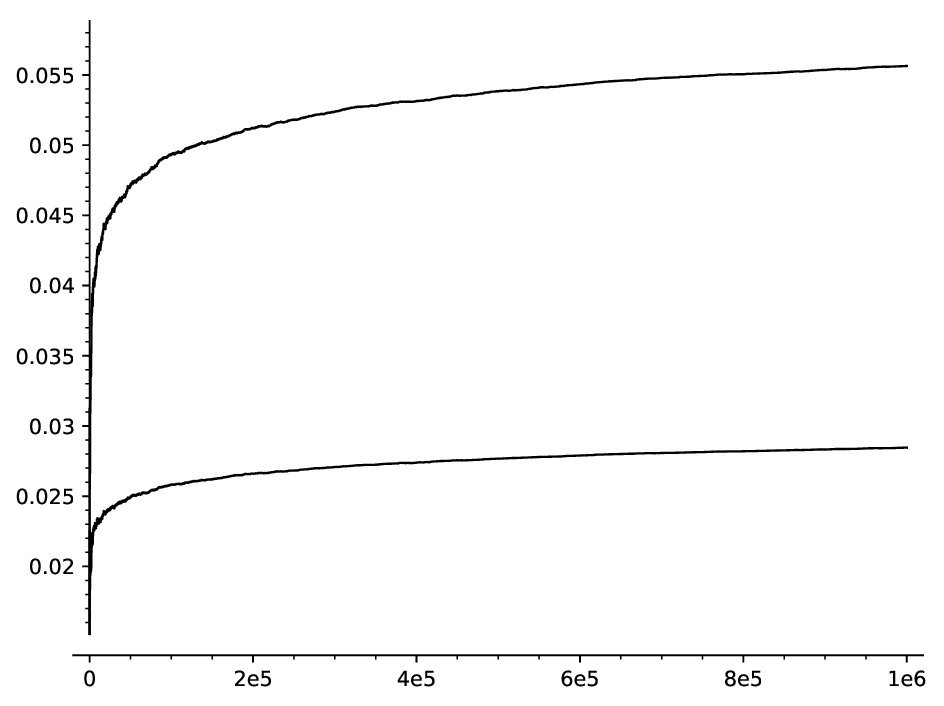}
\captionsetup{width=1.4\linewidth}
\caption{11a1: $(\alpha, \beta) = (1,6)$ Top to bottom $c =$ 0.3, 0.4} \label{fig:11_6_1_6_acc_c_orders}
\end{subfigure}\hspace*{\fill}
\begin{subfigure}[b]{0.4\linewidth}
\includegraphics[width=\linewidth]{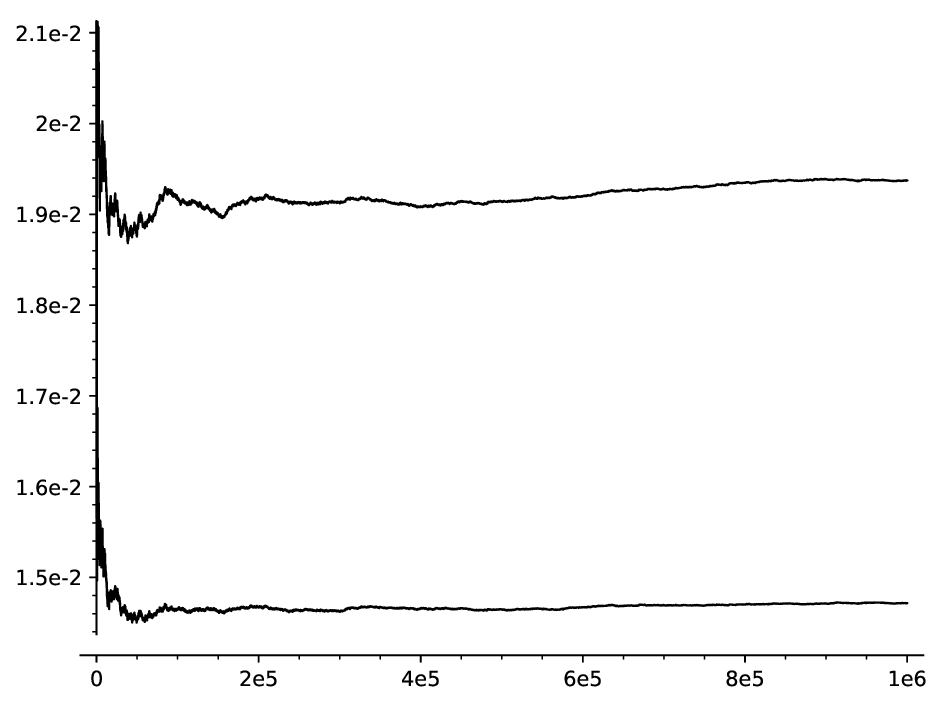}
\captionsetup{width=1.4\linewidth}
\caption{11a1: $(\alpha, \beta) = (2,6)$ Top to bottom $c =$ 0.3, 0.4} \label{fig:11_6_2_6_acc_c_orders}
\end{subfigure}
\hspace*{-.7cm}
\begin{subfigure}[b]{0.4\linewidth}
\includegraphics[width=\linewidth]{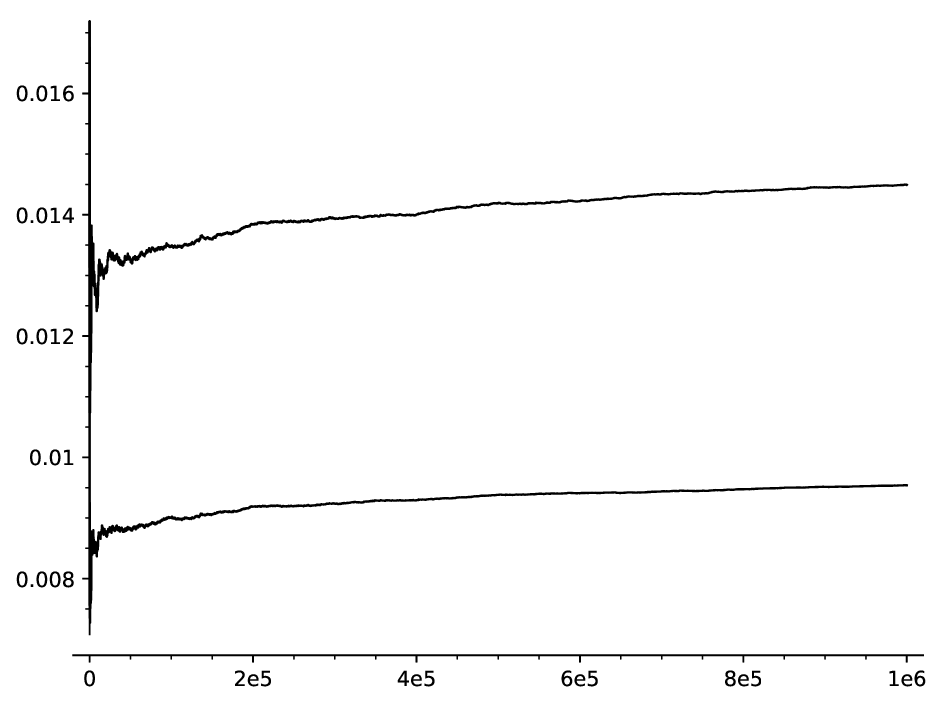}
\captionsetup{width=1.4\linewidth}
\caption{14a1: $(\alpha, \beta) = (1,3)$ Top to bottom $c =$ 0.3, 0.4} \label{fig:14_6_1_3_acc_c_orders}
\end{subfigure}\hspace*{\fill}
\begin{subfigure}[b]{0.4\linewidth}
\includegraphics[width=\linewidth]{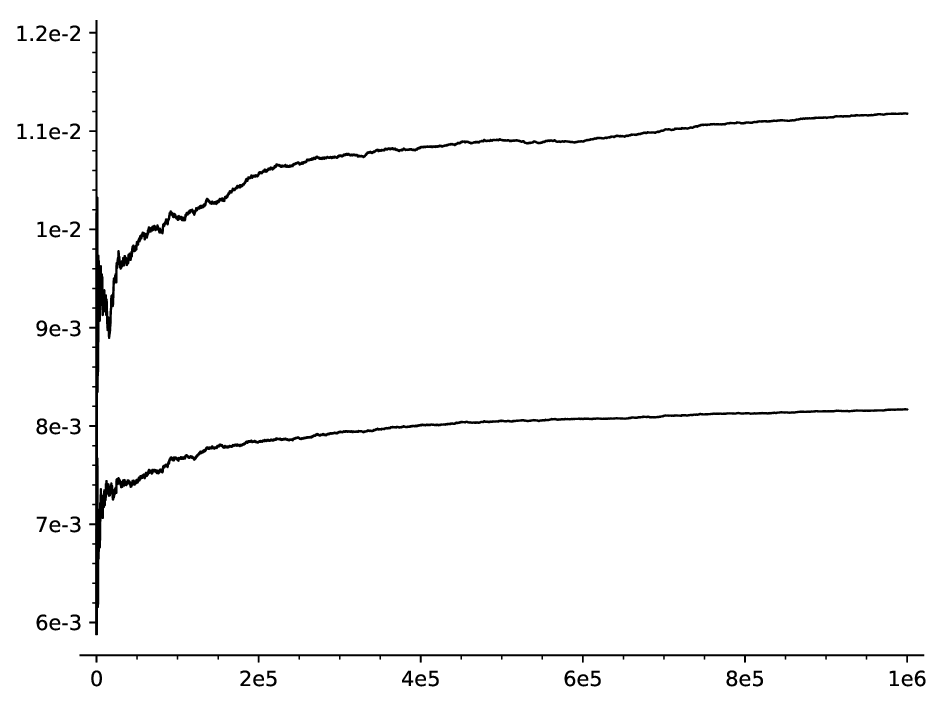}
\captionsetup{width=1.4\linewidth}
\caption{14a1: $(\alpha, \beta) = (2,3)$ Top to bottom $c =$ 0.3, 0.4} \label{fig:14_6_2_3_acc_c_orders}
\end{subfigure}
\hspace*{-.7cm}
\begin{subfigure}[b]{0.4\linewidth}
\includegraphics[width=\linewidth]{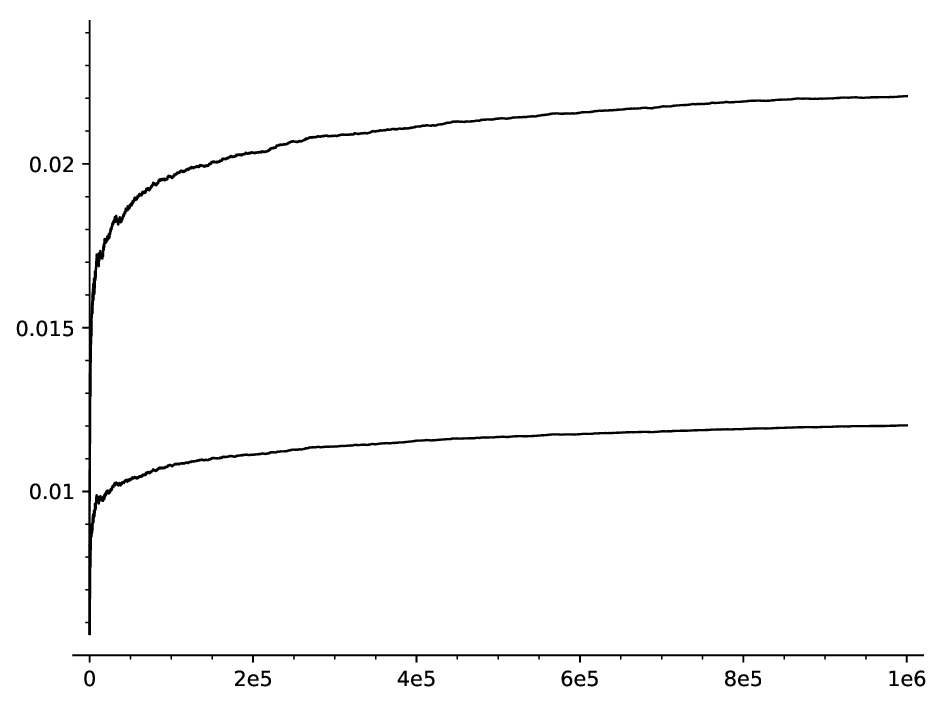}
\captionsetup{width=1.4\linewidth}
\caption{14a1: $(\alpha, \beta) = (1,6)$ Top to bottom $c =$ 0.3, 0.4} \label{fig:14_6_1_6_acc_c_orders}
\end{subfigure}\hspace*{\fill}
\begin{subfigure}[b]{0.4\linewidth}
\includegraphics[width=\linewidth]{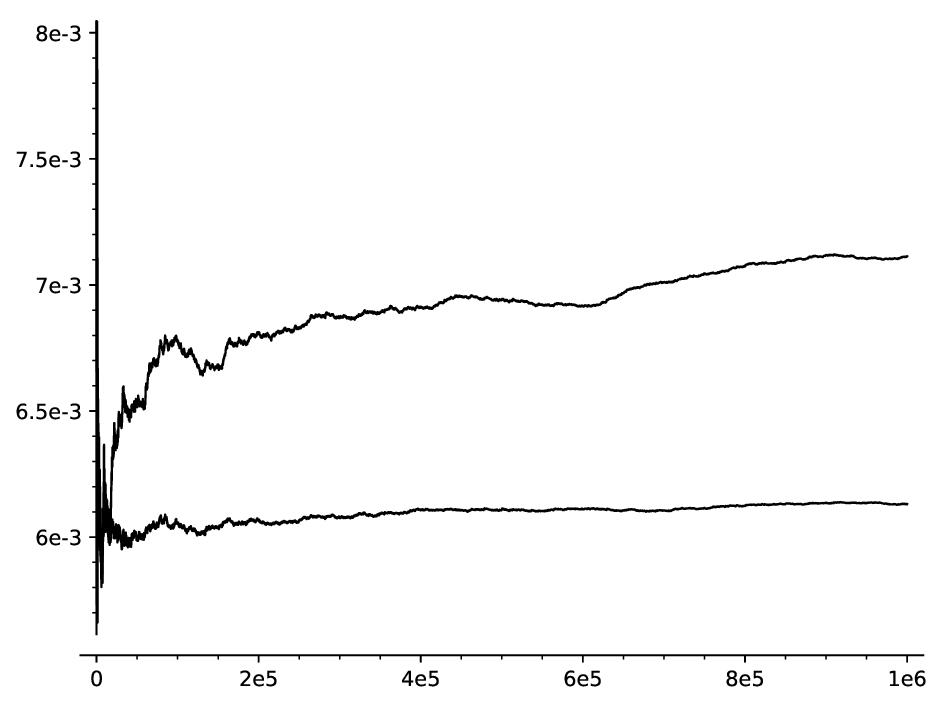}
\captionsetup{width=1.4\linewidth}
\caption{14a1: $(\alpha, \beta) = (2,6)$ Top to bottom $c =$ 0.3, 0.4} \label{fig:14_6_2_6_acc_c_orders}
\end{subfigure}
\caption{11a1, 14a1: Ratio~\eqref{ratio_M_orders} $m_{6,E}^{(\alpha, \beta)}(X;c)/X^{c +1/2}\log^2(X)$ for $c = $ 0.3, 0.4 and $k = 6$.} \label{fig:c_11_14_acc_6_orders}
\end{figure}

\clearpage

\begin{figure}[b!] % "[t!]" placement specifier just for this example
\hspace*{-.7cm}
\begin{subfigure}[b]{0.4\linewidth}
\includegraphics[width=\linewidth]{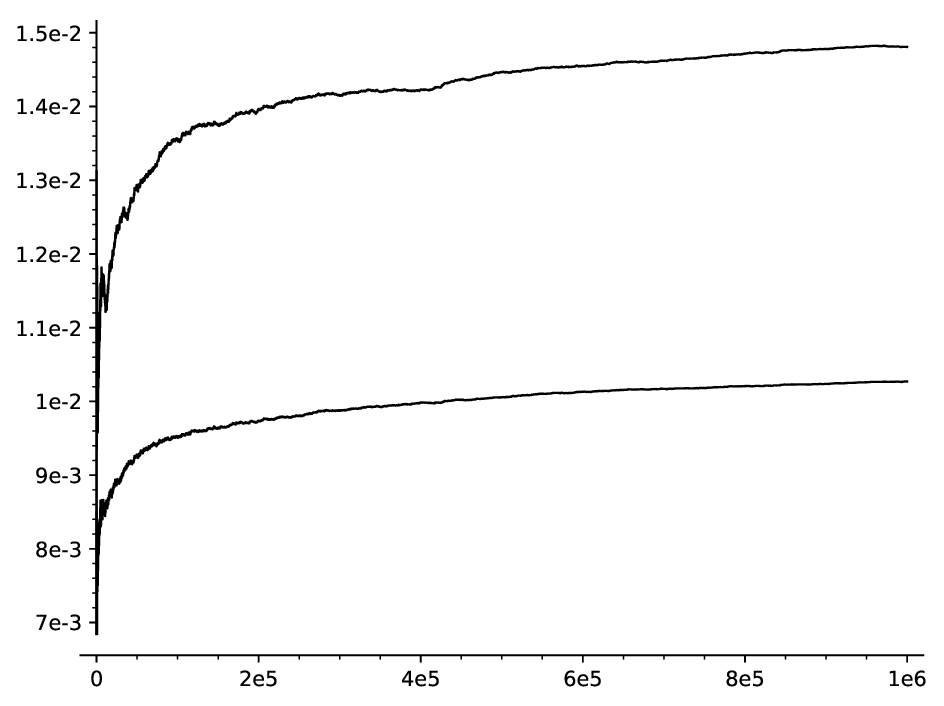}
\captionsetup{width=1.4\linewidth}
\caption{15a1: $(\alpha, \beta) = (1,3)$ Top to bottom $c =$ 0.3, 0.4} \label{fig:15_6_1_3_acc_c_orders}
\end{subfigure}\hspace*{\fill}
\begin{subfigure}[b]{0.4\linewidth}
\includegraphics[width=\linewidth]{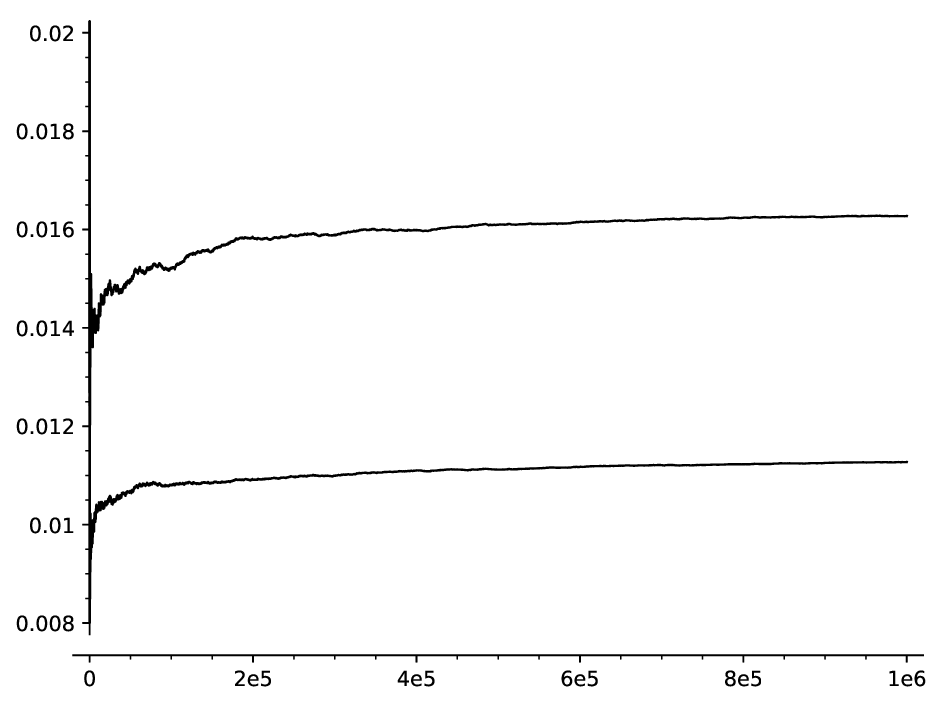}
\captionsetup{width=1.4\linewidth}
\caption{15a1: $(\alpha, \beta) = (2,3)$ Top to bottom $c =$ 0.3, 0.4} \label{fig:15_6_2_3_acc_c_orders}
\end{subfigure}
\hspace*{-.7cm}
\begin{subfigure}[b]{0.4\linewidth}
\includegraphics[width=\linewidth]{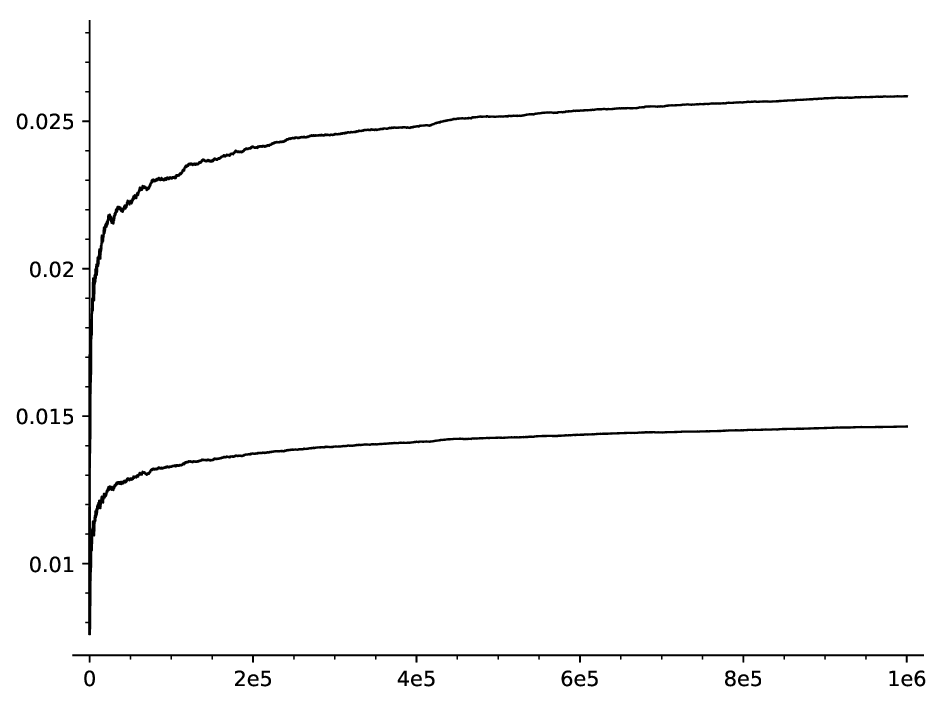}
\captionsetup{width=1.4\linewidth}
\caption{15a1: $(\alpha, \beta) = (1,6)$ Top to bottom $c =$ 0.3, 0.4} \label{fig:15_6_1_6_acc_c_orders}
\end{subfigure}\hspace*{\fill}
\begin{subfigure}[b]{0.4\linewidth}
\includegraphics[width=\linewidth]{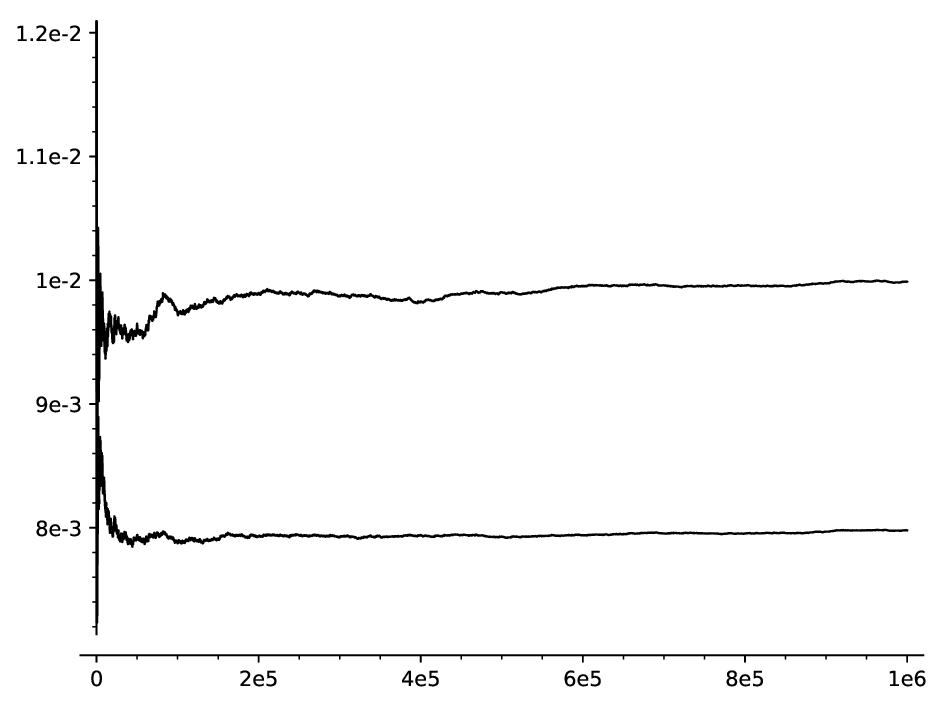}
\captionsetup{width=1.4\linewidth}
\caption{15a1: $(\alpha, \beta) = (2,6)$ Top to bottom $c =$ 0.3, 0.4} \label{fig:15_6_2_6_acc_c_orders}
\end{subfigure}
\hspace*{-.7cm}
\begin{subfigure}[b]{0.4\linewidth}
\includegraphics[width=\linewidth]{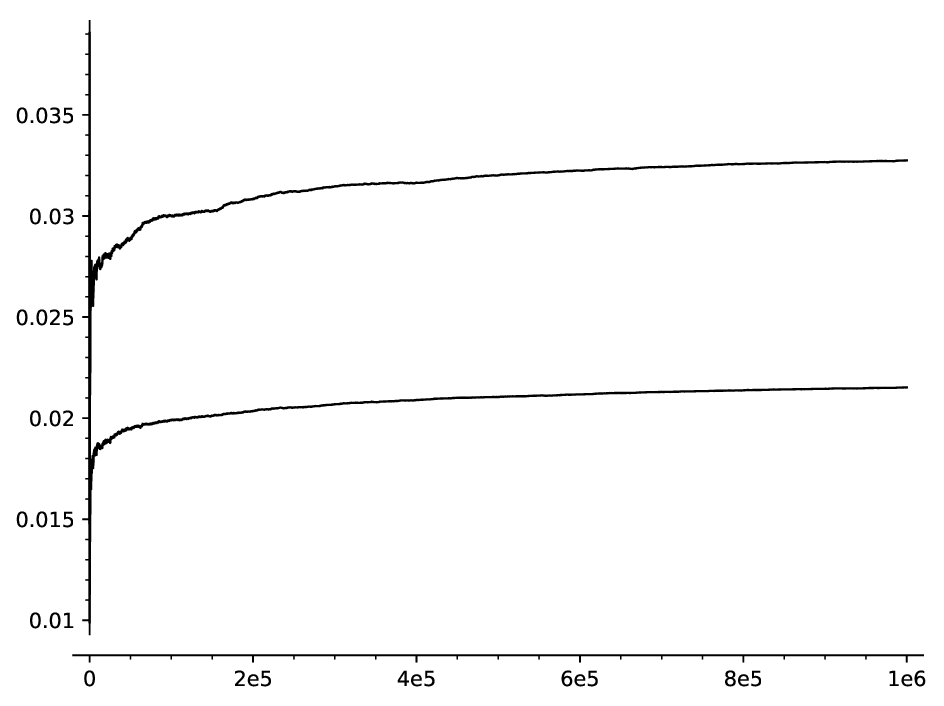}
\captionsetup{width=1.4\linewidth}
\caption{17a1: $(\alpha, \beta) = (1,3)$ Top to bottom $c =$ 0.3, 0.4} \label{fig:17_6_1_3_acc_c_orders}
\end{subfigure}\hspace*{\fill}
\begin{subfigure}[b]{0.4\linewidth}
\includegraphics[width=\linewidth]{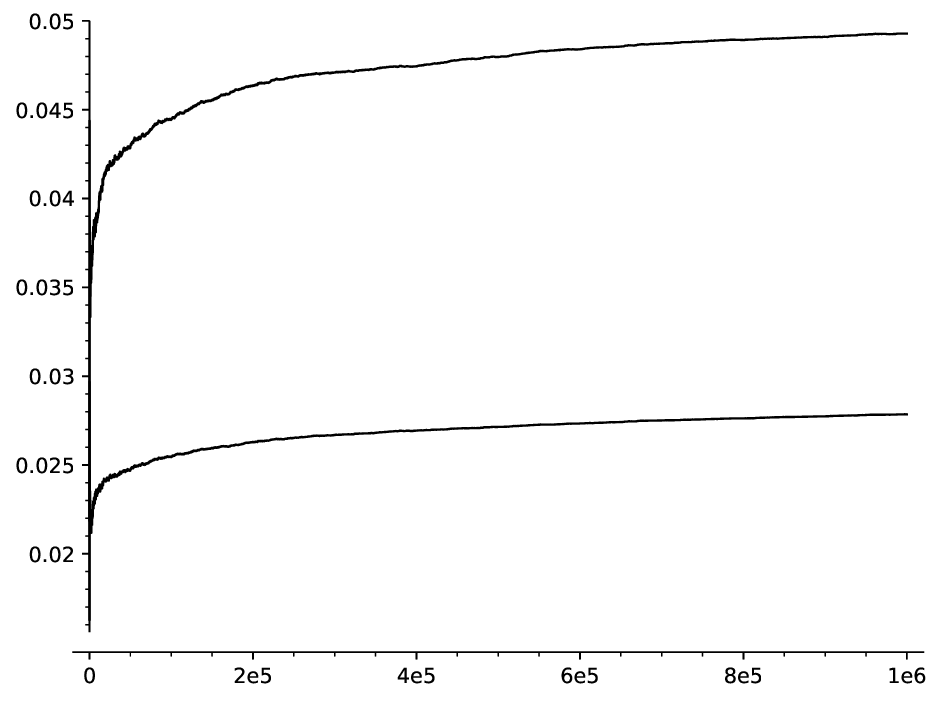}
\captionsetup{width=1.4\linewidth}
\caption{17a1: $(\alpha, \beta) = (2,3)$ Top to bottom $c =$ 0.3, 0.4} \label{fig:17_6_2_3_acc_c_orders}
\end{subfigure}
\hspace*{-.7cm}
\begin{subfigure}[b]{0.4\linewidth}
\includegraphics[width=\linewidth]{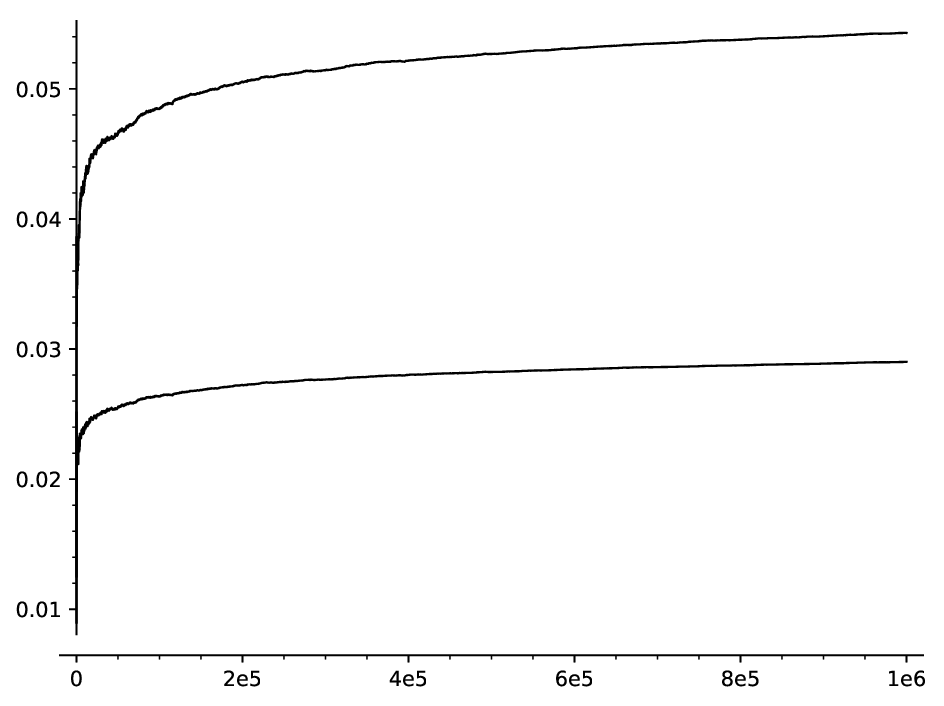}
\captionsetup{width=1.4\linewidth}
\caption{17a1: $(\alpha, \beta) = (1,6)$ Top to bottom $c =$ 0.3, 0.4} \label{fig:17_6_1_6_acc_c_orders}
\end{subfigure}\hspace*{\fill}
\begin{subfigure}[b]{0.4\linewidth}
\includegraphics[width=\linewidth]{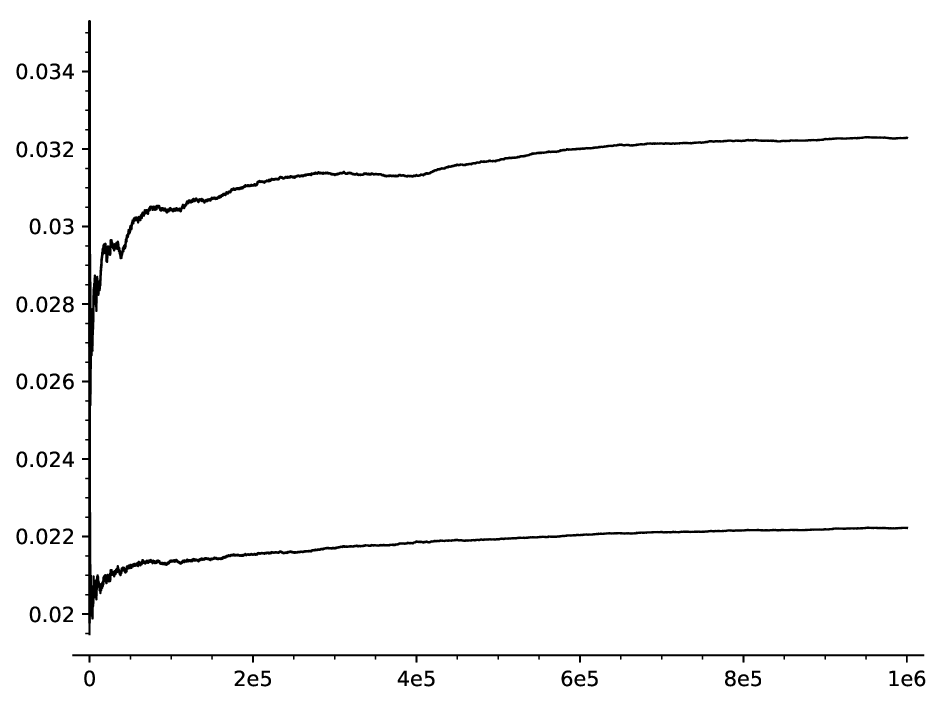}
\captionsetup{width=1.4\linewidth}
\caption{17a1: $(\alpha, \beta) = (2,6)$ Top to bottom $c =$ 0.3, 0.4} \label{fig:17_6_2_6_acc_c_orders}
\end{subfigure}
\caption{15a1, 17a1: Ratio~\eqref{ratio_M_orders} $m_{6,E}^{(\alpha, \beta)}(X;c)/X^{c +1/2}\log^2(X)$ for $c = $ 0.3, 0.4 and $k = 6$.} \label{fig:c_15_17_acc_6_orders}
\end{figure}

\clearpage

\begin{figure}[b!] % "[t!]" placement specifier just for this example
\hspace*{-.7cm}
\begin{subfigure}[b]{0.4\linewidth}
\includegraphics[width=\linewidth]{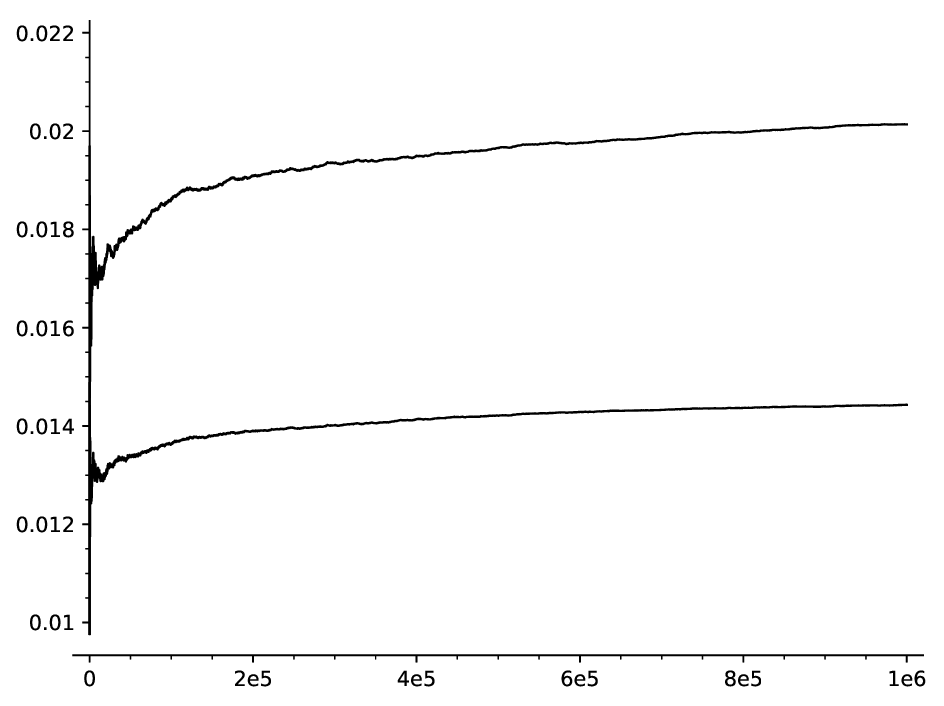}
\captionsetup{width=1.4\linewidth}
\caption{19a1: $(\alpha, \beta) = (1,3)$ Top to bottom $c =$ 0.3, 0.4} \label{fig:19_6_1_3_acc_c_orders}
\end{subfigure}\hspace*{\fill}
\begin{subfigure}[b]{0.4\linewidth}
\includegraphics[width=\linewidth]{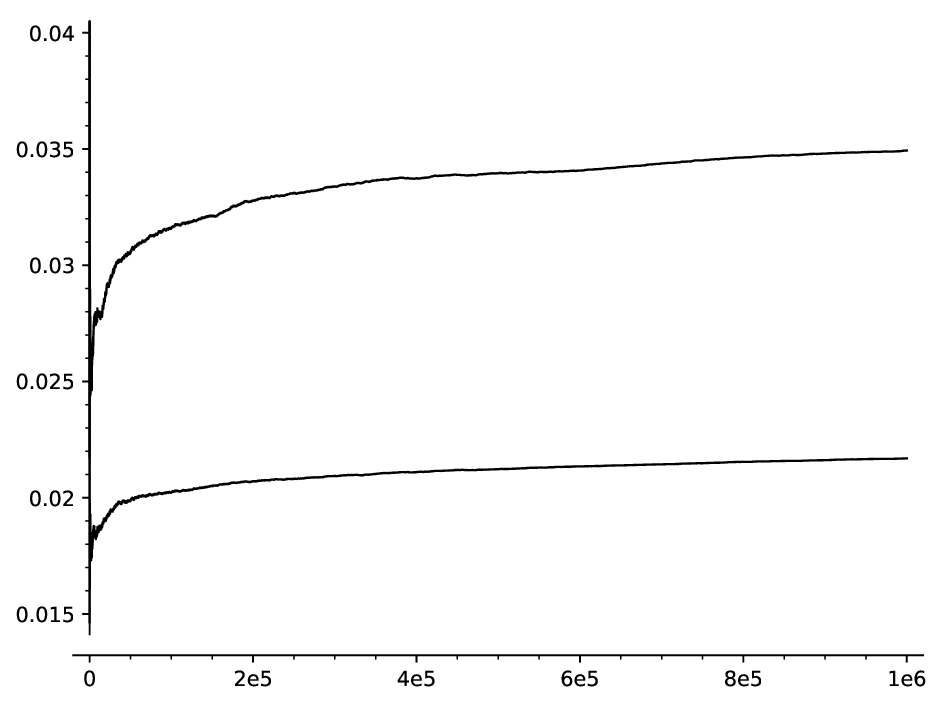}
\captionsetup{width=1.4\linewidth}
\caption{19a1: $(\alpha, \beta) = (2,3)$ Top to bottom $c =$ 0.3, 0.4} \label{fig:19_6_2_3_acc_c_orders}
\end{subfigure}
\hspace*{-.7cm}
\begin{subfigure}[b]{0.4\linewidth}
\includegraphics[width=\linewidth]{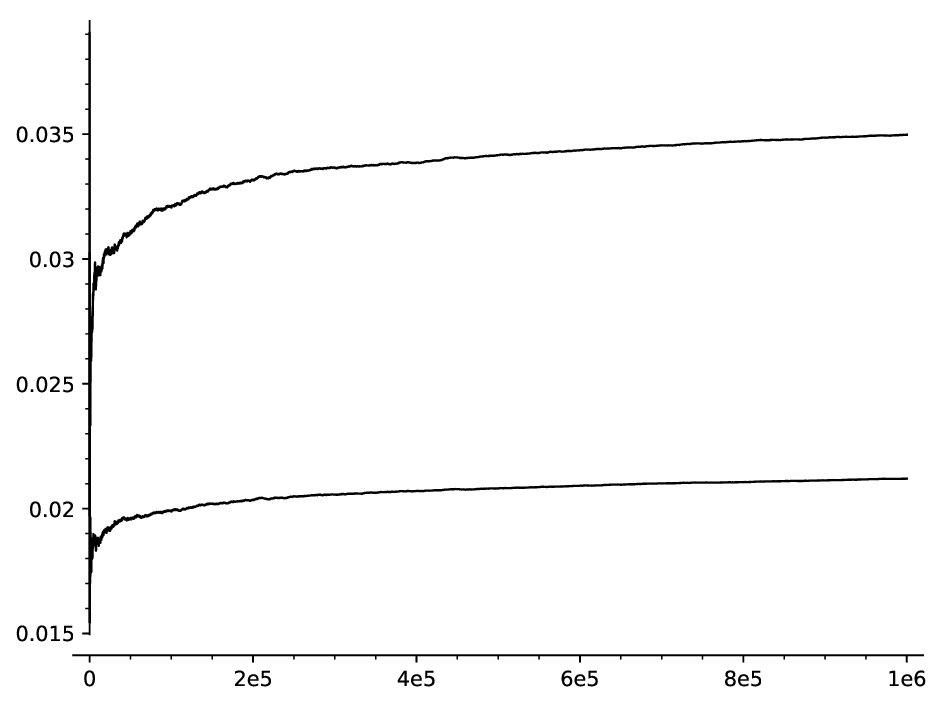}
\captionsetup{width=1.4\linewidth}
\caption{19a1: $(\alpha, \beta) = (1,6)$ Top to bottom $c =$ 0.3, 0.4} \label{fig:19_6_1_6_acc_c_orders}
\end{subfigure}\hspace*{\fill}
\begin{subfigure}[b]{0.4\linewidth}
\includegraphics[width=\linewidth]{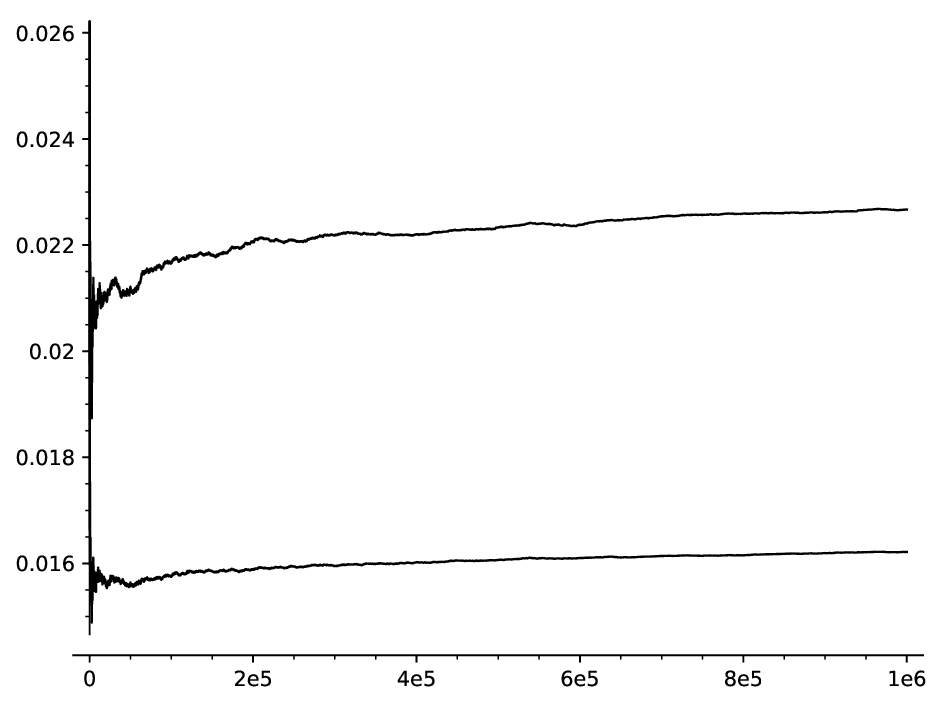}
\captionsetup{width=1.4\linewidth}
\caption{19a1: $(\alpha, \beta) = (2,6)$ Top to bottom $c =$ 0.3, 0.4} \label{fig:19_6_2_6_acc_c_orders}
\end{subfigure}
\hspace*{-.7cm}
\begin{subfigure}[b]{0.4\linewidth}
\includegraphics[width=\linewidth]{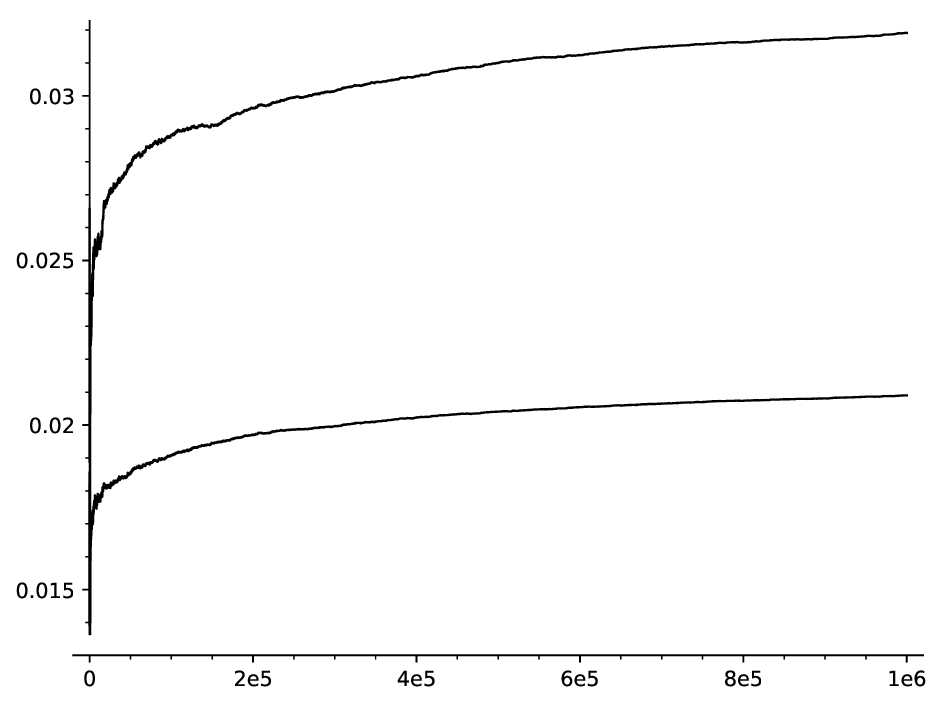}
\captionsetup{width=1.4\linewidth}
\caption{37b1: $(\alpha, \beta) = (1,3)$ Top to bottom $c =$ 0.3, 0.4} \label{fig:37_6_1_3_acc_c_orders}
\end{subfigure}\hspace*{\fill}
\begin{subfigure}[b]{0.4\linewidth}
\includegraphics[width=\linewidth]{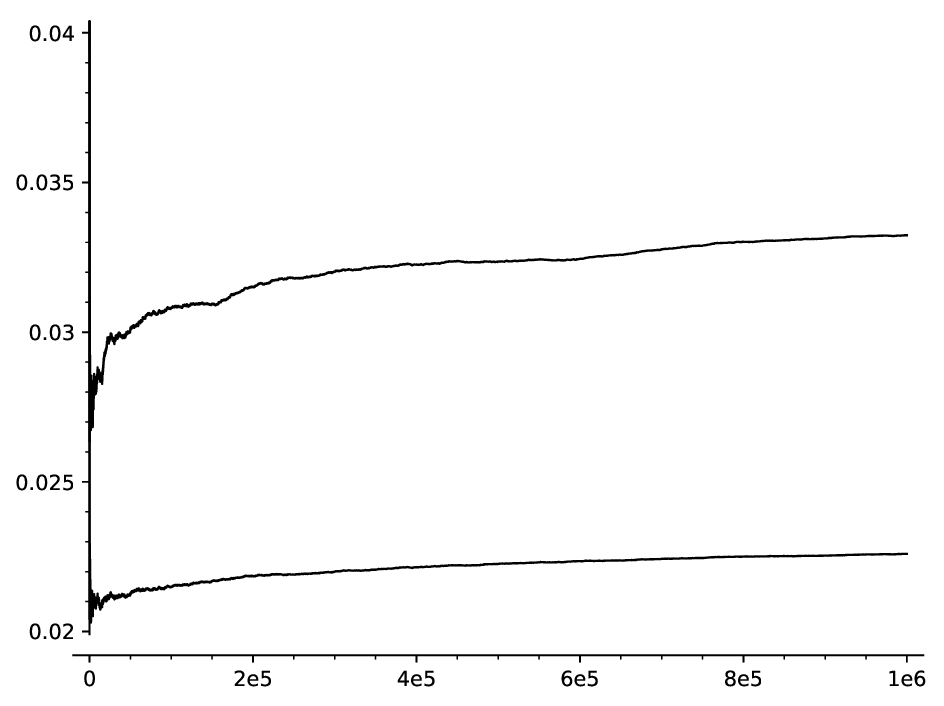}
\captionsetup{width=1.4\linewidth}
\caption{37b1: $(\alpha, \beta) = (2,3)$ Top to bottom $c =$ 0.3, 0.4} \label{fig:37_6_2_3_acc_c_orders}
\end{subfigure}
\hspace*{-.7cm}
\begin{subfigure}[b]{0.4\linewidth}
\includegraphics[width=\linewidth]{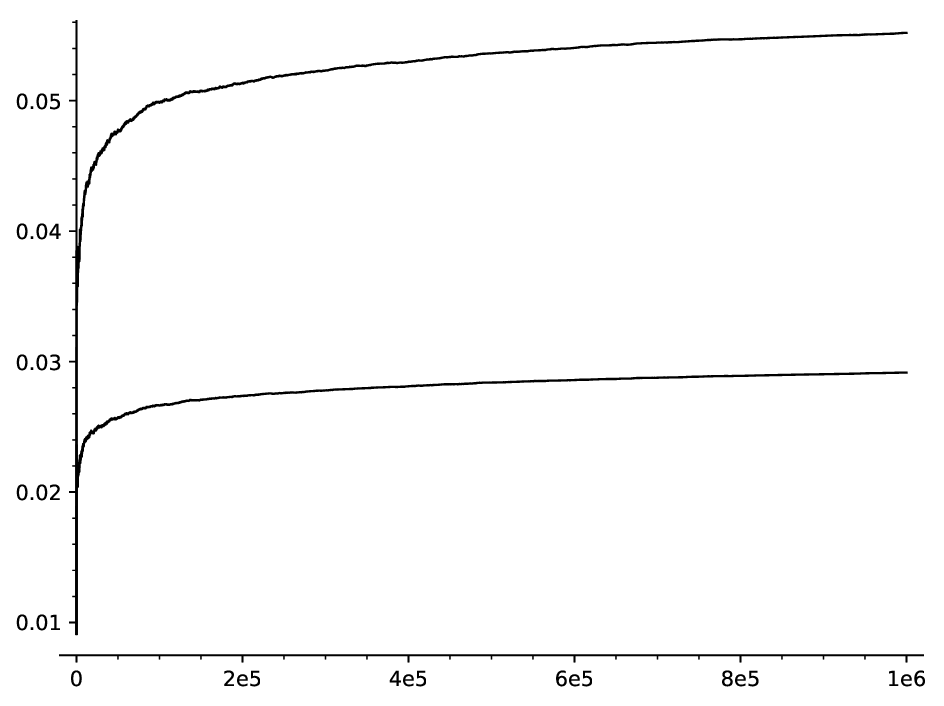}
\captionsetup{width=1.4\linewidth}
\caption{37b1: $(\alpha, \beta) = (1,6)$ Top to bottom $c =$ 0.3, 0.4} \label{fig:37_6_1_6_acc_c_orders}
\end{subfigure}\hspace*{\fill}
\begin{subfigure}[b]{0.4\linewidth}
\includegraphics[width=\linewidth]{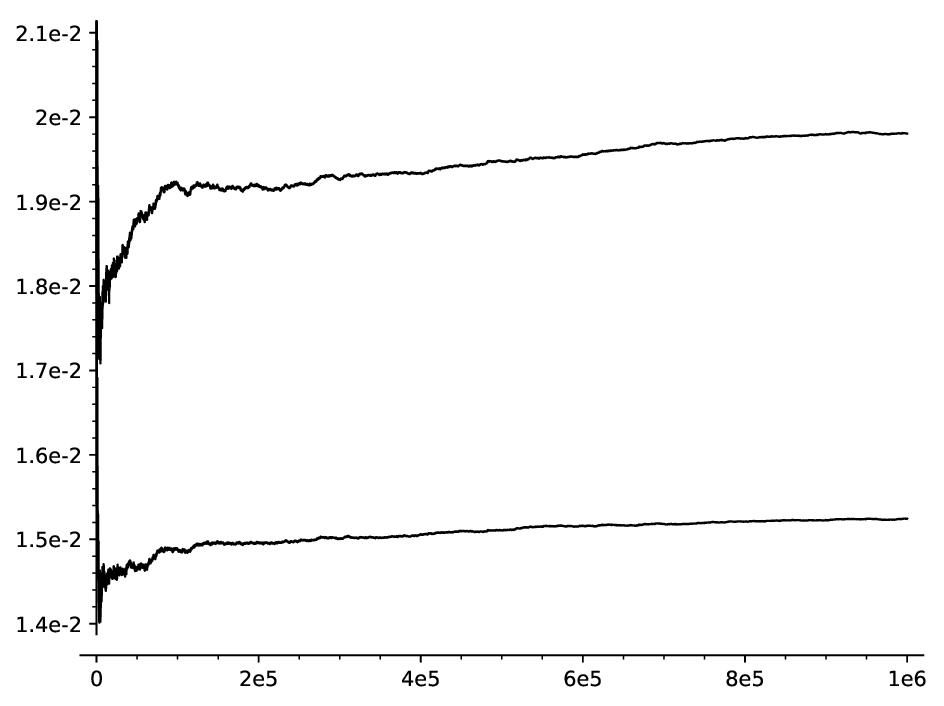}
\captionsetup{width=1.4\linewidth}
\caption{37b1: $(\alpha, \beta) = (2,6)$ Top to bottom $c =$ 0.3, 0.4} \label{fig:37_6_2_6_acc_c_orders}
\end{subfigure}
\caption{19a1, 37b1: Ratio~\eqref{ratio_M_orders} $m_{6,E}^{(\alpha, \beta)}(X;c)/X^{c +1/2}\log^2(X)$ for $c = $ 0.3, 0.4 and $k = 6$.} \label{fig:c_19_37_acc_6_orders}
\end{figure}

\clearpage

\begin{figure}[t] % "[t!]" placement specifier just for this example
\hspace*{-2.3cm}
\begin{subfigure}[b]{0.4\linewidth}
\includegraphics[width=\linewidth]{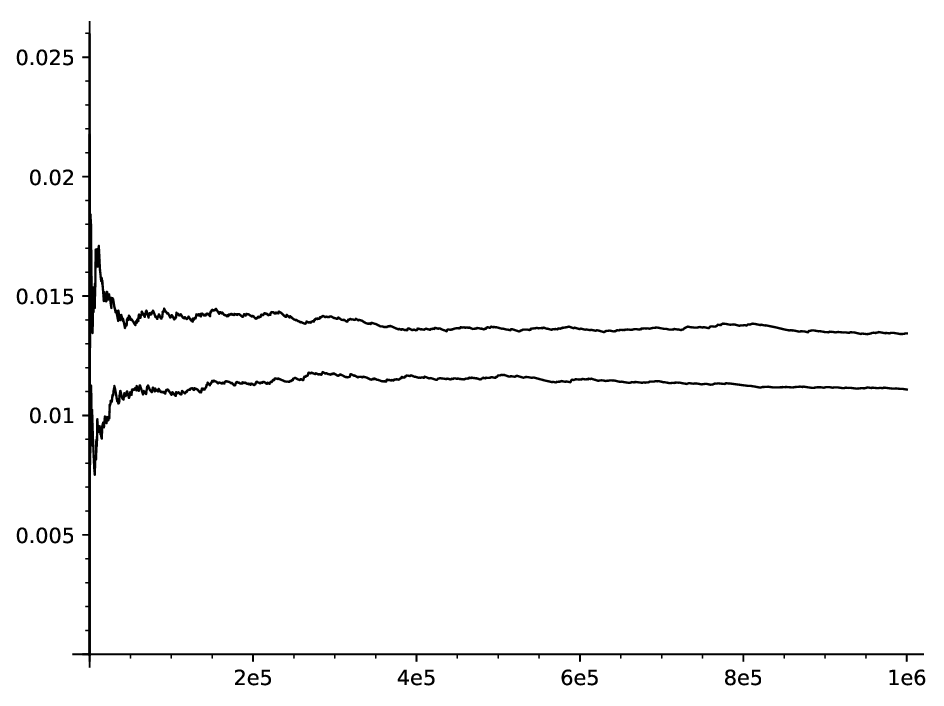}
\caption{$|l| = 1$: Top -1 bottom 1} \label{fig:11_6_1_3_A_1}
\end{subfigure}\hspace*{\fill}
\begin{subfigure}[b]{0.4\linewidth}
\includegraphics[width=\linewidth]{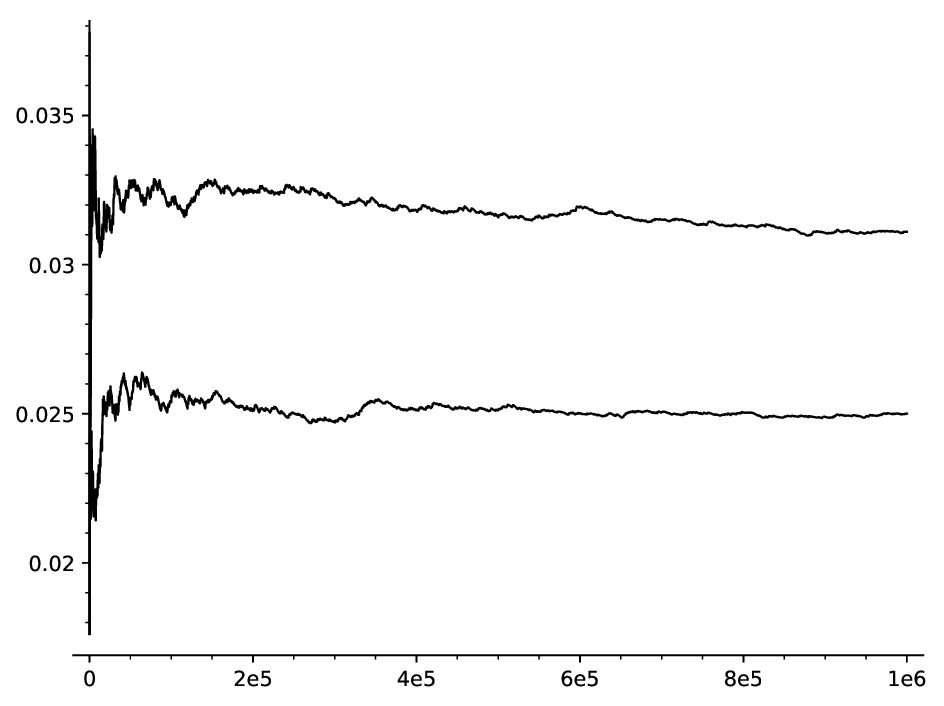}
\caption{$|l| = 2$: Top 2 bottom -2} \label{fig:11_6_1_3_A_2}
\end{subfigure}\hspace*{\fill}
\begin{subfigure}[b]{0.4\linewidth}
\includegraphics[width=\linewidth]{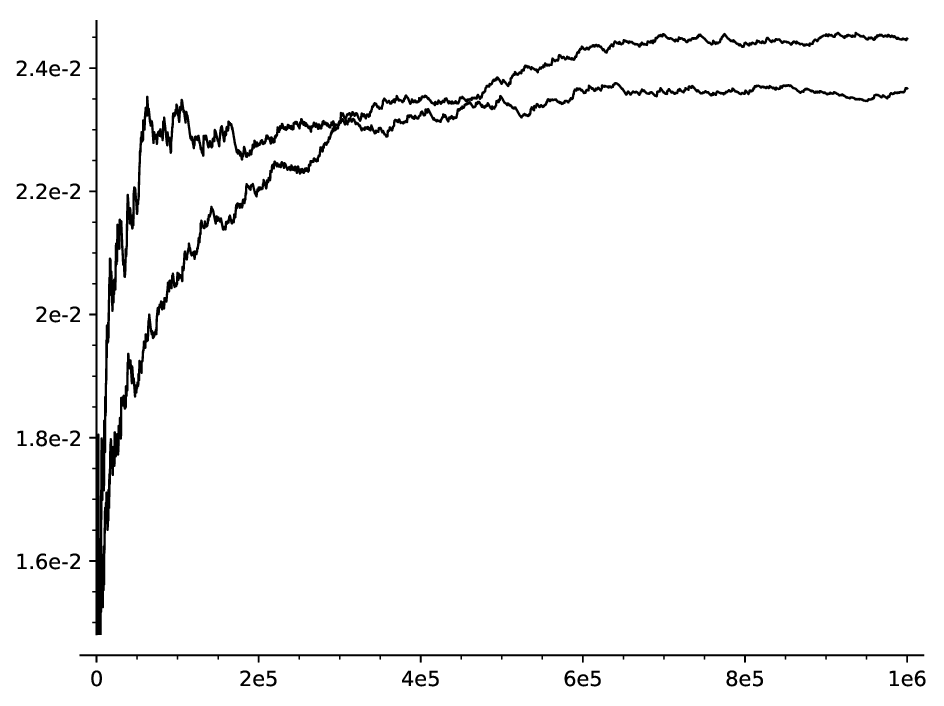}
\caption{$|l| = 3$: Top -3 bottom 3} \label{fig:11_6_1_3_A_3}
\end{subfigure}
\hspace*{-2.3cm}
\begin{subfigure}[b]{0.4\linewidth}
\includegraphics[width=\linewidth]{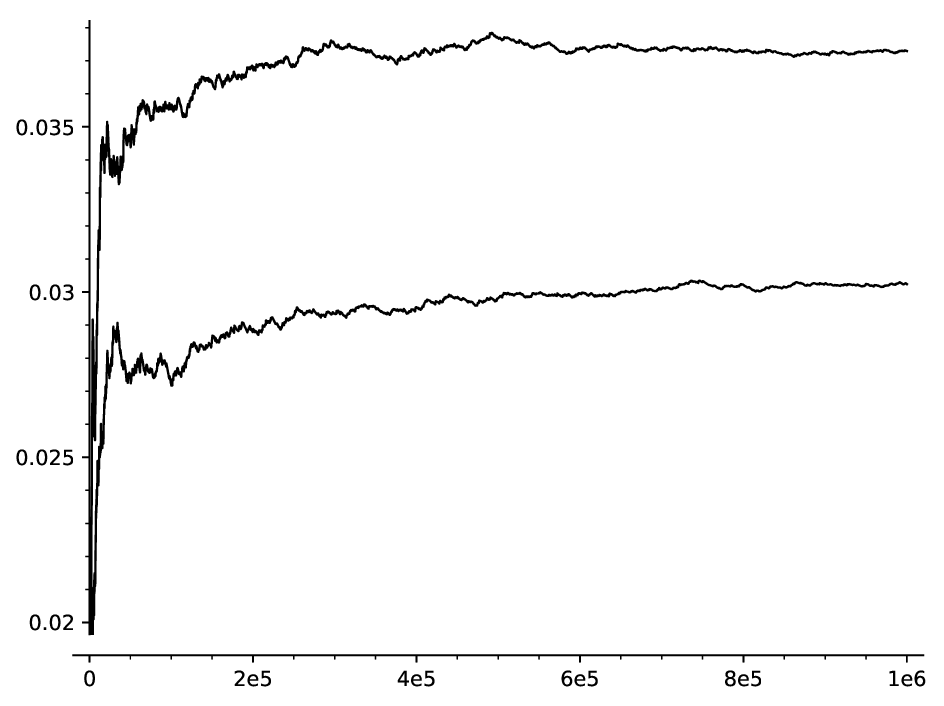}
\caption{$|l| = 4$: Top -4 bottom 4} \label{fig:11_6_1_3_A_4}
\end{subfigure}\hspace*{\fill}
\begin{subfigure}[b]{0.4\linewidth}
\includegraphics[width=\linewidth]{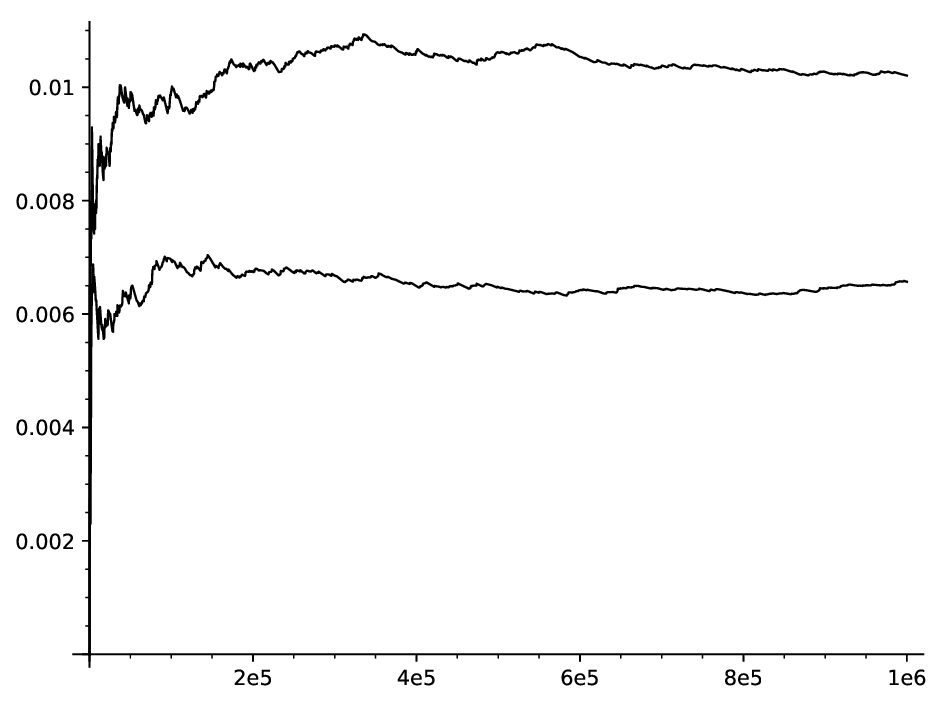}
\caption{$|l| = 5$: Top 5 bottom -5} \label{fig:11_6_1_3_A_5}
\end{subfigure}\hspace*{\fill}
\begin{subfigure}[b]{0.4\linewidth}
\includegraphics[width=\linewidth]{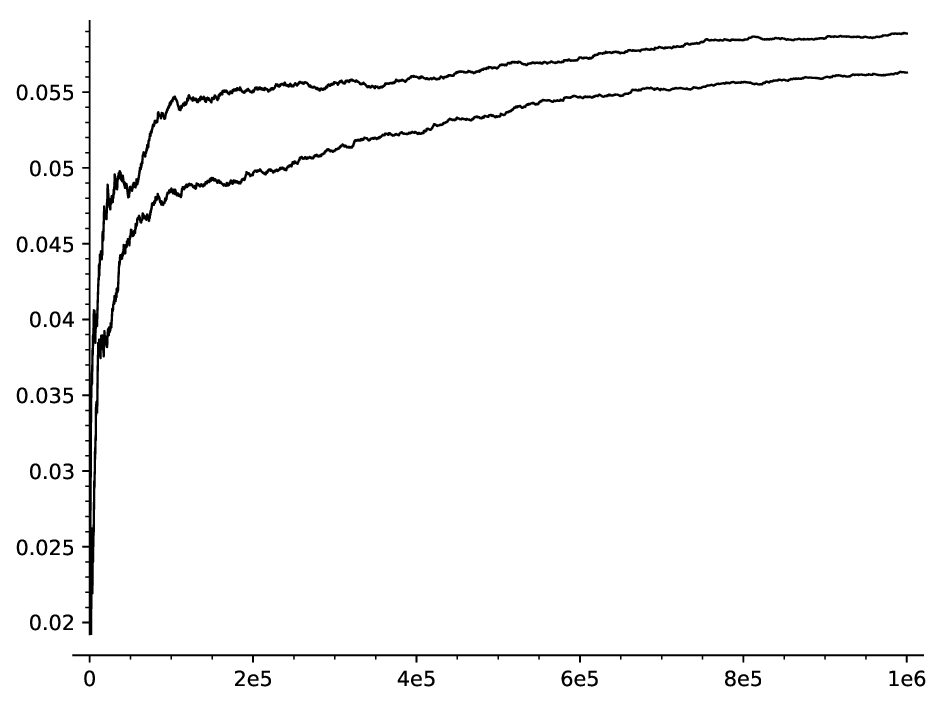}
\caption{$|l| = 6$: Top 6 bottom -6} \label{fig:11_6_1_3_A_6}
\end{subfigure}
\hspace*{-2.3cm}
\begin{subfigure}[b]{0.4\linewidth}
\includegraphics[width=\linewidth]{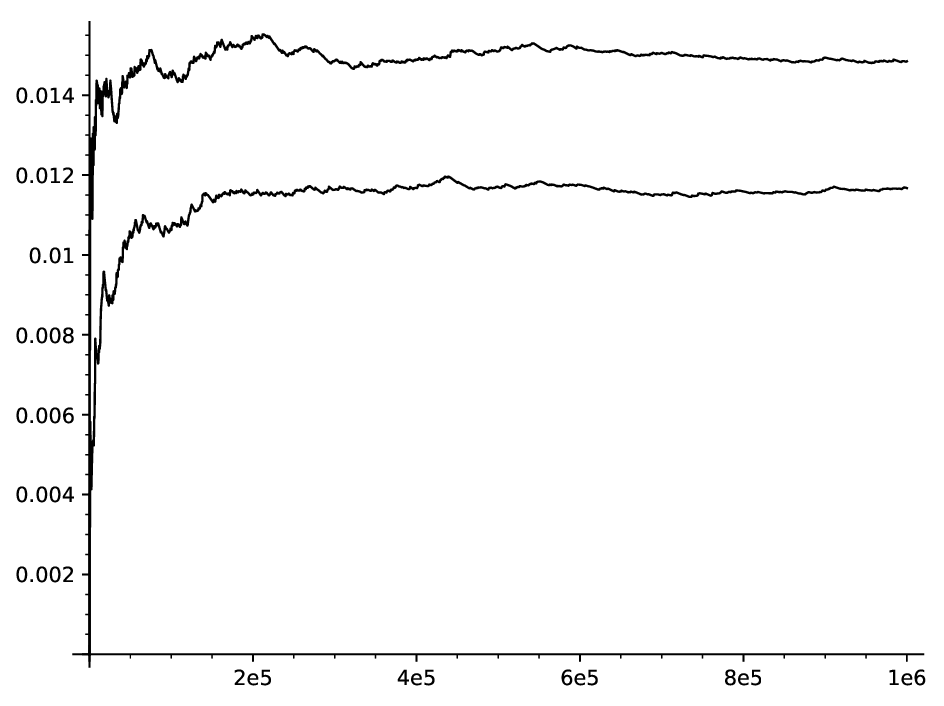}
\caption{$|l| = 7$: Top -7 bottom 7} \label{fig:11_6_1_3_A_7}
\end{subfigure}\hspace*{\fill}
\begin{subfigure}[b]{0.4\linewidth}
\includegraphics[width=\linewidth]{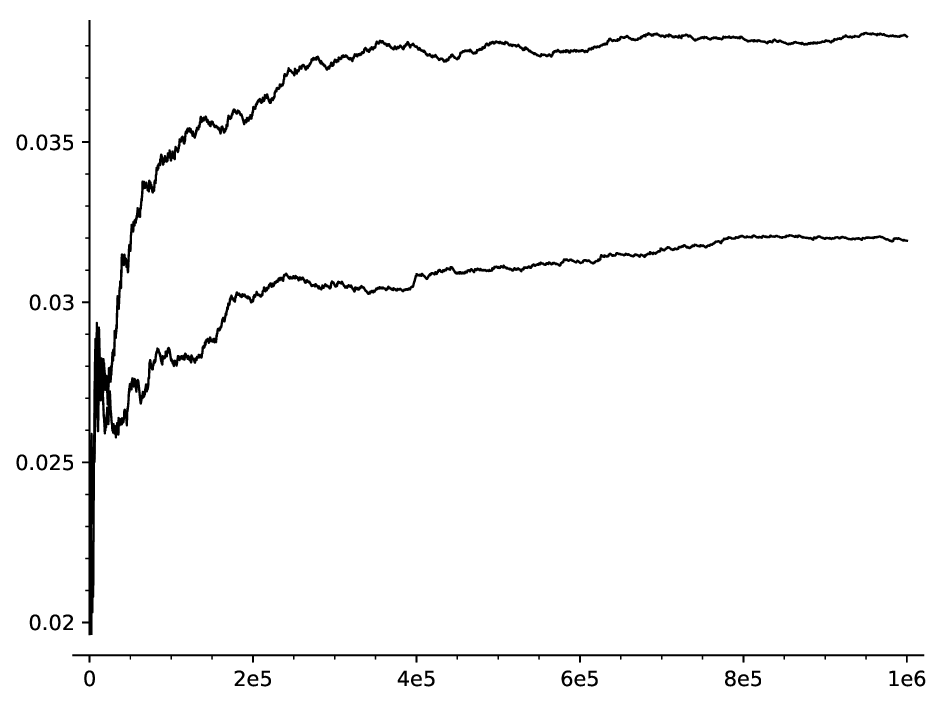}
\caption{$|l| = 8$: Top 8 bottom -8} \label{fig:11_6_1_3_A_8}
\end{subfigure}\hspace*{\fill}
\begin{subfigure}[b]{0.4\linewidth}
\includegraphics[width=\linewidth]{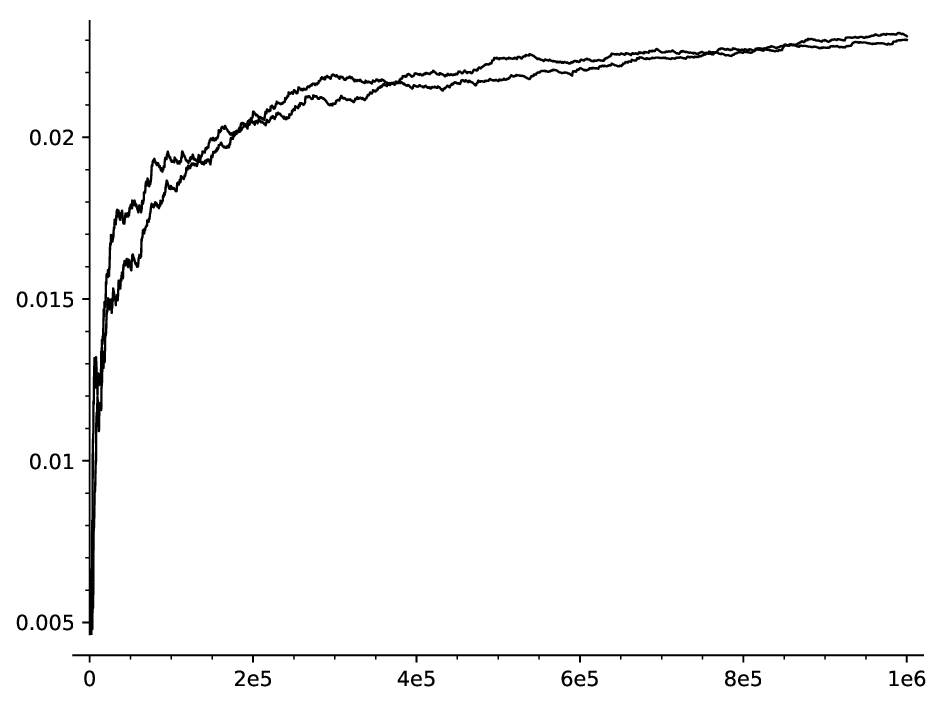}
\caption{$|l| = 9$: Top -9 bottom 9} \label{fig:11_6_1_3_A_9}
\end{subfigure}
\caption{11a1: $(\alpha, \beta) = (1,3)$ Ratio~\eqref{ratio_n_orders} $x_{6,E}^{(\alpha, \beta)}(X;l)/X^{1/2}\log^2(X)$} \label{fig:11a1_6_1_3_A_exact}
\end{figure}

\clearpage

\begin{figure}[t] % "[t!]" placement specifier just for this example
\hspace*{-2.3cm}
\begin{subfigure}[b]{0.4\linewidth}
\includegraphics[width=\linewidth]{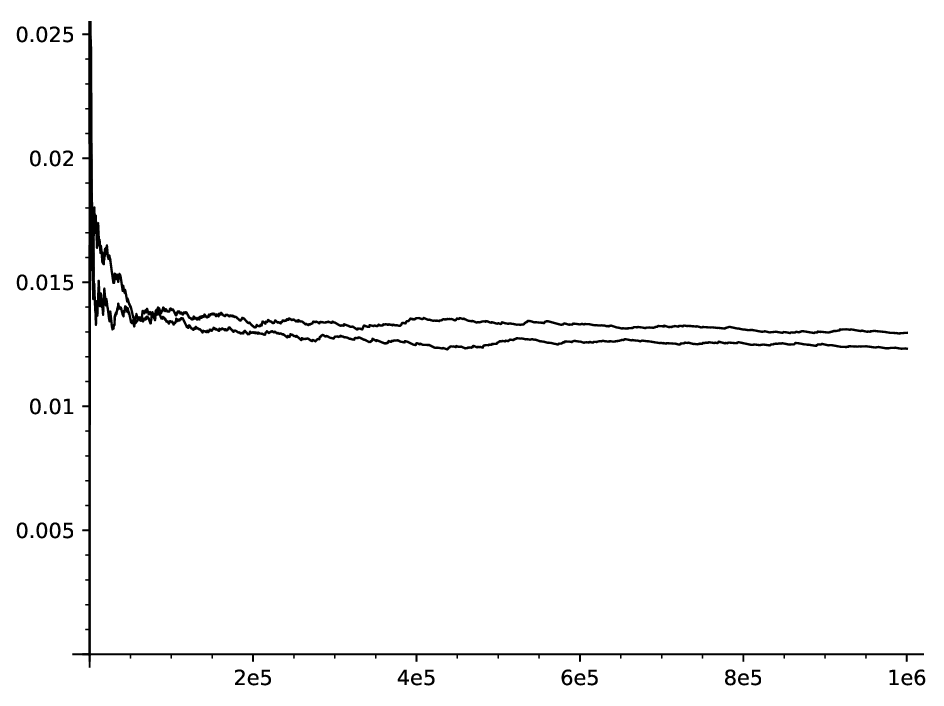}
\caption{$|l| = 1$: Top 1 bottom -1} \label{fig:11_6_2_3_A_1}
\end{subfigure}\hspace*{\fill}
\begin{subfigure}[b]{0.4\linewidth}
\includegraphics[width=\linewidth]{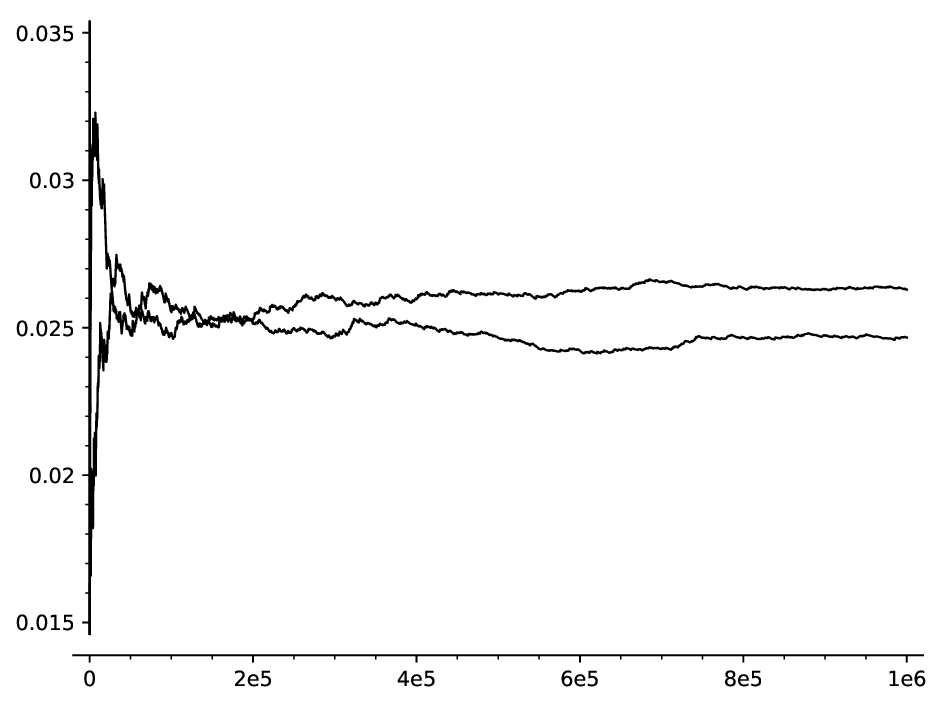}
\caption{$|l| = 2$: Top -2 bottom 2} \label{fig:11_6_2_3_A_2}
\end{subfigure}\hspace*{\fill}
\begin{subfigure}[b]{0.4\linewidth}
\includegraphics[width=\linewidth]{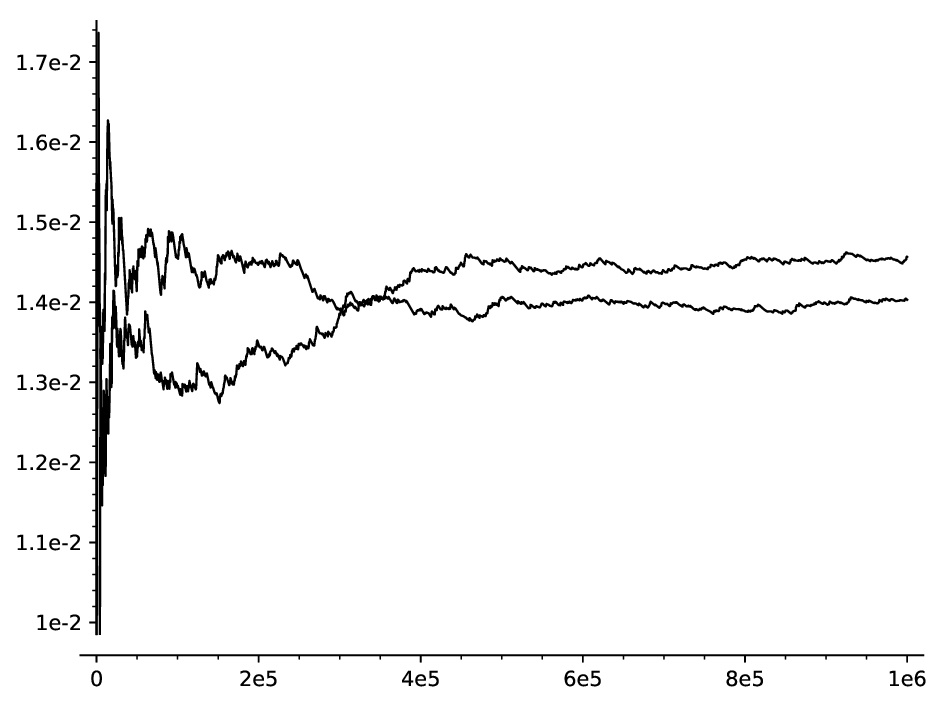}
\caption{$|l| = 3$: Top -3 bottom 3} \label{fig:11_6_2_3_A_3}
\end{subfigure}
\hspace*{-2.3cm}
\begin{subfigure}[b]{0.4\linewidth}
\includegraphics[width=\linewidth]{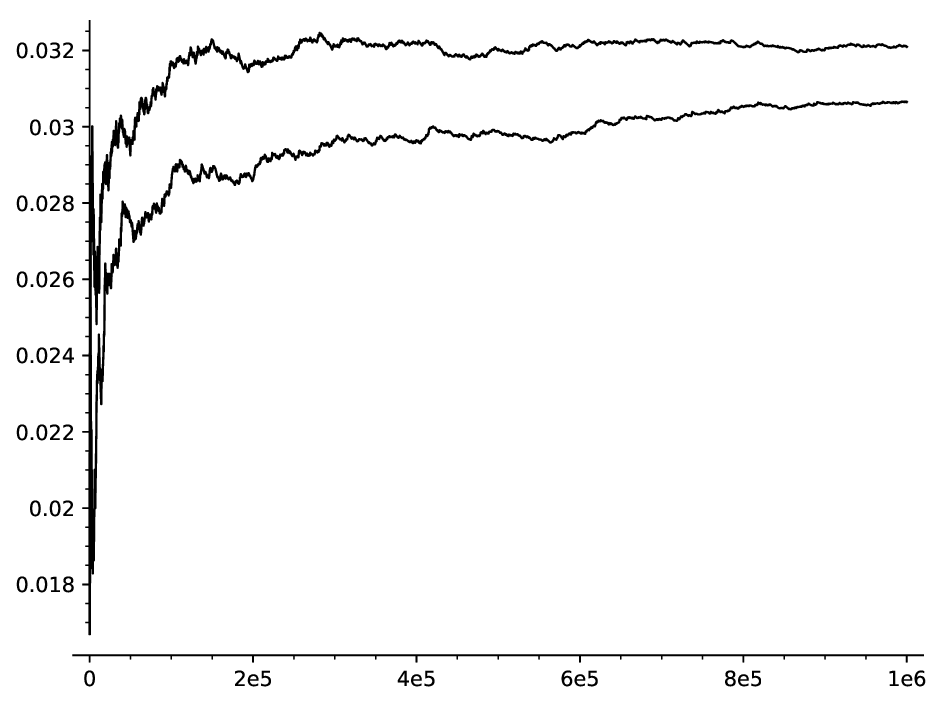}
\caption{$|l| = 4$: Top 4 bottom -4} \label{fig:11_6_2_3_A_4}
\end{subfigure}\hspace*{\fill}
\begin{subfigure}[b]{0.4\linewidth}
\includegraphics[width=\linewidth]{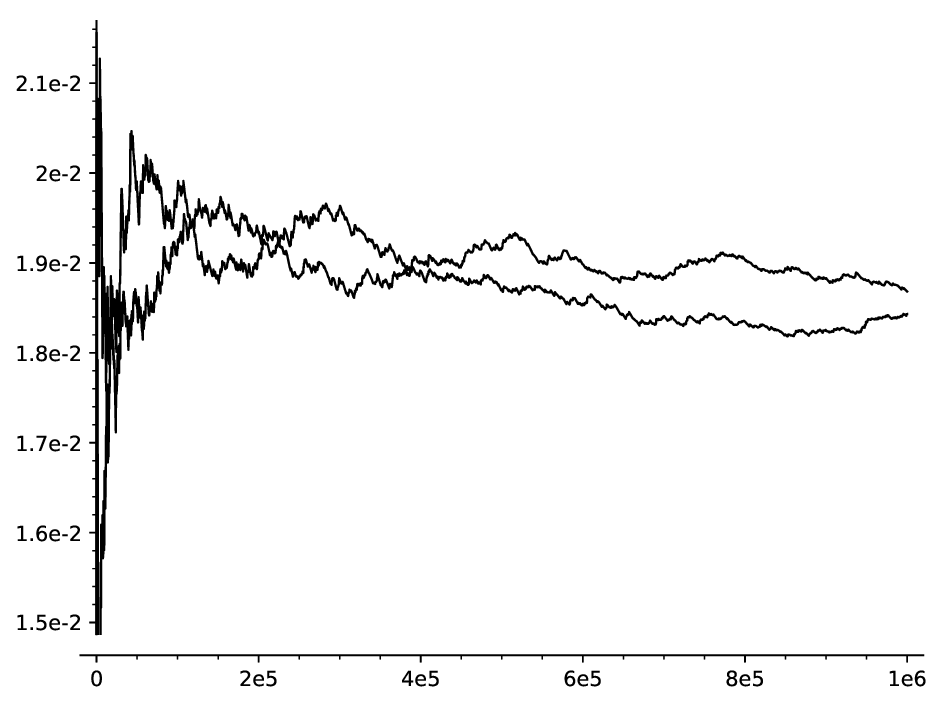}
\caption{$|l| = 5$: Top 5 bottom -5} \label{fig:11_6_2_3_A_5}
\end{subfigure}\hspace*{\fill}
\begin{subfigure}[b]{0.4\linewidth}
\includegraphics[width=\linewidth]{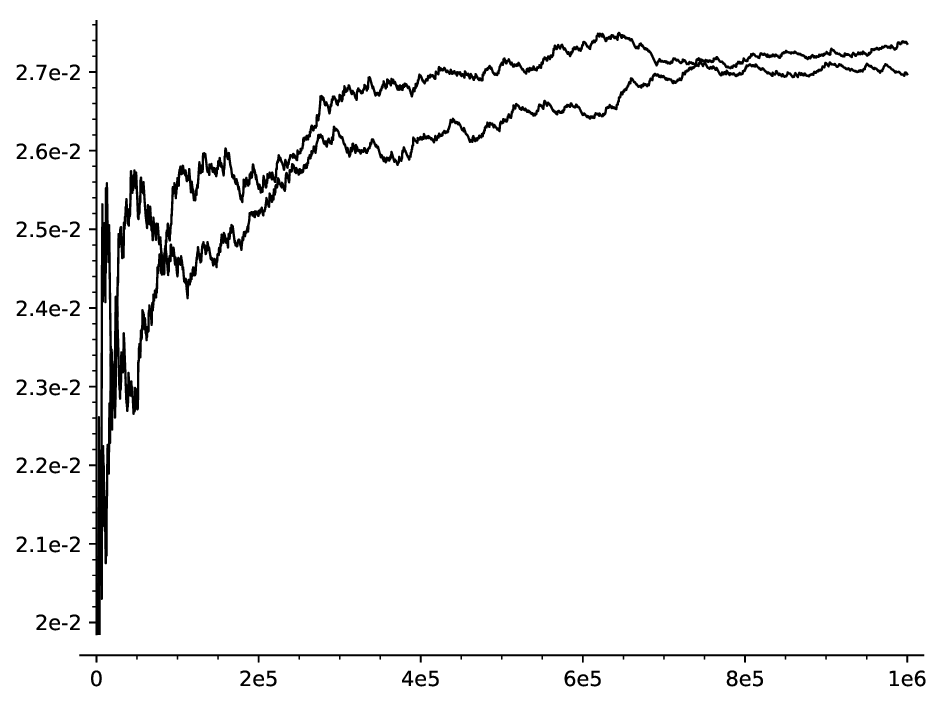}
\caption{$|l| = 6$: Top 6 bottom -6} \label{fig:11_6_2_3_A_6}
\end{subfigure}
\hspace*{-2.3cm}
\begin{subfigure}[b]{0.4\linewidth}
\includegraphics[width=\linewidth]{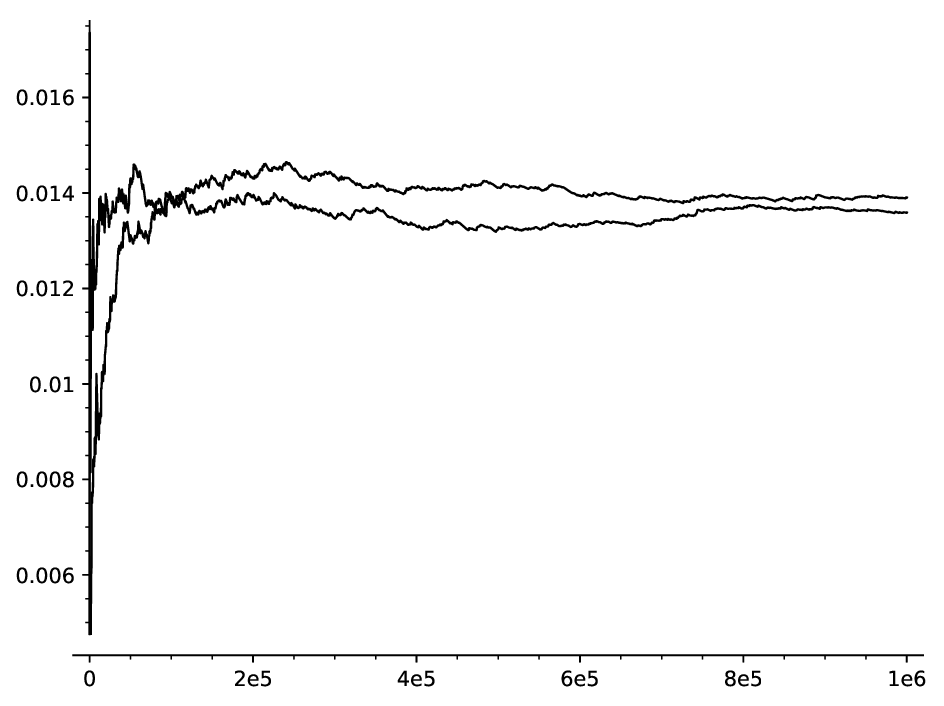}
\caption{$|l| = 7$: Top 7 bottom -7} \label{fig:11_6_2_3_A_7}
\end{subfigure}\hspace*{\fill}
\begin{subfigure}[b]{0.4\linewidth}
\includegraphics[width=\linewidth]{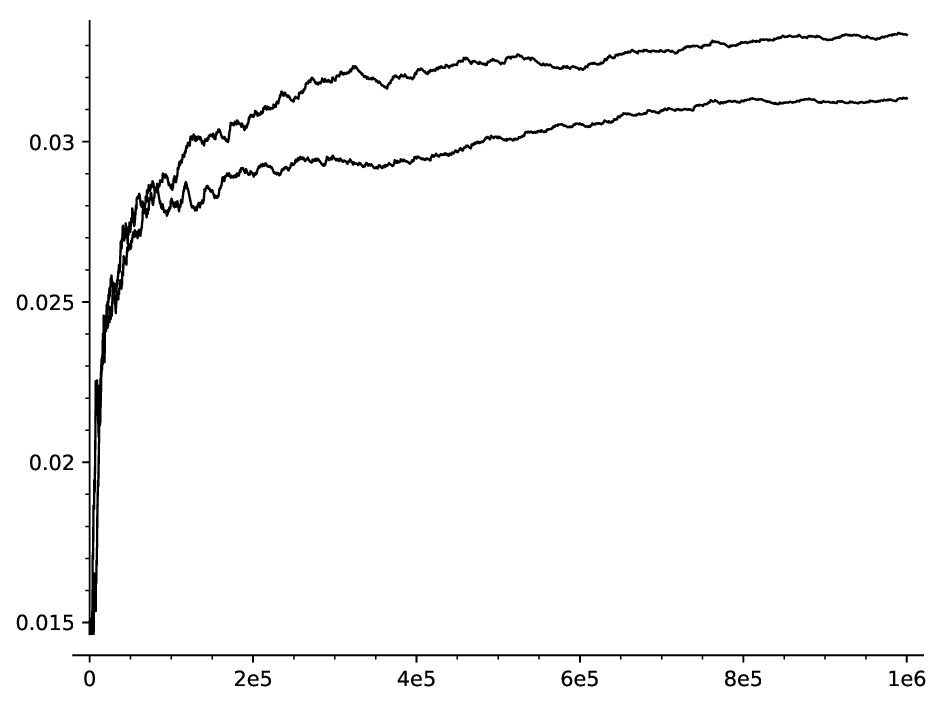}
\caption{$|l| = 8$: Top -8 bottom 8} \label{fig:11_6_2_3_A_8}
\end{subfigure}\hspace*{\fill}
\begin{subfigure}[b]{0.4\linewidth}
\includegraphics[width=\linewidth]{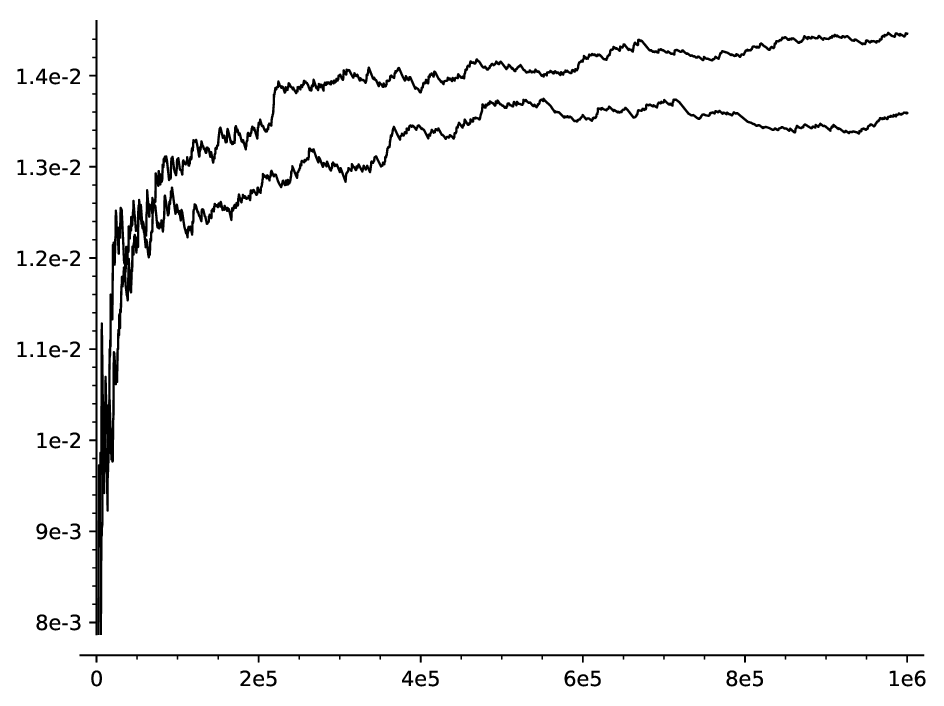}
\caption{$|l| = 9$: Top 9 bottom -9} \label{fig:11_6_2_3_A_9}
\end{subfigure}
\caption{11a1: $(\alpha, \beta) = (2,3)$ Ratio~\eqref{ratio_n_orders} $x_{6,E}^{(\alpha, \beta)}(X;l)/X^{1/2}\log^2(X)$} \label{fig:11a1_6_2_3_A_exact}
\end{figure}

\clearpage

\begin{figure}[t] % "[t!]" placement specifier just for this example
\hspace*{-2.3cm}
\begin{subfigure}[b]{0.4\linewidth}
\includegraphics[width=\linewidth]{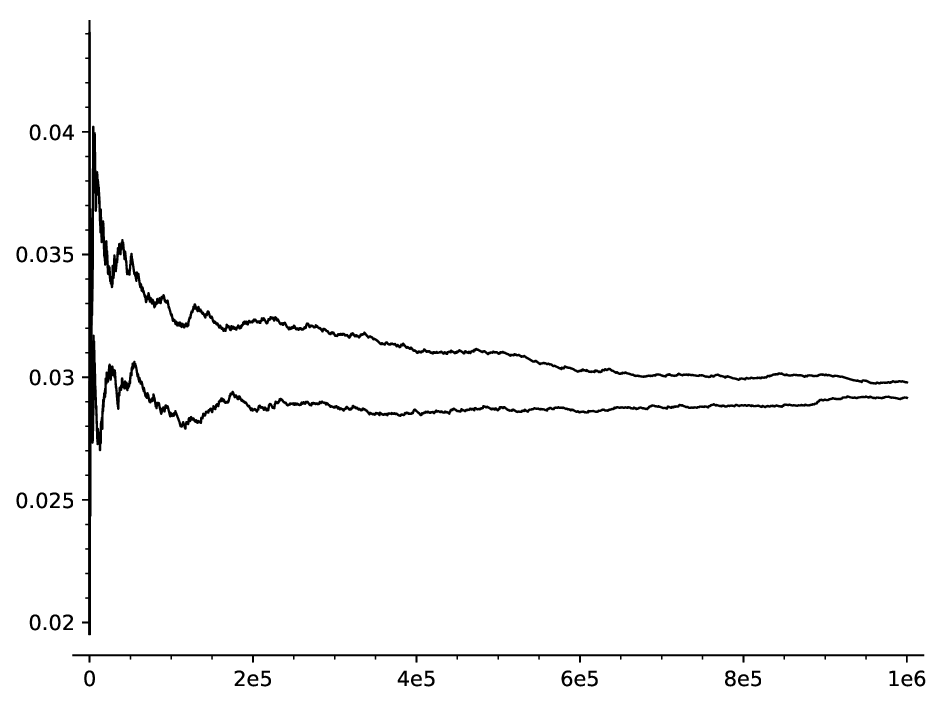}
\caption{$|l| = 1$: Top 1 bottom -1} \label{fig:11_6_1_6_A_1}
\end{subfigure}\hspace*{\fill}
\begin{subfigure}[b]{0.4\linewidth}
\includegraphics[width=\linewidth]{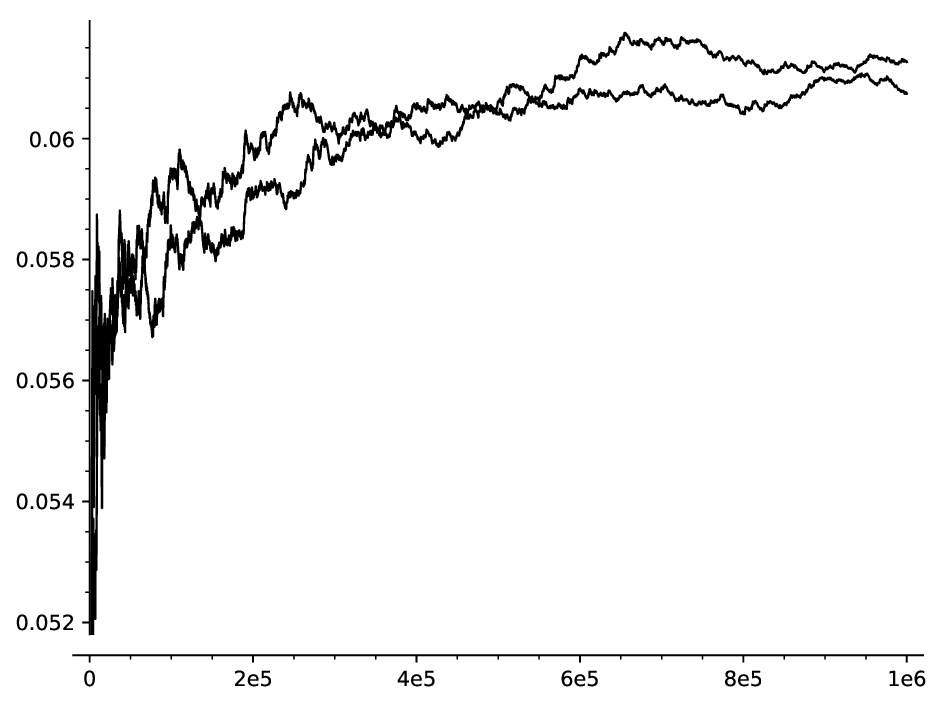}
\caption{$|l| = 2$: Top -2 bottom 2} \label{fig:11_6_1_6_A_2}
\end{subfigure}\hspace*{\fill}
\begin{subfigure}[b]{0.4\linewidth}
\includegraphics[width=\linewidth]{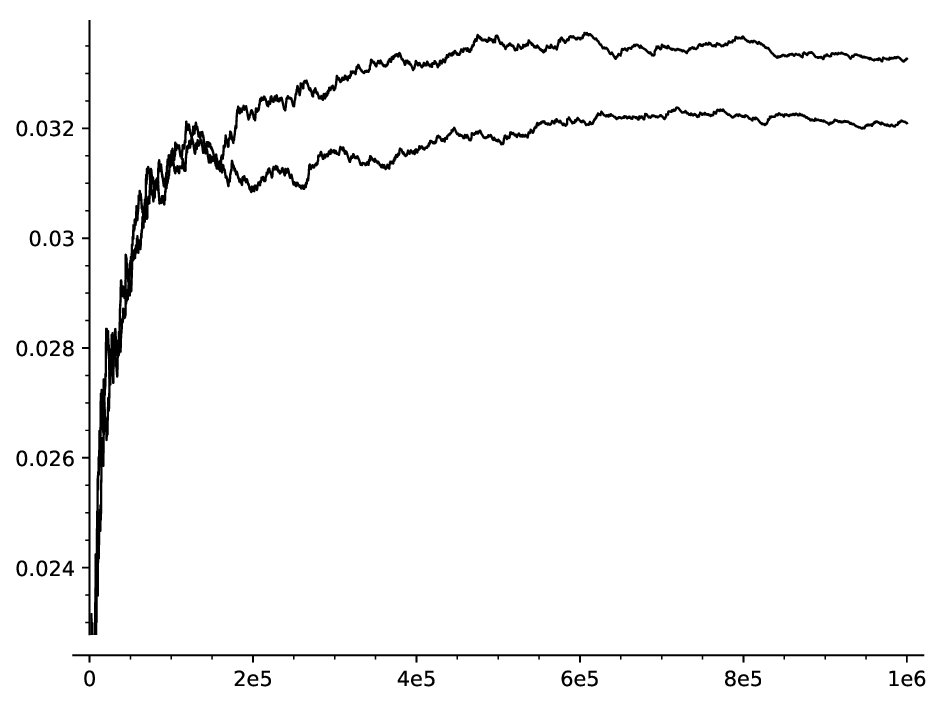}
\caption{$|l| = 3$: Top 3 bottom -3} \label{fig:11_6_1_6_A_3}
\end{subfigure}
\hspace*{-2.3cm}
\begin{subfigure}[b]{0.4\linewidth}
\includegraphics[width=\linewidth]{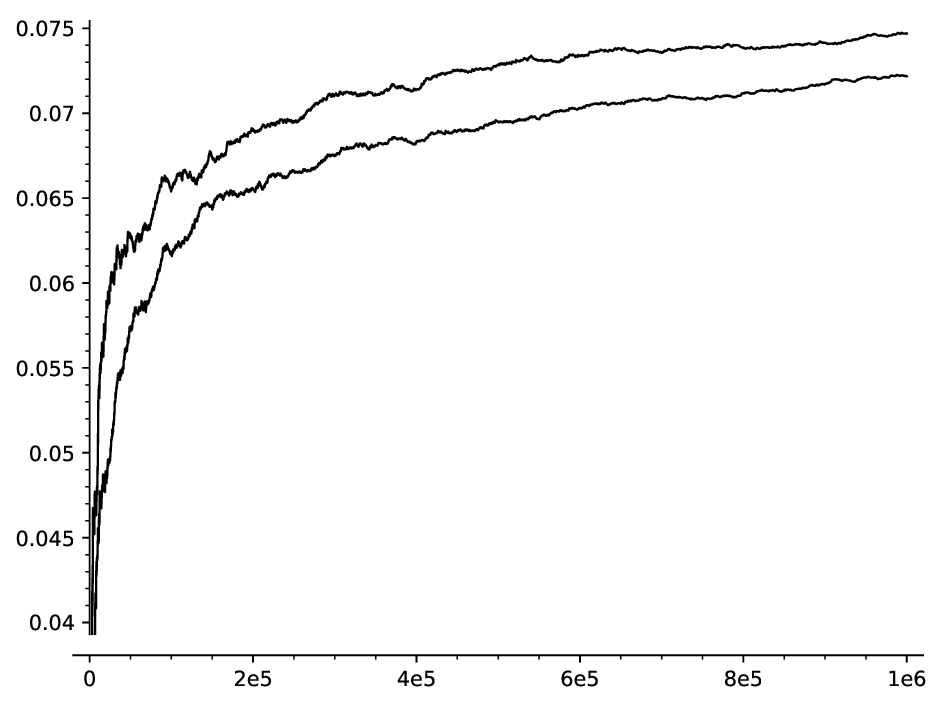}
\caption{$|l| = 4$: Top 4 bottom -4} \label{fig:11_6_1_6_A_4}
\end{subfigure}\hspace*{\fill}
\begin{subfigure}[b]{0.4\linewidth}
\includegraphics[width=\linewidth]{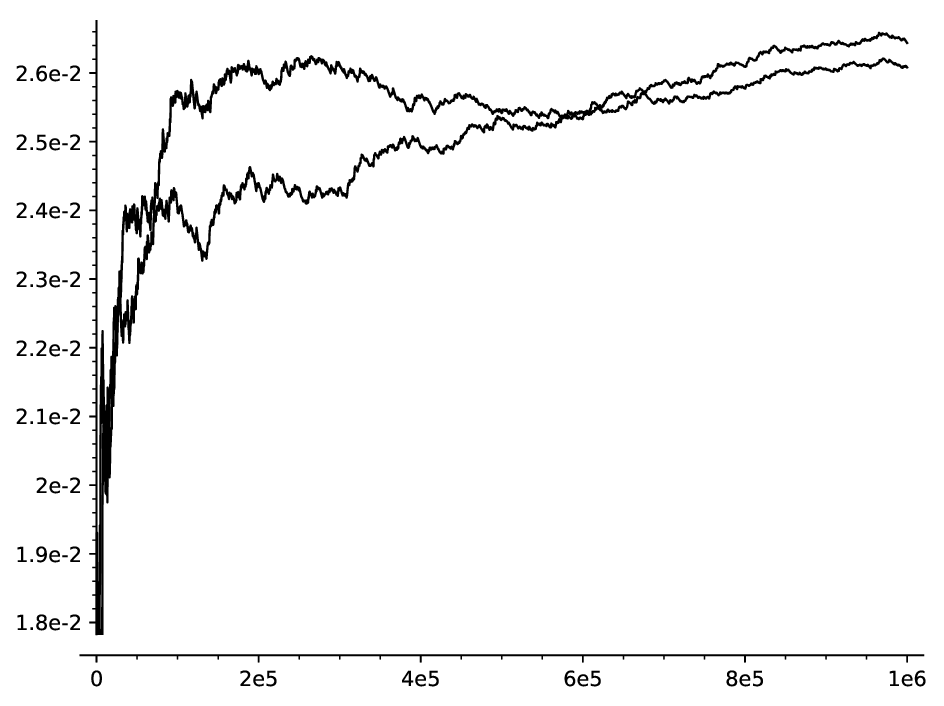}
\caption{$|l| = 5$: Top 5 bottom -5} \label{fig:11_6_1_6_A_5}
\end{subfigure}\hspace*{\fill}
\begin{subfigure}[b]{0.4\linewidth}
\includegraphics[width=\linewidth]{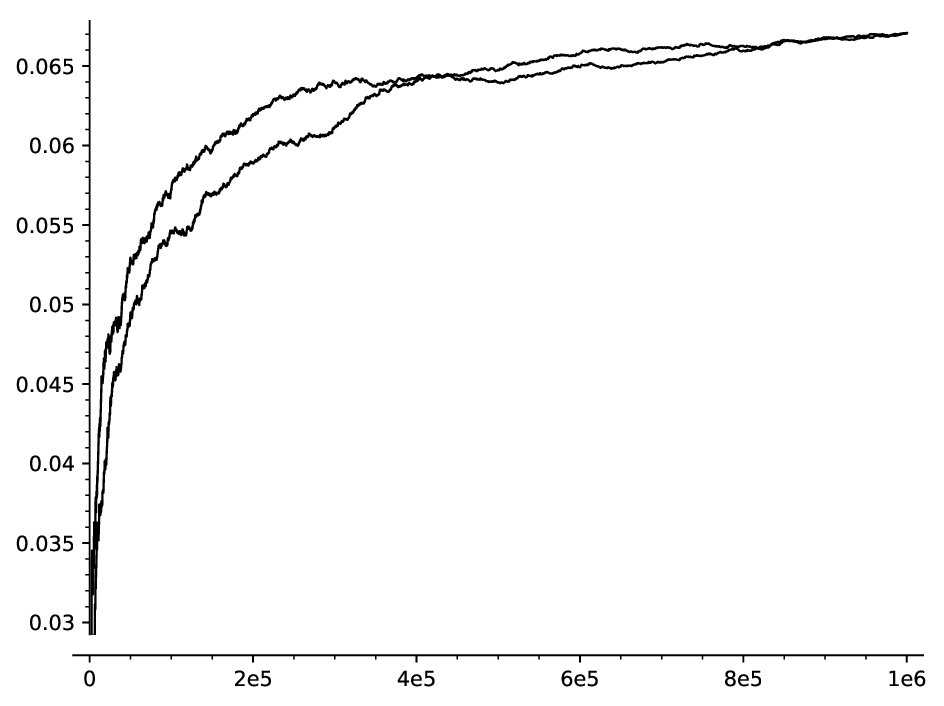}
\caption{$|l| = 6$: Top -6 bottom 6} \label{fig:11_6_1_6_A_6}
\end{subfigure}
\hspace*{-2.3cm}
\begin{subfigure}[b]{0.4\linewidth}
\includegraphics[width=\linewidth]{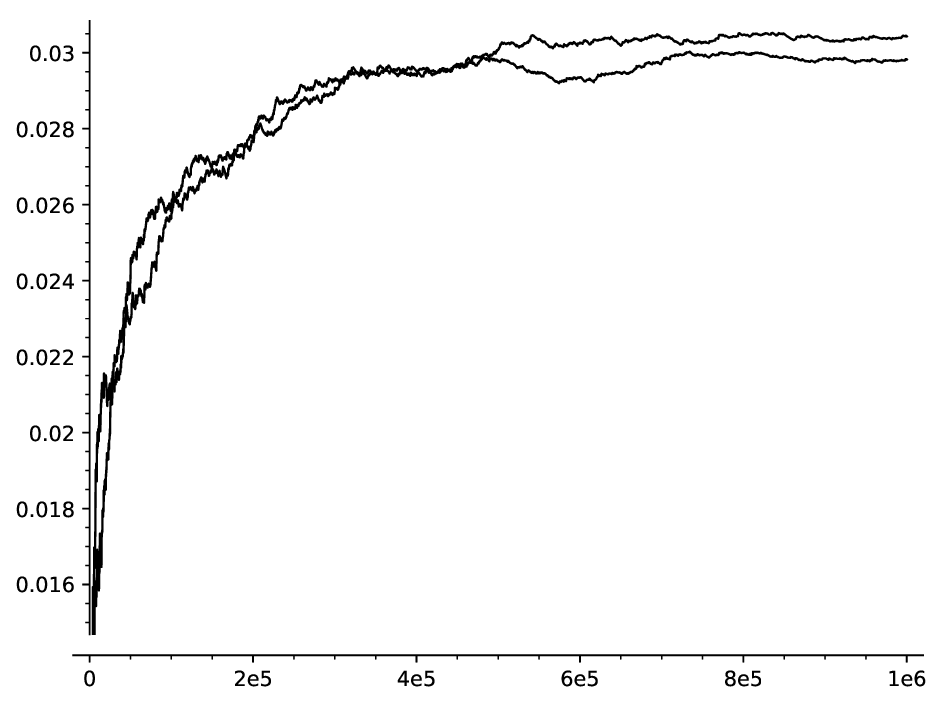}
\caption{$|l| = 7$: Top 7 bottom -7} \label{fig:11_6_1_6_A_7}
\end{subfigure}\hspace*{\fill}
\begin{subfigure}[b]{0.4\linewidth}
\includegraphics[width=\linewidth]{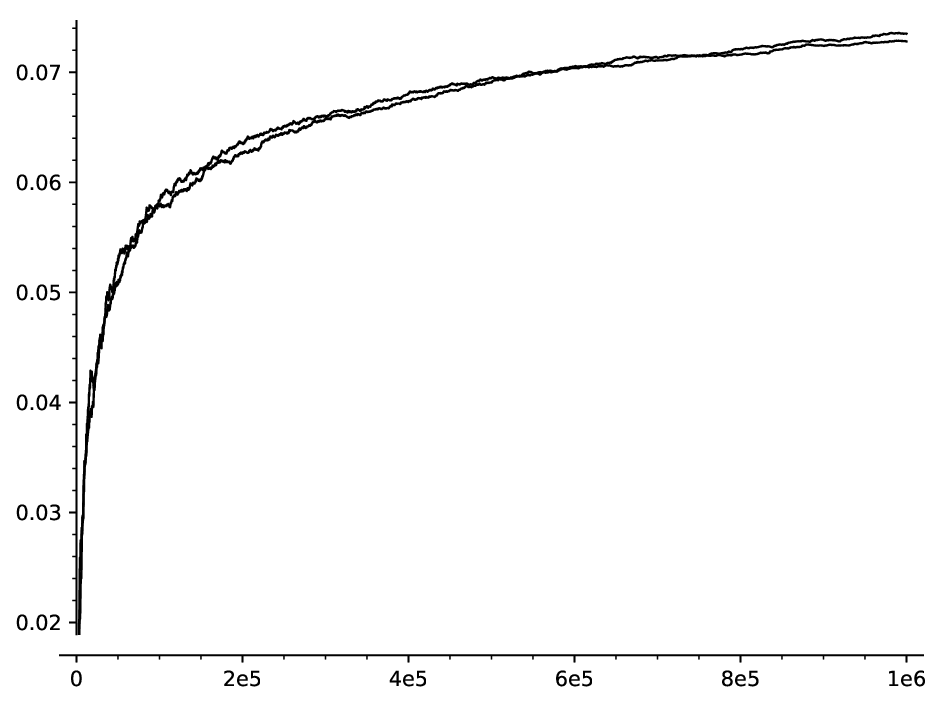}
\caption{$|l| = 8$: Top -8 bottom 8} \label{fig:11_6_1_6_A_8}
\end{subfigure}\hspace*{\fill}
\begin{subfigure}[b]{0.4\linewidth}
\includegraphics[width=\linewidth]{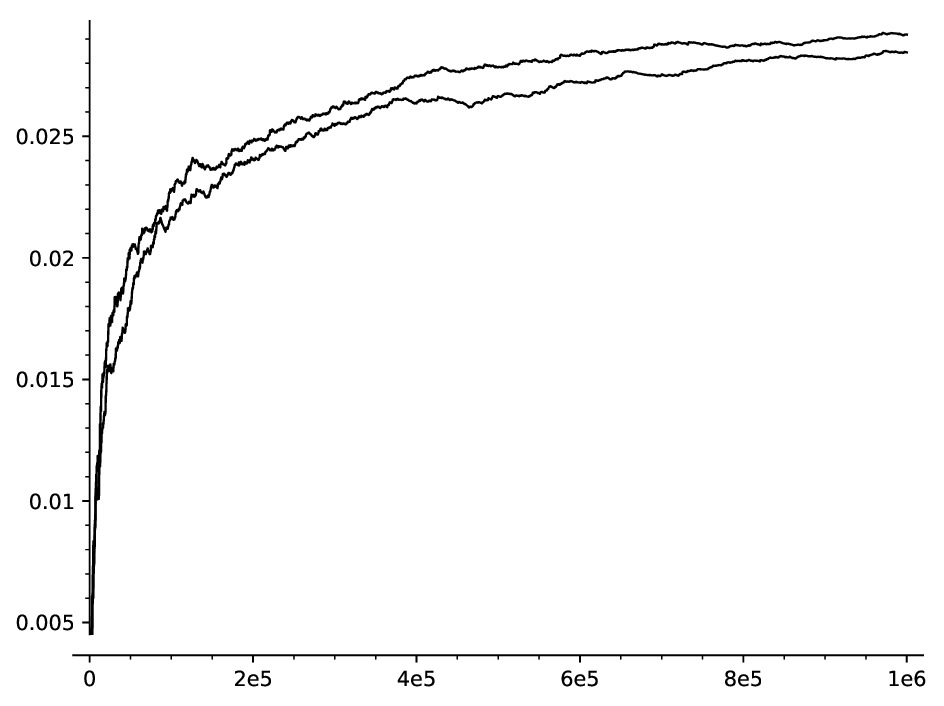}
\caption{$|l| = 9$: Top 9 bottom -9} \label{fig:11_6_1_6_A_9}
\end{subfigure}
\caption{11a1: $(\alpha, \beta) = (1,6)$ Ratio~\eqref{ratio_n_orders} $x_{6,E}^{(\alpha, \beta)}(X;l)/X^{1/2}\log^2(X)$} \label{fig:11a1_6_1_6_A_exact}
\end{figure}

\clearpage

\begin{figure}[t] % "[t!]" placement specifier just for this example
\hspace*{-2.3cm}
\begin{subfigure}[b]{0.4\linewidth}
\includegraphics[width=\linewidth]{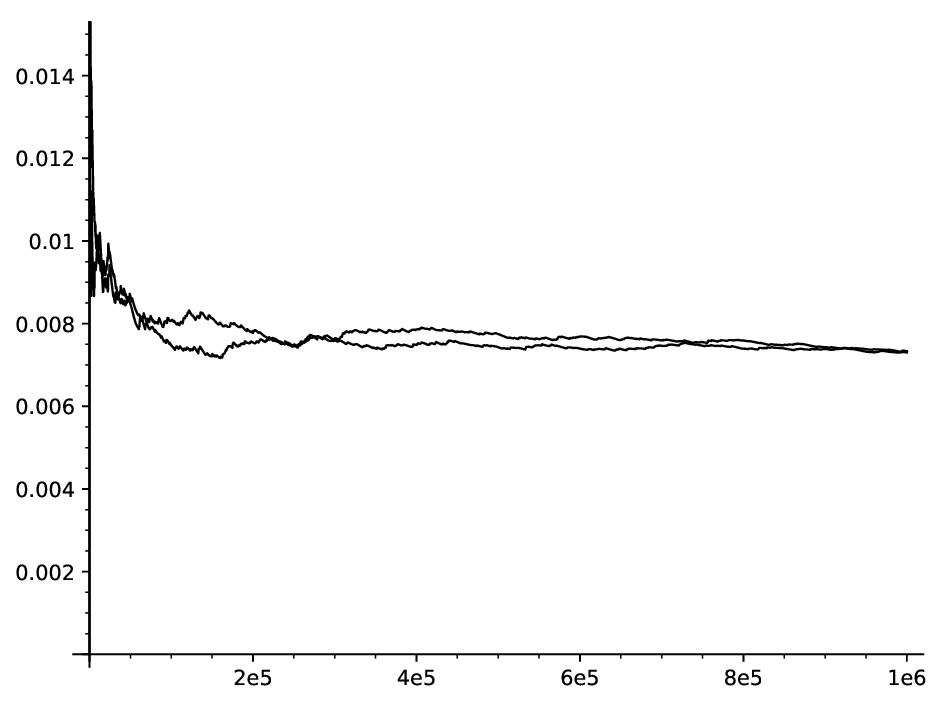}
\caption{$|l| = 1$: Top -1 bottom 1} \label{fig:11_6_2_6_A_1}
\end{subfigure}\hspace*{\fill}
\begin{subfigure}[b]{0.4\linewidth}
\includegraphics[width=\linewidth]{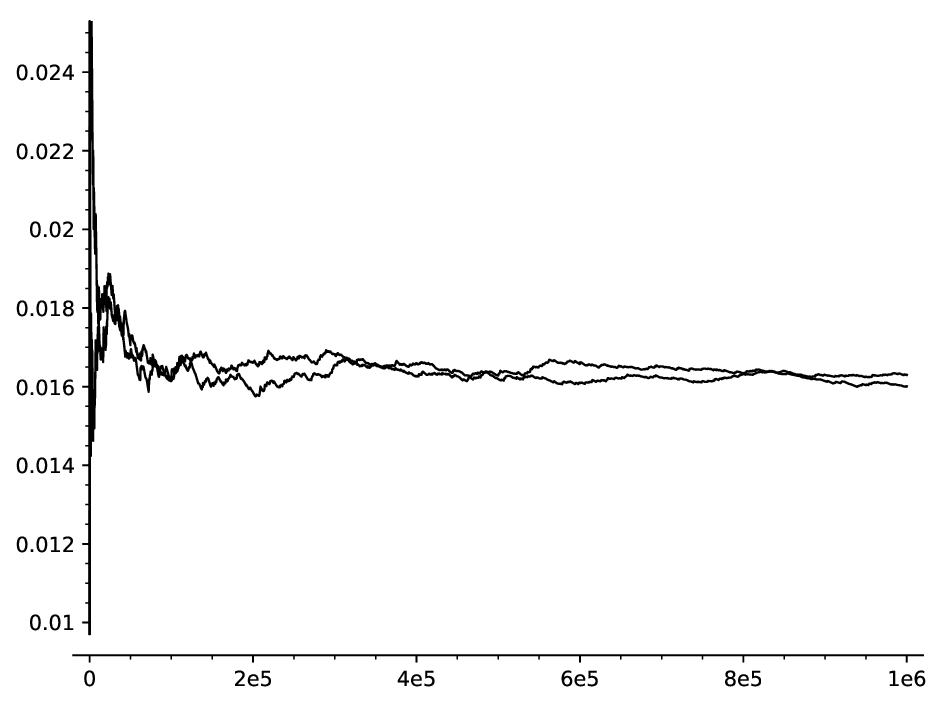}
\caption{$|l| = 2$: Top -2 bottom 2} \label{fig:11_6_2_6_A_2}
\end{subfigure}\hspace*{\fill}
\begin{subfigure}[b]{0.4\linewidth}
\includegraphics[width=\linewidth]{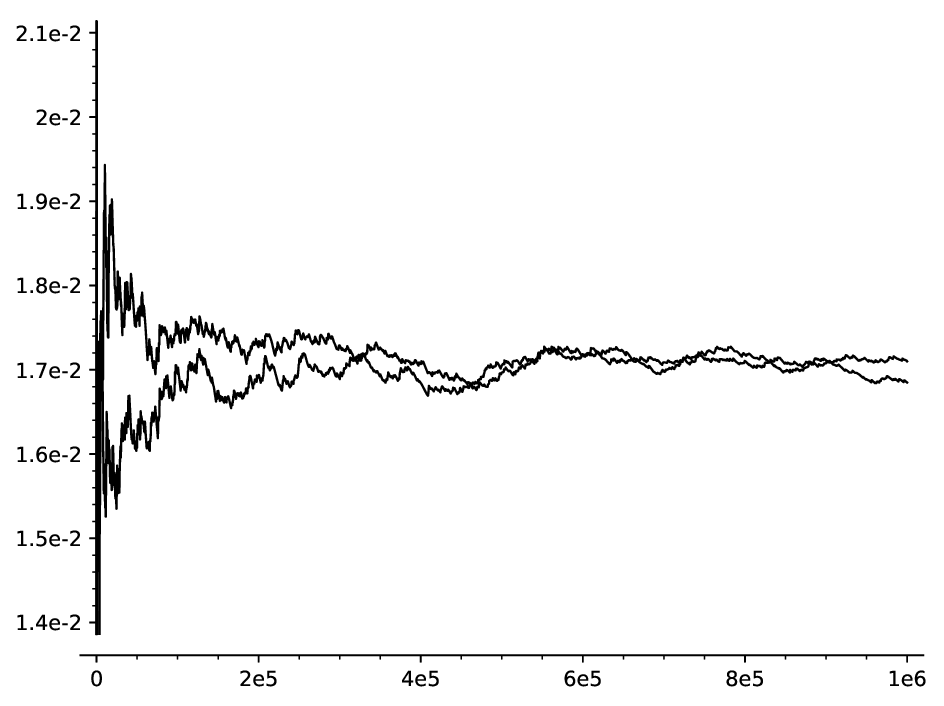}
\caption{$|l| = 3$: Top -3 bottom 3} \label{fig:11_6_2_6_A_3}
\end{subfigure}
\hspace*{-2.3cm}
\begin{subfigure}[b]{0.4\linewidth}
\includegraphics[width=\linewidth]{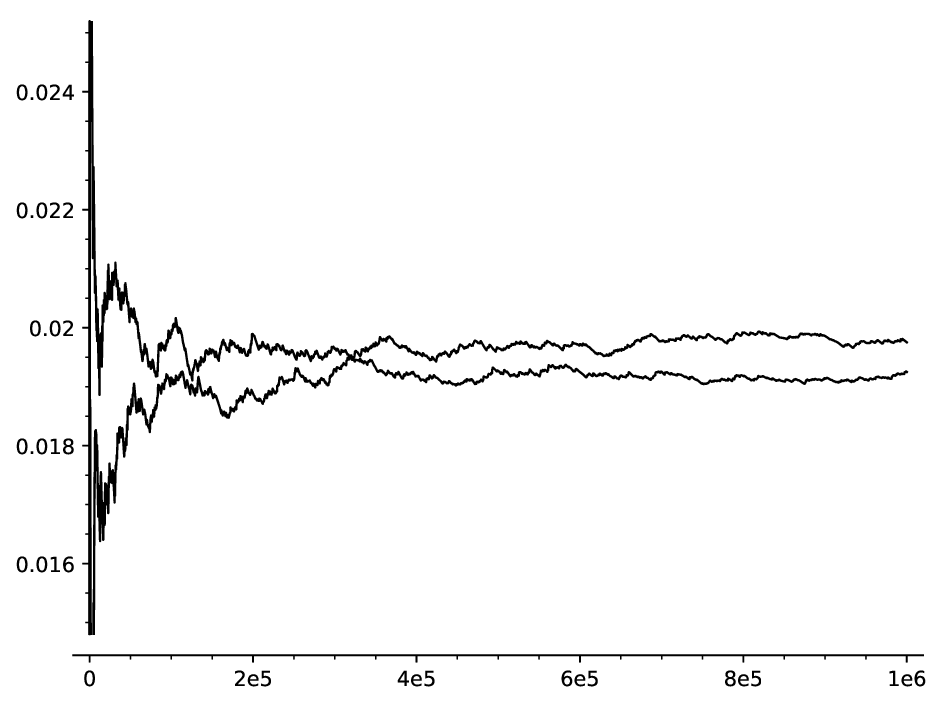}
\caption{$|l| = 4$: Top -4 bottom 4} \label{fig:11_6_2_6_A_4}
\end{subfigure}\hspace*{\fill}
\begin{subfigure}[b]{0.4\linewidth}
\caption{$|l| = 5$: Not occur} \label{fig:11_6_2_6_A_5}
\end{subfigure}\hspace*{\fill}
\begin{subfigure}[b]{0.4\linewidth}
\includegraphics[width=\linewidth]{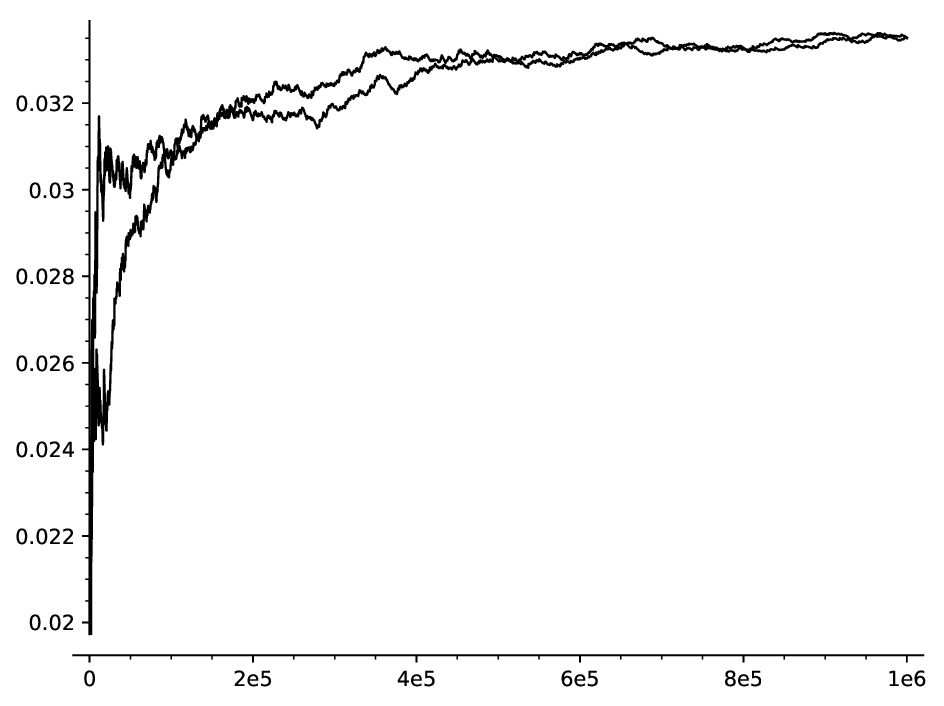}
\caption{$|l| = 6$: Top 6 bottom -6} \label{fig:11_6_2_6_A_6}
\end{subfigure}
\hspace*{-2.3cm}
\begin{subfigure}[b]{0.4\linewidth}
\includegraphics[width=\linewidth]{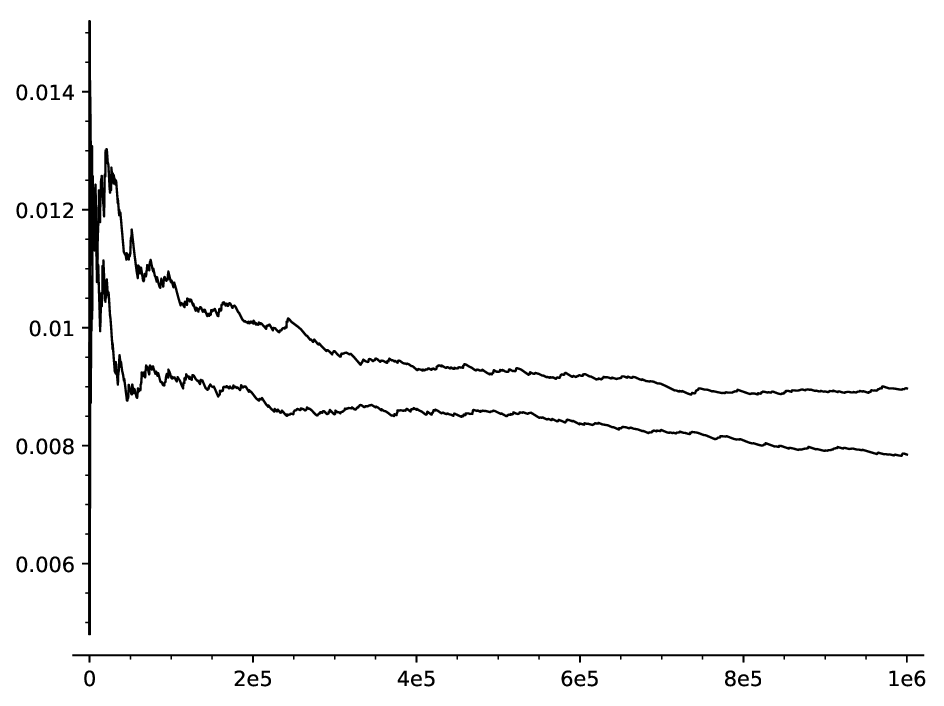}
\caption{$|l| = 7$: Top -7 bottom 7} \label{fig:11_6_2_6_A_7}
\end{subfigure}\hspace*{\fill}
\begin{subfigure}[b]{0.4\linewidth}
\includegraphics[width=\linewidth]{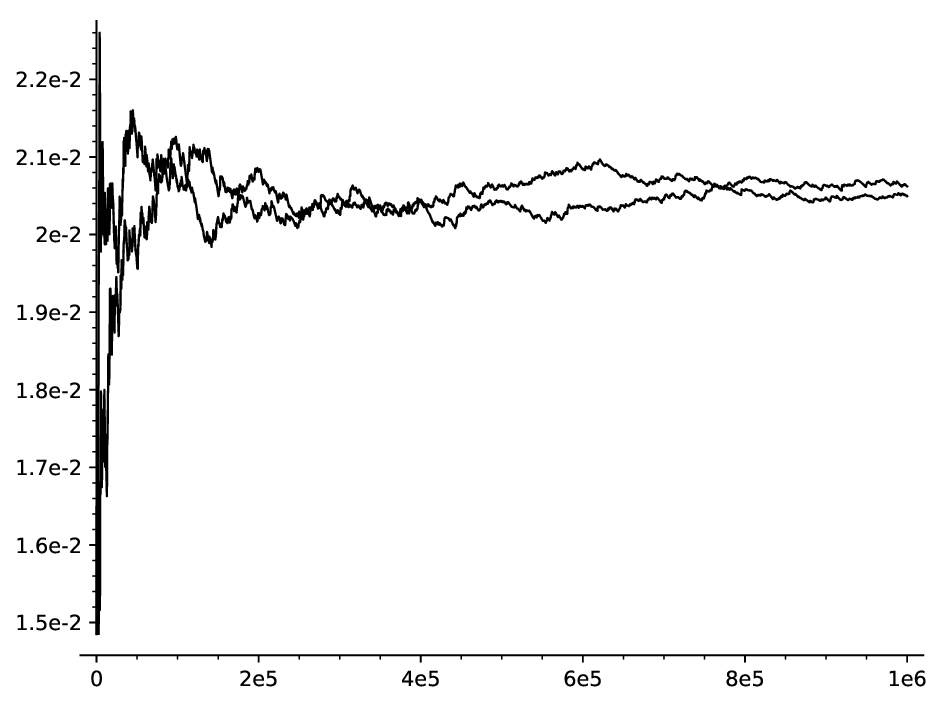}
\caption{$|l| = 8$: Top 8 bottom -8} \label{fig:11_6_2_6_A_8}
\end{subfigure}\hspace*{\fill}
\begin{subfigure}[b]{0.4\linewidth}
\includegraphics[width=\linewidth]{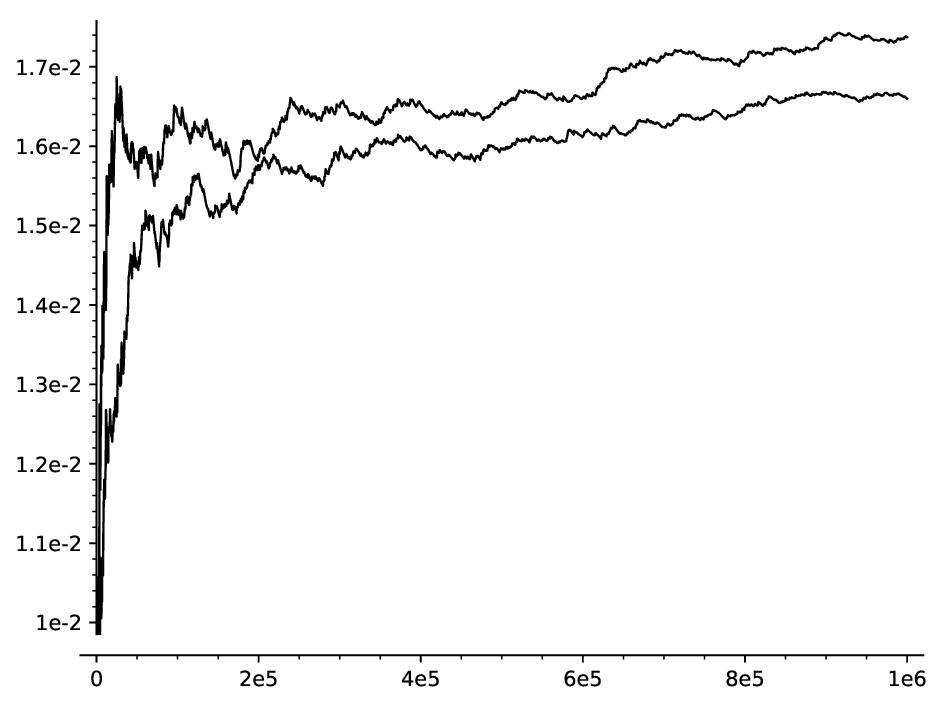}
\caption{$|l| = 9$: Top -9 bottom 9} \label{fig:11_6_2_6_A_9}
\end{subfigure}
\caption{11a1: $(\alpha, \beta) = (2,6)$ Ratio~\eqref{ratio_n_orders} $x_{6,E}^{(\alpha, \beta)}(X;l)/X^{1/2}\log^2(X)$} \label{fig:11a1_6_2_6_A_exact}
\end{figure}

\clearpage

\begin{figure}[t] % "[t!]" placement specifier just for this example
\hspace*{-2.3cm}
\begin{subfigure}[b]{0.4\linewidth}
\includegraphics[width=\linewidth]{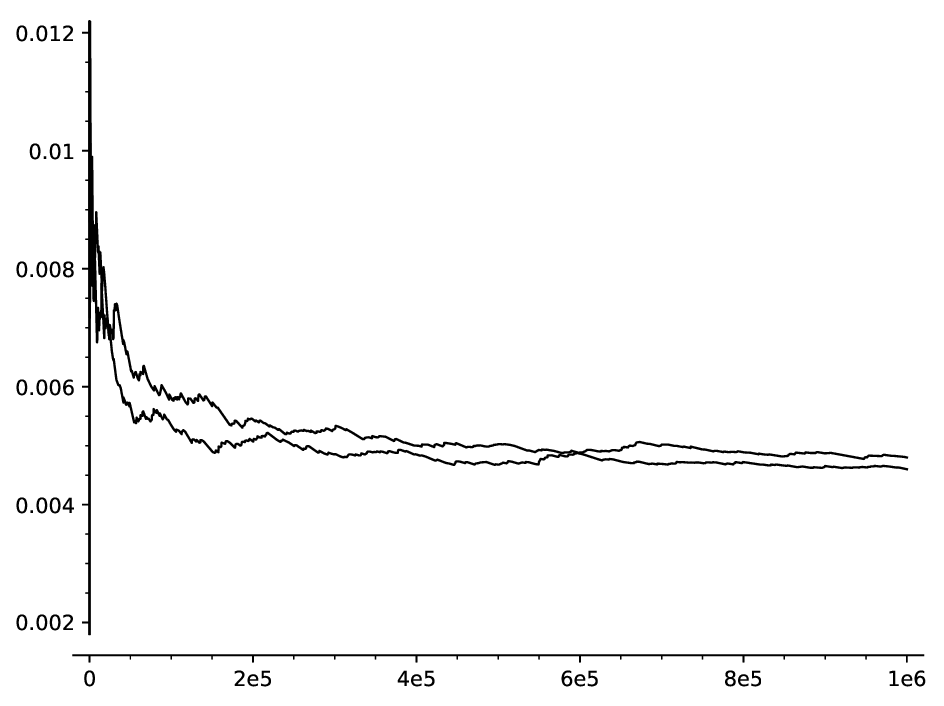}
\caption{$|l| = 1$: Top 1 bottom -1} \label{fig:14_6_1_3_A_1}
\end{subfigure}\hspace*{\fill}
\begin{subfigure}[b]{0.4\linewidth}
\includegraphics[width=\linewidth]{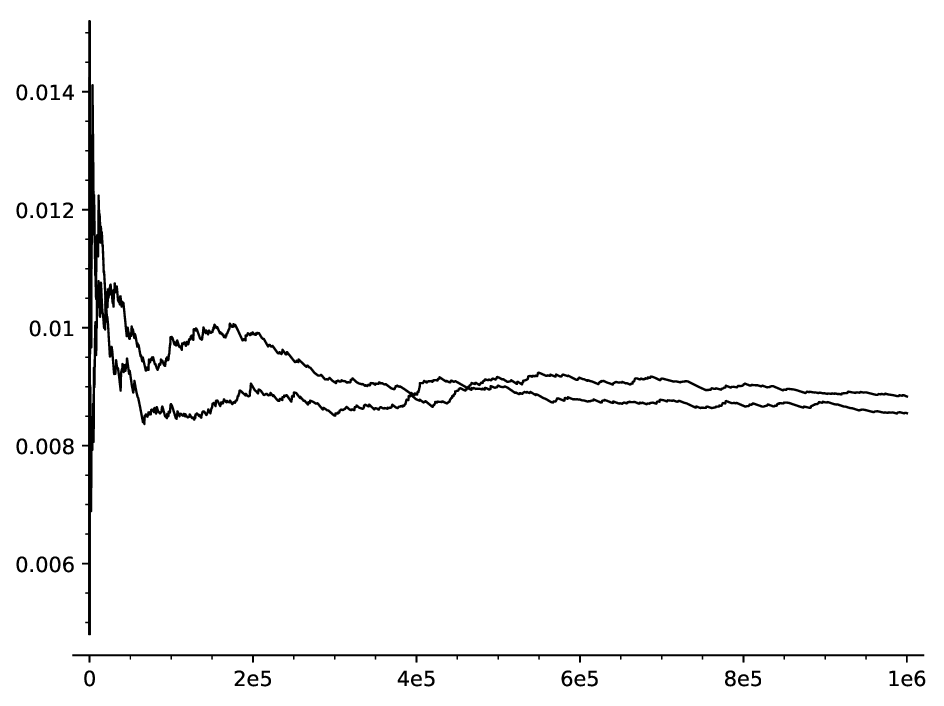}
\caption{$|l| = 2$: Top 2 bottom -2} \label{fig:14_6_1_3_A_2}
\end{subfigure}\hspace*{\fill}
\begin{subfigure}[b]{0.4\linewidth}
\includegraphics[width=\linewidth]{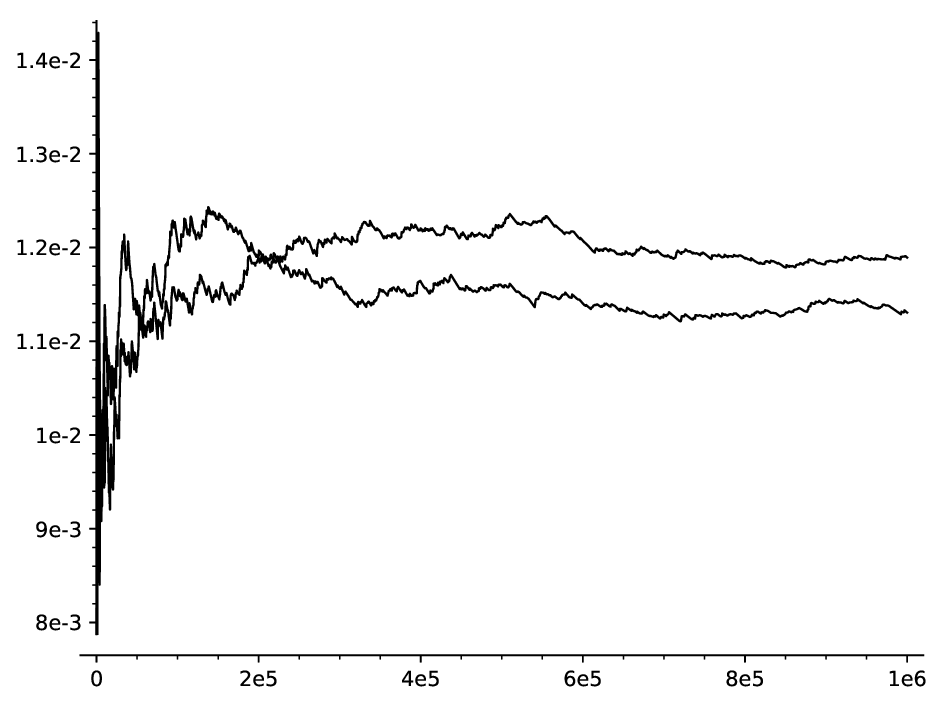}
\caption{$|l| = 3$: Top -3 bottom 3} \label{fig:14_6_1_3_A_3}
\end{subfigure}
\hspace*{-2.3cm}
\begin{subfigure}[b]{0.4\linewidth}
\includegraphics[width=\linewidth]{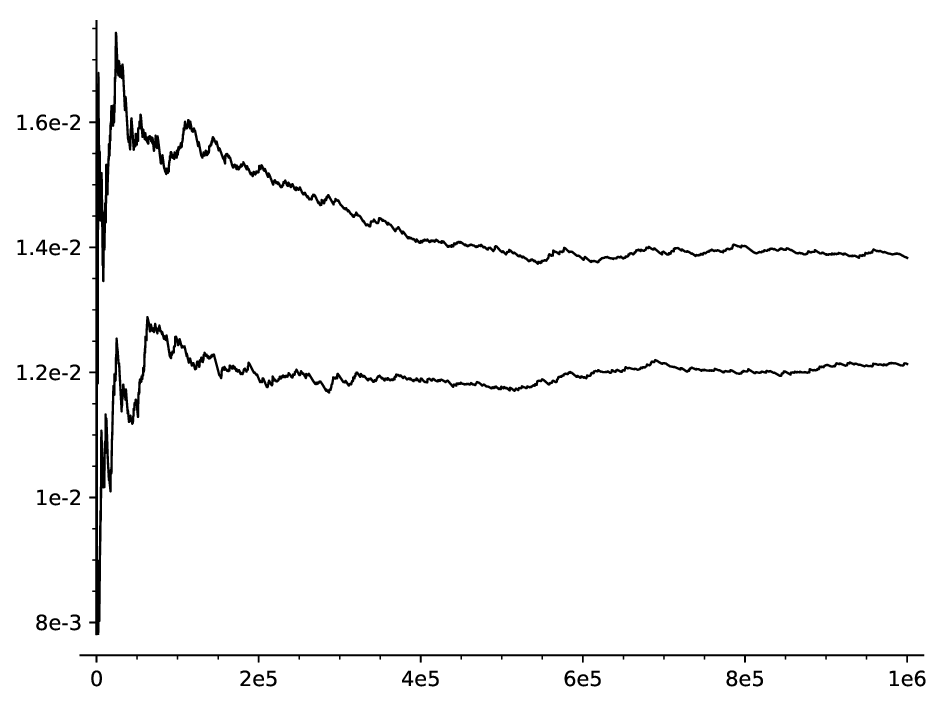}
\caption{$|l| = 4$: Top -4 bottom 4} \label{fig:14_6_1_3_A_4}
\end{subfigure}\hspace*{\fill}
\begin{subfigure}[b]{0.4\linewidth}
\includegraphics[width=\linewidth]{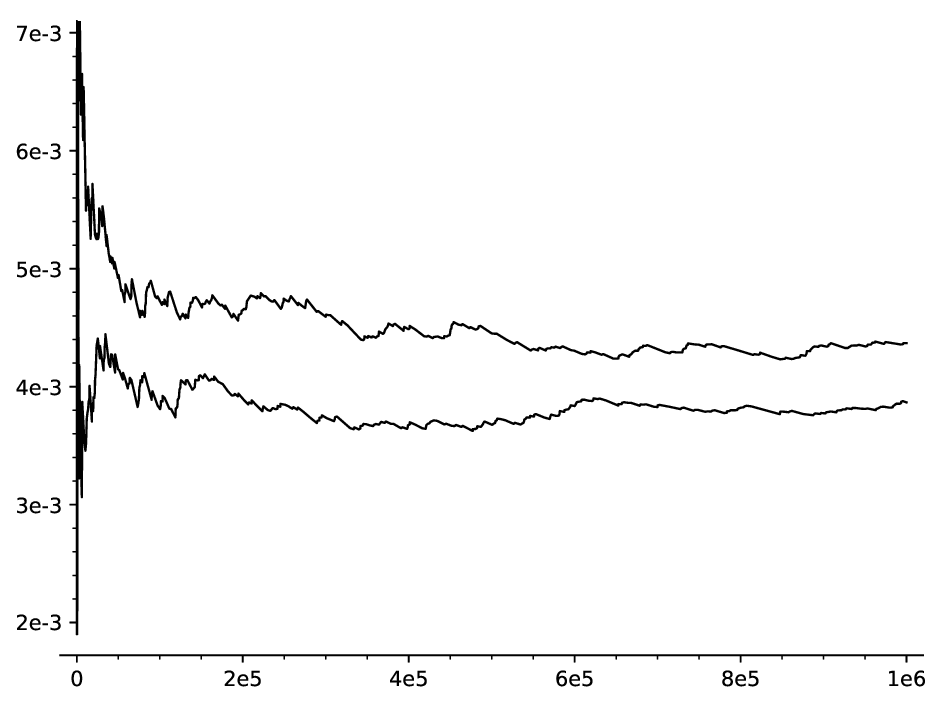}
\caption{$|l| = 5$: Top -5 bottom 5} \label{fig:14_6_1_3_A_5}
\end{subfigure}\hspace*{\fill}
\begin{subfigure}[b]{0.4\linewidth}
\includegraphics[width=\linewidth]{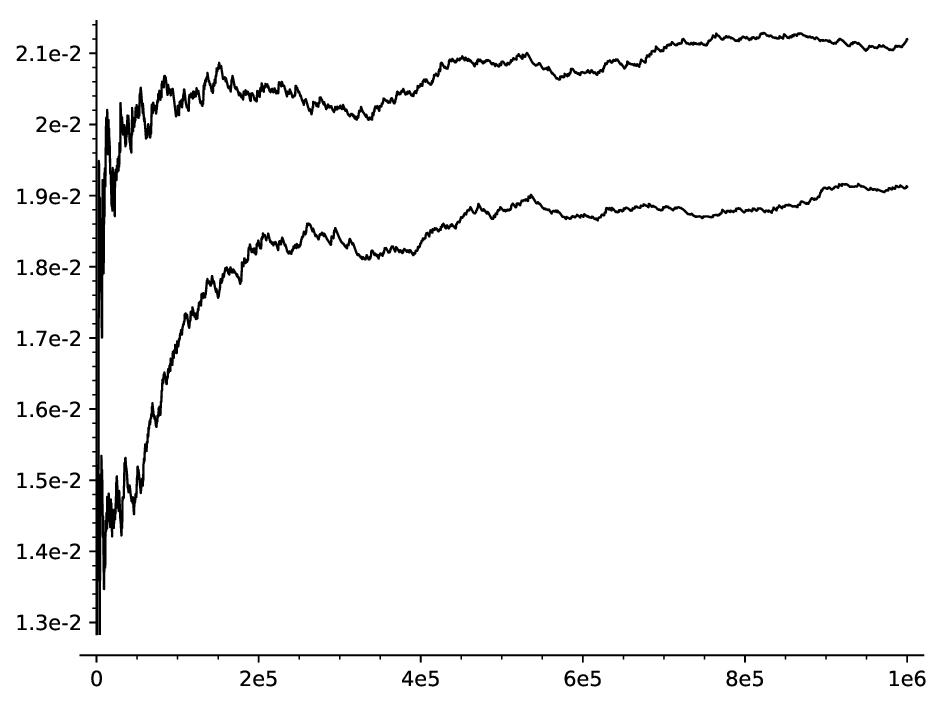}
\caption{$|l| = 6$: Top 6 bottom -6} \label{fig:14_6_1_3_A_6}
\end{subfigure}
\hspace*{-2.3cm}
\begin{subfigure}[b]{0.4\linewidth}
\includegraphics[width=\linewidth]{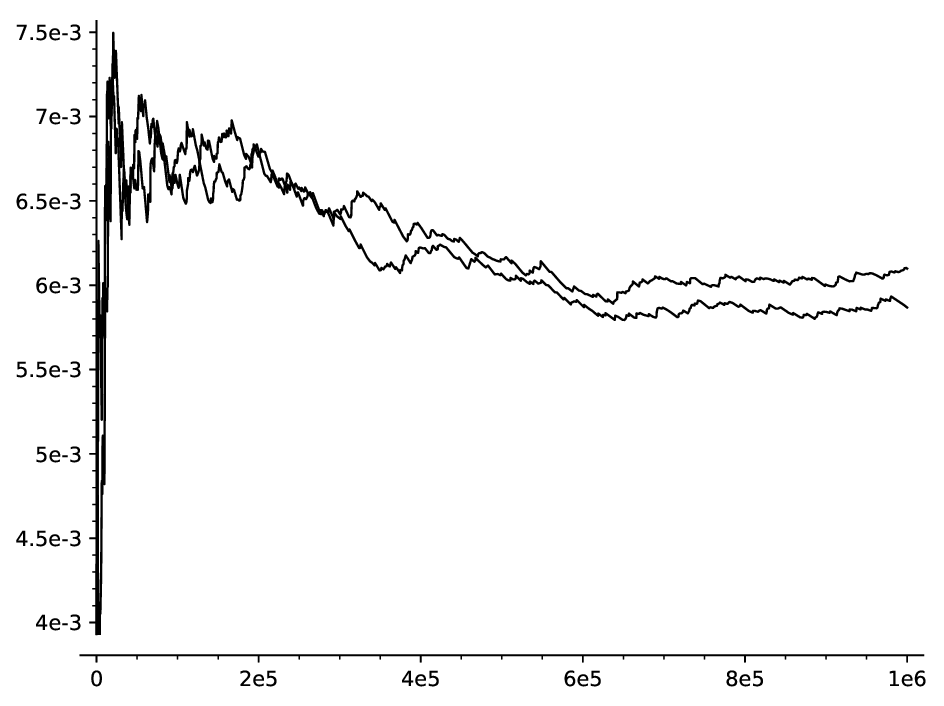}
\caption{$|l| = 7$: Top -7 bottom 7} \label{fig:14_6_1_3_A_7}
\end{subfigure}\hspace*{\fill}
\begin{subfigure}[b]{0.4\linewidth}
\includegraphics[width=\linewidth]{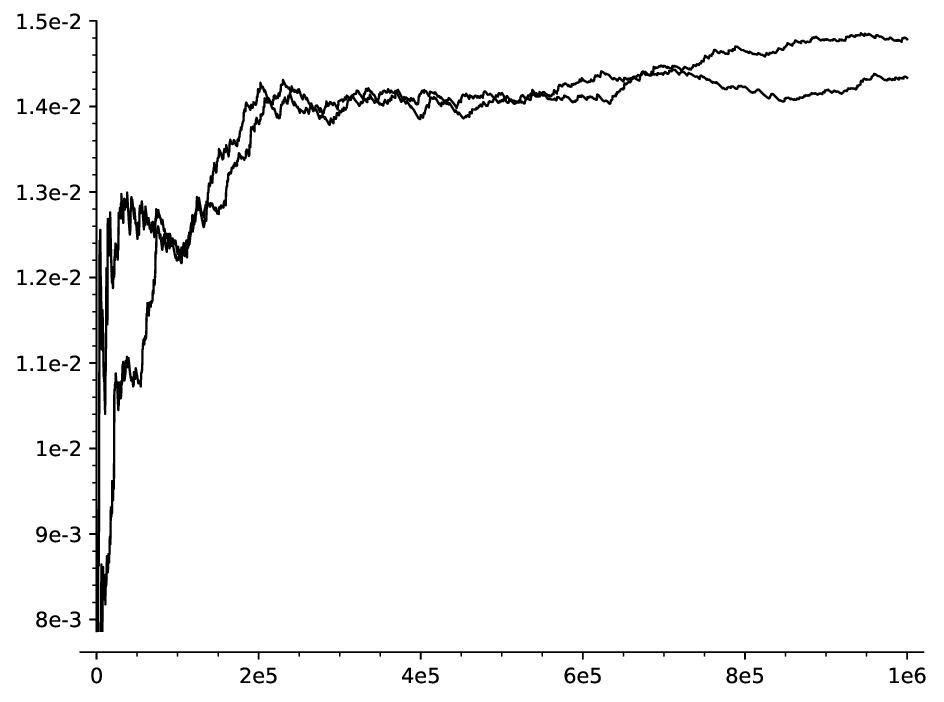}
\caption{$|l| = 8$: Top -8 bottom 8} \label{fig:14_6_1_3_A_8}
\end{subfigure}\hspace*{\fill}
\begin{subfigure}[b]{0.4\linewidth}
\includegraphics[width=\linewidth]{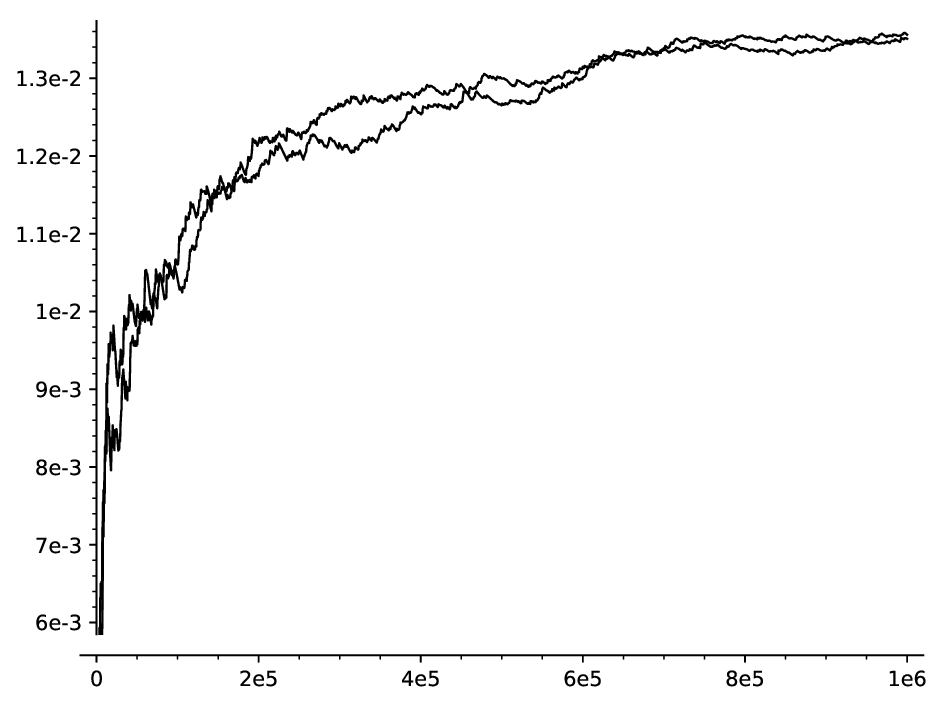}
\caption{$|l| = 9$: Top 9 bottom -9} \label{fig:14_6_1_3_A_9}
\end{subfigure}
\caption{14a1: $(\alpha, \beta) = (1,3)$ Ratio~\eqref{ratio_n_orders} $x_{6,E}^{(\alpha, \beta)}(X;l)/X^{1/2}\log^2(X)$} \label{fig:14a1_6_1_3_A_exact}
\end{figure}

\clearpage

\begin{figure}[t] % "[t!]" placement specifier just for this example
\hspace*{-2.3cm}
\begin{subfigure}[b]{0.4\linewidth}
\includegraphics[width=\linewidth]{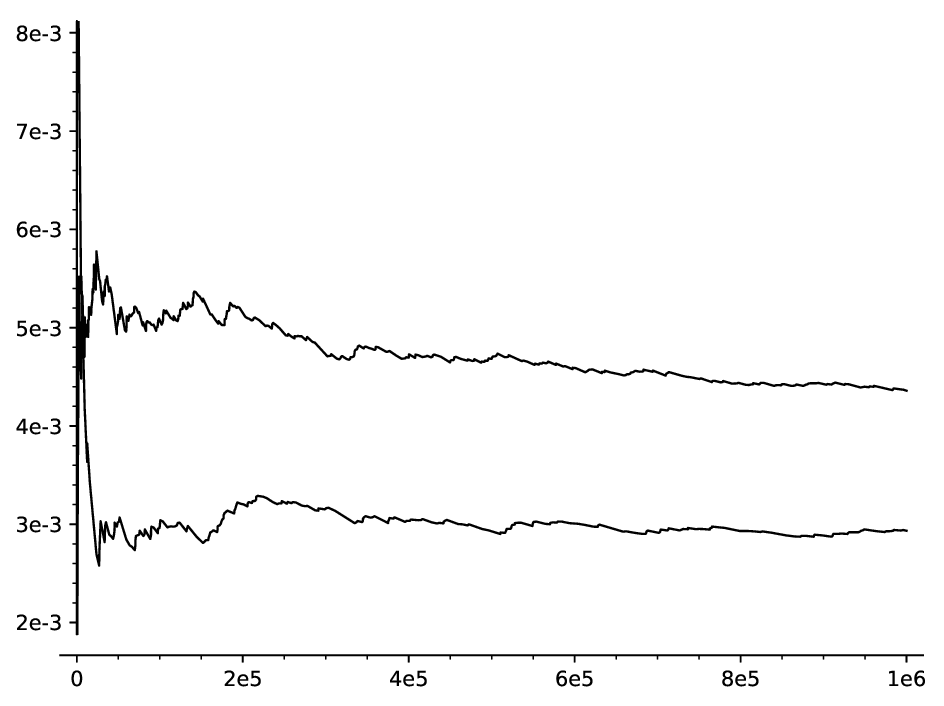}
\caption{$|l| = 1$: Top -1 bottom 1} \label{fig:14_6_2_3_A_1}
\end{subfigure}\hspace*{\fill}
\begin{subfigure}[b]{0.4\linewidth}
\includegraphics[width=\linewidth]{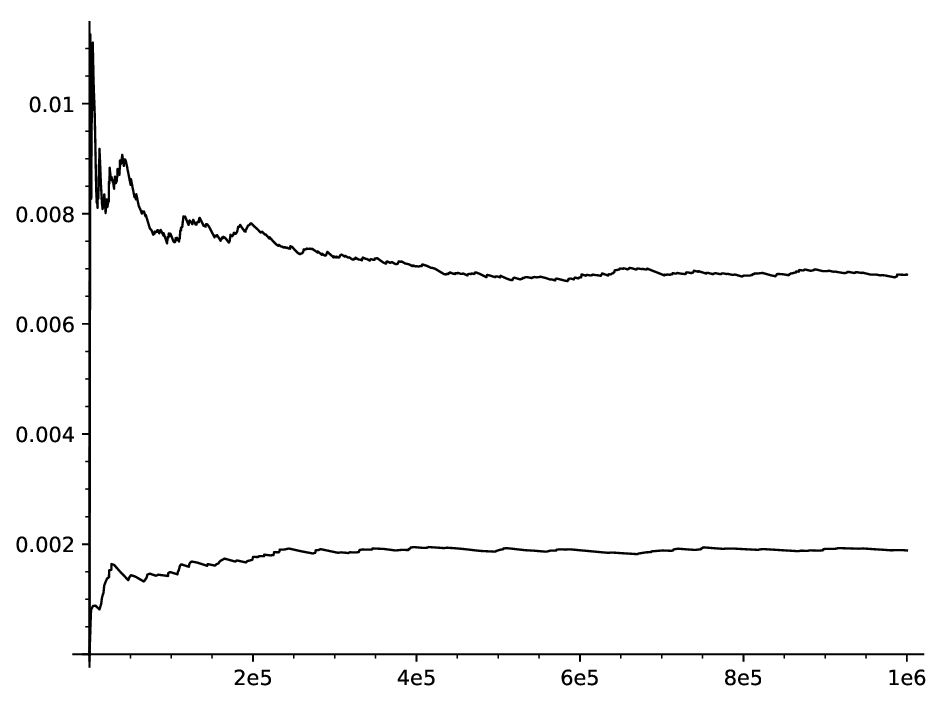}
\caption{$|l| = 2$: Top -2 bottom 2} \label{fig:14_6_2_3_A_2}
\end{subfigure}\hspace*{\fill}
\begin{subfigure}[b]{0.4\linewidth}
\includegraphics[width=\linewidth]{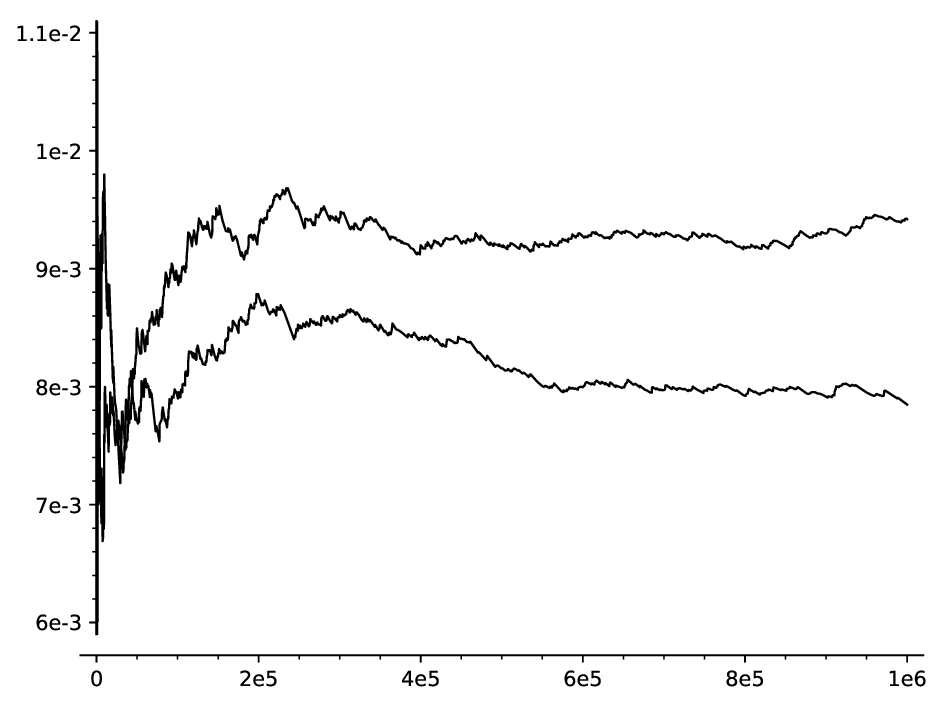}
\caption{$|l| = 3$: Top -3 bottom 3} \label{fig:14_6_2_3_A_3}
\end{subfigure}
\hspace*{-2.3cm}
\begin{subfigure}[b]{0.4\linewidth}
\includegraphics[width=\linewidth]{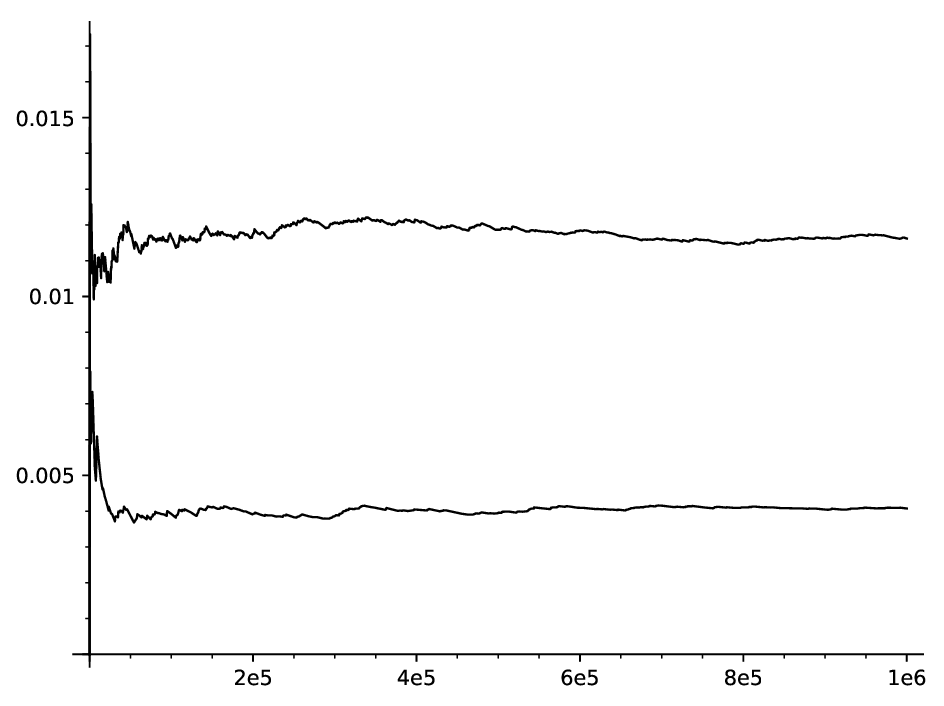}
\caption{$|l| = 4$: Top 4 bottom -4} \label{fig:14_6_2_3_A_4}
\end{subfigure}\hspace*{\fill}
\begin{subfigure}[b]{0.4\linewidth}
\includegraphics[width=\linewidth]{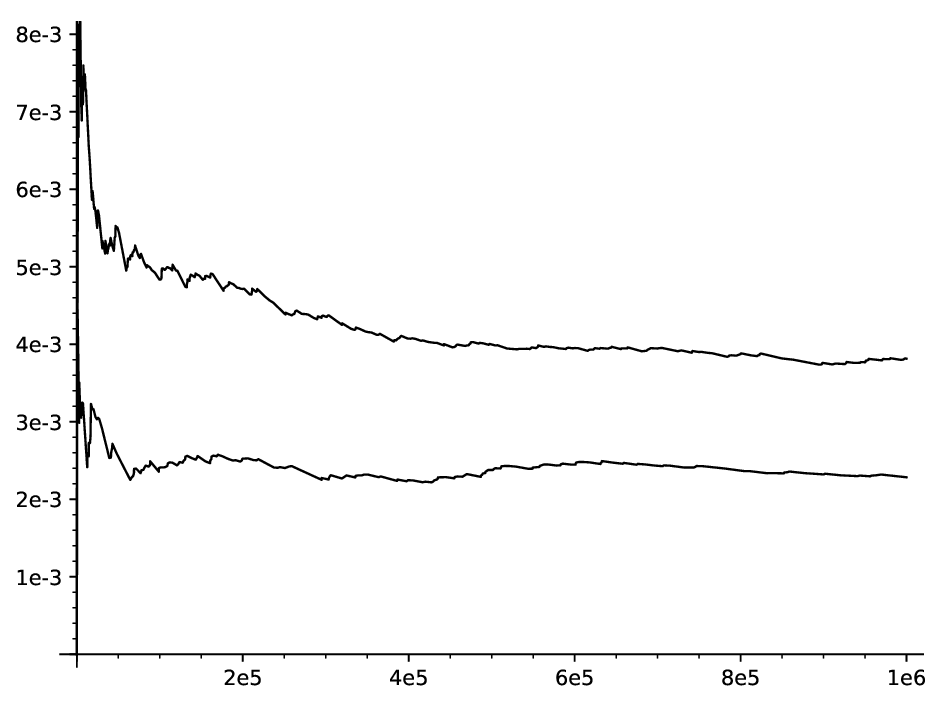}
\caption{$|l| = 5$: Top 5 bottom -5} \label{fig:14_6_2_3_A_5}
\end{subfigure}\hspace*{\fill}
\begin{subfigure}[b]{0.4\linewidth}
\includegraphics[width=\linewidth]{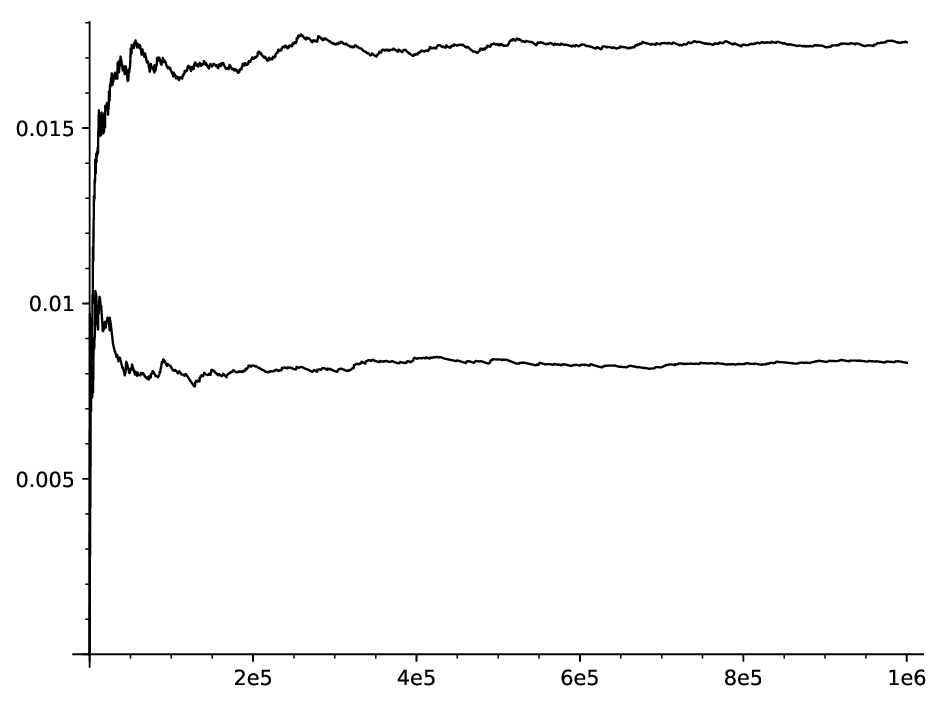}
\caption{$|l| = 6$: Top -6 bottom 6} \label{fig:14_6_2_3_A_6}
\end{subfigure}
\hspace*{-2.3cm}
\begin{subfigure}[b]{0.4\linewidth}
\includegraphics[width=\linewidth]{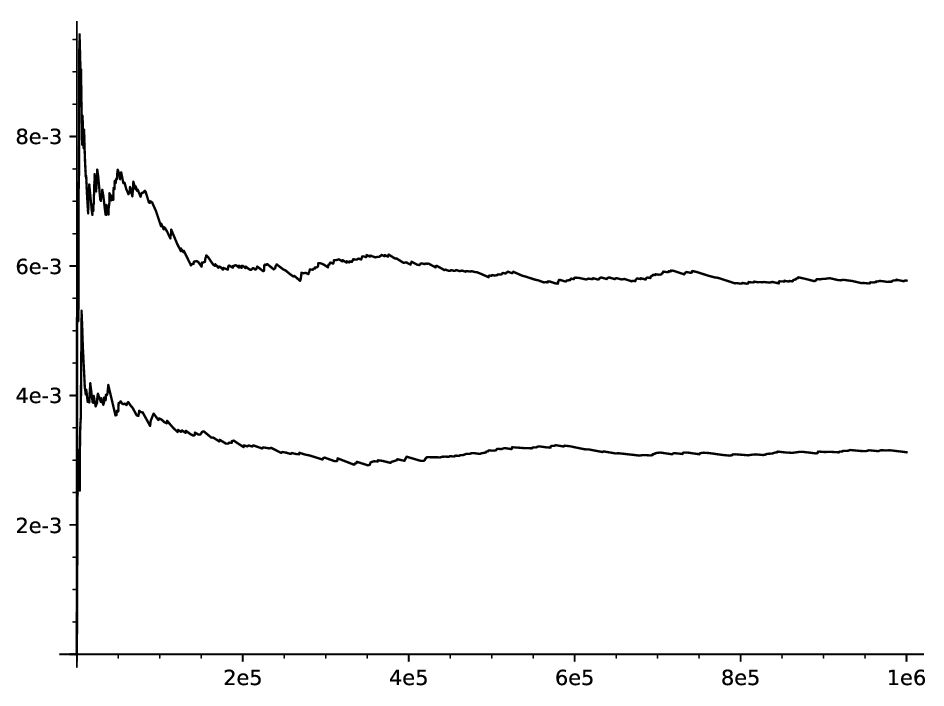}
\caption{$|l| = 7$: Top -7 bottom 7} \label{fig:14_6_2_3_A_7}
\end{subfigure}\hspace*{\fill}
\begin{subfigure}[b]{0.4\linewidth}
\includegraphics[width=\linewidth]{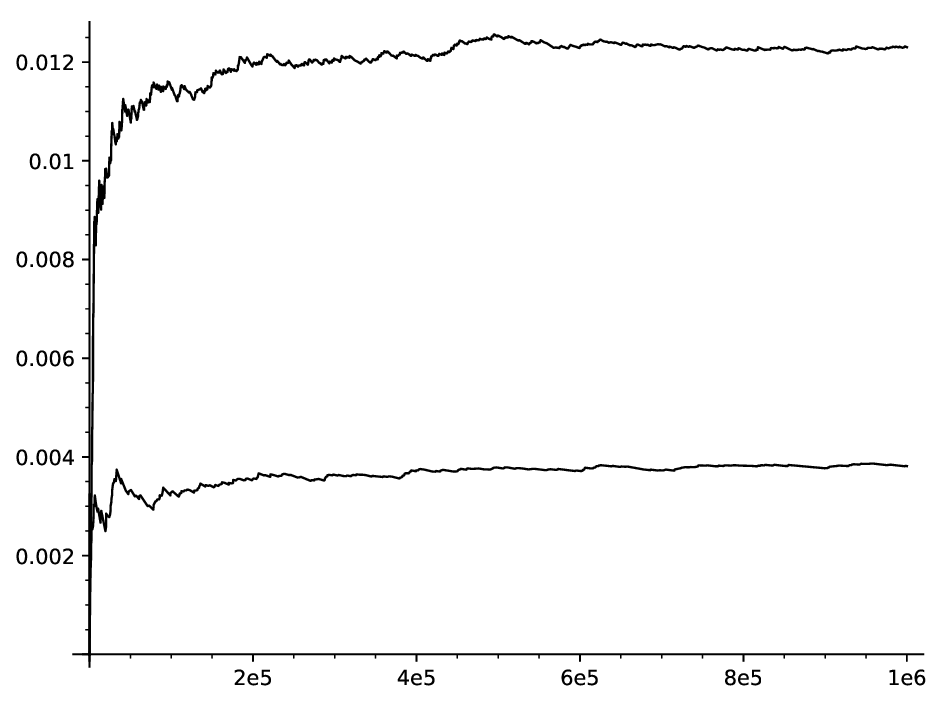}
\caption{$|l| = 8$: Top -8 bottom 8} \label{fig:14_6_2_3_A_8}
\end{subfigure}\hspace*{\fill}
\begin{subfigure}[b]{0.4\linewidth}
\includegraphics[width=\linewidth]{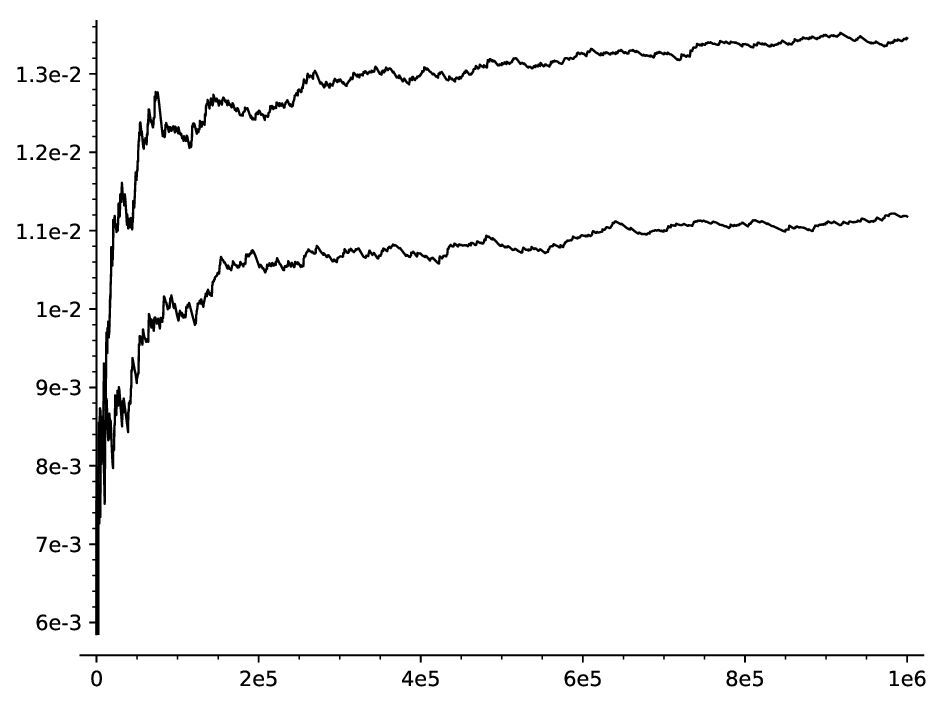}
\caption{$|l| = 9$: Top -9 bottom 9} \label{fig:14_6_2_3_A_9}
\end{subfigure}
\caption{14a1: $(\alpha, \beta) = (2,3)$ Ratio~\eqref{ratio_n_orders} $x_{6,E}^{(\alpha, \beta)}(X;l)/X^{1/2}\log^2(X)$} \label{fig:14a1_6_2_3_A_exact}
\end{figure}

\clearpage

\begin{figure}[t] % "[t!]" placement specifier just for this example
\hspace*{-2.3cm}
\begin{subfigure}[b]{0.4\linewidth}
\includegraphics[width=\linewidth]{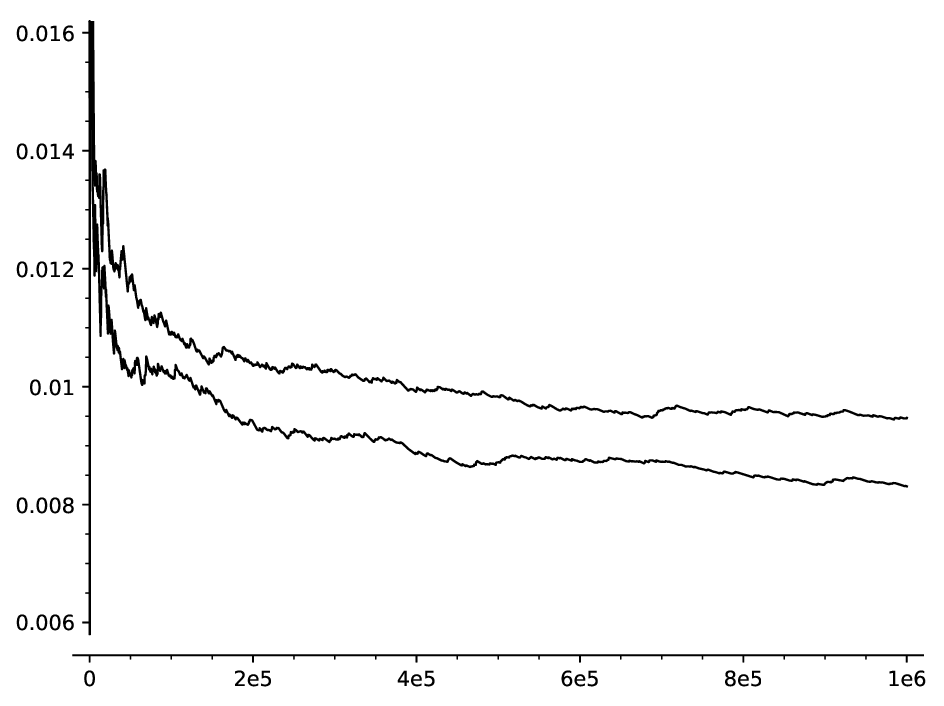}
\caption{$|l| = 1$: Top -1 bottom 1} \label{fig:14_6_1_6_A_1}
\end{subfigure}\hspace*{\fill}
\begin{subfigure}[b]{0.4\linewidth}
\includegraphics[width=\linewidth]{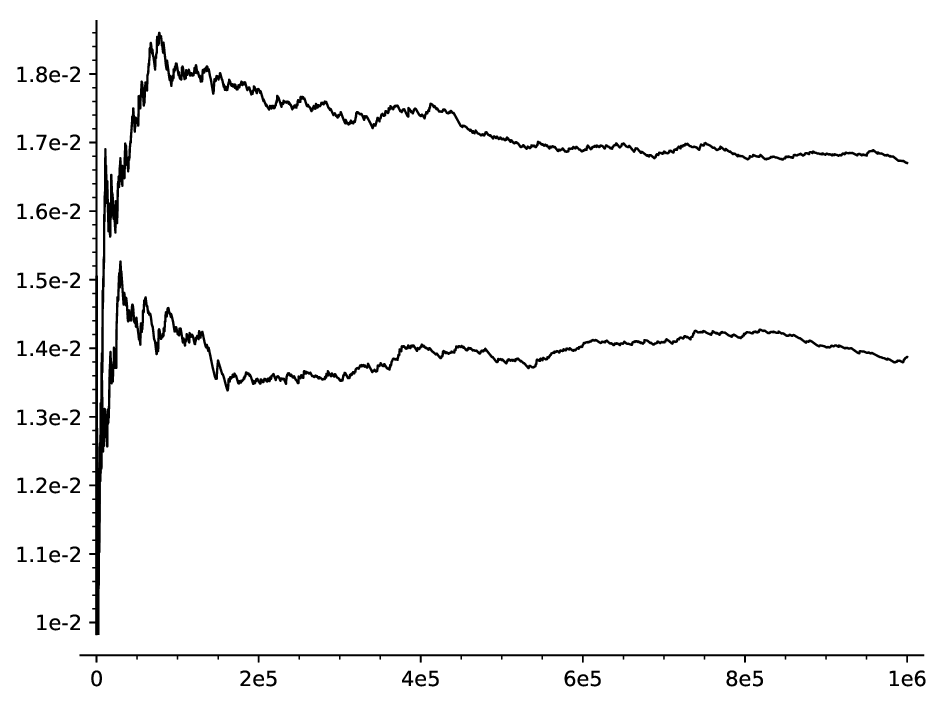}
\caption{$|l| = 2$: Top -2 bottom 2} \label{fig:14_6_1_6_A_2}
\end{subfigure}\hspace*{\fill}
\begin{subfigure}[b]{0.4\linewidth}
\includegraphics[width=\linewidth]{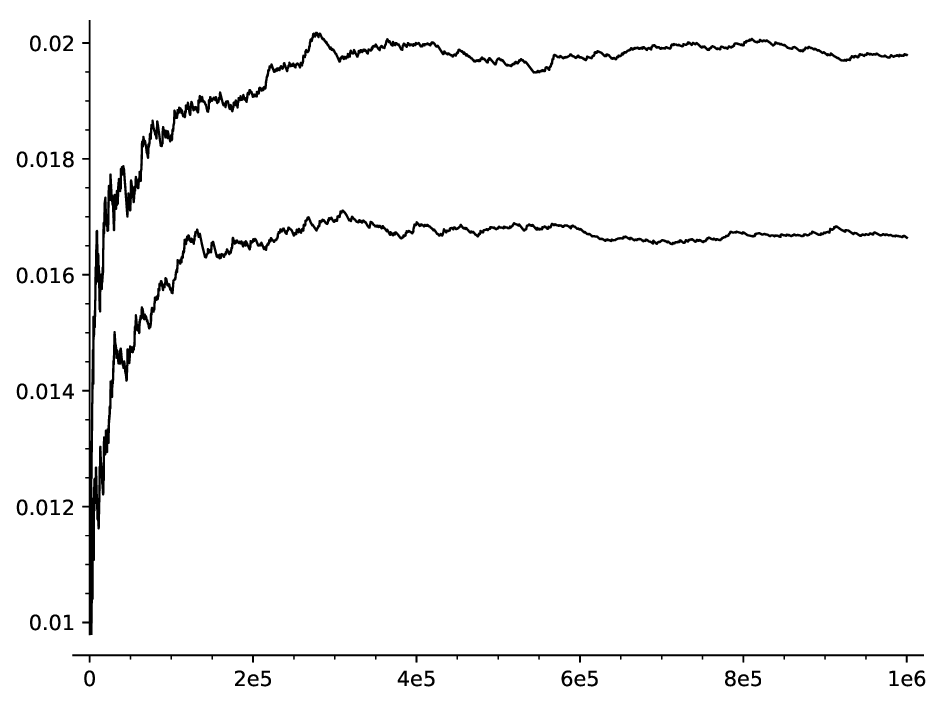}
\caption{$|l| = 3$: Top -3 bottom 3} \label{fig:14_6_1_6_A_3}
\end{subfigure}
\hspace*{-2.3cm}
\begin{subfigure}[b]{0.4\linewidth}
\includegraphics[width=\linewidth]{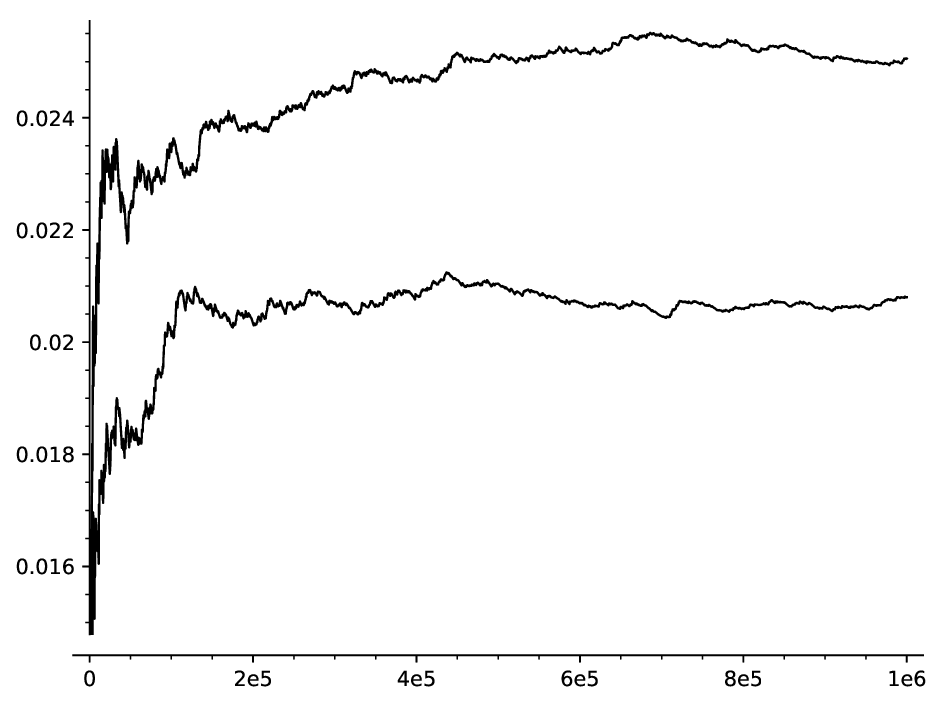}
\caption{$|l| = 4$: Top 4 bottom -4} \label{fig:14_6_1_6_A_4}
\end{subfigure}\hspace*{\fill}
\begin{subfigure}[b]{0.4\linewidth}
\includegraphics[width=\linewidth]{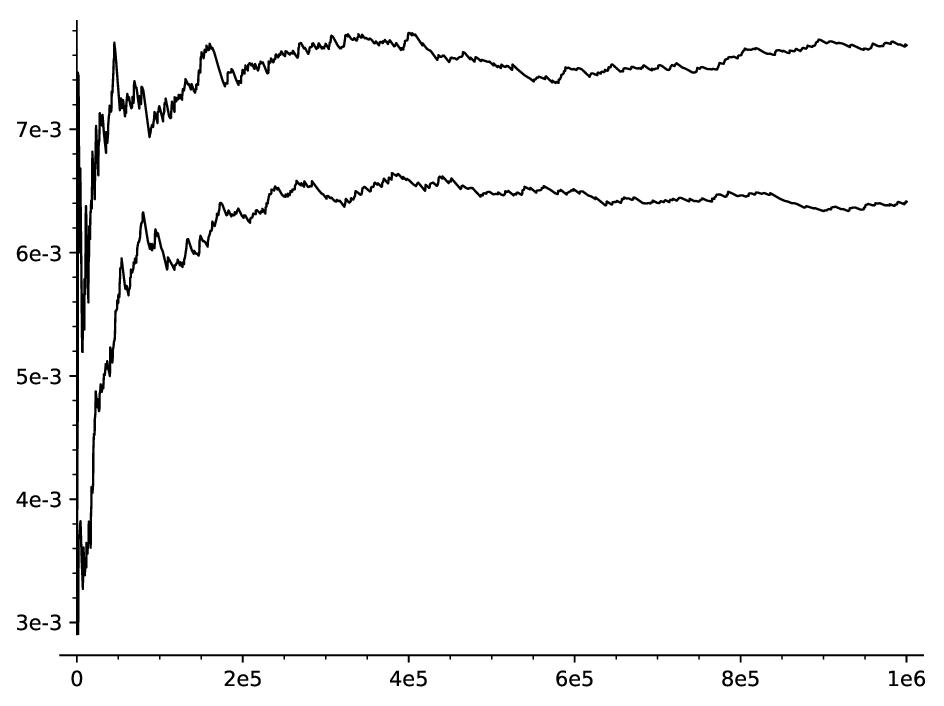}
\caption{$|l| = 5$: Top 5 bottom -5} \label{fig:14_6_1_6_A_5}
\end{subfigure}\hspace*{\fill}
\begin{subfigure}[b]{0.4\linewidth}
\includegraphics[width=\linewidth]{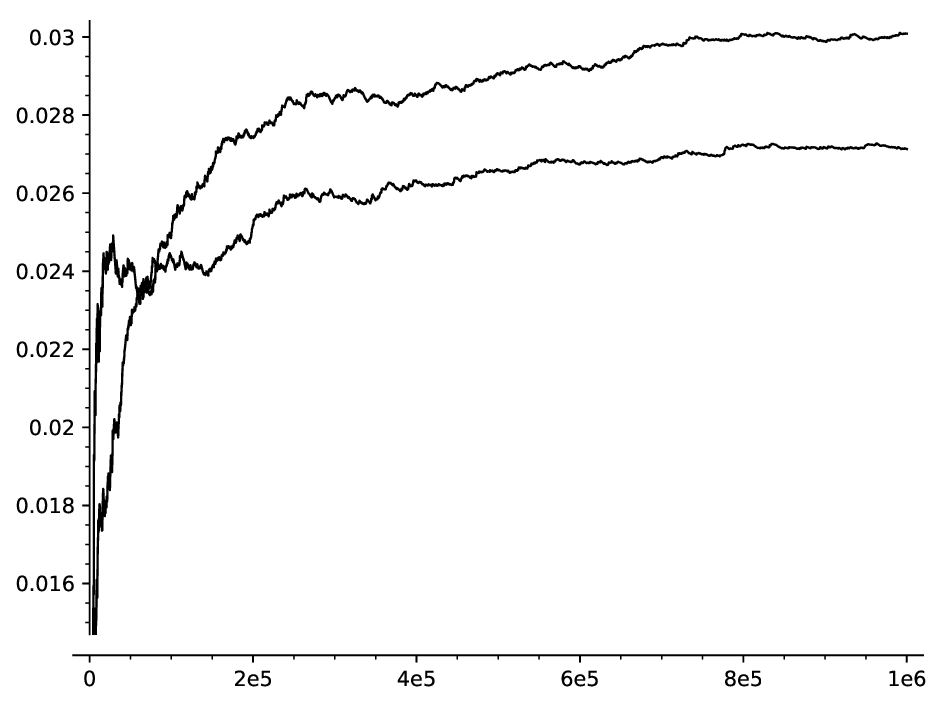}
\caption{$|l| = 6$: Top -6 bottom 6} \label{fig:14_6_1_6_A_6}
\end{subfigure}
\hspace*{-2.3cm}
\begin{subfigure}[b]{0.4\linewidth}
\includegraphics[width=\linewidth]{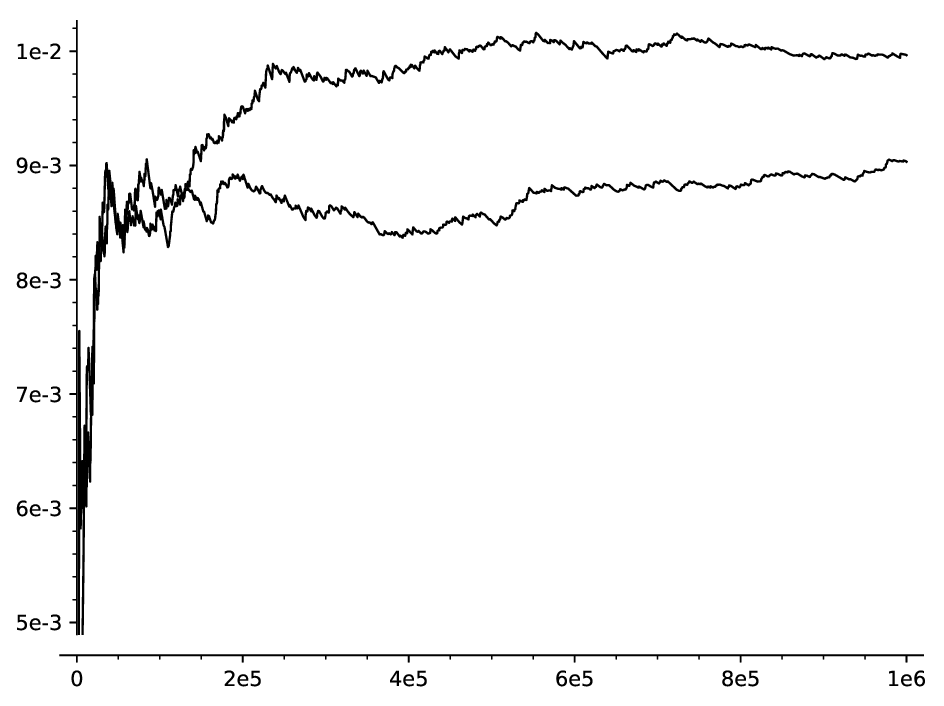}
\caption{$|l| = 7$: Top -7 bottom 7} \label{fig:14_6_1_6_A_7}
\end{subfigure}\hspace*{\fill}
\begin{subfigure}[b]{0.4\linewidth}
\includegraphics[width=\linewidth]{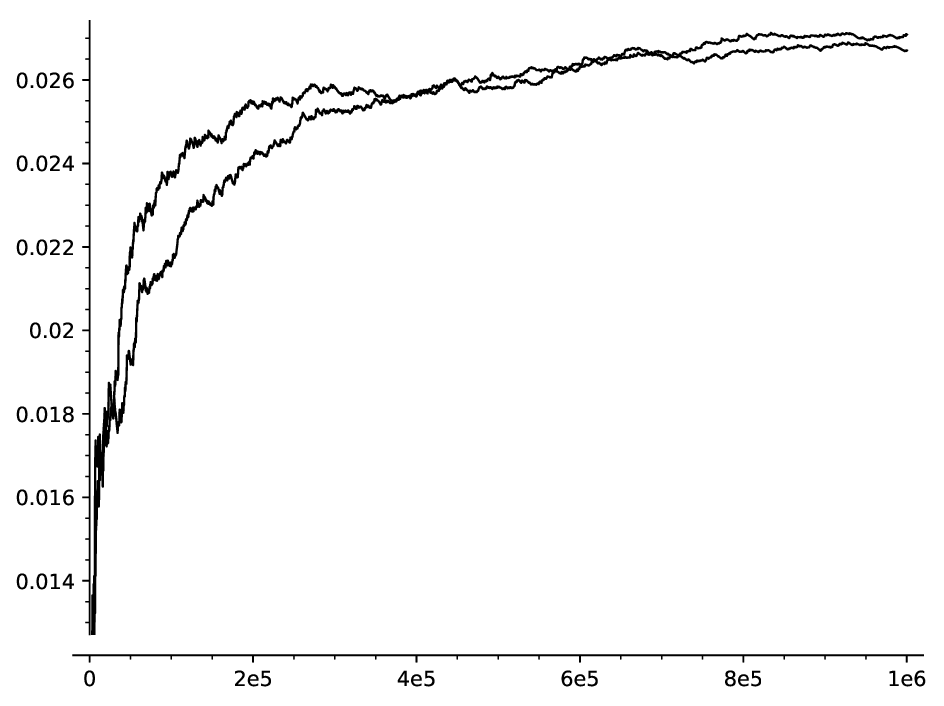}
\caption{$|l| = 8$: Top -8 bottom 8} \label{fig:14_6_1_6_A_8}
\end{subfigure}\hspace*{\fill}
\begin{subfigure}[b]{0.4\linewidth}
\includegraphics[width=\linewidth]{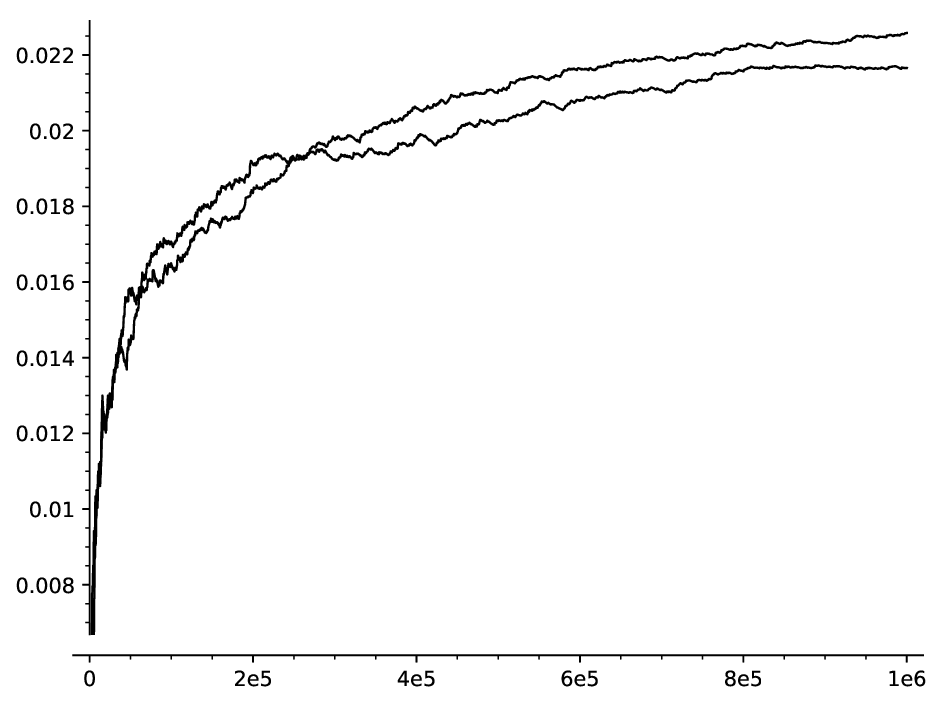}
\caption{$|l| = 9$: Top -9 bottom 9} \label{fig:14_6_1_6_A_9}
\end{subfigure}
\caption{14a1: $(\alpha, \beta) = (1,6)$ Ratio~\eqref{ratio_n_orders} $x_{6,E}^{(\alpha, \beta)}(X;l)/X^{1/2}\log^2(X)$} \label{fig:14a1_6_1_6_A_exact}
\end{figure}

\clearpage

\begin{figure}[t] % "[t!]" placement specifier just for this example
\hspace*{-2.3cm}
\begin{subfigure}[b]{0.4\linewidth}
\includegraphics[width=\linewidth]{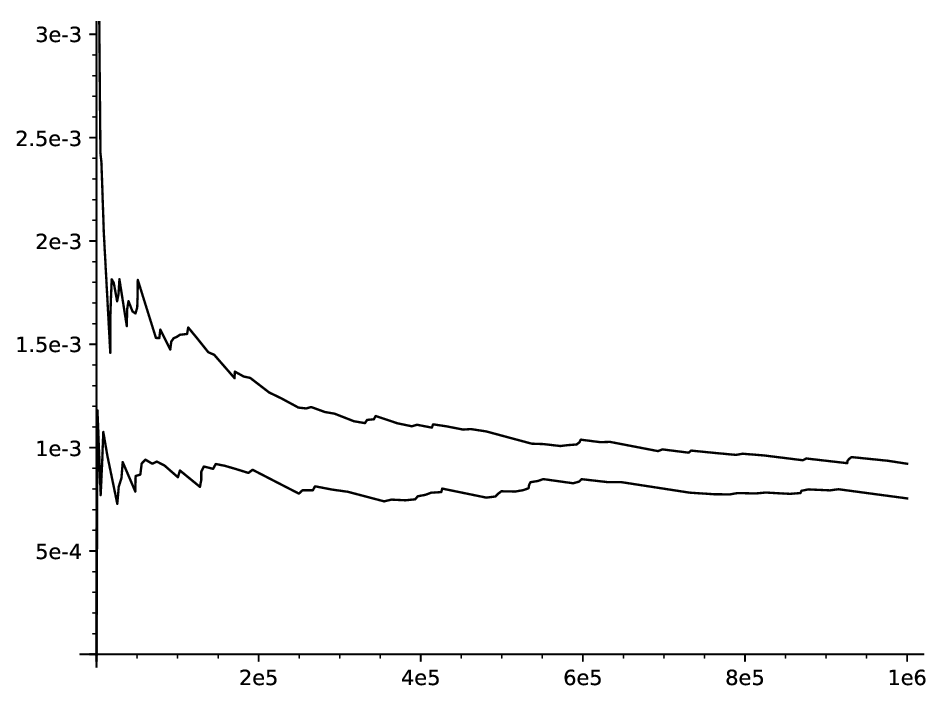}
\caption{$|l| = 1$: Top -1 bottom 1} \label{fig:14_6_2_6_A_1}
\end{subfigure}\hspace*{\fill}
\begin{subfigure}[b]{0.4\linewidth}
\includegraphics[width=\linewidth]{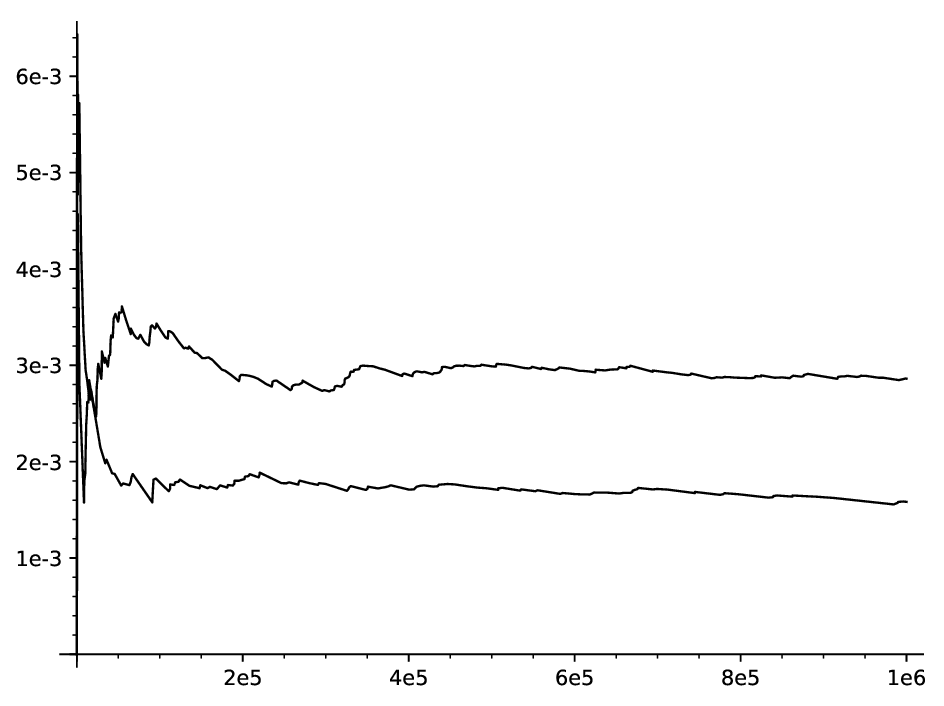}
\caption{$|l| = 2$: Top -2 bottom 2} \label{fig:14_6_2_6_A_2}
\end{subfigure}\hspace*{\fill}
\begin{subfigure}[b]{0.4\linewidth}
\includegraphics[width=\linewidth]{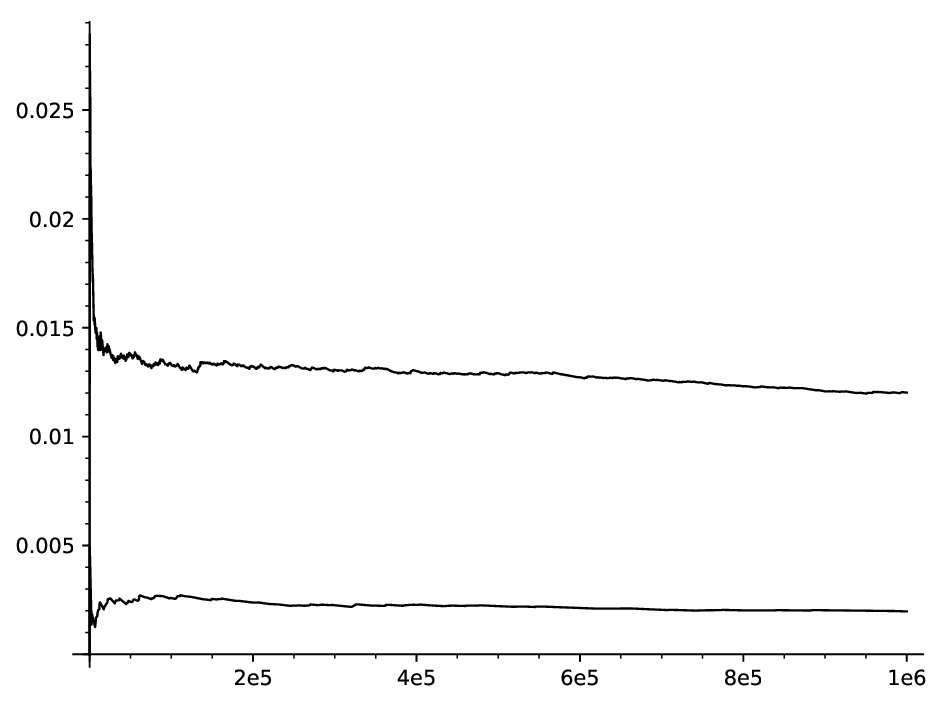}
\caption{$|l| = 3$: Top -3 bottom 3} \label{fig:14_6_2_6_A_3}
\end{subfigure}
\hspace*{-2.3cm}
\begin{subfigure}[b]{0.4\linewidth}
\includegraphics[width=\linewidth]{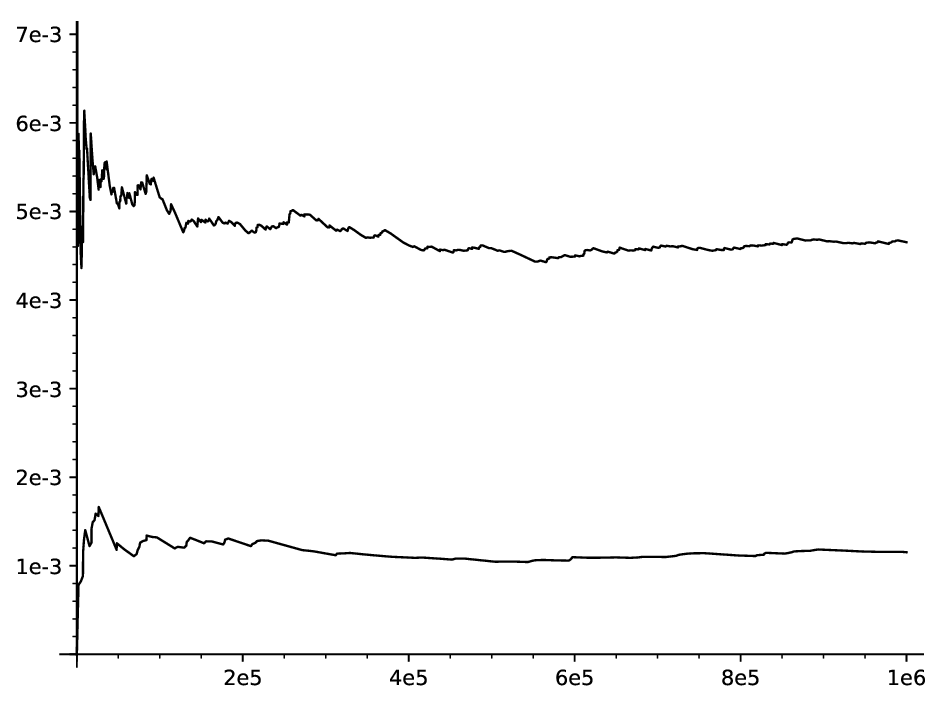}
\caption{$|l| = 4$: Top 4 bottom -4} \label{fig:14_6_2_6_A_4}
\end{subfigure}\hspace*{\fill}
\begin{subfigure}[b]{0.4\linewidth}
\includegraphics[width=\linewidth]{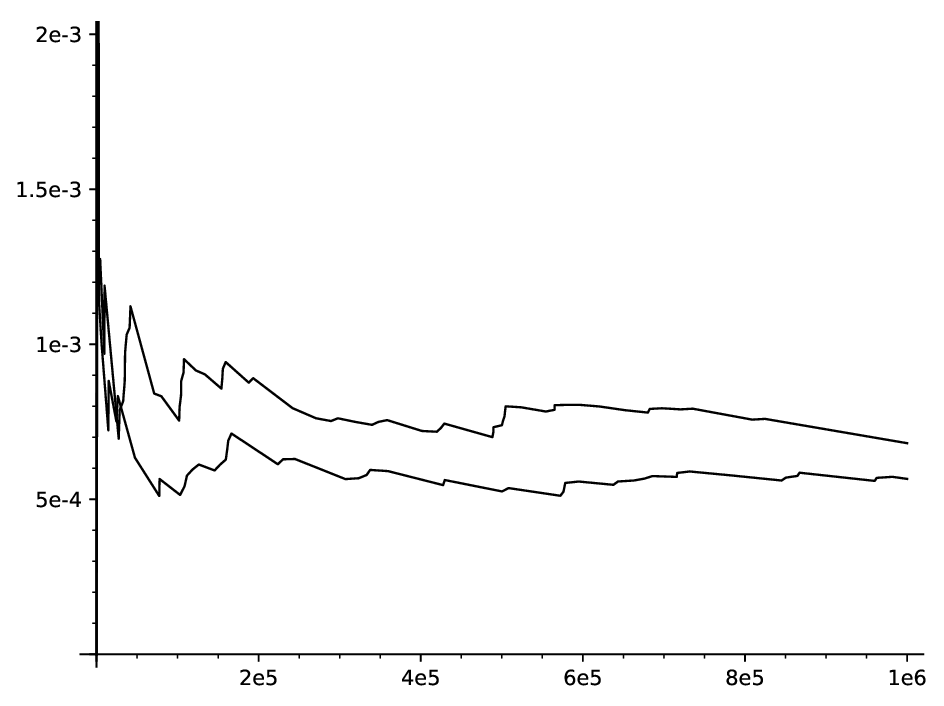}
\caption{$|l| = 5$: Top 5 bottom -5} \label{fig:14_6_2_6_A_5}
\end{subfigure}\hspace*{\fill}
\begin{subfigure}[b]{0.4\linewidth}
\includegraphics[width=\linewidth]{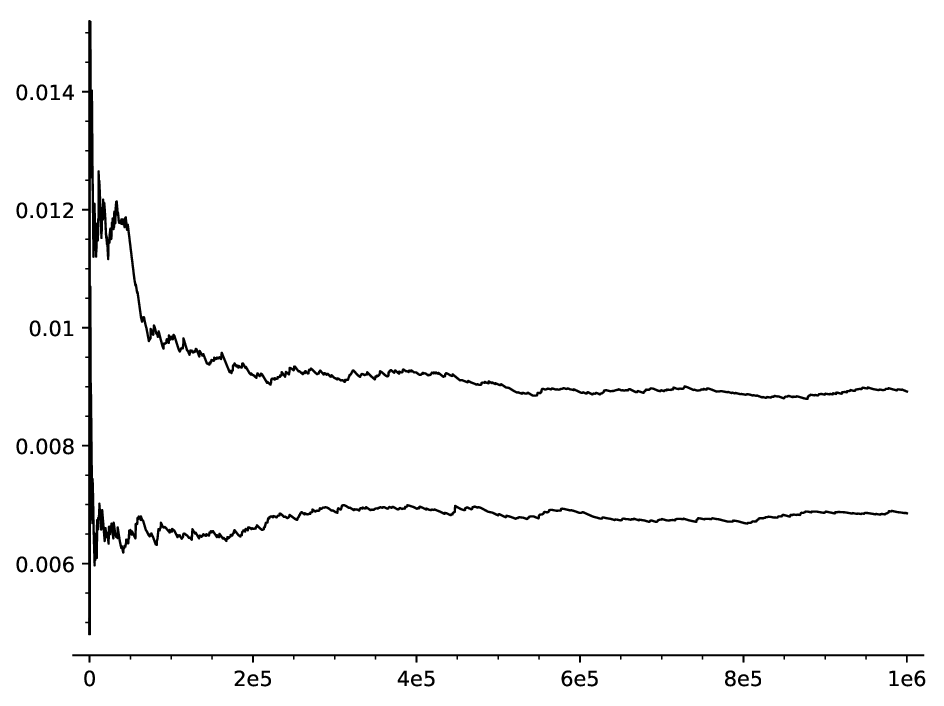}
\caption{$|l| = 6$: Top 6 bottom -6} \label{fig:14_6_2_6_A_6}
\end{subfigure}
\hspace*{-2.3cm}
\begin{subfigure}[b]{0.4\linewidth}
\includegraphics[width=\linewidth]{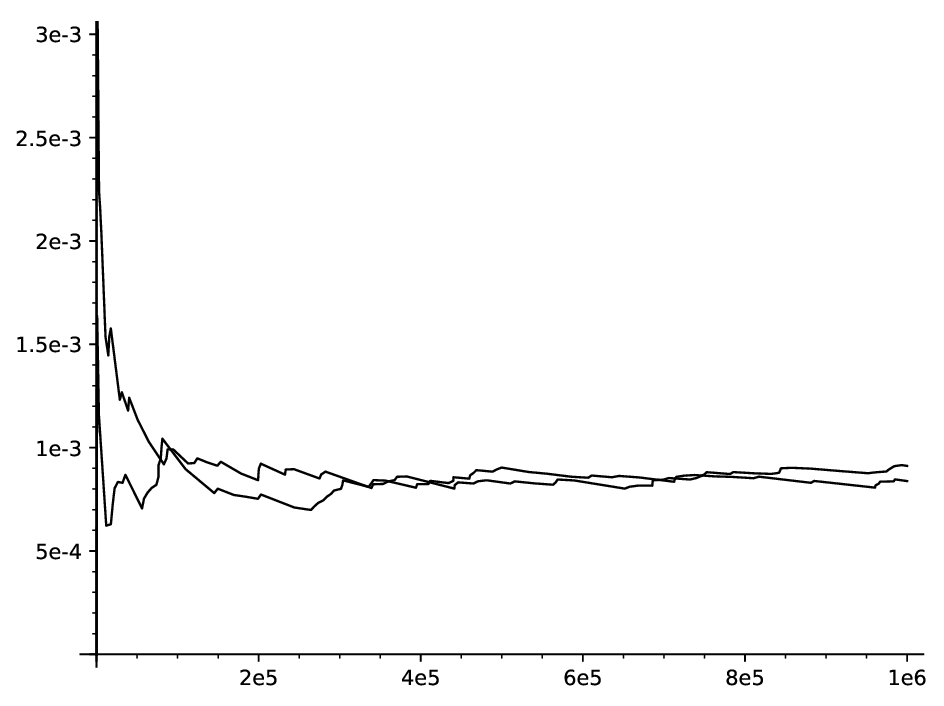}
\caption{$|l| = 7$: Top 7 bottom -7} \label{fig:14_6_2_6_A_7}
\end{subfigure}\hspace*{\fill}
\begin{subfigure}[b]{0.4\linewidth}
\includegraphics[width=\linewidth]{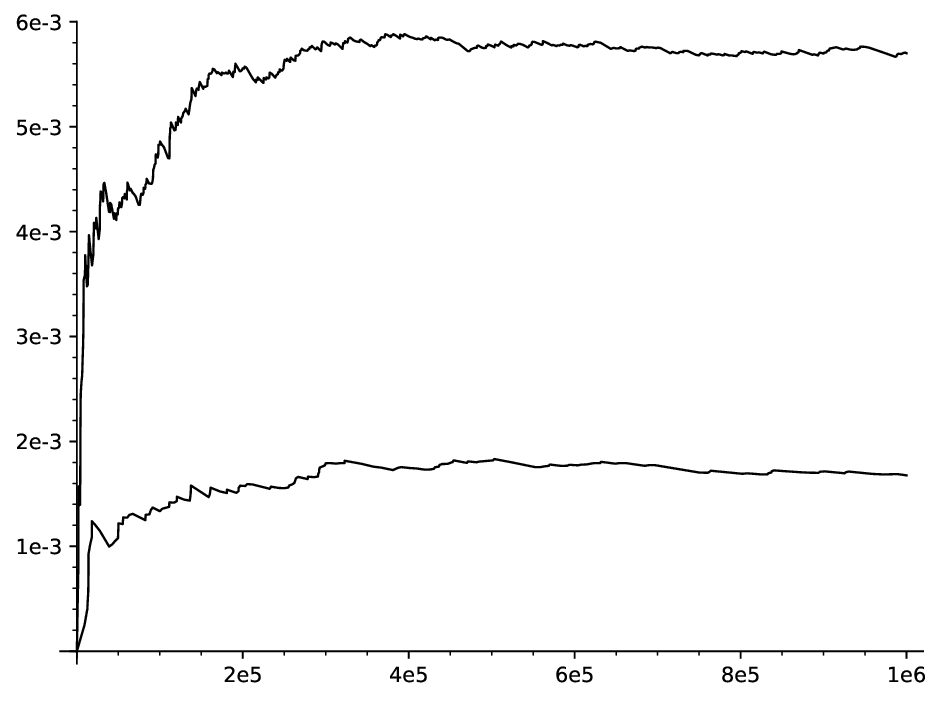}
\caption{$|l| = 8$: Top -8 bottom 8} \label{fig:14_6_2_6_A_8}
\end{subfigure}\hspace*{\fill}
\begin{subfigure}[b]{0.4\linewidth}
\includegraphics[width=\linewidth]{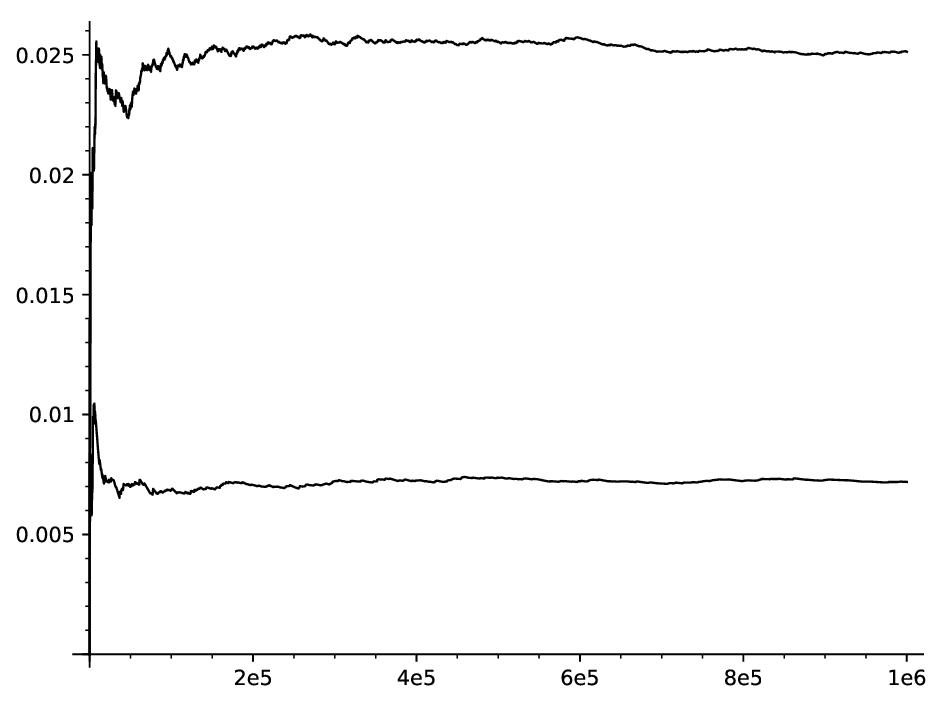}
\caption{$|l| = 9$: Top -9 bottom 9} \label{fig:14_6_2_6_A_9}
\end{subfigure}
\caption{14a1: $(\alpha, \beta) = (2,6)$ Ratio~\eqref{ratio_n_orders} $x_{6,E}^{(\alpha, \beta)}(X;l)/X^{1/2}\log^2(X)$} \label{fig:14a1_6_2_6_A_exact}
\end{figure}

\clearpage

\begin{figure}[t] % "[t!]" placement specifier just for this example
\hspace*{-2.3cm}
\begin{subfigure}[b]{0.4\linewidth}
\includegraphics[width=\linewidth]{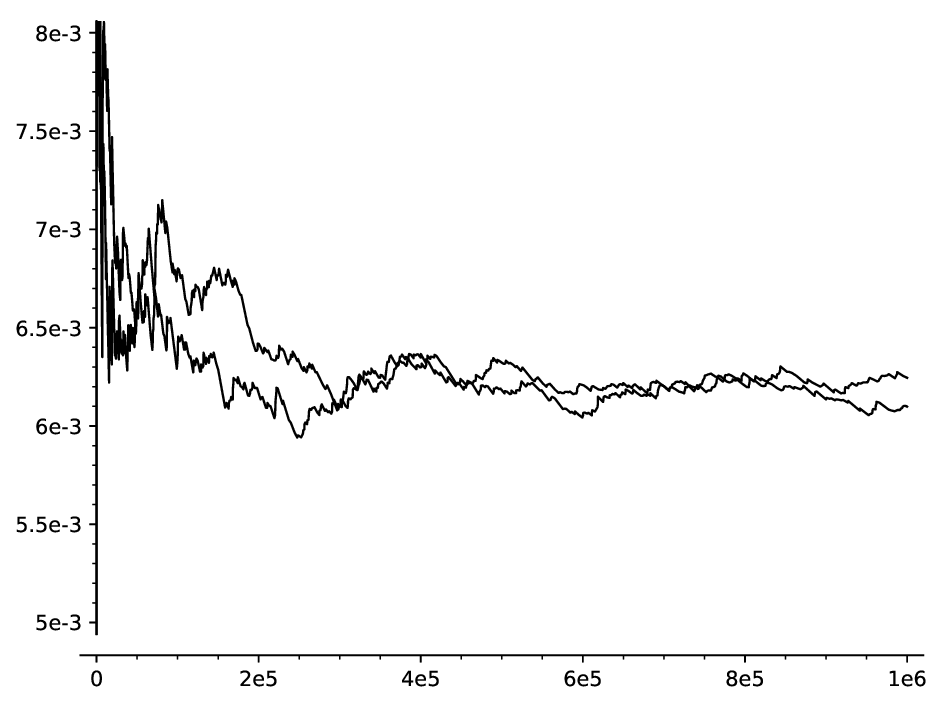}
\caption{$|l| = 1$: Top 1 bottom -1} \label{fig:15_6_1_3_A_1}
\end{subfigure}\hspace*{\fill}
\begin{subfigure}[b]{0.4\linewidth}
\includegraphics[width=\linewidth]{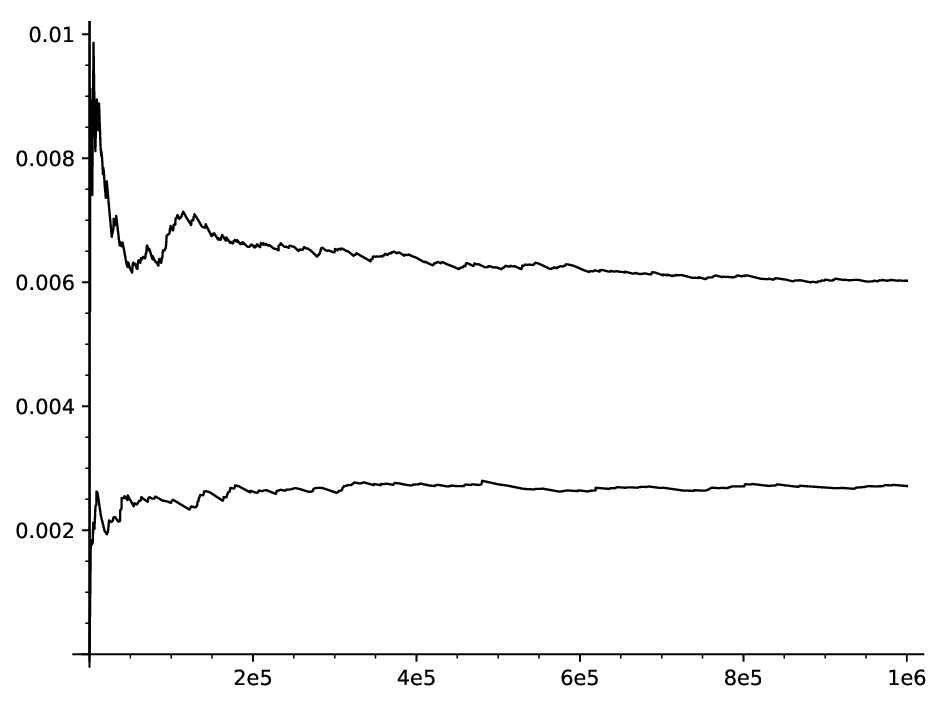}
\caption{$|l| = 2$: Top 2 bottom -2} \label{fig:15_6_1_3_A_2}
\end{subfigure}\hspace*{\fill}
\begin{subfigure}[b]{0.4\linewidth}
\includegraphics[width=\linewidth]{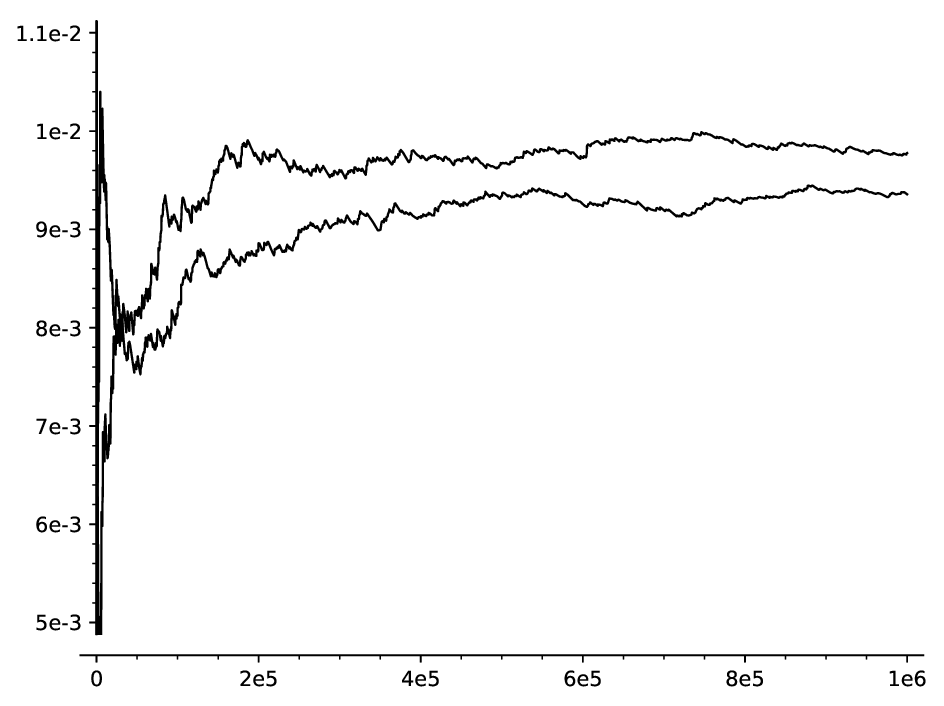}
\caption{$|l| = 3$: Top 3 bottom -3} \label{fig:15_6_1_3_A_3}
\end{subfigure}
\hspace*{-2.3cm}
\begin{subfigure}[b]{0.4\linewidth}
\includegraphics[width=\linewidth]{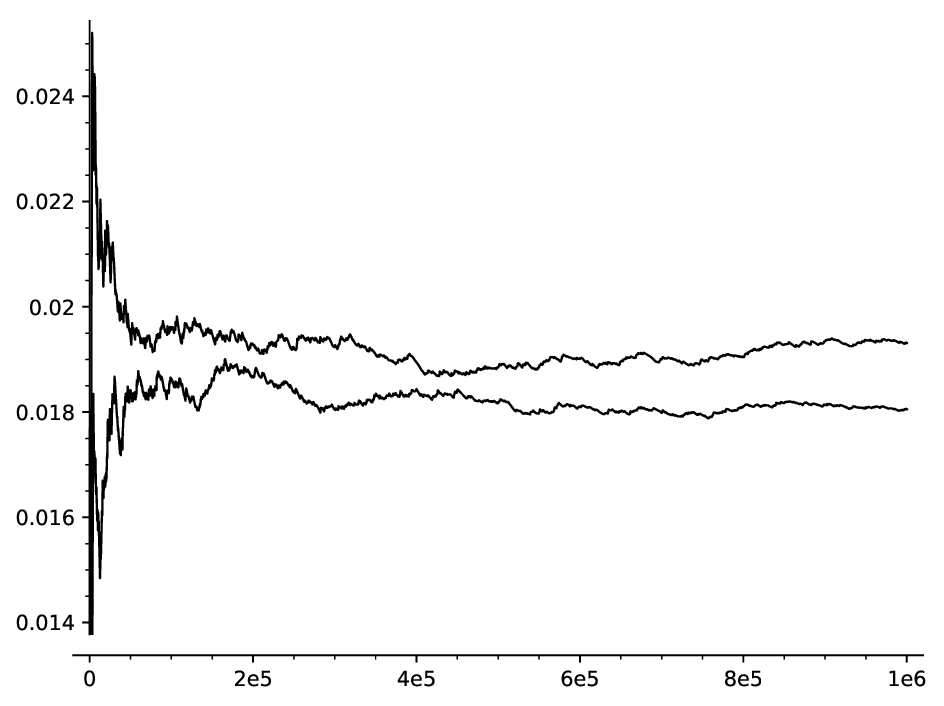}
\caption{$|l| = 4$: Top -4 bottom 4} \label{fig:15_6_1_3_A_4}
\end{subfigure}\hspace*{\fill}
\begin{subfigure}[b]{0.4\linewidth}
\includegraphics[width=\linewidth]{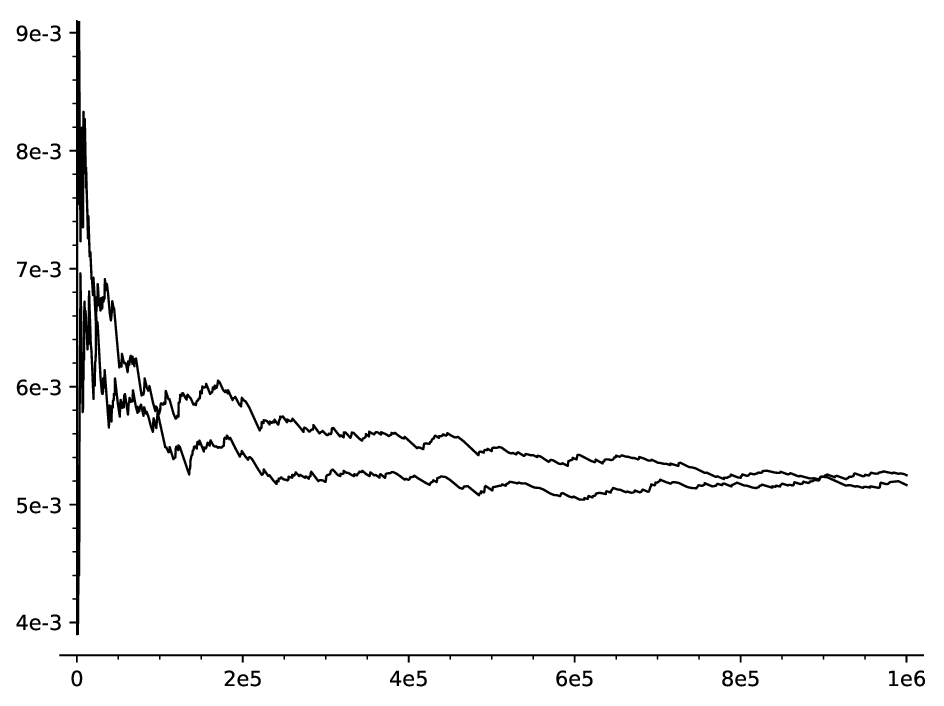}
\caption{$|l| = 5$: Top 5 bottom -5} \label{fig:15_6_1_3_A_5}
\end{subfigure}\hspace*{\fill}
\begin{subfigure}[b]{0.4\linewidth}
\includegraphics[width=\linewidth]{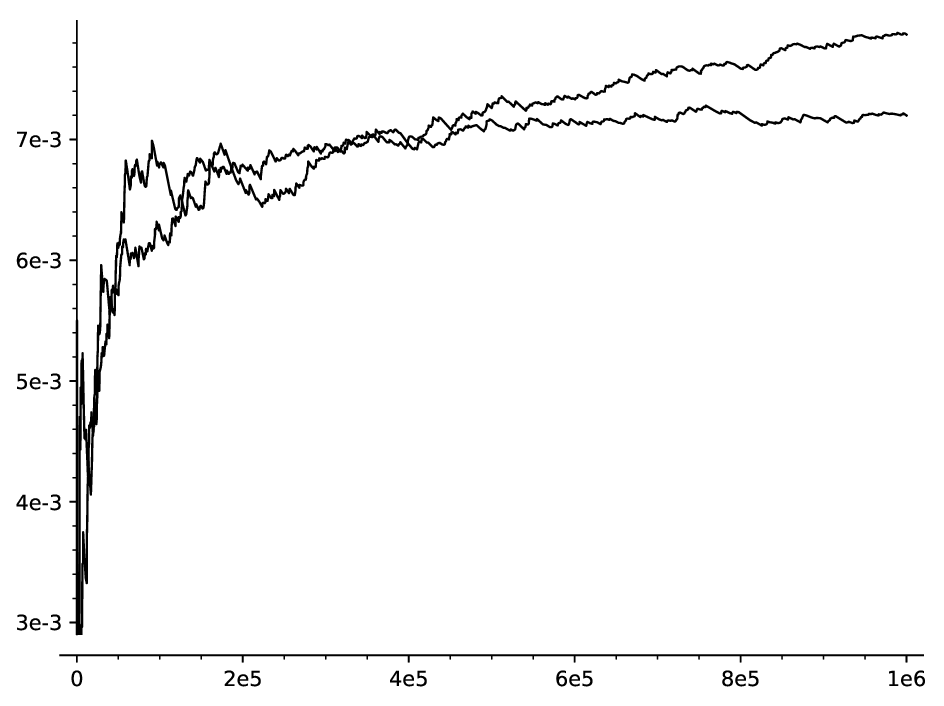}
\caption{$|l| = 6$: Top 6 bottom -6} \label{fig:15_6_1_3_A_6}
\end{subfigure}
\hspace*{-2.3cm}
\begin{subfigure}[b]{0.4\linewidth}
\includegraphics[width=\linewidth]{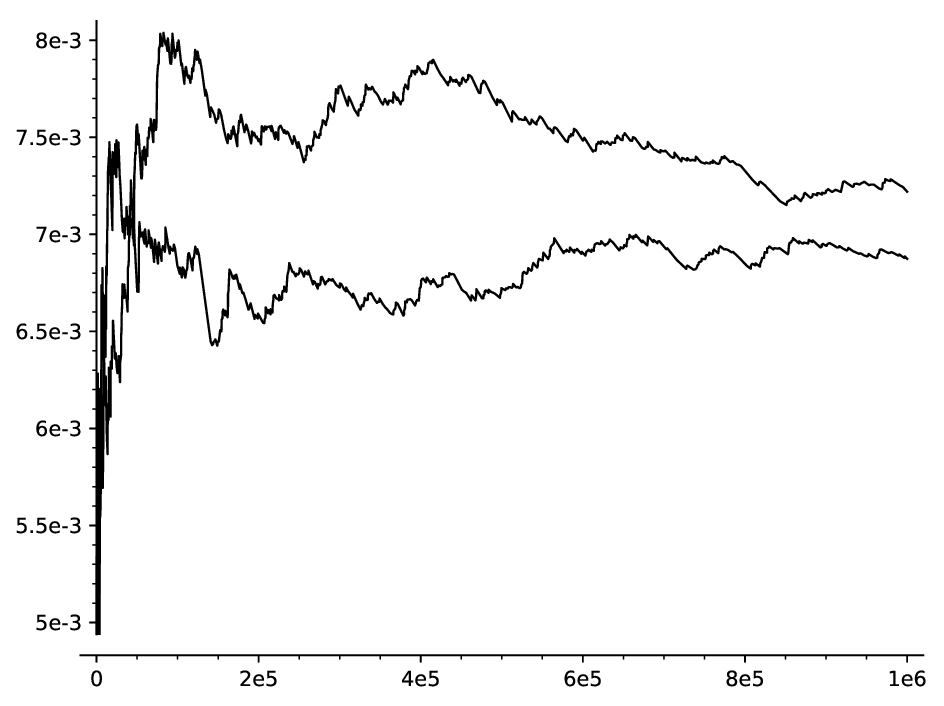}
\caption{$|l| = 7$: Top 7 bottom -7} \label{fig:15_6_1_3_A_7}
\end{subfigure}\hspace*{\fill}
\begin{subfigure}[b]{0.4\linewidth}
\includegraphics[width=\linewidth]{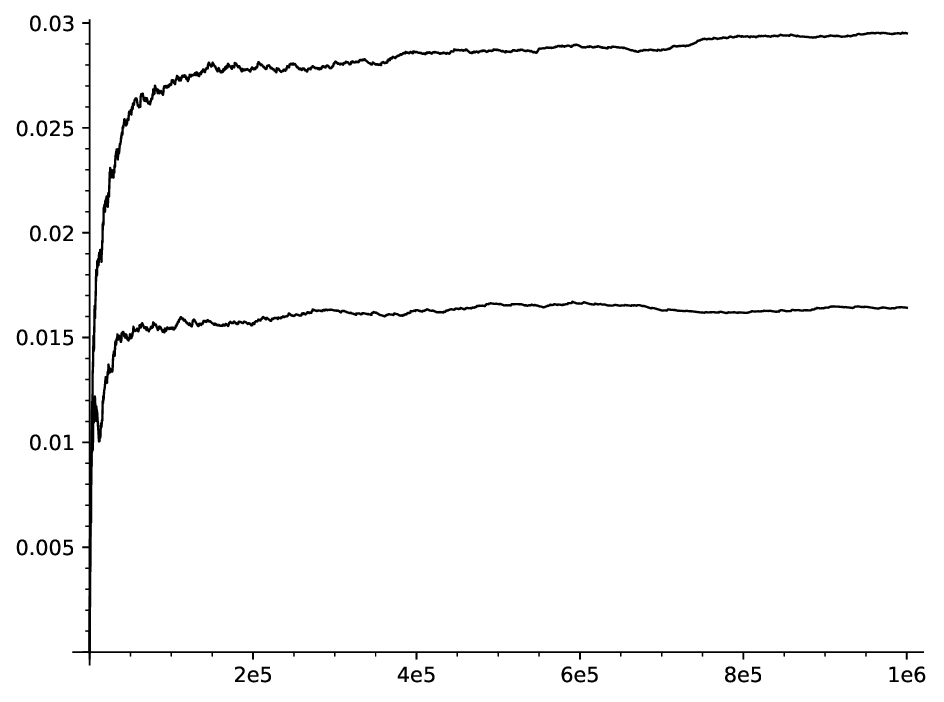}
\caption{$|l| = 8$: Top 8 bottom -8} \label{fig:15_6_1_3_A_8}
\end{subfigure}\hspace*{\fill}
\begin{subfigure}[b]{0.4\linewidth}
\includegraphics[width=\linewidth]{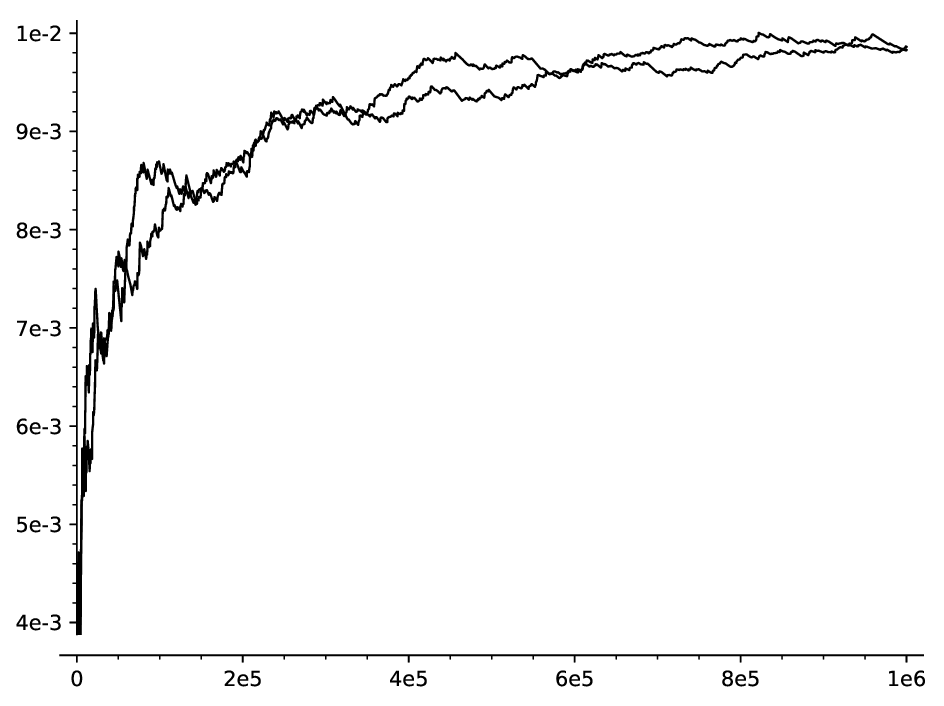}
\caption{$|l| = 9$: Top -9 bottom 9} \label{fig:15_6_1_3_A_9}
\end{subfigure}
\caption{15a1: $(\alpha, \beta) = (1,3)$ Ratio~\eqref{ratio_n_orders} $x_{6,E}^{(\alpha, \beta)}(X;l)/X^{1/2}\log^2(X)$} \label{fig:15a1_6_1_3_A_exact}
\end{figure}

\clearpage

\begin{figure}[t] % "[t!]" placement specifier just for this example
\hspace*{-2.3cm}
\begin{subfigure}[b]{0.4\linewidth}
\includegraphics[width=\linewidth]{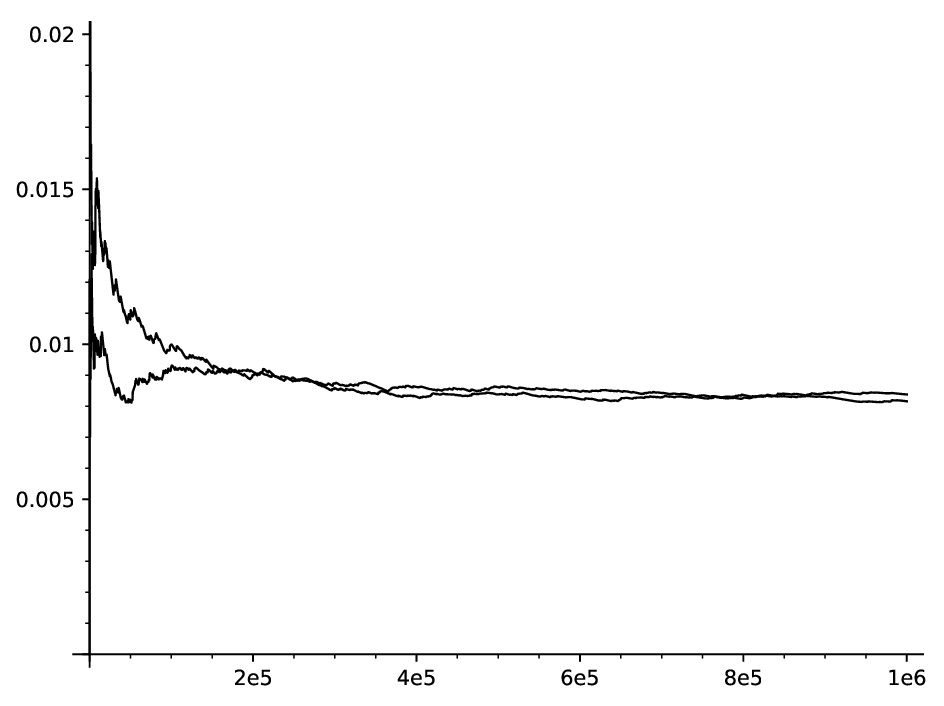}
\caption{$|l| = 1$: Top -1 bottom 1} \label{fig:15_6_2_3_A_1}
\end{subfigure}\hspace*{\fill}
\begin{subfigure}[b]{0.4\linewidth}
\includegraphics[width=\linewidth]{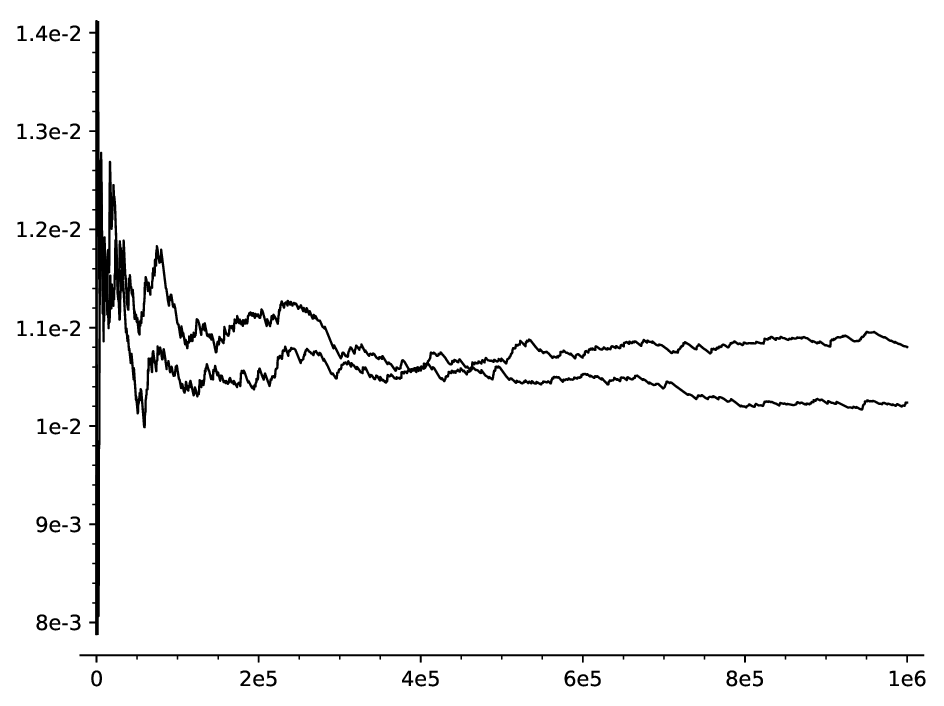}
\caption{$|l| = 2$: Top -2 bottom 2} \label{fig:15_6_2_3_A_2}
\end{subfigure}\hspace*{\fill}
\begin{subfigure}[b]{0.4\linewidth}
\includegraphics[width=\linewidth]{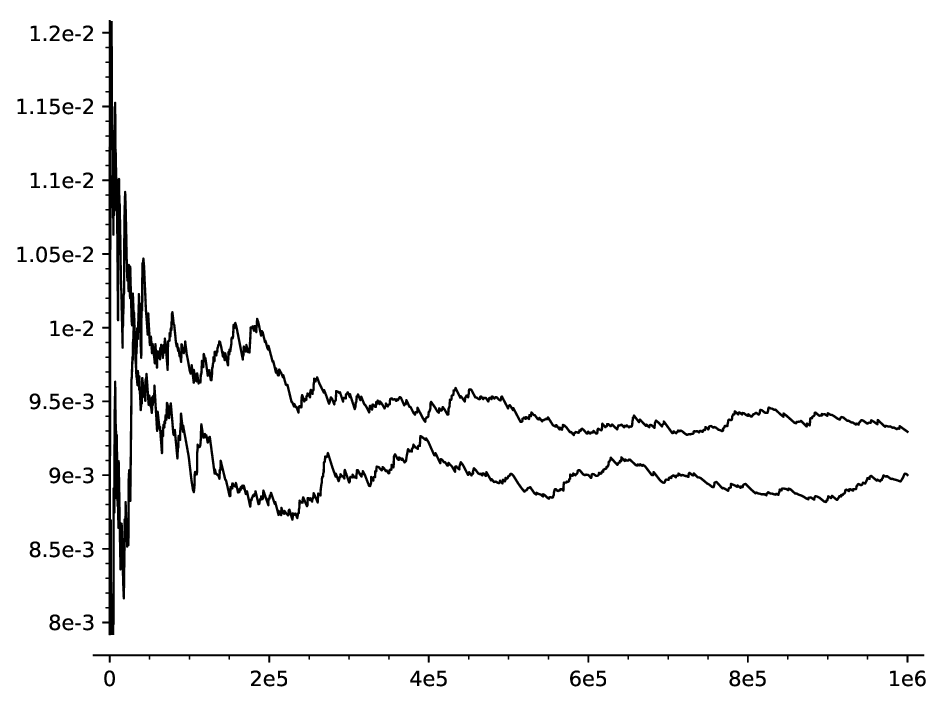}
\caption{$|l| = 3$: Top 3 bottom -3} \label{fig:15_6_2_3_A_3}
\end{subfigure}
\hspace*{-2.3cm}
\begin{subfigure}[b]{0.4\linewidth}
\includegraphics[width=\linewidth]{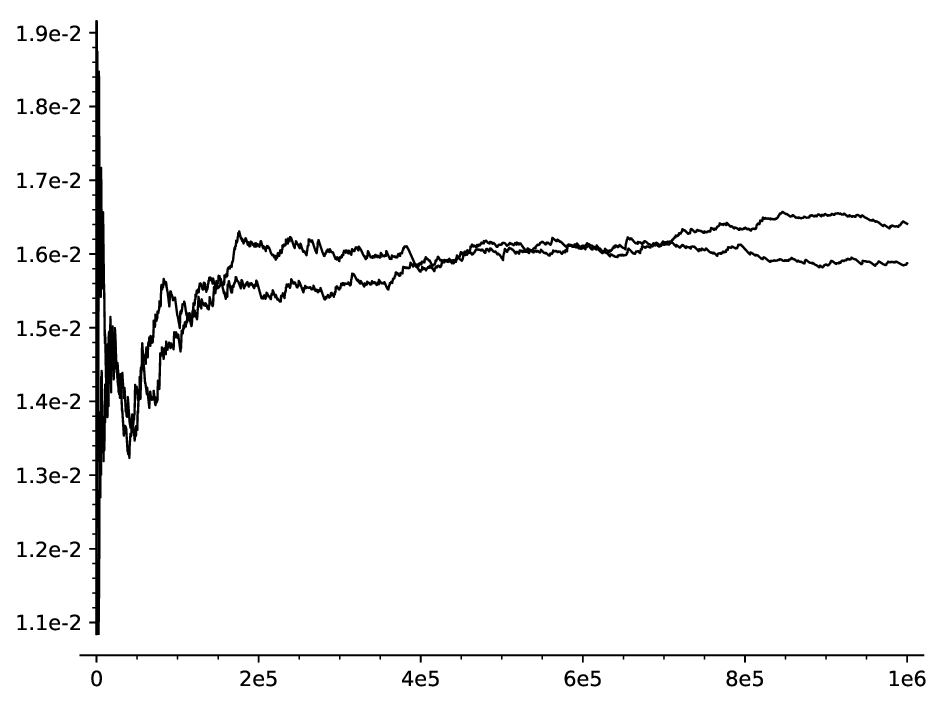}
\caption{$|l| = 4$: Top 4 bottom -4} \label{fig:15_6_2_3_A_4}
\end{subfigure}\hspace*{\fill}
\begin{subfigure}[b]{0.4\linewidth}
\includegraphics[width=\linewidth]{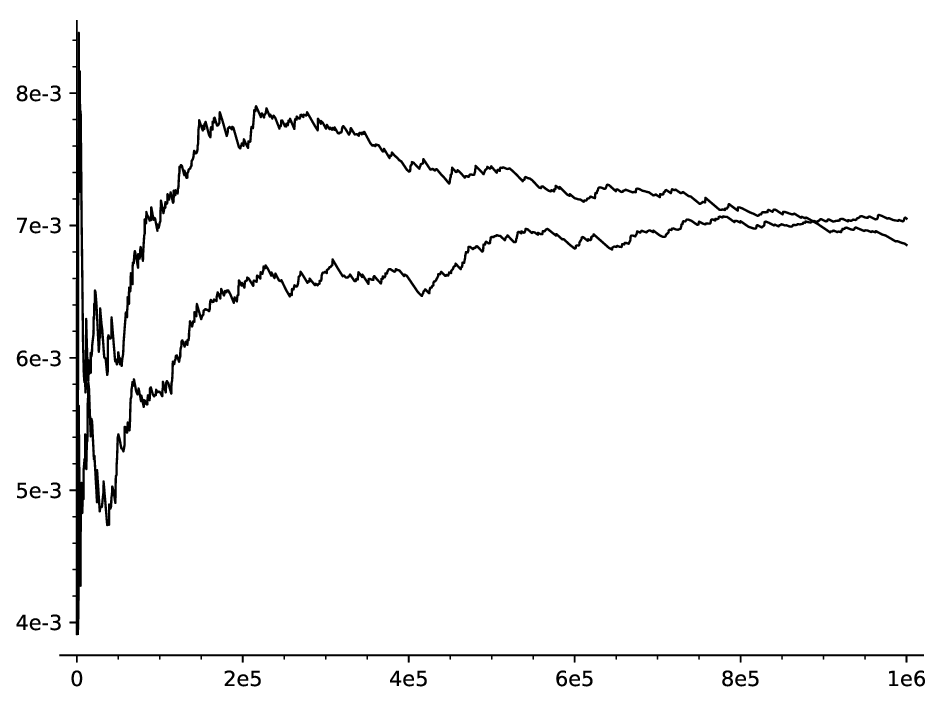}
\caption{$|l| = 5$: Top -5 bottom 5} \label{fig:15_6_2_3_A_5}
\end{subfigure}\hspace*{\fill}
\begin{subfigure}[b]{0.4\linewidth}
\includegraphics[width=\linewidth]{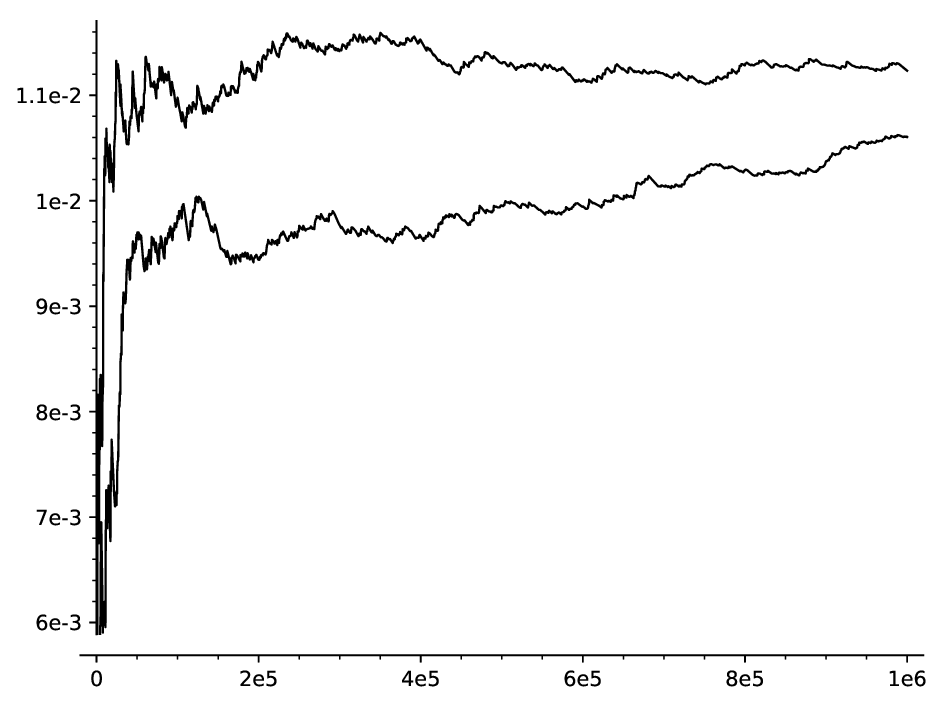}
\caption{$|l| = 6$: Top -6 bottom 6} \label{fig:15_6_2_3_A_6}
\end{subfigure}
\hspace*{-2.3cm}
\begin{subfigure}[b]{0.4\linewidth}
\includegraphics[width=\linewidth]{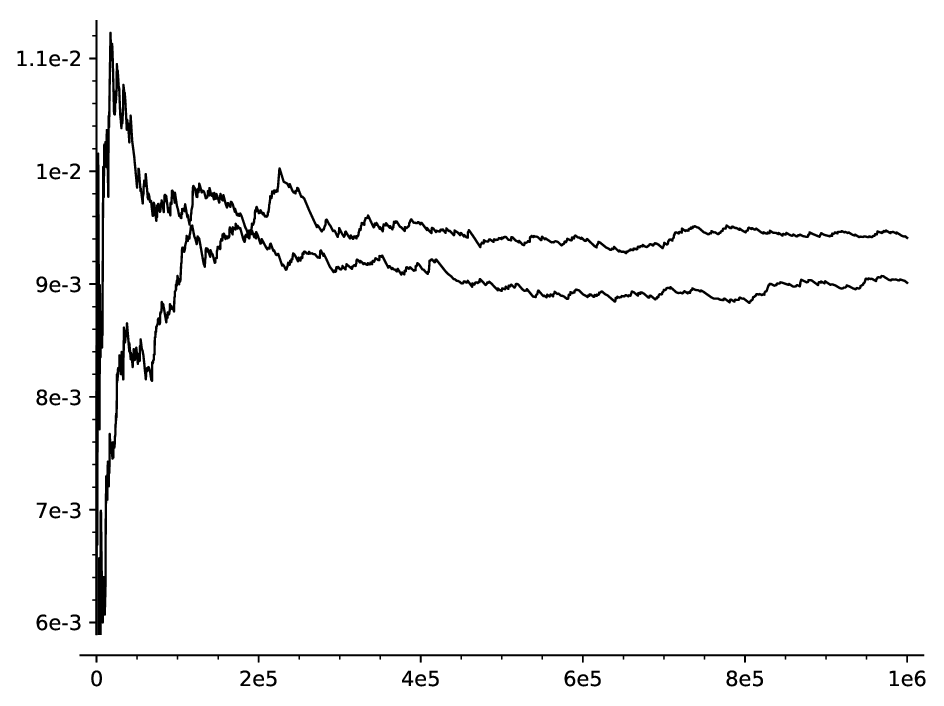}
\caption{$|l| = 7$: Top -7 bottom 7} \label{fig:15_6_2_3_A_7}
\end{subfigure}\hspace*{\fill}
\begin{subfigure}[b]{0.4\linewidth}
\includegraphics[width=\linewidth]{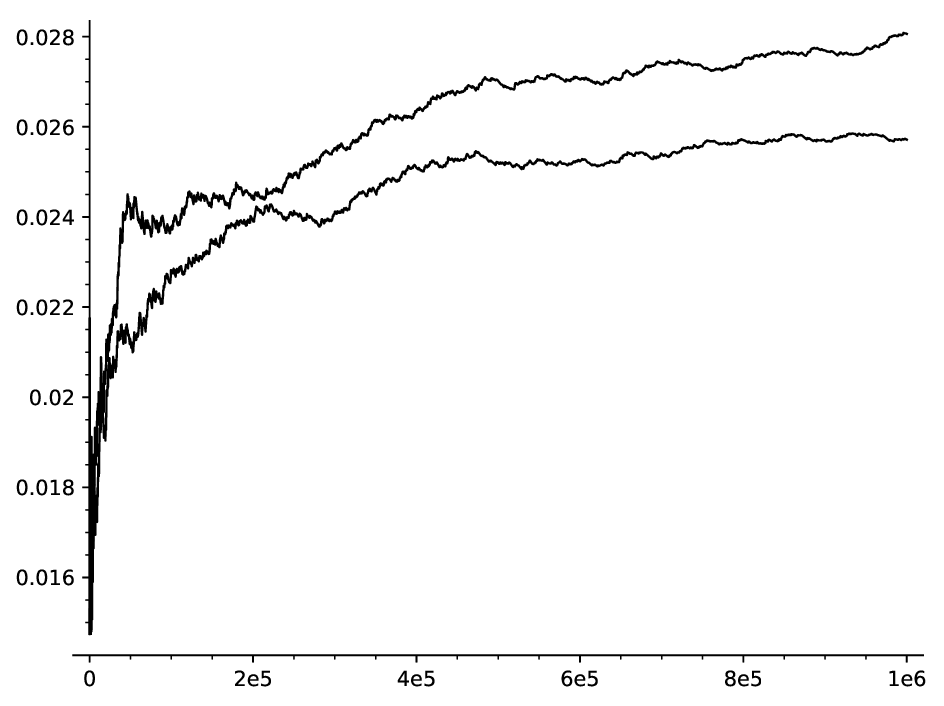}
\caption{$|l| = 8$: Top -8 bottom 8} \label{fig:15_6_2_3_A_8}
\end{subfigure}\hspace*{\fill}
\begin{subfigure}[b]{0.4\linewidth}
\includegraphics[width=\linewidth]{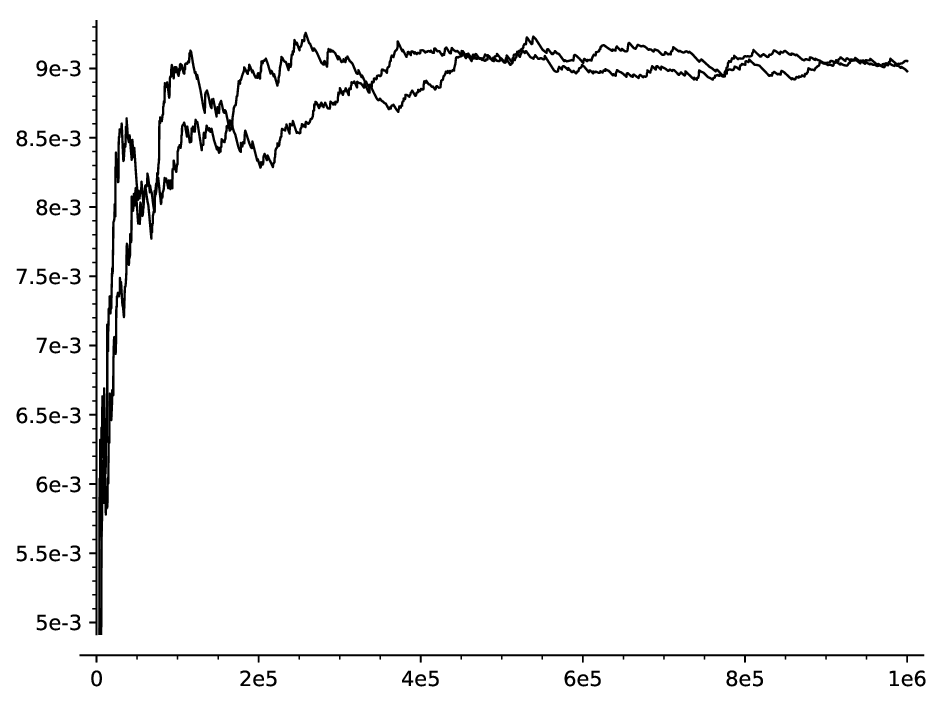}
\caption{$|l| = 9$: Top -9 bottom 9} \label{fig:15_6_2_3_A_9}
\end{subfigure}
\caption{15a1: $(\alpha, \beta) = (2,3)$ Ratio~\eqref{ratio_n_orders} $x_{6,E}^{(\alpha, \beta)}(X;l)/X^{1/2}\log^2(X)$} \label{fig:15a1_6_2_3_A_exact}
\end{figure}

\clearpage

\begin{figure}[t] % "[t!]" placement specifier just for this example
\hspace*{-2.3cm}
\begin{subfigure}[b]{0.4\linewidth}
\includegraphics[width=\linewidth]{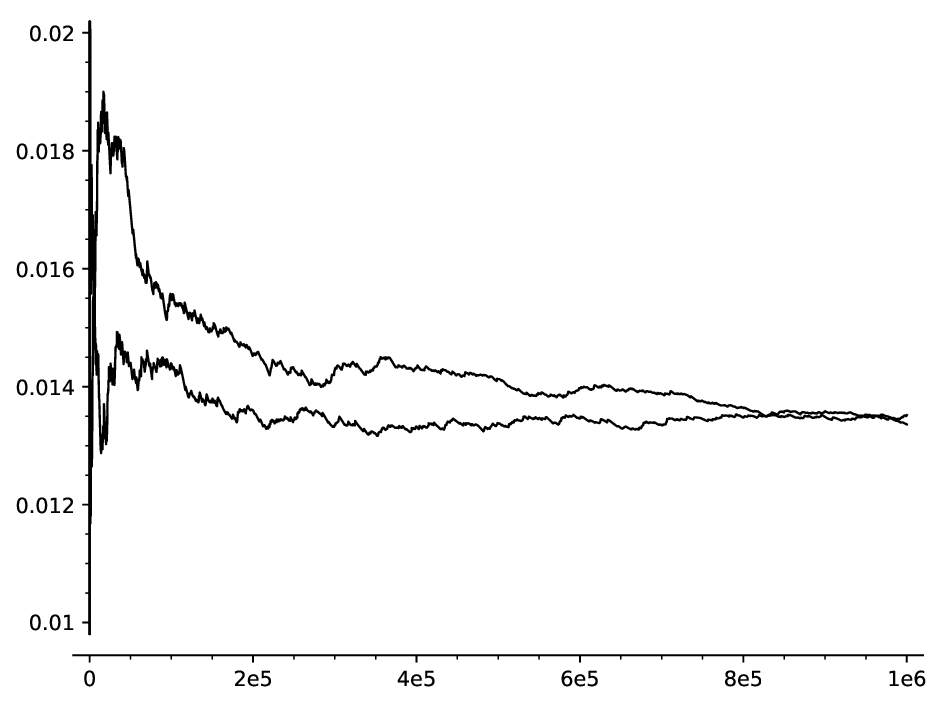}
\caption{$|l| = 1$: Top 1 bottom -1} \label{fig:15_6_1_6_A_1}
\end{subfigure}\hspace*{\fill}
\begin{subfigure}[b]{0.4\linewidth}
\includegraphics[width=\linewidth]{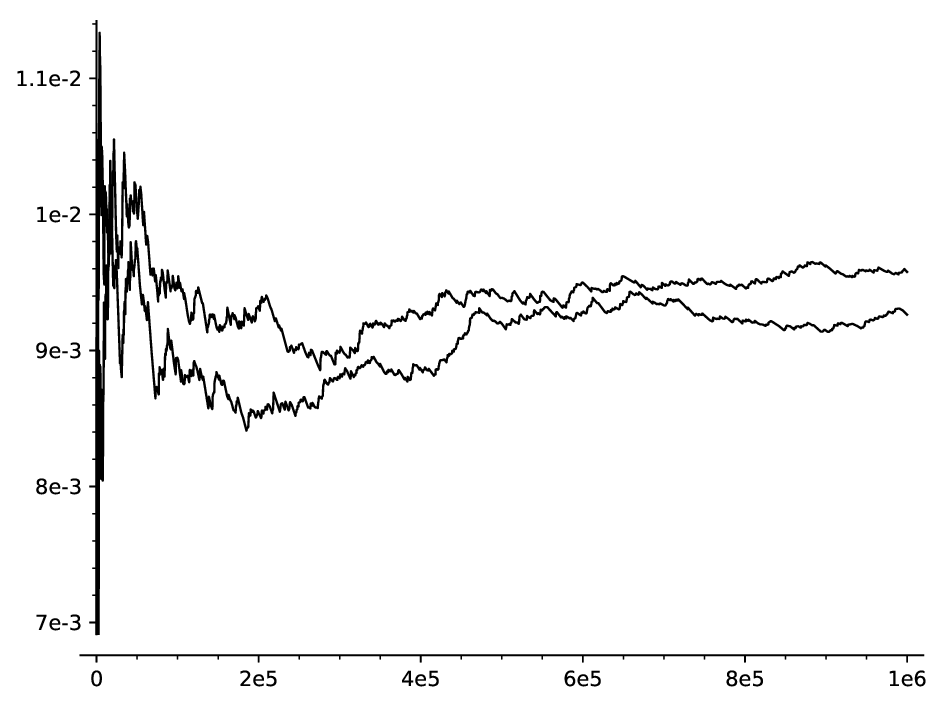}
\caption{$|l| = 2$: Top -2 bottom 2} \label{fig:15_6_1_6_A_2}
\end{subfigure}\hspace*{\fill}
\begin{subfigure}[b]{0.4\linewidth}
\includegraphics[width=\linewidth]{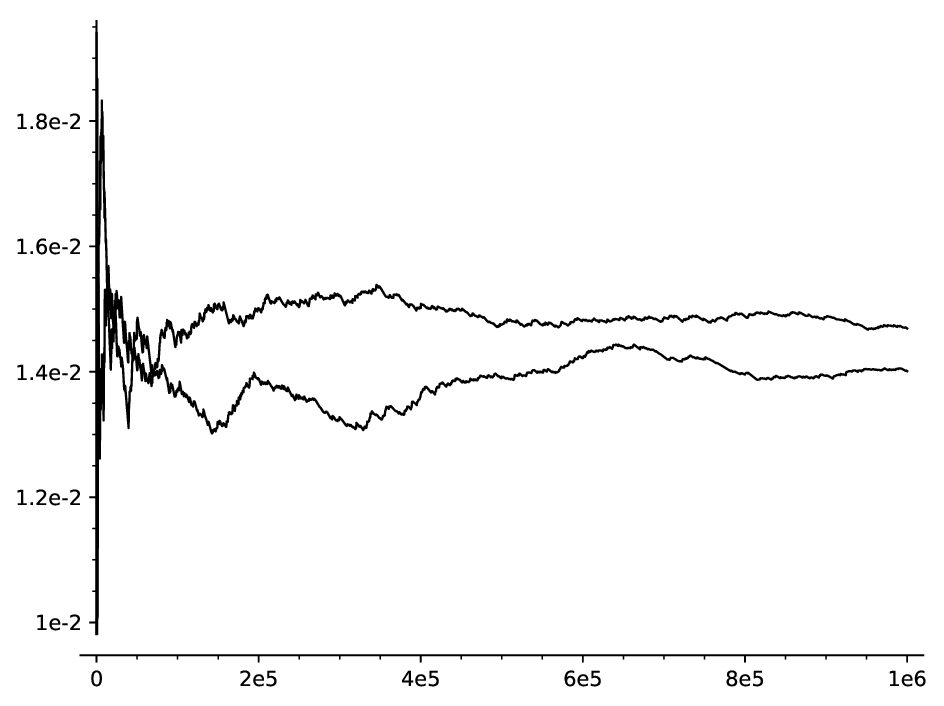}
\caption{$|l| = 3$: Top 3 bottom -3} \label{fig:15_6_1_6_A_3}
\end{subfigure}
\hspace*{-2.3cm}
\begin{subfigure}[b]{0.4\linewidth}
\includegraphics[width=\linewidth]{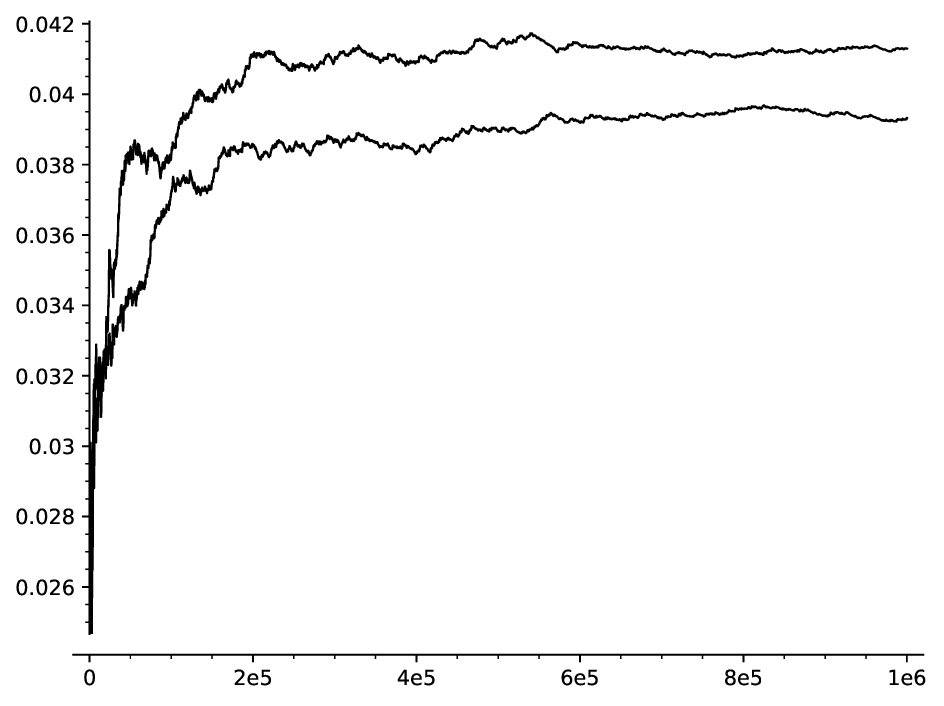}
\caption{$|l| = 4$: Top 4 bottom -4} \label{fig:15_6_1_6_A_4}
\end{subfigure}\hspace*{\fill}
\begin{subfigure}[b]{0.4\linewidth}
\includegraphics[width=\linewidth]{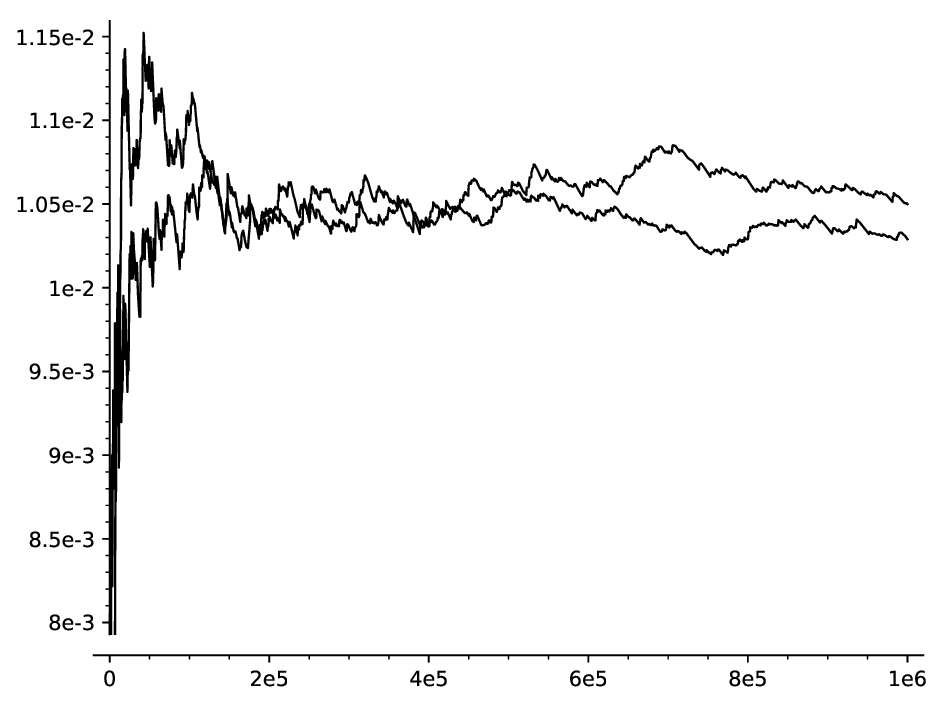}
\caption{$|l| = 5$: Top 5 bottom -5} \label{fig:15_6_1_6_A_5}
\end{subfigure}\hspace*{\fill}
\begin{subfigure}[b]{0.4\linewidth}
\includegraphics[width=\linewidth]{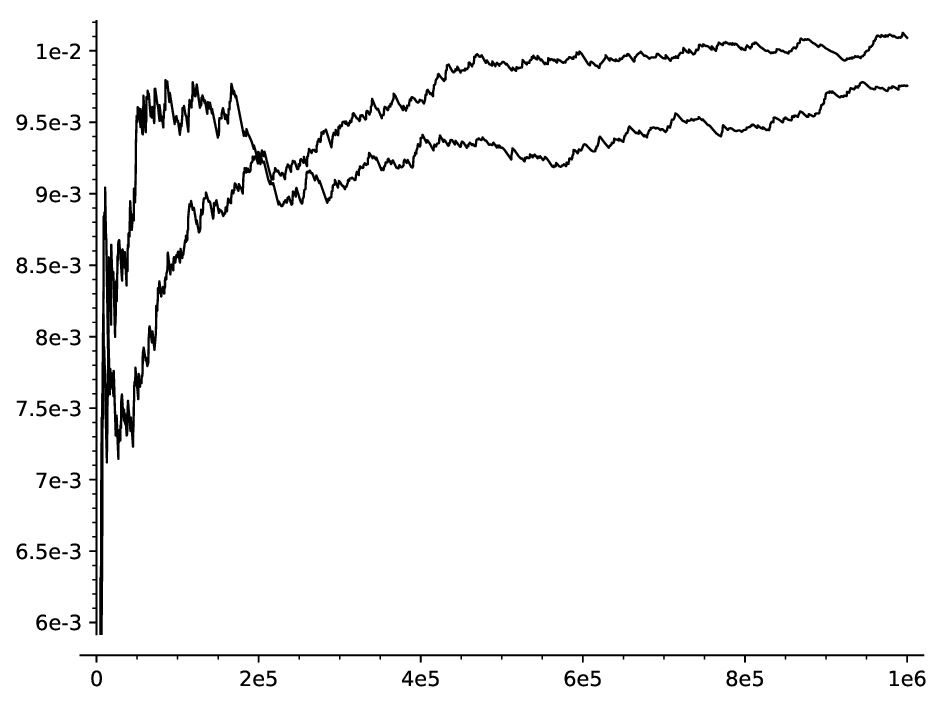}
\caption{$|l| = 6$: Top 6 bottom -6} \label{fig:15_6_1_6_A_6}
\end{subfigure}
\hspace*{-2.3cm}
\begin{subfigure}[b]{0.4\linewidth}
\includegraphics[width=\linewidth]{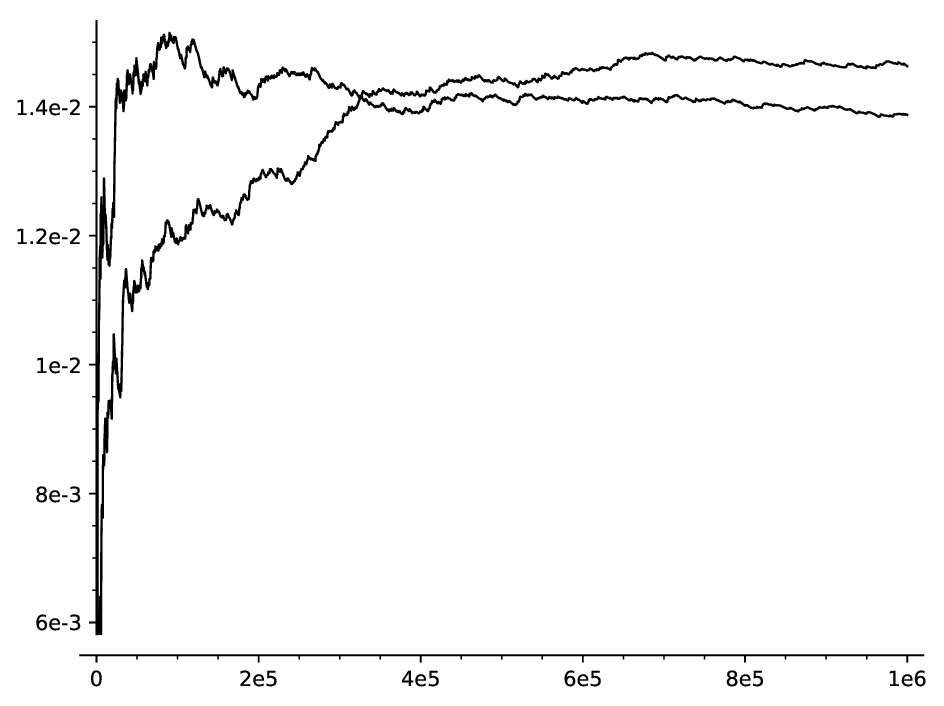}
\caption{$|l| = 7$: Top 7 bottom -7} \label{fig:15_6_1_6_A_7}
\end{subfigure}\hspace*{\fill}
\begin{subfigure}[b]{0.4\linewidth}
\includegraphics[width=\linewidth]{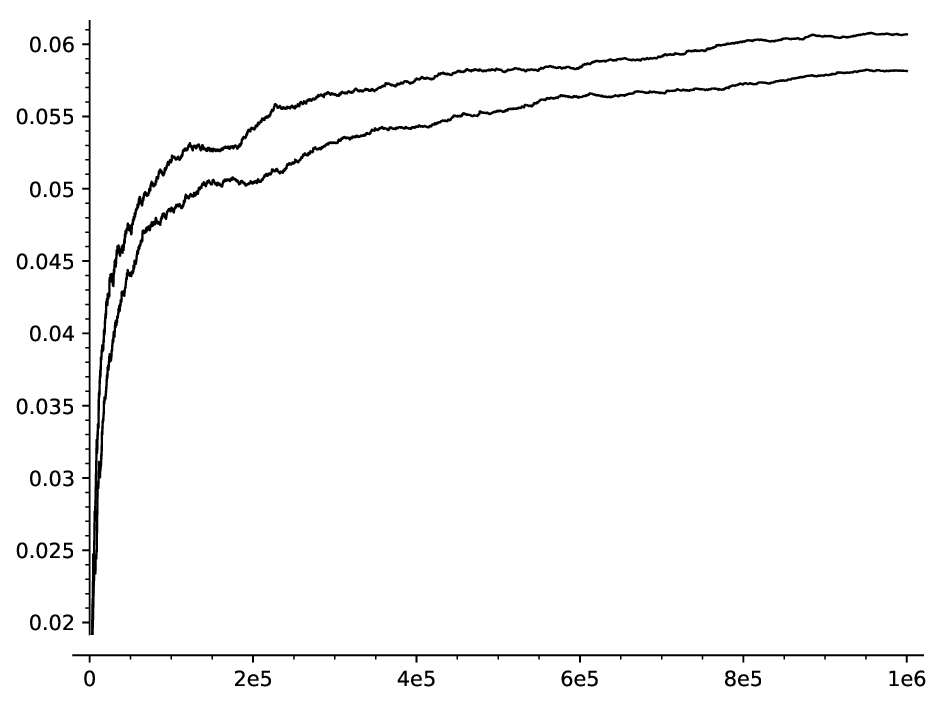}
\caption{$|l| = 8$: Top -8 bottom 8} \label{fig:15_6_1_6_A_8}
\end{subfigure}\hspace*{\fill}
\begin{subfigure}[b]{0.4\linewidth}
\includegraphics[width=\linewidth]{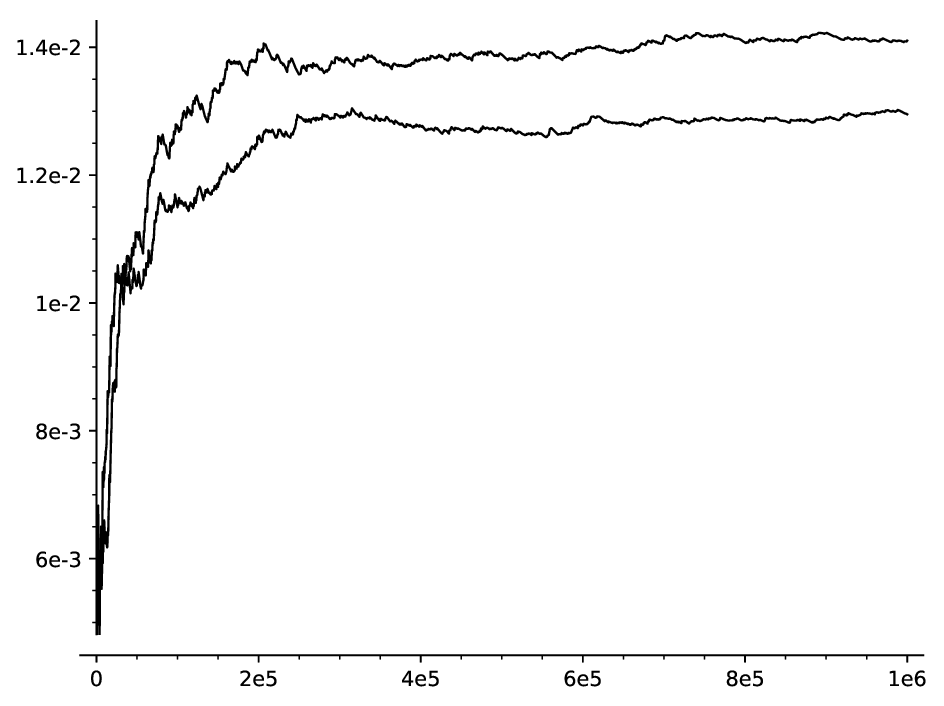}
\caption{$|l| = 9$: Top 9 bottom -9} \label{fig:15_6_1_6_A_9}
\end{subfigure}
\caption{15a1: $(\alpha, \beta) = (1,6)$ Ratio~\eqref{ratio_n_orders} $x_{6,E}^{(\alpha, \beta)}(X;l)/X^{1/2}\log^2(X)$} \label{fig:15a1_6_1_6_A_exact}
\end{figure}

\clearpage

\begin{figure}[t] % "[t!]" placement specifier just for this example
\hspace*{-2.3cm}
\begin{subfigure}[b]{0.4\linewidth}
\includegraphics[width=\linewidth]{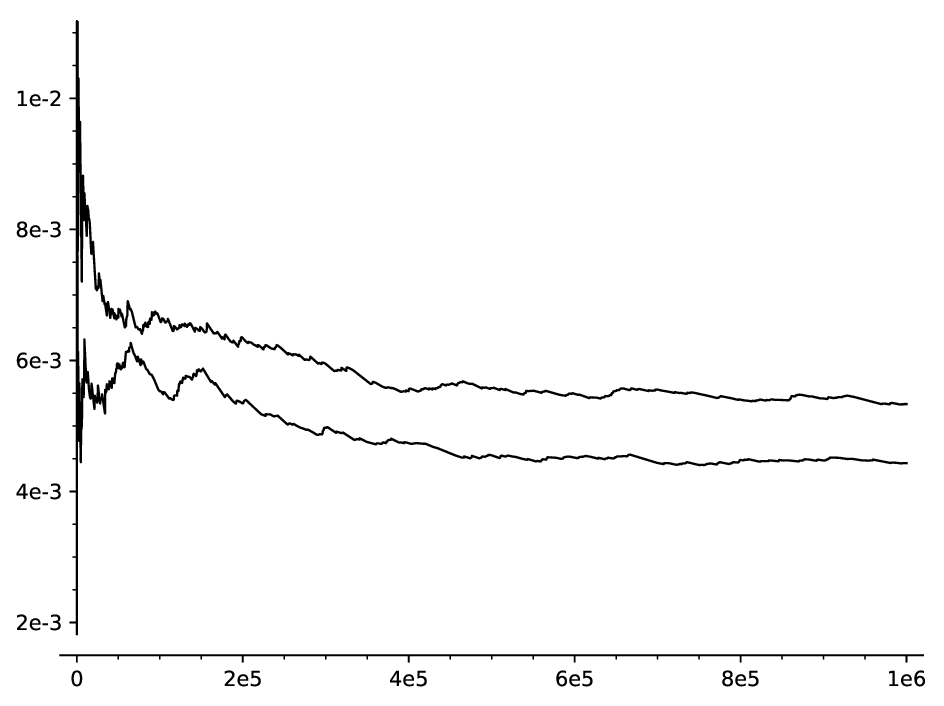}
\caption{$|l| = 1$: Top 1 bottom -1} \label{fig:15_6_2_6_A_1}
\end{subfigure}\hspace*{\fill}
\begin{subfigure}[b]{0.4\linewidth}
\includegraphics[width=\linewidth]{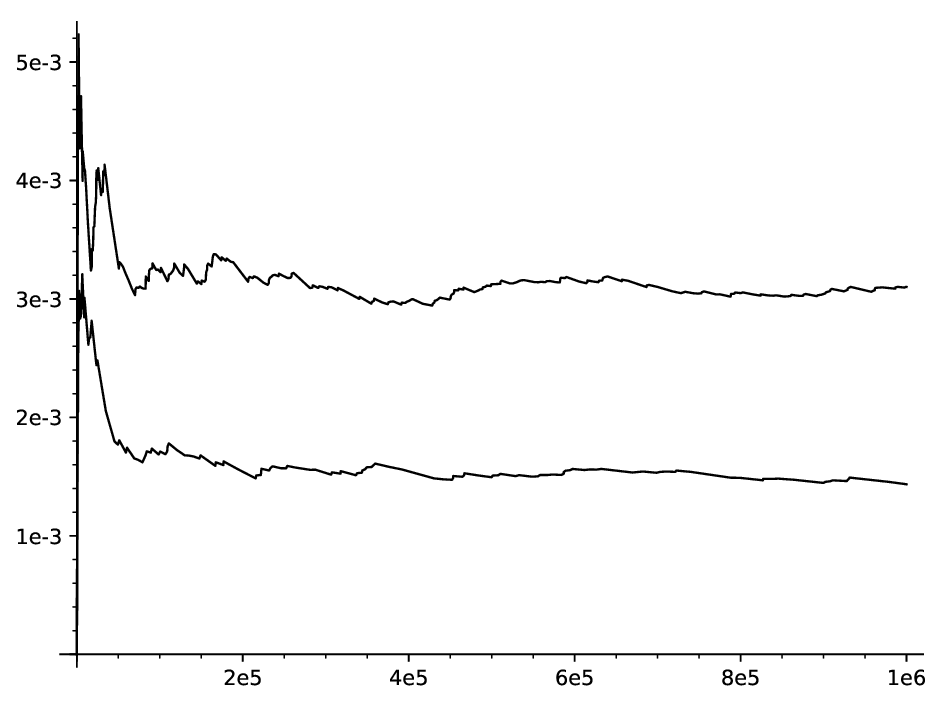}
\caption{$|l| = 2$: Top 2 bottom -2} \label{fig:15_6_2_6_A_2}
\end{subfigure}\hspace*{\fill}
\begin{subfigure}[b]{0.4\linewidth}
\includegraphics[width=\linewidth]{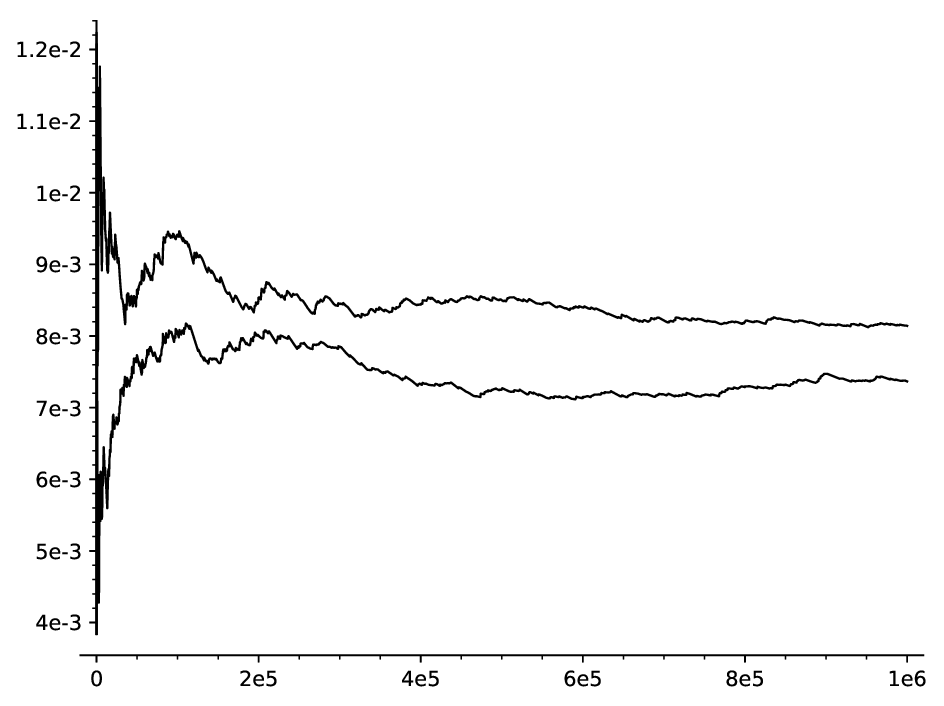}
\caption{$|l| = 3$: Top -3 bottom 3} \label{fig:15_6_2_6_A_3}
\end{subfigure}
\hspace*{-2.3cm}
\begin{subfigure}[b]{0.4\linewidth}
\includegraphics[width=\linewidth]{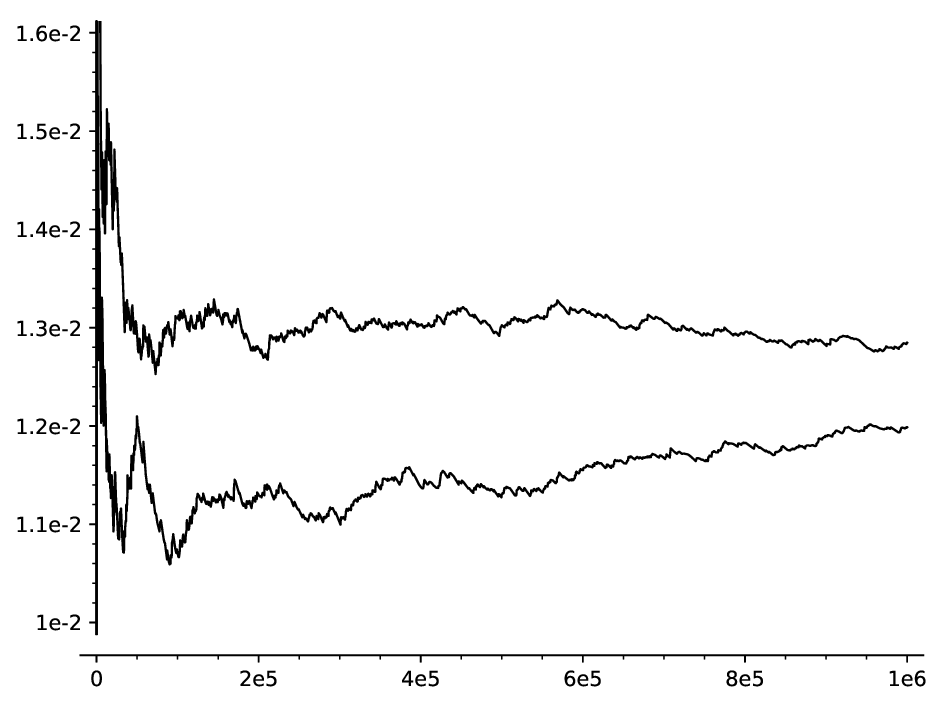}
\caption{$|l| = 4$: Top -4 bottom 4} \label{fig:15_6_2_6_A_4}
\end{subfigure}\hspace*{\fill}
\begin{subfigure}[b]{0.4\linewidth}
\includegraphics[width=\linewidth]{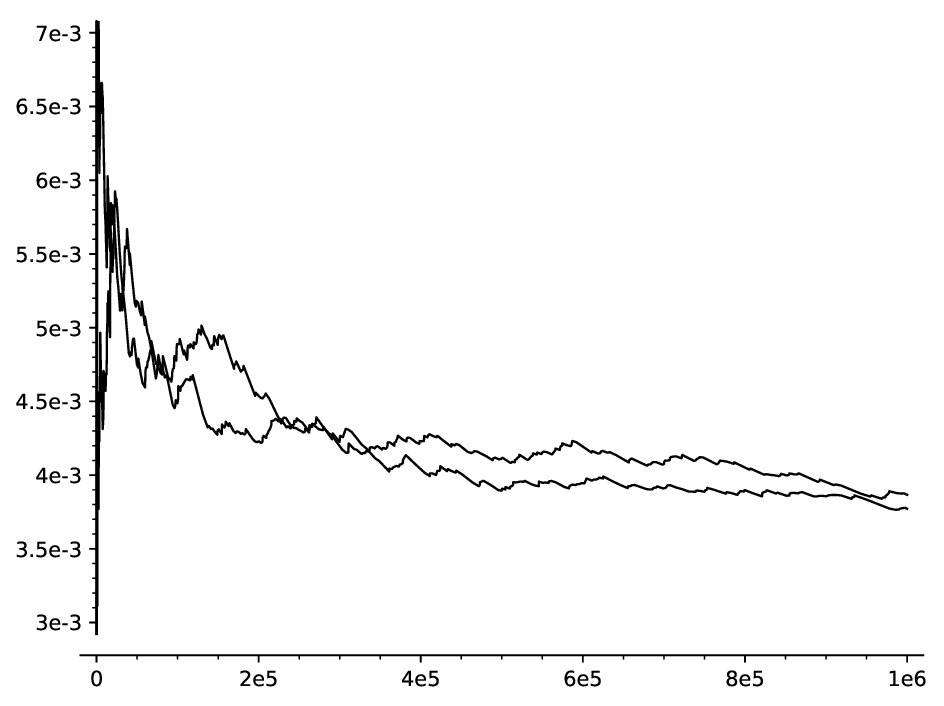}
\caption{$|l| = 5$: Top 5 bottom -5} \label{fig:15_6_2_6_A_5}
\end{subfigure}\hspace*{\fill}
\begin{subfigure}[b]{0.4\linewidth}
\includegraphics[width=\linewidth]{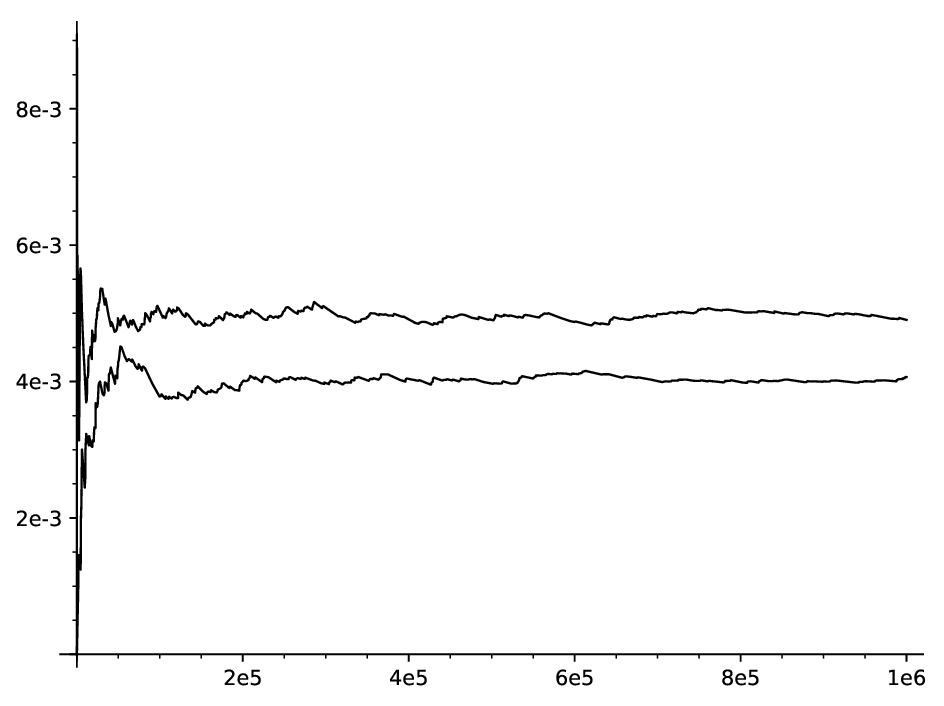}
\caption{$|l| = 6$: Top 6 bottom -6} \label{fig:15_6_2_6_A_6}
\end{subfigure}
\hspace*{-2.3cm}
\begin{subfigure}[b]{0.4\linewidth}
\includegraphics[width=\linewidth]{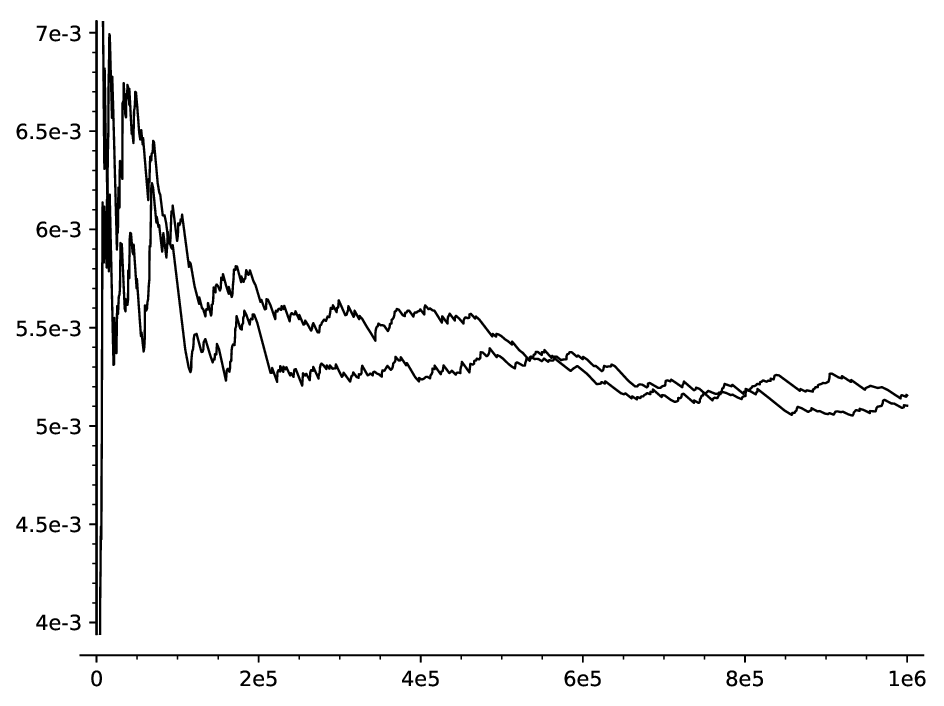}
\caption{$|l| = 7$: Top -7 bottom 7} \label{fig:15_6_2_6_A_7}
\end{subfigure}\hspace*{\fill}
\begin{subfigure}[b]{0.4\linewidth}
\includegraphics[width=\linewidth]{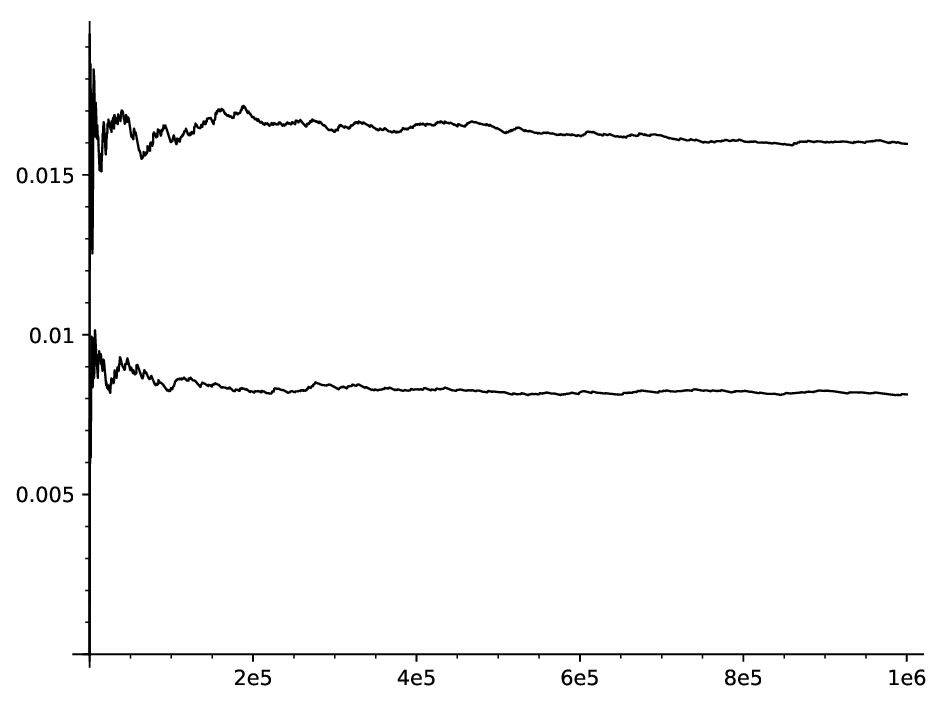}
\caption{$|l| = 8$: Top 8 bottom -8} \label{fig:15_6_2_6_A_8}
\end{subfigure}\hspace*{\fill}
\begin{subfigure}[b]{0.4\linewidth}
\includegraphics[width=\linewidth]{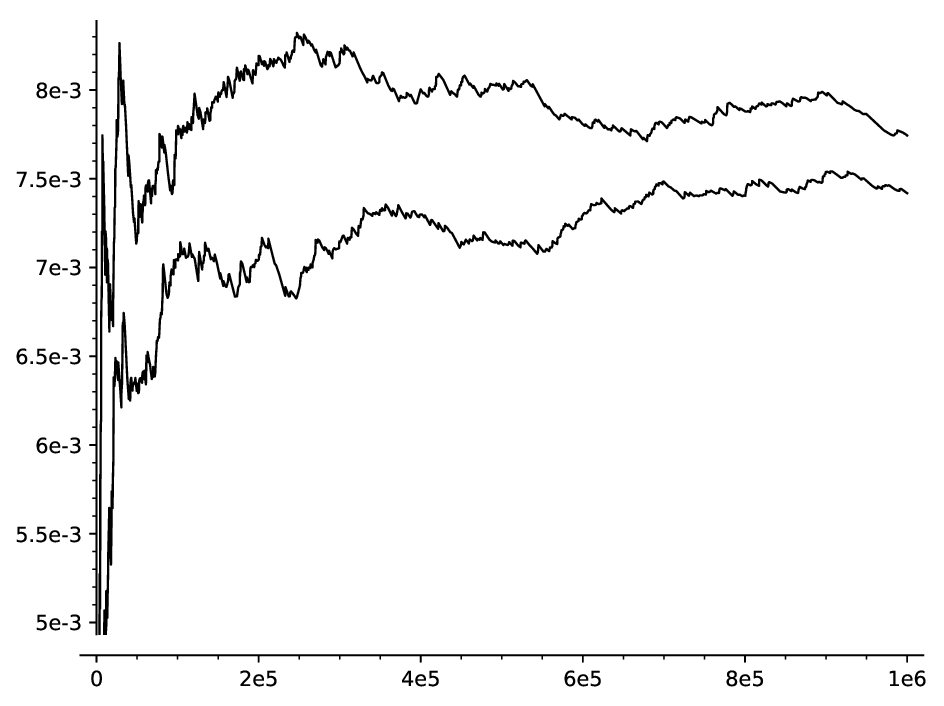}
\caption{$|l| = 9$: Top 9 bottom -9} \label{fig:15_6_2_6_A_9}
\end{subfigure}
\caption{15a1: $(\alpha, \beta) = (2,6)$ Ratio~\eqref{ratio_n_orders} $x_{6,E}^{(\alpha, \beta)}(X;l)/X^{1/2}\log^2(X)$} \label{fig:15a1_6_2_6_A_exact}
\end{figure}

\clearpage

\begin{figure}[t] % "[t!]" placement specifier just for this example
\hspace*{-2.3cm}
\begin{subfigure}[b]{0.4\linewidth}
\includegraphics[width=\linewidth]{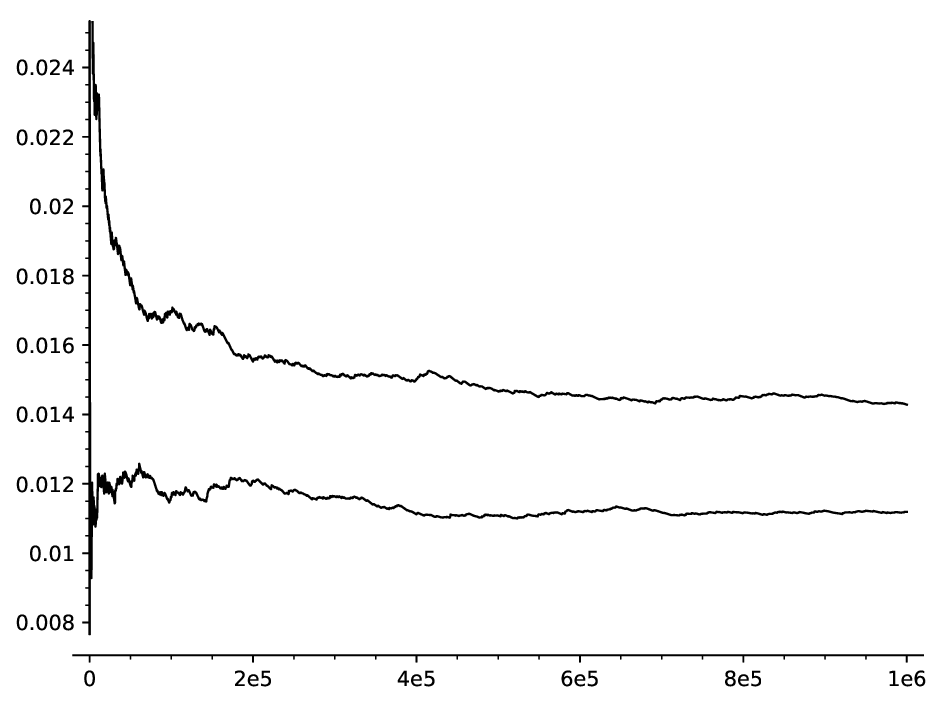}
\caption{$|l| = 1$: Top -1 bottom 1} \label{fig:17_6_1_3_A_1}
\end{subfigure}\hspace*{\fill}
\begin{subfigure}[b]{0.4\linewidth}
\includegraphics[width=\linewidth]{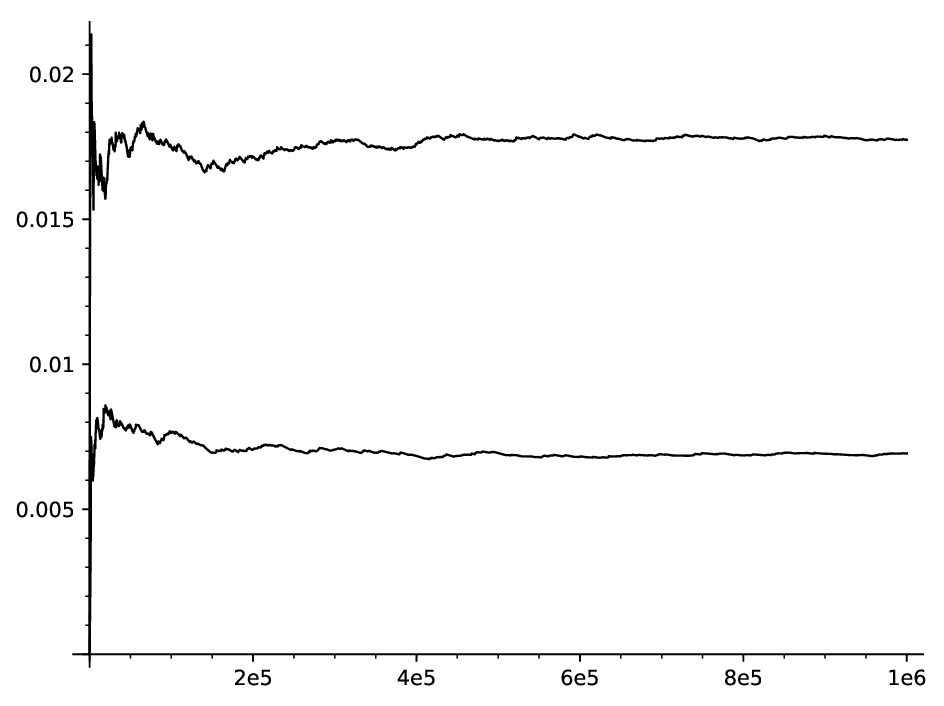}
\caption{$|l| = 2$: Top 2 bottom -2} \label{fig:17_6_1_3_A_2}
\end{subfigure}\hspace*{\fill}
\begin{subfigure}[b]{0.4\linewidth}
\includegraphics[width=\linewidth]{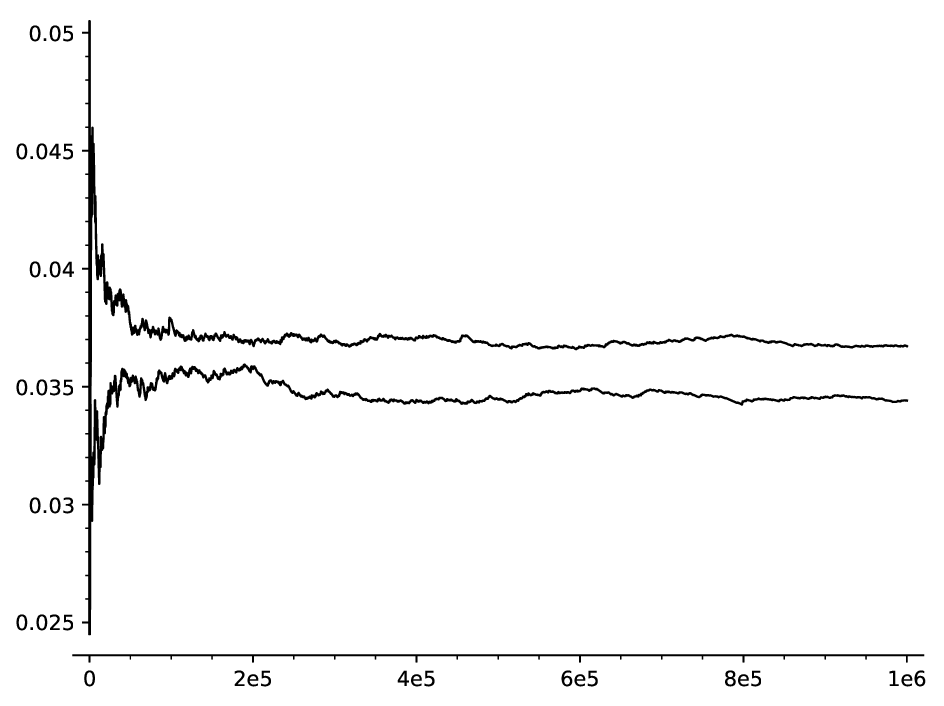}
\caption{$|l| = 3$: Top -3 bottom 3} \label{fig:17_6_1_3_A_3}
\end{subfigure}
\hspace*{-2.3cm}
\begin{subfigure}[b]{0.4\linewidth}
\includegraphics[width=\linewidth]{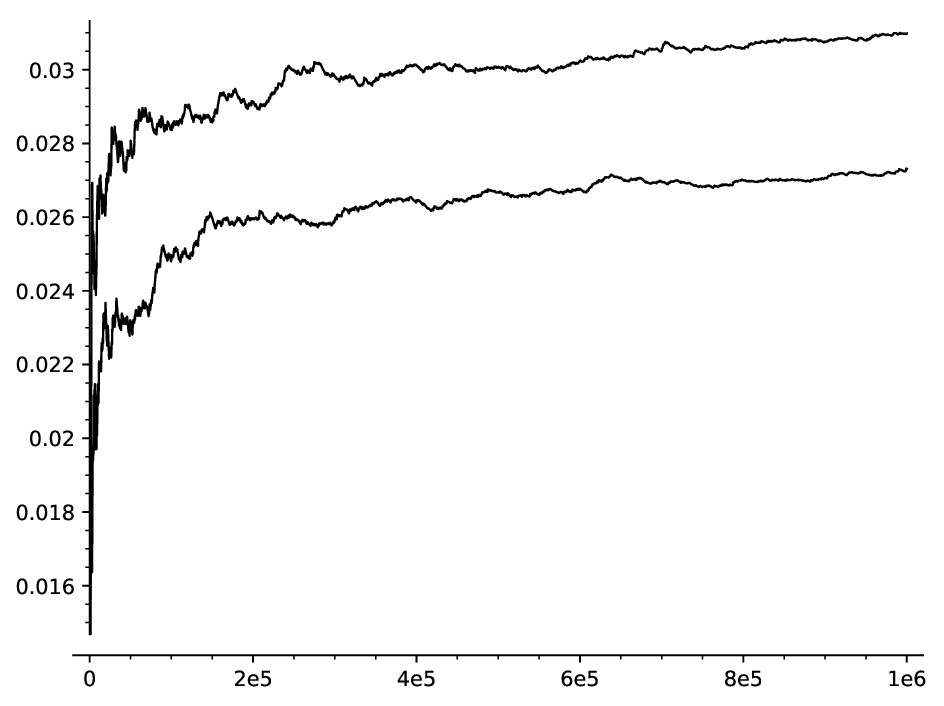}
\caption{$|l| = 4$: Top -4 bottom 4} \label{fig:17_6_1_3_A_4}
\end{subfigure}\hspace*{\fill}
\begin{subfigure}[b]{0.4\linewidth}
\includegraphics[width=\linewidth]{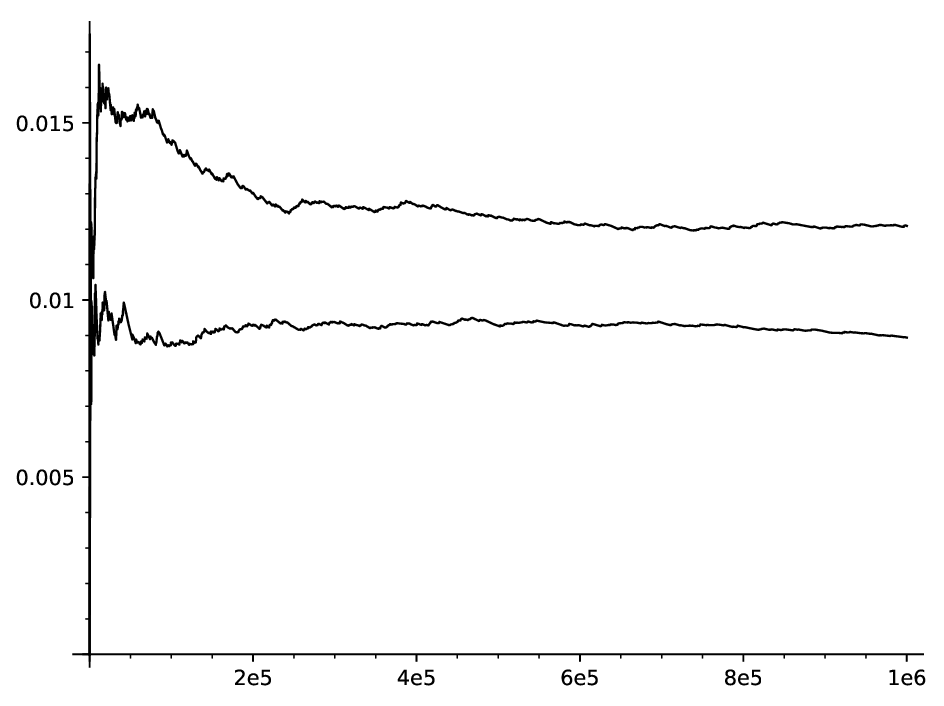}
\caption{$|l| = 5$: Top 5 bottom -5} \label{fig:17_6_1_3_A_5}
\end{subfigure}\hspace*{\fill}
\begin{subfigure}[b]{0.4\linewidth}
\includegraphics[width=\linewidth]{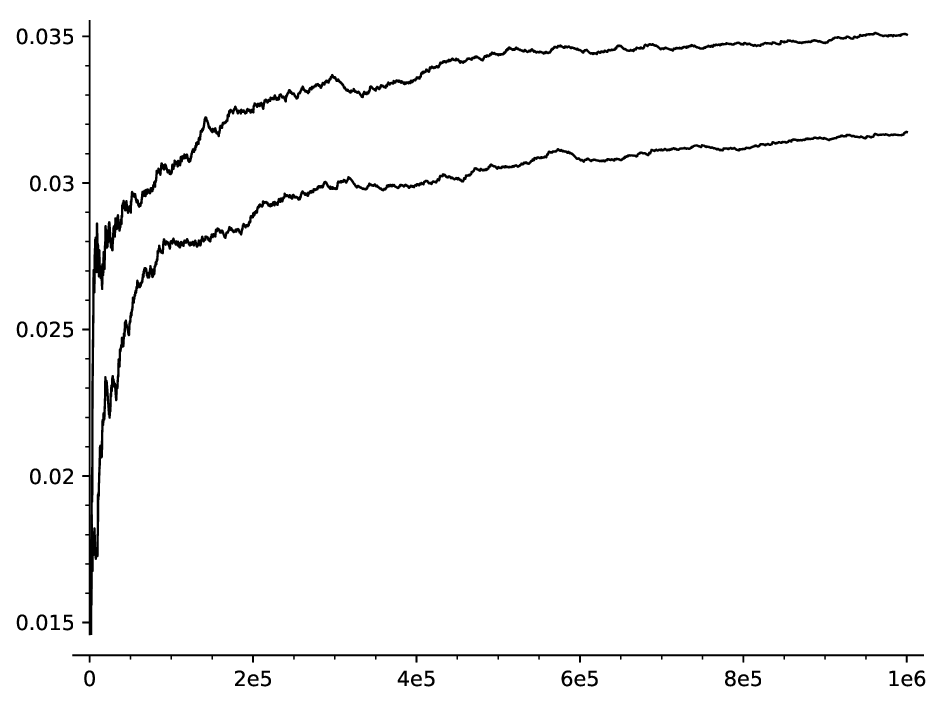}
\caption{$|l| = 6$: Top 6 bottom -6} \label{fig:17_6_1_3_A_6}
\end{subfigure}
\hspace*{-2.3cm}
\begin{subfigure}[b]{0.4\linewidth}
\includegraphics[width=\linewidth]{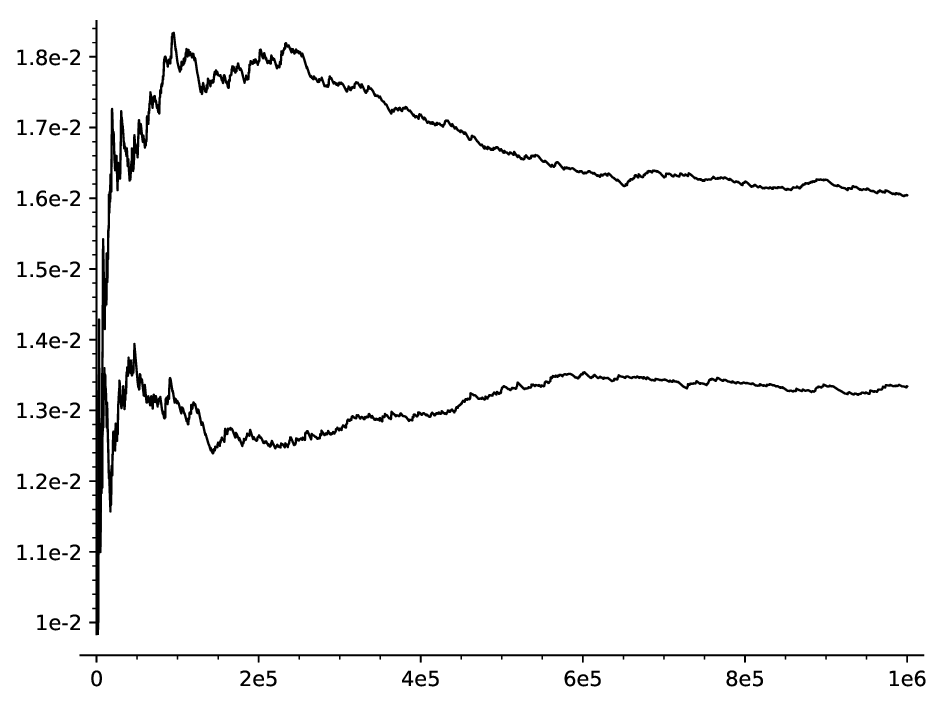}
\caption{$|l| = 7$: Top -7 bottom 7} \label{fig:17_6_1_3_A_7}
\end{subfigure}\hspace*{\fill}
\begin{subfigure}[b]{0.4\linewidth}
\includegraphics[width=\linewidth]{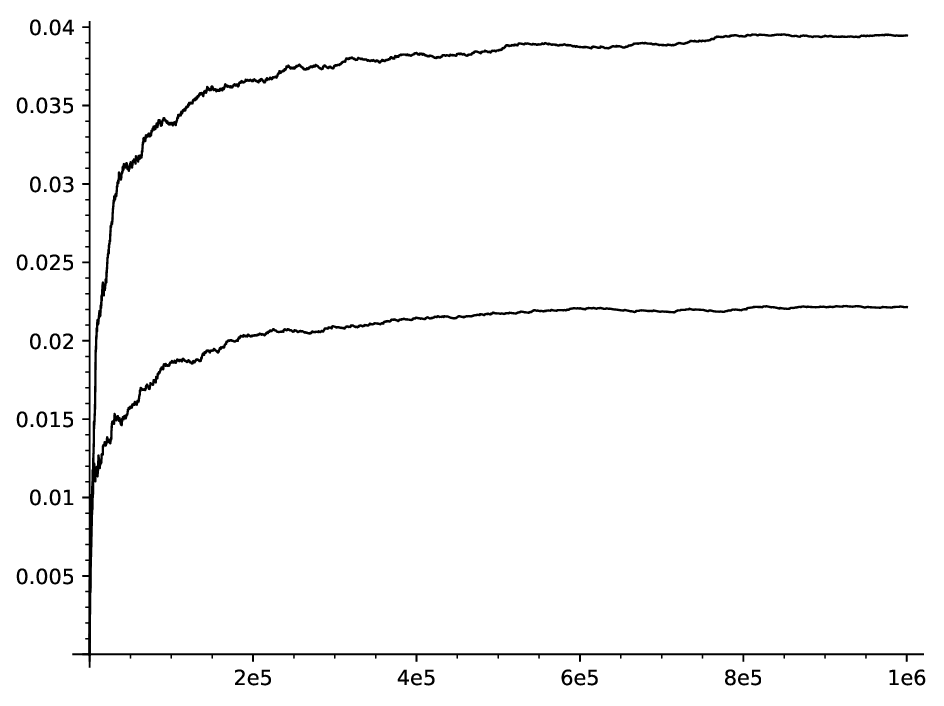}
\caption{$|l| = 8$: Top 8 bottom -8} \label{fig:17_6_1_3_A_8}
\end{subfigure}\hspace*{\fill}
\begin{subfigure}[b]{0.4\linewidth}
\includegraphics[width=\linewidth]{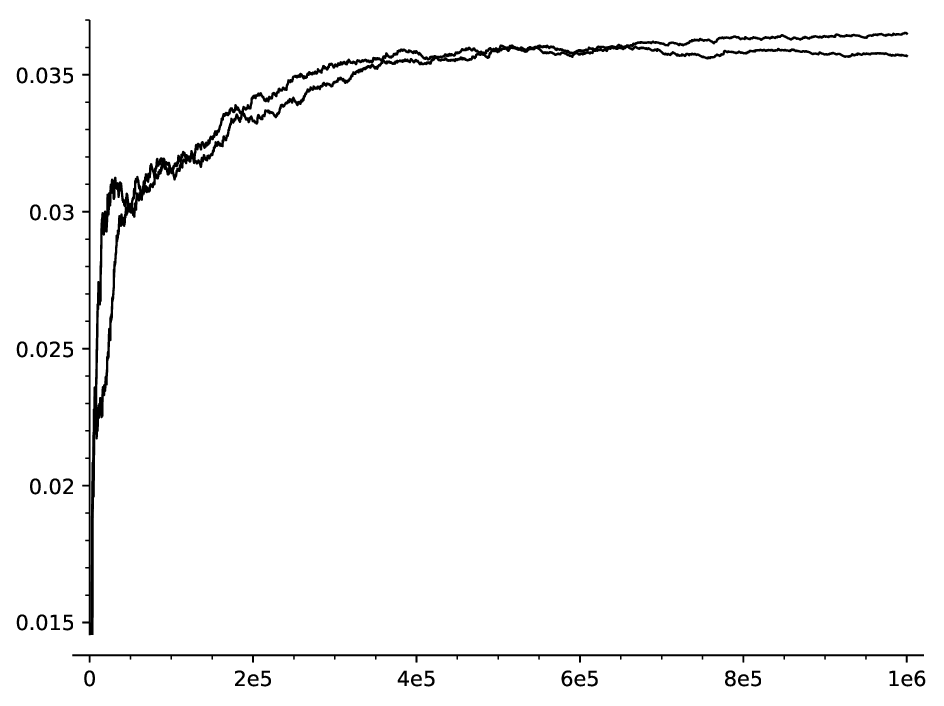}
\caption{$|l| = 9$: Top -9 bottom 9} \label{fig:17_6_1_3_A_9}
\end{subfigure}
\caption{17a1: $(\alpha, \beta) = (1,3)$ Ratio~\eqref{ratio_n_orders} $x_{6,E}^{(\alpha, \beta)}(X;l)/X^{1/2}\log^2(X)$} \label{fig:17a1_6_1_3_A_exact}
\end{figure}

\clearpage

\begin{figure}[t] % "[t!]" placement specifier just for this example
\hspace*{-2.3cm}
\begin{subfigure}[b]{0.4\linewidth}
\includegraphics[width=\linewidth]{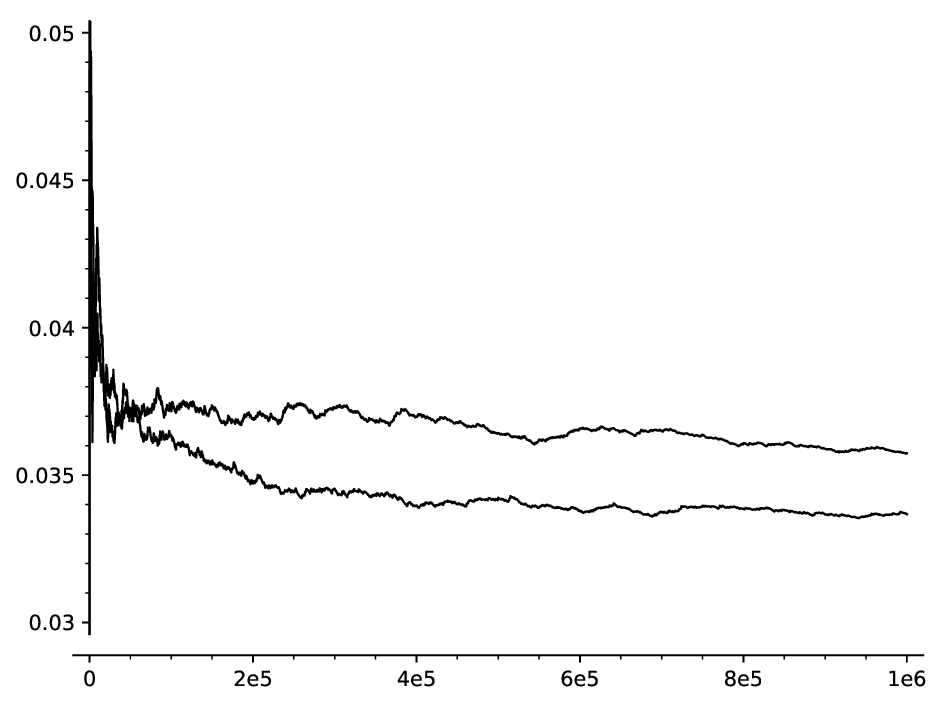}
\caption{$|l| = 1$: Top -1 bottom 1} \label{fig:17_6_2_3_A_1}
\end{subfigure}\hspace*{\fill}
\begin{subfigure}[b]{0.4\linewidth}
\includegraphics[width=\linewidth]{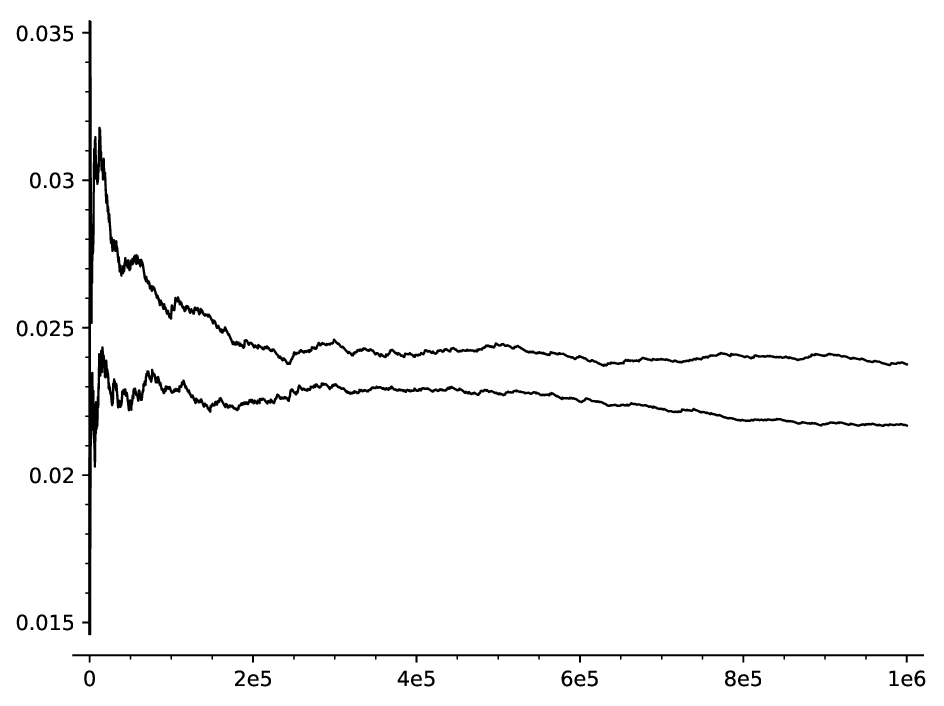}
\caption{$|l| = 2$: Top 2 bottom -2} \label{fig:17_6_2_3_A_2}
\end{subfigure}\hspace*{\fill}
\begin{subfigure}[b]{0.4\linewidth}
\includegraphics[width=\linewidth]{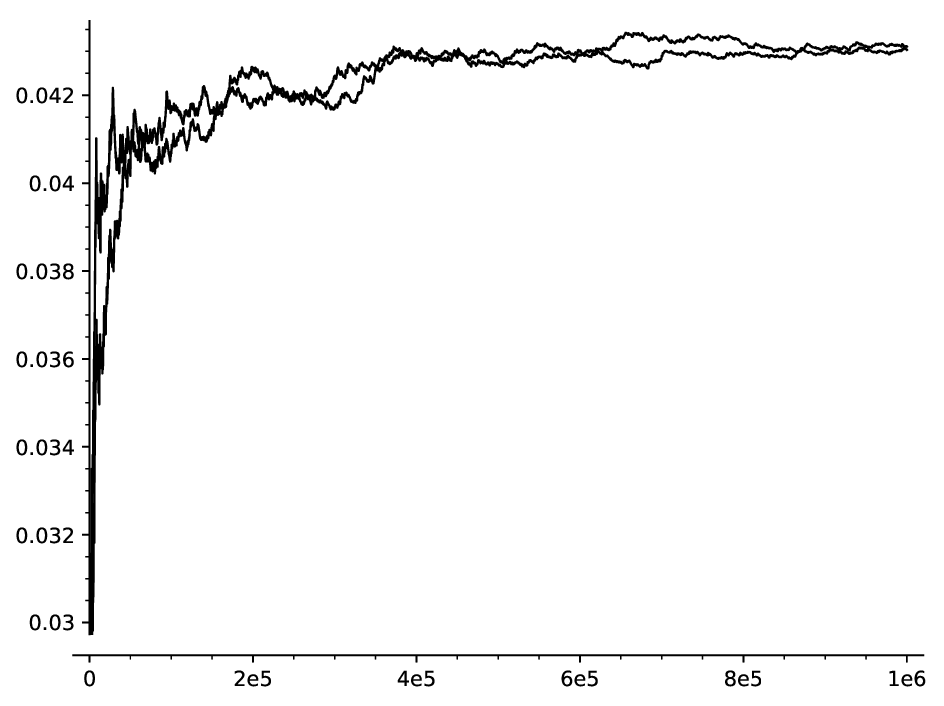}
\caption{$|l| = 3$: Top -3 bottom 3} \label{fig:17_6_2_3_A_3}
\end{subfigure}
\hspace*{-2.3cm}
\begin{subfigure}[b]{0.4\linewidth}
\includegraphics[width=\linewidth]{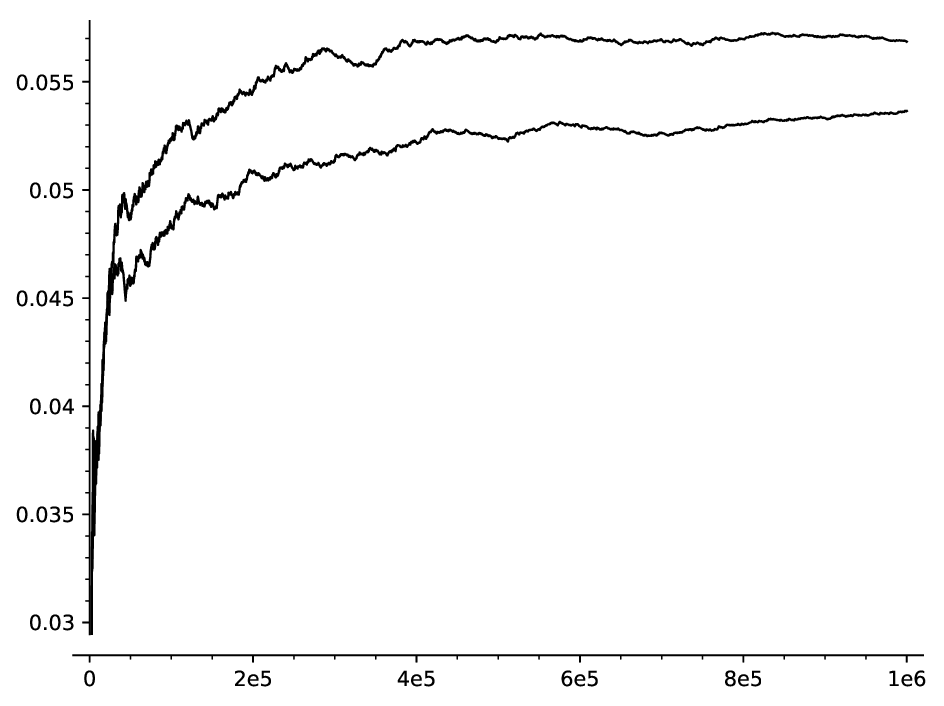}
\caption{$|l| = 4$: Top -4 bottom 4} \label{fig:17_6_2_3_A_4}
\end{subfigure}\hspace*{\fill}
\begin{subfigure}[b]{0.4\linewidth}
\includegraphics[width=\linewidth]{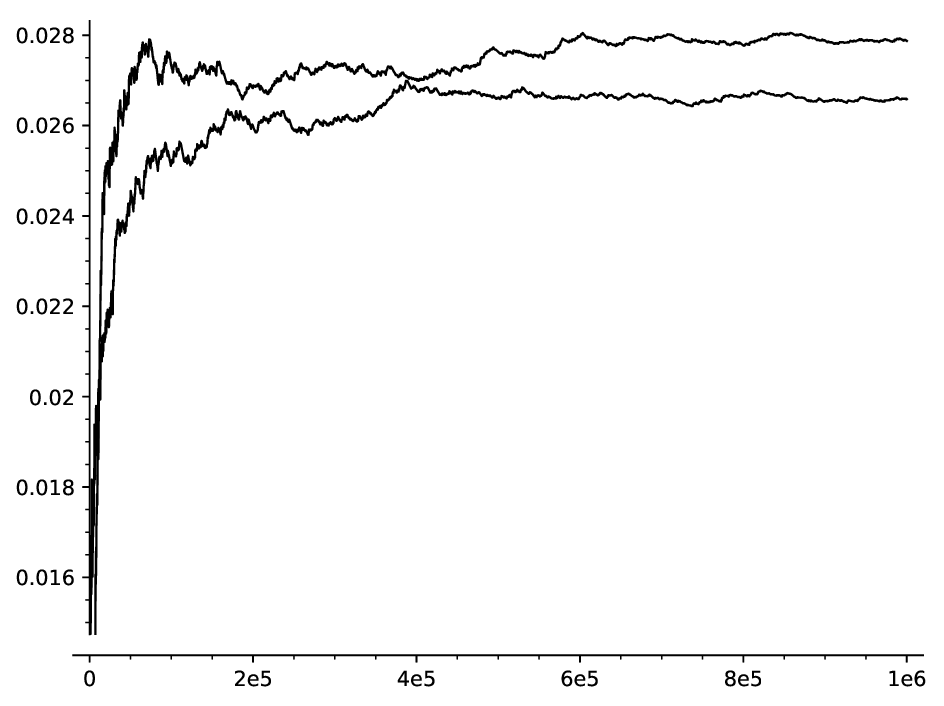}
\caption{$|l| = 5$: Top 5 bottom -5} \label{fig:17_6_2_3_A_5}
\end{subfigure}\hspace*{\fill}
\begin{subfigure}[b]{0.4\linewidth}
\includegraphics[width=\linewidth]{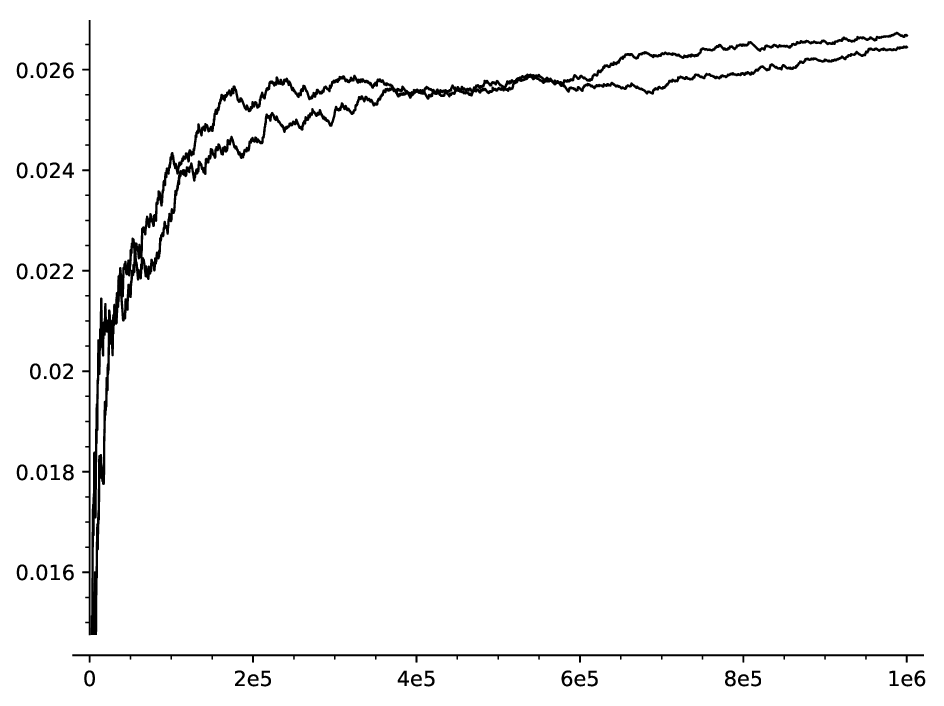}
\caption{$|l| = 6$: Top -6 bottom 6} \label{fig:17_6_2_3_A_6}
\end{subfigure}
\hspace*{-2.3cm}
\begin{subfigure}[b]{0.4\linewidth}
\includegraphics[width=\linewidth]{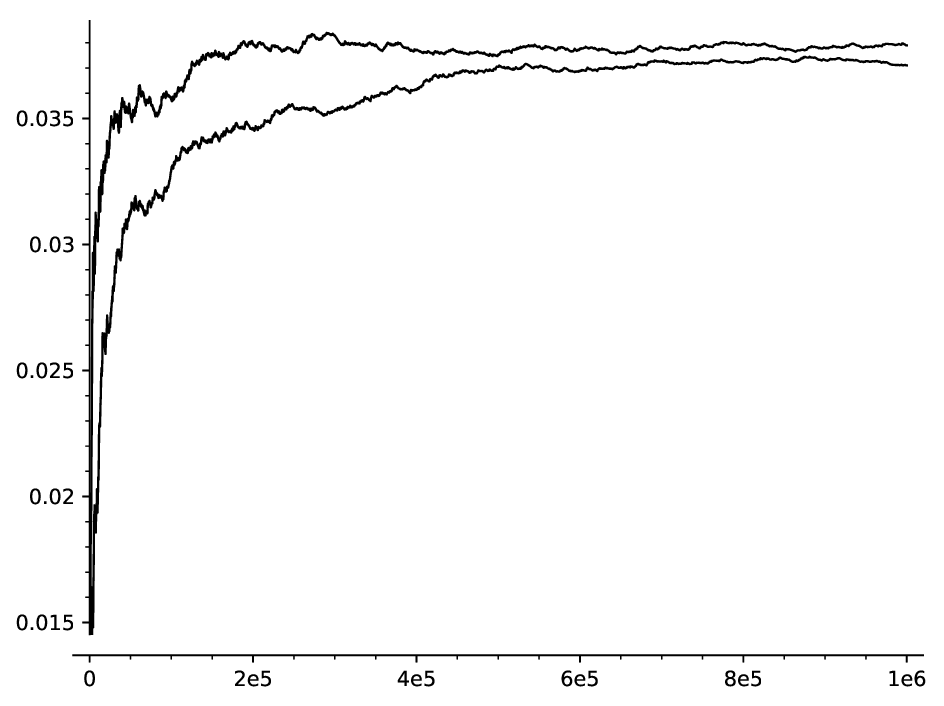}
\caption{$|l| = 7$: Top -7 bottom 7} \label{fig:17_6_2_3_A_7}
\end{subfigure}\hspace*{\fill}
\begin{subfigure}[b]{0.4\linewidth}
\includegraphics[width=\linewidth]{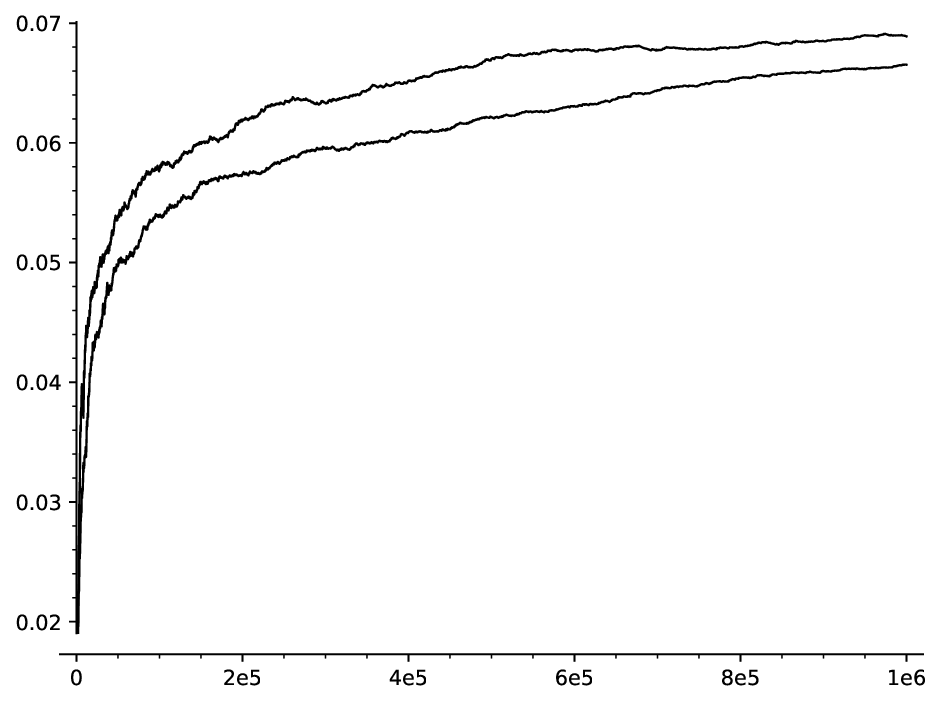}
\caption{$|l| = 8$: Top 8 bottom -8} \label{fig:17_6_2_3_A_8}
\end{subfigure}\hspace*{\fill}
\begin{subfigure}[b]{0.4\linewidth}
\includegraphics[width=\linewidth]{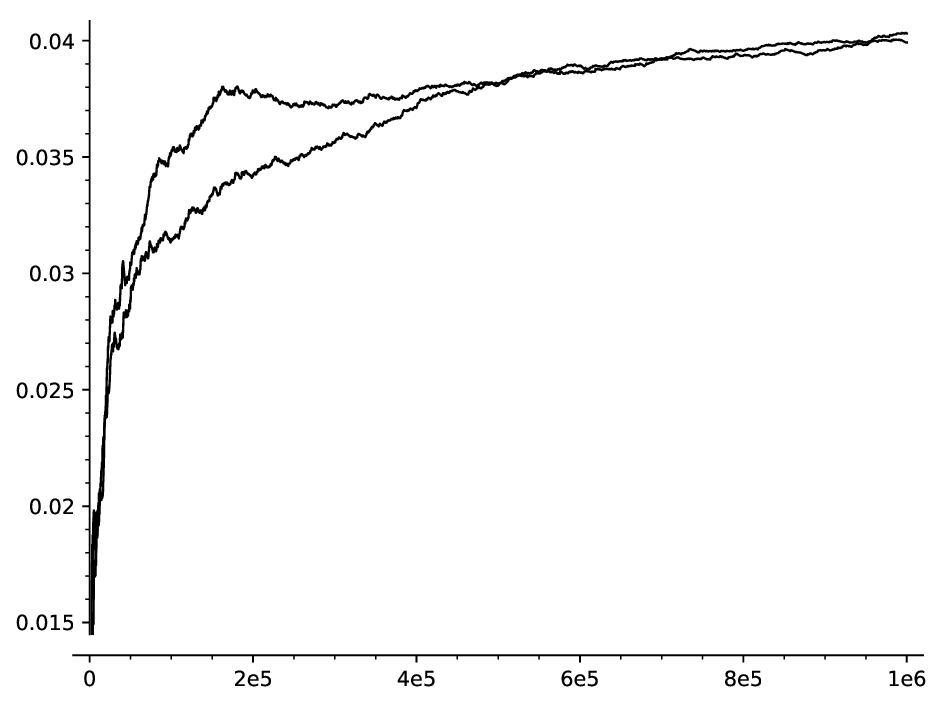}
\caption{$|l| = 9$: Top 9 bottom -9} \label{fig:17_6_2_3_A_9}
\end{subfigure}
\caption{17a1: $(\alpha, \beta) = (2,3)$ Ratio~\eqref{ratio_n_orders} $x_{6,E}^{(\alpha, \beta)}(X;l)/X^{1/2}\log^2(X)$} \label{fig:17a1_6_2_3_A_exact}
\end{figure}

\clearpage

\begin{figure}[t] % "[t!]" placement specifier just for this example
\hspace*{-2.3cm}
\begin{subfigure}[b]{0.4\linewidth}
\includegraphics[width=\linewidth]{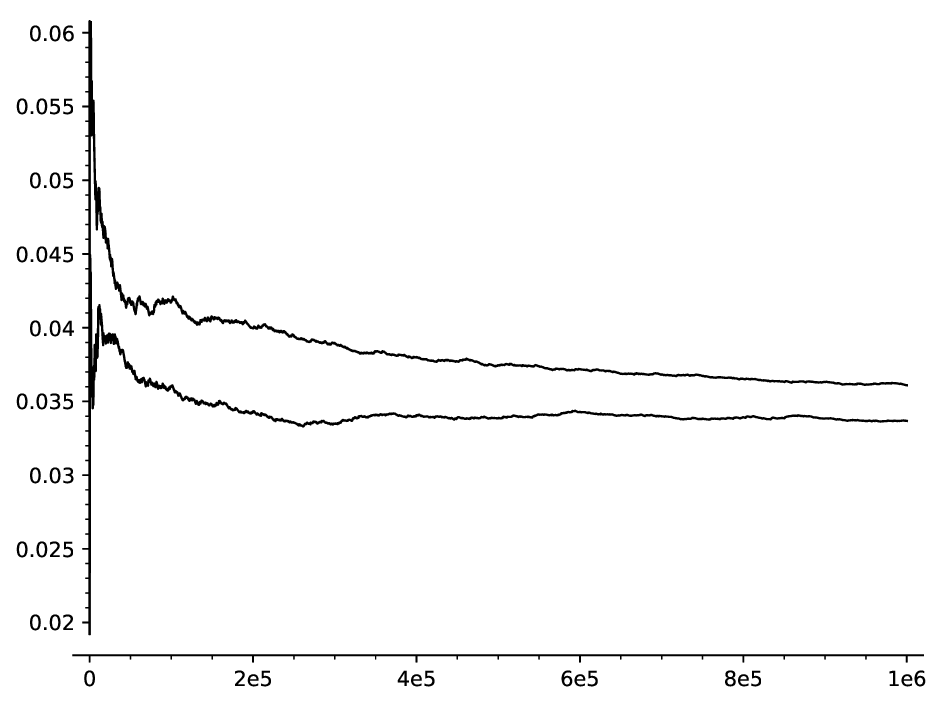}
\caption{$|l| = 1$: Top 1 bottom -1} \label{fig:17_6_1_6_A_1}
\end{subfigure}\hspace*{\fill}
\begin{subfigure}[b]{0.4\linewidth}
\includegraphics[width=\linewidth]{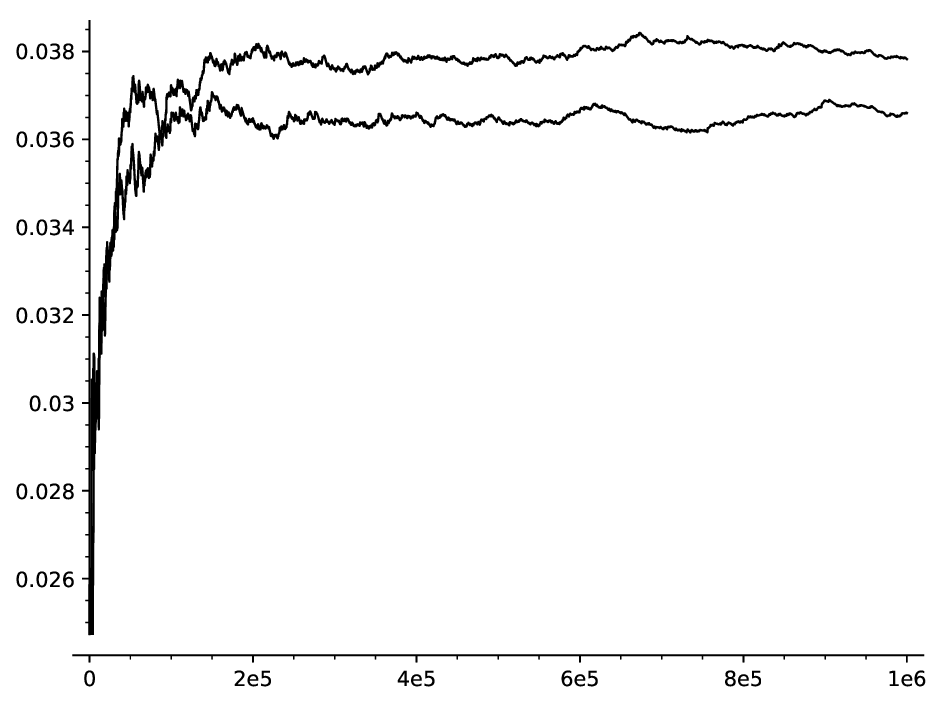}
\caption{$|l| = 2$: Top 2 bottom -2} \label{fig:17_6_1_6_A_2}
\end{subfigure}\hspace*{\fill}
\begin{subfigure}[b]{0.4\linewidth}
\includegraphics[width=\linewidth]{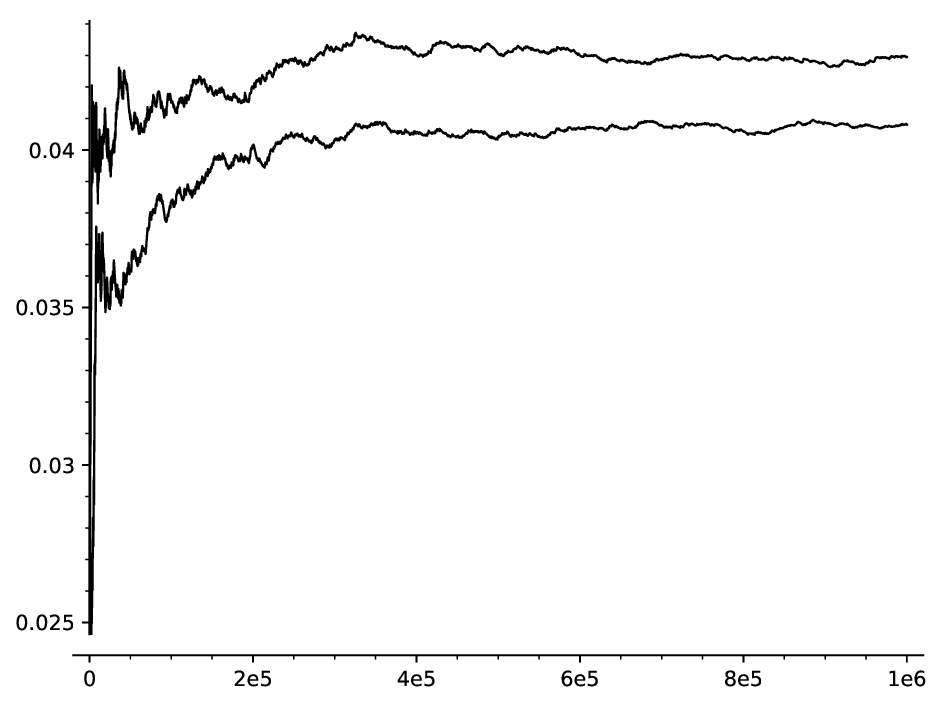}
\caption{$|l| = 3$: Top 3 bottom -3} \label{fig:17_6_1_6_A_3}
\end{subfigure}
\hspace*{-2.3cm}
\begin{subfigure}[b]{0.4\linewidth}
\includegraphics[width=\linewidth]{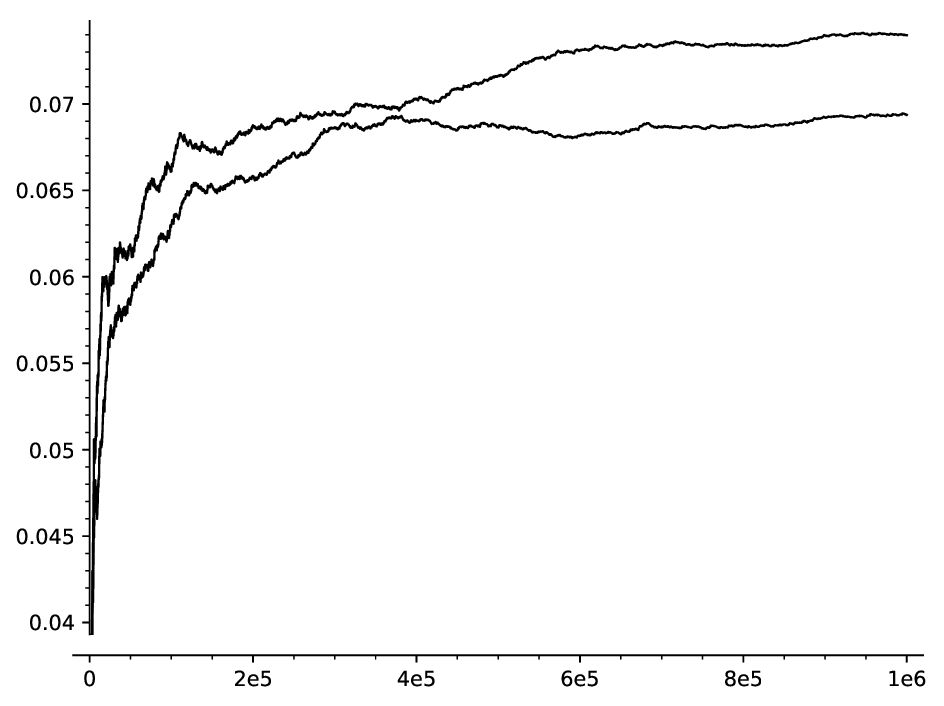}
\caption{$|l| = 4$: Top 4 bottom -4} \label{fig:17_6_1_6_A_4}
\end{subfigure}\hspace*{\fill}
\begin{subfigure}[b]{0.4\linewidth}
\includegraphics[width=\linewidth]{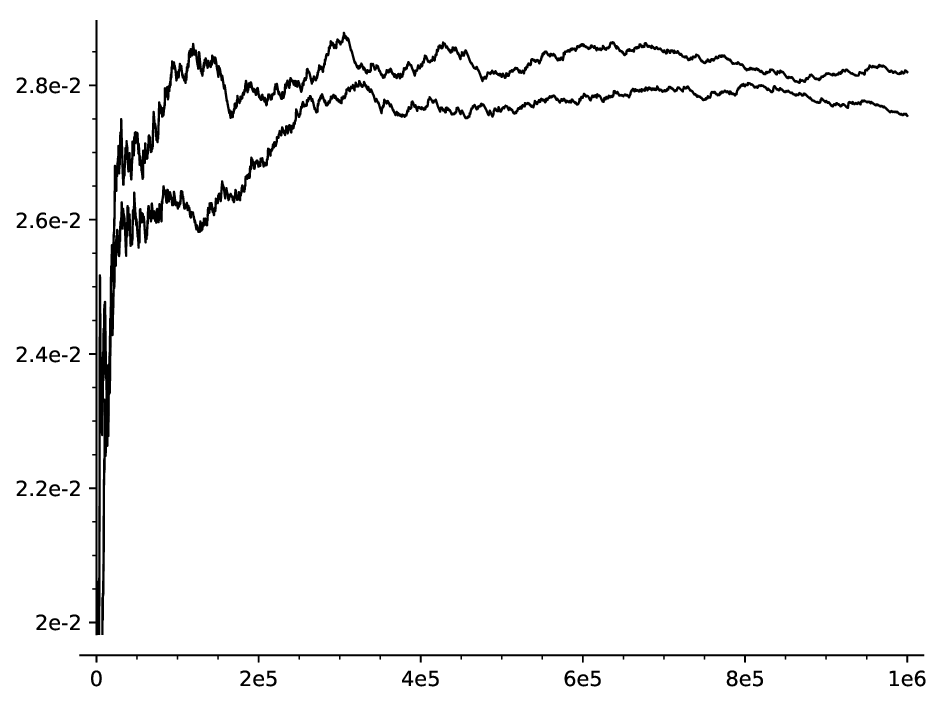}
\caption{$|l| = 5$: Top -5 bottom 5} \label{fig:17_6_1_6_A_5}
\end{subfigure}\hspace*{\fill}
\begin{subfigure}[b]{0.4\linewidth}
\includegraphics[width=\linewidth]{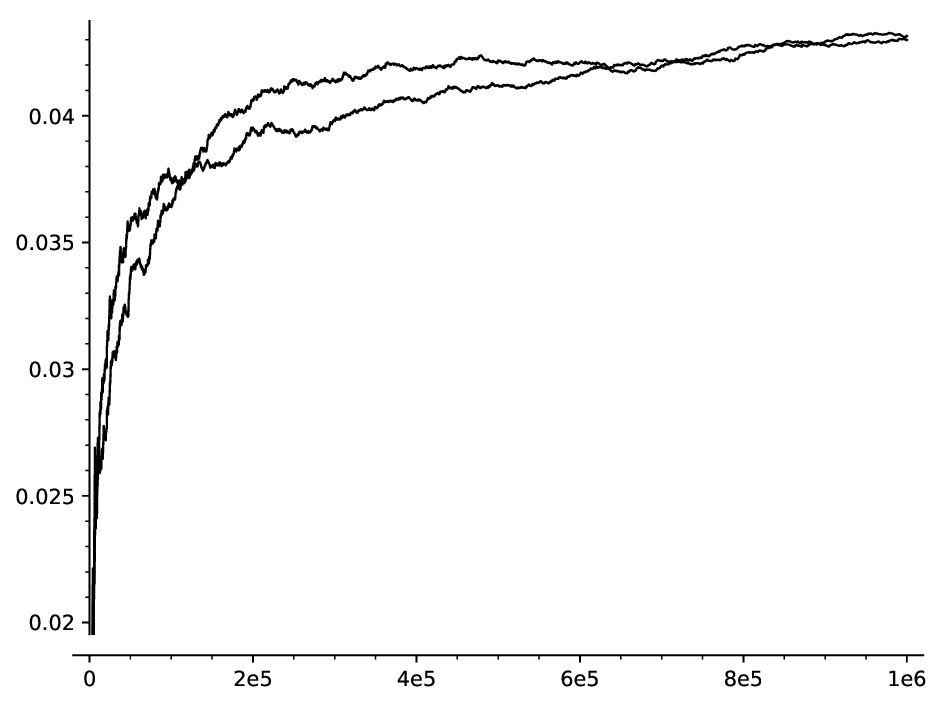}
\caption{$|l| = 6$: Top -6 bottom 6} \label{fig:17_6_1_6_A_6}
\end{subfigure}
\hspace*{-2.3cm}
\begin{subfigure}[b]{0.4\linewidth}
\includegraphics[width=\linewidth]{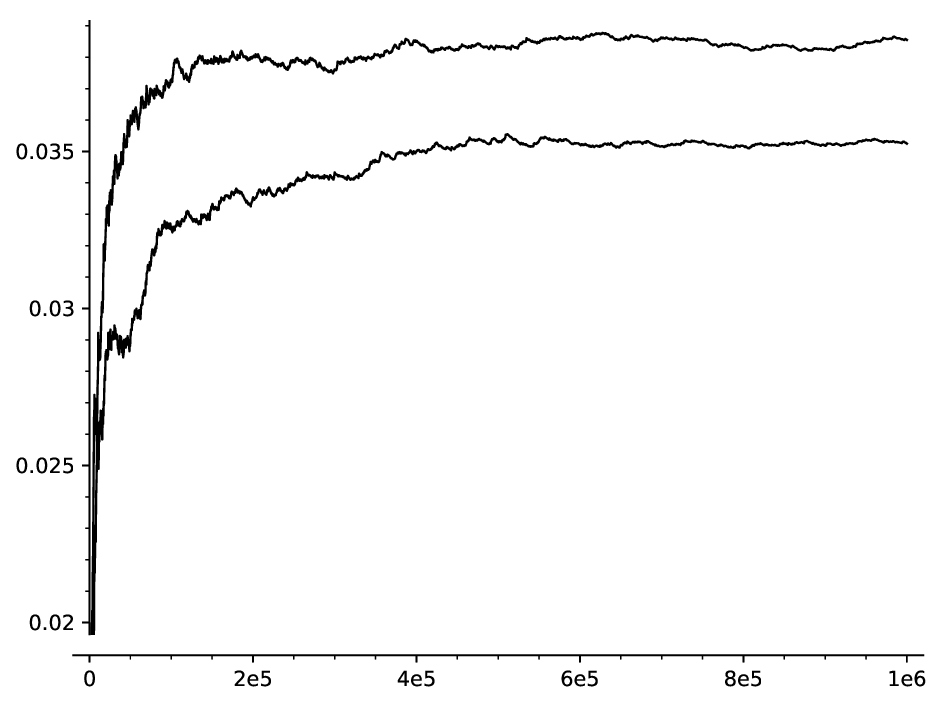}
\caption{$|l| = 7$: Top 7 bottom -7} \label{fig:17_6_1_6_A_7}
\end{subfigure}\hspace*{\fill}
\begin{subfigure}[b]{0.4\linewidth}
\includegraphics[width=\linewidth]{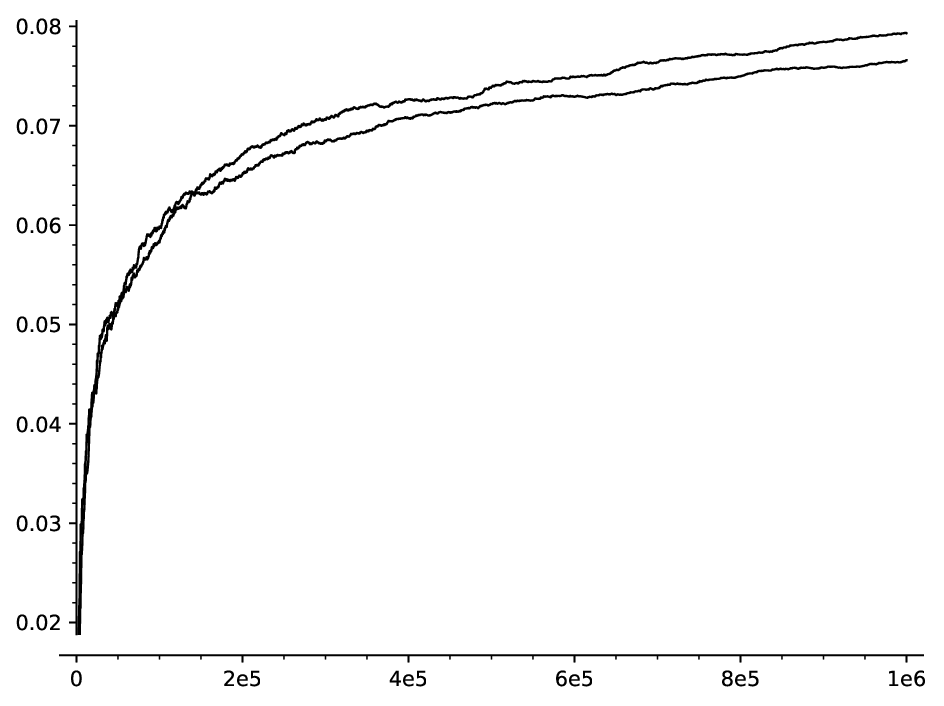}
\caption{$|l| = 8$: Top -8 bottom 8} \label{fig:17_6_1_6_A_8}
\end{subfigure}\hspace*{\fill}
\begin{subfigure}[b]{0.4\linewidth}
\includegraphics[width=\linewidth]{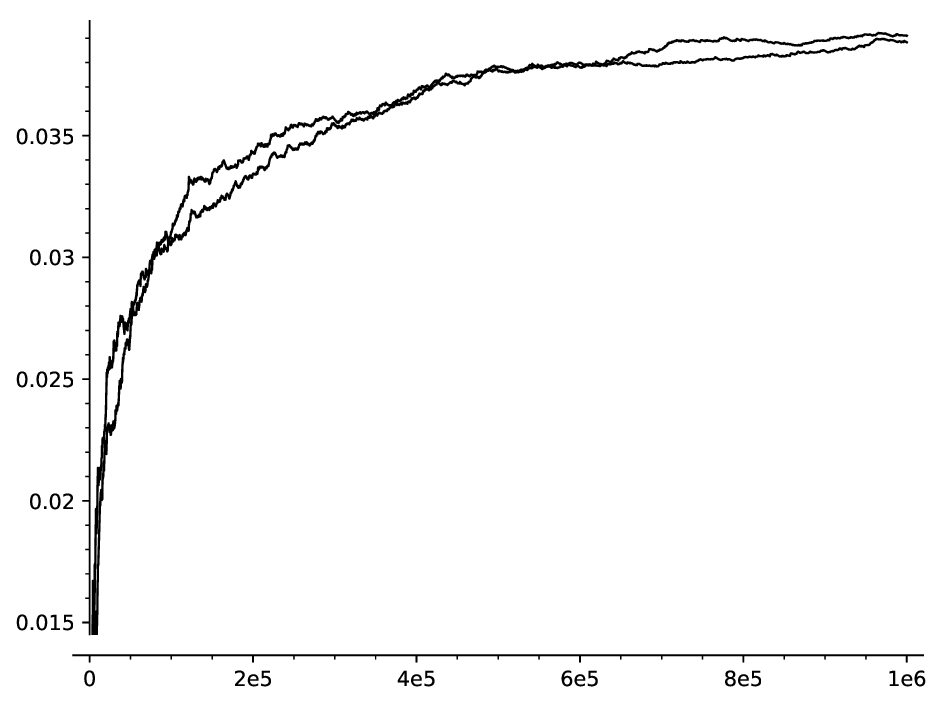}
\caption{$|l| = 9$: Top -9 bottom 9} \label{fig:17_6_1_6_A_9}
\end{subfigure}
\caption{17a1: $(\alpha, \beta) = (1,6)$ Ratio~\eqref{ratio_n_orders} $x_{6,E}^{(\alpha, \beta)}(X;l)/X^{1/2}\log^2(X)$} \label{fig:17a1_6_1_6_A_exact}
\end{figure}

\clearpage

\begin{figure}[t] % "[t!]" placement specifier just for this example
\hspace*{-2.3cm}
\begin{subfigure}[b]{0.4\linewidth}
\includegraphics[width=\linewidth]{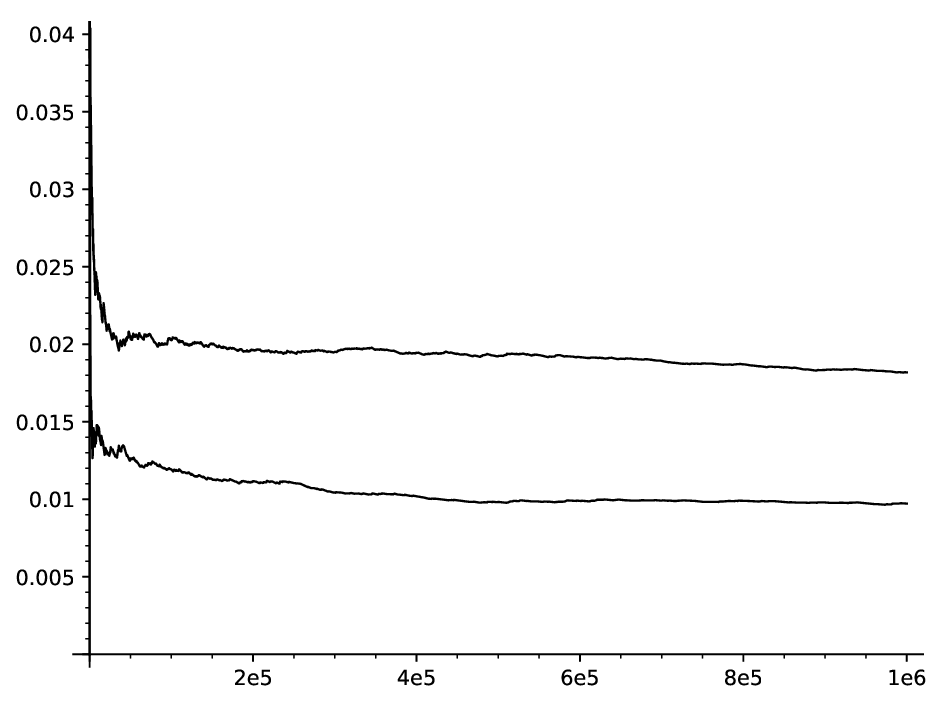}
\caption{$|l| = 1$: Top 1 bottom -1} \label{fig:17_6_2_6_A_1}
\end{subfigure}\hspace*{\fill}
\begin{subfigure}[b]{0.4\linewidth}
\includegraphics[width=\linewidth]{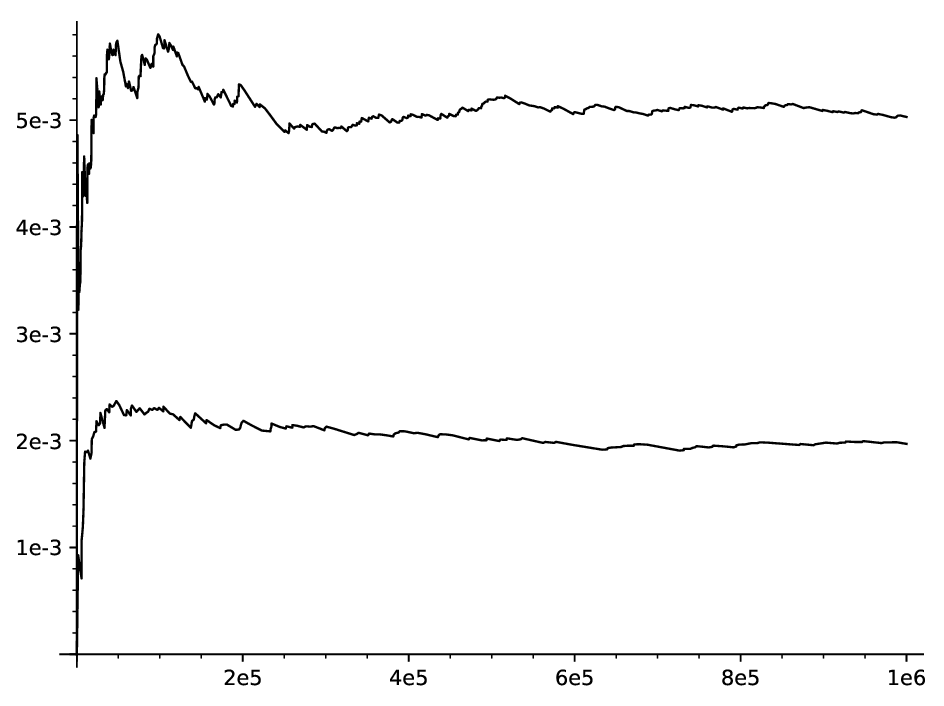}
\caption{$|l| = 2$: Top 2 bottom -2} \label{fig:17_6_2_6_A_2}
\end{subfigure}\hspace*{\fill}
\begin{subfigure}[b]{0.4\linewidth}
\includegraphics[width=\linewidth]{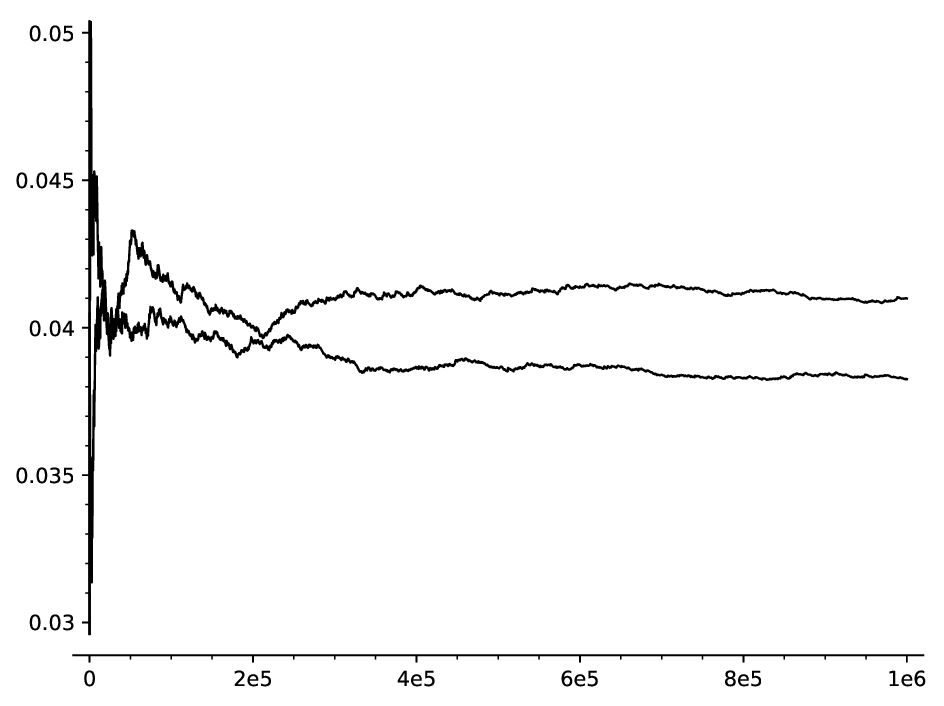}
\caption{$|l| = 3$: Top 3 bottom -3} \label{fig:17_6_2_6_A_3}
\end{subfigure}
\hspace*{-2.3cm}
\begin{subfigure}[b]{0.4\linewidth}
\includegraphics[width=\linewidth]{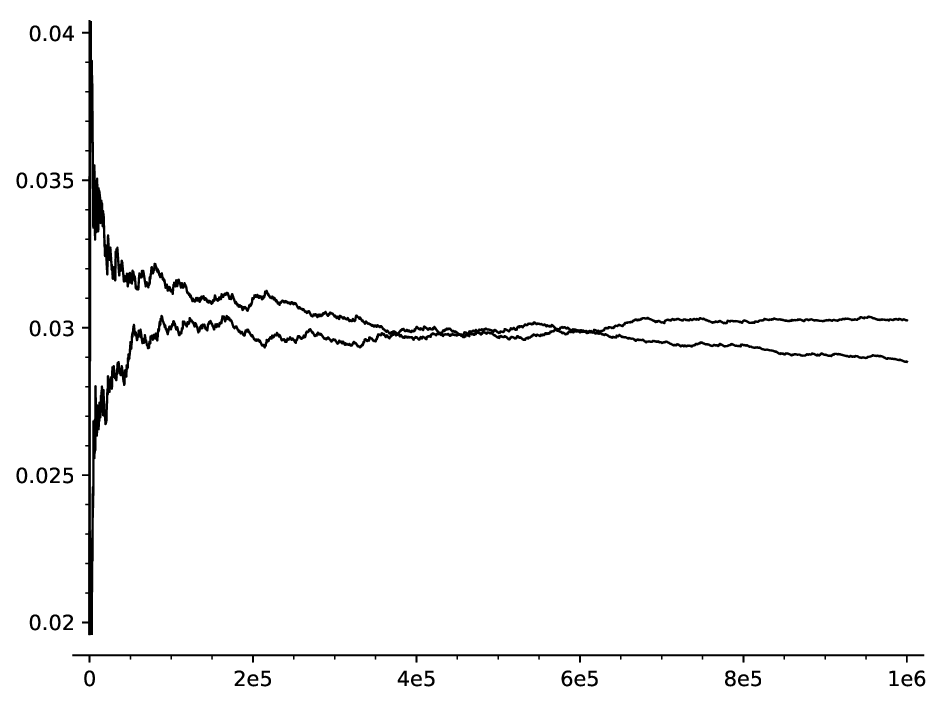}
\caption{$|l| = 4$: Top 4 bottom -4} \label{fig:17_6_2_6_A_4}
\end{subfigure}\hspace*{\fill}
\begin{subfigure}[b]{0.4\linewidth}
\includegraphics[width=\linewidth]{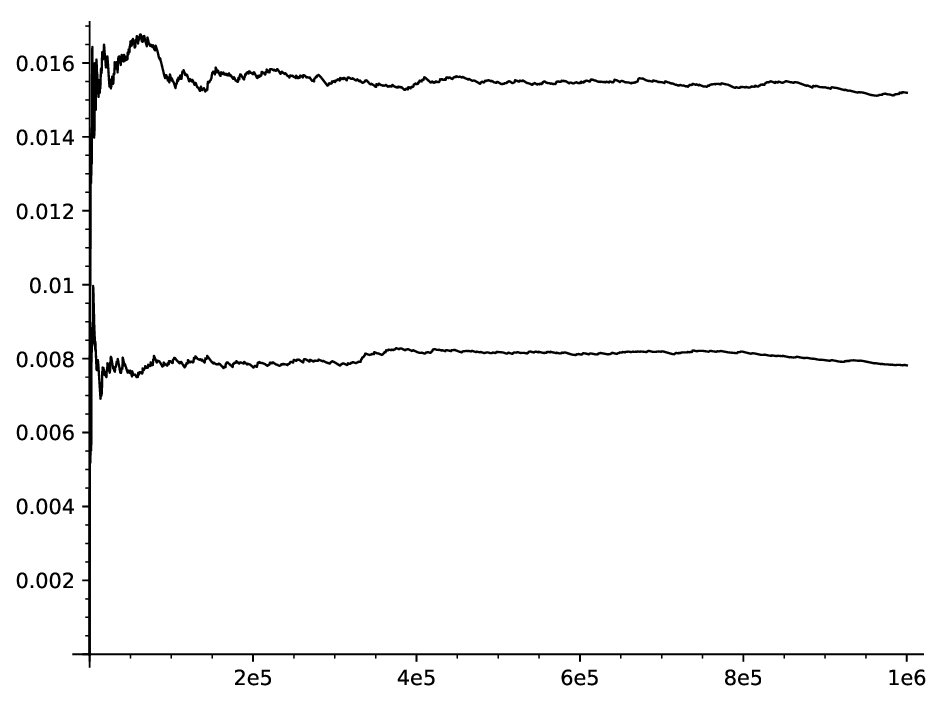}
\caption{$|l| = 5$: Top -5 bottom 5} \label{fig:17_6_2_6_A_5}
\end{subfigure}\hspace*{\fill}
\begin{subfigure}[b]{0.4\linewidth}
\includegraphics[width=\linewidth]{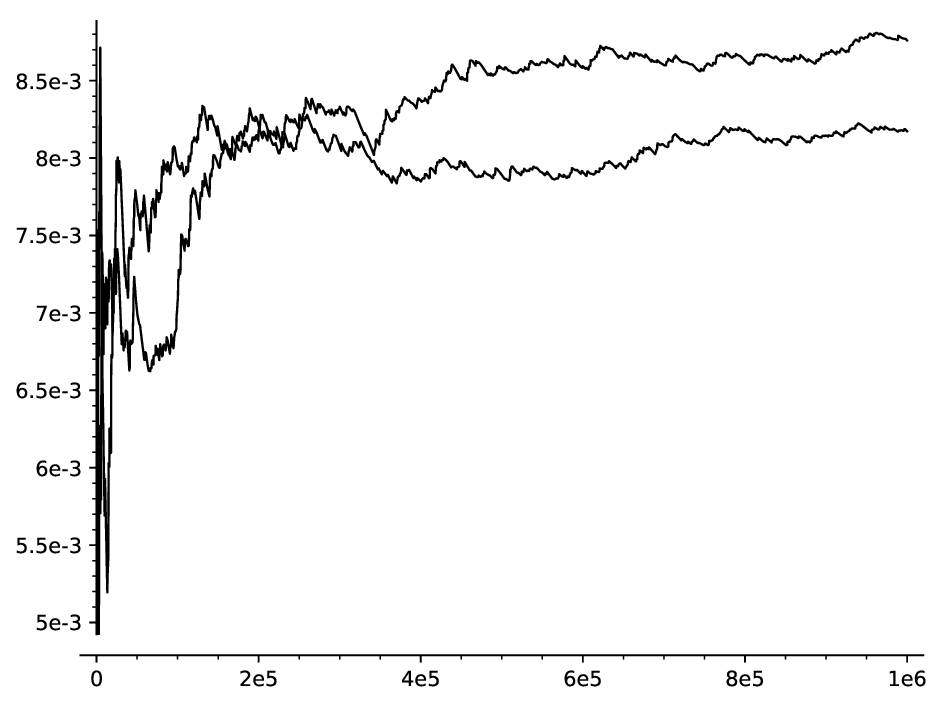}
\caption{$|l| = 6$: Top 6 bottom -6} \label{fig:17_6_2_6_A_6}
\end{subfigure}
\hspace*{-2.3cm}
\begin{subfigure}[b]{0.4\linewidth}
\includegraphics[width=\linewidth]{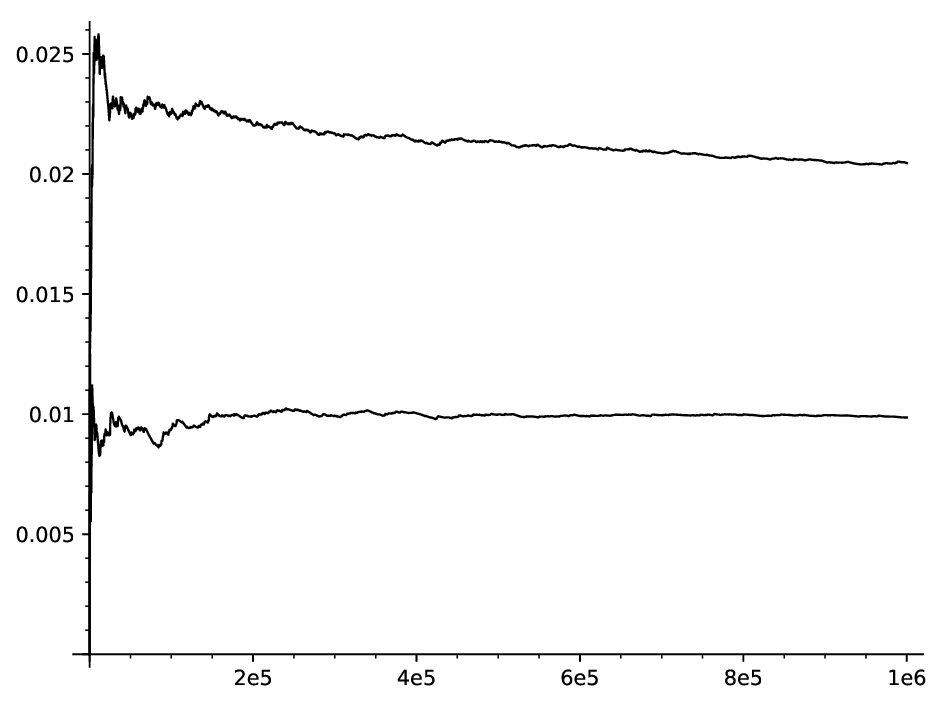}
\caption{$|l| = 7$: Top 7 bottom -7} \label{fig:17_6_2_6_A_7}
\end{subfigure}\hspace*{\fill}
\begin{subfigure}[b]{0.4\linewidth}
\includegraphics[width=\linewidth]{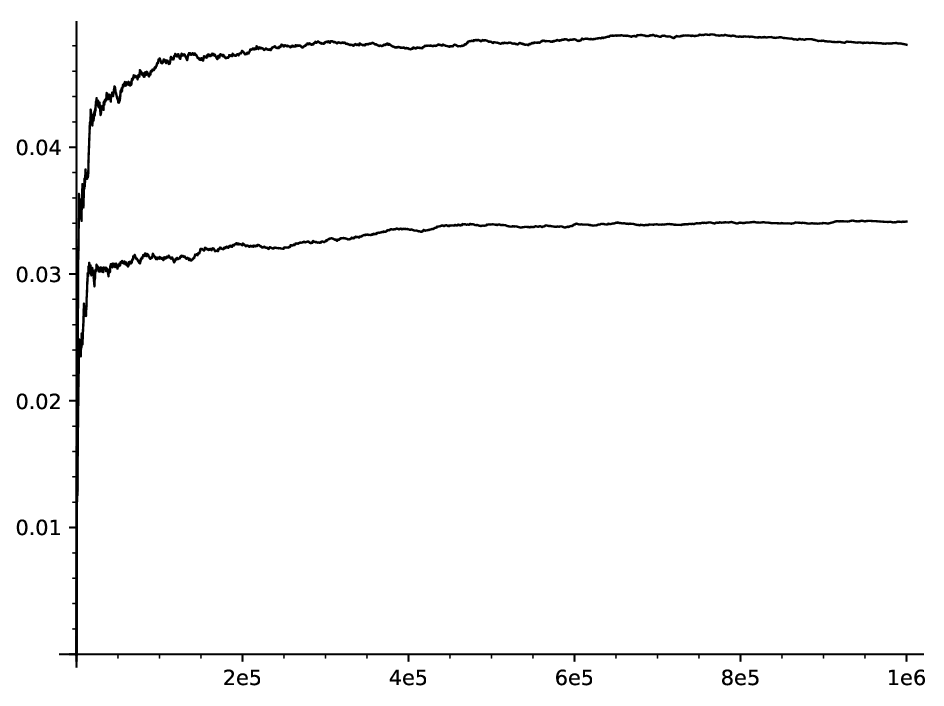}
\caption{$|l| = 8$: Top 8 bottom -8} \label{fig:17_6_2_6_A_8}
\end{subfigure}\hspace*{\fill}
\begin{subfigure}[b]{0.4\linewidth}
\includegraphics[width=\linewidth]{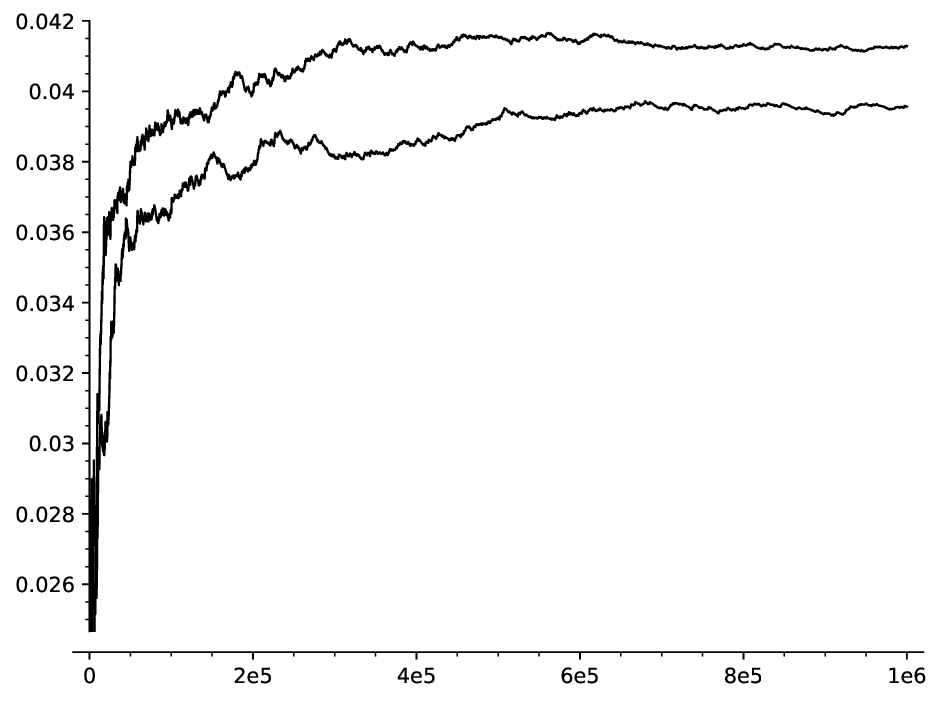}
\caption{$|l| = 9$: Top 9 bottom -9} \label{fig:17_6_2_6_A_9}
\end{subfigure}
\caption{17a1: $(\alpha, \beta) = (2,6)$ Ratio~\eqref{ratio_n_orders} $x_{6,E}^{(\alpha, \beta)}(X;l)/X^{1/2}\log^2(X)$} \label{fig:17a1_6_2_6_A_exact}
\end{figure}

\clearpage

\begin{figure}[t] % "[t!]" placement specifier just for this example
\hspace*{-2.3cm}
\begin{subfigure}[b]{0.4\linewidth}
\includegraphics[width=\linewidth]{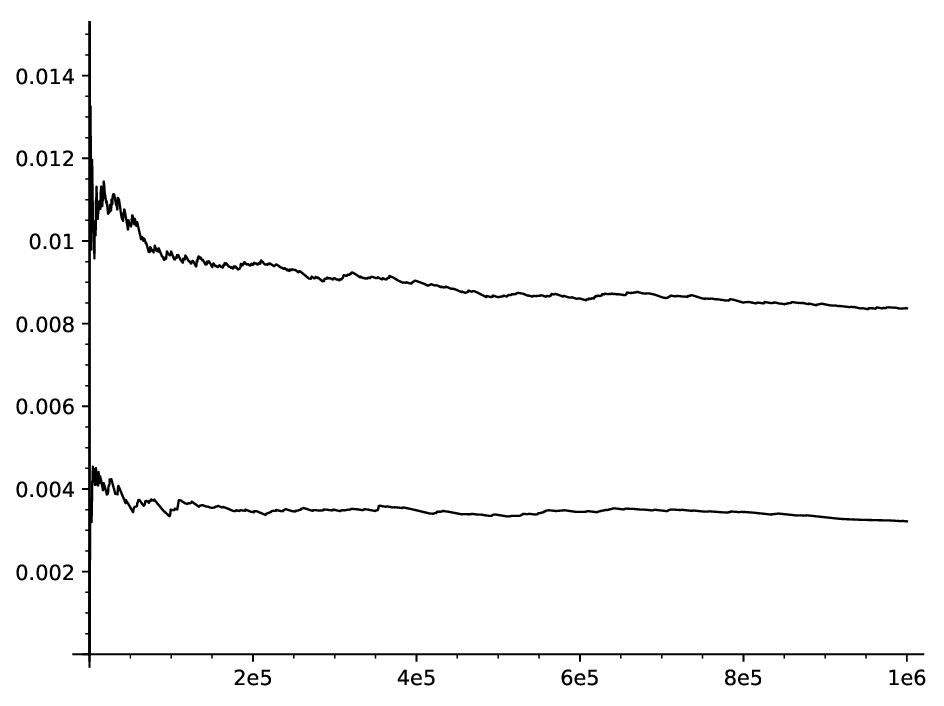}
\caption{$|l| = 1$: Top -1 bottom 1} \label{fig:19_6_1_3_A_1}
\end{subfigure}\hspace*{\fill}
\begin{subfigure}[b]{0.4\linewidth}
\includegraphics[width=\linewidth]{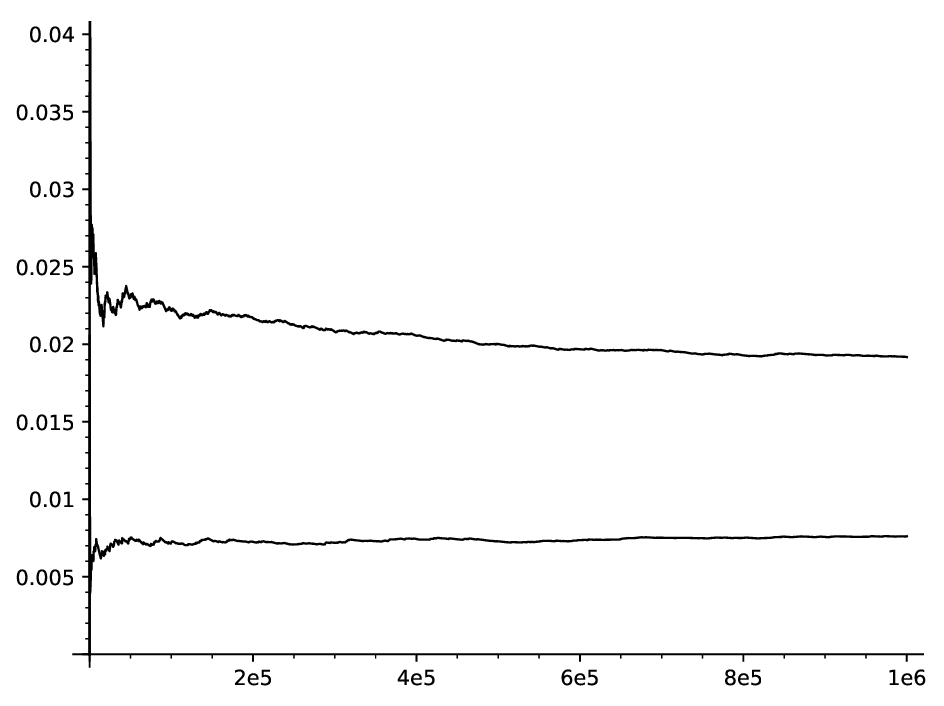}
\caption{$|l| = 2$: Top 2 bottom -2} \label{fig:19_6_1_3_A_2}
\end{subfigure}\hspace*{\fill}
\begin{subfigure}[b]{0.4\linewidth}
\includegraphics[width=\linewidth]{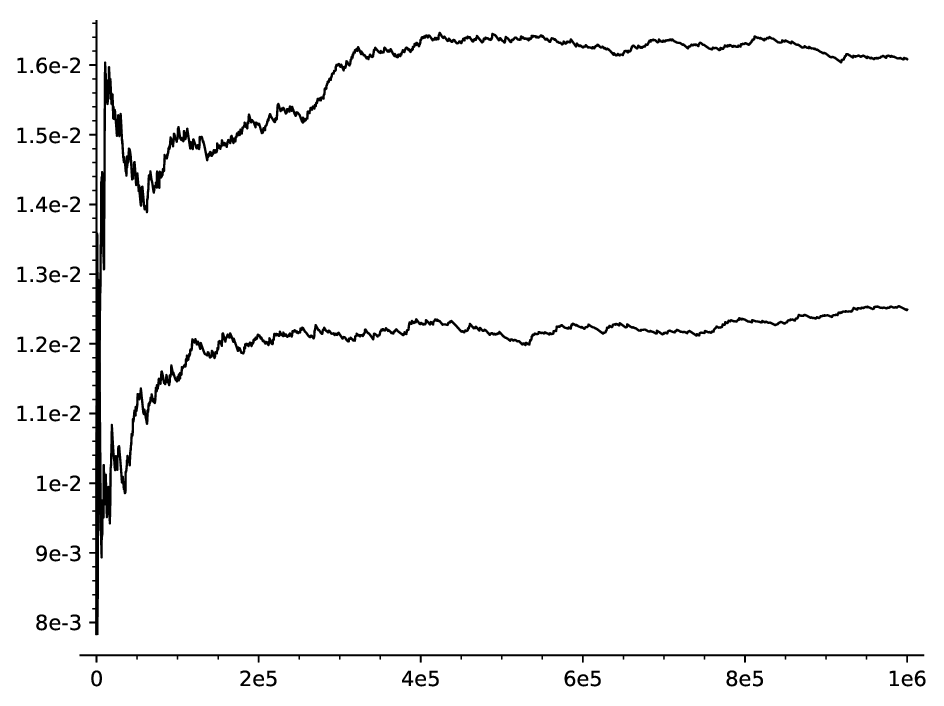}
\caption{$|l| = 3$: Top 3 bottom -3} \label{fig:19_6_1_3_A_3}
\end{subfigure}
\hspace*{-2.3cm}
\begin{subfigure}[b]{0.4\linewidth}
\includegraphics[width=\linewidth]{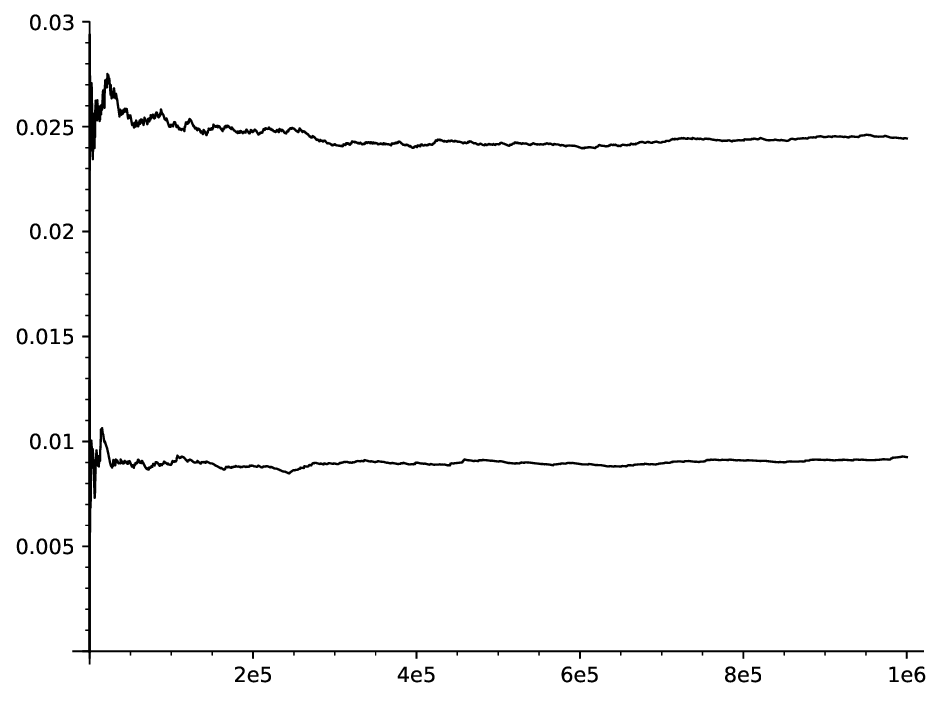}
\caption{$|l| = 4$: Top -4 bottom 4} \label{fig:19_6_1_3_A_4}
\end{subfigure}\hspace*{\fill}
\begin{subfigure}[b]{0.4\linewidth}
\includegraphics[width=\linewidth]{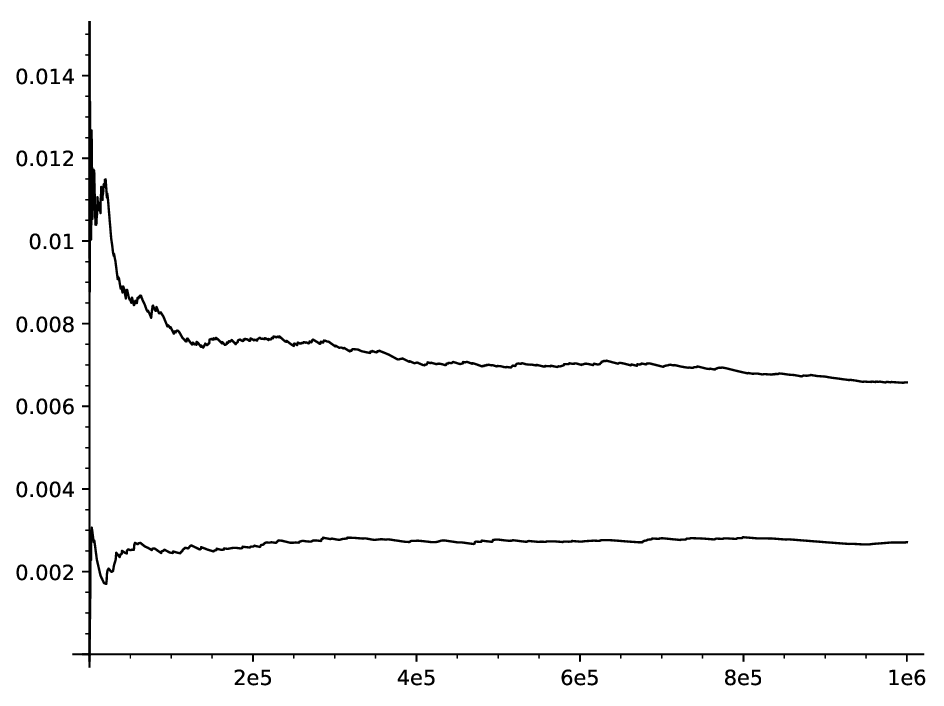}
\caption{$|l| = 5$: Top 5 bottom -5} \label{fig:19_6_1_3_A_5}
\end{subfigure}\hspace*{\fill}
\begin{subfigure}[b]{0.4\linewidth}
\includegraphics[width=\linewidth]{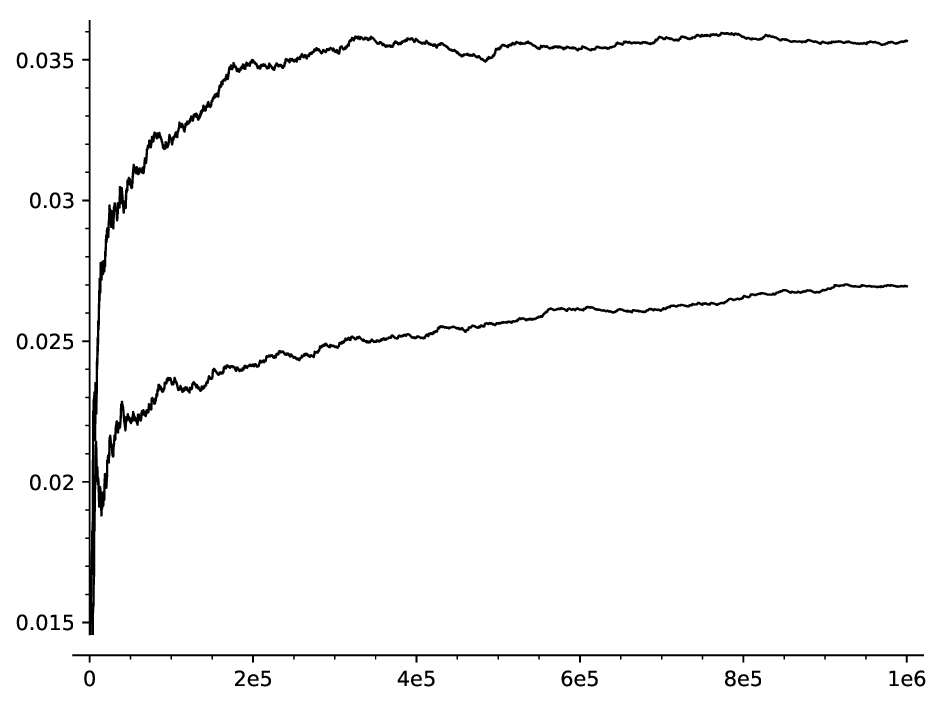}
\caption{$|l| = 6$: Top -6 bottom 6} \label{fig:19_6_1_3_A_6}
\end{subfigure}
\hspace*{-2.3cm}
\begin{subfigure}[b]{0.4\linewidth}
\includegraphics[width=\linewidth]{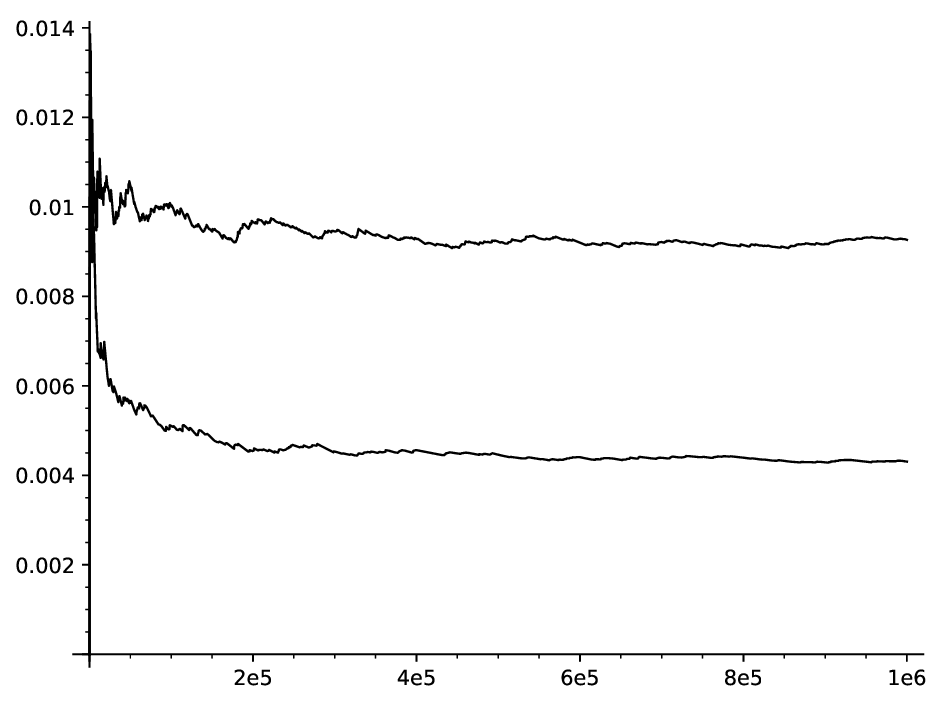}
\caption{$|l| = 7$: Top -7 bottom 7} \label{fig:19_6_1_3_A_7}
\end{subfigure}\hspace*{\fill}
\begin{subfigure}[b]{0.4\linewidth}
\includegraphics[width=\linewidth]{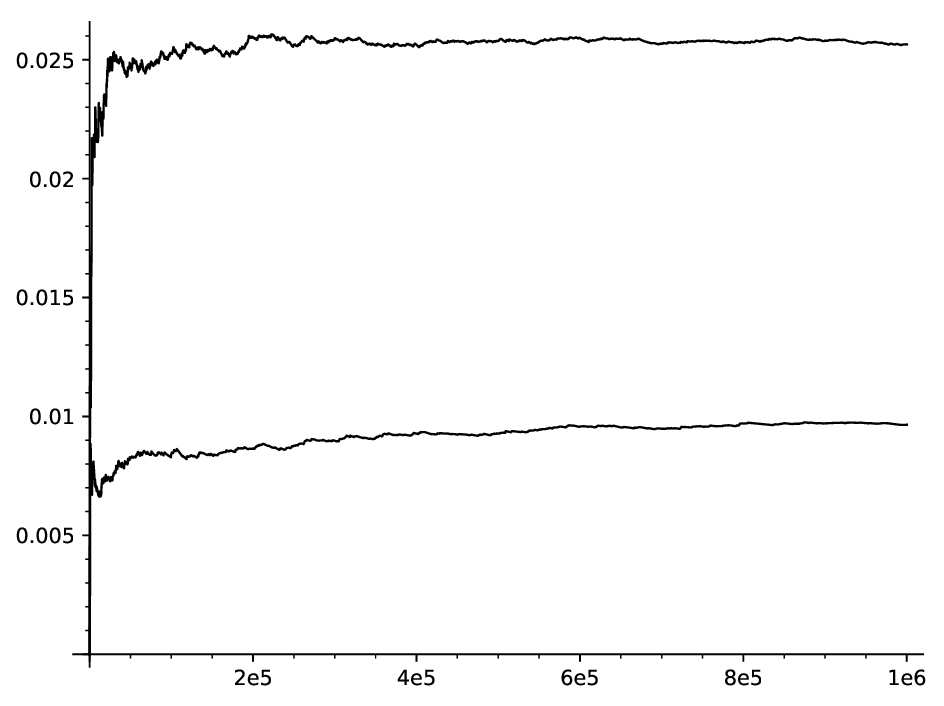}
\caption{$|l| = 8$: Top 8 bottom -8} \label{fig:19_6_1_3_A_8}
\end{subfigure}\hspace*{\fill}
\begin{subfigure}[b]{0.4\linewidth}
\includegraphics[width=\linewidth]{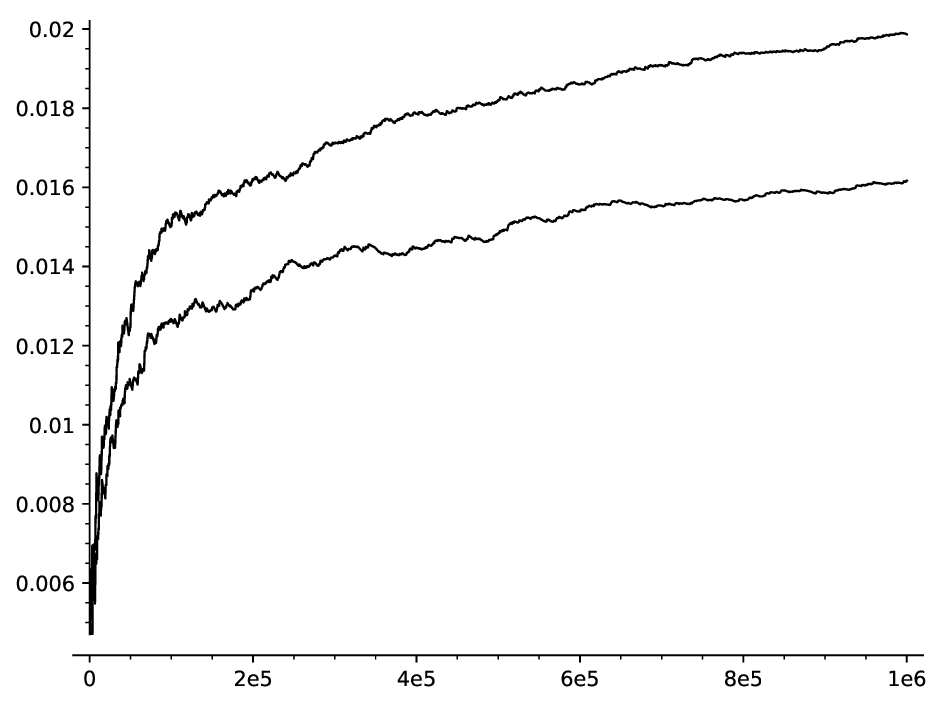}
\caption{$|l| = 9$: Top 9 bottom -9} \label{fig:19_6_1_3_A_9}
\end{subfigure}
\caption{19a1: $(\alpha, \beta) = (1,3)$ Ratio~\eqref{ratio_n_orders} $x_{6,E}^{(\alpha, \beta)}(X;l)/X^{1/2}\log^2(X)$} \label{fig:19a1_6_1_3_A_exact}
\end{figure}

\clearpage

\begin{figure}[t] % "[t!]" placement specifier just for this example
\hspace*{-2.3cm}
\begin{subfigure}[b]{0.4\linewidth}
\includegraphics[width=\linewidth]{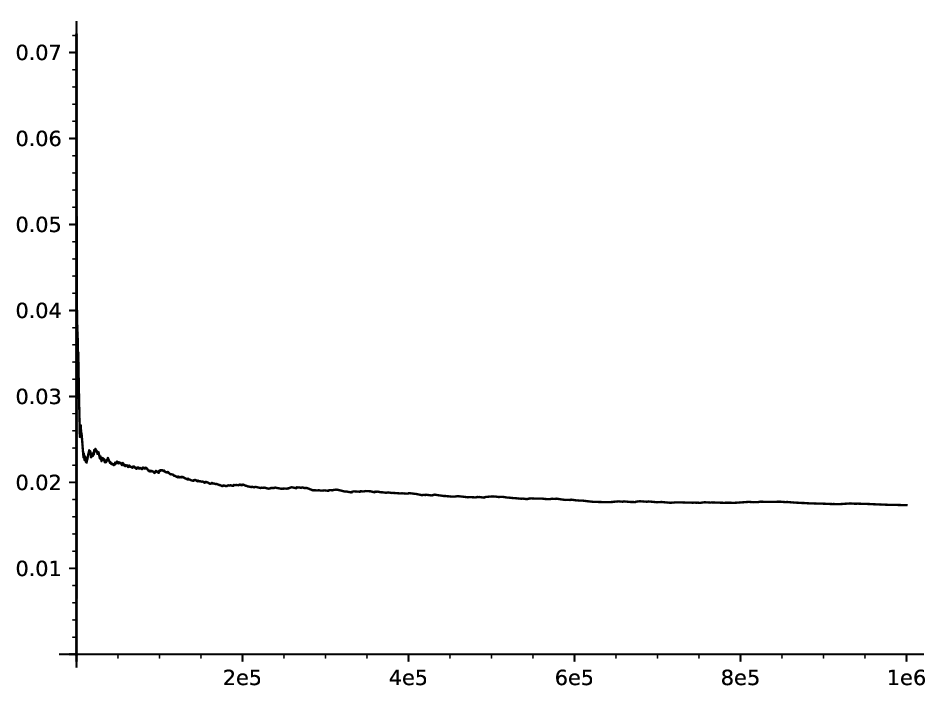}
\caption{$|l| = 1$: No 1 exists except for $\f = 9$} \label{fig:19_6_2_3_A_1}
\end{subfigure}\hspace*{\fill}
\begin{subfigure}[b]{0.4\linewidth}
\includegraphics[width=\linewidth]{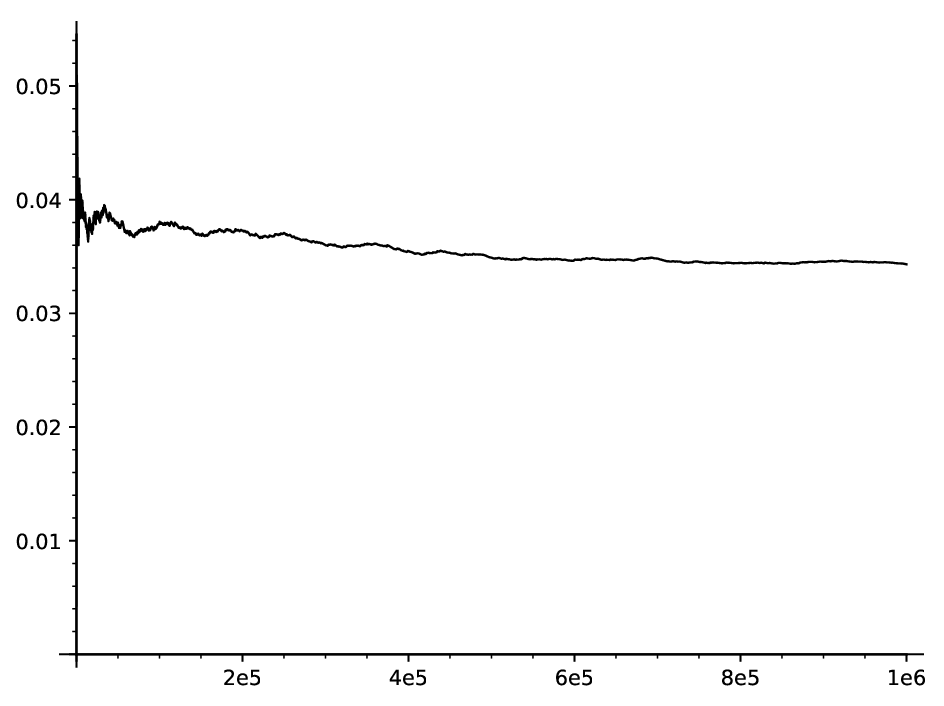}
\caption{$|l| = 2$: No -2 exists} \label{fig:19_6_2_3_A_2}
\end{subfigure}\hspace*{\fill}
\begin{subfigure}[b]{0.4\linewidth}
\includegraphics[width=\linewidth]{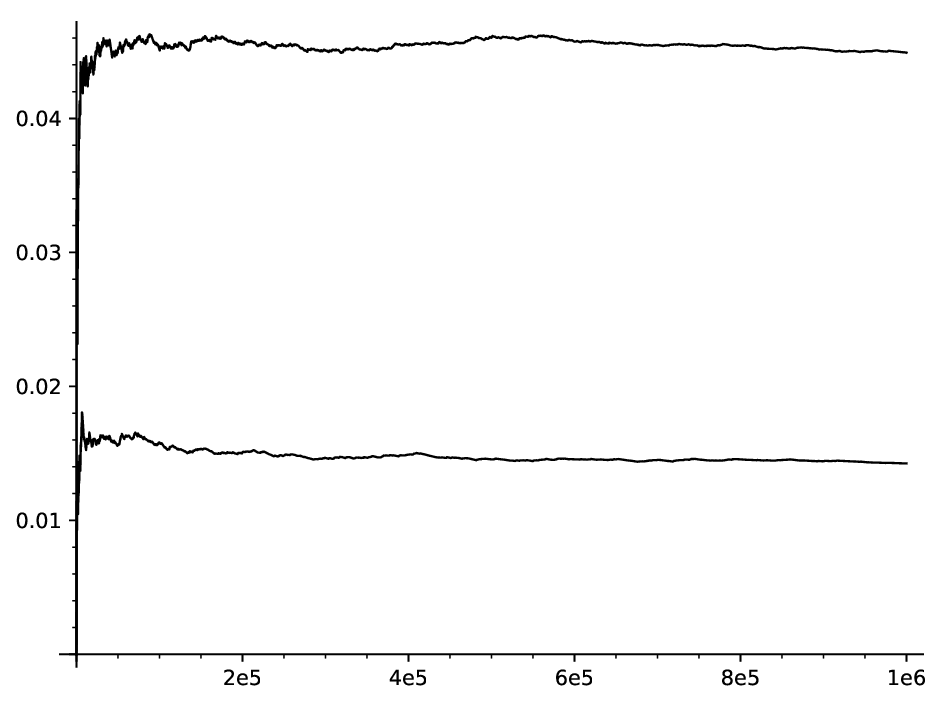}
\caption{$|l| = 3$: Top -3 bottom 3} \label{fig:19_6_2_3_A_3}
\end{subfigure}
\hspace*{-2.3cm}
\begin{subfigure}[b]{0.4\linewidth}
\includegraphics[width=\linewidth]{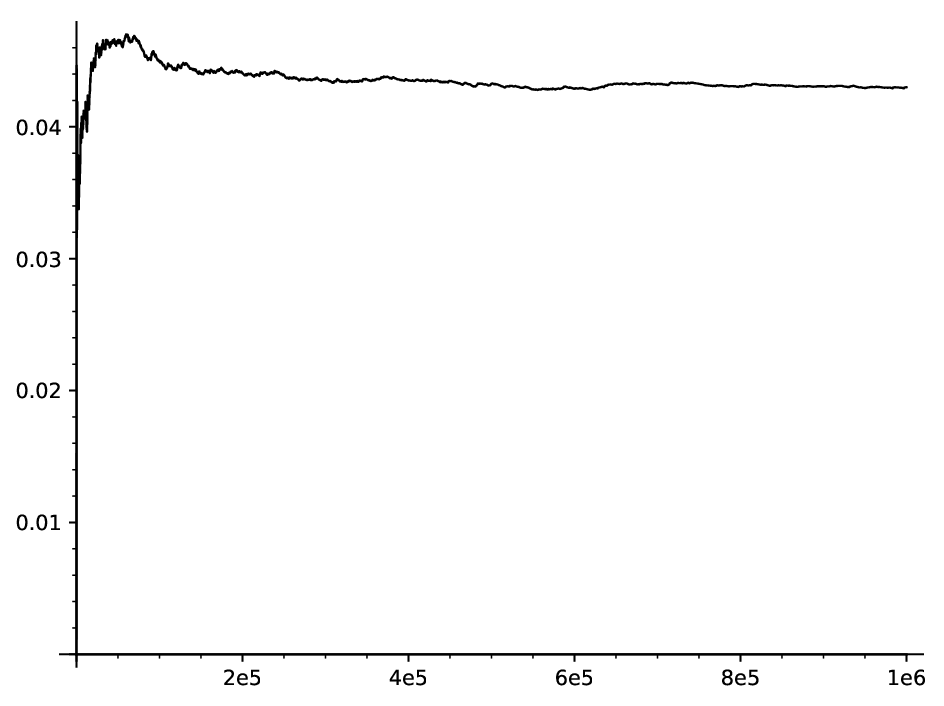}
\caption{$|l| = 4$: No 4 exists} \label{fig:19_6_2_3_A_4}
\end{subfigure}\hspace*{\fill}
\begin{subfigure}[b]{0.4\linewidth}
\includegraphics[width=\linewidth]{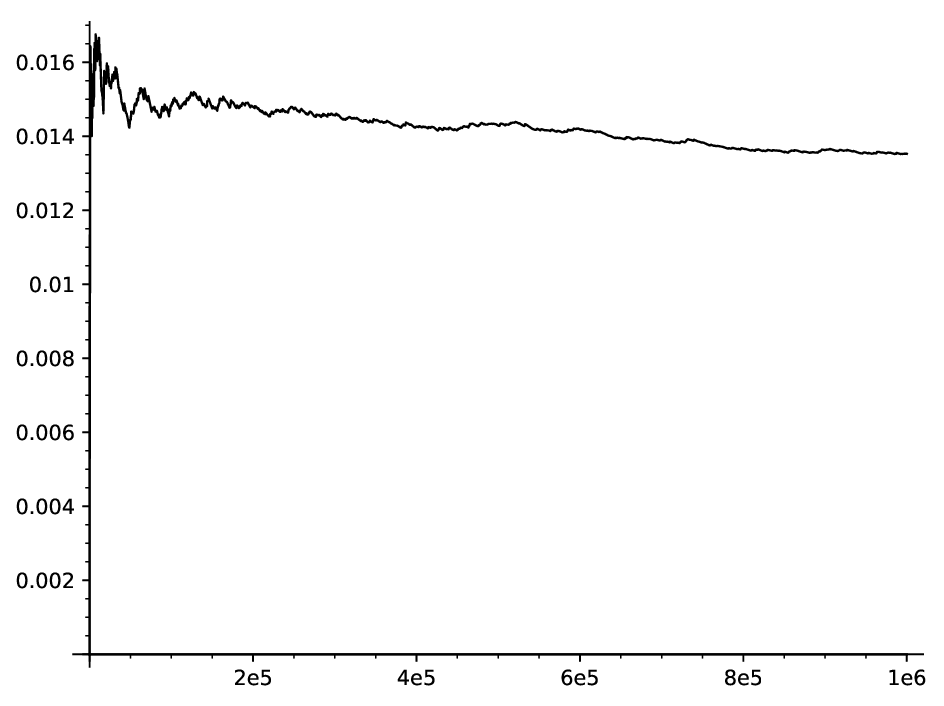}
\caption{$|l| = 5$: No -5 exists} \label{fig:19_6_2_3_A_5}
\end{subfigure}\hspace*{\fill}
\begin{subfigure}[b]{0.4\linewidth}
\includegraphics[width=\linewidth]{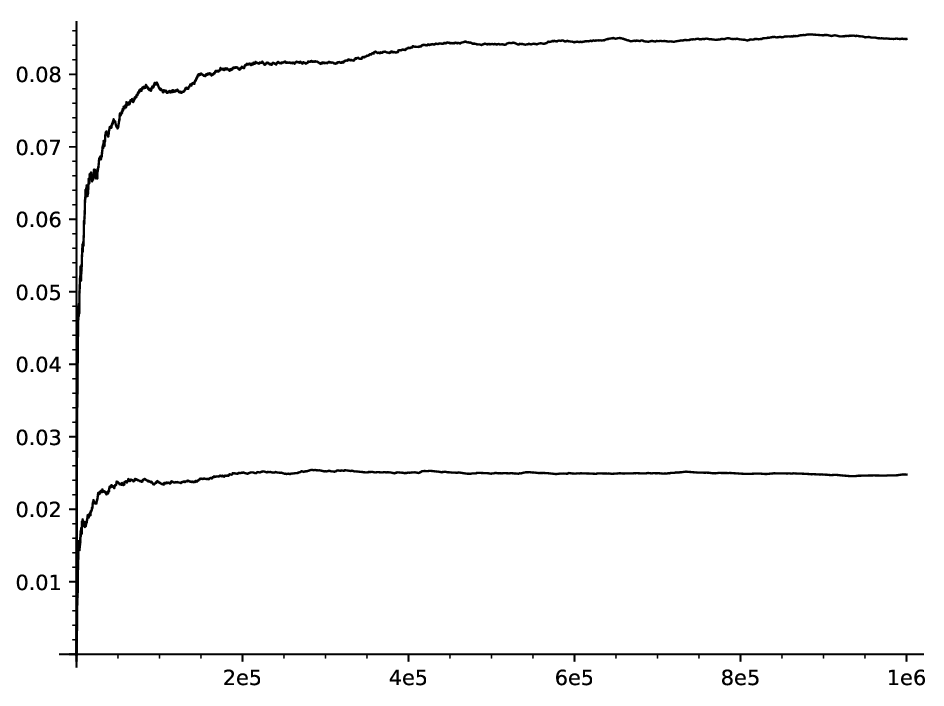}
\caption{$|l| = 6$: Top 6 bottom -6} \label{fig:19_6_2_3_A_6}
\end{subfigure}
\hspace*{-2.3cm}
\begin{subfigure}[b]{0.4\linewidth}
\includegraphics[width=\linewidth]{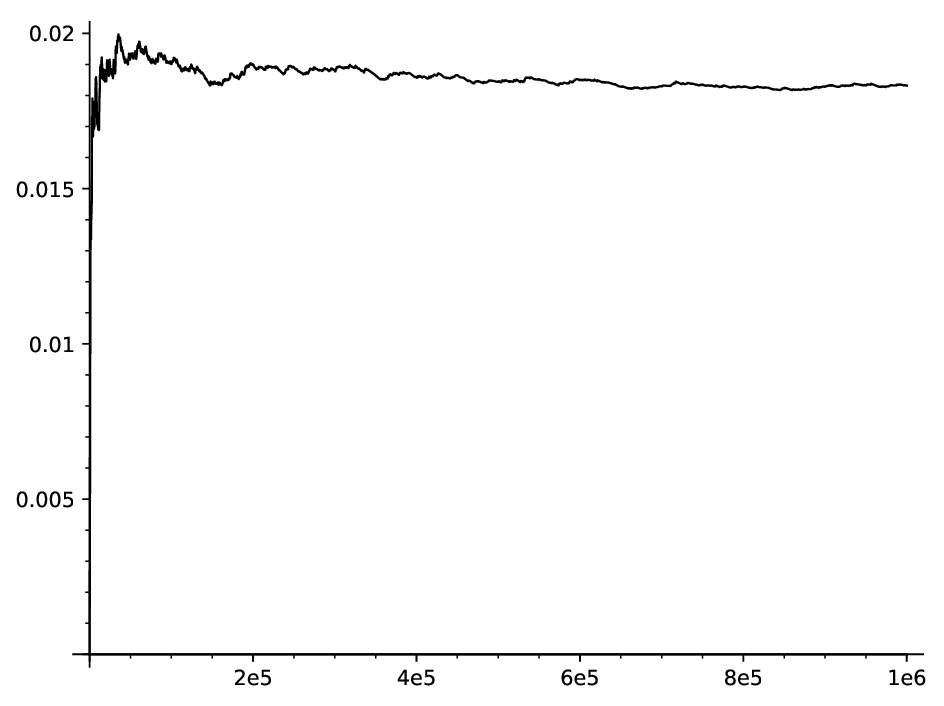}
\caption{$|l| = 7$: No 7 exists} \label{fig:19_6_2_3_A_7}
\end{subfigure}\hspace*{\fill}
\begin{subfigure}[b]{0.4\linewidth}
\includegraphics[width=\linewidth]{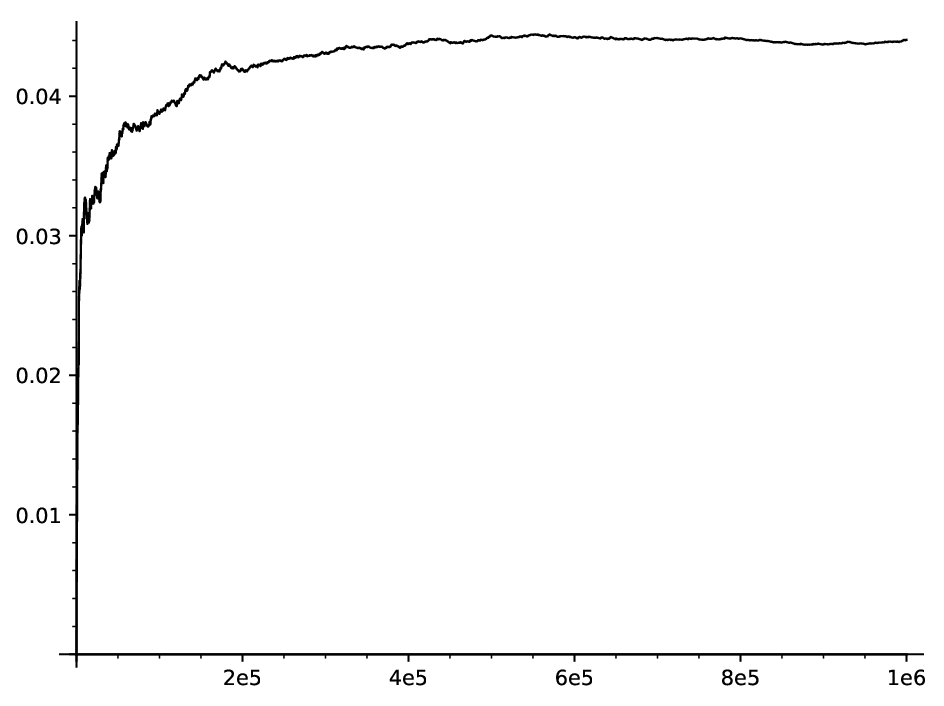}
\caption{$|l| = 8$: No -8 exists} \label{fig:19_6_2_3_A_8}
\end{subfigure}\hspace*{\fill}
\begin{subfigure}[b]{0.4\linewidth}
\includegraphics[width=\linewidth]{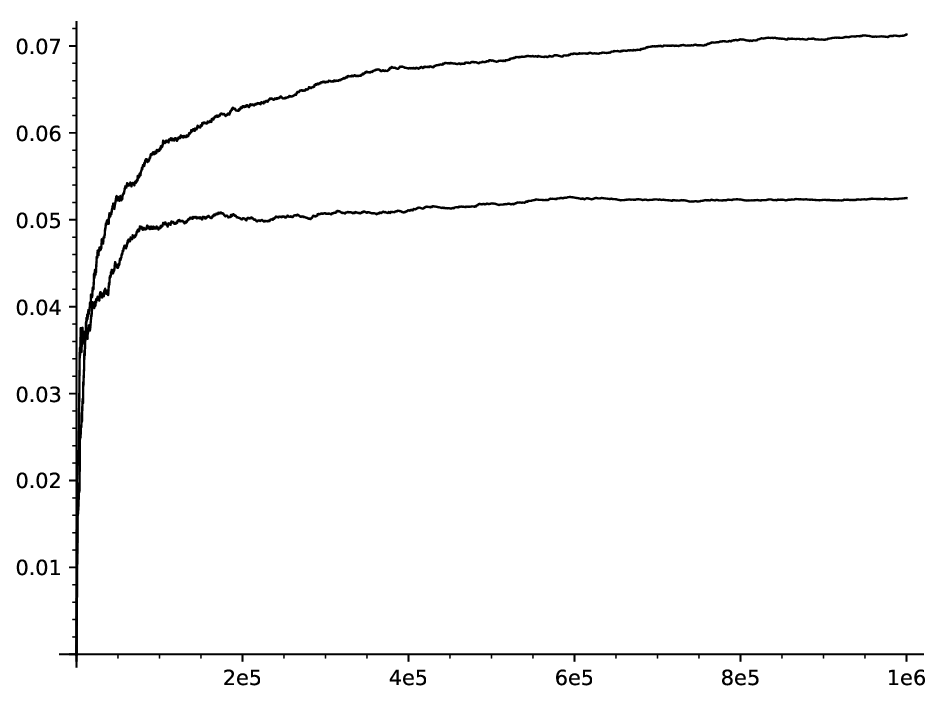}
\caption{$|l| = 9$: Top -9 bottom 9} \label{fig:19_6_2_3_A_9}
\end{subfigure}
\caption{19a1: $(\alpha, \beta) = (2,3)$ Ratio~\eqref{ratio_n_orders} $x_{6,E}^{(\alpha, \beta)}(X;l)/X^{1/2}\log^2(X)$} \label{fig:19a1_6_2_3_A_exact}
\end{figure}

\clearpage

\begin{figure}[t] % "[t!]" placement specifier just for this example
\hspace*{-2.3cm}
\begin{subfigure}[b]{0.4\linewidth}
\includegraphics[width=\linewidth]{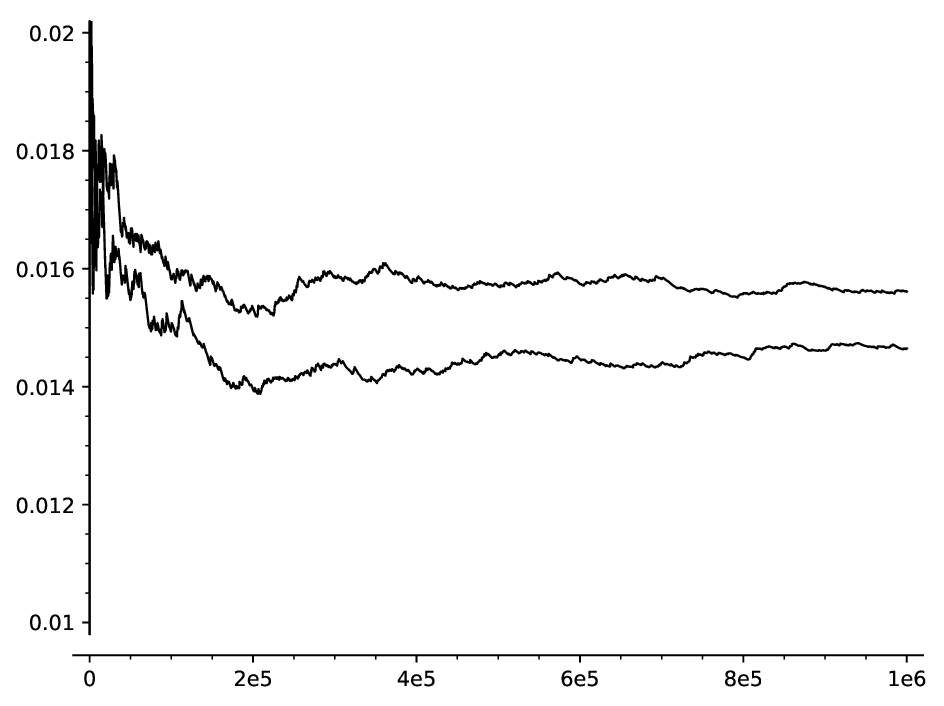}
\caption{$|l| = 1$: Top -1 bottom 1} \label{fig:19_6_1_6_A_1}
\end{subfigure}\hspace*{\fill}
\begin{subfigure}[b]{0.4\linewidth}
\includegraphics[width=\linewidth]{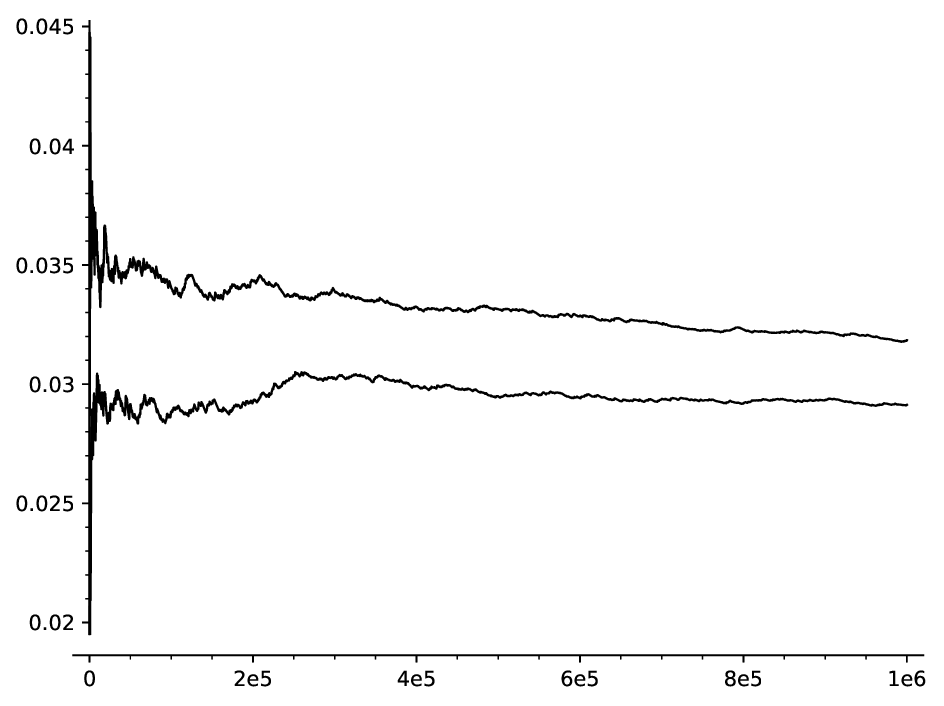}
\caption{$|l| = 2$: Top 2 bottom -2} \label{fig:19_6_1_6_A_2}
\end{subfigure}\hspace*{\fill}
\begin{subfigure}[b]{0.4\linewidth}
\includegraphics[width=\linewidth]{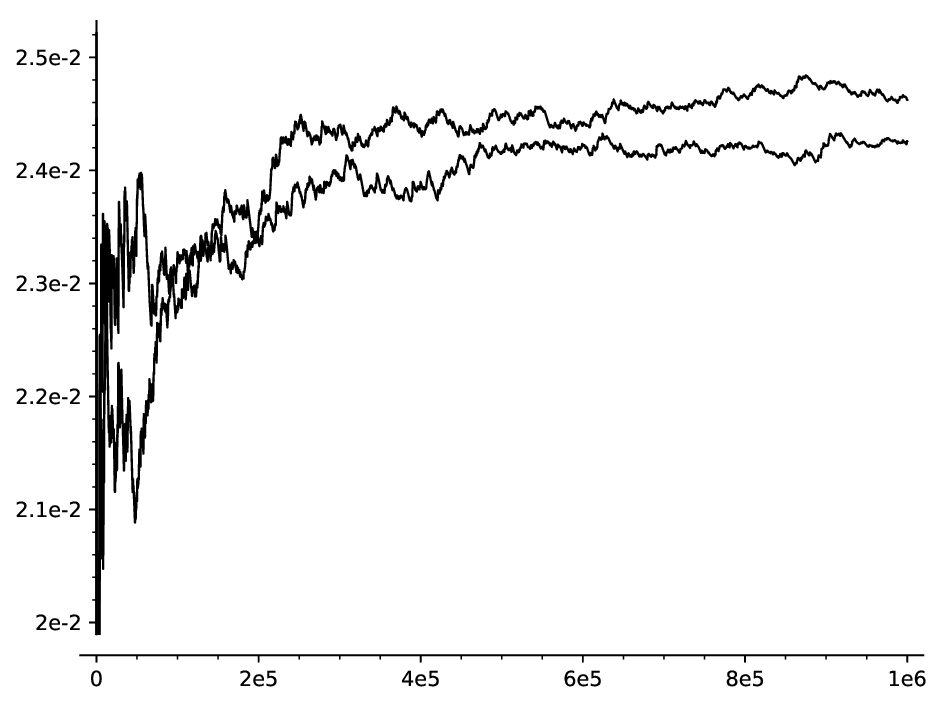}
\caption{$|l| = 3$: Top -3 bottom 3} \label{fig:19_6_1_6_A_3}
\end{subfigure}
\hspace*{-2.3cm}
\begin{subfigure}[b]{0.4\linewidth}
\includegraphics[width=\linewidth]{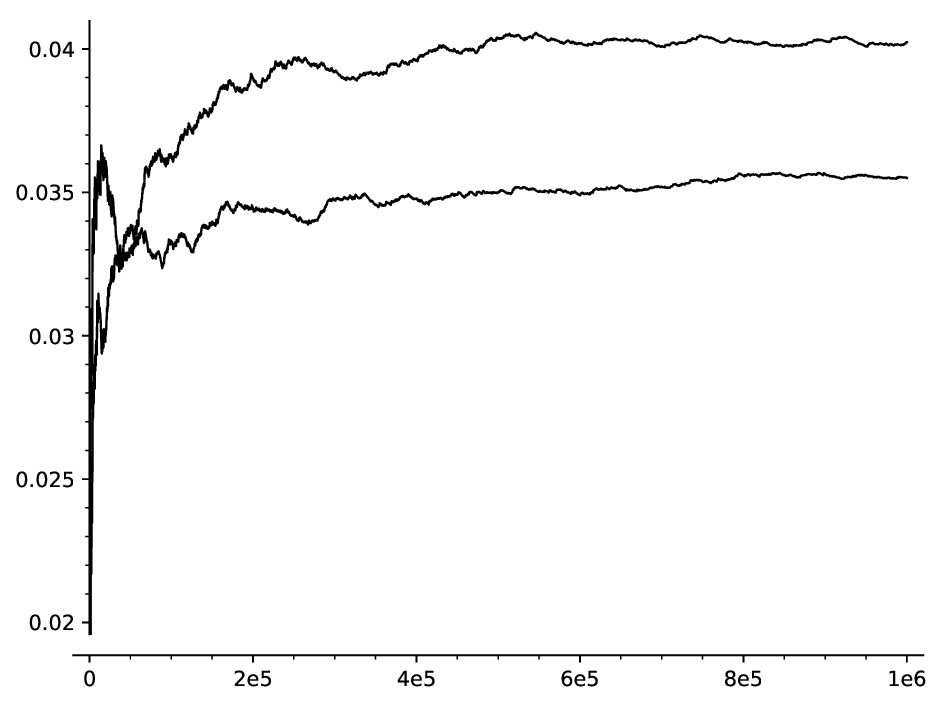}
\caption{$|l| = 4$: Top -4 bottom 4} \label{fig:19_6_1_6_A_4}
\end{subfigure}\hspace*{\fill}
\begin{subfigure}[b]{0.4\linewidth}
\includegraphics[width=\linewidth]{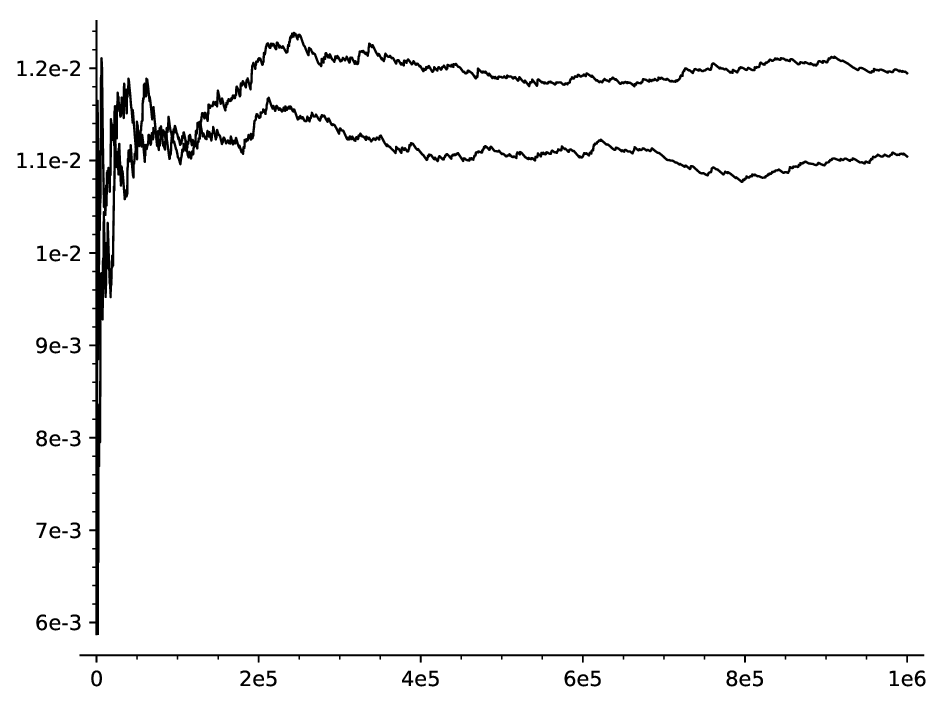}
\caption{$|l| = 5$: Top 5 bottom -5} \label{fig:19_6_1_6_A_5}
\end{subfigure}\hspace*{\fill}
\begin{subfigure}[b]{0.4\linewidth}
\includegraphics[width=\linewidth]{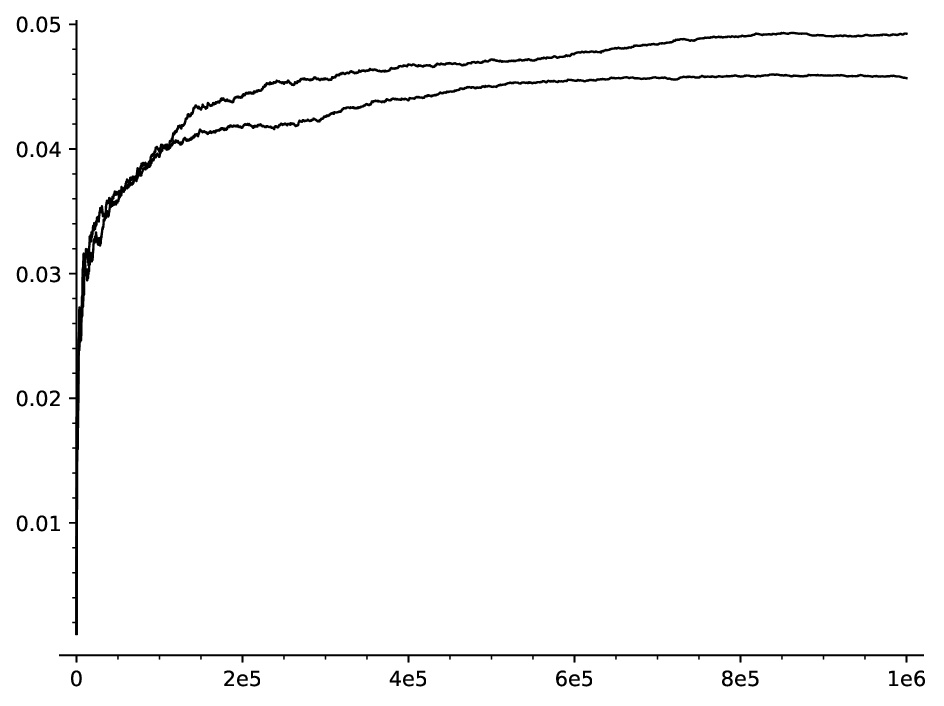}
\caption{$|l| = 6$: Top 6 bottom -6} \label{fig:19_6_1_6_A_6}
\end{subfigure}
\hspace*{-2.3cm}
\begin{subfigure}[b]{0.4\linewidth}
\includegraphics[width=\linewidth]{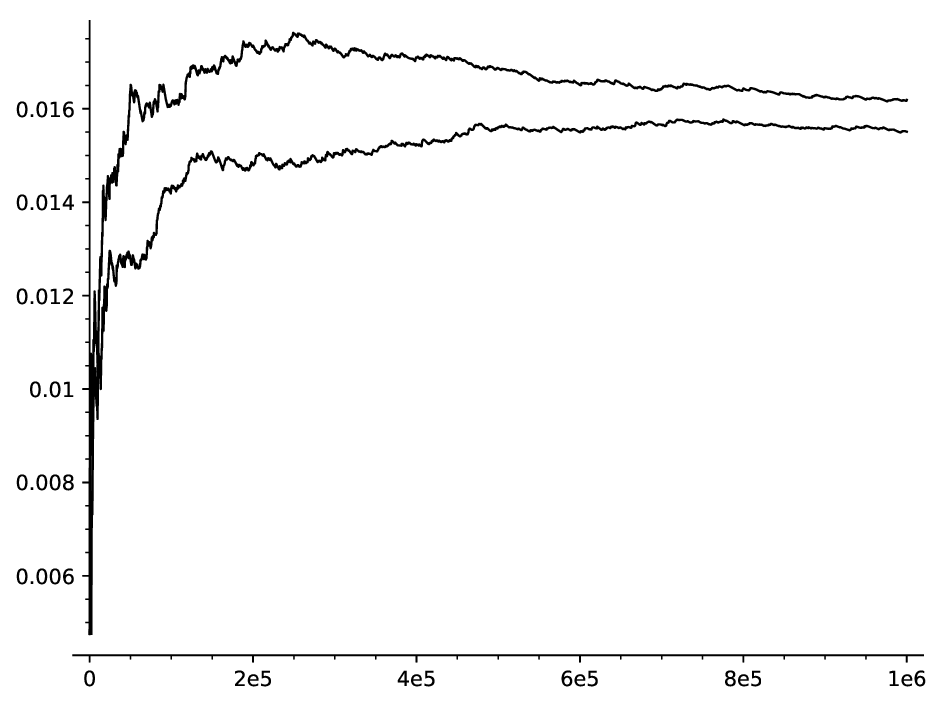}
\caption{$|l| = 7$: Top 7 bottom -7} \label{fig:19_6_1_6_A_7}
\end{subfigure}\hspace*{\fill}
\begin{subfigure}[b]{0.4\linewidth}
\includegraphics[width=\linewidth]{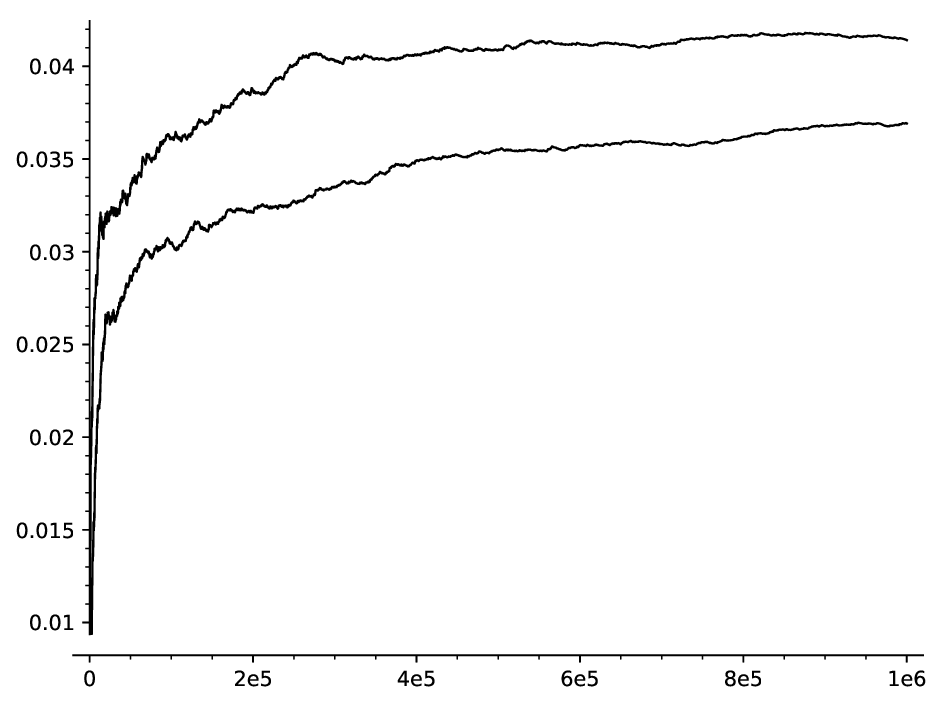}
\caption{$|l| = 8$: Top 8 bottom -8} \label{fig:19_6_1_6_A_8}
\end{subfigure}\hspace*{\fill}
\begin{subfigure}[b]{0.4\linewidth}
\includegraphics[width=\linewidth]{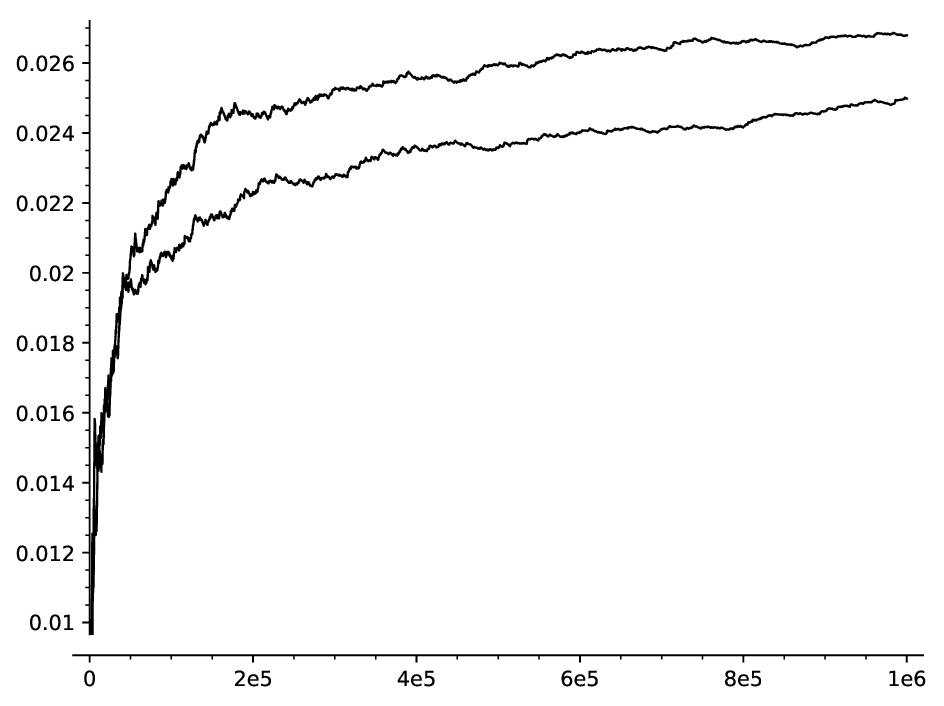}
\caption{$|l| = 9$: Top 9 bottom -9} \label{fig:19_6_1_6_A_9}
\end{subfigure}
\caption{19a1: $(\alpha, \beta) = (1,6)$ Ratio~\eqref{ratio_n_orders} $x_{6,E}^{(\alpha, \beta)}(X;l)/X^{1/2}\log^2(X)$} \label{fig:19a1_6_1_6_A_exact}
\end{figure}

\clearpage

\begin{figure}[t] % "[t!]" placement specifier just for this example
\hspace*{-2.3cm}
\begin{subfigure}[b]{0.4\linewidth}
\includegraphics[width=\linewidth]{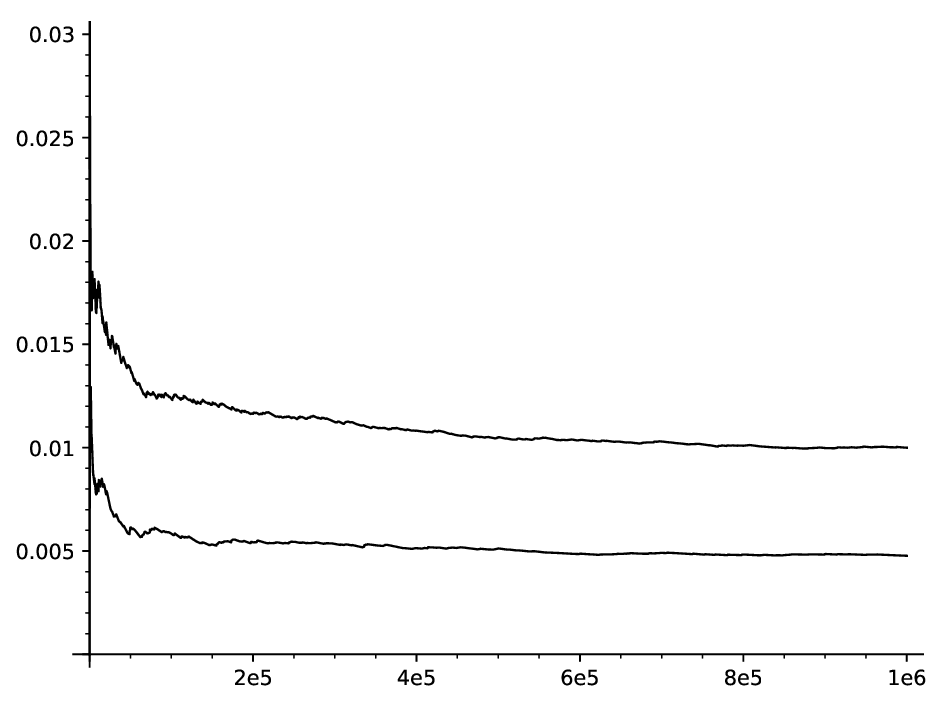}
\caption{$|l| = 1$: Top -1 bottom 1} \label{fig:19_6_2_6_A_1}
\end{subfigure}\hspace*{\fill}
\begin{subfigure}[b]{0.4\linewidth}
\includegraphics[width=\linewidth]{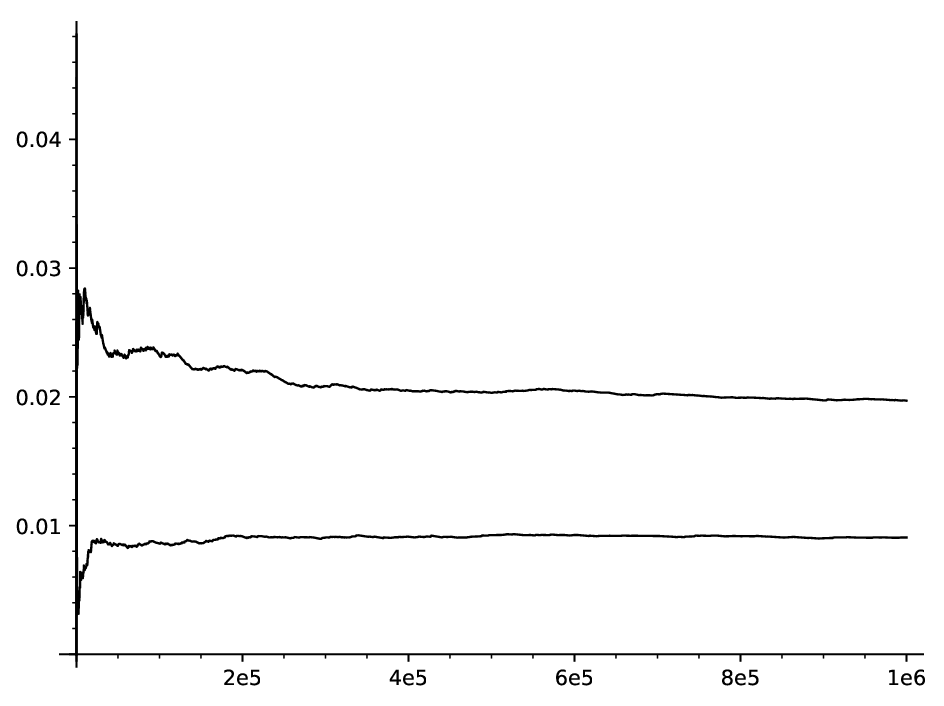}
\caption{$|l| = 2$: Top 2 bottom -2} \label{fig:19_6_2_6_A_2}
\end{subfigure}\hspace*{\fill}
\begin{subfigure}[b]{0.4\linewidth}
\includegraphics[width=\linewidth]{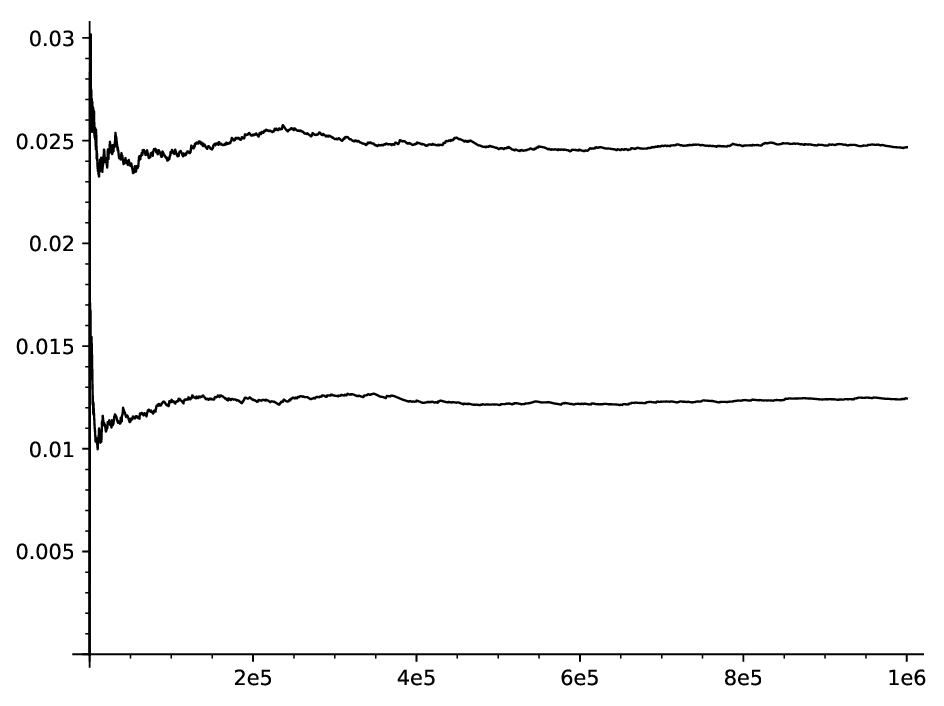}
\caption{$|l| = 3$: Top -3 bottom 3} \label{fig:19_6_2_6_A_3}
\end{subfigure}
\hspace*{-2.3cm}
\begin{subfigure}[b]{0.4\linewidth}
\includegraphics[width=\linewidth]{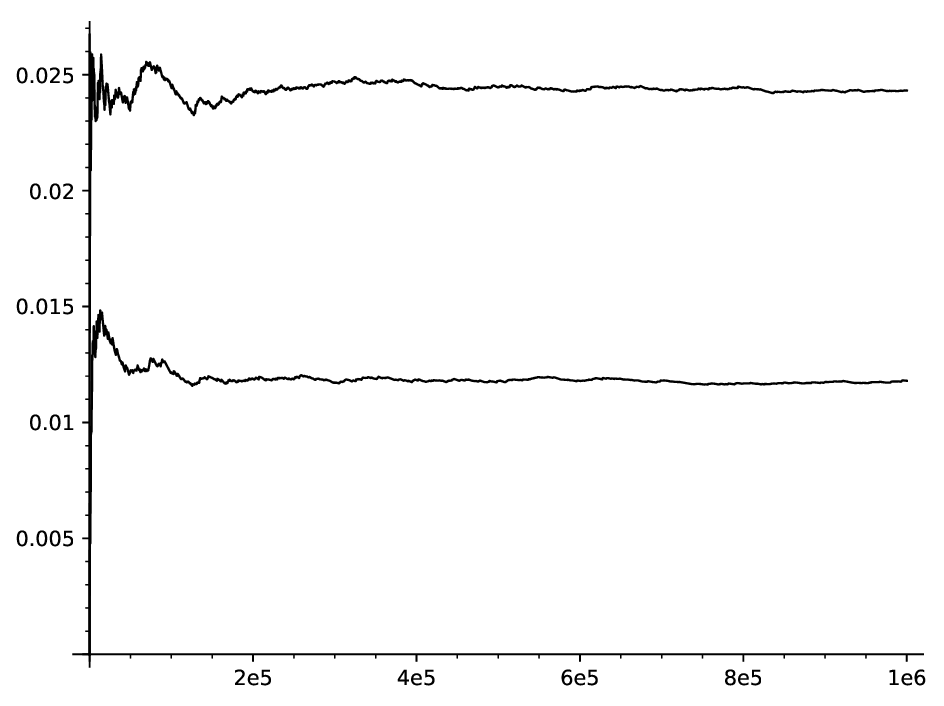}
\caption{$|l| = 4$: Top -4 bottom 4} \label{fig:19_6_2_6_A_4}
\end{subfigure}\hspace*{\fill}
\begin{subfigure}[b]{0.4\linewidth}
\includegraphics[width=\linewidth]{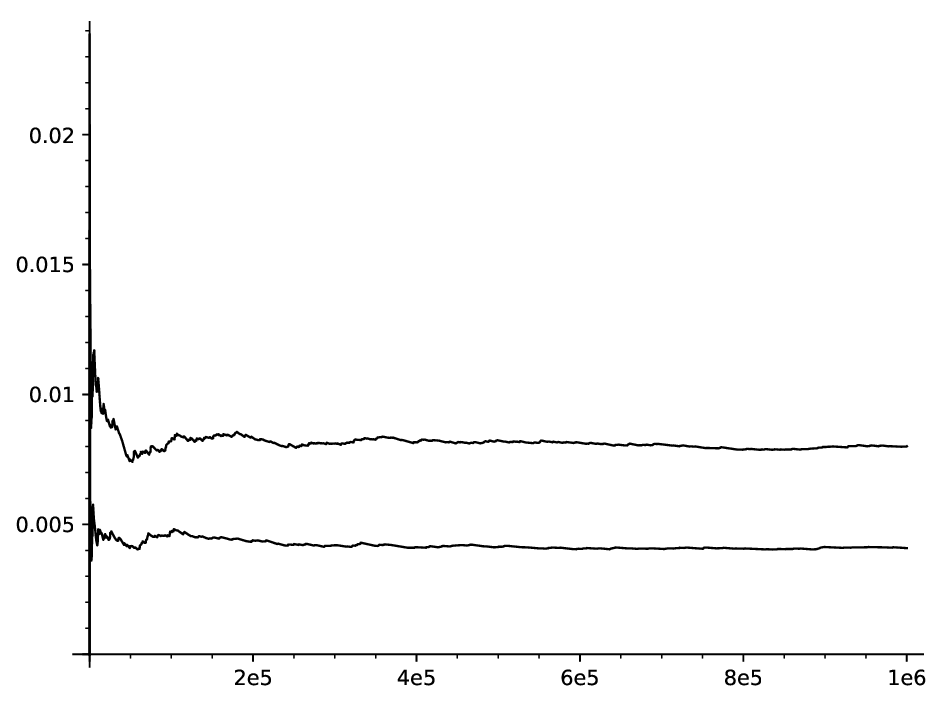}
\caption{$|l| = 5$: Top 5 bottom -5} \label{fig:19_6_2_6_A_5}
\end{subfigure}\hspace*{\fill}
\begin{subfigure}[b]{0.4\linewidth}
\includegraphics[width=\linewidth]{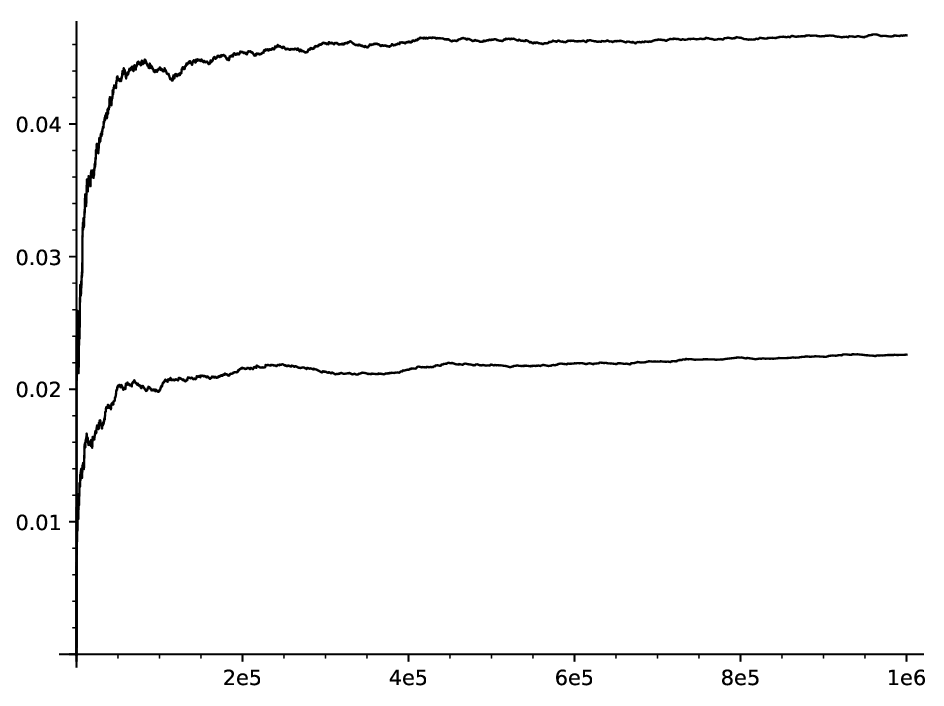}
\caption{$|l| = 6$: Top 6 bottom -6} \label{fig:19_6_2_6_A_6}
\end{subfigure}
\hspace*{-2.3cm}
\begin{subfigure}[b]{0.4\linewidth}
\includegraphics[width=\linewidth]{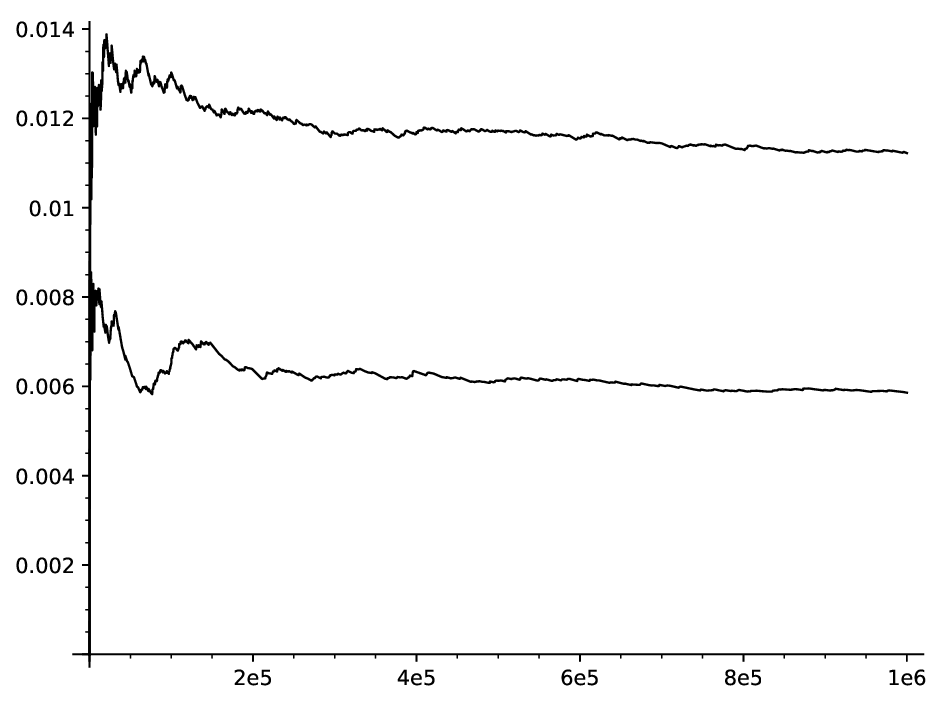}
\caption{$|l| = 7$: Top -7 bottom 7} \label{fig:19_6_2_6_A_7}
\end{subfigure}\hspace*{\fill}
\begin{subfigure}[b]{0.4\linewidth}
\includegraphics[width=\linewidth]{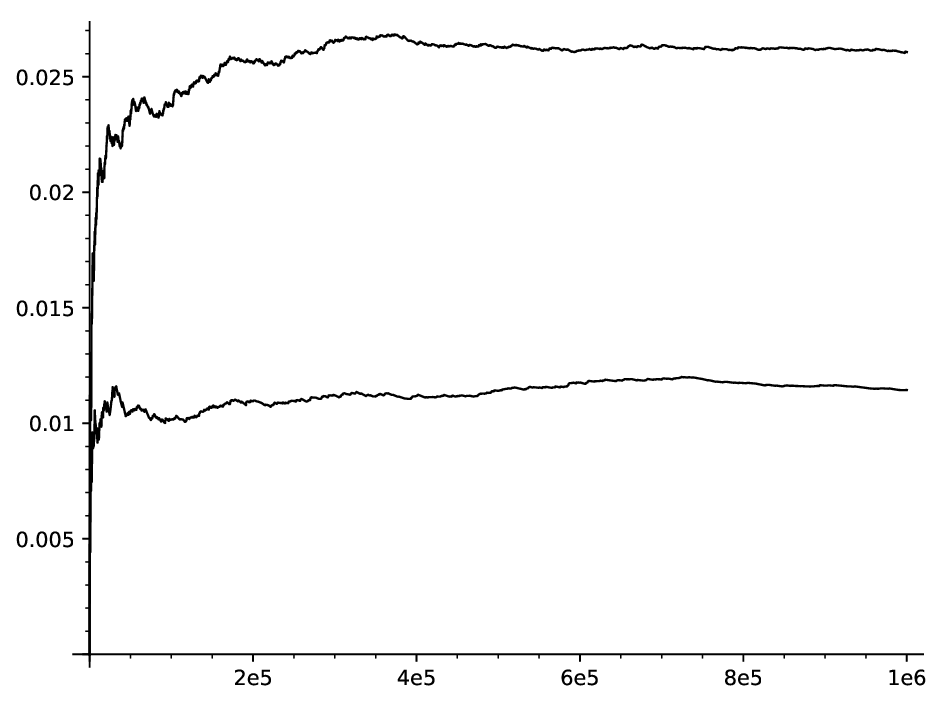}
\caption{$|l| = 8$: Top 8 bottom -8} \label{fig:19_6_2_6_A_8}
\end{subfigure}\hspace*{\fill}
\begin{subfigure}[b]{0.4\linewidth}
\includegraphics[width=\linewidth]{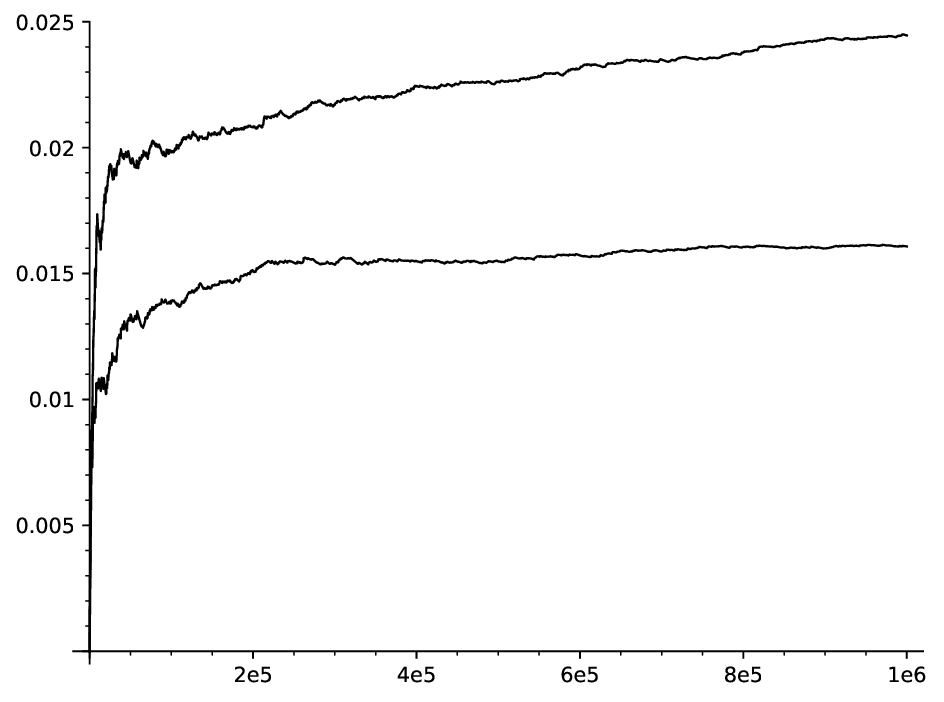}
\caption{$|l| = 9$: Top -9 bottom 9} \label{fig:19_6_2_6_A_9}
\end{subfigure}
\caption{19a1: $(\alpha, \beta) = (2,6)$ Ratio~\eqref{ratio_n_orders} $x_{6,E}^{(\alpha, \beta)}(X;l)/X^{1/2}\log^2(X)$} \label{fig:19a1_6_2_6_A_exact}
\end{figure}

\clearpage

\begin{figure}[t] % "[t!]" placement specifier just for this example
\hspace*{-2.3cm}
\begin{subfigure}[b]{0.4\linewidth}
\includegraphics[width=\linewidth]{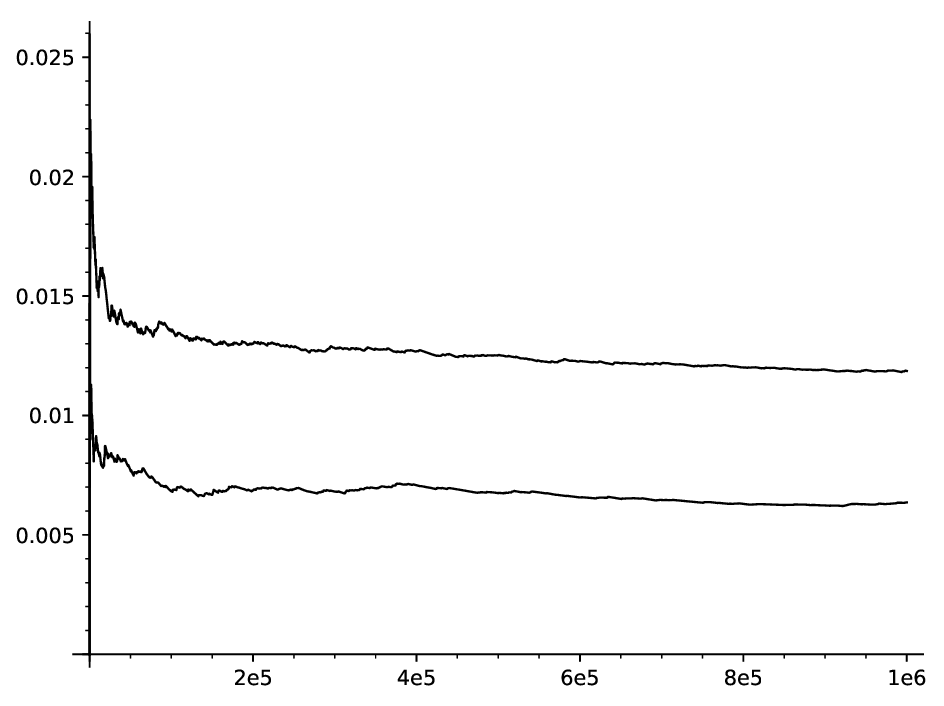}
\caption{$|l| = 1$: Top -1 bottom 1} \label{fig:37_6_1_3_A_1}
\end{subfigure}\hspace*{\fill}
\begin{subfigure}[b]{0.4\linewidth}
\includegraphics[width=\linewidth]{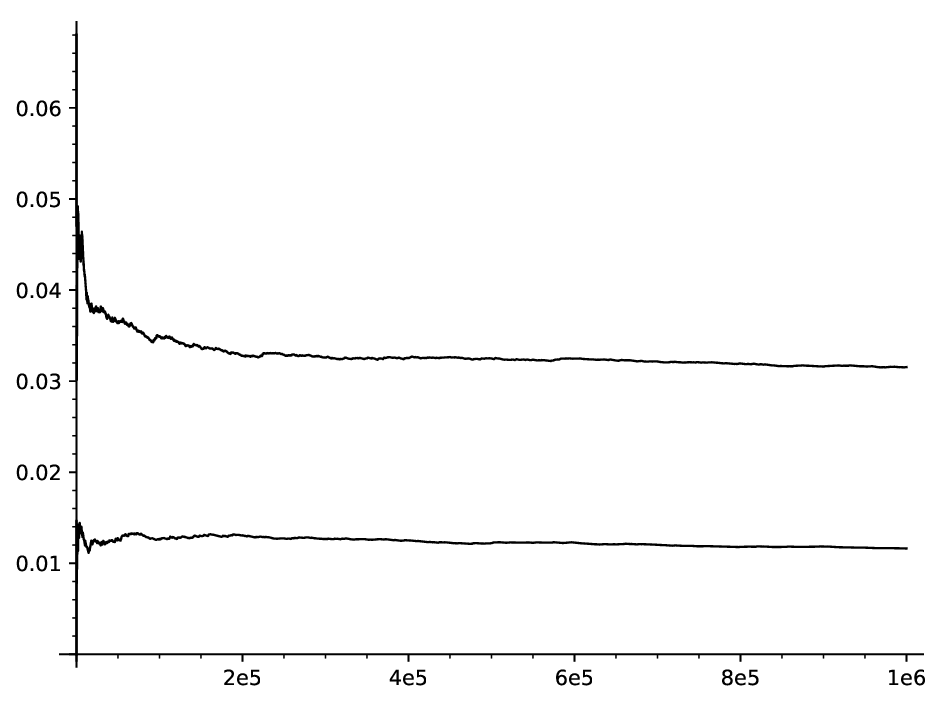}
\caption{$|l| = 2$: Top 2 bottom -2} \label{fig:37_6_1_3_A_2}
\end{subfigure}\hspace*{\fill}
\begin{subfigure}[b]{0.4\linewidth}
\includegraphics[width=\linewidth]{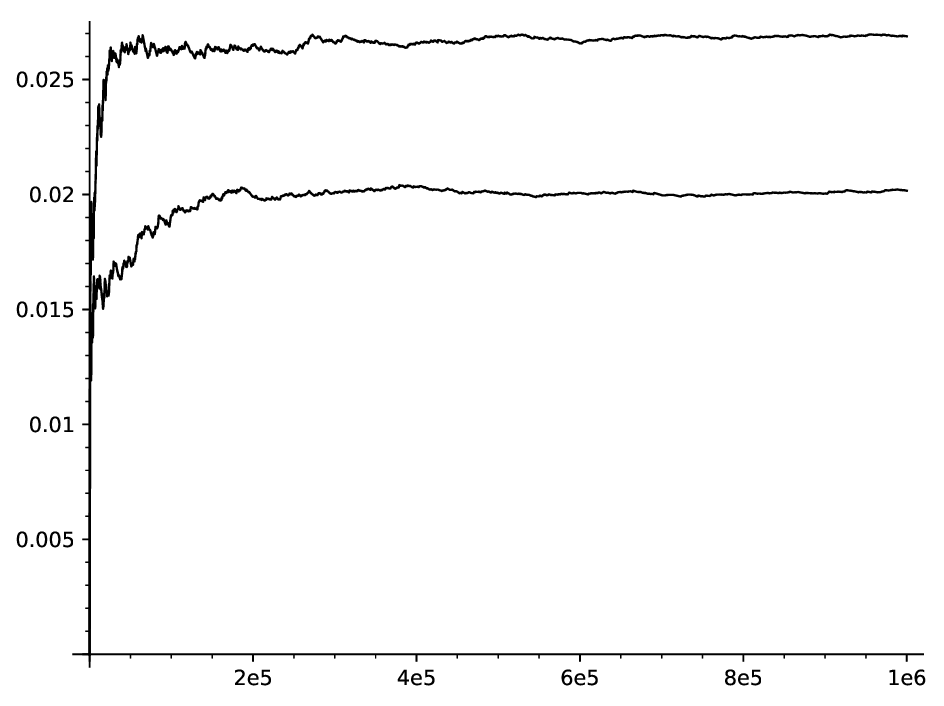}
\caption{$|l| = 3$: Top 3 bottom -3} \label{fig:37_6_1_3_A_3}
\end{subfigure}
\hspace*{-2.3cm}
\begin{subfigure}[b]{0.4\linewidth}
\includegraphics[width=\linewidth]{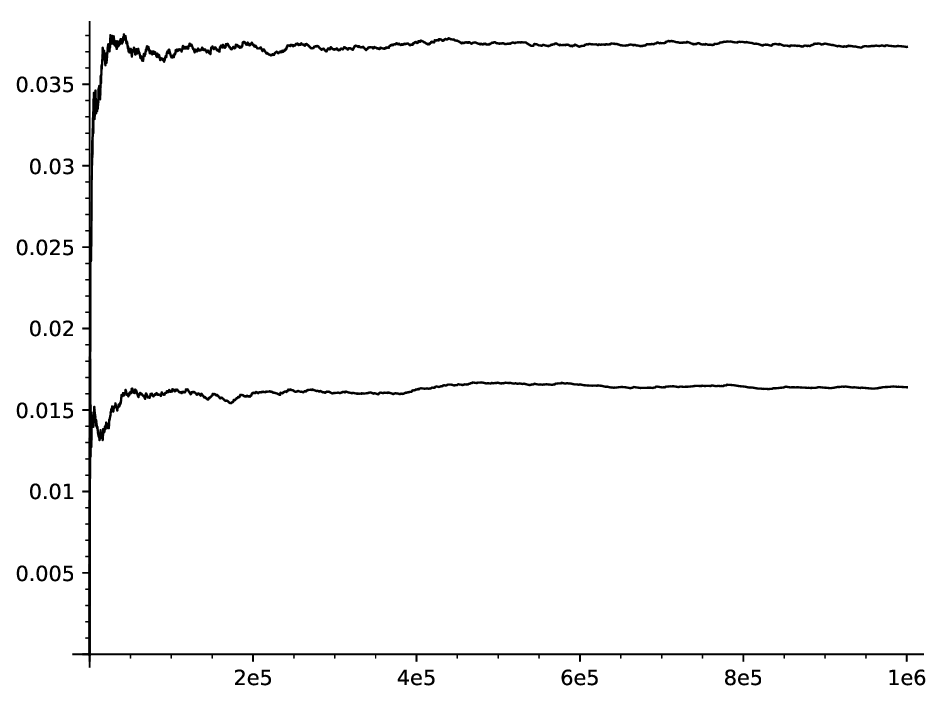}
\caption{$|l| = 4$: Top -4 bottom 4} \label{fig:37_6_1_3_A_4}
\end{subfigure}\hspace*{\fill}
\begin{subfigure}[b]{0.4\linewidth}
\includegraphics[width=\linewidth]{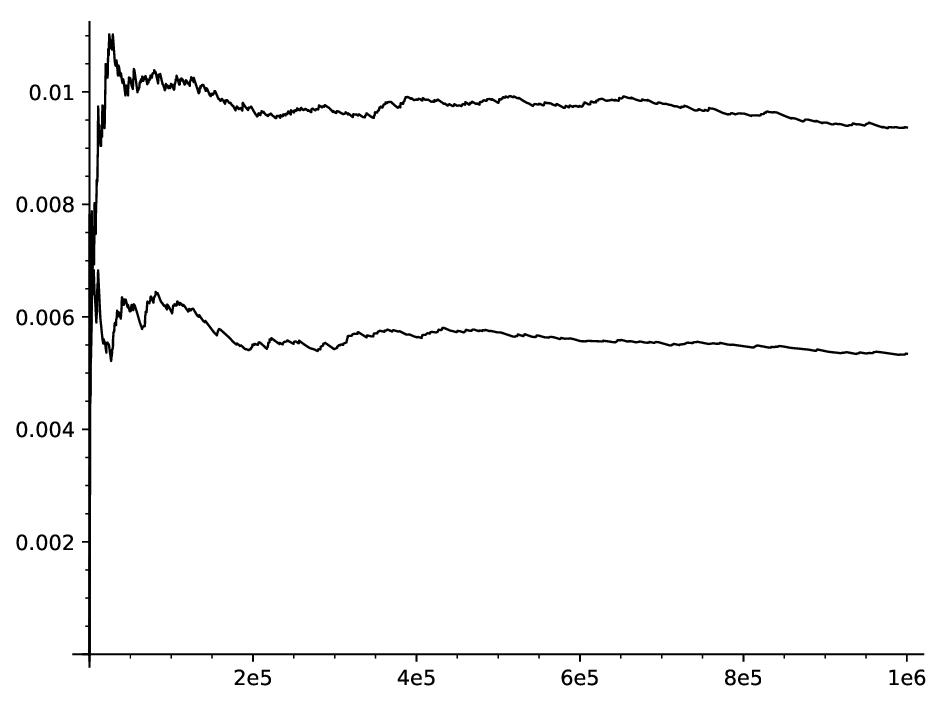}
\caption{$|l| = 5$: Top 5 bottom -5} \label{fig:37_6_1_3_A_5}
\end{subfigure}\hspace*{\fill}
\begin{subfigure}[b]{0.4\linewidth}
\includegraphics[width=\linewidth]{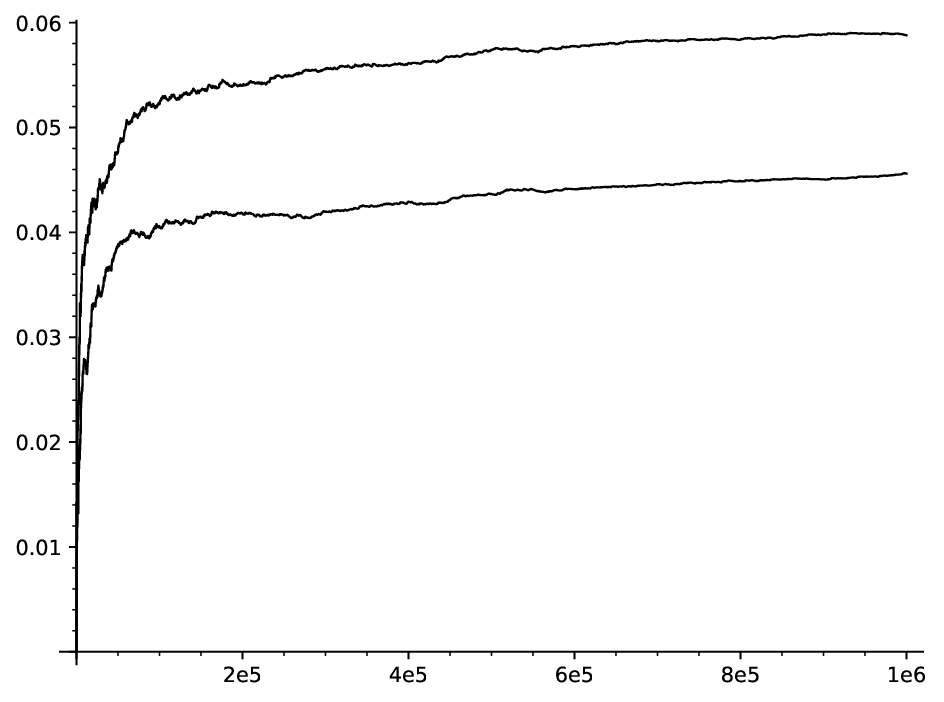}
\caption{$|l| = 6$: Top -6 bottom 6} \label{fig:37_6_1_3_A_6}
\end{subfigure}
\hspace*{-2.3cm}
\begin{subfigure}[b]{0.4\linewidth}
\includegraphics[width=\linewidth]{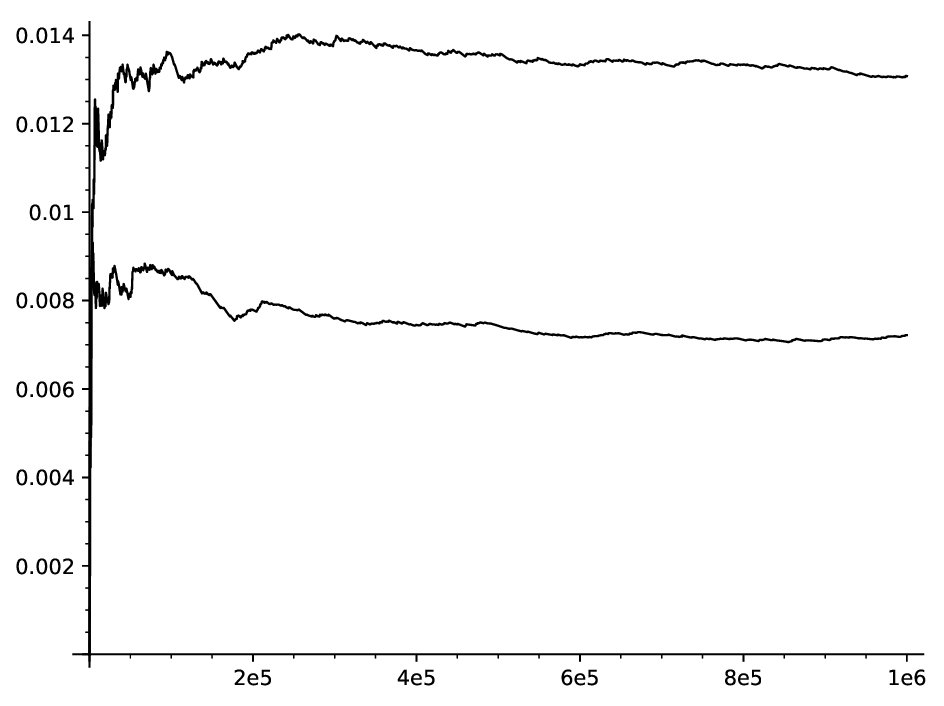}
\caption{$|l| = 7$: Top -7 bottom 7} \label{fig:37_6_1_3_A_7}
\end{subfigure}\hspace*{\fill}
\begin{subfigure}[b]{0.4\linewidth}
\includegraphics[width=\linewidth]{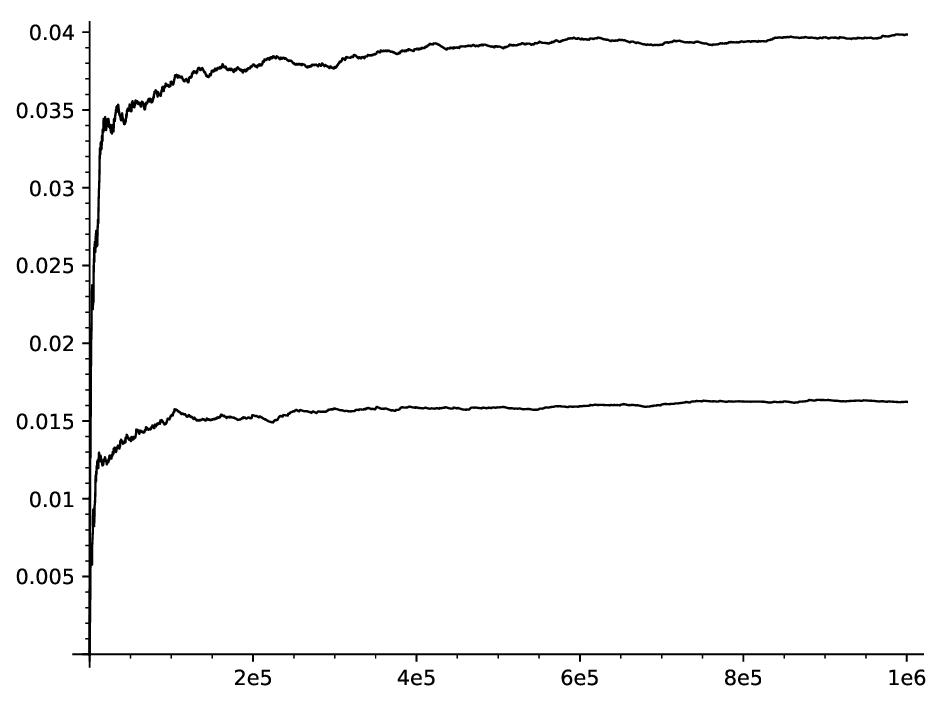}
\caption{$|l| = 8$: Top 8 bottom -8} \label{fig:37_6_1_3_A_8}
\end{subfigure}\hspace*{\fill}
\begin{subfigure}[b]{0.4\linewidth}
\includegraphics[width=\linewidth]{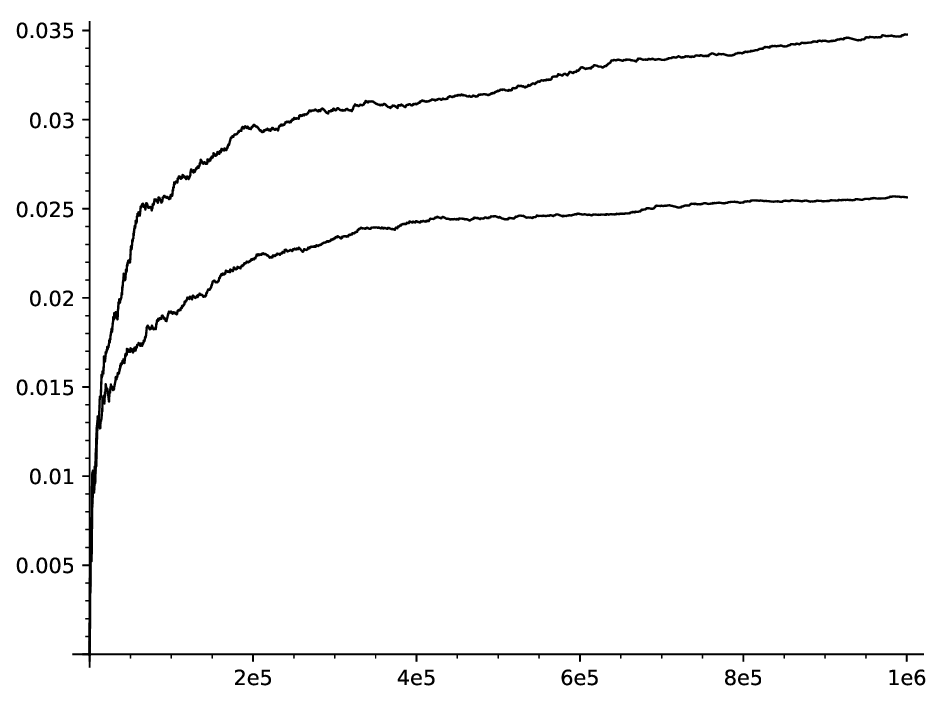}
\caption{$|l| = 9$: Top 9 bottom -9} \label{fig:37_6_1_3_A_9}
\end{subfigure}
\caption{37b1: $(\alpha, \beta) = (1,3)$ Ratio~\eqref{ratio_n_orders} $x_{6,E}^{(\alpha, \beta)}(X;l)/X^{1/2}\log^2(X)$} \label{fig:37b1_6_1_3_A_exact}
\end{figure}

\clearpage

\begin{figure}[t] % "[t!]" placement specifier just for this example
\hspace*{-2.3cm}
\begin{subfigure}[b]{0.4\linewidth}
\includegraphics[width=\linewidth]{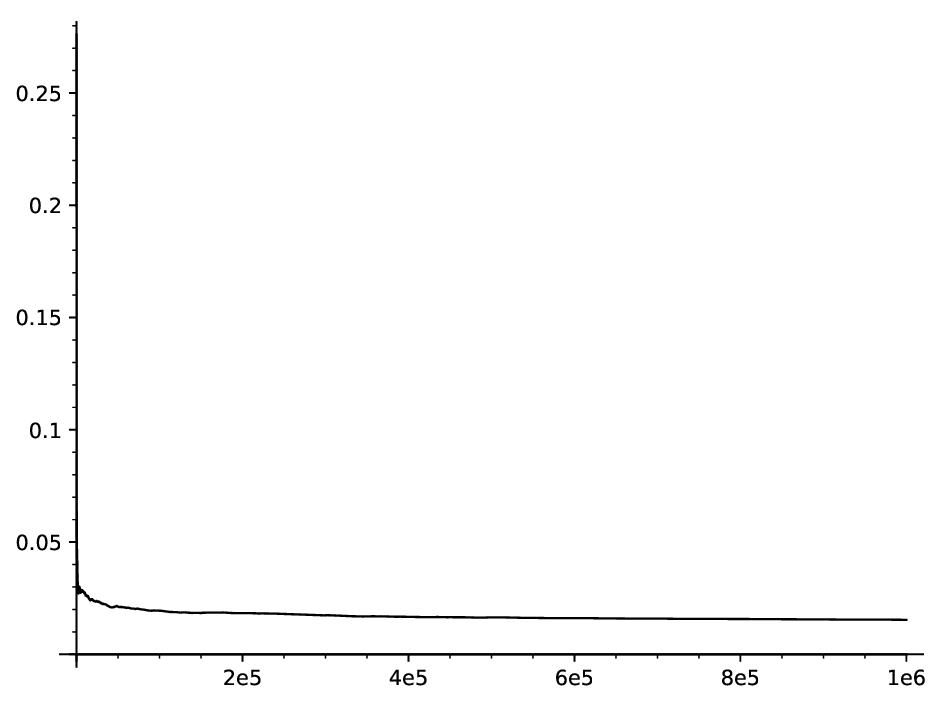}
\caption{$|l| = 1$: No -1 exists} \label{fig:37_6_2_3_A_1}
\end{subfigure}\hspace*{\fill}
\begin{subfigure}[b]{0.4\linewidth}
\includegraphics[width=\linewidth]{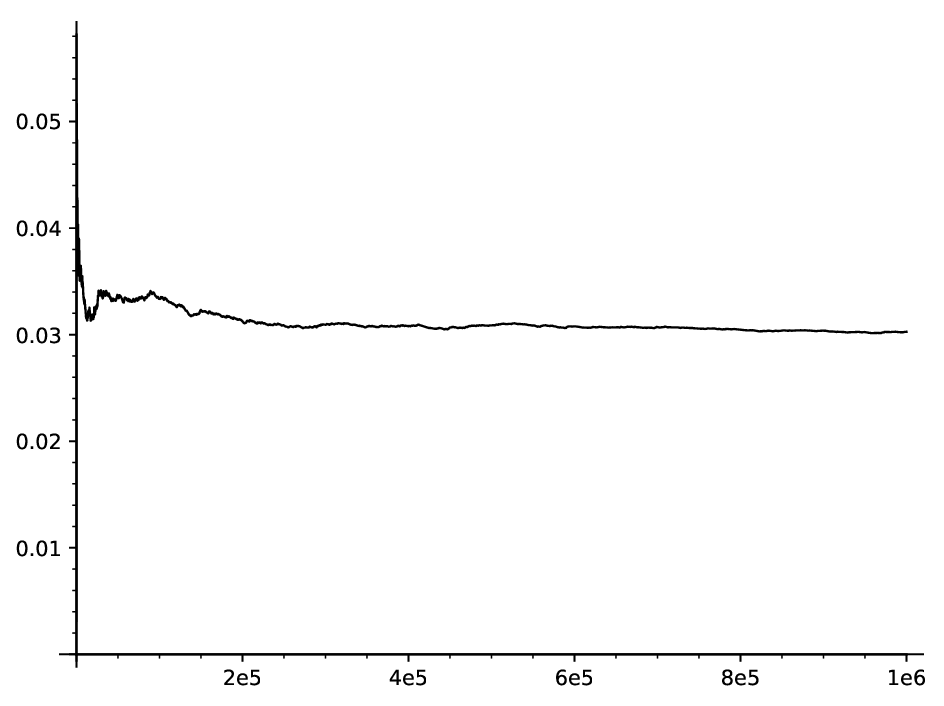}
\caption{$|l| = 2$: No 2 exists} \label{fig:37_6_2_3_A_2}
\end{subfigure}\hspace*{\fill}
\begin{subfigure}[b]{0.4\linewidth}
\includegraphics[width=\linewidth]{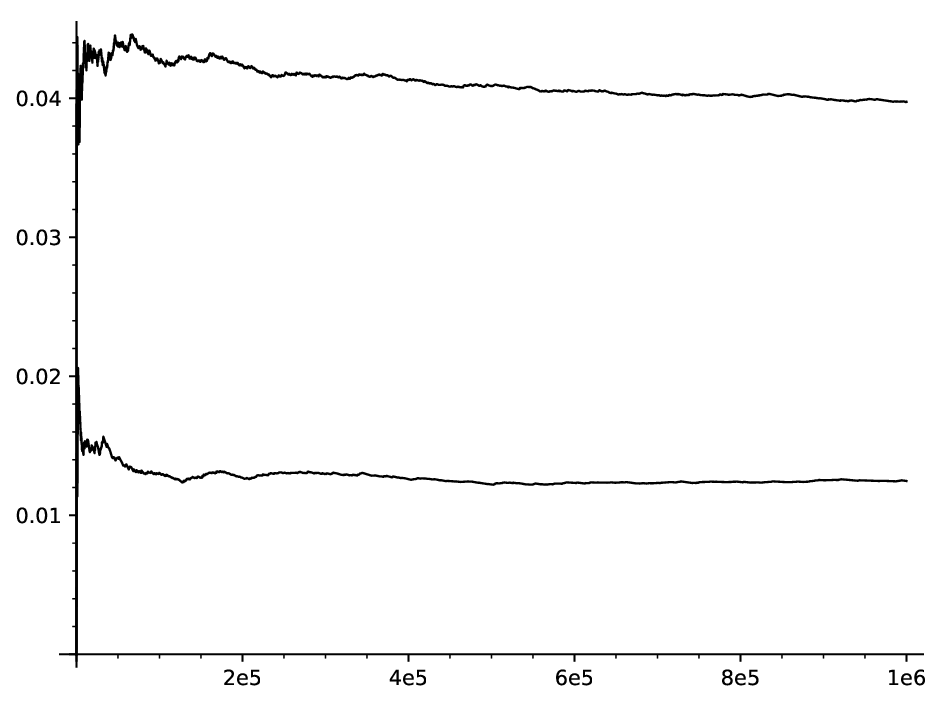}
\caption{$|l| = 3$: Top 3 bottom -3} \label{fig:37_6_2_3_A_3}
\end{subfigure}
\hspace*{-2.3cm}
\begin{subfigure}[b]{0.4\linewidth}
\includegraphics[width=\linewidth]{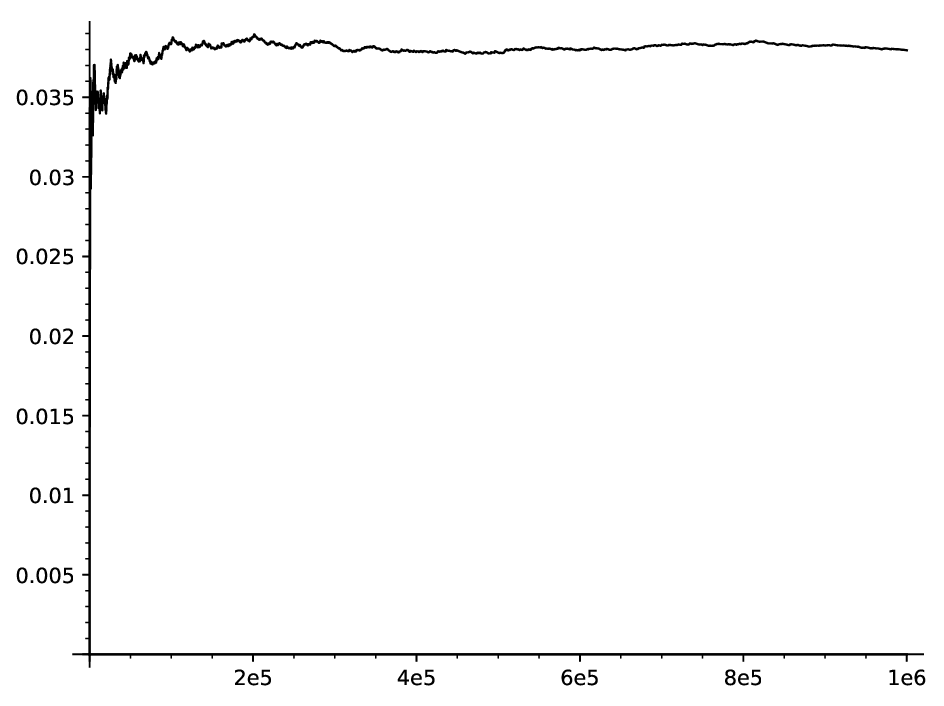}
\caption{$|l| = 4$: No -4 exists} \label{fig:37_6_2_3_A_4}
\end{subfigure}\hspace*{\fill}
\begin{subfigure}[b]{0.4\linewidth}
\includegraphics[width=\linewidth]{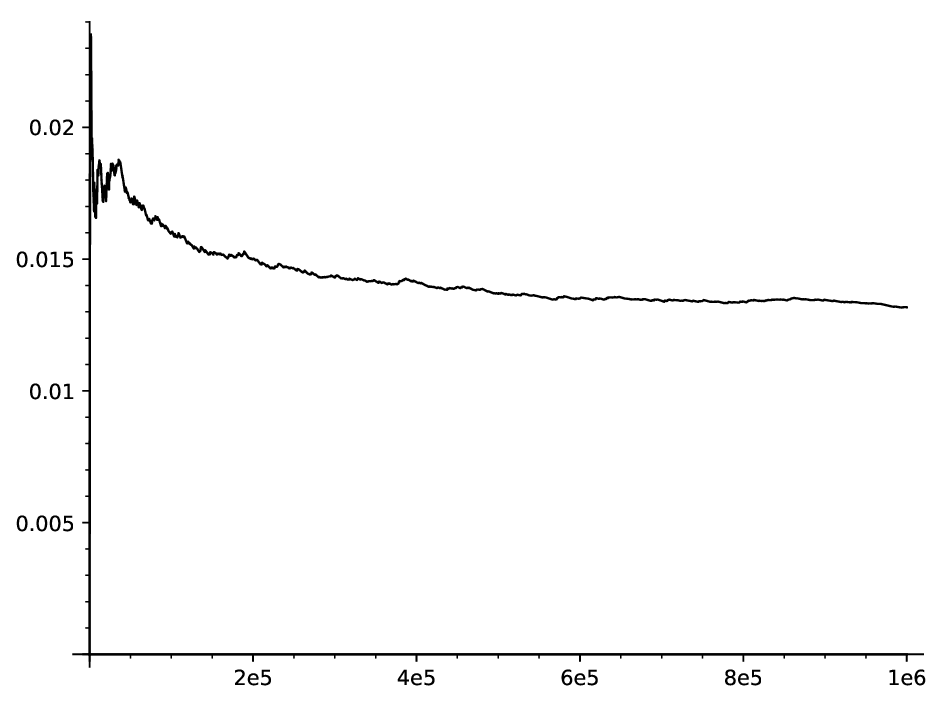}
\caption{$|l| = 5$: No 5 exists} \label{fig:37_6_2_3_A_5}
\end{subfigure}\hspace*{\fill}
\begin{subfigure}[b]{0.4\linewidth}
\includegraphics[width=\linewidth]{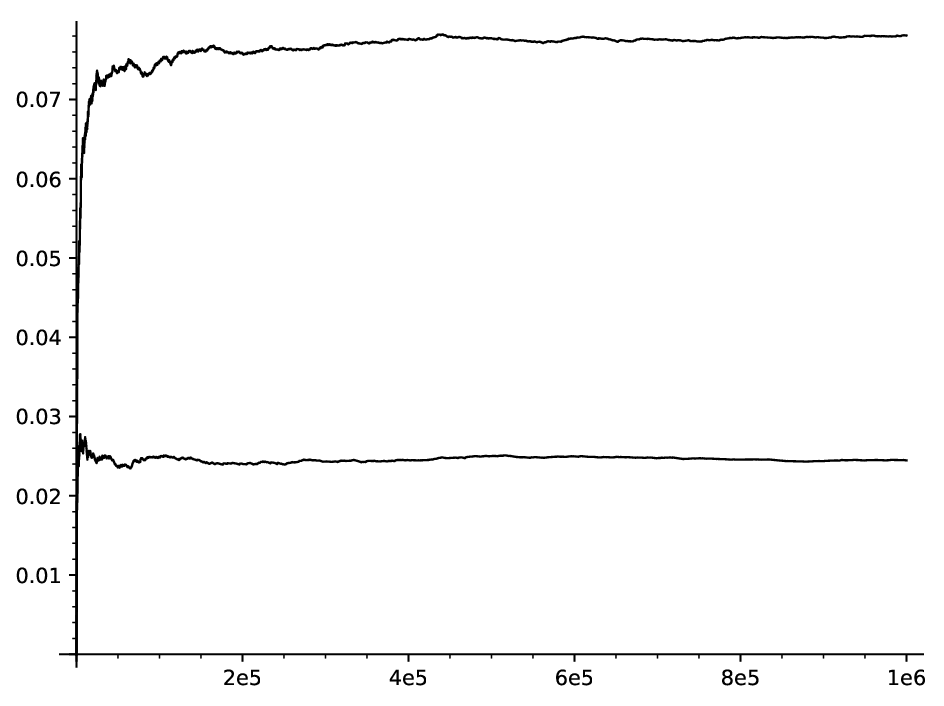}
\caption{$|l| = 6$: Top -6 bottom 6} \label{fig:37_6_2_3_A_6}
\end{subfigure}
\hspace*{-2.3cm}
\begin{subfigure}[b]{0.4\linewidth}
\includegraphics[width=\linewidth]{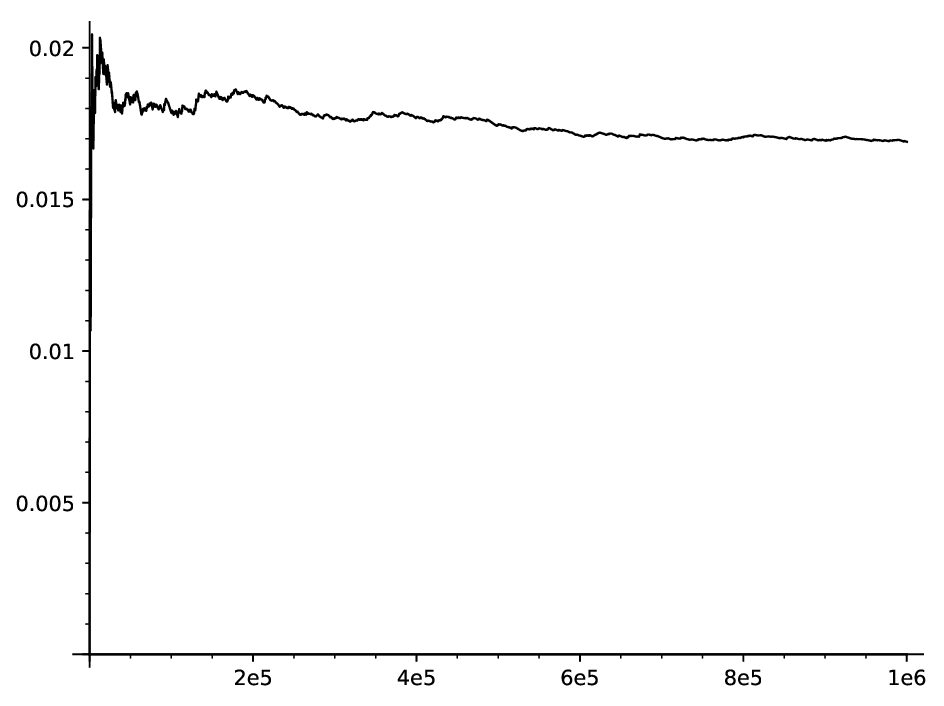}
\caption{$|l| = 7$: No -7 exists} \label{fig:37_6_2_3_A_7}
\end{subfigure}\hspace*{\fill}
\begin{subfigure}[b]{0.4\linewidth}
\includegraphics[width=\linewidth]{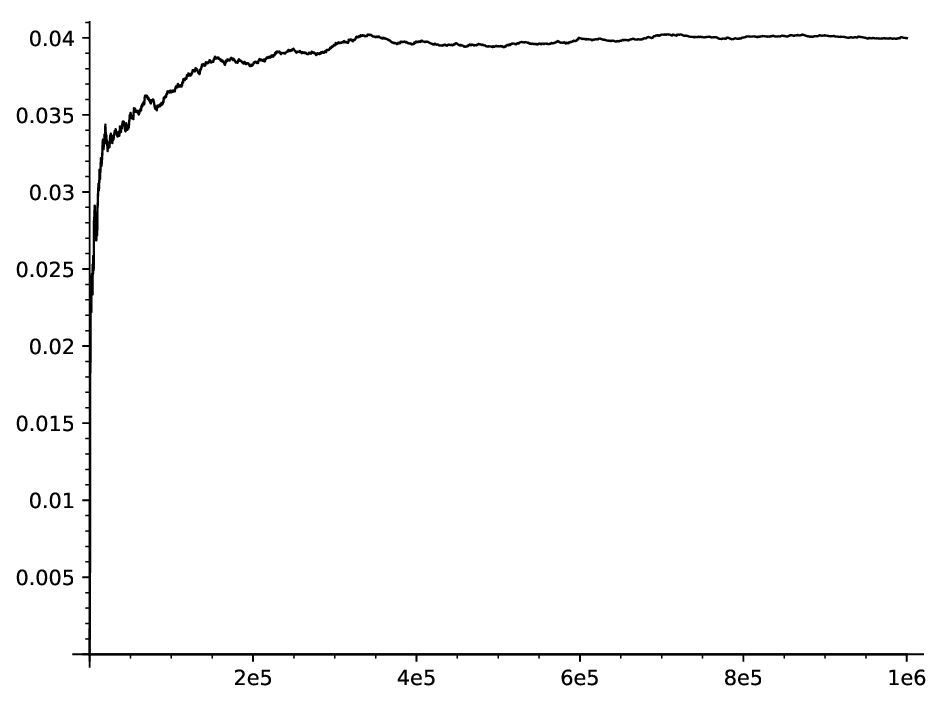}
\caption{$|l| = 8$: No 8 exists} \label{fig:37_6_2_3_A_8}
\end{subfigure}\hspace*{\fill}
\begin{subfigure}[b]{0.4\linewidth}
\includegraphics[width=\linewidth]{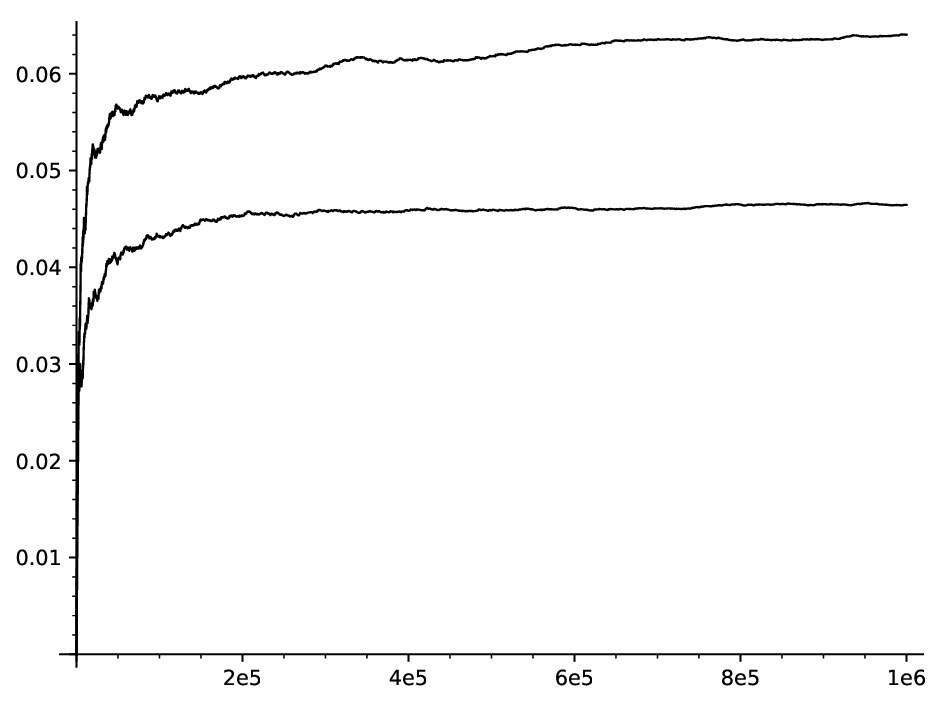}
\caption{$|l| = 9$: Top 9 bottom -9} \label{fig:37_6_2_3_A_9}
\end{subfigure}
\caption{37b1: $(\alpha, \beta) = (2,3)$ Ratio~\eqref{ratio_n_orders} $x_{6,E}^{(\alpha, \beta)}(X;l)/X^{1/2}\log^2(X)$} \label{fig:37b1_6_2_3_A_exact}
\end{figure}

\clearpage

\begin{figure}[t] % "[t!]" placement specifier just for this example
\hspace*{-2.3cm}
\begin{subfigure}[b]{0.4\linewidth}
\includegraphics[width=\linewidth]{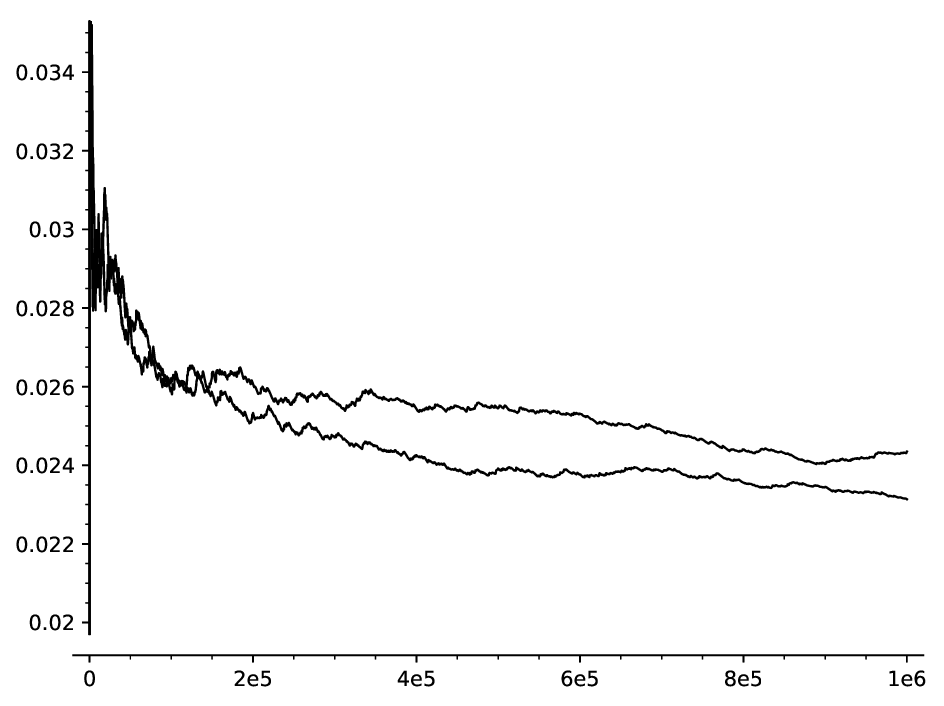}
\caption{$|l| = 1$: Top -1 bottom 1} \label{fig:37_6_1_6_A_1}
\end{subfigure}\hspace*{\fill}
\begin{subfigure}[b]{0.4\linewidth}
\includegraphics[width=\linewidth]{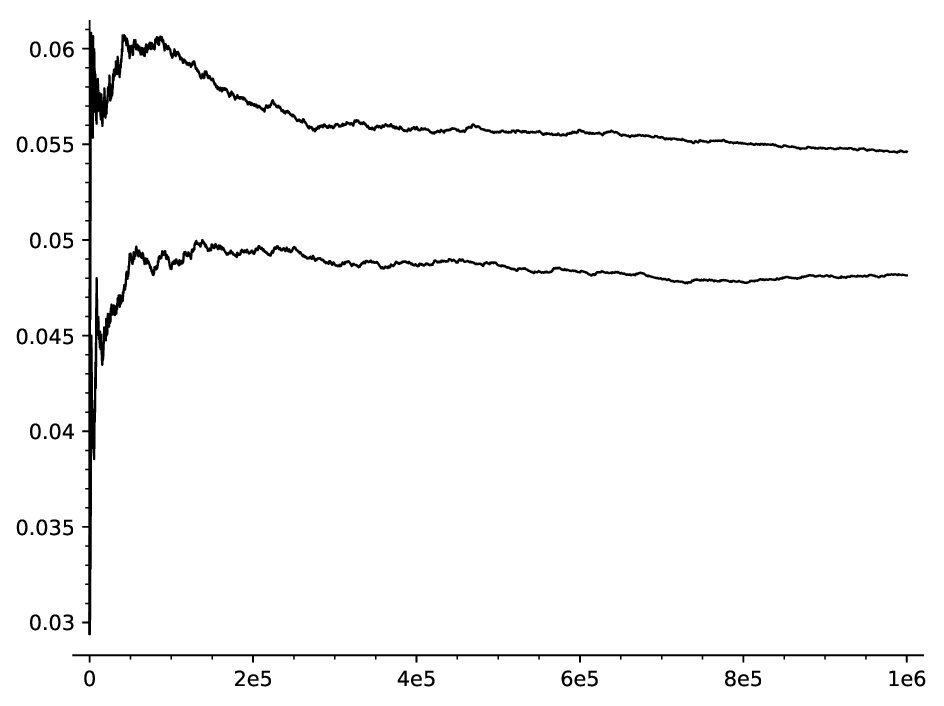}
\caption{$|l| = 2$: Top 2 bottom -2} \label{fig:37_6_1_6_A_2}
\end{subfigure}\hspace*{\fill}
\begin{subfigure}[b]{0.4\linewidth}
\includegraphics[width=\linewidth]{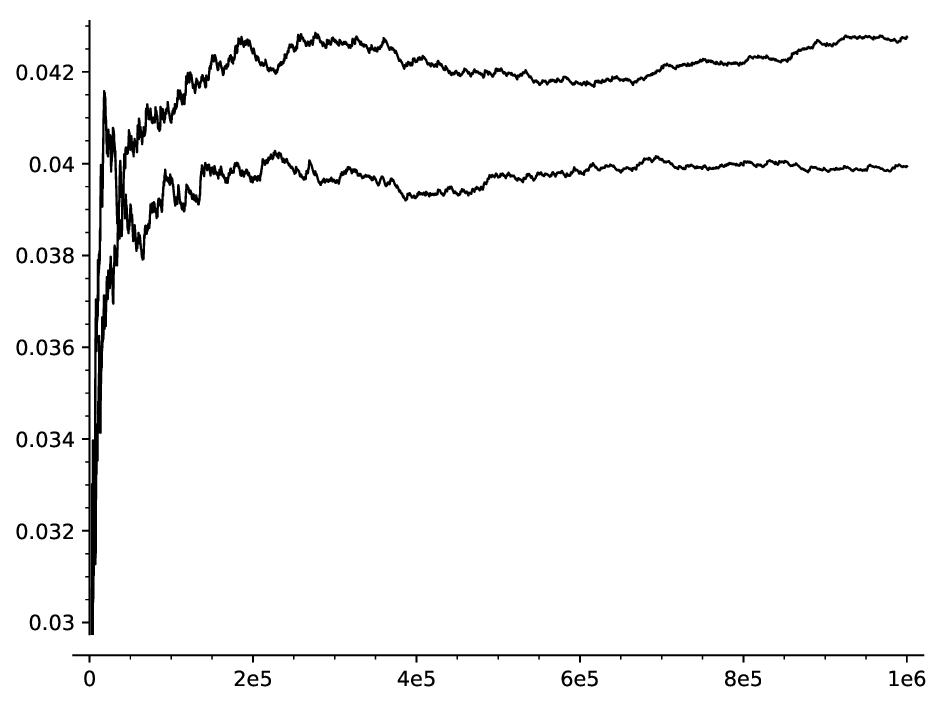}
\caption{$|l| = 3$: Top -3 bottom 3} \label{fig:37_6_1_6_A_3}
\end{subfigure}
\hspace*{-2.3cm}
\begin{subfigure}[b]{0.4\linewidth}
\includegraphics[width=\linewidth]{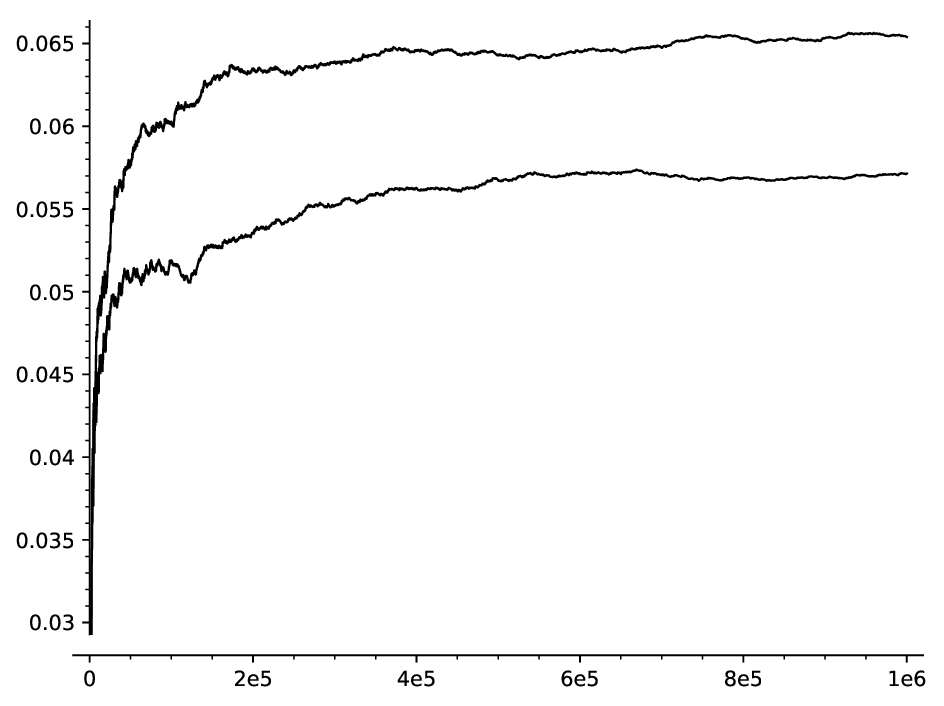}
\caption{$|l| = 4$: Top -4 bottom 4} \label{fig:37_6_1_6_A_4}
\end{subfigure}\hspace*{\fill}
\begin{subfigure}[b]{0.4\linewidth}
\includegraphics[width=\linewidth]{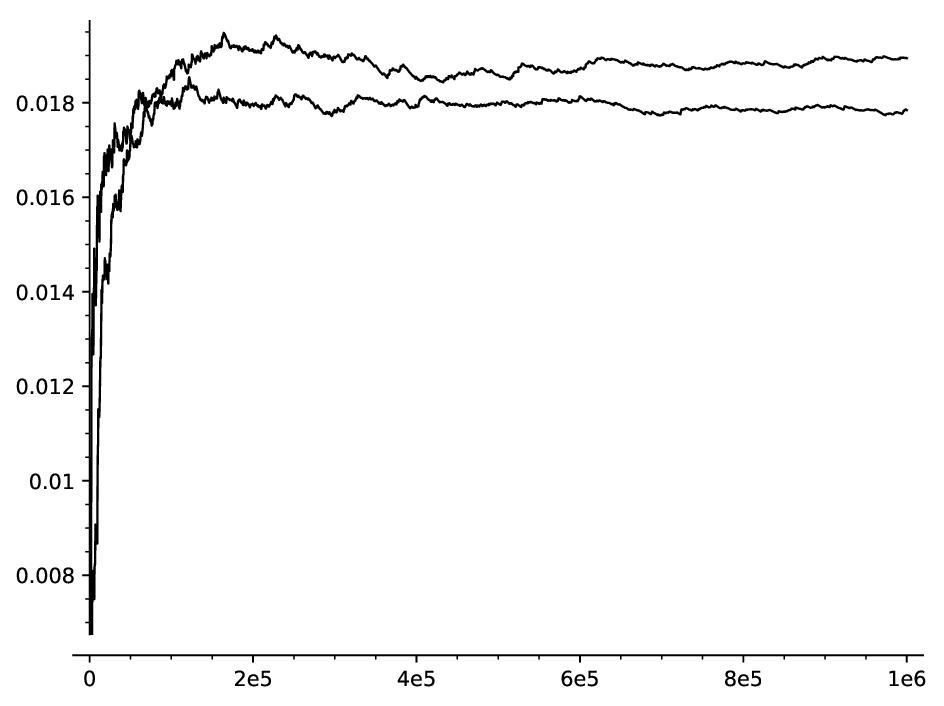}
\caption{$|l| = 5$: Top 5 bottom -5} \label{fig:37_6_1_6_A_5}
\end{subfigure}\hspace*{\fill}
\begin{subfigure}[b]{0.4\linewidth}
\includegraphics[width=\linewidth]{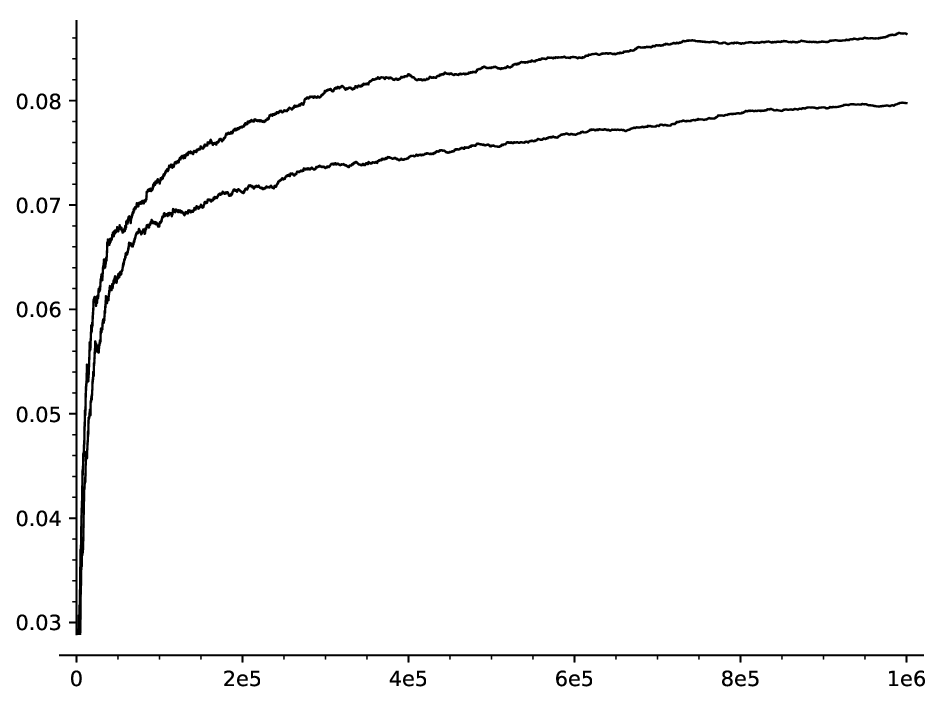}
\caption{$|l| = 6$: Top 6 bottom -6} \label{fig:37_6_1_6_A_6}
\end{subfigure}
\hspace*{-2.3cm}
\begin{subfigure}[b]{0.4\linewidth}
\includegraphics[width=\linewidth]{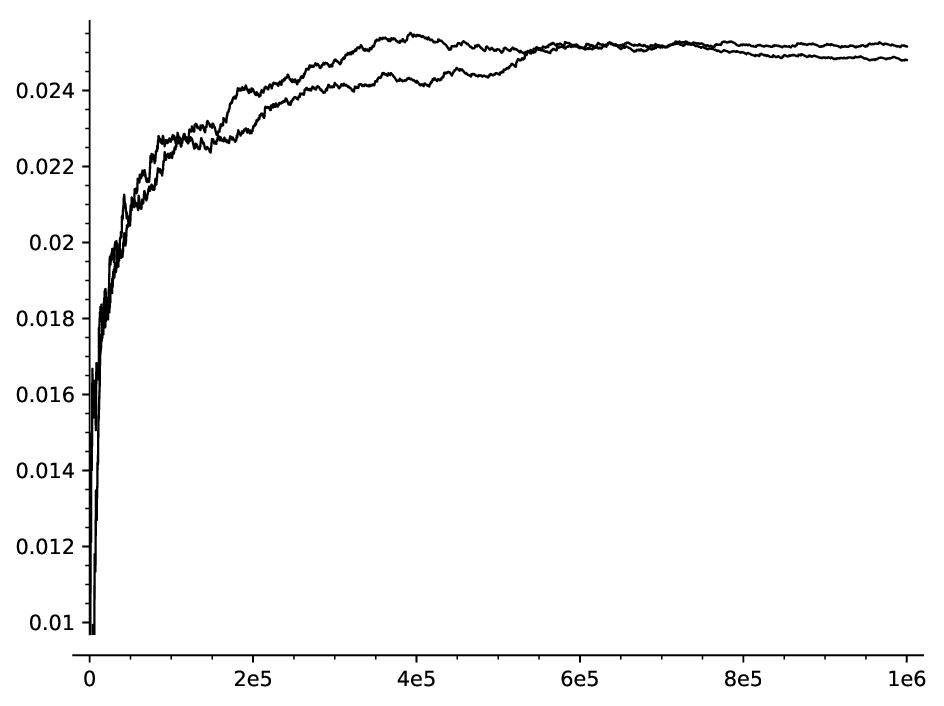}
\caption{$|l| = 7$: Top -7 bottom 7} \label{fig:37_6_1_6_A_7}
\end{subfigure}\hspace*{\fill}
\begin{subfigure}[b]{0.4\linewidth}
\includegraphics[width=\linewidth]{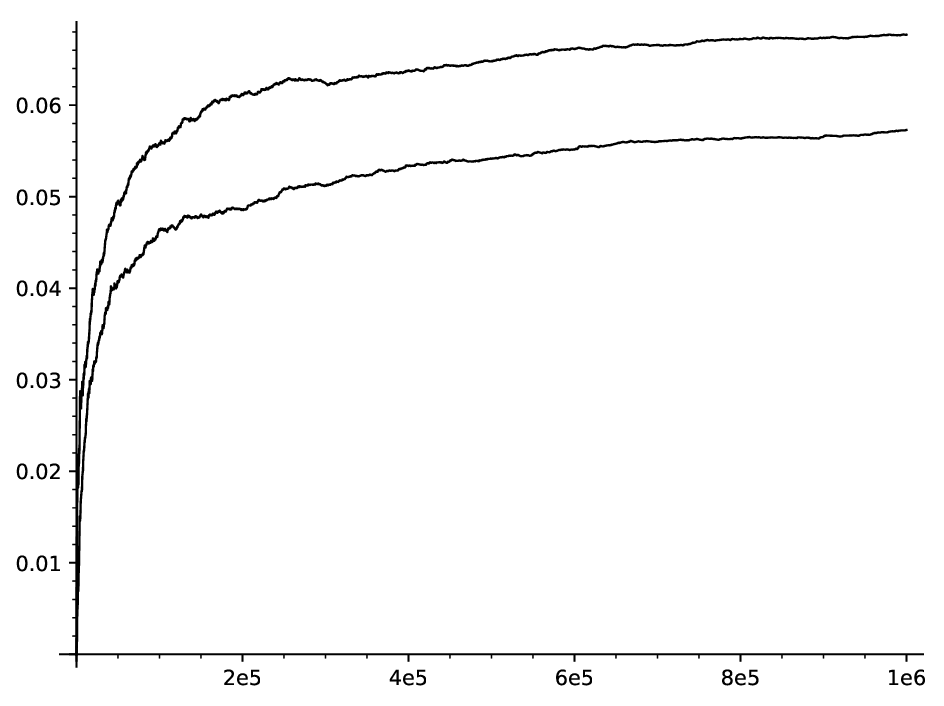}
\caption{$|l| = 8$: Top 8 bottom -8} \label{fig:37_6_1_6_A_8}
\end{subfigure}\hspace*{\fill}
\begin{subfigure}[b]{0.4\linewidth}
\includegraphics[width=\linewidth]{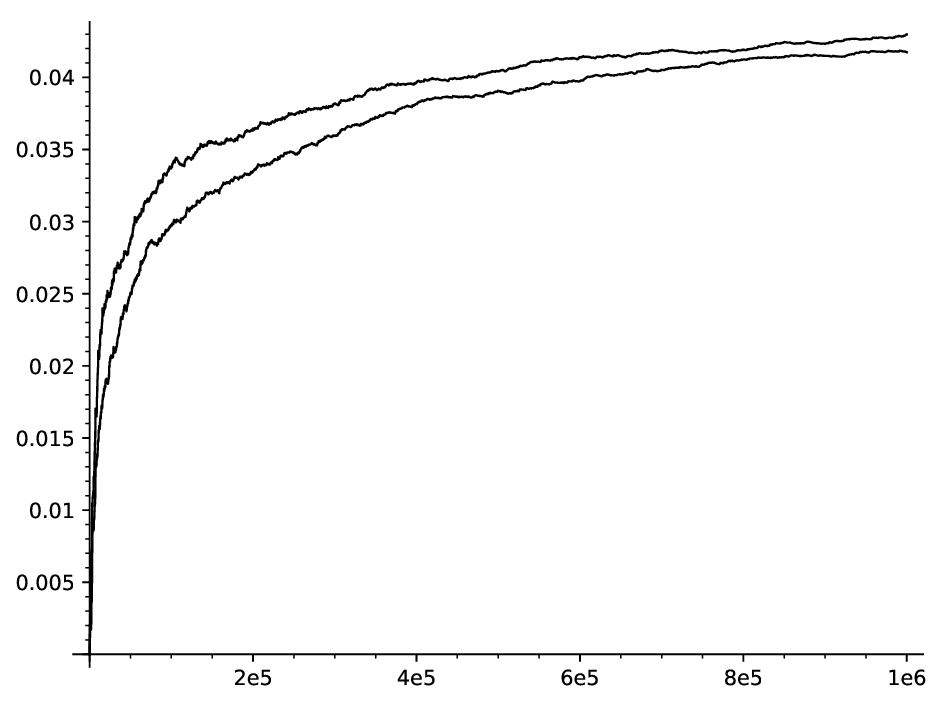}
\caption{$|l| = 9$: Top 9 bottom -9} \label{fig:37_6_1_6_A_9}
\end{subfigure}
\caption{37b1: $(\alpha, \beta) = (1,6)$ Ratio~\eqref{ratio_n_orders} $x_{6,E}^{(\alpha, \beta)}(X;l)/X^{1/2}\log^2(X)$} \label{fig:37b1_6_1_6_A_exact}
\end{figure}

\clearpage

\begin{figure}[t] % "[t!]" placement specifier just for this example
\hspace*{-2.3cm}
\begin{subfigure}[b]{0.4\linewidth}
\includegraphics[width=\linewidth]{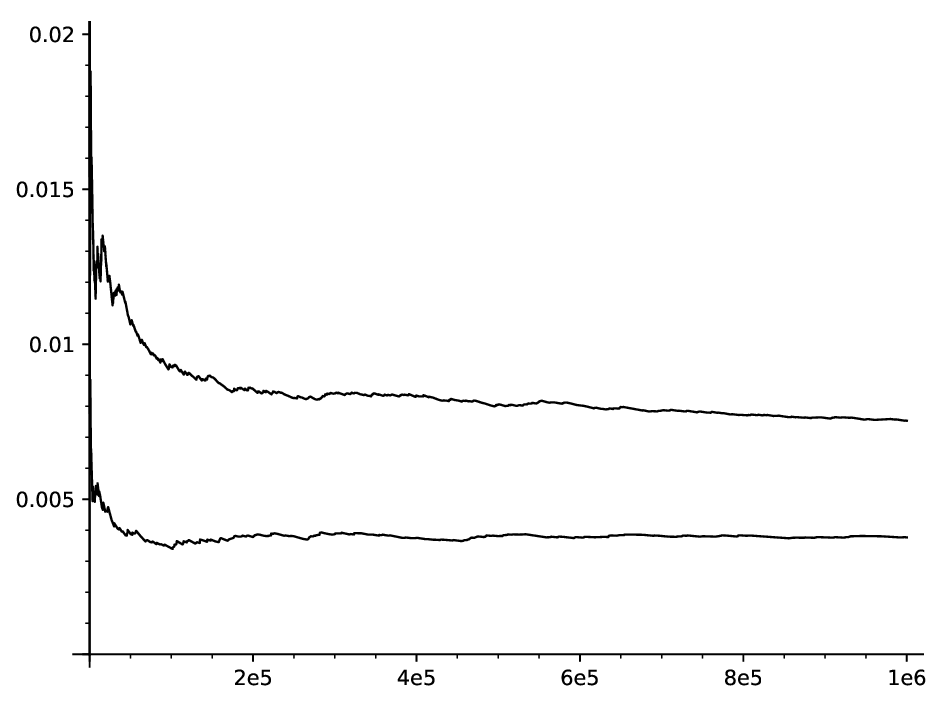}
\caption{$|l| = 1$: Top 1 bottom -1} \label{fig:37_6_2_6_A_1}
\end{subfigure}\hspace*{\fill}
\begin{subfigure}[b]{0.4\linewidth}
\includegraphics[width=\linewidth]{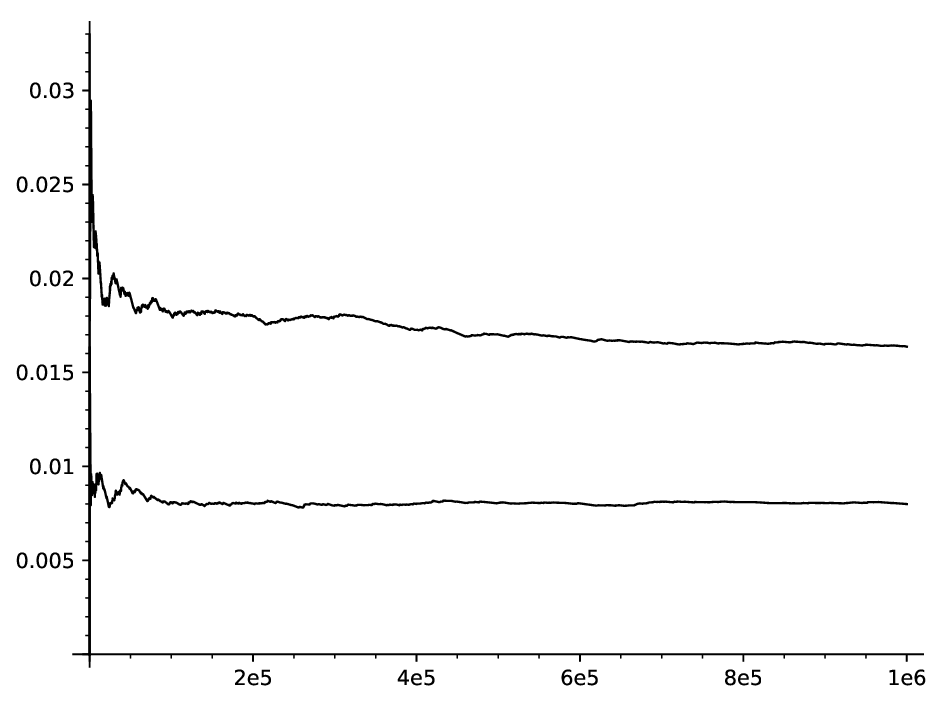}
\caption{$|l| = 2$: Top -2 bottom 2} \label{fig:37_6_2_6_A_2}
\end{subfigure}\hspace*{\fill}
\begin{subfigure}[b]{0.4\linewidth}
\includegraphics[width=\linewidth]{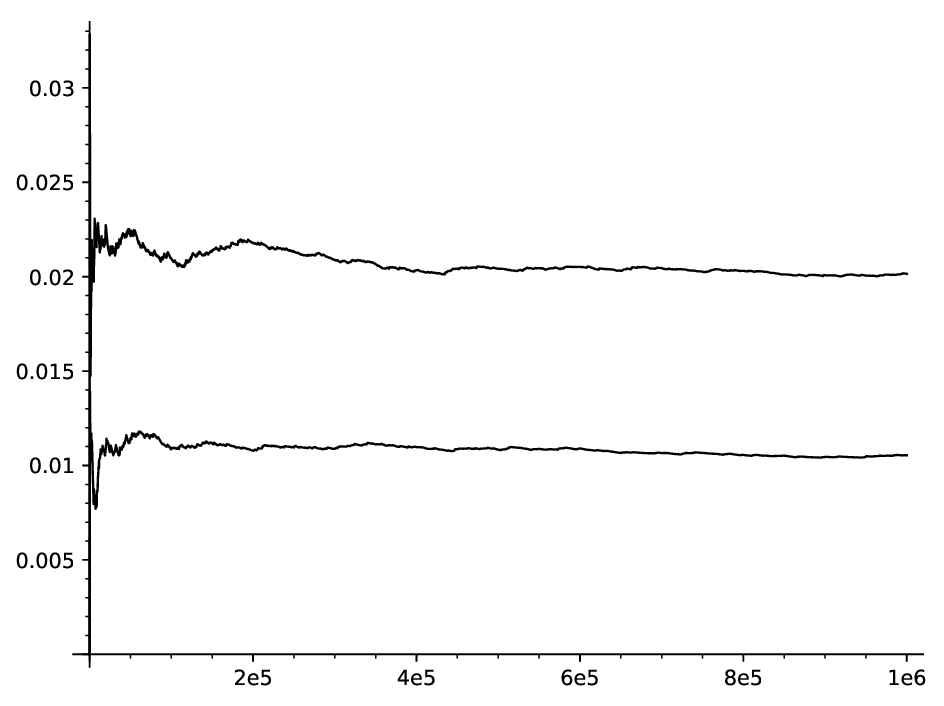}
\caption{$|l| = 3$: Top 3 bottom -3} \label{fig:37_6_2_6_A_3}
\end{subfigure}
\hspace*{-2.3cm}
\begin{subfigure}[b]{0.4\linewidth}
\includegraphics[width=\linewidth]{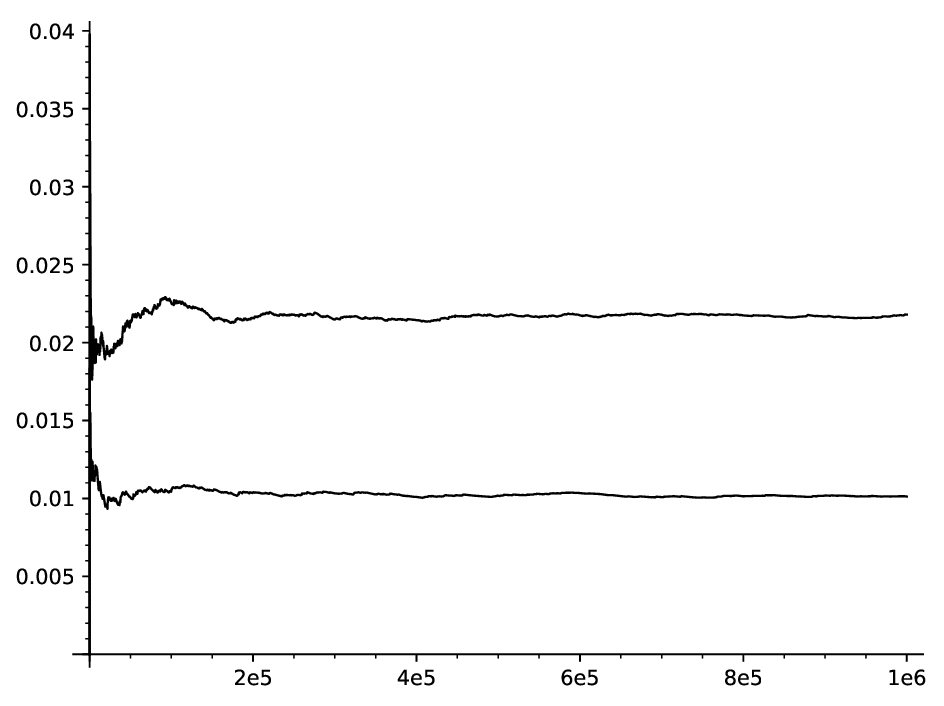}
\caption{$|l| = 4$: Top 4 bottom -4} \label{fig:37_6_2_6_A_4}
\end{subfigure}\hspace*{\fill}
\begin{subfigure}[b]{0.4\linewidth}
\includegraphics[width=\linewidth]{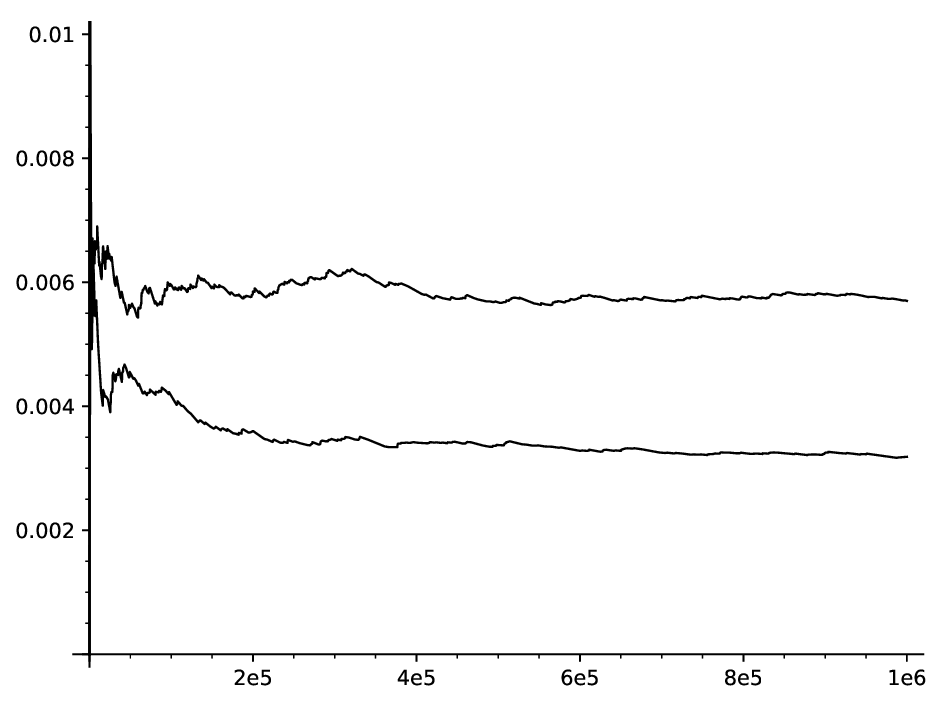}
\caption{$|l| = 5$: Top -5 bottom 5} \label{fig:37_6_2_6_A_5}
\end{subfigure}\hspace*{\fill}
\begin{subfigure}[b]{0.4\linewidth}
\includegraphics[width=\linewidth]{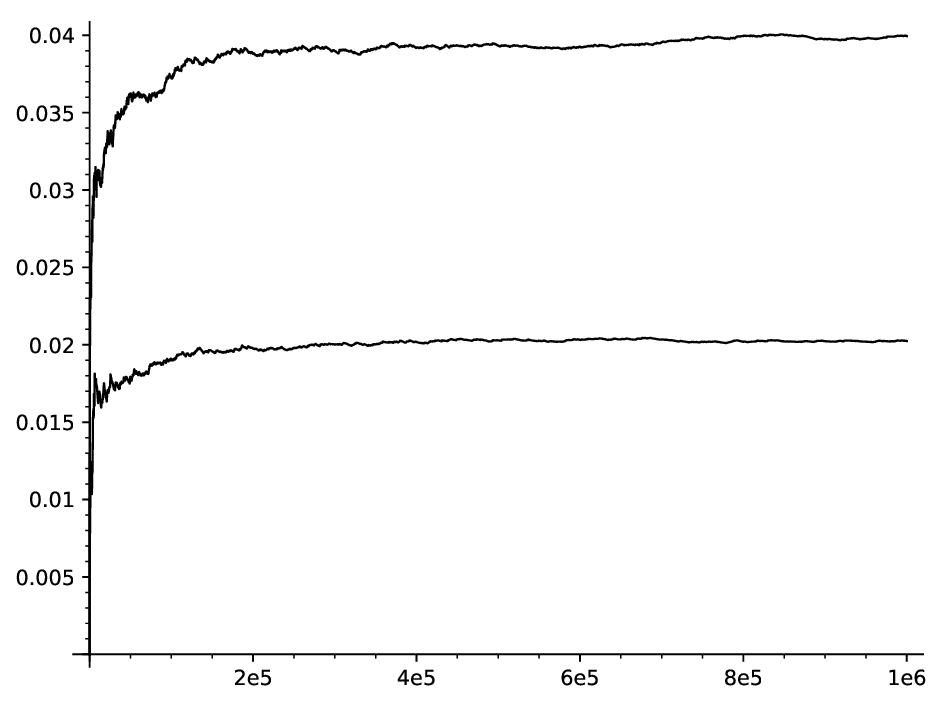}
\caption{$|l| = 6$: Top -6 bottom 6} \label{fig:37_6_2_6_A_6}
\end{subfigure}
\hspace*{-2.3cm}
\begin{subfigure}[b]{0.4\linewidth}
\includegraphics[width=\linewidth]{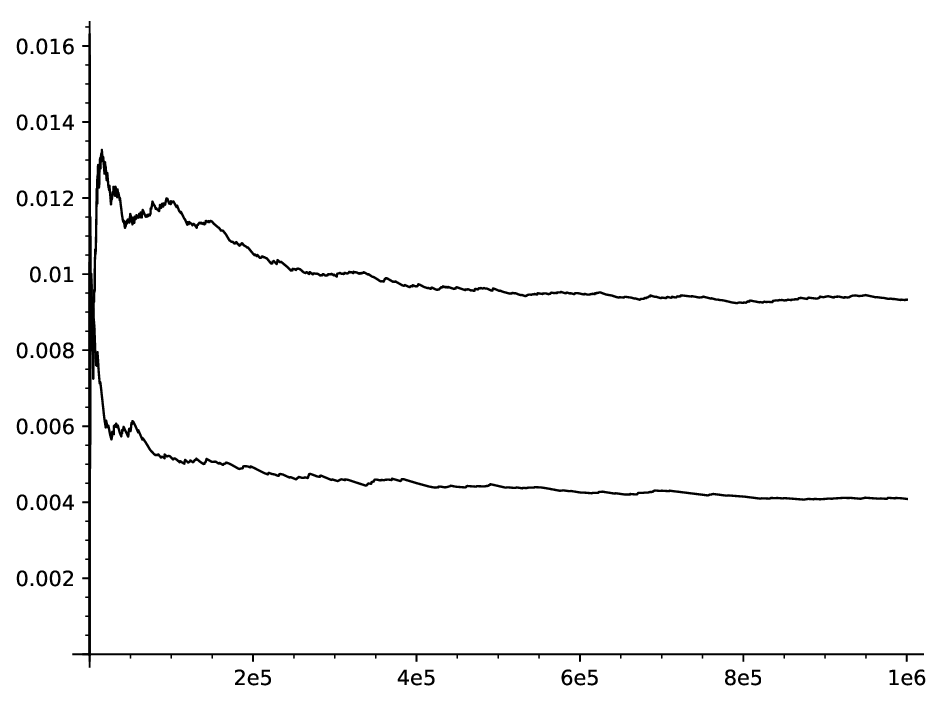}
\caption{$|l| = 7$: Top 7 bottom -7} \label{fig:37_6_2_6_A_7}
\end{subfigure}\hspace*{\fill}
\begin{subfigure}[b]{0.4\linewidth}
\includegraphics[width=\linewidth]{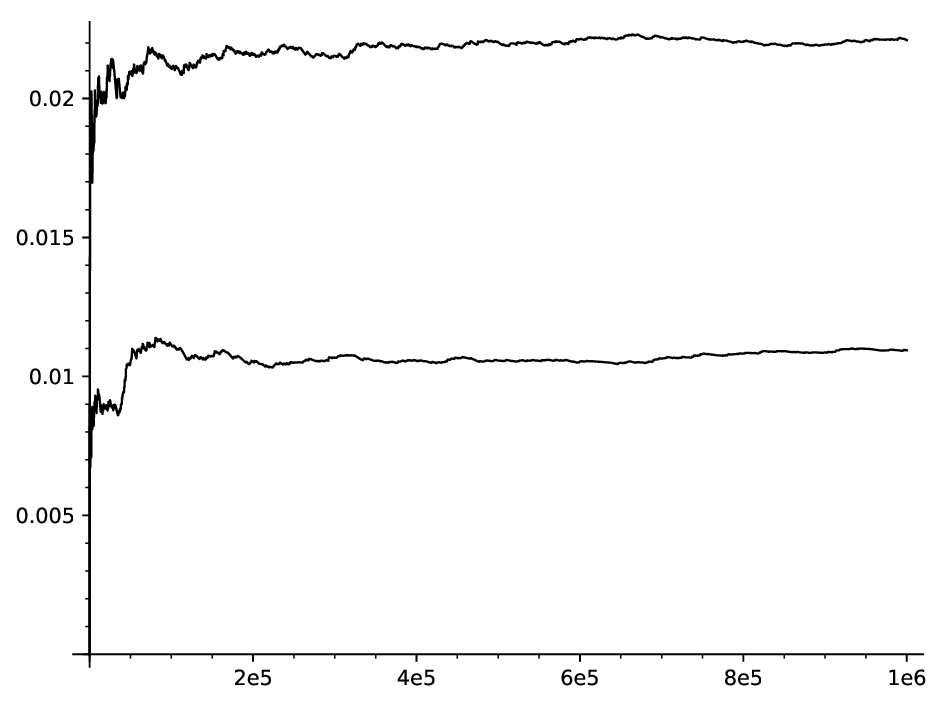}
\caption{$|l| = 8$: Top -8 bottom 8} \label{fig:37_6_2_6_A_8}
\end{subfigure}\hspace*{\fill}
\begin{subfigure}[b]{0.4\linewidth}
\includegraphics[width=\linewidth]{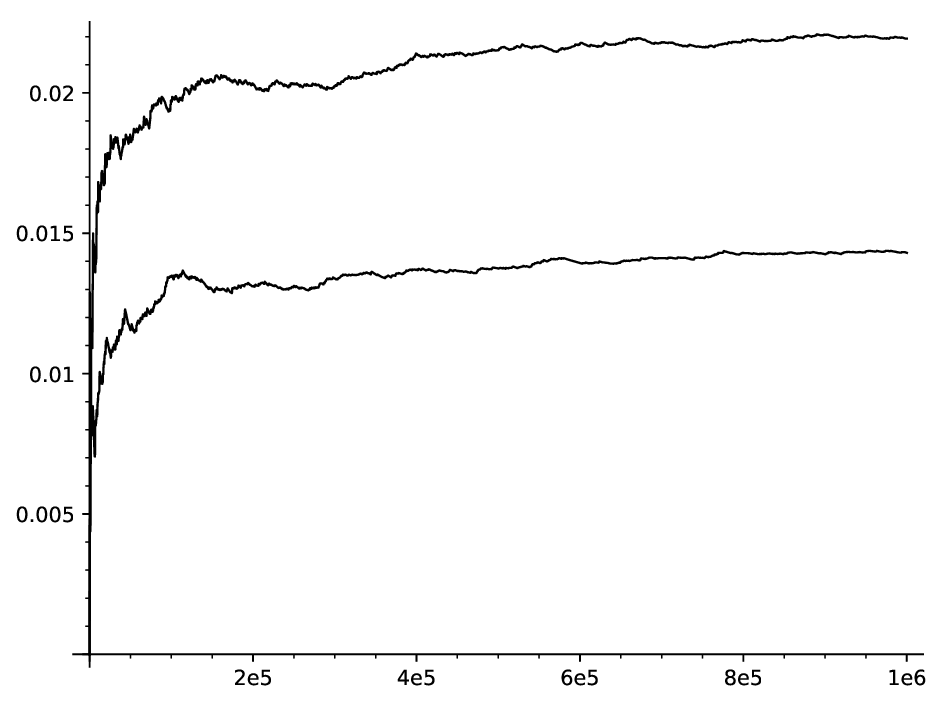}
\caption{$|l| = 9$: Top 9 bottom -9} \label{fig:37_6_2_6_A_9}
\end{subfigure}
\caption{37b1: $(\alpha, \beta) = (2,6)$ Ratio~\eqref{ratio_n_orders} $x_{6,E}^{(\alpha, \beta)}(X;l)/X^{1/2}\log^2(X)$} \label{fig:37b1_6_2_6_A_exact}
\end{figure}

%%%%%%%%%%%%%%%%%%%%%%%%%%%%%%%%%%%%%%%%%% Figures %%%%%%%%%%%%%%%%%%%%%%%%%%%%%%%%%%%%%%%%%%%%%%%%%%%%%%%%%%%%%

%\vfill\eject  

\end{document}